%%%%%%%%%%%%%%%%%%%%%%%%%%%%%%%%%%%%%%%%%%%%%%%%%%%%%%%%%%%%%%%%%%%%%%%%%%%%
%% Trim Size: 9.75in x 6.5in
%% Text Area: 8in (include Runningheads) x 5in
%% ws-m3as.tex   :   30-3-2004
%% Tex file to use with ws-m3as.cls written in Latex2E.
%% The content, structure, format and layout of this style file is the
%% property of World Scientific Publishing Co. Pte. Ltd.
%% Copyright 1995, 2002 by World Scientific Publishing Co.
%% All rights are reserved.
%%%%%%%%%%%%%%%%%%%%%%%%%%%%%%%%%%%%%%%%%%%%%%%%%%%%%%%%%%%%%%%%%%%%%%%%%%%%
%
\documentclass{amsart}
\usepackage{a4wide}
\usepackage{amssymb}
\usepackage{amsmath}
\usepackage{pifont}
\usepackage{amsfonts}
\usepackage{latexsym}
\usepackage{epsfig}
\usepackage{graphicx}
\usepackage{verbatim}
\usepackage{float}
\usepackage{exscale}
\usepackage{color}
\usepackage[titletoc]{appendix}
%\usepackage{mathtools}

%\linenumbers\def\linenumberfont{\normalfont\small\sffamily} % Show line numbers in the margins
%\linenumbers % Show line numbers in the margins: alternative to the one above, with smaller fonts

\newcommand{\BIGOP}[1]{\mathop{\mathchoice%
{\raise-0.22em\hbox{\huge $#1$}}%
{\raise-0.05em\hbox{\Large $#1$}}{\hbox{\large $#1$}}{#1}}}
\newcommand{\bigtimes}{\BIGOP{\times}}
% nur fuer Bigboxplus andere Korrekturen
\newcommand{\BIGboxplus}{\mathop{\mathchoice%
{\raise-0.35em\hbox{\huge $\boxplus$}}%
{\raise-0.15em\hbox{\Large $\boxplus$}}{\hbox{\large $\boxplus$}}{\boxplus}}}

\renewcommand{\ldots}{\ensuremath{\dotsc}}

\newcommand{\mint}{\textstyle\mints\displaystyle}
\newcommand{\mints}{\int\!\!\!\!\!{\rm-}\,}

\newcommand{\Rplus}{{\mathbb R}_{>0}}

\newcommand{\Yrho}{Y}
\newcommand{\Yrhod}{\Upsilon}
\def\epsilon{\varepsilon}
\def\hat{\widehat}

\def\undertilde#1{{\baselineskip=0pt\vtop
{\hbox{$#1$}\hbox{$\scriptscriptstyle\sim$}}}{}}
\def\underdtilde#1{{\baselineskip=0pt\vtop
{\hbox{$#1$}\hbox{$\scriptscriptstyle\approx$}}}{}}
\def\epsilon{\varepsilon}
\def\hat{\widehat}

\def\nabq{\undertilde{\nabla}_{q}}
\def\nabqi{\undertilde{\nabla}_{q_i}}
\def\nabqj{\undertilde{\nabla}_{q_j}}
\def\nabx{\undertilde{\nabla}_{x}}
\def\nabttx{\underdtilde{\nabla}_{x}}
\def\nabr1{\undertilde{\nabla}_{r_1}}
\def\nabr2{\undertilde{\nabla}_{r_2}}

\def\lae{\ell_{a}}

\def\ut{\undertilde{u}}
\def\utk{\undertilde{u}_{\kappa}}
\def\utka{\undertilde{u}_{\kappa,\alpha}}
\def\utkaL{\undertilde{u}_{\kappa,\alpha,L}}
\def\utkaLd{\undertilde{u}_{\kappa,\alpha,L,\delta}}
\def\utkaLD{\undertilde{u}_{\kappa,\alpha,L}^{\Delta t}}
\def\utkaLDp{\undertilde{u}_{\kappa,\alpha,L}^{\Delta t,+}}
\def\utkaLDm{\undertilde{u}_{\kappa,\alpha,L}^{\Delta t,-}}
\def\utkaLDwpm{\undertilde{u}_{\kappa,\alpha,L}^{\Delta t,\pm}}
\def\utkaLDpm{\undertilde{u}_{\kappa,\alpha,L}^{\Delta t(,\pm)}}
\def\mt{\undertilde{m}}

\def\mtka{\undertilde{m}_{\kappa,\alpha}}
\def\mtkaL{\undertilde{m}_{\kappa,\alpha,L}}
\def\mtkaLD{\undertilde{m}_{\kappa,\alpha,L}^{\Delta t}}
\def\mtkaLDp{\undertilde{m}_{\kappa,\alpha,L}^{\Delta t,+}}
\def\mtkaLDm{\undertilde{m}_{\kappa,\alpha,L}^{\Delta t,-}}
\def\mtkaLDpm{\undertilde{m}_{\kappa,\alpha,L}^{\Delta t(,\pm)}}
\def\mtkaLDwpm{\undertilde{m}_{\kappa,\alpha,L}^{\Delta t,\pm}}
\def\utae{\undertilde{u}_{L}}
\def\utaed{\undertilde{u}_{L,\delta}}
\def\uta{\undertilde{u}_{L}}

\def\utaeDm{\utae^{\Delta t,-}}
\def\utaeDp{\utae^{\Delta t,+}}

\def\vt{\undertilde{v}}
\def\wt{\undertilde{w}}
\def\xt{\undertilde{x}}
\def\qt{\undertilde{q}}

\def\bt{\undertilde{b}}
\def\ft{\undertilde{f}}
\def\gt{\undertilde{g}}

\def\nt{\undertilde{n}}
\def\yt{\undertilde{y}}

\def\lambdat{\undertilde{\lambda}}
\def\varsigmat{\undertilde{\varsigma}}
\def\Dlxn{\partial^{|\lambdat|}}
\def\Dlxd{\partial^{\lambda_1}_{x_1}\cdots\partial^{\lambda_d}_{x_d}}

\def\calBt{\undertilde{\mathcal{B}}}
\def\Ct{\undertilde{C}}
\def\Et{\undertilde{E}}
\def\Ft{\undertilde{F}}

\def\Ht{\undertilde{H}}

\def\Lt{\undertilde{L}}

\def\Vt{\undertilde{V}}
\def\Wt{\undertilde{W}}

\def\cVt{\undertilde{\mathcal{V}}}
\def\dq{\,{\rm d}\undertilde{q}}
\def\dx{\,{\rm d}\undertilde{x}}

\def\dy{\,{\rm d}\undertilde{y}}
\def\dt{\,{\rm d}t}
\def\zerot{\undertilde{0}}
\def\zerott{\underdtilde{0}}
\def\tautt{\underdtilde{\tau}}

\def\Att{\underdtilde{A}}
\def\Btt{\underdtilde{B}}
\def\Ctt{\underdtilde{C}}

\def\Dtt{\underdtilde{D}}

\def\Htt{\underdtilde{H}}
\def\Itt{\underdtilde{I}}

\def\Ltt{\underdtilde{L}}
\def\Stt{\underdtilde{S}}

\def\unabtt{\underdtilde{\nabla}_{x}\,\ut}

\def\vnabtt{\underdtilde{\nabla}_{x}\,\vt}

\def\sigtt{\underdtilde{\sigma}}

\def\pk{p_{\kappa}}
\def\Pk{P_{\kappa}}
\def\hpsi{\hat \psi}
\def\tpsi{\widetilde{\psi}}
\def\tut{\widetilde{\ut}}

\def\trho{\widetilde{\rho}}

\def\hpsiaet{\widetilde{\psi}_{L}}
\def\psikaL{\psi_{\kappa,\alpha,L}}
\def\hpsikaL{\hat \psi_{\kappa,\alpha,L}}

\def\hpsikaLd{\hat \psi_{\kappa,\alpha,L,\delta}}
\def\hpsikaLD{\hat \psi_{\kappa,\alpha,L}^{\Delta t}}
\def\hpsikaLDp{\hat \psi_{\kappa,\alpha,L}^{\Delta t,+}}
\def\hpsikaLDm{\hat \psi_{\kappa,\alpha,L}^{\Delta t,-}}
\def\hpsikaLDpm{\hat \psi_{\kappa,\alpha,L}^{\Delta t(,\pm)}}
\def\hpsikaLDwpm{\hat \psi_{\kappa,\alpha,L}^{\Delta t,\pm}}

\def\hpsika{\hat \psi_{\kappa,\alpha}}
\def\hpsik{\hat \psi_{\kappa}}

\def\hpsiaedt{\widetilde{\psi}_{L,\delta}}
\def\psiae{\psi_{L}}
\def\psia{\widetilde \psi_{L}}
\def\pkaL{p_{\kappa,\alpha,L}}
\def\rhok{\rho_{\kappa}}
\def\rhoka{\rho_{\kappa,\alpha}}
\def\rhokaL{\rho_{\kappa,\alpha,L}}
\def\rhokaLd{\rho_{\kappa,\alpha,L,\delta}}
\def\rhokaLD{\rho_{\kappa,\alpha,L}^{[\Delta t]}}
\def\rhokaLDp{\rho_{\kappa,\alpha,L}^{\Delta t,+}}
\def\rhokaLDm{\rho_{\kappa,\alpha,L}^{\Delta t,-}}
\def\rhokaLDpm{\rho_{\kappa,\alpha,L}^{\Delta t(,\pm)}}
\def\rhokaLDwpm{\rho_{\kappa,\alpha,L}^{\Delta t,\pm}}
\def\vrho{\varrho}
\def\vrhok{\varrho_{\kappa}}
\def\vrhoka{\varrho_{\kappa,\alpha}}
\def\vrhokaL{\varrho_{\kappa,\alpha,L}}
\def\vrhokaLd{\varrho_{\kappa,\alpha,L,\delta}}

\def\vrhokaLDp{\varrho_{\kappa,\alpha,L}^{\Delta t,+}}
\def\vrhokaLDm{\varrho_{\kappa,\alpha,L}^{\Delta t,-}}
\def\tvrho{\widetilde{\varrho}}

\def\dd {{\,\rm d}}

\newcommand{\nabxtt}{\underdtilde{\nabla}_{x}\,}

\newcommand{\bet}{\noalign{\vskip6pt plus 3pt minus 1pt}}

 % Partial derivative
 % Gradient symbol
 % Real numbers set

 % Sum for all springs with index i

\def\Xint#1{\mathchoice
{\XXint\displaystyle\textstyle{#1}}%
{\XXint\textstyle\scriptstyle{#1}}%
{\XXint\scriptstyle\scriptscriptstyle{#1}}%
{\XXint\scriptscriptstyle\scriptscriptstyle{#1}}%
\!\int}
\def\XXint#1#2#3{{\setbox0=\hbox{$#1{#2#3}{\int}$}
\vcenter{\hbox{$#2#3$}}\kern-.5\wd0}}

\def\dashint{\Xint-}

%%%%%%%
\setcounter{topnumber}{5}
\setcounter{bottomnumber}{5}
\setcounter{totalnumber}{10}

\setlength{\floatsep}{15pt plus 12pt minus 4pt}
\setlength{\textfloatsep}{15pt plus 12pt minus 4pt}

\newtheorem{example}{Example}[section]
\newtheorem{definition}{Definition}[section]
\newtheorem{lemma}{Lemma}[section]
\newtheorem{theorem}{Theorem}[section]
\newtheorem{remark}{Remark}[section]
\newtheorem{corollary}{Corollary}[section]

%%%%%%%%%%%%%%%%%%%%%%%%%%%%%%%%%%%%%%%%%%%%%%%%%%%%%%%%%%%%%%%%%%%%%%%%%%%

\setcounter{equation}{0}
\renewcommand{\theequation}{\arabic{section}.\arabic{equation}}

\newcounter{ind}
\def\eqlabstart{%
 \setcounter{ind}{\value{equation}}\addtocounter{ind}{1}%
 \setcounter{equation}{0}%
 \renewcommand{\theequation}{\arabic{section}.\arabic{ind}\alph{equation}}%
}
\def\eqlabend{%
 \renewcommand{\theequation}{\arabic{section}.\arabic{equation}}%
 \setcounter{equation}{\value{ind}}%
}

%%%%%%%%%%%%%%%%%%%%%%%%%%%%%%% Text in colour %%%%%%%%%%%%%%%%%%%%%%%%%%%

\newcommand{\red}[1]{\textcolor{black}{#1}}
\newcommand{\arxiv}[1]{\textcolor{black}{#1}}

%%%%%%%%%%%%%%%%%%%%%%%%%%%%%%% Appendix %%%%%%%%%%%%%%%%%%%%%%%%%%%%%%%%%
%
\newcounter{appendix}
\renewcommand\appendix{\par
        \refstepcounter{appendix}
           \setcounter{section}{0}
       \setcounter{theorem}{0}
           \setcounter{equation}{0}
\renewcommand\thesection{\appendixname ~\Alph{section}}
\renewcommand\thesubsection{\Alph{section}.\arabic{subsection}}%
\renewcommand\theequation{\Alph{section}.\arabic{equation}}}%

\begin{document}

\markboth{John W. Barrett and Endre S\"{u}li}
{Existence %and Equilibration
of Global Weak Solutions for Compressible Dilute Polymers}

%%%%%%%%%%%%%%%%%%% Publisher's Area please ignore %%%%%%%%%%%%%%%%%%%%%%%
%
%\catchline{}{}{}{}{}
%
%%%%%%%%%%%%%%%%%%%%%%%%%%%%%%%%%%%%%%%%%%%%%%%%%%%%%%%%%%%%%%%%%%%%%%%%%%

\title[Existence %and Equilibration
of Global Weak Solutions for Compressible Dilute Polymers]
{Existence of global weak solutions to compressible isentropic finitely extensible nonlinear\\
bead-spring chain models for dilute polymers} %with variable viscosity}

\author{JOHN W. BARRETT}

\address{\footnotesize Department of Mathematics, Imperial College London\\
London SW7 2AZ, UK\\
{\tt jwb@imperial.ac.uk}}

\author{ENDRE S\"ULI}

\address{
Mathematical Institute, University of Oxford\\ Oxford OX2 6GG, UK\\
{\tt endre.suli@maths.ox.ac.uk}}

\begin{abstract}
We prove the existence of global-in-time weak solutions to a general class of models that arise
from the kinetic theory of dilute solutions of nonhomogeneous polymeric liquids, where the polymer
molecules are idealized as bead-spring chains with finitely extensible nonlinear elastic (FENE)
type spring potentials.
The class of models under consideration involves the unsteady, compressible, isentropic,
isothermal Navier--Stokes system in a bounded domain $\Omega$ in $\mathbb{R}^d$, $d = 2$ or $3$,
for the
density $\rho$, the velocity $\ut$ and the pressure $p$ of the fluid, with an equation of state
of the form $p(\rho) = c_p \rho^\gamma$, where $c_p$ is a positive constant and $\gamma>\frac{3}{2}$.
The right-hand side of the Navier--Stokes momentum equation includes an elastic extra-stress
tensor,  \red{ which is the sum of the classical Kramers expression and a quadratic interaction term.
The elastic extra-stress tensor} stems from the random movement of the polymer chains and is defined through the
associated probability density function that satisfies a Fokker--Planck-type parabolic equation,
a crucial feature of which is the presence of a centre-of-mass diffusion term.
We require no structural assumptions
on the drag term in the Fokker--Planck equation; in particular, the drag term need not
be corotational. With a nonnegative initial density
$\rho_0 \in L^\infty(\Omega)$ for the continuity equation;
a square-integrable initial velocity datum $\undertilde{u}_0$ for
the Navier--Stokes momentum equation; and
a nonnegative initial probability density function $\psi_0$
for the Fokker--Planck equation, which has finite
relative entropy with respect to the Maxwellian $M$ associated with the spring potential
in the model, we prove, {\em via} a limiting procedure on certain discretization and regularization parameters,
the existence of a global-in-time bounded-energy weak solution
$t \mapsto (\rho(t),\undertilde{u}(t), \psi(t))$
to the coupled Navier--Stokes--Fokker--Planck system, satisfying the initial condition
$(\rho(0),\undertilde{u}(0), \psi(0)) = (\rho_0,\undertilde{u}_0, \psi_0)$.

\bigskip

\noindent
\textit{Keywords:} Kinetic polymer models, FENE chain, %Hookean chain,
compressible Navier--Stokes--Fokker--Planck system, variable density, nonhomogeneous dilute polymer

\bigskip

\noindent
\textit{AMS Subject Classification: 35Q30, 76N10, 82D60}

\end{abstract}

\maketitle

\section{Introduction}
\label{sec:1}

This paper establishes the existence of global-in-time weak solutions
to a large class of bead-spring chain models with finitely
extensible nonlinear elastic (FENE) %and linear (Hookean)
type spring potentials, ---
a system of nonlinear partial differential equations that arises
from the kinetic theory of dilute polymer solutions. The solvent is a
compressible, isentropic, viscous, isothermal Newtonian fluid
%with variable viscosity
confined to a bounded Lipschitz domain $\Omega \subset \mathbb{R}^d$, $d=2$ or $3$, with
boundary $\partial \Omega$. For the sake of simplicity of presentation,
we shall suppose that $\Omega$ has a `solid boundary' $\partial \Omega$;
the velocity field $\ut$ will then satisfy the no-slip boundary condition
$\ut=\zerot$ on $\partial \Omega$.
%The polymer chains, which are suspended
%in the solvent, are assumed not to interact with each other.
The equations of continuity and balance of linear momentum
have the form of the compressible Navier--Stokes equations
% with variable viscosity
\red{(cf.    %Antontsev, Kazhikhov \& Monakhov \cite{AKM},
Lions \cite{Lions2}, Feireisl \cite{Feir1}, Novotn\'{y} \& Stra\v{s}kraba \cite{NovStras}, or  Feireisl \& Novotn\'y \cite{FN})}  %or Simon \cite{Simon-density}
in which the  {\em elastic extra-stress} tensor $\tautt$ (i.e., the
polymeric part of the Cauchy stress tensor) appears as a source
term in the conservation of momentum equation:

Given $T \in \mathbb{R}_{>0}$, find
$ \rho\,:\,(\xt,t)
\in \Omega \times [0,T] \mapsto
\rho(\xt,t) \in {\mathbb R}$ and
$\ut\,:\,(\xt,t)\in
\overline\Omega \times [0,T] \mapsto
\ut(\xt,t) \in {\mathbb R}^d$ %and $p\,:\, (\xt,t) \in \Omega \times (0,T]
%\mapsto p(\xt,t) \in {\mathbb R}$
such that
\begin{subequations}
\begin{alignat}{2}
\frac{\partial \rho}{\partial t}
+ \nabx\cdot(\ut \,\rho)
&= 0 \qquad &&\mbox{in } \Omega \times (0,T],\label{ns0a}\\
\rho(\xt,0)&=\rho_{0}(\xt)    \qquad &&\forall \xt \in \Omega,\label{ns00a} \\
\frac{\partial(\rho\,\ut)}{\partial t}
+ \nabx\cdot(\rho\,\ut \otimes \ut) - \nabx \cdot \Stt(\ut,\rho) + \nabx \,p(\rho)
&= \rho\,\ft + \nabx \cdot \tautt \qquad &&\mbox{in } \Omega \times (0,T],\label{ns1a}\\
%\nabx \cdot \ut &= 0        \qquad &&\mbox{in } \Omega \times (0,T],\label{ns2a}\\
\ut &= \zerot               \qquad &&\mbox{on } \partial \Omega \times (0,T],\label{ns3a}\\
(\rho\,\ut)(\xt,0)&=(\rho_0\,\ut_{0})(\xt)    \qquad &&\forall \xt \in \Omega.\label{ns4a}
\end{alignat}
\end{subequations}
It is assumed that each of the equations above has been written in its nondimensional form;
$\rho$ denotes a nondimensional solvent density,
$\ut$ is a nondimensional solvent velocity, defined as the velocity field
scaled by the characteristic flow speed $U_0$.
Here $\Stt(\ut,\rho)$ is the Newtonian part of the viscous stress tensor defined by
\begin{align}
\Stt(\ut,\rho) :=  \mu^S(\rho)\,[\Dtt(\ut) - \frac{1}{d} \,(\nabx \cdot \ut)\, \Itt] +
\mu^B(\rho)
\,(\nabx \cdot \ut) \,\Itt,
\label{Stt}
\end{align}
where $\Itt$ is the $d \times d$ identity tensor,
$\Dtt(\vt) := \frac{1}{2}\,(\nabxtt \vt + (\nabxtt \vt)^{\rm T})$
is the rate of strain tensor,
with  $(\vnabtt)(\xt,t) \in {\mathbb
R}^{d \times d}$ and $\big(\vnabtt\big)_{ij} = \textstyle
\frac{\partial v_i}{\partial x_j}$.
The shear viscosity,
$\mu^S(\cdot) \in \mathbb{R}_{>0}$,
and the bulk viscosity, $\mu^B(\cdot) \in \mathbb{R}_{\geq0}$, of the solvent
are both scaled and, generally, density-dependent.
%is the reciprocal of the Reynolds number,
%i.e., the ratio of the kinematic viscosity coefficient of the solvent and $L_0 U_0$, where
%$L_0$ is a characteristic length-scale of the flow.
In addition, $p$ is
the nondimensional pressure satisfying the isentropic equation of state
\begin{align}
p(\rho) = c_p\,\rho^\gamma,
\label{pgamma}
\end{align}
where $c_p \in {\mathbb R}_{>0}$ and the constant $\gamma$ is such that $\gamma > \frac{3}{2}$.
%\gamma_{\rm max}]$, where $\gamma_{\rm max}<\infty$
%if $d=2$ and $\gamma_{\rm max} =\frac{16}{3}$ if $d=3$.

\begin{remark}\label{tait}
For the sake of simplicity of the exposition we focus here on the classical isentropic equation of
state \eqref{pgamma}. Our analysis applies, without alterations, to some other familiar equations
of state, such as the (Kirkwood-modified)
Tait equation of state
\[ p(\rho) = A_0 \left(\frac{\rho}{\rho_\ast}\right)^\gamma - A_1,\]
where $\gamma>\frac{3}{2}$, $A_0$ and $A_1$ are constants, $A_0 - A_1 = p_*$ is the equilibrium reference
pressure, and $\rho_*$ is the
equilibrium reference density. For distilled water, $\gamma \in [5.16,7.11]$
(the actual value being dependent on the
ambient temperature); for carbon tetrachloride (at $30^\circ{\rm C}$) $\gamma = 12.54$,
for glycerine (at $20^\circ{\rm C}$) $\gamma=9.80$; cf. Table 1.1 on p.4 of \cite{Naug}
for values of
$\gamma$, $A_0$ and $A_1$ for other substances.
\end{remark}

On the right-hand side of \eqref{ns1a}, $\ft$ is the nondimensional density of body forces
and $\tautt$ denotes the elastic extra-stress tensor.
In a {\em bead-spring chain model}, consisting of $K+1$ beads coupled with $K$ elastic
springs to represent a polymer chain, %the extra-stress tensor
$\tautt$ is defined by a version of the %modified
\textit{Kramers expression}
depending on %the density $\rho$ and
the probability
density function $\psi$ of the (random) conformation vector
$\qt := (\qt_1^{\rm T},\dots, \qt_K^{\rm T})^{\rm T}
\in \mathbb{R}^{Kd}$ of the chain (see equation (\ref{tau1}) below), with $\qt_i$
representing the $d$-component conformation/orientation vector of the $i$th spring.
The Kolmogorov equation satisfied by $\psi$ is a second-order parabolic equation,
the Fokker--Planck equation, whose transport coefficients depend
on the velocity field $\ut$, and the hydrodynamic drag coefficient appearing in the Fokker--Planck
equation is, generally, a nonlinear function of the density $\rho$.
The domain $D$ of admissible conformation
vectors $D \subset \mathbb{R}^{Kd}$ is a $K$-fold
Cartesian product $D_1 \times \cdots \times D_K$ of balanced convex
open sets $D_i \subset \mathbb{R}^d$, $i=1,\dots, K$; the term
{\em balanced} means that $\qt_i \in D_i$ if, and only if, $-\qt_i \in D_i$.
Hence, in particular, $\undertilde{0} \in D_i$, $i=1,\dots,K$.
Typically $D_i$ is the whole of $\mathbb{R}^d$ or a bounded open $d$-dimensional ball
centred at the origin $\zerot \in \mathbb{R}^d$ for each $i=1,\dots,K$.
When $K=1$, the model is referred to as the {\em dumbbell model.}

Let $\mathcal{O}_i\subset [0,\infty)$ denote the image of $D_i$
under the mapping $\qt_i \in D_i \mapsto
\frac{1}{2}|\qt_i|^2$, and consider the {\em spring potential}~$U_i
\!\in\! C^1(\mathcal{O}_i; \mathbb{R}_{\geq 0})$,
%\cap W^{2,\infty}_{\rm loc}(\mathcal{O}_i;\mathbb {R}_{\geq 0})$,
$i=1,\dots, K$.
Clearly, $0 \in \mathcal{O}_i$. We shall suppose that $U_i(0)=0$
and that $U_i$ is unbounded on
$\mathcal{O}_i$ for each $i=1,\dots, K$.
The elastic spring-force $\Ft_i\,:\, D_i \subseteq \mathbb{R}^d \rightarrow \mathbb{R}^d$
of the $i$th spring in the chain is defined by
\begin{equation}\label{eqF}
\Ft_i(\qt_i) := U_i'(\textstyle{\frac{1}{2}}|\qt_i|^2)\,\qt_i, \qquad i=1,\dots,K.
\end{equation}

The partial Maxwellian $M_i$, associated with the spring potential $U_i$, is defined by
\[
M_i(\qt_i) := \frac{1}{\mathcal{Z}_i} {\rm e}^{-U_i(\frac{1}{2}\,|\qt_i|^2)}, \qquad
\mathcal{Z}_i:= {\displaystyle \int_{D_i} {\rm e}^{-U_i(\frac{1}{2}|\qt_i|^2)} \dq_i},\qquad
i=1,\dots,K.
\]
The (total) Maxwellian in the model is then
\begin{align}
M(\qt) := \prod_{i=1}^K M_i(\qt_i) \qquad \forall
\qt:=(\qt_1^{\rm T},\ldots,\qt_K^{\rm T})^{\rm T} \in D
:= \bigtimes_{i=1}^K D_i.
\label{MN}
\end{align}
Observe that, for $i=1, \dots, K$,
\begin{subequations}
\begin{equation}
M(\qt)\,\nabqi [M(\qt)]^{-1} = - [M(\qt)]^{-1}\,\nabqi M(\qt) =
\nabqi \red{\big(}U_i(\textstyle{\frac{1}{2}}|\qt_i|^2)\red{\big)}
=U_i'(\textstyle{\frac{1}{2}}|\qt_i|^2)\,\qt_i, \label{eqM}
\end{equation}
and, by definition,
\begin{align}
\int_D M(q) \dq = 1.
\label{Mint1}
\end{align}
\end{subequations}

\begin{example}
\label{ex1.1} \em
In the Hookean dumbbell model $K=1$, and the spring force
is defined by ${\Ft}({\qt}) = {\qt}$, with ${\qt} \in {D}=\mathbb{R}^d$,
corresponding to ${U}(s)= s$, $s \in \mathcal{O} = [0,\infty)$.
More generally, in a Hookean bead-spring chain model, $K \geq 1$, $\Ft_i(\qt_i) = \qt_i$,
corresponding to $U_i(s) = s$, $i=1,\dots,K$, and $D$ is
the Cartesian product of $K$ copies of $\mathbb{R}^d$. The associated Maxwellian is
\[ M(\qt) = M_1(\qt_1)\cdots M_K(\qt_K) = \frac{1}{\mathcal{Z}} {\rm e}^{-\frac{1}{2}|\qt|^2},\]
%\qquad \mbox{
with $|\qt|^2:= |\qt_1|^2 + \cdots +|\qt_K|^2$ % }\]
and $\mathcal{Z} := \mathcal{Z}_1 \cdots \mathcal{Z}_K = (2\pi)^{{Kd}/{2}}$.
Hookean dumbbell and Hookean bead-spring chain models are physically unrealistic
as they admit arbitrarily large extensions.% of dumbbells and bead-spring chains.
%nevertheless, they are frequently used in applications because of their relative simplicity,
%which is our motivation for considering them here.
$\quad\diamond$
\end{example}

A more realistic class of models assumes that the springs in the bead-spring chain have
finite extension: the domain $D$ is then taken to be a Cartesian product of $K$
{\em bounded} open balls $D_i \subset \mathbb{R}^d$, centred at the
origin $\zerot \in \mathbb{R}^d$, $i=1,\dots, K,$ with $K \geq 1$. The spring potentials
$U_i\, :\, s \in [0,\frac{b_i}{2}) \mapsto U_i(s) \in [0,\infty)$,
with $b_i>0$, $i=1,\dots, K$, are in that case
nonlinear and unbounded functions, and the associated bead-spring chain model is referred
to as a FENE (finitely extensible nonlinear elastic) model; in the case of $K=1$, the
corresponding model is called a FENE dumbbell model.

Here we shall be concerned with finitely extensible nonlinear %and Hookean
bead-spring chain
models, with
%In the first case, $D:=\mathbb{R}^d \times \cdots
%\times \mathbb{R}^d \subset \mathbb{R}^{K d}$, while
%in the second case
$D:= B(\zerot,b_1^{\frac{1}{2}}) \times \cdots \times
B(\zerot, b_K^{\frac{1}{2}})$, where $b_i > 0$, $i=1,\dots,K$, $K \geq 1$, and $B(\zerot, b_i^{\frac{1}{2}})$
is a bounded open ball in $\mathbb{R}^d$ of radius $b_i^{\frac{1}{2}}$,
centred at $\zerot \in \mathbb{R}^d$. We shall adopt the following
structural hypotheses on the spring potentials
$U_i$ and the associated partial Maxwellians $M_i$, $i=1,\dots, K$.

%\medskip

%\textit{Finitely extensible nonlinear bead-spring chain (FENE-type) models.}
%We shall suppose that for $i=1,\dots, K$ there exist constants $c_{ij}>0$,
%$j=1, 2, 3, 4$, and $\theta_i > 1$
%such that
%the spring potential $U_i$ satisfies%
We shall assume that $D_i = B(0,b^{\frac{1}{2}}_i)$ with $b_i>0$ for $i=1,\dots, K$, and that
%We shall suppose that
for $i=1,\dots, K$ there exist constants $c_{ij}>0$,
$j=1, 2, 3, 4$, and $\theta_i > 1$
such that the spring potential $U_i \in C^1[0,\frac{b_i}{2})$ and the associated partial Maxwellian $M_i$ satisfy
\eqlabstart
\begin{eqnarray}
&& ~\hspace{6mm}c_{i1}\,[\mbox{dist}(\qt_i, \,\partial D_i)]^{\theta_i}
\leq  M_i(\qt_i) \,
\leq
c_{i2}\,[\mbox{dist}(\qt_i, \,\partial D_i)]^{\theta_i} \qquad \forall \qt_i \in
D_i, \label{growth1}\\
&& ~\hspace{15mm}c_{i3} \leq \mbox{dist}(\qt_i,\,\partial D_i) \,U_i'
(\textstyle{\frac{1}{2}}|\qt_i|^2)
\leq c_{i4}\qquad \forall \qt_i \in D_i. \label{growth2}
\end{eqnarray}
\eqlabend
%
%Since
%$[U_i(\textstyle{\frac{1}{2}}|\qt_i|^2)]^2 = (-\log M_i(\qt_i) + {\rm Const}.)^2$,
It follows from \red{(\ref{growth1},b)} that (if $\theta_i >1$,
as has been assumed here,)
\begin{equation}\label{additional-1}
\int_{D_i} \left[1 + [U_i(\textstyle{\frac{1}{2}}|\qt_i|^2)]^2
+ [U_i'(\textstyle{\frac{1}{2}}|\qt_i|^2)]^2\right] M_i(\qt_i) \, \dd
\qt_i < \infty, \qquad i=1, \dots, K.
\end{equation}

\medskip

\begin{example}
\label{ex1.2} \em
In the FENE (finitely extensible nonlinear elastic)
dumbbell model, introduced by Warner \cite{Warner:1972}, $K=1$ and the spring force is given by
$
{\Ft}({\qt}) = (1 - |{\qt}|^2/b)^{-1}\,{\qt}$,
$\qt \in D = B(\zerot,b^{\frac{1}{2}})$,
corresponding to ${U}(s) = - \frac{b}{2}\log \left(1-\frac{2s}{b}\right)$,
$s \in \mathcal{O} = [0,\frac{b}{2})$, $b>2$.
%
%Here $B(\zerot,b^{\frac{1}{2}})$ is a bounded open ball
%in $\mathbb{R}^d$ centred at the origin
%$\zerot \in \mathbb{R}^d$ and of fixed radius $b^{\frac{1}{2}}$, with $b>2$.
More generally, in a FENE bead spring chain, one considers $K+1$ beads linearly
coupled with $K$ springs, each with a FENE spring potential.
Direct calculations show that the partial Maxwellians $M_i$ and the
elastic potentials $U_i$, $i=1,\dots, K$, of the FENE bead spring chain satisfy
the conditions (\ref{growth1},b) with $\theta_i:= \frac{b_i}{2}$,
provided that $b_i>2$, $i=1,\dots, K$. Thus, (\ref{additional-1}) also holds
and $b_i>2$, $i=1,\dots, K$. Note, however, that \eqref{additional-1}
fails for $b_i \in (0,2]$, i.e., for $\theta_i \in (0,1]$,
which is why we have assumed in the statement of (\ref{growth1},b)
that $\theta_i>1$ for $i=1,\dots,K$.

It is interesting to note that in the (equivalent)
stochastic version of the FENE dumbbell model ($K=1$)
a solution to the system of
stochastic differential equations
associated with the Fokker--Planck equation exists and has
trajectorial uniqueness if, and only if, $\frac{b}{2}\geq 1$;
(cf. Jourdain, Leli\`evre \& Le Bris \cite{JLL2} for details).
Thus, in the general class of FENE-type bead-spring chain models considered here,
the assumption $\theta_i > 1$, $i=1,\dots, K$, is the weakest reasonable
requirement on the decay-rate of $M_i$  in (\ref{growth1}) as
$\mbox{dist}(\qt_i,\partial D_i) \rightarrow 0$.
\end{example}

\begin{comment}
Another example of a finitely extensible nonlinear elastic potential
is Cohen's Pad\'e approximant to
the inverse Langevin function (CPAIL) (cf. \cite{Cohen}), with
%
\begin{equation*} %\label{CPAIL-model}
U_i(s) := \frac{s}{3} - \frac{b_i}{3}\,{\rm log}\left(1 - \frac{2 s}{b_i}\right)
\qquad\text{and}\qquad \undertilde{F}_i(\qt_i) =
\frac{ 1 - \frac{|\qt_i|^2}{3 b_i} }{ 1 - \frac{|\qt_i|^2}{b_i} }\qt_i,
\end{equation*}
%
where $b_i$ is again a positive parameter. Again,  direct calculations show that
the Maxwellian $M$ and the elastic potential $U$ of the CPAIL dumbell model satisfy the
conditions (\ref{growth1},b) with $\theta_i:= \frac{b_i}{3}$,
provided that $b_i>3$, $i=1,\dots, K$. Thus, (\ref{additional-1}) also holds
and $b_i>3$, $i=1,\dots, K$.

We note in passing that both of these force laws
are approximations to the \emph{inverse Langevin force law}
%
\[ \undertilde{F}_i(\qt_i) := \frac{\sqrt{b_i}}{3} L^{-1}
\left( \frac{|\qt_i|}{\sqrt{b_i}} \right) \frac{\qt_i}{|\qt_i|},\]
%
where the $\emph{Langevin function}$ $L$ is defined by $L(t) := \coth(t)-1/t$
on $[0,\infty)$. As $L$ is strictly monotonic increasing on $[0,\infty)$ and tends to
$1$ as its argument tends to $\infty$, it follows that the function
$|\qt_i| \in [0,\sqrt{b_i}\,) \mapsto L^{-1}(|\qt_i|/\sqrt{b_i}\,)
 \in [0,\infty)$ is strictly monotonic increasing, with a vertical asymptote at $|\qt_i| = \sqrt{b_i}$.
$\quad\diamond$
\end{example}

\medskip

\end{comment}

The governing equations of the general nonhomogeneous bead-spring chain
models with centre-of-mass diffusion considered in this paper are
(\ref{ns0a}--e), where the extra-stress tensor $\tautt$ is defined by
\begin{equation}\label{tautot0}
\tautt(\psi)(\xt,t)
:= \tautt_1(\psi)(\xt,t) - \left(\int_{D\times D}\gamma(\qt,\qt')\,
\psi(\xt,\qt,t) \,\psi(\xt,\qt',t)\, \dq \,\dd \qt'\right) \;\Itt,
\end{equation}
$\gamma\,:\, D \times D \rightarrow \mathbb{R}_{\geq 0}$ is a smooth, time-independent,
$\xt$-independent and $\psi$-independent interaction kernel, which we shall henceforth consider to be
\[ \gamma(\qt,\qt') \equiv \mathfrak{z},\]
where $\mathfrak{z} \in {\mathbb R}_{>0}$; thus,
\begin{equation}\label{tautot}
\tautt(\psi)%(\xt,t)
:= \tautt_1(\psi)%(\xt,t)
-
\mathfrak{z}\left(\int_{D}\psi%(\xt,\qt,t)
\dq\right)^2 \Itt.
\end{equation}
Here, $\tautt_1(\psi)$ is the \textit{Kramers expression}; that is,
\begin{equation}
\label{tau1} \tautt_1(\psi):=
k\, \left[\left(\sum_{i=1}^K
\Ctt_i(\psi) \right) - (K+1)\int_D \psi \dq \;\Itt
\right],
%+ Z(\rho)\left(\zeta(\rho)- \int_D \psi \dq \right)\Itt\right],
%\int_{D}\psi(\xt,\qt,t)\, \qt_i\,
%\qt_i^{\rm T}\, U_i'\left(\textstyle \frac{1}{2}|\qt_i|^2\right)
%{\dd} \qt - K\,
%\varrho(\xt,t)\,\Itt\right),
\end{equation}
where $k \in {\mathbb R}_{>0}$, \red{with the first term in the square brackets being due to the $K$ springs and the second to the $K+1$ beads
in the bead-spring chain representing the polymer molecule. Further,}
\begin{align}
\Ctt_i(\psi)(\xt,t) &:= \int_{D} \psi(\xt,\qt,t)\, U_i'({\textstyle
\frac{1}{2}}|\qt_i|^2)\,\qt_i\,\qt_i^{\rm T} \dq, \qquad
i=1,\ldots,K. %\qquad \mbox{and}
\label{eqCtt}%\\
%\varrho(\psiae)(\xt,t) &:=& \int_{D} \psiae(\xt,\qt,t)\dq. \label{eqrhott}
\end{align}

The probability density function $\psi$ is a solution of the
Fokker--Planck (forward Kolmogorov) equation
\begin{align}
\label{fp0}
&\frac{\partial \psi}{\partial t} + \nabx\,\cdot\,(\ut\,\psi) +
\sum_{i=1}^K \nabqi
\cdot \left(\sigtt(\ut) \, \qt_i\,\psi \right)
\nonumber
\\
\bet
&\hspace{0.1in} =
\epsilon\,\Delta_x\left(\frac{\psi}{\zeta(\rho)}\right) +
\frac{1}{4 \,\lambda}\,
\sum_{i=1}^K \sum_{j=1}^K
A_{ij}\,\nabqi \cdot \left(
M
\,\nabqj \left(\frac{\psi}{\zeta(\rho)\, M}\right)\right) \quad \mbox{in } \Omega \times D \times
(0,T],
\end{align}
with $\sigtt(\vt) \equiv \nabxtt \vt$ and a, generally,
density-dependent scaled drag coefficient $\zeta(\cdot)\in \Rplus$.
%Here the scaled drag coefficient $\zeta(\cdot)\in \Rplus$ is density-dependent,
%since by Stokes' law $\zeta \propto \mu$.
A concise derivation of the Fokker--Planck equation \eqref{fp0} can be found
in Section 1 of Barrett and S\"{u}li \cite{BS-2011-density-arxiv}.
Let
$\partial\overline{D}_i := D_1 \times \cdots \times D_{i-1}
\times \partial D_i \times D_{i+1} \times \cdots \times D_K$.
We impose the following boundary and initial conditions on solutions of \eqref{fp0}:
\begin{subequations}
\begin{align}
&\left[\frac{1}{4\,\lambda} \sum_{j=1}^K A_{ij}\,
M\,\nabqj\!\left(\frac{\psi}{\zeta(\rho)\,M}\right)
- \sigtt(\ut) \,\qt_i\,
%M\,\zeta(\rho)\,\beta^L\!\left(\frac{%
\psi
%}{\zeta(\rho)\,M}\right)
\right]\! \cdot \frac{\qt_i}{|\qt_i|}
=0\qquad ~~\nonumber\\
&~ \hspace{1.49in}\mbox{on }
\Omega \times \partial \overline{D}_i\times (0,T],
\mbox{~~for $i=1,\dots, K$,} \label{eqpsi2aa-x}\\
&\epsilon\,\nabx \left(\frac{\psi}{\zeta(\rho)}\right)\,\cdot\,\nt =0
\qquad\mbox{on }
\partial \Omega \times D\times (0,T],\label{eqpsi2ab-x}\\
&\psi(\cdot,\cdot,0)=\psi_{0} (\cdot,\cdot) \geq 0
\qquad \!\!\mbox{on $\Omega\times D$},\label{eqpsi3ac-x}
\end{align}
\end{subequations}
where $\qt_i$ is normal to $\partial D_i$, as $D_i$
is a bounded ball centred at the origin,
and $\nt$ is normal to $\partial \Omega$.

The first term in (\ref{tau1}) is due to the $K$ springs, whilst the second is due to
the $K+1$ beads; see Chapter 15 in \cite{BCAH}.
The nondimensional constant $k>0$ featuring in \eqref{tau1} is a constant multiple
of the product of
%the polymer number density (the number of polymer molecules per unit volume),
the Boltzmann constant $k_B$ and the absolute temperature $\mathtt{T}$.
In \eqref{fp0}, $\varepsilon>0$ is the centre-of-mass diffusion coefficient defined
as $\varepsilon := (\ell_0/L_0)^2/(4(K+1)\lambda)$ with
$L_0$ a characteristic length-scale  of the solvent flow,
$\ell_0:=\sqrt{k_B \mathtt{T}/\mathtt{H}}$ signifying the characteristic microscopic length-scale
and $\lambda :=\frac{\zeta_0 U_0}{4\mathtt{H}L_0 }$,
where $\zeta_0>0$ is a characteristic drag coefficient and $\mathtt{H}>0$ is a spring-constant.
The nondimensional parameter $\lambda \in \Rplus$, called the Deborah number
(and usually denoted by $\mathsf{De}$), characterizes
the elastic relaxation property of the fluid, and
$\Att=(A_{ij})_{i,j=1}^K$ is the symmetric positive definite
\textit{Rouse matrix}, or connectivity matrix;
for example, $\Att = {\tt tridiag}\left[-1, 2, -1\right]$ in the case
of a (topologically) linear chain, depicted in Fig.
\ref{modelPoly}; see, Nitta \cite{Nitta}. Concerning these scalings and notational
conventions, we remark that the factor $\frac{1}{4\lambda}$ in equation \eqref{fp0} above
appears as a factor $\frac{1}{2\lambda}$ in the Fokker--Planck equation in our earlier papers
\cite{BS2011-fene,BS2010-hookean,BS2011-feafene,BS2010}.
%We note that the matrix
%$\Att$ referred to in our papers \cite{BS2011-fene,BS2010-hookean,BS2010,BS2011-feafene}
%as the Rouse matrix
%is in fact $1/2$ of the Rouse matrix in the sense used here,
%and the factor $1/(4\lambda)$ in equation \eqref{fp0} here therefore appears as
%$1/(2\lambda)$ in the Fokker--Planck equation in those papers.
%, with an extra factor of $1/2$ hidden in the matrix $A$ therein.
%; here
%we shall use the standard definition of the Rouse matrix in the polymer chemistry literature
% and hence the factor $1/(4\lambda)$ in \eqref{fp0}.

Two remarks are in order at this point concerning the extra-stress tensor \eqref{tautot}.

\begin{remark}\label{rem-1.2}
By comparing the last term in the expression (A) in Table 15.2-1 on p.156 in
\cite{BCAH} with the second term in \eqref{tau1}, we observe that Bird et al. write $K$
(i.e., $(N-1)$ in their notation)
instead of our prefactor $(K+1)$ (i.e., $N$ in terms of their notation),
in the context of incompressible spatially homogeneous flows (i.e., the velocity field is assumed to be independent of $\xt$ there). As we shall explain in equations \eqref{energy-id1}--\eqref{energy-id4} below, the analysis in the present paper applies to a general class of extra stress tensors \eqref{tautot}, with Kramers type expressions of the form
\begin{equation}
\label{tau1a} \tautt_1(\psi):=
k\, \left[\left(\sum_{i=1}^K
\Ctt_i(\psi) \right) - \mathfrak{k}\int_D \psi \dq \;\Itt
\right],
\end{equation}
with $\mathfrak{k} \in \mathbb{R}$ and $\mathfrak{z}>0$. Since the actual value of $\mathfrak{k}$ is of no particular relevance in our analysis, we took $\mathfrak{k}=K+1$ in the second term in \eqref{tau1a}, yielding \eqref{tau1}, as this choice simplifies the expressions that arise in the course of the proof. As will be shown below, if $\mathfrak{k} \geq K+1$ and $\mathfrak{z}\geq 0$ a formal energy identity holds, and if $\mathfrak{k} < K+1$ and $\mathfrak{z}>0$, one can still prove a formal energy inequality. In contrast with this, in the case of \textit{incompressible} flows, $\mathfrak{k}$ and $\mathfrak{z}$ are of no relevance in the analysis and can be any two real numbers; indeed, in \cite{BS2011-fene,BS2010-hookean,BS-2012-density-JDE}, $\mathfrak{k}=K$ and $\mathfrak{z}=0$ were the values used in \eqref{tau1a} and
\eqref{tautot}, respectively, resulting in $\tautt(\psi)$ that is the classical Kramers expression for the extra stress tensor.
%While in the case of an incompressible fluid, and with the velocity field $\ut$
%subject to a homogeneous Dirichlet boundary condition on $\partial\Omega$,
%the precise value of the prefactor is irrelevant from the point of view of
%ensuring energy balance for the coupled Navier--Stokes--Fokker--Planck system, for
%compressible flows, when $\ut$ is not divergence-free on $\Omega$, the situation is
%quite different: the presence of the prefactor $(K+1)$, as written in \eqref{tau1},
%is essential for energy balance. It should be noted however that the derivation of the
%Kramers expression in \cite{BCAH} is restricted to spatially homogeneous flows
%(i.e., the velocity field is assumed to be independent of $\xt$).
\end{remark}

\begin{remark}\label{rem-1.3}
Our second remark concerns the quadratic modification to $\tautt_1(\psi)$
appearing as the second term in {\rm(\ref{tautot})}. By defining the \textit{polymer number density}
\begin{equation}\label{pdn}
\vrho(\xt,t):= \int_D \psi(\xt,\qt,t) \dd \qt,\qquad (\xt,t) \in \Omega \times [0,T],
\end{equation}
formally integrating \eqref{fp0} over $D$ and using the boundary condition \eqref{eqpsi2aa-x},
we deduce that
\begin{subequations}
\begin{equation}\label{pdnumber1}
\frac{\partial \vrho}{\partial t} + \nabx \cdot (\ut\, \vrho) =
\varepsilon \,\Delta_x \left(\frac{\vrho}{\zeta(\rho)}\right)
\quad \mbox{on $\Omega \times (0,T]$},
\end{equation}
together with the boundary %condition
and initial conditions (which result from integrating (\ref{eqpsi2ab-x},c) over $D$)
\begin{equation}\label{pdnumber2}
\epsilon\,\nabx \left(\frac{\vrho}{\zeta(\rho)}\right)\,\cdot\,\nt =0
\quad\mbox{on }
\partial \Omega \times (0,T]
\qquad\mbox{and}\qquad
%\end{equation}
%and the initial condition (which comes from integrating \eqref{eqpsi3ac-x} over $D$)
%\begin{equation}\label{pdnumber3}
\vrho(\xt,0)=\int_D \psi_0(\xt,\qt) \dd \qt\quad \mbox{for $\xt \in \Omega$}.
\end{equation}
\end{subequations}
In order to avoid potential confusion concerning our notational conventions, we draw the reader's attention to the fact that, here and throughout the rest of the paper, the symbol $\rho$ signifies the density of the solvent, while the symbol $\varrho$ denotes the polymer number density, as defined in \eqref{pdn}.

If $\nabx \cdot \ut \equiv 0$
and $\vrho(\cdot,0)$ is constant, and either $\varepsilon=0$
or $\zeta(\rho)$ is independent of $\rho$
(and therefore identically
equal to a constant), then $\vrho(\xt,t)$ is constant ($\equiv \vrho(\cdot,0)$),
for all $(\xt,t) \in \Omega \times (0,T]$,
and hence
\[\int_D \psi(\xt,\qt,t) \dd \qt = \int_D \psi_0 (\xt,\qt) \dd \qt \in {\mathbb R}_{>0} \qquad
\mbox{for all $(\xt,t) \in \Omega \times (0,T]$};\]
in other words, the polymer number density is constant.
This conservation property then guarantees complete control of
$\vrho(\xt,t) = \int_D \psi(\xt,\qt,t)
\dd \qt$
(when $\mathfrak{z}=0$, and a fortiori, for $\mathfrak{z}>0$) in terms of the initial
probability density function $\psi_0$ in the course of the
weak compactness argument upon which the proof of existence of weak solutions rests (cf. \cite{BS2011-fene}
for the analysis in the case of $\varepsilon>0$, constant $\zeta$ and
 $\mathfrak{z}=0$). In particular,
the time derivative $\frac{\partial \psi}{\partial t}$
can be bounded in a sufficiently strong norm to enable the application of an
Aubin--Lions--Simon type compactness
theorem that ensures strong convergence of the sequence of approximations to the
probability density function $\psi$.

In the case of nonhomogeneous flows with density-dependent drag, the situation is
more complicated because of the consequential weaker control (in the sense of norms) on
the function $\vrho$: while, thanks to {\rm(\ref{pdnumber1},b)} %\eqref{pdnumber2}
and the assumed homogeneous Dirichlet boundary condition on $\ut$, it is still true that
%\footnote{Also true in the compressible case.}
%
\[ \int_{\Omega} \vrho(\xt,t)\dd \xt = \int_{\Omega \times D} \psi(\xt,\qt,t)\dd \xt \dd \qt =
\int_{\Omega \times D}\psi_0(\xt,\qt) \dd \xt \dd \qt = \int_{\Omega} \vrho(\xt,0) \dd \xt\]
for all $t \in (0,T]$, the availability of such a weak conservation property only
further complicates the analysis.
In
\cite{BS-2012-density-JDE} (in the case of $\nabx \cdot \ut \equiv 0$ and $\mathfrak{z}=0$)
a more involved argument, based on a combination of
compensated compactness and Vitali's theorem, was therefore
used in order to deduce strong convergence of the sequence of approximations to $\psi$.

\red{In the present paper, in order to focus on the essential new difficulty ---
 the lack of the divergence-free property of $\ut$ ---
we shall suppose that the drag coefficient in the Fokker--Planck equation is identically
equal to a constant, which we shall henceforth, without loss of generality, assume to be equal
to $1$, i.e., $\zeta(\rho)\equiv 1$. As in the majority of
contributions to the mathematical analysis of the compressible Navier--Stokes equations to date,
we shall also assume that the shear and bulk viscosity coefficients
are independent of the density $\rho$, i.e., that
$\mu^S \in \mathbb{R}_{>0}$ and $\mu^B \in \mathbb{R}_{\geq 0}$. For the analysis of the compressible Navier--Stokes equations in the
case of a particular family of density-dependent shear and bulk viscosities, we refer the reader to \cite{Bre1,Bre2}.} Setting
$\mathfrak{z}=0$ in this context results in the loss of a bound on the $L^1(0,T;\mathfrak{X}')$ norm of the time derivative
of the probability density function $\psi$, for any reasonable choice of the function space $\mathfrak{X}$.
%\footnote{$X$ is used for a particular choice, see (\ref{H1M}).}
Failure to control $\frac{\partial \psi}{\partial t}$ \red{or a time-difference of $\psi$}
in even such a weak sense brings into question the
meaningfulness of the model in the case of compressible flows for solutions of as low a degree of
regularity as is guaranteed by the formal energy bound in the case of $\mathfrak{z}=0$,
see \red{Remarks \ref{z0rem} and \ref{rem-z}} below.
%\footnote{Add reference to remark on p45.}
Motivated by the papers of Constantin \cite{CON}, Constantin et al. \cite{CFTZ} and
Bae \& Trivisa \cite{BaeKon}, we have therefore included the quadratic term in \eqref{tautot},
with $\mathfrak{z}>0$. As we shall show later on, inclusion of the
quadratic term into {\rm(\ref{tautot})} does not destroy energy balance thanks to the fact that
the polymer number
density function $\vrho$ satisfies the initial-boundary-value problem
{\rm(\ref{pdnumber1},b)}, %--(\ref{pdnumber3}),
and has the
beneficial effect of guaranteeing $L^\infty(0,T;L^2(\Omega))\cap L^2(0,T;H^1(\Omega))$
norm control for $\vrho$, rendering the time derivative of the probability density function finite
in the norm of $L^2(0,T;\mathfrak{X}')$, with a suitable choice of $\mathfrak{X}$
as a Maxwellian-weighted Sobolev
space of sufficiently high order.
This then enables the application of Dubinsk\u{\i}i's extension of the
Aubin--Lions--Simon compactness theorem (cf. \cite{Dub} and Barrett \& S\"{u}li \cite{BS-DUB}).

From the physical point of view \eqref{Stt}, \eqref{tautot}--%, \eqref{tau1} and
\eqref{eqCtt}
can be seen as a decomposition of the Cauchy stress $\pi$ as the sum of a contribution from the
solvent, $\pi_s$, and the polymeric extra stress, $\pi_p$,
resulting from the presence of the polymer molecules,
which are idealized here as bead-spring-chains (cf. eq. {\rm (13.3-1)} in \cite{BCAH}):
\begin{eqnarray*}
\underdtilde{\pi} &=& \underdtilde{\pi_s} + \underdtilde{\pi_p}
%\\
%    &=&
= (\underdtilde{S}(\ut,\rho) - p_s \,\underdtilde{I})
+ \left(k \sum_{i=1}^K \underdtilde{C}_i(\psi) - p_p\, \underdtilde{I}\right),
\end{eqnarray*}
where
%
%\[
$p_s = p =c_p \,\rho^\gamma$ %(x,t)=p(x,t)=c_p \big(\rho(x,t)\big)^\gamma\]
(with $c_p>0$, $\gamma>\frac{3}{2}$, and $\rho$ denoting the density) is the fluid pressure, and
$p_p %(x,t)
= k\,(K+1)\, \vrho %(\xt,t)
+ \mathfrak{z}\,\vrho^2$  %\big(\vrho(\xt,t)\big)^2\]
(with $k$ denoting a constant multiple of the product of the Boltzmann constant and the
absolute temperature, $\mathfrak{z}>0$ and $\vrho$
signifying the polymer number density) is the polymeric contribution to the total
pressure, defined as $p_s + p_p$.
\red{
The expression $k\,(K+1)\, \vrho + \mathfrak{z}\,\vrho^2$ can be viewed as a quadratic truncation of
a virial expansion of the polymeric pressure $p_p$ in terms of the polymer number density $\varrho$.}
%$\quad \diamond$
%\end{itemize}
\end{remark}

\begin{definition}\label{model-p}
The collection of equations and structural hypotheses
{\rm(\ref{ns0a}--e)}--%\eqref{fp0}, \eqref{
{\rm(\ref{eqpsi2aa-x}--c)} %--\eqref{eqpsi2ab-x}
together with the assumption that the Rouse matrix $A$ is symmetric and positive definite  (as is always the case, by definition,)
will be referred to throughout the paper as model
$({\rm P})$, or as the {\em compressible FENE-type bead-spring chain model with
centre-of-mass diffusion}.
%or the {\em general nonhomogenous Hookean-type bead-spring chain
%model with centre-of-mass diffusion}, as the case may be.
It will be assumed throughout the paper that the shear viscosity, $\mu^S \in \mathbb{R}_{>0}$,
the bulk viscosity,
$\mu^B \in \mathbb{R}_{\geq 0}$, and the drag coefficient, $\zeta \in \mathbb{R}_{>0}$,
are independent of the density $\rho$. For the ease of exposition we shall set $\zeta \equiv 1$.
\end{definition}
%

%
%\begin{center}
\begin{figure}[t]\label{modelPoly}
%\centering
    \includegraphics[width=0.55\textwidth]{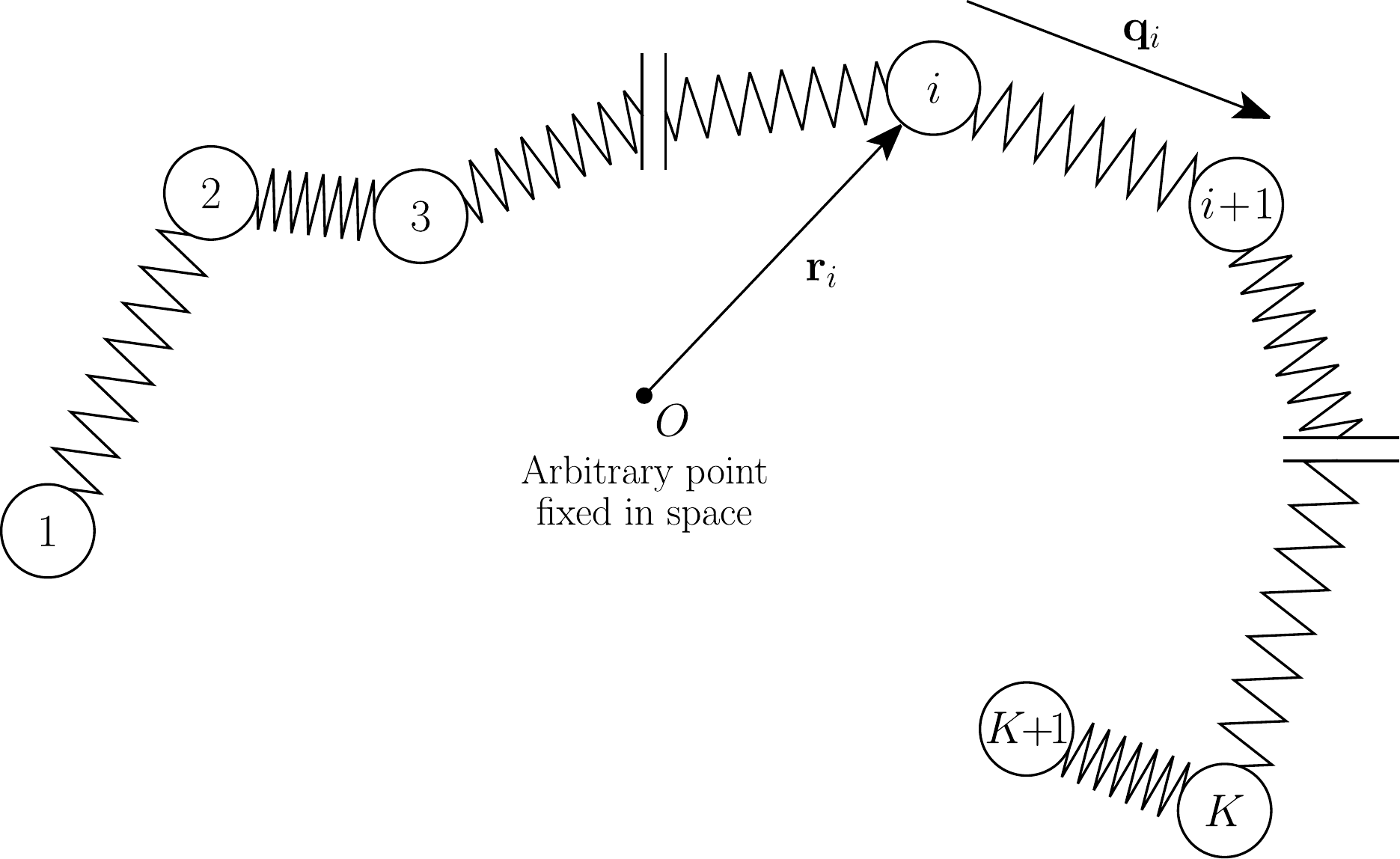}
    \caption{\vspace{0in}A (topologically) linear bead-spring chain with $K$ springs and $K+1$ beads.}
\end{figure}
%\end{center}
%

For a survey of recent developments concerning the mathematical analysis of
Navier--Stokes--Fokker--Planck systems in the case of \textit{incompressible} polymeric flows
the reader is referred to the introductory sections in our papers
\cite{BSS,BS,BS2,BS2011-fene,BS-2011-density-arxiv,BS2010-hookean,BS-2012-density-JDE}.
The convergence of finite element approximations of Navier--Stokes--Fokker--Planck systems
is discussed in
\cite{Barrett-boyaval-09,BS4,BS2011-feafene}, where we also
review the literature on the mathematical
theory of numerical algorithms for these equations.
For further details concerning the mathematical analysis of kinetic models of polymers in the case of
incompressible fluids we refer the reader to
Renardy \cite{R}, Lions \& Masmoudi \cite{LM},
E, Li \& Zhang \cite{E} and Li, Zhang \& Zhang \cite{LZZ},
Jourdain, Leli\`evre \& Le Bris \cite{JLL2},
Constantin \cite{CON},
Du, Yu \& Liu \cite{DU},
Yu, Du \& Liu \cite{YU},
Zhang \& Zhang \cite{ZZ},
Lions \& Masmoudi \cite{LM2},
Masmoudi \cite{M}, \cite{M10},
Otto \& Tzavaras \cite{OT}, and the references therein.

A noteworthy feature of equation (\ref{fp0}) in the model $({\rm P})$
compared to classical Fokker--Planck equations for bead-spring-chain models
for dilute polymers appearing in the literature is the presence of the
$\xt$-dissipative centre-of-mass diffusion term $\varepsilon
\,\Delta_x \psi$ on the right-hand side of the Fokker--Planck equation (\ref{fp0}).
We refer to Barrett \& S\"uli \cite{BS} for the derivation of (\ref{fp0})
in the case of $K=1$ and constant $\rho$;
see also the article by Schieber \cite{SCHI}
concerning generalized dumbbell models with centre-of-mass diffusion,
and the recent paper of Degond \& Liu \cite{DegLiu} for a careful justification
of the presence of the centre-of-mass diffusion term through asymptotic analysis.
In the case of variable density, viscosity and drag, the derivation of the Fokker--Planck
equation with centre-of-mass diffusion appears in \cite{BS-2011-density-arxiv}
(cf. also \cite{BS2010-hookean,BS-2012-density-JDE}).

For a survey of macroscopic models of \textit{compressible} viscoelastic flow,
the reader is referred to the paper
by Bollada \& Phillips \cite{Boll-Phil-2012}. The existence and uniqueness of local
strong solutions and the existence of global solutions near equilibrium for macroscopic models of
three-dimensional compressible viscoelastic fluids was considered in \cite{Hu-Wang1,Qian-Zhang,
Qian,Hu-Wang2,Hu-Wang3,Hu-Wu}. The existence of measure-valued solutions to
non-{N}ewtonian compressible, isothermal, monopolar fluid flow models was studied
by Ne\v{c}asov\'{a} in \cite{Necasova1,Necasova2};
for bipolar isothermal non-{N}ewtonian compressible
fluids related analysis was pursued in \cite{Necasova3}.
In a series of papers (cf. \cite{Mamontov1,Mamontov2,Mamontov3})
Mamontov developed a priori estimates for two- and three-dimensional
compressible nonlinear viscoelastic flow problems and studied the existence of solutions.
Zhikov \& Pastukhova \cite{Zhik-Past} proved the existence of global weak solutions to a
class of
compressible viscoelastic flow models with $p$-Laplacian structure. There is also a substantial literature in chemical
engineering on the use of the compressible Oldroyd-B system in modelling bubble dynamics in compressible viscoelastic
liquids (cf., for example, \cite{brujan}). Closer to the subject of the present paper,
Bae \& Trivisa \cite{BaeKon} have established the existence of global weak solutions to
Doi's rod-model in three-dimensional bounded domains. The model concerns suspensions of
rod-like molecules in compressible fluids and involves the coupling of a
Fokker--Planck type equation with the compressible Navier--Stokes system. Also,
Jiang, Jiang \& Wang \cite{Jiang-Jiang-Wang} have studied the existence of
global weak solutions to the equations of compressible flow of nematic liquid crystals in two dimensions.

Our objective in the present paper is to
prove the existence of global-in-time weak solutions to the general class of models of compressible
viscoelastic flow, labelled above as Problem (P) (cf. Definition \ref{model-p}), that arise from
the kinetic theory of dilute solutions of nonhomogeneous polymeric liquids where polymer molecules
are idealized as bead-spring chains, with finitely extensible nonlinear elastic (FENE) type
spring potentials, involving the coupling of the
unsteady, isentropic, compressible Navier--Stokes equations with the Fokker--Planck equation.
Despite their importance, in the present paper we shall, for the sake of simplicity,
neglect all thermal effects and will focus instead on mechanical properties of the fluid in the isothermal setting.
With a nonnegative initial density $\rho_0 \in L^\infty(\Omega)$ for the continuity equation;
a square-integrable initial velocity datum $\undertilde{u}_0$ for the Navier--Stokes momentum
equation; and
a nonnegative initial probability density function $\psi_0$ for the Fokker--Planck equation,
which has finite
relative entropy with respect to the Maxwellian $M$ associated with the spring potential
in the model, we prove, {\em via} a limiting procedure on certain discretization and regularization parameters,
the existence of a global-in-time bounded-energy weak solution $t \mapsto
(\rho(t),\undertilde{u}(t), \psi(t))$
to the coupled Navier--Stokes--Fokker--Planck system, satisfying the initial condition
$(\rho(0),\undertilde{u}(0), \psi(0)) = (\rho_0,\undertilde{u}_0, \psi_0)$.
Since the argument is long and technical, we give a brief overview of the main steps of the proof.

At the heart of the proof is a formal energy identity, which we shall now derive
under the assumption that $\ut$, $\rho$, $\psi$ and $\varrho$ are sufficiently smooth, and, at least for
our purposes in this introductory section, $\rho$ is nonnegative, and
$\psi$ and $\varrho$ are positive. Instead of \eqref{tau1}, used in the rest of the paper, we shall make use of the more general formula \eqref{tau1a}, in order to explain the admissible range of $\mathfrak{k}$ alluded to in Remark \ref{rem-1.2} as well as our reasons for choosing $\mathfrak{k}=K+1$ in \eqref{tau1}.
By taking the $L^2(\Omega)$
inner product of equation \eqref{ns0a} first with $\frac{1}{2}|\ut|^2$
and then with $P'(\rho)$, where $P(\rho)=\frac{p(\rho)}{\gamma-1}$ and so,
on noting (\ref{pgamma}), $\rho\,P'(\rho)-P(\rho)=p(\rho)$, and then taking the
$\undertilde{L}^2(\Omega)$ inner product
of equation \eqref{ns1a} with $\ut$, we deduce upon partial integration and noting the homogeneous Dirichlet
boundary condition on $\ut$ and (\ref{tautot}), (\ref{eqCtt}) and (\ref{tau1a})  that
%\footnote{We also test (\ref{ns0a}) with $P'(\rho)$.}
%
\begin{eqnarray}\label{energy-id1}
&&\frac{\dd}{\dd t} \int_\Omega \left[\frac{1}{2}\,\!\rho\, |\ut|^2 + P(\rho)\right]{\rm d} \xt
+  \mu^{S} \int_\Omega \left|\underdtilde{D}(\ut)
- \frac{1}{d}\,(\nabx \cdot \ut)\,\Itt\right|^2{\rm d} \xt
+ \mu^B \int_\Omega |\nabx \cdot \ut|^2 \dd \xt\nonumber\\
&&\qquad\qquad = \int_\Omega \rho \,\ft \cdot \ut \,\dd \xt - \int_\Omega \tautt :
\nabxtt \ut\, \dd \xt
\nonumber\\
&&\qquad\qquad = \int_\Omega \rho\, \ft \cdot \ut\, \dd \xt - k \sum_{i=1}^K \int_{\Omega \times D}
\psi   %(\xt,\qt,t)
\,\, U'_i({\textstyle \frac{1}{2}}|\qt_i|^2) \,\qt_i \,\qt_i^{\rm T} :
\nabxtt \ut \, \dd \qt \,\dd \xt
\nonumber\\
&&\qquad\qquad \quad + k \,\mathfrak{k} \int_{\Omega \times D} \psi %(\xt,\qt,t)
\,(\nabx \cdot \ut)
\,\dd \qt \,\dd \xt
+ \mathfrak{z} \int_{\Omega} \left[\int_{D} \psi %(\xt,\qt,t)
\,\dd \qt\right]^2  (\nabx \cdot \ut)
\dd \xt.
%\nonumber\\
%&& \qquad\qquad = \int_\Omega \rho \ft \cdot \ut \dd \xt - k \sum_{i=1}^K \int_{\Omega \times D}
%\psi(\xt,\qt,t) U'_i({\textstyle \frac{1}{2}}|\qt_i|^2) \qt_i \qt_i^{\rm T} :
%\nabxtt \ut  \dd \qt \dd \xt
%\nonumber\\
%&&\qquad\qquad \quad +\, k (K+1) \int_{\Omega \times D} \psi(\xt,\qt,t)
%(\nabx \cdot \ut) \dd \qt \dd \xt
%+ \mathfrak{z}\int_{\Omega} \left[\vrho(\xt,t)\right]^2  (\nabx \cdot \ut) \dd \xt,
\end{eqnarray}
%where $\vrho$ is defined by \eqref{pdn}.
Recalling our convention that $\zeta \equiv 1$ (cf. Definition
\ref{model-p}), it follows by taking the $L^2(\Omega)$ inner product of \eqref{pdnumber1} with $\vrho$,
partial integration, noting the boundary condition \eqref{pdnumber2} and that $\ut$ satisfies a
homogeneous Dirichlet boundary condition on $\partial\Omega$, that
\begin{equation}\label{energy-id2}
\frac{\dd}{\dd t}\int_\Omega \vrho^2 %(\xt,t)]^2
\dd \xt + 2\varepsilon \int_\Omega |\nabx \vrho %(\xt,t)
|^2 \dd \xt
= - \int_\Omega \vrho^2%(\xt,t)]^2
(\nabx \cdot \ut) \dd \xt.
\end{equation}
By noting (\ref{pdn}) and substituting \eqref{energy-id2} into the last term on
the right-hand side of \eqref{energy-id1},
we deduce that
\begin{eqnarray}
&&\frac{\dd}{\dd t} \int_\Omega \left[\frac{1}{2}\,\!\rho\, |\ut|^2 + P(\rho)
+ \mathfrak{z} \,\vrho^2\right]{\rm d} \xt\nonumber\\
&&\qquad\qquad\quad+\,  \mu^{S} \int_\Omega \left|\underdtilde{D}(\ut)
- \frac{1}{d}\,(\nabx \cdot \ut)\,\Itt\right|^2{\rm d} \xt
+ \mu^B \int_\Omega |\nabx \cdot \ut|^2 \dd \xt +
2\varepsilon \,\mathfrak{z}\int_\Omega |\nabx \vrho %(\xt,t)
|^2 \dd \xt\nonumber\\
%\end{eqnarray*}
%\begin{eqnarray}
\label{energy-id3}
&& \qquad\qquad = \int_\Omega \rho \,\ft \cdot \ut \,\dd \xt
- k \sum_{i=1}^K \int_{\Omega \times D}
\psi  %(\xt,\qt,t)
\,U'_i({\textstyle \frac{1}{2}}|\qt_i|^2) \,\qt_i \,\qt_i^{\rm T} :
\nabxtt \ut  \,\dd \qt \dd \xt
\nonumber\\
&&\qquad\qquad \quad +\, k \,\mathfrak{k} \int_{\Omega \times D} \psi %(\xt,\qt,t)
\,(\nabx \cdot \ut)
\dd \qt \dd \xt.
\end{eqnarray}
Next, the Fokker--Planck equation \eqref{fp0} (with $\zeta \equiv 1$)
is multiplied by $\log\frac{\psi}{M}$, integrated over $\Omega \times D$,
and integrations by parts are performed, using the boundary conditions \eqref{ns3a},
\eqref{eqpsi2aa-x} and \eqref{eqpsi2ab-x},
in the second and third term on the left-hand side and the two terms on the right-hand side; hence, by
letting $\mathcal{F}(s) = s (\log s - 1) + 1$ for $s>0$ and $\mathcal{F}(0):=\lim_{s \rightarrow 0_+}\mathcal{F}(s)=1$,
\begin{eqnarray}\label{energy-id3a}
&&\frac{\dd}{\dd t} \int_{\Omega \times D} M \,\mathcal{F}\left(\frac{\psi}{M}\right) \dd \qt \dd \xt
+ \int_{\Omega \times D} \psi %(\xt,\qt,t)
\,(\nabx \cdot \ut) \dd \qt \dd \xt\nonumber\\
&&\quad- \sum_{i=1}^K \int_{\Omega \times D}
\psi %(\xt,\qt,t)
\,U'_i({\textstyle \frac{1}{2}}|\qt_i|^2)\, \qt_i \,\qt_i^{\rm T} :
\nabxtt \ut  \,\dd \qt \dd \xt
+ K \int_{\Omega \times D} \psi %(\xt,\qt,t)
\,(\nabx \cdot \ut) \,\dd \qt \dd \xt \nonumber\\
&&\quad +\, 4\varepsilon \int_{\Omega \times D}\! M \left|\nabx\sqrt{\frac{\psi}{M}}\right|^2
\dd \qt \dd \xt
+ \frac{1}{\lambda} \sum_{i=1}^K \sum_{j=1}^K A_{ij}\int_{\Omega \times D}\! M
\,\nabqj \sqrt{\frac{\psi}{M}}
\cdot \nabqi \sqrt{\frac{\psi}{M}} \dd \qt \dd \xt = 0.
\end{eqnarray}
We now multiply \eqref{energy-id3a} by $k$ and add the resulting identity to \eqref{energy-id3} to deduce that
\begin{eqnarray}\label{energy-id4pre}
&&\frac{\dd}{\dd t} \int_\Omega \left[\frac{1}{2}\,\!\rho \,|\ut|^2 + P(\rho)
+ \mathfrak{z} \,\vrho^2 + k \int_{D} M
\,\mathcal{F}\left(\frac{\psi}{M}\right) \dd \qt\right]{\rm d} \xt
+ k\,(K+1-\mathfrak{k})\int_\Omega \varrho \,(\nabx \cdot \ut) \dx \nonumber\\
&&\qquad+\,  \mu^{S} \int_\Omega \left|\underdtilde{D}(\ut) -
\frac{1}{d}\,(\nabx \cdot \ut)\,\Itt\right|^2{\rm d} \xt
+ \mu^B \int_\Omega |\nabx \cdot \ut|^2 \dd \xt
+ 2\varepsilon\, \mathfrak{z} \int_\Omega |\nabx \vrho%(\xt,t)
|^2 \dd \xt\nonumber\\
&&\qquad +\, 4\varepsilon\, k \int_{\Omega \times D} M \left|\nabx\sqrt{\frac{\psi}{M}}\right|^2
\dd \qt \dd \xt
+ \frac{k}{\lambda} \sum_{i=1}^K \sum_{j=1}^K A_{ij} \int_{\Omega \times D}
M \,\nabqj \sqrt{\frac{\psi}{M}}
\cdot \nabqi \sqrt{\frac{\psi}{M}} \dd \qt \dd \xt\nonumber\\
&& \qquad\qquad = \int_\Omega \rho\, \ft \cdot \ut \dd \xt.
\end{eqnarray}
Unless $\mathfrak{k}=K+1$, one further step is necessary, in order to deal with the second term on the left-hand side. To this end, we return to \eqref{pdnumber1}, with
$\zeta(\rho)\equiv 1$, take the $L^2(\Omega)$ inner product with $\mathcal{F}'(\varrho)$, and integrate by parts in the second and third term, analogously to \eqref{energy-id2}, to deduce that
\[ \frac{\dd}{\dd t} \int_\Omega \mathcal{F}({\varrho}) \dx  + 4 \varepsilon \int_\Omega |\nabx \sqrt{\varrho}|^2 \dx
= - \int_\Omega \varrho\, (\nabx \cdot \ut)  \dx.\]
Substitution of this into the second term in \eqref{energy-id4pre} then yields
\begin{eqnarray}\label{energy-id4}
&&\frac{\dd}{\dd t} \int_\Omega \left[\frac{1}{2}\,\!\rho \,|\ut|^2 + P(\rho)
+ \mathfrak{z} \,\vrho^2 + k\,(\mathfrak{k}-(K+1))\, \mathcal{F}({\varrho}) + k \int_{D} M
\,\mathcal{F}\left(\frac{\psi}{M}\right) \dd \qt\right]{\rm d} \xt
 \nonumber\\
&&\qquad+\,  \mu^{S} \int_\Omega \left|\underdtilde{D}(\ut) -
\frac{1}{d}\,(\nabx \cdot \ut)\,\Itt\right|^2{\rm d} \xt
+ \mu^B \int_\Omega |\nabx \cdot \ut|^2 \dd \xt\nonumber\\
&&\qquad + 2\varepsilon\, \mathfrak{z} \int_\Omega |\nabx \vrho %(\xt,t)
|^2 \dd \xt + 4 \varepsilon\, k\,(\mathfrak{k}-(K+1)) \int_\Omega |\nabx \sqrt{\varrho}|^2 \dx \nonumber\\
&&\qquad +\, 4\varepsilon\, k \int_{\Omega \times D} M \left|\nabx\sqrt{\frac{\psi}{M}}\right|^2
\dd \qt \dd \xt
+ \frac{k}{\lambda} \sum_{i=1}^K \sum_{j=1}^K A_{ij} \int_{\Omega \times D}
M \,\nabqj \sqrt{\frac{\psi}{M}}
\cdot \nabqi \sqrt{\frac{\psi}{M}} \dd \qt \dd \xt\nonumber\\
&& \qquad\qquad = \int_\Omega \rho\, \ft \cdot \ut \dd \xt,
\end{eqnarray}
which is the desired (formal) energy identity that represents the starting point for our proof of
existence of global weak solutions to problem (P). The integral over $\Omega$ of the expression in
the square brackets in the first line of \eqref{energy-id4} is the total energy, and the sum of the
terms on the second, third and fourth line of \eqref{energy-id4} is the dissipation of the total
energy; recall that the Rouse matrix $A=(A_{ij})_{i,j=1}^K$ is, by definition, symmetric and
positive definite. While, as an energy identity, \eqref{energy-id4} is meaningful for all $\mathfrak{z}\geq 0$
and all $\mathfrak{k} \geq K+1$, it will transpire in the course of the
%compactness argument, which our
proof
%is based upon,
that $\mathfrak{z}>0$ is
necessary in order to ensure control of $\frac{\partial \psi}{\partial t}$ or of a time-difference of $\psi$ (cf. Remarks \ref{rem-1.3} and \ref{rem-z}). Once $\mathfrak{z}$ has been chosen to be positive, the two terms in \eqref{energy-id4} that include the factor $(\mathfrak{k}-(K+1))$ are of lower order and contribute no additional information; we have therefore, for the sake of simplicity, set $\mathfrak{k}=K+1$ in \eqref{tau1a}, yielding \eqref{tau1}. If $\mathfrak{k}<K+1$ and $\mathfrak{z}>0$, then upon moving the second term on the left-hand side of \eqref{energy-id4pre} to the right, Gronwall's inequality yields a formal energy \textit{inequality}; since the ultimate outcome is no different from the one with $\mathfrak{k}=K+1$ and $\mathfrak{z}>0$, we shall not discuss this case further.

The main idea of the proof is to construct a sequence of approximating solutions,
whose existence one can prove. The sequence of approximating solutions to the solution of problem (P)
will be defined through suitable truncations (cut-off), regularizations and temporal
semidiscretization. After deriving (truncation-, regularization-, discretization-) parameter
independent bounds on the sequence of approximating solutions, we shall use a variety of
compactness arguments to pass to limits in the parameters. The derivations of the various bounds on
sequences of approximating solutions mimic the derivation of the formal energy identity
\eqref{energy-id4} outlined above. \red{The present paper combines a number of techniques developed
in our earlier work on the existence of global weak solutions to Navier--Stokes--Fokker--Planck systems,
in the case of incompressible flows of dilute polymers, with techniques that were introduced in the
groundbreaking contributions  of Lions \cite{Lions2} and Feireisl \cite{Feir1} to the mathematical
theory of compressible Navier--Stokes equations. For a detailed overview of the latter, we point the
reader to the, more recent, monograph of Novotn\'{y} \& Stra\v{s}kraba \cite{NovStras}, which will be
our main source of reference for technical results from the mathematical theory of compressible
Navier--Stokes equations;} \arxiv{for the convenience of the reader we have included the most
relevant ones of these in Appendices A--G, at the end of the paper.}

\red{Available
results in the literature concerning the existence of weak solutions to the compressible Navier--Stokes equations
are based on considering sequences of approximating problems that are defined by spatial Galerkin discretization, using bases
of smooth functions. Since, as in our previous work on incompressible Navier--Stokes--Fokker--Planck systems,
we are motivated by constructive considerations, ultimately aimed at developing convergent numerical
algorithms for the problem, we shall adopt a different approach. Our construction of
a sequence of approximating problems for the compressible Navier--Stokes--Fokker--Planck system will be based on
discretizing the problem with respect to the temporal variable. For a similar approach, in the case of a coupled compressible Navier--Stokes--Cahn--Hillard system, we refer the reader to the work of Abels and Feireisl \cite{AF-2008} and to Remark \ref{rem-AF} below, which explains the aspects in which our temporal approximation differs from the one in \cite{AF-2008}.
%Even in the absence of the Fokker--Planck equation
%from the model,
%this approach has technical novelty, as it provides a new proof of existence of weak solutions to the compressible
%Navier--Stokes equations. 
The inclusion of the Fokker--Planck equation into the analysis is nontrivial,
the main hurdle being to ensure that the presence of the extra stress term $\tautt$ (cf. \eqref{tautot0})
on the right-hand side of the
Navier--Stokes momentum equation does not destroy the Lions--Feireisl compactness argument for the compressible Navier--Stokes
equations. We have only been able to achieve
this for $\mathfrak{z}>0$. As the same requirement on $\mathfrak{z}$ has been found to be necessary in the, related,
Doi model for suspensions of rod-like molecules in a compressible fluid, considered by Bae and Trivisa
\cite{BaeKon}, we are confident that the condition $\mathfrak{z}>0$ is not a byproduct of our time-discrete
approach to the proof of existence of weak solutions. It is more likely that the classical
Kramers expression $\tautt_1$, appearing as the first term in \eqref{tautot0}, which was originally derived in the case of
a homogeneous incompressible solvent, needs to be supplemented  in the case of a nonhomogeneous
compressible solvent by additional correction terms (e.g. to account for the fact that compressibility may impact on the size of the
excluded volume surrounding a polymer molecule immersed in the solvent), such as the `interaction term' appearing as the second term in \eqref{tautot0}.}

The proof of the central theorem in the paper,
Theorem \ref{Pexistslemfinal}, stating the existence of global bounded-energy weak solutions to
problem (P), consists of six steps, which are outlined below.

{\em Step 1.} Following the approach in Barrett \& S\"uli
\cite{BS2,BS2011-fene,BS-2011-density-arxiv,BS2010-hookean,BS-2012-density-JDE}
and motivated by recent papers of Jourdain, Leli\`evre, Le Bris \& Otto \cite{JLLO} and
Lin, Liu \& Zhang \cite{LinLZ}
(see also  Arnold, Markowich, Toscani \& Unterreiter \cite{AMTU},
and Desvillettes \& Villani \cite{DV})
concerning the convergence of the probability density function $\psi$ to its equilibrium
value $\psi_{\infty}(\xt,\qt):=M(\qt)$
(corresponding to the equilibrium value $\ut_\infty(\xt) :=\zerot$
of the velocity field in the case of constant density) in the absence of body forces $\ft$,
we observe that if $\frac{\psi}{M}$ is bounded above then, for $L \in \mathbb{R}_{>0}$
sufficiently large, the third term in \eqref{fp0}, referred to as the \textit{drag term} is equal to
\begin{equation}\label{cut1}
\sum_{i=1}^K \nabqi \cdot \left(\sigtt(\ut) \, \qt_i\,M \,
\beta^L\!\left(\frac{\psi}{M}\right)\right)
%
%\nabq\cdot \left[\sigtt(\ut) \, \qt\,M \,\zeta(\rho)\,\beta^L\left
%(\frac{\psi}{\zeta(\rho)\,M}\right)\right]
\end{equation}
(recall that, by hypothesis, $\zeta \equiv 1$),
where $\beta^L \in C({\mathbb R})$ is a cut-off function defined as
\begin{align}
\beta^L(s) := \min\{s,L\}.
\label{betaLa}
\end{align}
%More generally, in the case of $K \geq 1$,
%in analogy with \eqref{cut1}, the drag term with cut-off is
%defined by
%%
%\[\sum_{i=1}^K \nabqi \cdot \left(\sigtt(\ut) \, \qt_i\,M \,\zeta(\rho)\,
%\beta^L\!\left(\frac{\psi}{\zeta(\rho)\,M}\right)\right).\]
%
%
It then follows that, for $L\gg 1$, any solution $\psi$ of (\ref{fp0}),
such that $\frac{\psi}{M}$ is bounded above by $L$, also satisfies
\begin{eqnarray}
\label{eqpsi1aa}
&&\hspace{-6.5mm}\frac{\partial \psi}{\partial t} + \nabx \cdot
(\ut \,\psi)
+ \sum_{i=1}^K \nabqi \cdot \left(\sigtt(\ut) \, \qt_i\,M \,
\beta^L\!\left(\frac{\psi}{M}\right)\right)
\nonumber
\\
\bet
&&=
\epsilon\,\Delta_x \psi +
\frac{1}{4 \,\lambda}\,
\sum_{i=1}^K \sum_{j=1}^K
A_{ij}\,\nabqi \cdot \left(
M
\,\nabqj \left(\frac{\psi}{M}\right)\right)
%\epsilon\,\Delta_x\,\psi + \frac{1}{4 \,\lambda}\,\sum_{i=1}^K\sum_{j=1}^K A_{ij}
%\nabqi \cdot \left(
%M\,\nabqj\!\left(\frac{\psi}{M}\right)\right)
\quad \mbox{in } \Omega \times D \times
(0,T]. ~~~
\end{eqnarray}
%
%On assuming that $\zeta$ is a bounded positive function of $\rho$,
%Let
%
%$\partial\overline{D}_i := D_1 \times \cdots \times D_{i-1}
%\times \partial D_i \times D_{i+1} \times \cdots \times D_K$.
%
We impose the following boundary and initial conditions:
\begin{subequations}
\begin{align}
&\left[\frac{1}{4\,\lambda} \sum_{j=1}^K A_{ij}\,
M\,\nabqj\!\left(\frac{\psi}{M}\right)
- \sigtt(\ut) \,\qt_i\,M\,\beta^L\!\left(\frac{\psi}{M}\right)
\right]\! \cdot \frac{\qt_i}{|\qt_i|}
=0\qquad ~~\nonumber\\
&~ \hspace{1.49in}\mbox{on }
\Omega \times \partial \overline{D}_i\times (0,T],
\mbox{~~for $i=1,\dots, K$,} \label{eqpsi2aa}\\
&\epsilon\,\nabx \psi\,\cdot\,\nt =0
\qquad\mbox{on }
\partial \Omega \times D\times (0,T],\label{eqpsi2ab}\\
&\psi(\cdot,\cdot,0)=M(\cdot)\,\beta^L\!\left({\psi_{0}
(\cdot,\cdot)}/{M(\cdot)}\right) \geq 0
\qquad \mbox{on $\Omega\times D$}.\label{eqpsi3ac}
\end{align}
\end{subequations}
%
%where $\qt_i$ is normal to $\partial D_i$, as $D_i$
%is a bounded ball centred at the origin,
%and $\nt$ is normal to $\partial \Omega$.
The initial datum $\psi_0$ for the Fokker--Planck equation is nonnegative,
defined on $\Omega\times D$, with
\[\int_{D} \psi_0(\xt,\qt) \dd \qt \in L^\infty(\Omega), %\qquad
%\int_{\Omega \times D} \psi_0(\xt,\qt) \dq \dx = 1,%
\]
and is assumed to have finite relative entropy with
respect to the Maxwellian $M$; i.e.
\[\int_{\Omega \times D} \psi_0(\xt,\qt)
\log \frac{\psi_0(\xt,\qt)}{M(\qt)} \dq \dx < \infty.\]
The model with cut-off parameter $L>1$ is further regularized, by introducing a dissipation term
of the form $- \alpha \,\Delta \rho$, with $\alpha>0$, into the continuity equation \eqref{ns0a}
and supplementing the resulting parabolic equation with a homogeneous Neumann boundary condition
on $\partial\Omega \times (0,T]$. In addition, the equation of state \eqref{pgamma} is replaced
by a regularized equation of state, $p_{\kappa}(\rho)=p(\rho) + \kappa (\rho^4 +
\rho^\Gamma)$, where $\kappa \in \mathbb{R}_{>0}$ and $\Gamma = \max\{\gamma,8\}$. The resulting
problem is denoted by $({\rm P}_{\kappa,\alpha,L})$.

\textit{Step 2.} Ideally, one would like to pass to the limits $\kappa \rightarrow 0_{+}$,
$\alpha \rightarrow 0_{+}$, $L\rightarrow +\infty$ to deduce the existence of solutions to
$({\rm P})$. Unfortunately,
such a direct attack at the problem is
%(except in the special case of $d=2$, or in the absence of convection terms from the model,)
fraught with technical difficulties. Instead,
we shall first (semi)discretize the problem $({\rm P}_{\kappa, \alpha, L})$
by an implicit Euler type scheme with respect to $t$, with step size $\Delta t$.
This then results in a time-discrete version $({\rm P}^{\Delta t}_{\kappa, \alpha, L})$ of
$({\rm P}_{\kappa, \alpha, L})$.

\textit{Step 3.} By using Schauder's fixed point theorem, we will show in
Section \ref{sec:existence-cut-off} the existence of solutions to
$({\rm P}^{\Delta t}_{\kappa, \alpha, L})$.
In the course of the proof, for technical reasons, a further cut-off,
now from below, with a cut-off parameter $\delta \in (0,1)$, is required. In addition, a fourth-order
hyperviscosity term is added to the Navier--Stokes momentum equation \eqref{ns1a}.
We shall let $\delta$ pass to $0$ to
complete the proof of existence of solutions to $({\rm P}^{\Delta t}_{\kappa, \alpha, L})$ in the limit
of $\delta \rightarrow 0_+$ in Section \ref{sec:existence-cut-off}; cf. Lemma \ref{conv}.

\textit{Step 4.} In Section \ref{sec:entropy} we then go on to derive bounds on the sequence of solutions
to problem $({\rm P}^{\Delta t}_{\kappa, \alpha, L})$; in particular, we develop various bounds on the
sequence of weak solutions to $({\rm P}^{\Delta t}_{\kappa, \alpha, L})$
that are uniform in the time step
$\Delta t$ and the cut-off parameter $L$, and thus permit the extraction of weakly convergent subsequences,
as $L\rightarrow +\infty$ and $\Delta t \rightarrow 0_{+}$, with $\Delta t = o(L^{-1})$, when $L \rightarrow
+\infty$. The weakly convergent subsequences will then be shown to converge strongly in
suitable norms. This then allows us to pass to the limit as $L \rightarrow +\infty$,
with $\Delta t = o(L^{-1})$.
The main result of Section \ref{sec:entropy} is Theorem
\ref{5-convfinal}, which summarizes the outcome of this limiting process.

\textit{Step 5.} Section \ref{sec:Pkappa} is concerned with passage to the limit
$\alpha \rightarrow 0_{+}$ with the parabolic regularization parameter that was introduced into the
continuity equation in Step 1. The main result of Section \ref{sec:Pkappa} is Theorem
\ref{Pkexistslemfinal}, which summarizes the outcome of this limiting process.

\textit{Step 6.} Finally, in Section \ref{sec:P}
we pass to the limit $\kappa \rightarrow 0_{+}$ with the
regularization parameter that was introduced into the equation of state in Step 1, which then
leads to our main result,
Theorem \ref{Pexistslemfinal},
stating the existence of global bounded-energy weak solutions to problem (P).

%%%%%%%%%%%%%%%%%%%%%%%%%%%%%%%%%%%%%%%%%%%%%%%%%%%%

\section{The polymer model $({\rm P}_{\kappa,\alpha,L})$}
\label{sec:2}
\setcounter{equation}{0}

Let $\Omega \subset {\mathbb R}^d$ be a bounded open set with a
Lipschitz-continuous boundary $\partial \Omega$, and suppose that
the set $D:= D_1\times \cdots \times D_K$ of admissible
conformation vectors $\qt := (\qt_1^{\rm T}, \ldots ,
\qt_K^{\rm T})^{\rm T}$ in (\ref{fp0}) is
such that $D_i$, $i=1, \dots, K$, is an open ball
in ${\mathbb R}^d$, $d=2$ or $3$, centred at the origin\red{,} with boundary $\partial D_i$
and radius $\sqrt{b_i}$, $b_i>2$; let
\begin{align}
\partial D := \bigcup_{i=1}^K \partial \overline D_i,\qquad \mbox{where}\qquad \partial
\overline D_i:= D_1 \times \cdots \times D_{i-1} \times \partial D_i \times D_{i+1}
\times \cdots \times D_K.
%\bigcup_{i=1}^K \left[\partial D_i \times \left(\bigtimes_{j=1,\, j \neq i}^K D_j\right)\right].
\label{dD}
\end{align}
Collecting (\ref{ns0a}--e), (\ref{tautot}), (\ref{tau1}), (\ref{eqCtt}), (\ref{eqpsi1aa}) and (\ref{eqpsi2aa}--c),
we then consider the following regularized initial-boundary-value problem,
dependent on the following given regularization parameters $\kappa>0$, $\alpha>0$ and $L > 1$.
As has been already emphasized in the
Introduction, the centre-of-mass diffusion coefficient $\varepsilon>0$ is a
physical parameter and is regarded as being fixed throughout.
%although we systematically
%highlight its presence in the model through our subscript notation.

{\boldmath (${\rm P}_{\kappa,\alpha,L}$)}
Find
$ \rhokaL\,:\,(\xt,t)
\in \Omega \times [0,T] \mapsto
\rhokaL(\xt,t) \in {\mathbb R}$ and
$\utkaL\,:\,(\xt,t)\in \overline{\Omega} \times [0,T]
\mapsto \utkaL(\xt,t) \in {\mathbb R}^d$ %and $\pae\,:\, (\xt,t) \in
%\Omega \times (0,T] \mapsto \pae(\xt,t) \in {\mathbb R}$
such that
\begin{subequations}
\begin{alignat}{2}
\frac{\partial \rhokaL}{\partial t}
+ \nabx\cdot(\utkaL \,\rhokaL) - \alpha \, \Delta_x \rhokaL
&= 0 \qquad &&\mbox{in } \Omega \times (0,T],\label{equ0}
\\
\alpha
\,\nabx\, \rhokaL\,\cdot\,\nt &=0 \qquad &&\mbox{on }
\partial \Omega \times (0,T],\label{equ0b}\\
\rhokaL(\xt,0)&=\rho^0(\xt)    \qquad &&\forall \xt \in \Omega,\label{equ00}
\end{alignat}
\begin{align}
&\frac{\partial (\rhokaL\,\utkaL)}{\partial t} - \frac{\alpha}{2}\,(\Delta_x \rhokaL)\,\utkaL
+ \nabx\cdot(\rhokaL\,\utkaL \otimes \utkaL)
\nonumber \\
& %\hspace{2in}
- %\nu \,\delx \ut_L
\nabx \cdot \Stt(\utkaL,\rhokaL)
+ \nabx \,\pk(\rhokaL)= \rhokaL\,\ft + \nabx \cdot \tautt(\psikaL)
\qquad \mbox{in } \Omega \times (0,T],\label{equ1}
%\nabx \cdot \utae &= 0
%&&\mbox{in } \Omega \times (0,T],
%\label{equ2}\\
%\bet
\end{align}
\begin{alignat}{2}
\utkaL &= \zerot \qquad &&\mbox{on } \partial \Omega \times (0,T],
\label{equ3}\\
%\bet
(\rhokaL\,\utkaL)(\xt,0)&=(\rho^0\ut_{0})(\xt) \qquad &&\forall \xt \in \Omega,
\label{equ4}
\end{alignat}
\end{subequations}
where $\psikaL\,:\,(\xt,\qt,t)\in
\overline{\Omega} \times \overline{D} \times [0,T]
\mapsto \psikaL(\xt,\qt,t)
\in {\mathbb R}$.
\begin{comment}
and
$\tautt(\psiae)\,:\,(\xt,t) \in \Omega \times (0,T] \mapsto
\tautt(\psiae)(\xt,t)\in \mathbb{R}^{d\times d}$ is the symmetric
extra-stress tensor defined as
%
\begin{equation}
\tautt(\psiae) := k \left[\left( \sum_{i=1}^K
\Ctt_i(\psiae)\right)
- K\,\varrho(\psiae)\, \Itt\right].
\label{eqtt1}
\end{equation}
%
\end{comment}
Here $\Stt(\cdot,\cdot)$ and $\tautt(\cdot,\cdot)$
are given by (\ref{Stt}) and  (\ref{tautot}),
and $\rho^0(\alpha)$ is a regularization of $\rho_0$,
see (\ref{projrho0}) below.
In addition, $\pk(\cdot)$ is a regularization of $p(\cdot)$, (\ref{pgamma}),
defined by
\begin{align}
\pk(s) := p(s) + \kappa\,(s^4+ s^{\Gamma}),
\qquad \mbox{where } \kappa \in \mathbb{R}_{>0}
%\mbox{ are such that } \kappa >0
\mbox{ and } \Gamma = \max\{\gamma,8\}.
\label{pkdef}
\end{align}
%where the constant $\Gamma$ is such that $\Gamma > 4$.
\begin{comment}
$k \in \Rplus$,
$\Itt$ is the unit $d
\times d$ tensor,
%
\begin{subequations}
\begin{eqnarray}
\Ctt_i(\psiae)(\xt,t) &:=& \int_{D} \psiae(\xt,\qt,t)\, U_i'({\textstyle
\frac{1}{2}}|\qt_i|^2)\,\qt_i\,\qt_i^{\rm T} \dq,\qquad \mbox{and} \label{eqCtt}\\
\varrho(\psiae)(\xt,t) &:=& \int_{D} \psiae(\xt,\qt,t)\dq. \label{eqrhott}
\end{eqnarray}
\end{subequations}
%
\end{comment}
The Fokker--Planck equation with microscopic cut-off satisfied by $\psikaL$ is:
\begin{align}
\label{eqpsi1a}
&\hspace{-2mm}\frac{\partial \psikaL}{\partial t} +
\nabx \cdot
\left(\utkaL \,M\,\beta^L\!\left(\frac{\psikaL}{M}\right)\right)
+
\sum_{i=1}^K \nabqi
\cdot \left[\sigtt(\utkaL) \, \qt_i\,M\,%\zeta(\rhokaL)\,
\beta^L
\left( \frac{
\psikaL}{%\zeta(\rhokaL)\,
M}\right)\right]
\nonumber
\\
&\hspace{0.01in} =
\epsilon\,\Delta_x\, \psikaL
%\left(\frac{\psikaL}{\zeta(\rhokaL)}\right)
+
\frac{1}{4 \,\lambda}\,
\sum_{i=1}^K \sum_{j=1}^K
A_{ij}\,\nabqi \cdot \left(
M \,\nabqj \left(\frac{\psikaL}{%\zeta(\rho_L)\,
M}\right)\right)
%\epsilon\,\Delta_x\,\psiae +
%\frac{1}{4 \,\lambda}\,
%\sum_{i=1}^K \sum_{j=1}^K
%A_{ij}\,\nabqi \cdot \left(
%M\,\nabqj \left(\frac{\psiae}{M}\right)\right)
\qquad \mbox{in } \Omega \times D \times
(0,T].
\end{align}
Here, for a given $L > 1$, $\beta^L \in C({\mathbb R})$ is defined by (\ref{betaLa}),
%$\zeta(\cdot)$ is given by (\ref{zetarho}),
$\sigtt(\vt) \equiv \nabxtt \vt$, and
\begin{align}
\!\!\!\!\!\Att \in {\mathbb R}^{K \times K} \mbox{ is symmetric positive definite
with smallest eigenvalue $a_0 \in {\mathbb R}_{>0}$.}
\label{A}
\end{align}
%
%Let $\partial\overline{D}_i
%:= D_1 \times \cdots \times D_{i-1} \times \partial D_i \times D_{i+1} \times \cdots \times D_K$.
We impose the following boundary and initial conditions:
\begin{subequations}
\begin{align}
&\left[\frac{1}{4\,\lambda} \sum_{j=1}^K A_{ij}\,
M\,
\nabqj \left(\frac{\psikaL}{%\zeta(\rho_L)\,
M}\right)
- \sigtt(\utkaL) \,\qt_i\,M\,%\zeta(\rho_L)\,
\beta^L\left(\frac{\psikaL}{%\zeta(\rho_L)\,
M}\right)
\right]\cdot \frac{\qt_i}{|\qt_i|}
=0 \nonumber \\
&\hspace{2.815in}\mbox{on }
\Omega \times \partial \overline{D}_i \times (0,T],
%
%\partial D_i \times \left(\bigtimes_{j=1,\, j \neq i}^K D_j\right) \times (0,T],
\quad i=1, \dots, K,
\label{eqpsi2}\\
&\epsilon
\,\nabx %\left(\frac{
\,\psikaL %}{\zeta(\rho_L)}\right)
\,\cdot\,\nt =0 \hspace{0.815in} \qquad \qquad\qquad\, \mbox{on }
\partial \Omega \times D\times (0,T],\label{eqpsi2a}\\
&\psikaL(\cdot,\cdot,0)=M(\cdot)\,%\zeta(\rho_0(\cdot))\,
\beta^L(\psi_{0}(\cdot,\cdot)/ %(\zeta(\rho_0(\cdot))\,
M(\cdot)) \geq 0 \qquad \mbox{on  $\Omega\times D$},\label{eqpsi3}
\end{align}
\end{subequations}
where $\nt$ is the unit outward normal to $\partial \Omega$.
The boundary conditions for $\psikaL$
on $\partial\Omega\times D\times(0,T]$ and $\Omega \times \partial D
\times (0,T]$ have been chosen so as to ensure that
\begin{align}
\int_{\Omega \times D}\psikaL(\xt,\qt,t) \dq \dx  =
\int_{\Omega \times D} \psikaL(\xt,\qt,0) \dq \dx  \qquad \forall t \in (0,T].
\label{intDcon}
\end{align}
Henceforth, we shall write
\[\hpsikaL = \frac{\psikaL}{%\zeta(\rho_L)\,
M},\quad
  \hpsi_0 = \frac{\psi_0}{%\zeta(\rho_0)\,
  M}.\]
Thus, for example, \eqref{eqpsi3} in terms of this compact notation becomes:
$\hpsikaL(\cdot,\cdot,0) = \beta^L(\hpsi_0(\cdot,\cdot))$ on $\Omega \times D$.

\section{Existence of a solution to (P$_{\kappa,\alpha,L}^{\Delta t}$),
a discrete-in-time approximation of (P$_{\kappa,\alpha,L}$)}
\label{sec:existence-cut-off}
\setcounter{equation}{0}

\begin{comment}
Let
%
\begin{eqnarray}
&\Ht :=\{\wt \in \Lt^2(\Omega) : \nabx \cdot \wt =0\} \quad
\mbox{and}\quad \Vt :=\{\wt \in \Ht^{1}_{0}(\Omega) : \nabx \cdot
\wt =0\},&~~~ \label{eqVt}
\end{eqnarray}
%
where the divergence operator $\nabx\cdot$ is to be understood in
the sense of distributions on $\Omega$. Let $\Vt'$ be
the dual of $\Vt$.
\end{comment}

For later purposes, we recall the following
Lebesgue interpolation result and
the Gagliardo--Nirenberg inequality.
Let $1 \leq r \leq \upsilon \leq s < \infty$, then\red{,} for any bounded Lipschitz domain $\mathcal{O}$\red{,}
\begin{align}
\|\eta\|_{L^\upsilon(\mathcal{O})} \leq
\|\eta\|_{L^r(\mathcal{O})}^{1-\vartheta}
\,\|\eta\|_{L^s(\mathcal{O})}^{\vartheta}
 \qquad \forall \eta \in L^s(\mathcal{O}),
\label{eqLinterp}
\end{align}
where $\vartheta = \frac{ s(\upsilon-r)}{\upsilon(s-r)}$.
Let $r \in [2,\infty)$ if $d=2$,
and $r \in [2,6]$ if $d=3$ and $\vartheta = d \,\left(\frac12-\frac
1r\right)$. Then, there is a constant $C=C(\Omega,r,d)$,
such that
\begin{equation}\label{eqinterp}
\|\eta\|_{L^r(\Omega)}
\leq C\,
\|\eta\|_{L^2(\Omega)}^{1-\vartheta}
\,\|\eta\|_{H^{1}(\Omega)}^\vartheta
\qquad  \forall \eta \in H^{1}(\Omega).
\end{equation}
We note also the generalised Korn's inequality
\begin{align}
\int_{\Omega} \left[ |\Dtt(\wt)|^2 -\frac{1}{d}\, |\nabx \cdot \wt|^2\right]
\dx =
\int_{\Omega} |\Dtt(\wt)-\frac{1}{d}\, (\nabx \cdot \wt)\,
\Itt|^2 \dx \geq c_0 \,\|\wt\|_{H^1(\Omega)}^2
\qquad \forall \wt \in
\Ht_0^{1}(\Omega),
%\left[ (\vt \cdot \nabx) \wt_1 \right]\,\cdot\,
%\wt_2 \dx
%\label{tripid}
%= - \int_{\Omega} \left[ (\vt \cdot \nabx) \wt_2
%\right]\,\cdot\, \wt_1 \dx
\label{Korn}
\end{align}
where $c_0>0$, see Dain \cite{Dain}.
We remark that the notation $|\cdot|$ will be used to signify one of the following.
When applied to a real number $x$,
$|x|$ will denote the absolute value of the number $x$; when applied to a vector
$\vt$,  $|\vt|$ will
stand for the Euclidean norm of the vector $\vt$; and, when applied to a square matrix
$\Att$, $|\Att|$ will
signify the Frobenius norm, $[\mathfrak{tr}(\Att^{\rm T}\Att)]^{\frac{1}{2}}$, of the matrix
$\Att$, where, for a square matrix
$\Btt$, $\mathfrak{tr}(\Btt)$ denotes the trace of $\Btt$.

Let $\mathcal{F}\in C(\mathbb{R}_{>0})$ be defined by
$\mathcal{F}(s):= s\,(\log s -1) + 1$, $s>0$.
As $\lim_{s \rightarrow 0_+} \mathcal{F}(s) = 1$,
the function $\mathcal{F}$ can be considered
to be defined and continuous on $[0,\infty)$,
where it is a nonnegative, strictly convex function
with $\mathcal{F}(1)=0$.

We assume the following:
\begin{align}\nonumber
&\partial \Omega \in C^{2,\theta},\; \theta \in (0,1); \qquad
\rho_0 \in L^\infty_{\geq 0}(\Omega); %[\rho_{\rm min},\rho_{\rm max}],
%\mbox{ with } \rho_0>0  \ {\rm ~a.e.\ on}\ \Omega;
\qquad \ut_0 \in \Lt^2(\Omega);\\
&\psi_0 \geq 0 \ {\rm ~a.e.\ on}\ \Omega \times D
\mbox{ with }
\mathcal{F}(\hpsi_0)
\in L^1_M(\Omega \times D) %\quad
\mbox{ and}
%\quad
\int_{D} \psi_0(\cdot,\qt)\,\dq \in L^\infty_{\geq 0}(\Omega);
%\mbox{ for a.e.\ } \xt \in \Omega;
%\quad \int_{\Omega \times D} \psi_0(\xt,\qt)\,\dq \dx = 1;
\nonumber\\
& \mu^S \in {\mathbb R}_{>0}, \ \mu^B \in {\mathbb R}_{\geq 0}; \qquad
\mbox{the Rouse matrix $\Att\in \mathbb{R}^{K \times K}$ satisfies (\ref{A})};
\nonumber \\
&\mbox{$p, \,\pk \in C^1({\mathbb R}_{\geq 0},{\mathbb R}_{\geq 0})$ are defined by (\ref{pgamma})
and (\ref{pkdef});}
%&\mu^S \in C({\mathbb R}_{>0}, [\mu_{\rm min}^S,\mu_{\rm max}^S]),
%\quad \mu^B \in C({\mathbb R}_{>0}, [\mu_{\rm min}^B,\mu_{\rm max}^B]),
%\quad \zeta \in C^1({\mathbb R}_{>0}, [\zeta_{\rm min},\zeta_{\rm max}]),
%\nonumber \\
%&\mbox{with } \mu_{\rm min}^S,\,\zeta_{\rm min} >0 \mbox{ and }
% \mu_{\rm min}^B \geq 0, \nonumber
\nonumber \\
&\ft \in L^{2}(0,T;\Lt^\infty(\Omega))
\qquad \mbox{and} \qquad
D_i = B(\zerot,b^{\frac{1}{2}}_i), \quad \theta_i>1,\quad i= 1, \dots,
K, \quad  \mbox{in (\ref{growth1},b)}.
\label{inidata}
\end{align}
\begin{comment}
where $\varkappa >1$ if $d=2$ and $\varkappa = \frac{6}{5}$ if $d=3$.
For this range of $\varkappa$, we note from (\ref{eqinterp}) that there exists a constant
$C_\varkappa \in \Rplus$ such that
\begin{align}
\left|\int_{\Omega} \wt_1 \cdot \wt_2 \dx \right| \leq C_\varkappa\, \|\wt_1\|_{L^\varkappa(\Omega)}\,
\|\wt_2\|_{H^1(\Omega)} \qquad \forall \wt_1 \in \Lt^{\varkappa}(\Omega), \; \wt_2 \in \Ht^1_0(\Omega).
\label{fvkbd}
\end{align}
\end{comment}
We introduce $P,\, \Pk \in C^1({\mathbb R}_{\geq 0},{\mathbb R}_{\geq 0})$, for $\kappa >0$,
 such that
\begin{align}
&s \,P_{(\kappa)}'(s)-P_{(\kappa)}(s) = p_{(\kappa)}(s) \quad \mbox{and} \quad P_{(\kappa)}(0)=
P_{(\kappa)}'(0)=0\nonumber \\
\qquad &\Rightarrow \qquad P(s)= \frac{p(s)}{\gamma-1} = \frac{c_p}{\gamma-1}\,s^{\gamma}
\qquad \mbox{and} \qquad
\Pk(s) = P(s) + \kappa\left(\frac{s^4}{3}+\frac{s^{\Gamma}}{\Gamma-1}\right).
\label{Pdef}
\end{align}
Here, and throughout, the subscript ``$(\cdot)$'' means with and without the subscript ``$\,\cdot\,$''.
We adopt a similar notation for superscripts.
%In addition, we define
%\begin{align}
%\gamma_{\Gamma} := \max\{\gamma,\Gamma\}.
%\label{gGdef}
%\end{align}

\begin{comment}
\begin{remark}\label{cpsi0}
Since the constant $c(\psi_0):=\mbox{\rm ess.sup}_{\xt \in \Omega} \int_D \psi_0(\xt,\qt) \dq$
will appear repeatedly throughout the paper, for the sake of ease of writing we shall
assume henceforth that $\psi$ has been rescaled by $c(\psi_0)$,
so that $\int_D \psi_0(\xt,\qt) \dq \leq 1$ for a.e. $\xt \in \Omega$.$\quad \diamond$
\end{remark}
\end{comment}

In (\ref{inidata}), $L^r_M(\Omega \times D)$, for $r\in [1,\infty)$,
denotes the Maxwellian-weighted $L^r$ space over $\Omega \times D$ with norm
\[
\| \varphi\|_{L^{r}_M(\Omega\times D)} :=
\left\{ \int_{\Omega \times D} \!\!M\,
|\varphi|^r \dq \dx
\right\}^{\frac{1}{r}}.
\]
Similarly, we introduce $L^r_M(D)$,
the Maxwellian-weighted $L^r$ space over $D$. Letting
\begin{eqnarray}
\| \varphi\|_{H^{1}_M(\Omega\times D)} &:=&
\left\{ \int_{\Omega \times D} \!\!M\, \left[
|\varphi|^2 + \left|\nabx \varphi \right|^2 + \left|\nabq
\varphi \right|^2 \,\right] \dq \dx
\right\}^{\frac{1}{2}}\!\!, \label{H1Mnorm}
\end{eqnarray}
we then set
\begin{eqnarray}
\quad X \equiv H^{1}_M(\Omega \times D)
&:=& \left\{ \varphi \in L^1_{\rm loc}(\Omega\times D): \|
\varphi\|_{H^{1}_M(\Omega\times D)} < \infty \right\}. \label{H1M}
\end{eqnarray}
It is shown in Appendix C of %in the extended version of this paper
\cite{BS2010} (with the set $X$ denoted by $\widehat X$ there)
that
\begin{align}
C^{\infty}(\overline{\Omega \times D})
\mbox{ is dense in } X.
\label{cal K}
\end{align}

We have from Sobolev embedding that
\begin{equation}
H^1(\Omega;L^2_M(D)) \hookrightarrow L^s(\Omega;L^2_M(D)),
\label{embed}
\end{equation}
where $s \in [1,\infty)$ if $d=2$ or $s \in [1,6]$ if $d=3$.
\begin{comment}
Similarly to (\ref{eqinterp}) we have,
with $r$ and $\theta$ as there,
that there is a constant $C$, depending only  on
$\Omega$, $r$ and $d$, such that
%
\begin{equation}\label{MDeqinterp}
\hspace{-0mm}\|\varphi\|_{L^r(\Omega;L^2_M(D))}
\leq C\,
\|\varphi\|_{L^2(\Omega;L^2_M(D))}^{1-\theta}
\,\|\varphi\|_{H^1(\Omega;L^2_M(D))}^\theta \qquad
\mbox{$\forall\varphi \in
H^{1}(\Omega;L^2_M(D))$}.
\end{equation}
%
\end{comment}
In addition, we note that the embeddings
\begin{subequations}
\begin{align}
 H^1_M(D) &\hookrightarrow L^2_M(D) ,\label{wcomp1}\\
H^1_M(\Omega \times D) \equiv
L^2(\Omega;H^1_M(D)) \cap H^1(\Omega;L^2_M(D))
&\hookrightarrow
L^2_M(\Omega \times D) \equiv L^2(\Omega;L^2_M(D))
\label{wcomp2}
\end{align}
\end{subequations}
are compact if $\theta_i \geq 1$, $i=1, \dots,  K$, in (\ref{growth1},b);
see Appendix D
% in the extended version of this paper
of \cite{BS2010}.

We recall the Aubin--Lions--Simon compactness theorem, see, e.g.,
%Temam \cite{Temam} and
Simon \cite%[Theorem 5]
{Simon}. Let $\mathfrak{X}_0$, $\mathfrak{X}$ and
$\mathfrak{X}_1$ be Banach
spaces %, $\mathfrak{X}_i$, $i=0,1$, %reflexive,
with a compact embedding $\mathfrak{X}_0
\hookrightarrow \mathfrak{X}$ and a continuous embedding $\mathfrak{X} \hookrightarrow
\mathfrak{X}_1$. Then, for $\varsigma_i\in[1,\infty)$, $i=0,1$,
%Then, any bounded closed subset $E$ of $L^2(0,T;{\mathcal S}_0)$, such that
the embedding
\begin{align}
%\int_\theta^{T}
%\|\eta(t)-\eta(t-\theta)\|_{\mathcal S_1}^2 \dt \rightarrow 0 \mbox{\quad as $\theta
%\rightarrow 0$,
%\quad uniformly for $\eta \in E$},
&\{\,\eta \in L^{\varsigma_0}(0,T;\mathfrak{X}_0): \frac{\partial \eta}{\partial t}
\in L^{\varsigma_1}(0,T;\mathfrak{X}_1)\,\} \hookrightarrow L^{\varsigma_0}(0,T;\mathfrak{X})
\label{compact1}
\end{align}
is compact. %in $L^2(0,T;{\mathcal S})$.
We recall also a generalization of the Aubin--Lions--Simon compactness theorem
due to Dubinski\u{\i} \cite{Dub}, see also Barrett \& S\"{u}li \cite{BS-DUB}.
{\color{black}Prior to stating Dubinski\u{\i}'s theorem we introduce the necessary prerequisites.

Let $\mathfrak{X}$ be a linear space over the field $\mathbb{R}$ of real numbers, and suppose that $\mathfrak{M}$ is a subset of $\mathfrak{X}$ such that
\begin{equation}
\label{eq:property}
\lambda \eta \in \mathfrak{M}\qquad \mbox{$\forall \lambda \in \mathbb{R}_{\geq 0}$}, ~~ \mbox{$\forall \eta \in \mathfrak{M}$}.
\end{equation}
In other words, whenever $\eta$ is contained in $\mathfrak{M}$, the ray through $\eta$ from the origin of the linear space $\mathfrak{X}$ is also contained in $\mathfrak{M}$.  Note in particular that while any set $\mathfrak{M}$ with property \eqref{eq:property} must contain the zero element of the linear space $\mathfrak{X}$, the set
$\mathfrak{M}$ need not be closed under summation. The linear space $\mathfrak{X}$ will be referred to as the {\em ambient space} for $\mathfrak{M}$. Suppose further that each element $\eta$ of a set $\mathfrak{M}$ with property \eqref{eq:property}
is assigned a certain real number, denoted by $[\eta]_{\mathfrak M}$, such that:
\begin{enumerate}
\item[(i)] $[\eta]_{\mathfrak M} \geq 0$; and $[\eta]_{\mathfrak M} = 0$ if, and only if, $\eta=0$; and
\item[(ii)] $[\lambda \eta]_{\mathfrak M} = \lambda [\eta]_{\mathfrak M}$ for all $\lambda \in \mathbb{R}_{\geq 0}$ and all $\eta \in \mathfrak{X}$.
\end{enumerate}
We shall then say that $\mathfrak{M}$ is a {\em seminormed set}. A subset $\mathfrak{B}$ of a seminormed
set $\mathfrak{M}$ is said to be {\em bounded} if there exists a positive constant $K_0$ such that $[\eta]_{\mathfrak M} \leq K_0$ for all $\eta \in \mathfrak{B}$.  A seminormed set $\mathfrak{M}$ contained in a normed linear space $\mathfrak{X}$ with norm $\|\cdot\|_{\mathfrak X}$ is said to be {\em embedded in $\mathfrak{X}$}, and we write $\mathfrak{M} \subset \mathfrak{X}$, if there exists a $K_0 \in \mathbb{R}_{>0}$ such that
\[  \|\eta \|_{\mathfrak X} \leq K_0 [\eta]_{\mathfrak M}\qquad \forall \eta \in \mathfrak{M}.\]
Thus, bounded subsets of a seminormed set are also bounded subsets of the ambient normed linear space the seminormed set is embedded in.
The embedding of a seminormed set $\mathfrak{M}$ into a normed linear space $\mathfrak{X}$ is said to be {\em compact} if from any bounded, infinite set of elements of $\mathfrak{M}$ one can extract a subsequence that converges in $\mathfrak{X}$.

\begin{theorem}[Dubinski\u{\i}'s compactness theorem]
Suppose that $\mathfrak{M}$ is
%$\mathfrak{X}_0$, $\mathfrak{X}$ and
%$\mathfrak{X}_1$ be Banach
%spaces, $\mathfrak{X}_i$, $i=0,1$, reflexive,
%the Banach space ${\mathfrak X}_0$ is replaced by
a semi-normed set that is compactly embedded  into a Banach space $\mathfrak{X}$, which is, in turn, continuously embedded into a Banach space $\mathfrak{X}_1$.
Then, for $\varsigma_i\in[1,\infty)$, $i=0,1$,
the embedding
\begin{align}
&\{\,\eta \in L^{\varsigma_0}(0,T;\mathfrak{M}): \frac{\partial \eta}{\partial t}
\in L^{\varsigma_1}(0,T;\mathfrak{X}_1)\,\} \hookrightarrow L^{\varsigma_0}(0,T;\mathfrak{X})
\label{Dubinskii}
\end{align}
is compact.
\end{theorem}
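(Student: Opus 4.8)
The plan is to reduce the statement to a compactness assertion for bounded sequences and then to run an Aubin--Lions--Simon--type argument, with the linear structure of the first space replaced throughout by compactness of the sublevel sets of the seminorm $[\,\cdot\,]_{\mathfrak M}$. First I would observe that it suffices to show that every sequence $(\eta_n)_n$ for which $\int_0^T [\eta_n(t)]_{\mathfrak M}^{\varsigma_0}\,\mathrm{d}t + \int_0^T \|\tfrac{\partial}{\partial t}\eta_n(t)\|_{\mathfrak X_1}^{\varsigma_1}\,\mathrm{d}t$ is uniformly bounded has a subsequence converging in $L^{\varsigma_0}(0,T;\mathfrak X)$. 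The basic tool is an Ehrling-type inequality for the seminormed set: for every $\varepsilon>0$ there is a $c_\varepsilon>0$ with $\|\eta\|_{\mathfrak X}\le\varepsilon\,[\eta]_{\mathfrak M}+c_\varepsilon\,\|\eta\|_{\mathfrak X_1}$ for all $\eta\in\mathfrak M$. This follows by contradiction: a putative counterexample sequence, rescaled using the homogeneity property~(ii) of $[\,\cdot\,]_{\mathfrak M}$ so as to have unit $\mathfrak X$-norm, is then bounded in $\mathfrak M$, hence admits an $\mathfrak X$-convergent subsequence by compactness of the embedding of $\mathfrak M$ into $\mathfrak X$, while its $\mathfrak X_1$-norm tends to $0$; the limit would have unit $\mathfrak X$-norm and vanishing $\mathfrak X_1$-norm, contradicting the continuity of $\mathfrak X\hookrightarrow\mathfrak X_1$.

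Next I would extract the temporal regularity and the spatial compactness. Since $\tfrac{\partial}{\partial t}\eta_n\in L^{\varsigma_1}(0,T;\mathfrak X_1)$, each $\eta_n$ has a representative in $C([0,T];\mathfrak X_1)$ with $\|\eta_n(t+h)-\eta_n(t)\|_{\mathfrak X_1}\le\int_t^{t+h}\|\tfrac{\partial}{\partial s}\eta_n(s)\|_{\mathfrak X_1}\,\mathrm{d}s$, and a Hölder or uniform-integrability estimate (treating $\varsigma_1>1$ and $\varsigma_1=1$ separately) gives $\sup_n\|\eta_n(\cdot+h)-\eta_n(\cdot)\|_{L^{\varsigma_0}(0,T-h;\mathfrak X_1)}\to0$ as $h\to0_+$. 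For the spatial compactness I would use the following surrogate for the missing \emph{linear} Ehrling trade-off: for $R>0$, Chebyshev's inequality gives $|\{t\in(0,T):[\eta_n(t)]_{\mathfrak M}>R\}|\le CR^{-\varsigma_0}$ uniformly in $n$, while off this set $\eta_n(t)$ ranges in $\mathcal K_R:=\{v\in\mathfrak M:[v]_{\mathfrak M}\le R\}$, whose $\mathfrak X$-closure $\overline{\mathcal K_R}$ is compact; since the continuous injection $\overline{\mathcal K_R}\hookrightarrow\mathfrak X_1$ is a continuous injection of a compact metric space, it is a homeomorphism onto its image, so $\|x-y\|_{\mathfrak X_1}$ small forces $\|x-y\|_{\mathfrak X}$ small, uniformly for $x,y\in\overline{\mathcal K_R}$.

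I would then verify the hypotheses of Simon's compactness criterion in $L^{\varsigma_0}(0,T;\mathfrak X)$. Boundedness there is immediate from $\|\eta\|_{\mathfrak X}\le K_0[\eta]_{\mathfrak M}$. Relative compactness in $\mathfrak X$ of the local averages $\tfrac{1}{t_2-t_1}\int_{t_1}^{t_2}\eta_n\,\mathrm{d}t$ is obtained by splitting the integral over $\{[\eta_n(t)]_{\mathfrak M}\le R\}$ and its complement: the first part lies in a fixed compact subset of $\mathfrak X$ (the image of $[0,1]\times\overline{\mathrm{conv}}\,\overline{\mathcal K_R}$ under scalar multiplication, using Mazur's theorem that the closed convex hull of a compact set is compact), and the $\mathfrak X$-norm of the second part is at most $K_0 C^{1/\varsigma_0}(CR^{-\varsigma_0})^{1-1/\varsigma_0}$, which tends to $0$ as $R\to\infty$. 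Uniform smallness of the time-translates in $L^{\varsigma_0}(0,T-h;\mathfrak X)$ is obtained by splitting $(0,T-h)$ into the set where both $[\eta_n(t)]_{\mathfrak M}$ and $[\eta_n(t+h)]_{\mathfrak M}$ are $\le R$ --- there the $\mathfrak X_1$-smallness of the previous paragraph upgrades to $\mathfrak X$-smallness off a further subset of measure $\le c\,h$ on which the $\mathfrak X_1$-difference exceeds the uniform-continuity threshold --- and the complementary set, of measure $\le 2CR^{-\varsigma_0}$, on which the difference is bounded in $\mathfrak X$ by $K_0([\eta_n(t)]_{\mathfrak M}+[\eta_n(t+h)]_{\mathfrak M})$. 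Letting $R\to\infty$ and $h\to0_+$ in the right order, Simon's criterion produces a convergent subsequence, which by a standard diagonal argument proves the compactness of the embedding \eqref{Dubinskii}.

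The main obstacle, and the reason the classical Aubin--Lions--Simon proof cannot be transcribed verbatim, is precisely that $\mathfrak M$ is only a \emph{seminormed set} --- closed under nonnegative scaling but not under subtraction --- so the Ehrling inequality cannot be applied to a difference $\eta_n(t+h)-\eta_n(t)$. All genuine subtractions must therefore be performed either in the linear space $\mathfrak X_1$ or within the compact sublevel sets $\mathcal K_R$, where the $\mathfrak X$- and $\mathfrak X_1$-topologies coincide; organising the two splittings above so that these are the only places differences occur is the technical heart of the argument. A secondary subtlety is that $\{[\eta_n(\cdot)]_{\mathfrak M}^{\varsigma_0}\}$ is only bounded, not equi-integrable, in $L^1(0,T)$, so the quantities $\int_{\{[\eta_n]_{\mathfrak M}>R\}}[\eta_n]_{\mathfrak M}^{\varsigma_0}\,\mathrm{d}t$ need not be small uniformly in $n$ (this matters in particular when $\varsigma_0=1$); one first passes to a subsequence, e.g. via Chacon's biting lemma, on which these tails are uniformly small outside a set of arbitrarily small measure, and only then runs the splitting arguments of the preceding paragraphs.
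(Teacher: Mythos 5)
The paper does not prove this theorem---it states it and cites Dubinski\u{\i} \cite{Dub} and Barrett \& S\"uli \cite{BS-DUB}---so there is no in-paper proof to compare against; I assess your sketch against the argument in the latter reference.

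You correctly identify the central obstruction: $\mathfrak M$ is not closed under subtraction, so the one-element Ehrling inequality $\|\eta\|_{\mathfrak X}\le\varepsilon[\eta]_{\mathfrak M}+c_\varepsilon\|\eta\|_{\mathfrak X_1}$ that you prove cannot be applied to $\eta_n(t+h)-\eta_n(t)$. The device used in \cite{BS-DUB}, however, is not sublevel-set splitting but a \emph{two-element} Ehrling inequality: for every $\varepsilon>0$ there is $c_\varepsilon>0$ such that
\[
\|\varphi-\psi\|_{\mathfrak X}\le\varepsilon\bigl([\varphi]_{\mathfrak M}+[\psi]_{\mathfrak M}\bigr)+c_\varepsilon\,\|\varphi-\psi\|_{\mathfrak X_1}\qquad\forall\,\varphi,\psi\in\mathfrak M.
\]
This follows by the very contradiction argument you use, except that one normalizes the pair by the \emph{single} scalar $\lambda_k:=\bigl([\varphi_k]_{\mathfrak M}+[\psi_k]_{\mathfrak M}\bigr)^{-1}$ (legitimate by property (ii) of the seminorm); the rescaled pair then lies in the $\mathfrak X$-compact sublevel set $\{[\cdot]_{\mathfrak M}\le1\}$, one extracts $\mathfrak X$-convergent subsequences of $\lambda_k\varphi_k$ and of $\lambda_k\psi_k$ separately, and injectivity of $\mathfrak X\hookrightarrow\mathfrak X_1$ delivers the contradiction. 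Applied pointwise in $t$ to $\varphi=\eta_n(t+h)$, $\psi=\eta_n(t)$ and integrated, this yields the translation estimate in $L^{\varsigma_0}(0,T-h;\mathfrak X)$ for \emph{every} $\varsigma_0\in[1,\infty)$, with no splitting of the time interval and no equi-integrability requirement on $\{[\eta_n]_{\mathfrak M}^{\varsigma_0}\}$; Simon's criterion then applies verbatim.

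Your workaround has a genuine gap in the translation estimate, and it is not confined to $\varsigma_0=1$. On the bad set where at least one of $[\eta_n(t)]_{\mathfrak M}$, $[\eta_n(t+h)]_{\mathfrak M}$ exceeds $R$, you bound $\|\eta_n(t+h)-\eta_n(t)\|_{\mathfrak X}\le K_0\bigl([\eta_n(t)]_{\mathfrak M}+[\eta_n(t+h)]_{\mathfrak M}\bigr)$, so the $L^{\varsigma_0}$ contribution of this set involves $\int_{\rm bad}[\eta_n]_{\mathfrak M}^{\varsigma_0}\,{\rm d}t$: an integral of an $L^1(0,T)$-bounded function over precisely the set where it is large. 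The H\"older trade-off you invoke rescues only the local-average condition (and only for $\varsigma_0>1$); for the translation estimate the exponent is already critical and nothing short of equi-integrability of $\{[\eta_n]_{\mathfrak M}^{\varsigma_0}\}$ makes the tail small uniformly in $n$, and this is not assumed. Chacon's biting lemma does not close the gap either: it gives equi-integrability of $[\eta_{n_j}]_{\mathfrak M}^{\varsigma_0}$ on $(0,T)\setminus E_k$ with $|E_k|\to0$, but no uniform smallness of $\|\eta_{n_j}\|_{L^{\varsigma_0}(E_k;\mathfrak X)}$---which is exactly where any concentration would sit---so precompactness in $L^{\varsigma_0}((0,T)\setminus E_k;\mathfrak X)$ for each $k$ does not assemble into precompactness in $L^{\varsigma_0}(0,T;\mathfrak X)$. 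The missing ingredient---that the time-derivative bound forbids concentration---is exactly what the two-element Ehrling inequality encodes and your argument never uses.
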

}

Let $\mathfrak{X}$ be a Banach space.
We shall denote by $C_{w}([0,T];\mathfrak{X})$ the set of
all functions $\eta\in L^\infty(0,T;\mathfrak{X})$ such that $t \in [0,T] \mapsto
\langle \varphi, \eta(t) \rangle_{\mathfrak{X}} \in \mathbb{R}$ is continuous on $[0,T]$
for all $\varphi \in \mathfrak{X}'$, the dual space of $\mathfrak{X}$.
Here, and throughout,  $\langle \cdot,\cdot \rangle_{\mathfrak{X}}$
denotes the duality pairing between $\mathfrak{X}'$ and $\mathfrak{X}$.
Whenever $\mathfrak{X}$ has a predual, $\mathfrak{E}$, say, (viz. $\mathfrak{E}'=\mathfrak{X}$),
we shall denote by $C_{w\ast}([0,T];\mathfrak{X})$ the set of
all functions $\eta \in L^\infty(0,T;\mathfrak{X})$ such that $t \in [0,T] \mapsto \langle \eta(t),
\zeta \rangle_{\mathfrak{E}} \in \mathbb{R}$ is continuous on $[0,T]$ for all $\zeta \in \mathfrak{E}$.
%The next result will play an important role in what follows.
We note the following result.

\begin{lemma}\label{lemma-strauss}
Let $\mathfrak{X}$ and $\mathfrak{Y}$ be Banach spaces.

\begin{itemize}
\item[(a)] Assume that the space $\mathfrak{X}$ is reflexive and is continuously embedded
in the space $\mathfrak{Y}$; then,
$L^\infty(0,T; \mathfrak{X}) \cap C_{w}([0,T]; \mathfrak{Y}) = C_{w}([0,T];\mathfrak{X})$.
\item[(b)] Assume that $\mathfrak{X}$ has a separable predual $\mathfrak{E}$ and $\mathfrak{Y}$
has a predual $\mathfrak{F}$ such that
$\mathfrak{F}$ is continuously embedded in $\mathfrak{E}$; then,
$L^\infty(0,T;\mathfrak{X}) \cap C_{w\ast}([0,T]; \mathfrak{Y}) = C_{w\ast}([0,T];\mathfrak{X})$.
\end{itemize}
\end{lemma}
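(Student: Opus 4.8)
The plan is to prove (a) and (b) in parallel, since they share the same architecture; only the compactness ingredient differs — sequential weak compactness of norm-bounded sets in the reflexive space $\mathfrak{X}$ in case (a), versus sequential weak-$\ast$ compactness of norm-bounded sets in $\mathfrak{X}=\mathfrak{E}'$, which follows from the separability of the predual $\mathfrak{E}$ via the Banach--Alaoglu theorem, in case (b). The inclusions ``$\subseteq$'' of $C_w([0,T];\mathfrak{X})$ (resp. $C_{w\ast}([0,T];\mathfrak{X})$) into the intersection on the right are immediate: if $\eta\in C_w([0,T];\mathfrak{X})$ then $\eta\in L^\infty(0,T;\mathfrak{X})$ by definition, and for $\varphi\in\mathfrak{Y}'$ the functional $\varphi|_{\mathfrak{X}}$ lies in $\mathfrak{X}'$ by continuity of $\mathfrak{X}\hookrightarrow\mathfrak{Y}$, so $t\mapsto\langle\varphi,\eta(t)\rangle_{\mathfrak{Y}}=\langle\varphi|_{\mathfrak{X}},\eta(t)\rangle_{\mathfrak{X}}$ is continuous; the argument in (b) is identical, testing against $\zeta\in\mathfrak{F}\subseteq\mathfrak{E}$ and using that the bounded map $\mathfrak{X}=\mathfrak{E}'\to\mathfrak{F}'=\mathfrak{Y}$ dual to $\mathfrak{F}\hookrightarrow\mathfrak{E}$ acts by restriction of functionals, so $\langle\eta(t),\zeta\rangle_{\mathfrak{F}}=\langle\eta(t),\zeta\rangle_{\mathfrak{E}}$ for $\zeta\in\mathfrak{F}$.

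For the reverse inclusion, fix $\eta$ in $L^\infty(0,T;\mathfrak{X})\cap C_w([0,T];\mathfrak{Y})$ (resp. $\cap\,C_{w\ast}([0,T];\mathfrak{Y})$) and set $c_\eta:=\|\eta\|_{L^\infty(0,T;\mathfrak{X})}$. As an element of $C_w([0,T];\mathfrak{Y})$ (resp. $C_{w\ast}([0,T];\mathfrak{Y})$), $\eta$ has a well-defined value $\eta(t)\in\mathfrak{Y}$ for \emph{every} $t\in[0,T]$, and $\eta(t)\in\mathfrak{X}$ with $\|\eta(t)\|_{\mathfrak{X}}\le c_\eta$ for a.e.\ $t\in[0,T]$. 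The first task is to upgrade this to all $t$: given $t_0\in[0,T]$, choose $t_n\to t_0$ within the full-measure set on which $\|\eta(t_n)\|_{\mathfrak{X}}\le c_\eta$; by reflexivity of $\mathfrak{X}$ in case (a), or by Banach--Alaoglu applied to the separable predual $\mathfrak{E}$ in case (b), some subsequence $\eta(t_{n_k})$ converges weakly (resp. weakly-$\ast$) in $\mathfrak{X}$ to a limit $\xi$ with $\|\xi\|_{\mathfrak{X}}\le c_\eta$, by (weak-$\ast$) lower semicontinuity of the norm. Continuity of $\mathfrak{X}\hookrightarrow\mathfrak{Y}$ transfers this convergence to the weak (resp. weak-$\ast$) topology of $\mathfrak{Y}$; but $\eta(t_{n_k})\to\eta(t_0)$ in that same topology by the assumed continuity of $\eta$ into $\mathfrak{Y}$, and since $\mathfrak{Y}'$ (resp. $\mathfrak{F}$) separates the points of $\mathfrak{Y}$ we conclude $\xi=\eta(t_0)$. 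Hence $\eta(t_0)\in\mathfrak{X}$ with $\|\eta(t_0)\|_{\mathfrak{X}}\le c_\eta$ for every $t_0\in[0,T]$.

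It remains to establish weak (resp. weak-$\ast$) continuity of $\eta$ into $\mathfrak{X}$. Fix $\varphi\in\mathfrak{X}'$ (resp. $\zeta\in\mathfrak{E}$) and $t_0\in[0,T]$, and let $t_n\to t_0$ in $[0,T]$; it suffices to show $\langle\varphi,\eta(t_n)\rangle_{\mathfrak{X}}\to\langle\varphi,\eta(t_0)\rangle_{\mathfrak{X}}$ (resp. $\langle\eta(t_n),\zeta\rangle_{\mathfrak{E}}\to\langle\eta(t_0),\zeta\rangle_{\mathfrak{E}}$). If this failed, some subsequence would remain at distance at least $\delta>0$ from the putative limit; but $\sup_n\|\eta(t_n)\|_{\mathfrak{X}}\le c_\eta$ by the previous step, so — again by reflexivity in (a) or Banach--Alaoglu in (b) — a further subsequence converges weakly (resp. weakly-$\ast$) in $\mathfrak{X}$, and the identification argument of the previous paragraph forces its limit to be $\eta(t_0)$; testing against $\varphi$ (resp. $\zeta$) then contradicts the choice of $\delta$. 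Therefore $\eta\in C_w([0,T];\mathfrak{X})$ (resp. $C_{w\ast}([0,T];\mathfrak{X})$), which together with the trivial inclusion completes the proof.

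The only genuinely delicate point is the middle step: showing that the canonical $\mathfrak{Y}$-valued (weakly, resp. weakly-$\ast$) continuous representative actually takes values in $\mathfrak{X}$ with the uniform bound $c_\eta$ at \emph{every} time, not merely almost everywhere. This is exactly where the structural hypotheses are indispensable — reflexivity of $\mathfrak{X}$ in (a), and separability of the predual $\mathfrak{E}$ together with the factorisation $\mathfrak{F}\hookrightarrow\mathfrak{E}$ in (b) — as they simultaneously furnish sequential weak (resp. weak-$\ast$) compactness of bounded subsets of $\mathfrak{X}$ and allow the extracted limits to be identified inside the weaker space $\mathfrak{Y}$. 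Everything else reduces to a standard subsequence extraction.
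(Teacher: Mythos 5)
Your argument is correct and is precisely the standard subsequence-extraction proof found in Strauss and in Lions--Magenes (Lemma 8.1, Ch.~3, Sec.~8.4), which the paper simply cites for part (a) while observing that part (b) follows analogously via the sequential Banach--Alaoglu theorem — exactly the compactness tool you invoke. The crux you identify — upgrading the a.e.\ norm bound $\|\eta(t)\|_{\mathfrak{X}}\le c_\eta$ to an everywhere bound by weak (resp.\ weak-$\ast$) sequential compactness together with separation of points in $\mathfrak{Y}$, and then bootstrapping to weak (resp.\ weak-$\ast$) continuity of $\eta$ into $\mathfrak{X}$ — is exactly the content of the cited result.
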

\begin{proof}
Part (a) is due to Strauss \cite{Strauss} (cf. Lions \& Magenes \cite{Lions-Magenes},
Lemma 8.1, Ch. 3, Sec. 8.4); part (b) is proved analogously, {\em via}
the sequential Banach--Alaoglu theorem.
\end{proof}

We note from Lemma \ref{lemma-strauss}(a) above and Lemma 6.2
in Novotn\'{y} \& Stra\v{s}kraba \cite{NovStras} \arxiv{(or Lemma \ref{Le-E-1} in Appendix \ref{sec:App-E})} that if
$\{\eta_n\}_{n \in \mathbb N}$ is such that
\begin{subequations}
\begin{align}
\|\eta_n\|_{L^\infty(0,T;L^r(\Omega))} +
\left\|\frac{\partial \eta_n}{\partial t}\right\|_{L^\varsigma(0,T;W^{1,\upsilon}_0(\Omega)')} \leq C,
\qquad r,\varsigma, \upsilon \in (1,\infty),
\label{Cwcoma}
\end{align}
then there exists a subsequence (not indicated) of $\{\eta_n\}_{n \in \mathbb N}$
and an $\eta \in C_w([0,T];L^r(\Omega))$ such that
\begin{align}
\eta_n \rightarrow \eta \qquad \mbox{in } C_w([0,T];L^r(\Omega)).
\label{Cwcomb}
\end{align}
\end{subequations}

Throughout we will assume that
(\ref{inidata}) hold,
so that (\ref{additional-1}) and (\ref{wcomp1},b) hold.
We note for future reference that (\ref{eqCtt}) and
(\ref{additional-1}) yield that, for
$\varphi \in L^2_M(\Omega \times D)$,
\begin{align}
\label{eqCttbd}
\int_{\Omega} |\Ctt_i( M\,\varphi)|^2\,\dx & =
\int_{\Omega} %\sum_{i=1}^d \, \sum_{j=1}^d
\left|
\int_{D} M\,\varphi \,U_i'\,\qt_{i}\,\qt_{i}^{\rm T} \dq \right|^2 \dx
\nonumber
\\
%\bet
&\leq
\left(\int_{D} M\,(U_i')^2 \,|\qt_i|^4 \,\dq\right)
\left(\int_{\Omega \times D} M\,|\varphi|^2 \dq \dx\right)
\nonumber \\
%\bet
& \leq C
\left(\int_{\Omega \times D} M\,|\varphi|^2 \dq \dx\right),
%\nonumber
\qquad i=1, \dots,  K,
\end{align}
where $C$ is a positive constant.

We state a simple integration-by-parts formula.

\begin{lemma} Let $\varphi \in H^1_M(D)$ and suppose that $\Btt \in
\mathbb{R}^{d\times d}$; then,
\begin{equation}\label{intbyparts}
\int_D M\, \sum_{i=1}^K (\Btt \qt_i) \cdot \nabqi \varphi \dd \qt = \int_D M\,
\varphi \left[ \left(\sum_{i=1}^KU_i'(\textstyle{\frac{1}{2}|\qt_i|^2})\,  \qt_i  \qt_i^{\rm T}\right)
- K\,\Itt
\right]
: \Btt \dq.
\end{equation}
\end{lemma}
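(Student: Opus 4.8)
The plan is to prove \eqref{intbyparts} first for smooth test functions $\varphi$ by a direct integration by parts --- exploiting the product structure $M(\qt)=\prod_{i=1}^K M_i(\qt_i)$ and the degeneracy of $M$ on $\partial D$ --- and then to extend it to all $\varphi\in H^1_M(D)$ by density.

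First I would fix $\varphi\in C^\infty(\overline D)$ and $i\in\{1,\dots,K\}$ and integrate by parts in the $i$th block of variables, treating the remaining ones as parameters. Since $M$ degenerates on $\partial D_i$, I would not apply the divergence theorem on $D_i$ directly but on the truncated ball $D_i^\rho:=\{\qt_i\in D_i:\operatorname{dist}(\qt_i,\partial D_i)>\rho\}$, $\rho>0$, which gives
\[
\int_{D_i^\rho} M\,(\Btt\,\qt_i)\cdot\nabqi\varphi \dd \qt_i
= -\int_{D_i^\rho}\varphi\,\nabqi\cdot(M\,\Btt\,\qt_i)\dd \qt_i
+ \int_{\partial D_i^\rho} M\,\varphi\,(\Btt\,\qt_i)\cdot\frac{\qt_i}{|\qt_i|}\dd S .
\]
Then I would use $\nabqi\cdot(\Btt\,\qt_i)=\mathfrak{tr}(\Btt)=\Itt:\Btt$, the identity $\nabqi M=-M\,U_i'(\tfrac{1}{2}|\qt_i|^2)\,\qt_i$ from \eqref{eqM}, and $\qt_i^{\rm T}\Btt\,\qt_i=(\qt_i\qt_i^{\rm T}):\Btt$ (the matrix $\qt_i\qt_i^{\rm T}$ being symmetric) to rewrite the first term on the right as $\int_{D_i^\rho} M\,\varphi\,[\,U_i'(\tfrac{1}{2}|\qt_i|^2)\,(\qt_i\qt_i^{\rm T}):\Btt-\Itt:\Btt\,]\dd \qt_i$. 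For the boundary term, \eqref{growth1} gives $0\le M_i\le c_{i2}\,\rho^{\theta_i}$ on $\partial D_i^\rho$, while $\varphi$, $\Btt\,\qt_i$ and $\prod_{j\ne i}M_j$ are bounded there, so it is $O(\rho^{\theta_i})\to 0$ as $\rho\to 0_+$. Passing to the limit $\rho\to 0_+$, integrating over the remaining blocks, and summing over $i=1,\dots,K$ would then yield \eqref{intbyparts} for every $\varphi\in C^\infty(\overline D)$.

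The second step is a density argument. Both sides of \eqref{intbyparts} are bounded linear functionals of $\varphi$ on $H^1_M(D)$: the left-hand side is controlled by $C\,|\Btt|\,\|\nabq\varphi\|_{L^2_M(D)}$ since $|\qt_i|\le b_i^{1/2}$ on $D_i$, while for the right-hand side \eqref{growth1}--\eqref{growth2} give $M_i\,U_i'(\tfrac{1}{2}|\qt_i|^2)\le c_{i2}c_{i4}\,[\operatorname{dist}(\qt_i,\partial D_i)]^{\theta_i-1}$, which is bounded precisely because $\theta_i>1$, so the right-hand side is controlled by $C\,|\Btt|\,\|\varphi\|_{L^1_M(D)}\le C\,|\Btt|\,\|\varphi\|_{L^2_M(D)}$. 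Since $C^\infty(\overline D)$ is dense in $H^1_M(D)$ --- the $D$-only analogue of the density property \eqref{cal K}, proved exactly as in Appendix C of \cite{BS2010} --- the identity of the first step extends to all $\varphi\in H^1_M(D)$, completing the proof.

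I do not anticipate a genuine obstacle here; the only delicate point is the integration by parts against the Maxwellian weight, which degenerates on $\partial D_i$, and this is handled by the truncation-and-limit device in the first step. The hypothesis $\theta_i>1$ enters in the density step, where it guarantees that the right-hand side of \eqref{intbyparts} is finite and continuous in $\varphi$ with respect to the $H^1_M(D)$-norm (for the vanishing of the boundary term in the first step, $\theta_i>0$ already suffices).
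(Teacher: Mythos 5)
Your two-step plan --- verify the identity for smooth test functions by integration by parts, then extend by density --- is the plan the paper follows. Two remarks on the execution.

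The truncation device in Step~1 is unnecessary. The conditions \eqref{growth1}--\eqref{growth2} with $\theta_i>1$ imply that $M\in C^1(\overline D)$ with $M|_{\partial D}=0$: indeed $\nabqi M = -M\,U_i'(\tfrac12|\qt_i|^2)\,\qt_i$ and so $|\nabqi M_i|\leq C\,[\mbox{dist}(\qt_i,\partial D_i)]^{\theta_i-1}\to 0$ near $\partial D_i$. Hence, for $\varphi\in C^\infty(\overline D)$, the product $M\varphi$ lies in $C^1(\overline D)$ and vanishes on $\partial D$, so the classical divergence theorem applies directly with no boundary term, without passing to the inflated domains $D_i^\rho$. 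Your route is not wrong, only more elaborate than needed.

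The density step, however, has a genuine gap. You claim the right-hand side of \eqref{intbyparts} is bounded by $C\,|\Btt|\,\|\varphi\|_{L^1_M(D)}$, arguing from the boundedness of $M_i\,U_i'$. But the integrand contains the factor $M\,U_i' = (M_i\,U_i')\,\prod_{j\neq i}M_j$, so the pointwise bound on $M_i\,U_i'$ only controls $\int_D M\,|\varphi|\,U_i'\dq$ by $\int_D \bigl(\prod_{j\neq i}M_j\bigr)\,|\varphi|\dq$, i.e.\ by the $L^1$ norm of $\varphi$ against the weight $M/M_i$ --- not the $L^1_M$ norm, since the factor $M_i$ (which vanishes on $\partial D_i$) has been absorbed by $U_i'$. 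For a general $\varphi\in L^2_M(D)$ this quantity need not be finite, and the bound you assert is equivalent to $U_i'$ being pointwise bounded, which it is not. The correct estimate is the Cauchy--Schwarz inequality in conjunction with \eqref{additional-1}:
\begin{equation*}
\int_D M\,|\varphi|\,U_i'(\textstyle\frac12|\qt_i|^2)\dq \;\leq\; \Bigl(\int_D M\,[U_i']^2\dq\Bigr)^{\frac12}\,\|\varphi\|_{L^2_M(D)},
\end{equation*}
and the first factor is finite precisely because $\theta_i>1$ guarantees \eqref{additional-1}. This is the ingredient the paper invokes. So the conclusion you reach --- control by $C\,|\Btt|\,\|\varphi\|_{L^2_M(D)}$ --- is correct, but not via $\|\varphi\|_{L^1_M(D)}$, and the $L^2_M$-integrability of $U_i'$ from \eqref{additional-1} is the fact you need to cite.
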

\begin{proof}
By Theorem C.1 in Appendix C of Barrett \& S\"uli \cite{BS2010},
the set  $C^\infty(\overline D)$ is dense in $H^1_M(D)$; hence,
for any $\hat\varphi \in H^1_M(D)$ there exists a sequence
$\{\hat\varphi_n\}_{n \geq 0} \subset C^\infty(\overline D)$
converging to $\hat\varphi$ in $H^1_M(D)$. As $M \in C^1(\overline{D})$
%\footnote{$C^1(\overline{D})$ may be good enough.
%State precisely earlier in Sec.\ 1 the regularity that we require on $M$.}
and vanishes on $\partial D$, the same is true of each of the functions
$M\hat\varphi_n$, $n \geq 1$. By replacing $\hat\varphi$ by $\hat\varphi_n$ on both sides of
\eqref{intbyparts}, the resulting identity is easily verified by using
the classical divergence theorem
for smooth functions, noting \eqref{eqM} and that $M\hat\varphi_n$
vanishes on $\partial D$.
Then, \eqref{intbyparts} itself follows by
letting $n \rightarrow \infty$, recalling the definition of
the norm in $H^1_M(D)$ and hypothesis \eqref{additional-1}.
\end{proof}

We now formulate our discrete-in-time approximation of problem
(P$_{\kappa,\alpha,L})$ for fixed parameters $\kappa,\,\alpha \in (0,1]$
and $L > 1$. For any $T>0$ and $N \geq 1$, let
$N \,\Delta t=T$ and $t_n = n \, \Delta t$, $n=0, \dots,  N$.
To prove existence of a solution under minimal
smoothness requirements on the initial data, recall (\ref{inidata}),
we regularize the initial data in terms of the parameters $\alpha$, $\Delta t$ and $L$.
For $\rho_0 \in L^\infty_{\geq 0}(\Omega)$, we assign to it the function
$\rho^0(\alpha) \in H^1(\Omega)$, appearing in (\ref{equ00},f),
defined as the unique solution of
\begin{alignat}{2}
\int_{\Omega} \left[ \rho^0\eta + \alpha\,
\nabx \rho^0 \cdot \nabx \eta \right] \dx
&=   \int_{\Omega} \rho_0 \,\eta \dx \qquad
&&
\red{\forall \eta \in H^1(\Omega).}
\label{projrho0}
\end{alignat}
Hence,
\begin{subequations}
\begin{align}
&\rho^0(\cdot) \in [0,\|\rho_0\|_{L^\infty(\Omega)}],
% \qquad \forall \eta \in L^1(\Omega),\quad
%\int_{\Omega} (\rho^0-\rho_0)\,\eta\,\dx \rightarrow 0
%\quad \mbox{as } \alpha \rightarrow 0_+
\label{rho0conv}\\
\mbox{and} \qquad & \rho^0 \rightarrow \rho_0 \quad
\mbox{weakly-$\star$ in } L^\infty(\Omega), \quad
\mbox{strongly in }L^2(\Omega),\quad \mbox{as }
\alpha \rightarrow 0_+.
\label{rho0convL2}
\end{align}
%\end{subequations}
\red{Therefore, by \eqref{eqLinterp} and (\ref{rho0conv},b), also
\begin{align}
\qquad\quad \rho^0 \rightarrow \rho_0\quad  \mbox{strongly in $L^p(\Omega)$}, \quad \mbox{as $\alpha \rightarrow 0_+,\quad p \in [1,\infty)$.}
\label{rho0convLp}
\end{align}
}
\end{subequations}

Similarly, for $\ut_0 \in \Lt^2(\Omega)$
we assign to it the function $\ut^0 = \ut^0(\alpha,\Delta t) \in \Ht^1_0(\Omega)$,
defined as the unique solution of
\begin{alignat}{2}
\int_{\Omega} \left[ \rho^0\ut^0 \cdot \vt + \Delta t\,
\nabxtt \ut^0 : \nabxtt \vt \right] \dx
&=   \int_{\Omega} \rho^0 \ut_0 \cdot \vt \dx \qquad
&&\forall \vt \in \Ht^1_0(\Omega).
\label{proju0}
\end{alignat}
Hence, it follows from (\ref{proju0}) and (\ref{rho0conv}) that
there exists a $C \in {\mathbb R}_{>0}$, independent of $\Delta t$, $L$, $\alpha$ and $\kappa$,
such that
\begin{subequations}
\begin{align}
&\int_{\Omega} \left[\rho^0 |\ut^0|^2 + \Delta t \,|\unabtt^0|^2 \right]\dx %+
\leq
\int_{\Omega} \rho^0|\ut_0|^2 \dx
\leq C,
\label{idatabd}\\
\mbox{and}\qquad &\int_\Omega \rho^0(\ut^0-\ut_0)\cdot\vt \,\dx \rightarrow 0\quad \forall \vt \in\Lt^2(\Omega),
\quad \mbox{as $\Delta t \rightarrow 0_+$}.
\label{ut0conv}
\end{align}
\end{subequations}
Analogously, %to defining  $\ut^0 \in \Ht^1_0(\Omega)$ for a given initial velocity field
%$\ut_0 \in \Lt^2(\Omega)$,
we shall assign a certain `smoothed' initial
datum, $\hpsi^0 = \hpsi^0(L,\Delta t) \in H^1_M(\Omega \times D)$,
to the given initial datum $\hpsi_0 = \frac{\psi_0}{M}$ such that
\begin{align}
&\int_{\Omega \times D} M \left[ %\zeta(\rho_0)\,
\hpsi^0\, \varphi +
\Delta t\, \left( \nabx \hpsi^0 \cdot \nabx \varphi +
\nabq \hpsi^0 \cdot \nabq \varphi
\right) \right] \dq \dx \nonumber\\
&\hspace{2in}= \int_{\Omega \times D} M \,%\zeta(\rho_0)\,
\beta^{L}(\hpsi_0)\, \varphi \dq \dx
%\nonumber \\
%& \hspace{4.4in}
\qquad \forall \varphi \in
H^1_M(\Omega \times D).
\label{psi0}
\end{align}
For $r\in [1,\infty)$, let
\begin{align}
Z_r &:= \left\{ \varphi \in L^r_M(\Omega \times D) :
\varphi
\geq 0 \mbox{ a.e.\ on } \Omega \times D
%\mbox{ and }
%\nonumber %\\ & \hspace{1.4in}
%\int_D M(\qt)\,\varphi(\xt,\qt) \dq
%\leq 1
%\mbox{ for a.e.\ } \xt \in \Omega
%\in L^\infty(\Omega)
%\right.\\
%& \qquad \left.
%\mbox{and}\quad \int_{\Omega \times D} M(\qt) \,\varphi(\xt,\qt) \dq \dx = 1
\right\}.
\label{hatZ}
\end{align}
It is proved %in the special case of $\zeta(\rho_0(\cdot)) \equiv 1$
in the Appendix of \cite{BS2011-feafene} that there exists a unique
$\hpsi^0 \in H^1_M(\Omega \times D)$ satisfying \eqref{psi0}; furthermore,
$\hpsi^0 \in Z_2$,
\begin{subequations}
\begin{align}
&\int_{\Omega \times D} M\,%\zeta(\rho_0)\,
\mathcal{F}(\hpsi^0) \dq \dx
%\nonumber \\
%& \qquad\qquad\qquad\qquad
+ 4\,\Delta t \int_{\Omega \times D} M\,\left[
\big|\nabx \sqrt{\hpsi^0} \big|^2 + \big|\nabq \sqrt{\hpsi^{0}}\big|^2
\right]\!\dq \dx
%\nonumber \\ & \qquad\qquad\qquad\qquad\qquad\qquad\qquad\qquad\qquad\qquad
\leq \int_{\Omega \times D} M \,%\zeta(\rho_0)\,
\mathcal{F}(\hpsi_0) \dq \dx
%\sqrt{\widetilde\psi^0} \in H^1_M(\Omega \times D);\quad
%\mathcal{F}(\widetilde\psi^0) \in L^1_M(\Omega \times D);\quad\mbox{and}\quad
%\int_{\Omega \times D}\!\!\! M\, \mathcal{F}(\widetilde\psi^0)\dq \dx
%\leq \!\int_{\Omega \times D}\!\!\! M\, \mathcal{F}(\widetilde\psi_0)\dq \dx;
\label{inidata-1}
\end{align}
%\begin{equation}
%\mbox{ess.sup}_{\xt \in \Omega} \int_D M\, \hpsi^0 \dq \leq
%\mbox{ess.sup}_{\xt \in \Omega} \int_D M\, \hpsi_0 \dq
%\label{inidata-2}
%\end{equation}
and
\begin{align}
\hpsi^0 = \beta^L(\hpsi^0) \rightarrow \hpsi_0 \quad
\mbox{weakly in }L_M^1(\Omega \times D), \quad \mbox{as} \quad L \rightarrow \infty, \quad
\Delta t \rightarrow 0_+.
\label{psi0conv}
\end{align}
\end{subequations}
Finally, by choosing $\varphi(\xt,\qt) =
\varrho^0(\xt) \otimes 1(\qt)$
in \eqref{psi0}, where $\varrho^{0}(\xt) := \int_D M(\qt)\, \hpsi^{0}(\xt, \qt) \dq \dx$,
$\xt \in \Omega$,
yields, on noting (\ref{betaLa}) and (\ref{inidata}), %and recalling Remark \ref{cpsi0},
that
\begin{align}
&\frac{1}{2} \left[ \|\varrho^0\|_{L^2(\Omega)}^2 +  \int_{\Omega} \left( \int_D M \left( \hpsi^0-\beta^L(\hpsi_0)
\right) \dq \right)^2 \dx \right]
+ \Delta t \,\|\nabx \varrho^0\|_{L^2(\Omega)}^2
\nonumber \\
& \hspace{0.5in}
\red{
=  \frac{1}{2}\, \int_{\Omega} \left(\int_D M \,\beta^L(\hpsi_0)\,\dq\right)^2 \dx
\leq \frac{1}{2}\, \int_{\Omega} \left(\int_D M \, \hpsi_0\,\dq\right)^2 \dx
\leq C.
}
%\frac{1}{2}\,|\Omega|.
\label{vrho0}
\end{align}

Next, we define
\begin{align}
\Vt := \{ \wt \in \Ht^1_0(\Omega)
: \wt \in \Lt^\infty(\Omega),\;\nabx \cdot \wt \in L^\infty(\Omega) \}
\label{Vdef}
\end{align}
%where $s=2$ if $d=2$ and $s=\max\{2,\frac{3\Gamma}{2\Gamma-3}\} \leq \frac{12}{5}$ if $d=3$,
and
\begin{align}
\Yrho^n &:= L^2(t_{n-1},t_n;H^1(\Omega))%\cap C([t_{n-1},t_n];L^2(\Omega))
\cap
H^1(t_{n-1},t_n;H^1(\Omega)') \cap L^\infty(t_{n-1},t_n;L^{\frac{\Gamma}{2}}(\Omega))
\cap L^{\frac{4\Gamma}{3}}_{\geq 0}(\Omega\times(t_{n-1},t_n)).
%\nonumber \\
%& \hspace{1in} \hookrightarrow
%C([t_{n-1},t_n];L^2_{\geq 0}(\Omega))
%\{ \eta \in L^\infty(\Omega) : \eta > 0 %\in [ \rho_{\rm min}, \rho_{\rm max}]
%\mbox{ a.e.\ on } \Omega \}.
\label{denspace}
\end{align}
%where $\vartheta>2$ if $d=2$ and $\vartheta=3$ if $d=3$.
%
%Finally, for $\ut \in \Vt$ fixed, we define the function space
%%
%\begin{align}\label{graphspace}
%\mathcal{X}(\ut) := \{ \eta \in L^2(\Omega)\,:\, \ut \cdot \nabx \eta \in L^2(\Omega)\}.
%\end{align}
%%
%When equipped with the graph-norm
%%
%\[ \|\eta\|_{\mathcal{X}(\footnotesize\ut)} := \left(\|\eta\|^2 + \|\ut \cdot \nabx %\eta\|^2\right)^{\frac{1}{2}},\]
%%
%$\mathcal{X}(\ut)$ is a Hilbert space.
We recall also that,  %from (\ref{equ0}) that
for all %sufficiently smooth $\rho_L,\,\ut_L,\,\wt$
$\vt,\,\wt \in \Ht^1(\Omega)$,
\begin{align}
\red{(}\vt \otimes \vt\red{)} : \nabxtt \wt  &=
[(\vt \cdot \nabx) \wt] \cdot \vt
%\nonumber \\
%&
=  - [(\vt \cdot \nabx) \vt] \cdot \wt
+ (\vt \cdot \nabx)(\vt \cdot \wt)  \qquad \mbox{a.e.\ in } \Omega.
\label{ruwiden}
\end{align}
Noting the above, our discrete-in-time  approximation of (P$_{\kappa,\alpha,L}$)
is then defined as follows.

{\boldmath $({\rm P}_{\kappa,\alpha,L}^{\Delta t})$} Let $N \in \mathbb{N}_{\geq 1}$
and set $\Delta t := T/N$; let, further,
$\rho^0_{\kappa,\alpha,L} := \rho^0 \in L^{\infty}_{\geq 0}(\Omega)$,
$\utkaL^0 := \ut^0 \in \Ht^1_0(\Omega)$
and $\hpsikaL^0 := \hpsi^0 \in Z_2$.
For $n = 1,\dots, N$,  and given $(\rhokaL^{n-1},\utkaL^{n-1},\hpsikaL^{n-1}) \in
L^\Gamma_{\geq 0}(\Omega)\times \Ht^1_0(\Omega) \times Z_2$, find
\begin{align}
&\rhokaL%^n \in H^1(\Omega) %, \; %
^{[\Delta t],n} %|_{[t_{n-1},t_n]}
\in \Yrho^n %\cap L^{\Gamma}_{\geq 0}(\Omega \times (t_{n-1},t_n)), \;
%L^\infty(t_{n-1},t_n; L^\infty(\Omega))
%\cap C([t_{n-1},t_n]; L^2(\Omega)) \cap W^{1,\infty}(t_{n-1},t_n; W^{1,\frac{q}{q-1}}(\Omega)'),
%\nonumber \\&
\mbox{ with } \rhokaL^{n}(\cdot):=\rhokaL^{[\Delta t],n}%|_{[t_{n-1},t_n]}
(\cdot,t_n) \in L^\Gamma_{\geq 0}(\Omega),\;
\utkaL^n \in \Ht^1_0(\Omega)  \mbox{ and } \hpsikaL^{n} \in X \cap Z_2,
\label{rho-reg}
\end{align}
%
%where $q \in (2,\infty)$ when $d=2$ and $q \in [3,6]$ when $d=3$,
such that
$\rhokaL^{[\Delta t],n}(\cdot,t_{n-1})= \rhokaL^{n-1}(\cdot)$,
%and $\rho^{[\Delta t]}_L(\cdot,0) = \rho^0_L$,
\begin{subequations}
\begin{align}
& \int_{t_{n-1}}^{t_n}\left[ \left\langle\frac{\partial\rhokaL^{[\Delta t],n}}{\partial t} ,
\eta \right\rangle_{H^{1}(\Omega)}
%\int_{\Omega}
%\frac{\rhokaL^n- \rhokaL^{n-1}}{\Delta t} \, \eta
%\,\dx
+
\int_\Omega \left( \alpha \, \nabx \rhokaL^{[\Delta t],n}
 -  \rhokaL^{[\Delta t],n}
 \, \utkaL^n \right) \cdot \nabx \eta
\,\dx \right] \dd t
= 0
\nonumber\\ & \hspace{3.3in}
\qquad
\forall \eta
%\in H^1(\Omega),
%\in \mathcal{X}(\ut^{n-1}_L):=\{\chi \in L^{2}(\Omega) : \ut^{n-1}_L \cdot \nabx \chi \in L^2(\Omega)\},
\in L^2(t_{n-1},t_n;H^{1}(\Omega)),
\label{rhonL}
\end{align}
\begin{align}
%\\[2mm]
&\int_{\Omega} \left[
\frac{\rhokaL^n\,\utkaL^{n}-\rhokaL^{n-1}\,\utkaL^{n-1}}{\Delta t}
- \tfrac{1}{2}\,\frac{\rhokaL^{n}-\rhokaL^{n-1}}{\Delta t}
\,\utkaL^{n}
\right]
\cdot \wt \dx
\nonumber
\\
& \quad %\hspace{0.5cm}
+  \tfrac{1}{2} \int_{\Omega}  %\left(\frac{1}{\Delta t}\int_{t_{n-1}}^{t_n}
%\rhokaL^{[\Delta t],n}\dd t\right)
\rhokaL^{n-1}
\left[
[(\utkaL^{n-1} \cdot \nabx) \utkaL^{n}] \cdot \wt
-[(\utkaL^{n-1} \cdot \nabx) \wt] \cdot \utkaL^n
\right]
\dx \nonumber \\
& \quad
+\int_{\Omega}
\Stt(\utkaL^n)
: \nabxtt \wt \dx
- \int_{\Omega}  \left(\frac{1}{\Delta t}\int_{t_{n-1}}^{t_n}
\pk(\rhokaL^{[\Delta t],n}) \dd t\right) \nabx\cdot \wt \dx
\nonumber \\
&\qquad   %\hspace{1cm}
= \int_{\Omega} \rhokaL^{n}\,\ft^n \cdot \wt \dx %\rangle_{H^1_0(\Omega)}
- \int_{\Omega} \tautt_1(M\,%\zeta(\rho^n_L)\,
\hpsikaL^n)%, \rho^n_L)
:
%- k\,\sum_{i=1}^K \int_{\Omega} \Ctt_i(M\,\zeta(\rho^n_L)\,\hpsiaet^n):
\nabxtt \wt \dx \nonumber \\
%&\hspace{2.5in}
& \qquad \qquad
- 2 \,\mathfrak{z}
\int_{\Omega}
\left( \int_D M\,\beta^L(\hpsikaL^n) \,\dq \right)
\nabx \left( \int_D M\,\hpsikaL^n \,\dq \right)
\cdot \wt \,\dx
\qquad \forall \wt \in \Vt,
\label{Gequn}
\end{align}
\begin{align}
&\int_{\Omega \times D} M\,%\left[
\frac{%\zeta(\rho^n_L)\,
\hpsikaL^n
- %\zeta(\rho^{n-1}_L)\,
\hpsikaL^{n-1}}
{\Delta t}
%- \frac{1}{2}\,\frac{\zeta(\rho^n_L) - \zeta(\rho^{n-1}_L)}{\Delta t}\, \beta^L(\hpsiaet^n) \right]
\,\varphi \dq \dx
\nonumber
\\
%\bet
&\hspace{0.3in}  +  \sum_{i=1}^K \int_{\Omega \times D}
 M \left[\, \frac{1}{4\, \lambda}\,
\sum_{j=1}^K A_{ij}\,\nabqj \hpsikaL^n
-[\,\sigtt(
\utkaL^n) \,\qt_i\,]\,%\zeta(\rho^n_L)\,
\beta^L(\hpsikaL^{n})\right]\,\cdot\, \nabqi
\varphi \dq \dx\,
\nonumber \\
%\bet
%&\hspace{1.1cm} + \epsilon \int_{\Omega \times D}
%M\,\nabx \hpsikaL^n \cdot\, \nabx \varphi\dq \dx
&\hspace{0.3in}
%+ \frac{1}{2}
+ \int_{\Omega \times D} M
\left[ \varepsilon\,\nabx \hpsikaL^n
-
\,\beta_L(\hpsikaL^n)\, %\left(\frac{1}{\Delta t}\int_{t_{n-1}}^{t_n}
%\zeta(\rho^{[\Delta t]}_L)\dd t\right)\,
\utkaL^{n} \right]\cdot %\left[(\nabx \beta^L(\hpsiaet^n))\,\varphi -
\nabx\varphi
% \right]
\dq \dx
=0
\qquad \forall \varphi \in
X;
\label{psiG}
\end{align}
\end{subequations}
where, %for a Banach space $\mathcal{S}$,
%$\rhokaL^{n}(\cdot):=\rhokaL^{[\Delta t],n}%|_{[t_{n-1},t_n]}
%(\cdot,t_n) \in L^\Gamma_{\geq 0}(\Omega)$ %,
%$\vartheta>2$ if $d=2$ and $\vartheta=3$ if $d=3$,
%and
%the symbol $\langle \cdot , \cdot \rangle_{\mathcal{S} %H^{1}(\Omega)
%}$
%denotes the duality pairing between
%$H^{1}(\Omega)'$ and $H^{1}(\Omega)$
%$\mathcal{S}'$ and ${\mathcal S}$. %, with
%$q \in (2,\infty)$ when $d=2$ and $q \in [3,6]$ when $d=3$.
%Given $(\rho^{n-1}_L,\utae^{n-1},\hpsiaet^{n-1}) \in \Upsilon \times \Vt \times Z_2$,
%and letting $\hpsiaet^{n-1}:= \hpsiae^{n-1}/\zeta(\rho^{n-1}_L)$,
%find
%
%\[\left(\utae^n,\hpsiaet^n %= \zeta(\rho^{n}_L)\,\hpsiaet^{n}
%\right) \in \Vt \times (X \cap Z_2)\]
%
%such that
%In addition,
for  $t \in (t_{n-1}, t_n]$ and $n=1, \dots,  N$,
\begin{align}
\ft^{\{\Delta t\}}(\cdot,t) =
\ft^n(\cdot) := \frac{1}{\Delta t}\,\int_{t_{n-1}}^{t_n}
\ft(\cdot,t) \dt \in \Lt^{\infty}(\Omega). %(\Ht^1_0(\Omega))' \subset \Vt'.
\label{fn}
\end{align}
It follows from (\ref{inidata}) and (\ref{fn}) that
\begin{subequations}
\begin{align}
&\int_{t_{n-1}}^{t_n}\|\ft^{\{\Delta t\}}\|_{L^\infty(\Omega)}^2\,\dt
\leq \int_{t_{n-1}}^{t_n}\|\ft\|_{L^\infty(\Omega)}^2\,\dt, \qquad n=1,\ldots,N,
\label{fnbd}\\
&\ft^{\{\Delta t\}} \rightarrow \ft \qquad \mbox{strongly in } L^{2}(0,T;\Lt^r(\Omega)),\quad \mbox {as } \Delta t \rightarrow 0_{+},
\label{fncon}
\end{align}
\end{subequations}
where \red{$r \in [1,\infty]$.}

\begin{remark}\label{rem-AF}
A possible alternative to our temporal approximation scheme \eqref{rhonL} for \eqref{equ0}, 
which is the weak formulation of a parabolic initial boundary-value problem posed over the time slab $\Omega \times [t_{n-1},t_n)$, $n=1,\dots, N$, would have been to proceed as in the work of
Abels and Feireisl \cite{AF-2008} and approximate \eqref{equ0} by an implicit finite difference scheme with respect to $t$.
That would have avoided the use of $\rho^{[\Delta t],n}_{\kappa,\alpha,L}$ here, but would have had the disadvantage, from the
point of view of constructive considerations in numerical analysis at least, that
nonnegativity of $\rho^{n}_{\kappa,\alpha,L}$ will have been guaranteed for $\Delta t \leq \Delta t_0$ only, where
$\Delta t_0 \in (0,T]$ is sufficiently small, with the value of $\Delta t_0$ not being easily quantifiable in terms of the data and
 its independence of $\kappa$, $\alpha$ and $L$ being less than obvious. 
In contrast with that, our $\rho^{[\Delta t],n}_{\kappa,\alpha,L}$, and thereby also $\rho^{n}_{\kappa,\alpha,L}$, will be shown to be nonnegative for all $\Delta t=\frac{T}{N}$ and all $n=1,\dots,N$, regardless of the choice of $\kappa$, $\alpha$, $L$ and $N$.
\end{remark}

We rewrite (\ref{rhonL}) as
\begin{align}
& \int_{t_{n-1}}^{t_n} \left[ \left\langle\frac{\partial\rhokaL^{[\Delta t],n}}{\partial t} ,
\eta \right\rangle_{H^1(\Omega)}
+ c(\utkaL^n)(\rhokaL^{[\Delta t],n},\eta)
\right]
\dd t
= 0
%\nonumber\\ & \hspace{3.5in}
\qquad
\forall \eta
\in L^2(t_{n-1},t_n;H^{1}(\Omega)),
%c(\rhokaL^n,\eta) = \ell_c(\utkaL^n)(\eta)
%\qquad \forall \eta \in H^1(\Omega),
\label{c}
\end{align}
where, for all $\vt \in \Ht^1_0(\Omega)$
and $\eta_i \in H^1(\Omega)$, $i=1,\,2$,
%\begin{subequations}
\begin{align}
c(\vt)(\eta_1,\eta_2) :=
\int_\Omega \left( \alpha \,\nabx \eta_1
 -  \eta_1
 \, \vt\right) \cdot \nabx \eta_2
\,\dx.
%\int_{\Omega} \left[
%\eta_1\,\eta_2 + \Delta t\, \alpha\, \nabx \eta_1 \cdot \nabx \eta_2 \right] \dx
\label{cdef}
\end{align}
%and, for all $\vt \in \Ht^1_0(\Omega)$ and $\eta \in H^1(\Omega)$,
%\begin{align}
%\ell_c(\vt)(\eta) :=
%\int_{\Omega} \rhokaL^{n-1}\,\eta \dx
%\end{align}
%\end{subequations}
Similarly, on noting (\ref{Stt}) and (\ref{tau1}), we rewrite (\ref{Gequn}) as
\begin{equation}
b%\left(\rho^n_L, \frac{1}{\Delta t}\int_{t_{n-1}}^{t_n} \rho^{[\Delta t]}_L \dd t\right)
(\rhokaL^n)(\utkaL^n,\wt) =
\ell_b(%\rho^n_L,
\rhokaL^{[\Delta t],n},\hpsikaL^n)(\wt)
\qquad \forall \wt \in \Vt;
\label{bLM}
\end{equation}
where, for all $\eta \in  L^2_{\geq 0}(\Omega)$
%$\vartheta \in L^\infty(t_{n-1},t_n; L^\infty(\Omega))$
and
$\wt_i \in \Ht^{1}_{0}(\Omega)$, $i=1,2$,
\begin{subequations}
\begin{align}
b%\left(\eta,\frac{1}{\Delta t}\int_{t_{n-1}}^{t_n}\vartheta \dd t\right)
(\eta)(\wt_1,\wt_2) &:=
\tfrac{1}{2}\, \int_{\Omega} ( %\rho^n_L
\eta + \rhokaL^{n-1})\,\wt_1\cdot \wt_2 \dx
+ \Delta t\, \mu^S\int_{\Omega} %(\rho^{n-1}_L)
\,\Dtt(\wt_1)
:\Dtt(\wt_2) \dx
\nonumber \\
&\hspace{1cm}
+ %\tfrac{1}{2}\,
\Delta t
\left(\mu^B %(\rho^{n-1}_L)
- \frac{\mu^S %(\rho^{n-1}_L)
}{d}\right)
\int_{\Omega}
 (\nabx \cdot \wt_1)\,(\nabx \cdot \wt_2)
\dx
\nonumber \\
& \hspace{1cm} + \tfrac{1}{2} %\,\Delta t
\int_{\Omega} %\left( %\frac{1}{\Delta t}
%\int_{t_{n-1}}^{t_n} \eta \dd t \right)
\rhokaL^{n-1}
\left[
[(\utkaL^{n-1} \cdot \nabx) \wt_1] \cdot \wt_2
-[(\utkaL^{n-1} \cdot \nabx) \wt_2] \cdot \wt_1
\right]
\dx
\label{bgen}
\end{align}
and, for all $\eta \in \Yrho^n$ with $\eta(\cdot,t_n) \in L^2_{\geq 0}(\Omega)$,
$\varphi \in X$ %L^2_M(\Omega \times D)$
and $\wt \in \Vt$,
\begin{align}
\ell_b(%\eta,
\eta,\varphi)(\wt) &:=
\int_\Omega \left[
%\rangle_{H^1_0(\Omega)}
%+ \displaystyle\int_{\Omega}
\rhokaL^{n-1}\,
\utkaL^{n-1} \cdot \wt
+ \Delta t \, \eta(\cdot,t_n)\, \ft^n \cdot \wt
-\Delta t \,k\,\sum_{i=1}^K
\Ctt_i(M\,%\zeta(\rho^n_L)\,
\varphi) : \nabxtt
\wt
\right] \dx
\nonumber \\
& \hspace{1cm} + %\Delta t
\int_{\Omega} \left(%\frac{1}{\Delta t}
\int_{t_{n-1}}^{t_n} %\rho^{[\Delta t]}_L
\pk(\eta) \dd t + \Delta t\,k\,(K+1) \int_D M\,\varphi \dq\right)
\nabx \cdot \wt \dx
\nonumber \\
& \hspace{1cm} - 2\,\Delta t\,\mathfrak{z}\,
\int_{\Omega} \left( \int_D M\,\beta^L(\varphi) \,\dq\right)
\nabx\left( \int_D M\,\varphi \,\dq\right)
\cdot \wt \,\dx.
\label{lbgen}
\end{align}
\end{subequations}
%
%For $r>d$, let
%%
%\begin{align}
%\Yt^r :=
%\left\{\vt \in \Lt^{r}(\Omega) :
%\int_{\Omega} \vt\,\cdot\,\nabx w \dx = 0 \quad \forall w
%\in W^{1,\frac{r}{r-1}}(\Omega)\right\}.
%\label{Ytr}
%\end{align}
%
It follows
for fixed $\utkaL^{n-1} \in \Ht^1_0(\Omega)$, $\rhokaL^{n-1} \in L^\Gamma_{\geq 0}(\Omega)$
and $\eta \in L^2_{\geq 0}(\Omega)$,
and the generalised Korn's inequality, (\ref{Korn}),
that the nonsymmetric bilinear functional
$b(\eta)(\cdot,\cdot)$ is a nonsymmetric continuous coercive bilinear
functional on
$\Ht^1_0(\Omega) \times \Ht^1_0(\Omega)$.
%Here we note by Sobolev embedding that as $\Gamma \geq 4$, recall (\ref{pkdef}),
%then the last term in (\ref{bgen}) is well-defined with the choice $s=2$ in (\ref{Vdef}) if $d=2$,
%whereas we require the choice $s=\max\{2,\frac{3\Gamma}{2\Gamma-3}\}$ if $d=3$.
%We will address the reasons for the above restrictions on $\Gamma$ later in this section.
In addition, for fixed
$\utkaL^{n-1} \in \Ht^1_0(\Omega)$, $\rhokaL^{n-1} \in L^\Gamma_{\geq 0}(\Omega)$,
$\eta \in \Yrho^n$ with $\eta(\cdot,t_n) \in  L^{2}_{\geq 0}(\Omega)$
and $\varphi \in X$, %L^2_M(\Omega\times D)$,
it follows, on recalling \eqref{eqCttbd}, (\ref{pkdef}) and (\ref{denspace}),
that $\ell_b(\eta,\varphi)(\cdot)$ is a continuous linear
functional on $\Vt$.

It is also convenient to rewrite (\ref{psiG}) as
\begin{align}
a(\hpsikaL^n,\varphi) = \lae(\utkaL^n,\beta^L(\hpsikaL^n))
(\varphi) \qquad \forall \varphi \in X,
\label{genLM}
\end{align}
where, for all $\varphi_i \in X$, $i=1,\,2$,
\begin{subequations}
\begin{align}
a(\varphi_1,\varphi_2)
&:=\int_{\Omega \times D} M\, \bigg[ %\zeta(\rho^n_L)\,
\varphi_1\,\varphi_2  + \Delta t\, \epsilon\,\nabx
\varphi_1\,\cdot\, \nabx \varphi_2
%\nonumber \\
%& \hspace{0.3in} - \Delta t
%\left(\frac{1}{\Delta t}\int_{t_{n-1}}^{t_n}\zeta(\rho^{[\Delta t]}_L)\dd t\right)
%\utae^{n-1}\,\varphi_1\,\cdot\, \nabx \varphi_2
\label{agen}
%\nonumber\\
%& \hspace{0.6in}
+\, \frac{\Delta t}{4\,\lambda} \,
\sum_{i=1}^K \sum_{j=1}^K A_{ij}\,
\nabqj \varphi_1 \, \cdot\, \nabqi
\varphi_2 \biggr] \dq \dx
%\nonumber\\
%&= \int_{\Omega \times D} M\,\biggl[\tfrac{1}{2}(\zeta(\rho^n_L) +
%\zeta(\rho^{n-1}_L))\,
%\varphi_1\,\varphi_2  + \Delta t\, \epsilon\,\nabx
%\varphi_1\,\cdot\, \nabx \varphi_2  \nonumber\\
%& \hspace{0.3in}+ \tfrac{1}{2}\Delta t \left(\frac{1}{\Delta t}
%\int_{t_{n-1}}^{t_n}\zeta(\rho^{[\Delta t]}_L)\dd t\right)
%\left(\utae^{n-1}\,\varphi_2\,\cdot\, \nabx \varphi_1-
%\utae^{n-1}\,\varphi_1\,\cdot\, \nabx \varphi_2 \right)
%\label{agen}
%\nonumber\\
%& \hspace{0.6in}
%+\, \frac{\Delta t}{4\,\lambda} \,
%\sum_{i=1}^K \sum_{j=1}^K A_{ij}\,
%\nabqj \varphi_1 \, \cdot\, \nabqi
%\varphi_2 \biggr] \dq \dx,
\end{align}
and, for all $\vt \in \Ht^1(\Omega)$, $\xi \in L^\infty(\Omega\times D)$
and $\varphi \in X$,
\begin{align}
\lae(\vt,\xi)(\varphi) &:=
\int_{\Omega \times D}
M\,\left[ %\zeta(\rho^{n-1}_L)\,
\hpsikaL^{n-1}
\,\varphi
+ \Delta t\,\xi\,\left(\sum_{i=1}^K [\,\sigtt(\vt)
\,\qt_i\,]\,%\zeta(\rho^n_L)\,
\cdot\, \nabqi
\varphi
+ \vt \,\cdot\, \nabx \varphi \right)
\right]\!\dq \dx.
\label{lgen}
\end{align}
\end{subequations}
Clearly, $a(\cdot,\cdot)$ is a symmetric continuous
coercive bilinear functional on $X \times X$.
In addition, it is easily deduced for fixed
$\vt \in \Ht^1(\Omega)$ and $\xi \in L^\infty(\Omega\times D)$, on noting (\ref{eqinterp}),
that
 $\lae(\vt,\xi)(\cdot)$
is a continuous linear functional on $X$.

In order to prove existence of a solution, (\ref{rho-reg}), to
(P$_{\kappa,\alpha,L}^{\Delta t}$),
$\rhokaL^{[\Delta t],n}(\cdot,t_{n-1})= \rhokaL^{n-1}(\cdot)$
and (\ref{rhonL}--c),
which is equivalent to (\ref{c}), (\ref{bLM}) and (\ref{genLM}),
{\color{black} we require two convex regularizations of the entropy function $\mathcal{F}\,: s \in \mathbb{R}_{\geq 0} \mapsto \mathcal{F}(s) = s (\log s - 1) + 1 \in \mathbb{R}_{\geq 0}$, denoted by $\mathcal{F}^L$ and $\mathcal{F}^L_\delta$.

For any $L>1$, we define $\mathcal{F}^L \in C(\mathbb{R}_{\geq 0})\cap C^{2,1}_{\rm loc}(\mathbb{R}_{>0})$ by
\begin{equation}\label{eq:FL}
\mathcal{F}^L(s):= \left\{\begin{array}{ll}
s(\log s - 1) + 1,   &  ~~0 \leq s \leq L,\\
\frac{s^2 - L^2}{2L} + s(\log L - 1) + 1,  &  ~~L \leq s.
\end{array} \right.
\end{equation}
Note that
\begin{subequations}
\begin{equation}\label{eq:FL1}
[\mathcal{F}^L]'(s) = \left\{\begin{array}{ll}
\log s,   &  ~~0 < s \leq L,\\
\frac{s}{L} + \log L - 1,  &  ~~L \leq s,
\end{array} \right.
\end{equation}
and
\begin{equation}\label{eq:FL2}
[\mathcal{F}^L]''(s) = \left\{\begin{array}{ll}
\frac{1}{s},   &  ~~0 < s \leq L,\\
\frac{1}{L},  &  ~~L \leq s.
\end{array} \right.
\end{equation}
\end{subequations}
Hence, on noting the definition \eqref{betaLa} of $\beta^L$, we have that
\begin{subequations}
\begin{equation}\label{eq:FL2a}
\beta^L(s) = \min\{s,L\} = \left([\mathcal{F}^L]''(s)\right)^{-1},\quad s \in \mathbb{R}_{\geq 0},
\end{equation}
with the convention $\frac{1}{\infty}:=0$ when $s=0$, and

\begin{equation}\label{eq:FL2b}
[\mathcal{F}^L]''(s) \geq \mathcal{F}''(s) = \frac{1}{s},\quad s \in \mathbb{R}_{> 0}.
\end{equation}
\end{subequations}
We shall also require the following inequality, relating $\mathcal{F}^L$ to $\mathcal{F}$:
\begin{equation}\label{eq:FL2c}
\mathcal{F}^L(s) \geq \mathcal{F}(s),\quad s \in \mathbb{R}_{\geq 0}.
\end{equation}
For $0\leq s \leq 1$, \eqref{eq:FL2c} trivially holds, with equality. For $s\geq 1$, it
follows from \eqref{eq:FL2b}, with $s$ replaced by a dummy variable $\sigma$, after integrating
twice over $\sigma \in [1,s]$, and noting that $[\mathcal{F}^L]'(1)= \mathcal{F}'(1)$ and $\mathcal{F}^L(1)=\mathcal{F}(1)$.

For $L>1$ and $\delta \in (0,1)$, the function $\mathcal{F}_{\delta}^L \in C^{2,1}({\mathbb R})$
is defined by
\begin{align}
 &\mathcal{F}_{\delta}^L(s) := \left\{
 \begin{array}{ll}
 \textstyle\frac{s^2 - \delta^2}{2\,\delta}
 + s\,(\log \delta - 1) + 1,
 \quad & \mbox{$s \le \delta$}, \\
\mathcal{F}^L(s),
%& \mbox{$\delta \le s \le L$}, \\
%  \textstyle\frac{s^2 - L^2}{2\,L}
% + s\,(\log L - 1) + 1,
 & \mbox{$\delta \le s$}.
 \end{array} \right. \label{GLd}
\end{align}
Hence,
\begin{subequations}
\begin{align}
\quad &[\mathcal{F}_{\delta}^{L}]'(s) = \left\{
 \begin{array}{ll}
 \textstyle \frac{s}{\delta} + \log \delta - 1,
 \quad & \mbox{$s \le \delta$}, \\
% \log s, & \mbox{$\delta \le s \le L$}, \\
 \textstyle [\mathcal{F}^L]'(s),
%\frac{s}{L} + \log L - 1,
% [\mathcal{F}^L]'(s)
 & \mbox{$\delta \le s$},
 \end{array} \right. \label{GLdp}\\
\quad
 &[\mathcal{F}_{\delta}^{L}]''(s) = \left\{
 \begin{array}{ll}
 \frac{1}{\delta}, \quad & \mbox{$s \le \delta$}, \\
 \textstyle [\mathcal{F}^L]''(s), & \mbox{$\delta \le s$}.
 %\\
 %L^{-1}, & \mbox{$L \le s$}. \,
\end{array} \right. \label{Gdlpp}
\end{align}
\end{subequations}

We note that
\begin{subequations}
\begin{align}
{\mathcal F}^L_\delta(s) &\leq {\mathcal F}^L(s) \qquad \forall s \geq 0,
\label{cFabove} \\
\mathcal{F}^L_\delta(s) &\geq \left\{
\begin{array}{ll}
\frac{s^2}{2\,\delta}, &\quad \mbox{$s \leq 0$},
\\
\frac{s^2}{4\,L} - C(L),&\quad \mbox{$s \geq 0$};
\end{array}
\right.
\label{cFbelow}
\end{align}
\end{subequations}
and} that
$[\mathcal{F}_{\delta}^{L}]''(s)$
is bounded below by $\frac{1}{L}$ for all $s \in \mathbb{R}$.
Finally, we set
\begin{align}
\beta^L_\delta(s) := ([\mathcal{F}_{\delta}^{L}]'')^{-1}(s)
= \max \{\beta^L(s),\delta\},
\label{betaLd}
\end{align}
and observe that $\beta^L_\delta(s)$
is bounded above by $L$ and bounded below by $\delta$ for all $s \in \mathbb{R}$.
Note also that both $\beta^L$ and $\beta^L_\delta$ are Lipschitz continuous on
$\mathbb{R}$, with Lipschitz constants equal to $1$.

In addition, we
regularize the bilinear functional $b(\eta)(\cdot,\cdot)$, (\ref{bgen}), on $X \times X$,
by introducing the Banach space
\begin{align}
\cVt := \{ \wt \in \Ht^2(\Omega) \cap \Ht^1_0(\Omega)
: \nabx \cdot \wt \in H^2(\Omega) \},
\label{calVdef}
\end{align}
which is compactly embedded in $\Vt$, (\ref{Vdef}).
Then we define for $\delta \in {\mathbb R}_{>0}$, $\eta \in L^2_{\geq 0}(\Omega)$
and $\wt_i \in \cVt$, $i=1,\,2$,
\begin{align}
b_\delta(\eta)(\wt_1,\wt_2) &:=
b(\eta)(\wt_1,\wt_2)
\nonumber \\
& \qquad  + \Delta t \,\delta  \sum_{|\lambdat|=2}
\int_{\Omega}
\left[
\frac{\Dlxn \wt_1}{\Dlxd}\cdot\frac{\Dlxn \wt_2}{\Dlxd}
+
\frac{\Dlxn (\nabx \cdot \wt_1)}{\Dlxd}\,\frac{\Dlxn (\nabx \cdot \wt_2)}{\Dlxd}
\right] \dx.
\label{bddef}
\end{align}
%As $b(\eta)(\cdot,\cdot)$ is a nonsymmetric continuous coercive bilinear functional on
%$\Ht^1_0(\Omega)\times \Ht^1_0(\Omega)$ for fixed $\eta \in \Upsilon^n$,
It follows that
$b_\delta(\eta)(\cdot,\cdot)$ is a nonsymmetric continuous coercive bilinear functional on
$\cVt\times \cVt$ for fixed $\eta \in L^2_{\geq 0}(\Omega)$ and $\delta \in (0,1)$.
We also replace $\ell_{b}(\eta,\varphi)(\cdot)$ by $\ell_{b,\delta}(\eta,\varphi)(\cdot)$,
where $\beta^L(\varphi)$ in $\ell_{b}(\eta,\varphi)(\cdot)$ is replaced by
$\beta^L_\delta(\varphi)$; that is, for $\delta \in (0,1)$,
\begin{align}
\ell_{b,\delta}(\eta,\varphi)(\wt):=
\ell_{b}(\eta,\varphi)(\wt)
+ 2\,\Delta t\,\mathfrak{z}\,
\int_{\Omega} \left( \int_D M\,[\beta^L(\varphi)-\beta^L_\delta(\varphi)] \,\dq\right)
\nabx\left( \int_D M\,\varphi \,\dq\right)
\cdot \wt \,\dx.
\label{lbdgen}
\end{align}
As for fixed $\eta \in \Yrho^n$ with $\eta(\cdot,t_n) \in  L^{2}_{\geq 0}(\Omega)$
and $\varphi \in X$, $\ell_b(\eta,\varphi)(\cdot)$ is a continuous linear
functional on $\Vt$,
%for fixed $\eta \in \Yrho^n$ %\cap
%with $\eta(\cdot,t_n) \in  L^{2}_{\geq 0}(\Omega)$
%and $\varphi \in X$ %L^2_M(\Omega\times D)$
it follows
that $\ell_{b,\delta}(\eta,\varphi)(\cdot)$, for $\delta \in (0,1)$, is a continuous linear
functional on $\cVt$.
%for fixed $\eta \in \Yrho^n$ with $\eta(\cdot,t_n) \in  L^{2}_{\geq 0}(\Omega)$
%and $\varphi \in X$. %L^2_M(\Omega\times D)$.

Next, we introduce
\begin{align}
\Yrhod^n &:= L^2(t_{n-1},t_n;H^1(\Omega))
\cap
H^1(t_{n-1},t_n;H^{1}(\Omega)')\cap
L^{\infty}_{\geq 0}(\Omega\times(t_{n-1},t_n)).
%\nonumber \\ &
%\hspace{1in} \subset
\label{denspacereg}
\end{align}
We note that $\Yrhod^n \hookrightarrow C([t_{n-1},t_n];L^2_{\geq 0}(\Omega))$
and $\Yrhod^n \subset \Yrho^n$, \red{(cf. \eqref{denspace})}.
Finally, we regularize the initial data and set
\begin{align}
\rhokaLd^{n-1}= \beta^{\delta^{-1}}(\rhokaL^{n-1}),
\label{regrid}
\end{align}
where $\beta^{\delta^{-1}}$ is given by (\ref{betaLa}) with $L={\delta}^{-1}$.

We now consider the following regularized version
of the coupled system (\ref{c}), (\ref{bLM}) and
(\ref{genLM}) for a given $\delta \in (0,1)$:

Given $(\rhokaL^{n-1},\utkaL^{n-1},\hpsikaL^{n-1}) \in
L^\Gamma_{\geq 0}(\Omega)\times \Ht^1_0(\Omega) \times Z_2$, find
$(\rhokaLd^{[\Delta t],n},\,\utkaLd^n$, $\hpsikaLd^{n})
\in \Yrhod^n %\cap L^\infty(t_{n-1},t_n;L^{\infty}_{\geq 0}(\Omega))
\times \cVt \times X$
such that $\rhokaLd^{[\Delta t],n}(\cdot,t_{n-1})= \rhokaLd^{n-1}(\cdot)$,
\begin{subequations}
\begin{align}
\int_{t_{n-1}}^{t_n} \left[ \left\langle\frac{\partial\rhokaLd^{[\Delta t],n}}{\partial t} ,
\eta \right\rangle_{H^{1}(\Omega)}
+ c(\utkaLd^n)(\rhokaLd^{[\Delta t],n},\eta)
\right]
\dd t
= 0
\qquad
\forall \eta
\in L^2(t_{n-1},t_n;H^1(\Omega)),
\label{cd}
\end{align}
\begin{alignat}{2}
b_\delta(\rhokaLd^{[\Delta t],n}(\cdot,t_n))(\utkaLd^n,\wt) &= \ell_{b,\delta}
(\rhokaLd^{[\Delta t],n},\hpsikaLd^n)(\wt)
\qquad &&\forall \wt \in \cVt,
\label{bLMd} \\
a(\hpsikaLd^n,\varphi) &= \lae(\utkaLd^n,\beta^L_\delta(\hpsikaLd^n))
(\varphi) \qquad &&\forall \varphi \in X.
\label{genLMd}
\end{alignat}
\end{subequations}

The existence of a solution to (\ref{cd}--c) will be proved by
using a fixed-point argument.
Given $(\tut,\tpsi)
\in \Vt\times L^2_M(\Omega \times D)$,
let $(\rho^{\star},\ut^{\star}, \psi^\star) \in
\Yrhod^n %\cap L^\infty(t_{n-1},t_n;L^{\infty}_{\geq 0}(\Omega))
\times \cVt \times X$ be such that
$\rho^\star(\cdot,t_{n-1})= \rhokaLd^{n-1}(\cdot)$,
\begin{subequations}
\begin{align}
\int_{t_{n-1}}^{t_n} \left[ \left\langle\frac{\partial\rho^\star}{\partial t} ,
\eta \right\rangle_{H^{1}(\Omega)}
+ c(\tut)(\rho^{\star},\eta)
\right]
\dd t
&= 0
\qquad
\forall \eta
\in L^2(t_{n-1},t_n;H^1(\Omega)),
\label{fixrho}
\end{align}
\begin{alignat}{2}
a(\psi^{\star},\varphi) &=
\lae(\tut,\beta^L_\delta(\tpsi))(\varphi) \qquad
&&\forall \varphi \in X, \label{fix3} \\
b_\delta(\rho^\star(\cdot,t_n))(\ut^{\star},\wt) &= \ell_{b,\delta}(\rho^\star,\psi^\star)(\wt)
\qquad &&\forall \wt \in \cVt.
\label{fix4}
\end{alignat}
\end{subequations}
%

%\subsection{\boldmath Existence of a solution to  $({\rm P}_{\kappa,\alpha,L}^{\Delta t})$}
%\label{sec:existence-cut-off.1}

For fixed $\vt \in \Vt$,
it follows that
$c(\vt)(\cdot,\cdot)$, (\ref{cdef}),
is a nonsymmetric continuous bilinear functional on $H^1(\Omega) \times H^1(\Omega)$,
and, moreover,
for all $\eta \in H^1(\Omega)$,
\begin{align}
c(\vt)(\eta,\eta) = \alpha\,\|\nabx \eta\|_{L^2(\Omega)}^2 + \textstyle
\frac{1}{2} \displaystyle
\int_{\Omega} (\nabx \cdot \vt)\, \eta^2 \dx
\geq
\alpha\,\|\nabx \eta\|_{L^2(\Omega)}^2 - \textstyle \frac{1}{2}\,
\|\nabx \cdot \vt\|_{L^\infty(\Omega)}\,\|\eta\|_{L^2(\Omega)}^2.
\label{cbel}
\end{align}
Hence for any fixed $\tut \in \Vt$,
the existence of a unique weak solution
\begin{align}
\rho^\star \in
L^2(t_{n-1},t_n;H^1(\Omega))\cap H^1(t_{n-1},{t_n}; H^1(\Omega)')
\hookrightarrow C([t_{n-1},{t_n}]; L^2(\Omega))
\label{rhoL2H1}
\end{align}
satisfying $\rho^\star(\cdot,t_{n-1})= \rhokaLd^{n-1}(\cdot)$
and (\ref{fixrho})
is immediate; see, for example, Wloka \cite{Wloka}, Thm. 26.1.
Further, on choosing, for $s \in (t_{n-1},t_n]$,
$\eta(\cdot,t) = \chi_{[t_{n-1},s]}\, {\rm e}^{-\|\nabx \cdot \tut\|_{L^\infty(\Omega)}\,(t-t_{n-1})}\,
[\rho^\star(\cdot,t)]_{-}$ in
\eqref{fixrho}, where, for a set $S \subset \mathbb{R}$, $\chi_S$
denotes the characteristic function of  $S$, and
%noting that $\ut^{n-1}_L \in \Vt$,
recalling (\ref{cbel}),
we obtain that %the energy identity
\begin{align}
&{\rm e}^{-\|\nabx \cdot \tut\|_{L^\infty(\Omega)}\,(s-t_{n-1})}\,
\|[\rho^{\star}(\cdot,s)]_{-}\|^2_{L^2(\Omega)}
\nonumber \\
& \hspace{0.5in}
+ 2 \alpha \int_{t_{n-1}}^s \int_\Omega
{\rm e}^{-\|\nabx \cdot \tut\|_{L^\infty(\Omega)}\,(t-t_{n-1})}
|\nabx [\rho^\star(\xt,t)]_{-} |^2 \dx \dd t \leq 0,
\qquad s \in (t_{n-1},t_n].
\label{rhomin}
\end{align}
Next, we set
\begin{align}
R(t) := {\rm e}^{\|\nabx \cdot \tut\|_{L^\infty(\Omega)}\,(t-t_{n-1})} \|\rhokaLd^{n-1}\|_{L^\infty(\Omega)},
\qquad t\in [t_{n-1},t_n],
\label{Rdef}
\end{align}
so that
\begin{align}
&\int_{t_{n-1}}^{t_n} \left[ \left\langle\frac{\partial(\rho^\star-R)}{\partial t} ,
\eta \right\rangle_{H^{1}(\Omega)}
+ c(\tut)(\rho^{\star}-R,\eta)
\right]
\dd t
\nonumber \\
&\qquad =
- \int_{t_{n-1}}^{t_n}
R \int_\Omega \left(
\nabx \cdot \tut +  \|\nabx \cdot \tut\|_{L^\infty(\Omega)}
\right) \eta \dx \dt
\qquad
\forall \eta
\in L^2(t_{n-1},t_n;H^1(\Omega)).
\label{fixrhoR}
\end{align}
Then, similarly to (\ref{rhomin}),
on choosing
$\eta(\cdot,t) = \chi_{[t_{n-1},s]}\, {\rm e}^{-\|\nabx \cdot \tut \|_{L^\infty(\Omega)}\,(t-t_{n-1})}\,
[\rho^\star(\cdot,t)-R(t)]_{+}$ in
\eqref{fixrhoR} for $s \in (t_{n-1},t_n]$,
we obtain that
\begin{align}
&{\rm e}^{-\|\nabx \cdot \tut\|_{L^\infty(\Omega)}\,(s-t_{n-1})}\,
\|[\rho^{\star}(\cdot,s)-R(s)]_{+}\|^2_{L^2(\Omega)}
\nonumber \\
& \hspace{0.4in}
+ 2 \alpha \int_{t_{n-1}}^s \int_\Omega
{\rm e}^{-\|\nabx \cdot \tut\|_{L^\infty(\Omega)}\,(t-t_{n-1})}
|\nabx [\rho^\star(\xt,t)-R(t)]_{+} |^2 \dx \dd t \leq 0,
\qquad s \in (t_{n-1},t_n].
\label{rhomax}
\end{align}
On noting (\ref{rhoL2H1}), extending \eqref{rhomin} and \eqref{rhomax} from the interval $(t_{n-1},t_n]$ to $[t_{n-1},t_n]$ by letting $s \rightarrow t_{n-1}$ in \eqref{rhomin} and \eqref{rhomax}, and combining the resulting inequalities, we deduce that
\begin{align}
\rho^*(\cdot, t) \in [0,R(t)]\qquad \mbox{for $t \in [t_{n-1},t_n]$,} \qquad \mbox{ and so }
\rho^\star \in \Yrhod^n. %\cap L^\infty(t_{n-1},t_n;L^{\infty}_{\geq 0}(\Omega)).
\label{rhoLinf}
\end{align}

%$\{\rho^{[\Delta t]}_{L,\alpha}\}_{\alpha \in (0,1)}$ is a bounded set in
%the function space $L^\infty(t_{n-1},t_n;L^2(\Omega))$ and that

As $a(\cdot,\cdot)$ is a symmetric continuous
coercive bilinear functional on $X \times X$
and $\lae(\vt,\xi)(\cdot)$
is a continuous linear functional on $X$
for fixed
$\vt \in \Ht^1(\Omega)$ and $\xi \in L^\infty(\Omega\times D)$,
the Lax--Milgram theorem yields the existence of a unique solution $\psi^\star \in
X$ to (\ref{fix3}).
Similarly, for $\delta \in (0,1)$, as
$b_\delta(\eta)(\cdot,\cdot)$ is a nonsymmetric continuous coercive bilinear functional on
$\cVt\times \cVt$ for fixed $\eta \in L^2_{\geq 0}(\Omega)$,
and $\ell_{b,\delta}(\eta,\varphi)(\cdot)$ is a continuous linear
functional on $\cVt$
for fixed $\eta \in \Yrhod^n$ with $\eta(\cdot,t_n) \in L^{2}_{\geq 0}(\Omega)$
and $\varphi \in X$, %L^2_M(\Omega\times D)$,
the Lax--Milgram theorem yields the existence of a unique solution $\ut^\star \in \cVt$ to (\ref{fix4}).
 %for a given pair $(\ut^\star, \widetilde \psi) \in \Vt \times X$.
Therefore
the overall procedure (\ref{fixrho}--c)
that, for $\rho^{n-1}_{\kappa, \alpha, L} \in L^\Gamma_{\geq 0}(\Omega)$
     fixed,
     maps $(\tut,\tpsi) \in \Vt \times L^2_M(\Omega \times D)$
to
$(\rho^{\star},\ut^{\star}, \psi^\star) \in
\Yrhod^n \times \cVt \times X$, with $\rho^\ast(\cdot,t_{n-1}) = \rho^{n-1}_{\kappa, \alpha, L}$,
is well defined.

\begin{lemma}
\label{fixlem} Let ${\mathcal T}: \Vt \times L^2_M(\Omega \times D)
\rightarrow \cVt \times X$ denote the
nonlinear map that takes the functions $(\tut,\tpsi)$ to
$(\ut^{\star}, \psi^\star)={\mathcal T}(\tut,\tpsi)$
{\em via} the procedure
{\rm (\ref{fixrho}--c)}. Then, the mapping ${\mathcal T}$ has a fixed point. Hence, there exists a solution
$(\rhokaLd^{[\Delta t],n},\utkaLd^n,\hpsikaLd^n)
\in \Yrhod^n %\cap L^\infty(t_{n-1},t_n;L^{\infty}_{\geq 0}(\Omega))
\times \cVt \times X$
to {\rm (\ref{cd}--c)}.
\end{lemma}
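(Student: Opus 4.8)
The plan is to apply Schauder's fixed point theorem to the map $\mathcal{T}\colon \Vt \times L^2_M(\Omega \times D) \to \cVt \times X$, viewed as a map of the product Banach space $\Vt \times L^2_M(\Omega \times D)$ into itself via the compact embedding $\cVt \hookrightarrow\!\!\!\rightarrow \Vt$ (cf. \eqref{calVdef}) and the compact embedding $X \equiv H^1_M(\Omega\times D)\hookrightarrow\!\!\!\rightarrow L^2_M(\Omega\times D)$ (cf. \eqref{wcomp2}, valid since $\theta_i\geq 1$). First I would establish that $\mathcal{T}$ is well defined: this has essentially been done in the text above, since for each $\tut \in \Vt$ the linear parabolic problem \eqref{fixrho} has a unique solution $\rho^\star \in L^2(t_{n-1},t_n;H^1(\Omega))\cap H^1(t_{n-1},t_n;H^1(\Omega)')$, the bounds \eqref{rhomin}, \eqref{rhomax} give $\rho^\star(\cdot,t)\in[0,R(t)]$ so that in fact $\rho^\star \in \Yrhod^n$, and then $a(\cdot,\cdot)$ being a symmetric continuous coercive bilinear form on $X\times X$ with $\lae(\tut,\beta^L_\delta(\tpsi))(\cdot)$ continuous on $X$ yields a unique $\psi^\star \in X$ by Lax--Milgram, and similarly $b_\delta(\rho^\star(\cdot,t_n))(\cdot,\cdot)$ coercive and continuous on $\cVt\times\cVt$ with $\ell_{b,\delta}(\rho^\star,\psi^\star)(\cdot)$ continuous on $\cVt$ yields a unique $\ut^\star \in \cVt$.

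Next I would verify the compactness and self-mapping hypotheses of Schauder's theorem. The key point is to derive an a priori bound on $(\ut^\star,\psi^\star)$ in $\cVt \times X$ that is uniform over all $(\tut,\tpsi)$ in a suitable closed bounded convex set $\mathcal{K}\subset \Vt\times L^2_M(\Omega\times D)$, and then to show $\mathcal{T}(\mathcal{K})\subseteq \mathcal{K}$ after enlarging $\mathcal{K}$ if necessary. For $\psi^\star$: testing \eqref{fix3} with $\varphi=\psi^\star$, using coercivity of $a$, the Cauchy--Schwarz inequality, the bound $|\beta^L_\delta(\tpsi)|\leq L$, and the linear-growth estimates on $\sigtt(\tut)\qt_i$ over the bounded domain $D$, gives $\|\psi^\star\|_X \leq C(\Delta t,L,\delta,K,\lambda)(1+\|\tut\|_{\Ht^1(\Omega)})$, and crucially the right-hand side depends only on the $\Ht^1$-norm of $\tut$, which is controlled since $\tut \in \Vt$. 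For $\rho^\star$: the maximum principle bounds \eqref{rhomin}--\eqref{rhomax} give $\|\rho^\star\|_{L^\infty(\Omega\times(t_{n-1},t_n))} \leq \|\rhokaLd^{n-1}\|_{L^\infty(\Omega)}\,\mathrm{e}^{T\|\nabx\cdot\tut\|_{L^\infty(\Omega)}}$; here one must be slightly careful because $\|\nabx\cdot\tut\|_{L^\infty(\Omega)}$ is controlled on all of $\Vt$ but is not bounded on $\Vt$-bounded sets uniformly in the $\Ht^1$-norm --- however, on the fixed-point set $\mathcal{K}$ we take $\tut$ from a ball of $\Vt$ in its own norm, which does include the $L^\infty$-norm of $\nabx\cdot\tut$, so the bound is uniform over $\mathcal{K}$. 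Then $\ell_{b,\delta}(\rho^\star,\psi^\star)(\cdot)$ is bounded in $\cVt'$ in terms of $\|\rho^\star\|_{L^{\Gamma}}$, $\|\psi^\star\|_X$ (via \eqref{eqCttbd}) and the data, and coercivity of $b_\delta$ on $\cVt$ (using generalised Korn \eqref{Korn} for the $\Ht^1$-part and the $\delta$-hyperviscosity for the $\Ht^2$-part) yields $\|\ut^\star\|_{\cVt}\leq C$ uniformly over $\mathcal{K}$.

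With these uniform bounds in hand, $\mathcal{T}(\mathcal{K})$ is a bounded subset of $\cVt \times X$, hence relatively compact in $\Vt \times L^2_M(\Omega\times D)$; taking $\mathcal{K}$ to be a sufficiently large closed ball (of $\Vt\times L^2_M(\Omega\times D)$, or more conveniently a closed convex bounded subset already lying in a ball of $\cVt\times X$) makes $\mathcal{T}$ a continuous compact self-map of a closed bounded convex set, and Schauder's theorem delivers a fixed point $(\utkaLd^n,\hpsikaLd^n)$. Setting $\rhokaLd^{[\Delta t],n}$ to be the corresponding $\rho^\star$ then gives a solution of \eqref{cd}--\eqref{genLMd}. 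The remaining and, I expect, most delicate step is \textbf{continuity of $\mathcal{T}$}: one must show that if $(\tut^{(m)},\tpsi^{(m)})\to(\tut,\tpsi)$ in $\Vt\times L^2_M(\Omega\times D)$ then $\mathcal{T}(\tut^{(m)},\tpsi^{(m)})\to \mathcal{T}(\tut,\tpsi)$. This requires passing to the limit in the three linear problems in sequence: first showing $\rho^\star_m \to \rho^\star$ (strongly in $C([t_{n-1},t_n];L^2(\Omega))$, using the linear parabolic stability estimate together with strong convergence of the transport coefficient $\tut^{(m)}$), hence $\rho^\star_m(\cdot,t_n)\to\rho^\star(\cdot,t_n)$ in $L^2(\Omega)$ and in every $L^p$, $p<\infty$, by the $L^\infty$-bound and interpolation \eqref{eqLinterp}; then showing $\psi^\star_m\to\psi^\star$ in $X$ via the Lax--Milgram estimate and the fact that $\beta^L_\delta$ is Lipschitz with constant $1$ so that $\beta^L_\delta(\tpsi^{(m)})\to\beta^L_\delta(\tpsi)$ in $L^2_M$ (and, being bounded by $L$, boundedly, so products with the convergent $\sigtt(\tut^{(m)})$ converge); and finally showing $\ut^\star_m\to\ut^\star$ in $\cVt$ using coercivity of $b_\delta$ and the convergence of all the data on its right-hand side --- here one uses that $\tautt_1(M\hpsikaLd^n)$ depends continuously on $\hpsikaLd^n \in X$ via \eqref{eqCttbd}, that the quadratic interaction term in \eqref{lbdgen} is continuous in $\psi^\star\in X$, and that $\beta^L(\psi^\star_m)\to\beta^L(\psi^\star)$. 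The subtlety in this last step is the product $\rho^{n-1}\,[(\utkaL^{n-1}\cdot\nabx)\wt_1]\cdot\wt_2$ and the pressure integral $\int p_\kappa(\rho^\star)$ appearing in $b_\delta$ and $\ell_{b,\delta}$, but since $\rho^{n-1}$ and $\utkaL^{n-1}$ are \emph{fixed} data from the previous time step and $\rho^\star_m\to\rho^\star$ strongly in $C([t_{n-1},t_n];L^p(\Omega))$ with a uniform $L^\infty$-bound, continuity of $p_\kappa$ and dominated convergence handle the pressure term, so no genuine difficulty arises beyond careful bookkeeping.
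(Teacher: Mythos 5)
Your compactness and continuity arguments are in line with the paper, and you correctly identify most of the technical points (unique solvability via Lax--Milgram, compactness of $\cVt\hookrightarrow\Vt$ and $X\hookrightarrow L^2_M(\Omega\times D)$, the Lipschitz continuity of $\beta^L_\delta$, dominated convergence for the pressure). But the argument you propose for the third ingredient --- finding a closed bounded convex set $\mathcal{K}\subset\Vt\times L^2_M(\Omega\times D)$ with $\mathcal{T}(\mathcal{K})\subseteq\mathcal{K}$ by ``taking $\mathcal{K}$ to be a sufficiently large closed ball'' --- does not close. The bounds you record decouple the three equations: you get $\|\psi^\star\|_X\lesssim 1+\|\tut\|_{H^1(\Omega)}$, then $\|\rho^\star\|_{L^\infty(\Omega\times(t_{n-1},t_n))}\lesssim {\rm e}^{T\|\nabx\cdot\tut\|_{L^\infty(\Omega)}}$, and then $\|\ut^\star\|_{\cVt}$ is controlled via coercivity of $b_\delta$ in terms of $\|\rho^\star\|_{L^\Gamma(\Omega)}$ through the pressure integral $\int_\Omega p_\kappa(\rho^\star)\,\nabx\cdot\wt$, i.e.\ in terms of $\|\rho^\star\|_{L^\infty}^{\Gamma}$. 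Chaining these gives $\|\ut^\star\|_{\cVt}\lesssim\exp(C\Gamma\,\|\nabx\cdot\tut\|_{L^\infty(\Omega)})$, which grows exponentially in the $\Vt$-norm of $\tut$, so no ball $B_R\subset\Vt\times L^2_M(\Omega\times D)$ is invariant: you would need $C{\rm e}^{CR}\le R$, which has no solution.

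The paper avoids this by invoking the Schaefer (Leray--Schauder) form of the fixed-point theorem: instead of exhibiting an invariant convex set, one shows that the set $\{(\tut,\tpsi): (\tut,\tpsi)=\varkappa\,\mathcal{T}(\tut,\tpsi),\ \varkappa\in(0,1]\}$ is a priori bounded. At a point of this set, the three sub-problems \textit{are} coupled, with $\tut$ and $\tpsi$ equal (up to $\varkappa$) to the outputs $\ut^\star$ and $\psi^\star$, and this coupling is what makes the energy estimate possible: testing \eqref{fix3sig} with $[\mathcal{F}^L_\delta]'(\tpsi)$ produces the drag term $\sum_i\Ctt_i(M\tpsi):\sigtt(\tut) - (K+1)\,(\nabx\cdot\tut)\int_D M\tpsi\,\dq$, which cancels exactly against the extra-stress and polymeric-pressure contributions coming from testing \eqref{fix4sig} with $\wt=\tut$; and testing the continuity equation with $P'_\kappa(\trho+\varsigma)$ absorbs the solvent-pressure contribution $\int p_\kappa(\trho)\,\nabx\cdot\tut$. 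That cancellation is unavailable in your decoupled scheme because $\psi^\star$ and $\ut^\star$ are determined independently from $(\tut,\tpsi)$, so nothing ties the stress term in the momentum energy to the drag term in the Fokker--Planck energy. To repair your argument you would have to replace the invariant-ball step by precisely this Schaefer-type bound (i.e.\ establish \eqref{fixbound}), which is what the paper does.
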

\begin{proof}
%The proof is a simple adaption of the proof of Lemma 3.2 in
%\cite{BS2011-fene}, where $\rho^{n}_{L} = \rho^{n-1}_L \equiv 1$
%and $\zeta(\cdot)$ is identically equal to a positive constant.
Clearly, a fixed point of ${\mathcal T}$ yields a
solution of (\ref{cd}--c).
In order to show that ${\mathcal T}$ has a fixed point, we apply
Schauder's fixed-point theorem; that is, we need to show that:
(i)~${\mathcal T}:
\Vt \times L^2_M(\Omega \times D) \rightarrow
\Vt \times  L^2_M(\Omega \times D)$ is continuous;
(ii)~${\mathcal T}$ is compact; and
(iii)~there exists a $C_{\star} \in {\mathbb R}_{>0}$ such that
\begin{eqnarray}
\|\tut\|_{H^1(\Omega)}+
\|\tut\|_{L^\infty(\Omega)}
+\|\nabx\cdot\tut\|_{L^\infty(\Omega)}
+\|\tpsi\|_{L^{2}_M(\Omega\times D)} \leq C_{\star}
\label{fixbound}
\end{eqnarray}
for every $(\tut,\tpsi) \in \Vt \times L^2_M(\Omega \times D)$
and $\varkappa \in (0,1]$
satisfying $(\tut,\tpsi) = \varkappa\, {\mathcal T}(\tut,\tpsi)$.

(i) Let $\{\tut^{(m)},\tpsi^{(m)}\}_{m \in {\mathbb N}}$ be such that,
as  $m \rightarrow \infty$,
%
%\begin{subequations}
\begin{alignat}{2}
\tut^{(m)}
&\rightarrow \tut \quad \mbox{strongly in }
\Ht^1_0(\Omega), \qquad &&\tut^{(m)}
\rightarrow \tut \quad \mbox{strongly in }
\Lt^\infty(\Omega),
%\label{Gcont1u}
\nonumber \\
\nabx\cdot \tut^{(m)}
&\rightarrow \nabx \cdot \tut \quad \mbox{strongly in }
L^\infty(\Omega),
\qquad
&&\tpsi^{(m)}
\rightarrow \tpsi \quad \mbox{strongly in }
L^{2}_M(\Omega\times D).
\label{Gcont1}
\end{alignat}
%\end{subequations}
%
It follows immediately from (\ref{Gcont1}), (\ref{betaLd}), (\ref{growth1})
and (\ref{eqLinterp}) %with $\Omega$ replaced by $\Omega \times D$
%and (\ref{eqCttbd})
that
%\begin{subequations}
\begin{alignat}{1}
M^{\frac{1}{2}}\,\beta^L_\delta(\tpsi^{(m)})
\rightarrow M^{\frac{1}{2}}\,
\beta^L_\delta(\tpsi) \qquad
\mbox{strongly in }
L^r(\Omega\times D)\quad
\mbox{as } m \rightarrow \infty
\label{betacon}
\end{alignat}
for all $r \in [1,\infty)$.
\begin{comment}
and, for $i=1, \dots, K$,
\begin{alignat}{1}
&
\Ctt_i(M\,\tpsi^{(m)}) \rightarrow \Ctt_i(M\,\tpsi) \qquad
\mbox{strongly in } L^{2}(\Omega)\quad
\mbox{as } m \rightarrow \infty.
\label{Cttcon}
\end{alignat}
\end{subequations}
\end{comment}
In order to prove that ${\mathcal T}: \Vt \times L^2_M(\Omega \times D) \rightarrow
\Vt \times L^2_M(\Omega \times D)$ is continuous, we need to show that
$(\vt^{(m)},\xi^{(m)}):={\mathcal T}(\tut^{(m)},\tpsi^{(m)})$
is such that, as $m \rightarrow \infty$,
%\begin{subequations}
\begin{alignat}{2}
\vt^{(m)}
&\rightarrow \vt \quad \mbox{strongly in }
\Ht^1_0(\Omega), \qquad &&\vt^{(m)}
\rightarrow \vt \quad \mbox{strongly in }
\Lt^\infty(\Omega),
%\label{Gcont2u}
\nonumber \\
\nabx\cdot \vt^{(m)}
&\rightarrow \nabx \cdot \vt \quad \mbox{strongly in }
L^\infty(\Omega),
\qquad
&&\xi^{(m)}
\rightarrow \xi \quad \mbox{strongly in }
L^{2}_M(\Omega\times D),
\label{Gcont2}
\end{alignat}
%\end{subequations}
where
$(\vt,\xi):={\mathcal T}(\tut,\tpsi)$.
We have from the definition of ${\mathcal T}$, recall (\ref{fixrho}--c), that, for all
$m \in {\mathbb N}$,
$(\vt^{(m)},\xi^{(m)}) \in \cVt \times X$ is the unique solution to
\begin{subequations}
\begin{alignat}{2}
a( \xi^{(m)}, \varphi)&= \ell_a(\tut ^{(m)},
\beta^L_\delta(\tpsi^{(m)}))(\varphi)
&&\qquad \forall \varphi \in X,
\label{Gcont5}\\
b_\delta(\trho^{(m)}(\cdot,t_n))(\vt^{(m)},\wt) &= \ell_{b,\delta}(\trho^{(m)},\xi^{(m)})(\wt)
&&\qquad \forall \wt \in \cVt,
\label{Gcont5a}
\end{alignat}
\end{subequations}
where
$\trho^{(m)} \in
\Yrhod^n$ %\cap L^\infty(t_{n-1},t_n;L^{\infty}_{\geq 0}(\Omega))$
is the unique solution to
$\trho^{(m)}(\cdot,t_{n-1})= \rhokaLd^{n-1}(\cdot)$ and
\begin{align}
\int_{t_{n-1}}^{t_n} \left[ \left\langle\frac{\partial\trho^{(m)}}{\partial t} ,
\eta \right\rangle_{H^{1}(\Omega)}
+ c(\tut^{(m)})(\trho^{(m)},\eta)
\right]
\dd t
&= 0
\qquad
\forall \eta
\in L^2(t_{n-1},t_n;H^1(\Omega)).
\label{fixrhom}
\end{align}

It follows from (\ref{Gcont1}) that $\|\nabx \cdot \tut^{(m)}\|_{L^\infty(\Omega)} \leq C_\star$
for all $m \in {\mathbb N}$.
On choosing, for $s \in (t_{n-1},t_n]$,
$\eta(\cdot,t) = \chi_{[t_{n-1},s]}\,{\rm e}^{-C_\star\,(t-t_{n-1})} \trho^{(m)}(\cdot,t)$ in
\eqref{fixrhom} yields, on noting (\ref{cbel}), the first two bounds in
\begin{align}
&\|\trho^{(m)}\|_{C([t_{n-1},t_n];L^2(\Omega))}^2
+ \alpha \, \|\trho^{(m)}\|_{L^2(t_{n-1},t_n;H^1(\Omega))}^2
\nonumber \\
& \qquad \qquad
+\|\trho^{(m)}\|_{L^\infty(t_{n-1},t_n;L^\infty(\Omega))}^2
+ \left\|\frac{\partial \trho^{(m)}}{\partial t}\right\|_{L^2(t_{n-1},t_n;H^1(\Omega)')}^2
\leq C,
\label{rhombds}
\end{align}
where, here and below, $C$ is independent of $m$.
The third bound in (\ref{rhombds}) follows from applying the bound (\ref{rhoLinf})
to (\ref{fixrhom}) with $\tut$ in
(\ref{Rdef}) replaced by $\tut^{(m)}$ and noting (\ref{Gcont1}).
The fourth bound in (\ref{rhombds}) follows immediately from the first two bounds in
(\ref{rhombds}), (\ref{Gcont1})
and (\ref{fixrhom}).
Choosing $\varphi = \xi^{(m)}$ in (\ref{Gcont5}) yields, on noting (\ref{agen},b)
and (\ref{Gcont1}), that
\begin{align}
\|\xi^{(m)}\|_{H^1_M(\Omega\times D)}^2 \leq C.
\label{chimbds}
\end{align}
Choosing $\wt = \vt^{(m)}$ in (\ref{Gcont5a}) yields, on noting (\ref{bddef}), (\ref{lbdgen}), (\ref{bgen},b),
$\trho^{(m)}(\cdot,t_n) \in L^2_{\geq 0}(\Omega)$, $\rhokaL^{n-1} \in L^\Gamma_{\geq 0}(\Omega)$, (\ref{Korn}),
%(\ref{Cttcon}),
(\ref{Mint1}), (\ref{eqCttbd}),
(\ref{chimbds}),
(\ref{pkdef}) and (\ref{rhombds}), that
\begin{align}
%\|\vt^{(m)}\|_{H^1(\Omega)}^2 + %\delta \left[
\|\vt^{(m)}\|_{H^2(\Omega)}^2
+\|\nabx \cdot \vt^{(m)}\|_{H^2(\Omega)}^2 %\right]
\leq C.
\label{vtmbds}
\end{align}

It follows from
(\ref{rhombds}),  (\ref{chimbds}), (\ref{vtmbds}),
(\ref{compact1}), %Lebesgue's dominated convergence theorem,
(\ref{betaLd}),
(\ref{growth1}),
(\ref{eqLinterp}), %with $\Omega$ replaced by $\Omega \times D$,
(\ref{eqCttbd})
and
the compactness of $\cVt$ in $\Vt$ and $X \equiv H^1_M(\Omega \times D)$ in $L^2_M(\Omega \times D)$
%(\ref{Gcont7}), (\ref{embed}) and the compactness of the embedding (\ref{wcomp2})
that there exists a subsequence
$\{(\trho^{(m_k)},\vt^{(m_k)},\xi^{(m_k)})\}_{m_k \in {\mathbb N}}$ and
functions $(\trho,\,\vt,\,\xi) \in \Yrhod^n %\cap L^\infty(t_{n-1},t_n,L^\infty_{\geq 0}(\Omega)
\times \cVt \times X$ such that, as $m_k \rightarrow \infty$, for any $r \in [1,\infty)$,
\begin{subequations}
\begin{alignat}{3}
\trho^{(m_k)} & \rightarrow \trho
\quad &&\mbox{weakly in }
L^{2}(t_{n-1},t_n;H^1(\Omega)), \quad &&\mbox{%weak$^\star$
strongly in  } L^r(t_{n-1},t_n;L^r(\Omega)),
\label{Gcont8rhoa}\\
\frac{\partial \trho^{(m_k)}}{\partial t} & \rightarrow \frac{\partial \trho}{\partial t}
\quad &&\mbox{weakly in }
L^{2}(t_{n-1},t_n;H^1(\Omega)'), &&
\label{Gcont8rhob}\\
\trho^{(m_k)}(\cdot,t_n) & \rightarrow \trho(\cdot,t_n)
\quad &&\mbox{weakly in }
L^{2}(\Omega), &&
\label{Gcont8rhoc}\\
\vt^{(m_k)} &\rightarrow \vt \quad
&&\mbox{weakly in }
\Ht^{2}(\Omega), \quad &&\mbox{strongly in }
\Ht^{1}_0(\Omega)\cap\Lt^{\infty}(\Omega),
\label{Gcont8vta}\\
\nabx \cdot \vt^{(m_k)} &\rightarrow \nabx \cdot \vt \quad
&&\mbox{weakly in }
H^{2}(\Omega), \quad &&\mbox{strongly in }
L^{\infty}(\Omega),
\label{Gcont8vtb}\\
\xi^{(m_k)} &\rightarrow \xi \quad
&&\mbox{weakly in }
H^{1}_M(\Omega\times D), \quad &&\mbox{strongly in }
L^2_M(\Omega \times D),
\label{Gcont8chi}\\
M^{\frac{1}{2}}\,\beta^L_\delta(\xi^{(m_k)}) &\rightarrow
M^{\frac{1}{2}}\,\beta^L_\delta(\xi) \quad
&&\mbox{strongly in }
L^r(\Omega \times D),&&
\label{Gcont8chid}
\\
\Ctt_i(\xi^{(m_k)}) &\rightarrow \Ctt_i(\xi) \quad
&&\mbox{strongly in }
L^2(\Omega), \quad&&i=1,\ldots,K.
\label{Gcont8Ctt}
\end{alignat}
\end{subequations}
\begin{comment}
\widetilde{\eta}^{(\pk)}
&\rightarrow
\widetilde{\eta}
\qquad
&&\mbox{weakly in }
L^{s}(\Omega ;L^2_M(D)),
\label{Gcont8a} \\
\bet
M^{\frac{1}{2}}\,\nabx \widetilde{\eta}^{(\pk)}
&\rightarrow M^{\frac{1}{2}}\,\nabx \widetilde{\eta} \qquad
&&\mbox{weakly in }
\Lt^{2}(\Omega \times D),
\label{Gcont8bx} \\
\bet
M^{\frac{1}{2}}\,\nabq \widetilde{\eta}^{(\pk)}
&\rightarrow M^{\frac{1}{2}}\,\nabq \widetilde{\eta}
\qquad
&&\mbox{weakly in }
\Lt^{2}(\Omega \times D),
\label{Gcont8b} \\
\bet
\widetilde{\eta}^{(\pk)}
&\rightarrow
\widetilde{\eta}
\qquad
&&\mbox{strongly in }
L^{2}_M(\Omega\times D),
\label{Gcont8as} \\,
\end{alignat}
\end{subequations}
%
where $s \in [1,\infty)$ if $d=2$ or $s \in [1,6]$ if $d=3$.
\end{comment}

We deduce from (\ref{fixrhom}), (\ref{Gcont8rhoa},b) and (\ref{Gcont1}) that
$\trho \in
\Upsilon^n$ %\cap L^\infty(t_{n-1},t_n;L^{\infty}_{\geq 0}(\Omega))$
is the unique solution to
$\trho(\cdot,t_{n-1})= \rhokaLd^{n-1}(\cdot)$ and
\begin{align}
\int_{t_{n-1}}^{t_n} \left[ \left\langle\frac{\partial\trho}{\partial t} ,
\eta \right\rangle_{H^{1}(\Omega)}
+ c(\tut)(\trho,\eta)
\right]
\dd t
&= 0
\qquad
\forall \eta
\in L^2(t_{n-1},t_n;H^1(\Omega)).
\label{fixtrho}
\end{align}
Choosing $\eta=1$ in (\ref{fixtrho}), on noting (\ref{cdef}) and (\ref{regrid}), yields that
\begin{align}
\int_{\Omega} \trho(\cdot,t_n) \,\dx = \int_{\Omega} \rhokaLd^{n-1} \,\dx \leq
 \int_{\Omega} \rhokaL^{n-1} \,\dx.
\label{trhoin}
\end{align}
It follows from (\ref{Gcont5}), (\ref{agen},b), (\ref{Gcont8chi}), (\ref{Gcont1}) and
(\ref{betacon}) that $\xi,\,\tpsi \in X$
and $\tut \in \Vt$ satisfy
\begin{align}
a(\xi,\varphi)
&= \lae(\tut,\beta^L_\delta(\tpsi))(\varphi)
\qquad \forall \varphi \in C^\infty(\overline{\Omega \times D}).
\label{Gcont11}
\end{align}
Then, noting that $a(\cdot,\cdot)$
is a continuous bilinear functional on $X \times X$,
that $\lae(\vt,\beta^L_\delta(\tpsi))(\cdot)$ is a continuous linear functional on $X$,
and recalling (\ref{cal K}), we deduce that
$\xi \in X$ is the unique solution of
(\ref{Gcont11})
for all $\varphi \in X$.
It further follows from (\ref{Gcont5a}), (\ref{bddef}), (\ref{lbdgen}), (\ref{bgen},b) and (\ref{Gcont8rhoa},c,d,e,f,g,h)
that $\vt \in \cVt$ is the unique solution to
\begin{eqnarray}\qquad
b_\delta(\trho(\cdot,t_n))(\vt,\wt) =\ell_{b,\delta}(\trho,\xi)(\wt)
\qquad \forall \wt \in \cVt.
\label{Gcont9}
\end{eqnarray}

Combining (\ref{Gcont11}) with $\varphi \in X$ with (\ref{Gcont9}) and (\ref{fixtrho}), we have that
$(\vt,\xi) = {\mathcal T}(\tut,\tpsi)\in \cVt \times X$. As $(\vt,\xi)$ is unique for fixed $(\ut,\tpsi)$,
the whole sequence converges in (\ref{Gcont8rhoa}--h), and so (\ref{Gcont2}) holds.
Therefore the mapping ${\mathcal T}: \Vt \times L^2_M(\Omega \times D) \rightarrow
\Vt \times L^2_M(\Omega \times D)$ is continuous.
\begin{comment}
the whole sequence
%
\[\widetilde{\eta}^{(p)} \equiv {\mathcal T}(\widetilde{\psi}^{(p)})
\rightarrow {\mathcal T}(\widetilde{\psi})\qquad \mbox{strongly in $L^2_M(\Omega\times D)$},
\]
as $p \rightarrow \infty$, and so (i) holds.
\end{comment}

(ii) Since the embeddings $\cVt \hookrightarrow \Vt$ and $X \hookrightarrow L^{2}_M(\Omega \times D)$
are compact, we directly deduce that the mapping ${\mathcal T}: \Vt \times L^2_M(\Omega \times D) \rightarrow
\Vt \times L^2_M(\Omega \times D)$ is compact. It therefore remains to show that (iii) holds.

(iii) Let us suppose that $(\tut,\tpsi) =
\varkappa \, {\mathcal T}(\tut,\tpsi)$; then, $(\trho,\tut,\tpsi) \in
\Yrhod^n %\cap L^\infty(t_{n-1},t_n;L^\infty_{\geq 0}(\Omega))
\times \cVt \times X$ satisfies
$\trho(\cdot,t_{n-1})= \rhokaLd^{n-1}(\cdot)$ and
\begin{subequations}
\begin{align}
\int_{t_{n-1}}^{t_n} \left[ \left\langle\frac{\partial\trho}{\partial t} ,
\eta \right\rangle_{H^{1}(\Omega)}
+ c(\tut)(\trho,\eta)
\right]
\dd t
&= 0
\qquad
\forall \eta
\in L^2(t_{n-1},t_n;H^1(\Omega)),
\label{fixtrhoa}
\end{align}
\begin{alignat}{2}
b_\delta(\trho(\cdot,t_n))(\tut,\wt) &= \varkappa\,\ell_{b,\delta}(\trho,\tpsi)(\wt)
\qquad &&\forall \wt \in \cVt,
\label{fix4sig}
\\
a(\tpsi,\varphi)
&= \varkappa\,\lae(\tut,\beta^L_\delta(\tpsi))(\varphi)
\qquad &&\forall \varphi \in X. \label{fix3sig}
\end{alignat}
\end{subequations}
Choosing $\wt = \tut$ in (\ref{fix4sig})
yields, as $\varkappa \in (0,1]$,  that
\begin{align}
&\frac{\varkappa}{2}\,\displaystyle
\int_{\Omega} \left[ \trho(\cdot,t_n)\,|\tut|^2 + \rhokaL^{n-1}\,
|\tut-\utkaL^{n-1}|^2 - \rhokaL^{n-1}\,|\utkaL^{n-1}|^2
\right]
\dx \nonumber\\
&\hspace{0.5in}
+ \Delta t\,\mu^S
\int_{\Omega} |\Dtt(\tut)|^2 \dx
+ \Delta t\,\left(\mu^B -\frac{\mu^S}{d}\right)
\int_{\Omega} |\nabx \cdot \tut|^2 \dx
\nonumber \\
&\hspace{0.5in}
+ \Delta t\,
\delta  \sum_{|\lambdat|=2}
\int_{\Omega}
\left[\,
\left|\frac{\Dlxn \tut}{\Dlxd}\right|^2
+ \left|
\frac{\Dlxn (\nabx \cdot \tut)}{\Dlxd}\right|^2 \,\right]
\dx
\nonumber \\
&\hspace{1in}
\leq \varkappa \,\Delta t \left[
\int_{\Omega}\trho(\cdot,t_n) \,\ft^n \cdot \tut \dx
- k\,\sum_{i=1}^K
\int_{\Omega}
 \Ctt_i(M %\,\zeta(\rho^n_L)
 \,\tpsi): \nabxtt \tut \dx \right]
\nonumber\\ & \hspace{1.5in}
+ \varkappa\int_{\Omega} \left(\int_{t_{n-1}}^{t_n} \pk(\trho) \dt
+ \Delta t\,k\,(K+1) \int_D M\,\tpsi \dq
\right)
\nabx \cdot \tut \,\dx
\nonumber\\ & \hspace{1.5in}
- 2\,\varkappa \,\Delta t\,\mathfrak{z}
\int_{\Omega}
\left( \int_D M\,\beta^L_\delta(\tpsi) \,\dq \right)
\nabx \left( \int_D M\,\tpsi \,\dq \right)
\cdot \tut \,\dx.
\label{Gequnbhat}
\end{align}
On recalling (\ref{Pdef}),  we choose $\eta(\cdot,t) = \chi_{[t_{n-1},s]}\,
\Pk'(\trho(\cdot,t)+\varsigma)$ in (\ref{fixtrhoa}),
for any $s\in (t_{n-1},t_n]$
and any fixed $\varsigma \in {\mathbb R}_{>0}$,
 to obtain,
on noting (\ref{cdef}), that
\begin{align}
&\int_{\Omega} \Pk(\trho(\cdot,s)) \dx
+ \alpha\,\kappa
\int_{t_{n-1}}^{s} \int_\Omega (4\,\trho^{2}
+\Gamma\,\trho^{\Gamma-2})\,|\nabx \trho|^2 \dx \,\dt
\nonumber \\
& \hspace{0.6in} \leq \int_{\Omega} \Pk(\trho(\cdot,s)+\varsigma)  \dx
+ \alpha\,\int_{t_{n-1}}^{s} \int_\Omega \Pk''(\trho+\varsigma)\,|\nabx \trho|^2 \dx \,\dt
\nonumber \\
& \hspace{0.6in} =
\int_{\Omega} \Pk(\rhokaLd^{n-1}+\varsigma) \dx
+
\int_{t_{n-1}}^{s} \int_{\Omega} \trho\,\tut\cdot
\nabx \Pk'(\trho+\varsigma) \dx \,\dt
\nonumber \\
& \hspace{0.6in} =
\int_{\Omega} \Pk(\rhokaLd^{n-1}+\varsigma) \dx
+  \int_{\Omega}
\left(\int_{t_{n-1}}^{s} \left[ \Pk(\trho+\varsigma)-\trho\,\Pk'(\trho+\varsigma) \right]
\dt \right)  \nabx \cdot \tut\,\dx.
\label{Pkentreg}
\end{align}
As $\rhokaLd^{n-1}\in L^\infty_{\geq 0}(\Omega)$,
$\trho\in L^\infty(t_{n-1},t_n;L^\infty_{\geq 0}(\Omega))$ and $\tut \in \cVt$,
one can pass to the limit $\varsigma \rightarrow 0_{+}$ in (\ref{Pkentreg})
using Lebesgue's dominated convergence theorem to obtain, for any $s \in (t_{n-1},t_n]$, that
\begin{align}
&\int_{\Omega} \Pk(\trho(\cdot,s)) \dx
+ \alpha\,\kappa
\int_{t_{n-1}}^{s} %\int_\Omega %(4\,\trho^{2} +\Gamma\,\trho^{\Gamma-2})\,|\nabx \trho|^2
%\left[ |\nabx (\trho^2)|^2 + \frac{4}{\Gamma}\,|\nabx (\trho^{\frac{\Gamma}{2}})|^2 \right]
\left[ \|\nabx (\trho^2)\|_{L^2(\Omega)}^2 + \frac{4}{\Gamma}\,\|\nabx
(\trho^{\frac{\Gamma}{2}})\|_{L^2(\Omega)}^2 \right]
%\dx \,
\dt
\nonumber \\
& \hspace{0.6in}\leq \int_{\Omega} \Pk(\rhokaLd^{n-1}) \dx
 +  \int_{\Omega}
\left(\int_{t_{n-1}}^{s} \left[ \Pk(\trho)-\trho\,\Pk'(\trho) \right]
\dt \right)  \nabx \cdot \tut\,\dx.
\nonumber \\
& \hspace{0.6in} \leq \int_{\Omega} \Pk(\rhokaL^{n-1}) \dx
 - \int_{\Omega}
\left( \int_{t_{n-1}}^{s} \pk(\trho) \,\dt
\right)
\nabx \cdot \tut
\,\dx,
\label{Pkent}
\end{align}
where we have noted (\ref{regrid}) and (\ref{Pdef}) for the final inequality.
We remark that we needed to choose $\Pk'(\trho(\cdot,t)+\varsigma)$, as opposed
to $\Pk'(\trho(\cdot,t))$, in the testing procedure as $\Pk''(\trho)$, that would appear in
(\ref{Pkentreg}), may not be well-defined for $\gamma \in  (\frac{3}{2},2)$,
as we only know that $\trho$ is nonnegative as opposed to being strictly positive.
\begin{comment}
Selecting $\varphi = [\mathcal{F}_\delta^L]'(\widetilde{\psi})$ in (\ref{fix3sig}), defining
$\mathcal{G}^L_\delta \in W^{1,1}_{\rm loc}(\mathbb{R})$ by
%%
%\begin{equation}\label{Gdl}
%\mathcal{G}^L_\delta(s) := \left\{\begin{array}{ll}
%\frac{1}{2\delta} s^2 + \frac{\delta - L}{2} & \qquad\mbox{if $s \leq \delta$}, \\
%s - \frac{L}{2}                              & \qquad\mbox{if $s \in [\delta, L]$},\\
%\frac{1}{2L} s^2                             & \qquad\mbox{if $s \geq L$};
%\end{array}
%\right.
%\end{equation}
%
%
\begin{equation}\label{Gdl}
\mathcal{G}^L_\delta(s) := \left\{\begin{array}{ll}
\frac{1}{2\delta} (s^2 + \delta^2)-1         & \qquad\mbox{if $s \leq \delta$}, \\
s - 1                                        & \qquad\mbox{if $s \in [\delta, L]$},\\
\frac{1}{2L} (s^2+L^2)-1                             & \qquad\mbox{if $s \geq L$};
\end{array}
\right.
\end{equation}
%
using that, thanks to (\ref{betaLd}),  $[\mathcal{G}^L_\delta]'(s) = s/\beta^L_\delta(s) = s
[\mathcal{F}^L_\delta]''(s)$;
and that, by virtue of \eqref{Gequn-trans0} with
$\varphi = [\mathcal{G}^L_\delta](\widetilde{\psi})$, we have
%
\begin{align}
&- \Delta t \int_{\Omega \times D} M
\left(\frac{1}{\Delta t}\int_{t_{n-1}}^{t_n}\zeta(\rho^{[\Delta t]}_L)\dd t\right)
\utae^{n-1}\, \widetilde\psi \,\cdot\, \nabx [\mathcal{F}^L_\delta]'(\widetilde{\psi}) \dq \dx
\nonumber\\
&\qquad = - \int_{\Omega \times D} M
\left(\int_{t_{n-1}}^{t_n}\zeta(\rho^{[\Delta t]}_L)\dd t\right)
\utae^{n-1}\,\cdot\, \nabx [\mathcal{G}^L_\delta](\widetilde{\psi}) \dq \dx
\nonumber\\
&\qquad = - \int_{\Omega \times D} M
\left(\zeta(\rho^{n}_L) - \zeta(\rho^{n-1}_L)\right)
\, \mathcal{G}^L_\delta(\widetilde{\psi}) \dq \dx.
\end{align}
%
\end{comment}

For a.e.\ $\xt \in \Omega$, let
\red{
\begin{equation}\label{tvrho}
\tvrho(\xt) := \int_D M(\qt)\, \tpsi(\xt, \qt) \dq
\quad \mbox{and}
\quad
\vrhokaL^{n-1}(\xt) := \int_D M(\qt)\, \hpsikaL^{n-1}(\xt, \qt) \dq.
%\quad n=1,\ldots,N.
 %\in L^\infty(\Omega),
\end{equation}
}
%uniformly with respect to $L>1$ for all $n \in \{1,\dots,N\}$. Hence we shall deduce in particular that
%$\widetilde\psi^n_L\in Z_2$.
Choosing $\varphi(\xt,\qt) =
\tvrho(\xt) \otimes 1(\qt)$
in \eqref{fix3sig} yields that
\begin{align}
&\frac{\varkappa}{2} \left[ \|\tvrho\|_{L^2(\Omega)}^2 +
\|\tvrho-\vrhokaL^{n-1}\|_{L^2(\Omega)}^2\right]
+ \Delta t \,\varepsilon \,\|\nabx \tvrho\|_{L^2(\Omega)}^2
\nonumber \\
& \hspace{1.6in} =  \frac{\varkappa}{2}\, \|\vrhokaL^{n-1}\|_{L^2(\Omega)}^2
+ \varkappa\,\Delta t\,\int_{\Omega}
\left(\int_D M\,\beta^L_\delta(\tpsi)\,\dq\right)\,\tut \cdot \nabx \tvrho\,\dx.
\label{vrhokaLbd}
\end{align}

Combining (\ref{Gequnbhat}), (\ref{Pkent}) for $s=t_n$ and (\ref{vrhokaLbd}) yields,
on noting (\ref{Korn}), (\ref{inidata}) and (\ref{tvrho}),
that, for all $\varkappa \in (0,1]$,
\begin{align}
&\frac{\varkappa}{2}\,\displaystyle
\int_{\Omega} \left[ \trho(\cdot,t_n)\,|\tut|^2 + \rhokaL^{n-1}\,
|\tut-\utkaL^{n-1}|^2 \right]
\dx + \varkappa \int_{\Omega} \Pk(\trho(\cdot,t_n)) \dx
\nonumber\\
&\hspace{0.1in}
+ \varkappa\,\alpha\,\kappa
\int_{t_{n-1}}^{t_n}
\left[ \|\nabx (\trho^2)\|_{L^2(\Omega)}^2 + \frac{4}{\Gamma}\,\|\nabx
(\trho^{\frac{\Gamma}{2}})\|_{L^2(\Omega)}^2 \right]
\dt
+ \Delta t\,\mu^S\,c_0\, \|\tut\|_{H^1(\Omega)}^2
\nonumber \\
&\hspace{0.1in}
+ \Delta t\,
\delta  \sum_{|\lambdat|=2}
\int_{\Omega}
\left[\,
\left|\frac{\Dlxn \tut}{\Dlxd}\right|^2
+ \left|
\frac{\Dlxn (\nabx \cdot \tut)}{\Dlxd}\right|^2 \,\right]
\dx
\nonumber \\
&\hspace{0.1in} + \varkappa\,\mathfrak{z}\,\left[ \|\tvrho\|_{L^2(\Omega)}^2 +
\|\tvrho-\vrhokaL^{n-1}\|_{L^2(\Omega)}^2\right]
+ 2\,\Delta t  \,\mathfrak{z}\,\varepsilon\,\|\nabx \tvrho\|_{L^2(\Omega)}^2
\nonumber
\\
%\end{align}
%\begin{align}
&\hspace{0.2in}
\leq
\frac{\varkappa}{2}\,\displaystyle
\int_{\Omega} \rhokaL^{n-1}\,|\utkaL^{n-1}|^2 \dx
+\varkappa \int_{\Omega} \Pk(\rhokaL^{n-1})  \dx
+ \varkappa \,\Delta t
\int_{\Omega}\trho(\cdot,t_n) \,\ft^n \cdot \tut \dx
\nonumber \\
&\hspace{0.3in}
+\varkappa\,\mathfrak{z}\,\|\vrhokaL^{n-1}\|_{L^2(\Omega)}^2
%\nonumber\\
%&\hspace{0.3in}
+\varkappa \,k\,\Delta t \left[
(K+1) \int_\Omega \tvrho\,
%\left( \int_D M\,\tpsi \dq \right)
\nabx \cdot \tut \,\dx
- \sum_{i=1}^K
\int_{\Omega}
 \Ctt_i(M
 \,\tpsi): \nabxtt \tut \dx \right].
\label{Ekt}
\end{align}

Choosing $\varphi = [\mathcal{F}_\delta^L]'(\widetilde{\psi})$ in (\ref{fix3sig})
and noting %the convexity $\mathcal{F}_\delta^L$,
%and Lemma \ref{basic}, with $c_0=0$ on noting that
%$s [\mathcal{F}^L_\delta]'(s) - \mathcal{F}^L_\delta(s) - \mathcal{G}^L_\delta(s)=0$,
(\ref{betaLd})
implies that
\begin{align}
&
\int_{\Omega \times D} M \left( %\zeta(\rho^n_L)\,
\mathcal{F}_\delta^L (\tpsi)
%- \zeta(\rho^{n-1}_L)\,
- \mathcal{F}_\delta^L (\varkappa \,\hpsikaL^{n-1})
+ \frac{1}{2\,L}\,|\tpsi-\varkappa\, \hpsikaL^{n-1}|^2
\right) \dq \dx
\nonumber \\
& \qquad
+ \frac{\Delta t}{4\,\lambda}\,
\sum_{i=1}^K \sum_{j=1}^K A_{ij}
\int_{\Omega \times D} M\,
\nabqj
\tpsi \cdot \nabqi ([\mathcal{F}_\delta^L]'(\tpsi))
\dq \dx
\nonumber\\&\qquad\qquad
+ \Delta t \, \varepsilon
\int_{\Omega \times D}
 M\,  \nabx \tpsi \cdot \nabx ([\mathcal{ F}_{\delta}^L]'(\tpsi))
\dq \dx
\nonumber \\
&
\hspace{1in}
\leq \varkappa\,\Delta t
\int_{\Omega \times D}
M\,\left[ %\zeta(\rho^n_L)\,
\left(\sum_{i=1}^K \sigtt(\tut) \,\qt_i \right) \cdot
\nabqi \tpsi + \tut \cdot \nabx \tpsi \right]
\dq \dx
\nonumber
\\
&\hspace{1in}
= \varkappa\,\Delta t
\int_{\Omega}
\left[\sum_{i=1}^K
\Ctt_i(M\,%\zeta(\rho^n_L)\,
\tpsi) : \sigtt(\tut)
-(K+1)\, (\nabx \cdot \tut) \int_D M \tpsi \dq
\right]
\dx,
\label{acorstab1}
\end{align}
where in the transition to the final line we applied \eqref{intbyparts} with
$\Btt = \sigtt(\tut)$ (on account of it being independent of the variable $\qt$),
%together with the fact that $\mathfrak{tr}(\sigtt(\vt))  = \nabx \cdot ~\vt = 0$,
and recalled \eqref{eqCtt}.
Combining %(\ref{Gequnbhat}), (\ref{Pkent}) with $s=t_n$
(\ref{Ekt}) and (\ref{acorstab1}),
and noting %(\ref{Korn}),
(\ref{A}) and (\ref{betaLd}), %and (\ref{inidata}),
%\eqref{convGdL},
%\eqref{Gdlpp}, (\ref{inidata}), (\ref{fvkbd}) and
% %(\ref{eqinterp}) and a Poincar\'{e} inequality,
yields, for all $\varkappa \in (0,1]$ and $\varsigma \in {\mathbb R}_{>0}$, that
\begin{align}
&\frac{\varkappa}{2}\,\displaystyle
\int_{\Omega} \left[ \trho(\cdot,t_n)\,|\tut|^2 + \rhokaL^{n-1}\,
|\tut-\utkaL^{n-1}|^2 \right]
\dx
+ \varkappa \int_{\Omega} \Pk(\trho(\cdot,t_n)) \dx
\nonumber\\
&\hspace{0.1in}
+ \varkappa\,\alpha\,\kappa
\int_{t_{n-1}}^{t_n} %\int_\Omega %(4\,\trho^{2}
%+\Gamma\,\trho^{\Gamma-2})\,|\nabx \trho|^2
%\left[ |\nabx (\trho^2)|^2 + \frac{4}{\Gamma}\,|\nabx (\trho^{\frac{\Gamma}{2}})|^2 \right]
\left[ \|\nabx (\trho^2)\|_{L^2(\Omega)}^2 + \frac{4}{\Gamma}\,\|\nabx
(\trho^{\frac{\Gamma}{2}})\|_{L^2(\Omega)}^2 \right]
%\dx \,
\dt
\nonumber
\\
%\end{align}
%\begin{align}
&\hspace{0.1in}+
k \int_{\Omega \times D} M \left(
\mathcal{F}_\delta^L (\tpsi)
+ \frac{1}{2\,L}\,|\tpsi-\varkappa\, \hpsikaL^{n-1}|^2
\right) \dq \dx
%\nonumber \\
%&\hspace{0.1in}
+ \Delta t\,\mu^S\,c_0\, \|\tut\|_{H^1(\Omega)}^2
%\int_{\Omega} |\Dtt(\tut)|^2 \dx
%+ \Delta t\,\left(\mu^B -\frac{\mu^S}{d}\right)
%\int_{\Omega} |\nabx \cdot \tut|^2 \dx
\nonumber 
%\\
\end{align}
\begin{align}
&\hspace{0.1in}
+ \Delta t\,
\delta  \sum_{|\lambdat|=2}
\int_{\Omega}
\left[\,
\left|\frac{\Dlxn \tut}{\Dlxd}\right|^2
+ \left|
\frac{\Dlxn (\nabx \cdot \tut)}{\Dlxd}\right|^2 \,\right]
\dx
\nonumber 
\\
& \hspace{0.1in}
+ \frac{\Delta t\,k\,a_0}{4\,\lambda\,L}\,
\sum_{i=1}^K %\sum_{j=1}^K A_{ij}
\int_{\Omega \times D} %\frac{M}{\beta^L_\delta(\tpsi)}\,
M\,|\nabqi \tpsi|^2 %\cdot \nabqi ([\mathcal{F}_\delta^L]'(\tpsi))
\dq \dx
%\nonumber\\
%&\hspace{0.1in}
+ \frac{\Delta t\, k\,\varepsilon}{L}
\int_{\Omega \times D} %\frac{M}{\beta^L_\delta(\tpsi)}\,
 M\,  |\nabx \tpsi|^2 %\cdot \nabx ([\mathcal{ F}_{\delta}^L]'(\tpsi))
\dq \dx
% + \varkappa\,\alpha\,\int_{t_{n-1}}^{t_n} \int_\Omega \Pk''(\trho)\,|\nabx \trho|^2 \dx \,\dt
\nonumber 
\\
& \hspace{0.1in}
+ \varkappa\,\mathfrak{z}\,\left[ \|\tvrho\|_{L^2(\Omega)}^2 +
\|\tvrho-\vrhokaL^{n-1}\|_{L^2(\Omega)}^2\right]
+ 2\,\Delta t \, \mathfrak{z}\,\varepsilon\,\|\nabx \tvrho\|_{L^2(\Omega)}^2
\nonumber 
\\
&\hspace{0.2in}
\leq
\frac{\varkappa}{2}\,\displaystyle
\int_{\Omega} \rhokaL^{n-1}\,|\utkaL^{n-1}|^2 \dx
+\varkappa \int_{\Omega} \Pk(\rhokaL^{n-1})  \dx
+
k \int_{\Omega \times D} M \mathcal{F}_\delta^L (\varkappa \,\hpsikaL^{n-1})
\dq \dx
\nonumber \\
& \hspace{0.3in}
+ \varkappa\,\mathfrak{z}\,\|\vrhokaL^{n-1}\|_{L^2(\Omega)}^2
+ \varkappa \,\Delta t
\int_{\Omega}\trho(\cdot,t_n) \,\ft^n \cdot \tut \dx
%- \varkappa\,\Delta t\,k
%\int_{\Omega}
%\left(\int_D M \tpsi \dq\right)
%\nabx \cdot \tut
%\dx.
\nonumber \\
& \hspace{0.2in}
\leq
\frac{\varkappa}{2}\,\displaystyle
\int_{\Omega} \rhokaL^{n-1}\,|\utkaL^{n-1}|^2 \dx
+\varkappa \int_{\Omega} \Pk(\rhokaL^{n-1})  \dx
+
k \int_{\Omega \times D} M \mathcal{F}_\delta^L (\varkappa \,\hpsikaL^{n-1})
\dq \dx
\nonumber \\
& \hspace{0.3in}
+ \varkappa\,\mathfrak{z}\,\|\vrhokaL^{n-1}\|_{L^2(\Omega)}^2
+ \frac{\varkappa\,\Delta t}{2}\,\left[
\varsigma\,\int_{\Omega}\trho(\cdot,t_n)\,|\tut|^2 \,\dx
+ \frac{1}{\varsigma}\,\|\ft^n\|_{L^\infty(\Omega)}^2\,\int_{\Omega}
\rhokaL^{n-1} \,\dx\right],
\label{Ek}
\end{align}
where, in deriving the final inequality, we have noted %(\ref{regrid}), (\ref{Pdef}) 5and
(\ref{trhoin}).
It is easy to see that $\mathcal{F}^L_\delta(s)$ is nonnegative for all
$s \in \mathbb{R}$, with  $\mathcal{F}^L_\delta(1)=0$.
Furthermore, for any $\varkappa \in (0,1]$,
$\mathcal{F}^L_\delta(\varkappa\, s) \leq \mathcal{F}^L_\delta(s)$
if $s<0$ or $1 \leq \varkappa\, s$, and also
$\mathcal{F}^L_\delta(\varkappa\, s) \leq \mathcal{F}^L_\delta(0) \leq 1$
if $0 \leq \varkappa\, s \leq 1$.
Thus we deduce that
\begin{equation}\label{deltaL}
\mathcal{F}_\delta^L(\varkappa\, s)
\leq \mathcal{F}_\delta^L(s)+ 1\qquad \forall s \in {\mathbb R},\quad
\forall \varkappa \in (0,1].
\end{equation}
Hence, the bounds (\ref{Ek}) and (\ref{deltaL}), on noting (\ref{cFbelow}) and, from
(\ref{betaLd}) and (\ref{betaLa}), that $\beta^L_\delta(\cdot) \leq L$,
give rise, for $\varsigma$ sufficiently small,
to the desired bound (\ref{fixbound}) with $C_*$ dependent only on
$\delta$, $L$, $\Delta t$, $M$, $k$, $\mu^S$, $c_0$, $a_0$, $\ft$,
$\rhokaL^{n-1}$, $\utkaL^{n-1}$ and $\hpsikaL^{n-1}$.
Therefore (iii) holds, and so ${\mathcal T}$ has a fixed point, proving
existence of a solution to (\ref{cd}--c).
\qquad\end{proof}

Similarly to (\ref{vrhokaLbd}), choosing $\varphi(\xt,\qt) =
\vrhokaLd^n(\xt) \otimes 1(\qt)$
in \eqref{genLMd}, where $\vrhokaLd^{n}(\xt) := \int_D M(\qt)\, \hpsikaLd^{n}(\xt, \qt) \dq \dx$,
yields that
\begin{align}
&\frac{1}{2} \left[ \|\vrhokaLd^n\|_{L^2(\Omega)}^2 +
\|\vrhokaLd^n-\vrhokaL^{n-1}\|_{L^2(\Omega)}^2\right]
+ \Delta t \,\varepsilon \,\|\nabx \vrhokaLd^n\|_{L^2(\Omega)}^2
\nonumber \\
& \hspace{0.5in} =  \frac{1}{2}\, \|\vrhokaL^{n-1}\|_{L^2(\Omega)}^2
+ \Delta t \int_{\Omega}
\left(\int_D M\,\beta^L_\delta(\vrhokaLd^n)\,\dq\right)\,\utkaLd^n \cdot \nabx \vrhokaLd^n\,\dx.
\label{vrhokaLdbd}
\end{align}
Choosing $\wt = \utkaLd^{n}$ in (\ref{bLMd}) and $\eta = \Pk'(\rhokaLd^{[\Delta t],n})$
in (\ref{cd}),
and combining with (\ref{vrhokaLdbd}), yields, similarly to (\ref{Ekt}), that
\begin{align}
&\frac{1}{2}\,\displaystyle
\int_{\Omega} \left[ \rhokaLd^{[\Delta t],n}(\cdot,t_n)\,|\utkaLd^n|^2 + \rhokaL^{n-1}\,
|\utkaLd^n-\utkaL^{n-1}|^2 \right]
\dx
+ \int_{\Omega} \Pk(\rhokaLd^{[\Delta t],n}(\cdot,t_n)) \dx
\nonumber\\
&\;
+ \alpha\,\kappa
\int_{t_{n-1}}^{t_n}
\left[ \|\nabx[(\rhokaLd^{[\Delta t],n})^2]\|_{L^2(\Omega)}^2 + \frac{4}{\Gamma}\,\|\nabx
[(\rhokaLd^{[\Delta t],n})^{\frac{\Gamma}{2}}]\|_{L^2(\Omega)}^2 \right]
\dt
\nonumber \\
&\;
+ \Delta t\,\mu^S\,c_0\, \|\utkaLd^n\|_{H^1(\Omega)}^2\nonumber\\
&\;+ \Delta t\,
\delta  \sum_{|\lambdat|=2}
\int_{\Omega}
\left[\,
\left|\frac{\Dlxn \utkaLd^n}{\Dlxd}\right|^2
+ \left|
\frac{\Dlxn (\nabx \cdot \utkaLd^n)}{\Dlxd}\right|^2 \,\right]
\dx
\nonumber
\\
&\;
+ \mathfrak{z}\,\left[ \|\vrhokaLd^n\|_{L^2(\Omega)}^2 +
\|\vrhokaLd^n-\vrhokaL^{n-1}\|_{L^2(\Omega)}^2\right]
+ 2\,\Delta t \,\mathfrak{z}\,\varepsilon\,\|\nabx \vrhokaLd^n\|_{L^2(\Omega)}^2
\nonumber 
%\\
\end{align}
\begin{align}
&\hspace{0.2in}
\leq
\frac{1}{2}\,\displaystyle
\int_{\Omega} \rhokaL^{n-1}\,|\utkaL^{n-1}|^2 \dx
+ \int_{\Omega} \Pk(\rhokaL^{n-1})  \dx
+ \Delta t
\int_{\Omega}\rhokaLd^{[\Delta t],n}(\cdot,t_n) \,\ft^n \cdot \utkaLd^n \dx
\nonumber \\
& \hspace{0.4in}
+ \mathfrak{z}\,\|\vrhokaL^{n-1}\|_{L^2(\Omega)}^2
+ k\,\Delta t \,
(K+1) \int_\Omega %\left( \int_D M\,\hpsikaLd^n \dq \right)
\vrhokaLd^n\,
\nabx \cdot \utkaLd^n \,\dx
\nonumber \\
&\hspace{0.4in}
- k\,\Delta t \sum_{i=1}^K
\int_{\Omega}
 \Ctt_i(M
 \,\hpsikaLd^n): \nabxtt \utkaLd^n \dx.
\label{Ektrhod}
\end{align}
Choosing %$\wt = \utkaLd^{n}$ in (\ref{bLMd}), $\eta = \Pk'(\rhokaLd^{[\Delta t],n})$
%in (\ref{cd}) and
$\varphi = [{\mathcal F}^{L}_\delta]'(\hpsikaLd^{n})$ in (\ref{genLMd}),
combining with (\ref{Ektrhod})
and noting (\ref{cFabove}), yields, similarly to (\ref{Ek}), that, for
$\varsigma \in {\mathbb R}_{>0}$
sufficiently small,
the solution
$(\rhokaLd^{[\Delta t],n},\,\utkaLd^n$ $\hpsikaLd^{n})
\in \Yrhod^n
\times \cVt \times X$
of (\ref{cd}--c) satisfies
\begin{align}
&\frac{1}{2}\,\displaystyle
\int_{\Omega} \left[ \rhokaLd^{[\Delta t],n}(\cdot,t_n)\,|\utkaLd^n|^2 + \rhokaL^{n-1}\,
|\utkaLd^n-\utkaL^{n-1}|^2 \right]
\dx
+ \int_{\Omega} \Pk(\rhokaLd^{[\Delta t],n}(\cdot,t_n)) \dx
\nonumber \\
&\hspace{0.25in}
+ \alpha\,\kappa
\int_{t_{n-1}}^{t_n} %\int_\Omega %\left(4\,(\rhokaLd^{[\Delta t],n})^{2}
%+\Gamma\,(\rhokaLd^{[\Delta t],n})^{\Gamma-2}\right)|\nabx \rhokaLd^{[\Delta t],n}|^2
\left[ \|\nabx [(\rhokaLd^{[\Delta t],n})^2]\|_{L^2(\Omega)}^2 + \frac{4}{\Gamma}\,\|\nabx
[(\rhokaLd^{[\Delta t],n})^{\frac{\Gamma}{2}}]\|_{L^2(\Omega)}^2 \right]
%\dx \,
\dt
\nonumber \\
& \hspace{0.25in}
+k \int_{\Omega \times D} M \left(
\mathcal{F}_\delta^L (\hpsikaLd^n)
+ \frac{1}{2\,L}\,|\hpsikaLd^n-\hpsikaL^{n-1}|^2
\right) \dq \dx
\nonumber \\
&\hspace{0.25in}
+ \Delta t\,\mu^S\,c_0\, \|\utkaLd^n\|_{H^1(\Omega)}^2
+ \Delta t\,
\delta  \sum_{|\lambdat|=2}
\int_{\Omega}
\left[\,
\left|\frac{\Dlxn \utkaLd^n}{\Dlxd}\right|^2
+ \left|
\frac{\Dlxn (\nabx \cdot \utkaLd^n)}{\Dlxd}\right|^2 \,\right]
\dx
\nonumber
\\
%\end{align}
%\begin{align}
& \hspace{0.25in}
+ \frac{\Delta t\,k\,a_0}{4\,\lambda\,L}\,
\sum_{i=1}^K %\sum_{j=1}^K A_{ij}
\int_{\Omega \times D} %\frac{M}{\beta^L_\delta(\hpsikaLd^n)}\,
M\,|\nabqi \hpsikaLd^n|^2 %\cdot \nabqi ([\mathcal{F}_\delta^L]'(\tpsi))
\dq \dx
%\nonumber\\
%&\hspace{0.05in}
+ \frac{\Delta t\, k\,\varepsilon}{L}
\int_{\Omega \times D} %\frac{M}{\beta^L_\delta(\hpsikaLd^n)}\,
M\,   |\nabx \hpsikaLd^n|^2 %\cdot \nabx ([\mathcal{ F}_{\delta}^L]'(\tpsi))
\dq \dx
% + \varkappa\,\alpha\,\int_{t_{n-1}}^{t_n} \int_\Omega \Pk''(\trho)\,|\nabx \trho|^2 \dx \,\dt
% + \alpha\,\int_{t_{n-1}}^{t_n} \int_\Omega \Pk''(\trho)\,|\nabx \trho|^2 \dx \,\dt
\nonumber \\
&\hspace{0.25in}
+ \mathfrak{z}\,\left[ \|\vrhokaLd^n\|_{L^2(\Omega)}^2 +
\|\vrhokaLd^n-\vrhokaL^{n-1}\|_{L^2(\Omega)}^2\right]
+ 2\,\Delta t \,\mathfrak{z}\,\epsilon\,\|\nabx \vrhokaLd^n\|_{L^2(\Omega)}^2
\nonumber \\
&\hspace{0.1in}
\leq
\frac{1}{2}\,\displaystyle
\int_{\Omega} \rhokaL^{n-1}\,|\utkaL^{n-1}|^2 \dx
+\int_{\Omega} \Pk(\rhokaLd^{n-1})  \dx
+
k \int_{\Omega \times D} M \mathcal{F}_\delta^L (\hpsikaL^{n-1})
\dq \dx
\nonumber \\
& \hspace{0.2in}
+ \mathfrak{z}\,\|\vrhokaL^{n-1}\|_{L^2(\Omega)}^2
+ \Delta t
\int_{\Omega}\rhokaLd^{[\Delta t],n}(\cdot,t_n) \,\ft^n \cdot \utkaLd^n \dx
\nonumber
\\
&\hspace{0.1in}
\leq
\frac{1}{2}\,\displaystyle
\int_{\Omega} \rhokaL^{n-1}\,|\utkaL^{n-1}|^2 \dx
+\int_{\Omega} \Pk(\rhokaL^{n-1})  \dx
+
k \int_{\Omega \times D} M \mathcal{F}^L (\hpsikaL^{n-1})
\dq \dx
\nonumber \\
& \hspace{0.2in}
+ \mathfrak{z}\,\|\vrhokaL^{n-1}\|_{L^2(\Omega)}^2
+ \frac{\Delta t}{2}\left[
\varsigma \int_{\Omega}\rhokaLd^{[\Delta t],n}(\cdot,t_n)\,|\utkaLd^n|^2 \,\dx
+ \frac{1}{\varsigma}\,\|\ft^n\|_{L^\infty(\Omega)}^2\,\int_{\Omega}
\rhokaL^{n-1} \,\dx\right]
\nonumber \\
&\hspace{0.1in}
\leq C, \nonumber \\
\label{E1}
\end{align}
where
$C$ is independent of $\delta$ and $\Delta t$.

On choosing, for any $s\in (t_{n-1},t_n]$, $\eta(\cdot,t) = \chi_{[t_{n-1},s]}\,
[\rhokaLd^{[\Delta t],n}(\cdot,t)]^{\vartheta-1}$, for $\vartheta=2$ and $\frac{\Gamma}{2}$,
in (\ref{cd}), we obtain, on noting (\ref{cdef}),
(\ref{regrid}) %, (\ref{Pdef})
and (\ref{E1}), that
\begin{align}
&\frac{1}{\vartheta}\,\|\rhokaLd^{[\Delta t],n}(\cdot,s)\|_{L^\vartheta(\Omega)}^\vartheta
+ \frac{4\alpha(\vartheta-1)}{\vartheta^2}\,
\int_{t_{n-1}}^{s} \|\nabx[ (\rhokaLd^{[\Delta t],n})^{\frac{\vartheta}{2}}]\|_{L^2(\Omega)}^2 \,\dt
\nonumber \\
& \quad = \frac{1}{\vartheta}\left[ \|\rhokaLd^{n-1}\|_{L^\vartheta(\Omega)}^\vartheta
+ (\vartheta-1)\,\int_{t_{n-1}}^{s} \int_{\Omega} %\rhokaLd^{[\Delta t],n} \,\,
\utkaLd^n \cdot
\nabx [(\rhokaLd^{[\Delta t],n})^\vartheta] \dx\, \dt \right]
\nonumber \\
& \quad \leq \frac{1}{\vartheta}\left[\|\rhokaL^{n-1}\|_{L^\vartheta(\Omega)}^\vartheta
+ \Delta t \,\|\utkaLd^n\|_{L^2(\Omega)}^2 +
\frac{(\vartheta-1)^2}{4}\,\int_{t_{n-1}}^{t_n} %\int_{\Omega}
\|\nabx [(\rhokaLd^{[\Delta t],n})^\vartheta]\|_{L^2(\Omega)}^2 \dt
%(\rhokaLd^{[\Delta t],n})^2|\nabx \rhokaLd^{[\Delta t],n}|^2 \dx\, \dt
\right]
\nonumber \\
& \quad \leq C,
\label{rhodbd}
\end{align}
where $C$ is independent of $\delta$ and $\Delta t$.
On denoting by $\mint\eta$, the mean value of the function $\eta$ over $\Omega$, it follows from
a  Poincar\'e inequality,  %(\ref{eqinterp}),
(\ref{E1}) and (\ref{rhodbd}) for
$\vartheta=\frac{\Gamma}{2}$ that
\begin{align}
&\|\rhokaLd^{[\Delta t],n}\|_{L^{\Gamma}(t_{n-1},t_n;L^\Gamma(\Omega))}^\Gamma
= \|(\rhokaLd^{[\Delta t],n})^{\frac{\Gamma}{2}}\|_{L^{2}(t_{n-1},t_n;L^2(\Omega))}^2
\nonumber \\[2mm]
& \hspace{0.6in} \leq
2 \|(I-\mint)(\rhokaLd^{[\Delta t],n})^{\frac{\Gamma}{2}}\|_{L^{2}(t_{n-1},t_n;L^2(\Omega))}^2
+ 2\, \|\mint(\rhokaLd^{[\Delta t],n})^{\frac{\Gamma}{2}}\|_{L^{2}(t_{n-1},t_n;L^2(\Omega))}^2
\nonumber \\[2mm]
& \hspace{0.6in} \leq C\,
\|\nabx[(\rhokaLd^{[\Delta t],n})^{\frac{\Gamma}{2}}]\|_{L^{2}(t_{n-1},t_n;L^2(\Omega))}^2
+ C\,\Delta t \,\|\rhokaLd^{[\Delta t],n}\|_{L^{\infty}(t_{n-1},t_n;L^{\frac{\Gamma}{2}}(\Omega))}^\Gamma
%\nonumber \\[2mm]
%& \hspace{0.6in} \leq C\left[ 1+
%\|(\rhokaLd^{[\Delta t],n})^{\frac{\Gamma-2}{2}}
%\nabx \rhokaLd^{[\Delta t],n}\|_{L^{2}(t_{n-1},t_n;L^2(\Omega))}^2
%\nonumber \\
%& \hspace{1.5in}
%+ C\,
%\int_{t_{n-1}}^{t_n} \|\rhokaLd^{[\Delta t],n}\|_{L^{2}(\Omega)}^{(1-\vartheta)\,\Gamma}
%\|\rhokaLd^{[\Delta t],n}\|_{H^{1}(\Omega)}^{\vartheta\, \Gamma}\,\dt\right]
\leq C.
\label{PkrhokaLdc}
\end{align}
%provided $\vartheta\,\Gamma \leq 2$, where $\vartheta = 3\,(\frac12-\frac{2}{\Gamma})$.
%This leads to the restriction $\Gamma \leq \frac{16}{3}$; that is, on recalling (\ref{pkdef}),
%$\gamma \leq \frac{16}{3}$.
Next, we obtain from (\ref{cdef}), (\ref{eqinterp}), (\ref{E1}) and (\ref{rhodbd})
for $\vartheta=\frac{\Gamma}{2}$, on recalling that $\Gamma \geq 8$,
%and if $d=2$, (\ref{PkrhokaLdb}),
that
\begin{align}
\left| \int_{t_{n-1}}^{t_n} c(\utkaLd^n)(\rhokaLd^{[\Delta t],n},\eta) \dt\right|
%\nonumber \\ & %\hspace{1in}
&\leq
\alpha \,\|\rhokaLd^{[\Delta t],n}\|_{L^2(t_{n-1},t_n;H^{1}(\Omega))}\, \|\eta\|_{L^2(t_{n-1},t_n;H^{1}(\Omega))}
\nonumber \\
& \hspace{0.5in}
+ \left| \int_{t_{n-1}}^{t_n} \|\rhokaLd^{[\Delta t],n}\|_{L^3(\Omega)}\,\|\utkaLd^n\|_{L^6(\Omega)}\,
\|\nabx \eta\|_{L^2(\Omega)} \,\dt \right|
\nonumber \\
& %\hspace{1in}
\leq C \,\|\eta\|_{L^2(t_{n-1},t_n;H^{1}(\Omega))}
+ C \left| \int_{t_{n-1}}^{t_n} \|\utkaLd^n\|_{H^1(\Omega)}\,
\|\eta\|_{H^{1}(\Omega)} \,\dt \right|
\nonumber \\
& %\hspace{1in}
\leq C \left[ 1 + \left(\Delta t\,\|\utkaLd^n\|_{H^1(\Omega)}^2 \right)^{\frac{1}{2}}\right]
\,\|\eta\|_{L^2(t_{n-1},t_n;H^{1}(\Omega))}
\nonumber \\
& %\hspace{1in}
\leq C\,
\,\|\eta\|_{L^2(t_{n-1},t_n;H^{1}(\Omega))}
\qquad \forall \eta \in L^2(t_{n-1},t_n;H^{1}(\Omega)).
\label{rhodH-1a}
\end{align}
%where $r = s=4$ if $d=2$ and
%$r=2$ and $s=6$ if $d=3$.
Hence, we deduce from  %(\ref{Pdef}),
(\ref{rhodbd}) for $\vartheta=2$ and $\frac{\Gamma}{2}$,
(\ref{rhodH-1a}),
on noting
(\ref{cd}),
%(\ref{PkrhokaLdb}),
(\ref{E1}), (\ref{Pdef}) and
(\ref{PkrhokaLdc}) that $\rhokaLd^{[\Delta t],n} \in \Yrhod^n$ is such that
\begin{align}
&
%+\|\rhokaLd^{[\Delta t],n}\|_{L^{\Gamma}(t_{n-1},t_n;L^\Gamma(\Omega))}^\Gamma
\|\rhokaLd^{[\Delta t],n}\|_{L^\infty(t_{n-1},t_n;L^{\frac{\Gamma}{2}}(\Omega))}%^{\frac{\Gamma}{2}}
+
\|\rhokaLd^{[\Delta t],n}\|_{L^2(t_{n-1},t_n;H^1(\Omega))}^2
+
\|\rhokaLd^{[\Delta t],n}\|_{H^1(t_{n-1},t_n;H^{1}(\Omega)')}^2
\nonumber \\
& \hspace{1in}
+\|\rhokaLd^{[\Delta t],n}(\cdot,t_n)\|_{L^\Gamma(\Omega)}^\Gamma
%+\|(\rhokaLd^{[\Delta t],n})^{2}\|_{L^2(t_{n-1},t_n;H^1(\Omega))}^2
+\|(\rhokaLd^{[\Delta t],n})^{\frac{\Gamma}{2}}\|_{L^2(t_{n-1},t_n;H^1(\Omega))}^2
\leq C,
\label{rhodall}
\end{align}
where $C$ is independent of $\delta$ and $\Delta t$.
Furthermore, we deduce from  (\ref{eqinterp}) and the last bound in (\ref{rhodall}) that
\begin{align}
\|\rhokaLd^{[\Delta t],n}\|_{L^\Gamma(t_{n-1},t_n;L^{3\Gamma}(\Omega))}^\Gamma =
\|(\rhokaLd^{[\Delta t],n})^{\frac{\Gamma}{2}}\|_{L^2(t_{n-1},t_n;L^6(\Omega))}^2
\leq C\,\|(\rhokaLd^{[\Delta t],n})^{\frac{\Gamma}{2}}\|_{L^2(t_{n-1},t_n;H^1(\Omega))}^2\leq C.
\label{rhodL3}
\end{align}
Finally, it follows from (\ref{eqLinterp}) with $\upsilon=\frac{4\Gamma}{3}$, $r=\frac{\Gamma}{2}$
and $s=3 \Gamma$ yielding $\vartheta =\frac{3}{4}$, the first bound in (\ref{rhodall})
and (\ref{rhodL3}) that
\begin{align}
\|\rhokaLd^{[\Delta t],n}\|_{L^{\frac{4\Gamma}{3}}(t_{n-1},t_n;L^{\frac{4\Gamma}{3}}(\Omega))}^{\frac{4\Gamma}{3}}
\leq
\|\rhokaLd^{[\Delta t],n}\|_{L^{\infty}(t_{n-1},t_n;L^{\frac{\Gamma}{2}}(\Omega))}^{\frac{\Gamma}{3}}
\|\rhokaLd^{[\Delta t],n}\|_{L^{\Gamma}(t_{n-1},t_n;L^{3\Gamma}(\Omega))}^{\Gamma}
\leq C,
\label{rhodL43}
\end{align}
where $C$ is independent of $\delta$ and $\Delta t$.

As the bounds (\ref{E1}), (\ref{rhodL3}) and (\ref{rhodL43}) are independent of $\delta$, we
are now ready to pass to the limit $\delta \rightarrow 0_+$ in (\ref{cd}--c),
to deduce the existence of
%$(\utae^n, \hpsiae^n)$ that solves (\ref{Gequn},b) for $n=1,\dots, N$;
%and thereby, the existence of
a solution $\{(\rhokaL^{[\Delta t],n},\utkaL^n,\hpsikaL^n)\}_{n=1}^N$ to
({\rm P}$^{\Delta t}_{\kappa,\alpha,L}$).
%with $\rhokaL^{[\Delta t],n} \in \Yrho^n$,
%$\utkaL^n \in \Ht^1_0(\Omega)$
%and $\hpsikaL^n \in X \cap Z_2$, $n=1,\dots, N$.

\begin{lemma}
\label{conv}
There exists a subsequence (not indicated) of $\{(\rhokaLd^{[\Delta t],n},\utkaLd^{n},
\hpsikaLd^{n})\}_{\delta >0}$, and functions
$\rhokaL^{[\Delta t],n} \in \Yrho^n$ with $\rhokaL^n(\cdot) = \rhokaL^{[\Delta t],n}(\cdot,t_n)
\in L^\Gamma_{\geq 0}(\Omega)$,
$\utkaL^{n} \in \Ht^1_0(\Omega)$
and $\hpsikaL^{n} \in X \cap Z_2$, $n = 1,\ldots, N$,
with
\begin{align}
\vrhokaL^n(\cdot) := \int_D M(\qt)\,\hpsikaL^n(\cdot,\qt)\,\dq \in H^1(\Omega),
\qquad n=1,\ldots,N,
\label{vrhokaLn}
\end{align}
such that, as $\delta \rightarrow 0_+$,
\begin{subequations}
\begin{alignat}{3}
\rhokaLd^{[\Delta t],n} &\rightarrow  \rhokaL^{[\Delta t],n}
\quad &&\mbox{weakly in } L^2(t_{n-1},t_n;H^1(\Omega)), \quad
&&\mbox{weakly in } H^1(t_{n-1},t_n;H^{1}(\Omega)'),
\label{rhodwcon}\\
\rhokaLd^{[\Delta t],n} &\rightarrow  \rhokaL^{[\Delta t],n}\quad
&&\mbox{strongly in } L^2(t_{n-1},t_n;L^r(\Omega)),\quad
&&\mbox{strongly in } L^{\upsilon}(\Omega \times(t_{n-1},t_n)),
\label{rhodscon} \\
%&\rho^n_L \rightarrow \rho^n_L \qquad &&\mbox{strongly in } L^s(\Omega),
%\label{rhoscon}
%(\rhokaLd^{[\Delta t],n})^{\frac{\vartheta}{2}} &\rightarrow  (\rhokaL^{[\Delta t],n})^{\frac{\vartheta}{2}}
%\quad &&\mbox{weakly in } L^2(t_{n-1},t_n;H^1(\Omega)),
%\qquad &&\vartheta = 4,\,\Gamma
%\label{rhodGwconH1}\\
%(\rhokaLd^{[\Delta t],n})^\vartheta &\rightarrow  (\rhokaL^{[\Delta t],n})^\vartheta
%\quad &&\mbox{strongly in } L^1(\Omega\times (t_{n-1},t_n)),
%\qquad &&\vartheta = 4 \quad \mbox{and} \quad \vartheta = \Gamma \mbox{ if } d=2,
%\label{rhodGwconH1a}\\
\rhokaLd^{[\Delta t],n}(\cdot,t_n) &\rightarrow  \rhokaL^n(\cdot)\quad
&&\mbox{weakly in } L^\Gamma(\Omega),
\label{rhodwconG}\\
(\rhokaLd^{[\Delta t],n})^\vartheta &\rightarrow  (\rhokaL^{[\Delta t],n})^\vartheta
\quad &&\mbox{weakly in } L^2(t_{n-1},t_n;H^1(\Omega)),
\quad &&\vartheta = 2 \mbox{ and } \frac{\Gamma}{2},
\label{rhodwconth}
\end{alignat}
%for all $s \in [1,\infty)$;
\end{subequations}
\begin{subequations}
\begin{alignat}{2}
\utkaLd^{n} &\rightarrow \utkaL^{n} \qquad &&\mbox{weakly in } \Ht^1_0(\Omega),
\qquad \mbox{strongly in }
\Lt^{r}(\Omega),
\label{uwconH1}\\
%&\utkaLd^{n} \rightarrow \utkaL^{n}
% \label{usconL2}
\delta \, \frac{\Dlxn \utkaLd^n}{\Dlxd}
& \rightarrow \zerot \qquad &&\mbox{strongly in } \Lt^2(\Omega), \qquad \forall |\lambdat|=2,
\label{delcon1} \\
\delta \, \frac{\Dlxn (\nabx \cdot \utkaLd^n)}{\Dlxd}
& \rightarrow 0 \qquad &&\mbox{strongly in } L^2(\Omega), \qquad \forall |\lambdat|=2,
\label{delcon12}
\end{alignat}
\end{subequations}
where $r \in [1,\infty)$ if $d=2$ and $r \in [1,6)$ if $d=3$, and $\upsilon \in [1,\frac{4 \Gamma}{3})$;
and
\begin{subequations}
\begin{alignat}{2}
%\!\!\!\!M^{\frac{1}{2}}\,\hpsiaedt^{n} &\rightarrow
%M^{\frac{1}{2}}\,
%\hpsiaet^{n} &&\quad \mbox{weakly in }
%L^2(\Omega\times D),~~~ \label{psiwconL2}\\
%\bet
M^{\frac{1}{2}}\,\nabq \hpsikaLd^{n}
&\rightarrow M^{\frac{1}{2}}\,\nabq \hpsikaL^{n}
&&\qquad \mbox{weakly in }
\Lt^2(\Omega\times D), \label{psiwconH1}\\
\bet
M^{\frac{1}{2}}\,\nabx \hpsikaLd^{n}
&\rightarrow M^{\frac{1}{2}}\,\nabx \hpsikaL^{n}
&&\qquad \mbox{weakly in }
\Lt^2(\Omega\times D), \label{psiwconH1x}\\
\bet
M^{\frac{1}{2}}\,\hpsikaLd^{n} &\rightarrow
M^{\frac{1}{2}}\,\hpsikaL^{n}
&&\qquad \mbox{strongly in }
L^{2}(\Omega\times D),\label{psisconL2}
\\
\bet
M^{\frac{1}{2}}\,\beta_\delta^L(\hpsikaLd^{n}) &\rightarrow
M^{\frac{1}{2}}\,\beta^L(\hpsikaL^{n})
&&\qquad \mbox{strongly in }
L^s(\Omega\times D),\label{betaLdsconL2}\\
\Ctt_i(M\,%\zeta({\rho^n_L})\,
\hpsikaLd^n) & \rightarrow \Ctt_i(M\,%\zeta({\rho^n_L})\,
\hpsikaL^n)
&&\qquad \mbox{strongly in }
\Ltt^{2}(\Omega), \qquad i=1, \ldots,  K,
\label{CwconL2}
\\
\vrhokaLd^n &\rightarrow \vrhokaL^n&&\qquad \mbox{weakly in }
H^1(\Omega), \qquad \mbox{strongly in } L^r(\Omega),
\label{vrhodcon}
\end{alignat}
\end{subequations}
where $s \in [1,\infty)$.
Furthermore, $(\rhokaL^{[\Delta t],n},\utkaL^n, \hpsikaL^n)$ solves (\ref{rhonL}--c)
for $n=1,\ldots, N$. Hence, there exists a solution $\{(\rhokaL^{[\Delta t],n},\utkaL^n,
\hpsikaL^n)\}_{n=1}^N$ to
{\em ({\rm P}$^{\Delta t}_{\kappa,\alpha,L}$)}.
%, with
%$\rho^n_L=\rho^{[\Delta t]}_L(\cdot,t_n) \in \Upsilon$,
%$\utae^n \in \Vt$ and $\hpsiaet^n \in X \cap Z_2$, $n=1,\ldots, N$.
\end{lemma}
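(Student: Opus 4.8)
The plan is to pass to the limit $\delta\to 0_+$ in the regularized coupled system (\ref{cd}--c), which for each fixed $\delta\in(0,1)$ possesses a solution $(\rhokaLd^{[\Delta t],n},\utkaLd^n,\hpsikaLd^n)\in\Yrhod^n\times\cVt\times X$ by Lemma \ref{fixlem}; the whole argument rests on the fact that the bounds (\ref{E1}), (\ref{rhodbd}), (\ref{rhodL3}), (\ref{rhodL43}) and (\ref{rhodall}) are independent of $\delta$ (and of $\Delta t$). First I would extract the convergent subsequences. From (\ref{rhodall}) and the Aubin--Lions--Simon theorem (\ref{compact1}), applied on $(t_{n-1},t_n)$ with $\mathfrak{X}_0=H^1(\Omega)$, $\mathfrak{X}=L^r(\Omega)$, $\mathfrak{X}_1=H^1(\Omega)'$, one obtains (\ref{rhodwcon})--(\ref{rhodwconth}); passing to a further subsequence so that $\rhokaLd^{[\Delta t],n}\to\rhokaL^{[\Delta t],n}$ a.e.\ on $\Omega\times(t_{n-1},t_n)$ and using the uniform bound (\ref{rhodL43}) together with Vitali's theorem upgrades this to the strong convergence (\ref{rhodscon}) in $L^\upsilon(\Omega\times(t_{n-1},t_n))$ for every $\upsilon\in[1,\tfrac{4\Gamma}{3})$, so in particular $\pk(\rhokaLd^{[\Delta t],n})\to\pk(\rhokaL^{[\Delta t],n})$ strongly in $L^1(\Omega\times(t_{n-1},t_n))$. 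The uniform $\Ht^1_0(\Omega)$-bound on $\utkaLd^n$ from (\ref{E1}) and the compact embedding $\Ht^1_0(\Omega)\compEmb\Lt^r(\Omega)$ give (\ref{uwconH1}), while $\Delta t\,\delta\sum_{|\lambdat|=2}\|\frac{\Dlxn\utkaLd^n}{\Dlxd}\|_{L^2(\Omega)}^2\leq C$ yields $\|\delta\,\frac{\Dlxn\utkaLd^n}{\Dlxd}\|_{L^2(\Omega)}\leq\delta^{1/2}C\to 0$, i.e.\ (\ref{delcon1}), (\ref{delcon12}). The remaining terms on the left of (\ref{E1}), combined with $\mathcal{F}^L_\delta(s)\geq\frac{s^2}{4L}-C(L)$ for $s\geq 0$ from (\ref{cFbelow}), bound $\hpsikaLd^n$ uniformly in $X=H^1_M(\Omega\times D)$, so by the compact embedding (\ref{wcomp2}) one gets (\ref{psiwconH1})--(\ref{psisconL2}); since $\beta^L$ and $\beta^L_\delta$ are $1$-Lipschitz with $|\beta^L_\delta-\beta^L|\leq\delta$ and $0\leq\beta^L_\delta\leq L$, strong $L^2_M(\Omega\times D)$-convergence of $\hpsikaLd^n$ gives (\ref{betaLdsconL2}), whence (\ref{eqCttbd}) gives (\ref{CwconL2}); and the $\mathfrak{z}$-terms on the left of (\ref{E1}) (which arise by testing (\ref{genLMd}) with $\vrhokaLd^n\otimes 1$, cf.\ (\ref{vrhokaLdbd})) bound $\vrhokaLd^n$ uniformly in $H^1(\Omega)$, giving (\ref{vrhodcon}); by (\ref{psisconL2}) its limit equals $\vrhokaL^n=\int_D M\,\hpsikaL^n\dq\in H^1(\Omega)$, which is (\ref{vrhokaLn}).

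Next I would pass to the limit in each of the three equations. In (\ref{cd}) the linear terms converge by (\ref{rhodwcon}) and the advection term converges because $\rhokaLd^{[\Delta t],n}\to\rhokaL^{[\Delta t],n}$ strongly in $L^2(t_{n-1},t_n;L^r(\Omega))$ while $\utkaLd^n\to\utkaL^n$ strongly in $\Lt^r(\Omega)$ (with $r$ close to $6$); thus (\ref{rhonL}) holds, and since $\beta^{\delta^{-1}}(\rhokaL^{n-1})\to\rhokaL^{n-1}$ the initial condition $\rhokaL^{[\Delta t],n}(\cdot,t_{n-1})=\rhokaL^{n-1}$ is inherited. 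In (\ref{bLMd}), taken first with $\wt\in\cVt$, the $\delta$-hyperviscosity term vanishes by (\ref{delcon1}), (\ref{delcon12}), and $\ell_{b,\delta}-\ell_b\to 0$ because $|\beta^L-\beta^L_\delta|\leq\delta$ and $\nabx\vrhokaLd^n$ is bounded in $\Lt^2(\Omega)$; the remaining terms converge by the weak $L^\Gamma(\Omega)$-convergence of $\rhokaLd^{[\Delta t],n}(\cdot,t_n)$ paired with the strong $\Lt^r(\Omega)$-convergence of $\utkaLd^n$ and with $\ft^n,\wt\in\Lt^\infty(\Omega)$ (time-difference and body-force terms), by the weak/strong pairings afforded by (\ref{uwconH1}) --- here $\Gamma\geq 8$ is used to close the Hölder estimates (advection terms, cf.\ (\ref{ruwiden})) --- by weak $\Ht^1_0(\Omega)$-convergence (viscous term), by the strong $L^1(\Omega\times(t_{n-1},t_n))$-convergence of $\pk(\rhokaLd^{[\Delta t],n})$ against $\nabx\cdot\wt$ (pressure term), by (\ref{CwconL2}) and (\ref{vrhodcon}) (Kramers term), and by the strong $\Lt^r(\Omega)$-convergence of $\int_D M\,\beta^L_\delta(\hpsikaLd^n)\dq$ paired with the weak $\Lt^2(\Omega)$-convergence of $\nabx\vrhokaLd^n$ (quadratic interaction term). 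Hence (\ref{Gequn}) holds for all $\wt\in\cVt$, and since $\cVt$ is dense in $\Ht^1_0(\Omega)\supset\Vt$ and every term of (\ref{Gequn}) is continuous in the $\Ht^1_0(\Omega)$-topology (using again $\Gamma\geq 8$, $\rhokaL^{[\Delta t],n}\in L^{4\Gamma/3}(\Omega\times(t_{n-1},t_n))$, $\rhokaL^n\in L^\Gamma(\Omega)$ and $\utkaL^{n-1},\utkaL^n\in\Lt^6(\Omega)$), it extends to all $\wt\in\Vt$. In (\ref{genLMd}), taken first with $\varphi\in C^\infty(\overline{\Omega\times D})$, the time-difference and $a(\cdot,\cdot)$-terms converge by (\ref{psiwconH1})--(\ref{psisconL2}), and $\lae(\utkaLd^n,\beta^L_\delta(\hpsikaLd^n))(\varphi)$ converges by combining the weak $\Lt^2(\Omega)$-convergence of $\sigtt(\utkaLd^n)=\nabxtt\utkaLd^n$ and the strong $\Lt^r(\Omega)$-convergence of $\utkaLd^n$ with the uniform bound $0\leq\beta^L_\delta\leq L$ and the strong convergence $M^{1/2}\beta^L_\delta(\hpsikaLd^n)\to M^{1/2}\beta^L(\hpsikaL^n)$ in $L^s(\Omega\times D)$; a density argument based on (\ref{cal K}) then gives (\ref{psiG}) for all $\varphi\in X$.

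It remains to record the sign and regularity properties. Nonnegativity of $\rhokaL^{[\Delta t],n}$ (hence $\rhokaL^n\geq 0$) follows from (\ref{rhoLinf}) for the $\delta$-problem and (\ref{rhodscon}); $\rhokaL^{[\Delta t],n}\in\Yrho^n$ follows from (\ref{rhodwcon}), (\ref{rhodwconG}), (\ref{rhodwconth}), the uniform bounds (\ref{rhodall}), (\ref{rhodL3}), (\ref{rhodL43}) and weak lower semicontinuity of norms, while $\utkaL^n\in\Ht^1_0(\Omega)$ and $\hpsikaL^n\in X$ are immediate. Crucially, $\hpsikaL^n\geq 0$: the entropy bound in (\ref{E1}) together with $\mathcal{F}^L_\delta(s)\geq\frac{s^2}{2\delta}$ for $s\leq 0$ from (\ref{cFbelow}) gives $\frac{k}{2\delta}\int_{\Omega\times D}M\,|[\hpsikaLd^n]_-|^2\dq\dx\leq k\int_{\Omega\times D}M\,\mathcal{F}^L_\delta(\hpsikaLd^n)\dq\dx\leq C$ uniformly in $\delta$, so the negative part $[\hpsikaLd^n]_-\to 0$ in $L^2_M(\Omega\times D)$, which with (\ref{psisconL2}) forces $\hpsikaL^n\geq 0$, i.e.\ $\hpsikaL^n\in X\cap Z_2$. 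Thus $(\rhokaL^{[\Delta t],n},\utkaL^n,\hpsikaL^n)$ solves (\ref{rhonL}--c). Carrying out this construction successively for $n=1,\dots,N$, with $(\rhokaL^n,\utkaL^n,\hpsikaL^n)$ serving as the datum at time level $n+1$ and with the inductive hypotheses $\rhokaL^{n-1}\in L^\Gamma_{\geq 0}(\Omega)$, $\utkaL^{n-1}\in\Ht^1_0(\Omega)$ and $\hpsikaL^{n-1}\in Z_2$ of finite $\mathcal{F}^L$-entropy propagated (the last by Fatou's lemma applied to the entropy bound in (\ref{E1}), the base case being supplied by the construction of the regularized initial data, cf.\ (\ref{inidata-1})), produces a solution $\{(\rhokaL^{[\Delta t],n},\utkaL^n,\hpsikaL^n)\}_{n=1}^N$ of ({\rm P}$^{\Delta t}_{\kappa,\alpha,L}$).

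The main obstacle is the passage to the limit in the superlinear pressure term: this is exactly what forces the higher-integrability bound (\ref{rhodL43}) in $L^{4\Gamma/3}(\Omega\times(t_{n-1},t_n))$ --- and thereby the choice $\Gamma=\max\{\gamma,8\}$ and the $\kappa$-regularization $\kappa(\rho^4+\rho^\Gamma)$ of the equation of state in (\ref{pkdef}) --- which, together with a.e.\ convergence and Vitali's theorem, yields strong $L^\upsilon$-convergence of $\rhokaLd^{[\Delta t],n}$ for some $\upsilon>\Gamma$. The only other points requiring genuine care are the bookkeeping of the weak/strong pairings in the advection terms (where $\Gamma\geq 8$ closes the Hölder estimates) and the density passage from $\cVt$ to $\Vt$ in (\ref{Gequn}); the rest is routine.
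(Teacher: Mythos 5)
Your proof follows the same structure as the paper's (extract subsequences from the $\delta$-uniform bounds, pass to the limit in each of the three equations, deduce $\hpsikaL^n\geq 0$ from the $\delta^{-1}$-coercivity of $\mathcal{F}^L_\delta$ for negative arguments, iterate over $n$), and the main steps are sound. The only small technical divergence in the strong convergence (\ref{rhodscon}) is that you pass through a.e.\ convergence and Vitali, whereas the paper interpolates the $L^2$-strong convergence against the $L^{\frac{4\Gamma}{3}}$-bound via (\ref{eqLinterp}); both arguments use exactly the same ingredients and are interchangeable.

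There is, however, one step that fails as written: the extension of (\ref{Gequn}) from $\wt\in\cVt$ to $\wt\in\Vt$. You claim that $\cVt$ is dense in $\Ht^1_0(\Omega)$ and that every term of (\ref{Gequn}) is continuous in the $\Ht^1_0(\Omega)$-topology. The second claim is false for the pressure term: by (\ref{rhodL43}), $\rhokaL^{[\Delta t],n}\in L^{\frac{4\Gamma}{3}}(\Omega\times(t_{n-1},t_n))$ and hence $\pk(\rhokaL^{[\Delta t],n})\in L^{\frac{4}{3}}$, so the time-averaged pressure lies only in $L^{\frac{4}{3}}(\Omega)$ and its pairing with $\nabx\cdot\wt$ requires $\nabx\cdot\wt\in L^{4}(\Omega)$, which is not controlled by the $\Ht^1_0(\Omega)$-norm in either $d=2$ or $d=3$. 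The density argument therefore has to be run in a stronger topology: for a fixed $\wt\in\Vt$ (for which $\nabx\cdot\wt\in L^\infty(\Omega)$) one needs approximants $\wt_m\in\cVt$ with $\wt_m\to\wt$ in $\Ht^1(\Omega)$ \emph{and} $\nabx\cdot\wt_m\to\nabx\cdot\wt$ in $L^4(\Omega)$ --- and this is precisely what makes $\Vt$, with its $L^\infty$-control on the divergence, the appropriate test space rather than $\Ht^1_0(\Omega)$. The idea is right and the fix is local, but your explicit identification of $\Ht^1_0(\Omega)$ as the approximating topology for the pressure term does not hold up.
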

\begin{proof}
The weak convergence results
(\ref{rhodwcon},c,d) follow immediately from (\ref{rhodall}).
The strong convergence results (\ref{rhodscon}) follow from (\ref{rhodwcon}), (\ref{compact1}),
(\ref{rhodL43}) and the interpolation result (\ref{eqLinterp}).
%with $\Omega$ replaced by $\Omega \times (t_{n-1},t_n)$.
Hence $\rhokaL^{[\Delta t],n} \in Y^n$
with $\rhokaL^{n}(\cdot)=\rhokaL^{[\Delta t],n}(\cdot,t_n) \in L^{\Gamma}_{\geq 0}(\Omega)$
as $\rhokaLd^{[\Delta t],n} \in \Upsilon^n$.
The weak convergence result (\ref{uwconH1}) and the strong convergence results (\ref{delcon1},c)
follow immediately from (\ref{E1}),
and hence $\utkaL^n \in \Ht^1_0(\Omega)$ as $\utkaLd^n \in \cVt$.
The strong convergence result (\ref{uwconH1})
follows as $\Ht^{1}(\Omega)$ is compactly embedded in $\Lt^r(\Omega)$ for
the stated values of $r$.

The weak convergence results (\ref{psiwconH1},b)
follow from (\ref{E1}); the strong convergence result (\ref{psisconL2})
and the fact that $\hpsikaL^n \geq 0$
a.e.\ on $\Omega \times D$ follow from the fourth bound in (\ref{E1}),
(\ref{cFbelow}) and (\ref{wcomp2}).
Hence $\hpsikaL^n \in X \cap Z_2$.
The desired results
(\ref{betaLdsconL2},e) follow from (\ref{psisconL2}), (\ref{betaLd}),
(\ref{eqCtt}) and (\ref{eqCttbd}).
See the proof of Lemma 3.3 in \cite{BS2011-fene} for details of the
results (\ref{psiwconH1}--e).
Finally, (\ref{vrhodcon}) follows from (\ref{E1}) and (\ref{psisconL2}).

It follows from
(\ref{rhodwcon}--c),
(\ref{uwconH1}--c), (\ref{psiwconH1}--f), (\ref{cdef}),
(\ref{bddef}), (\ref{lbdgen}), (\ref{bgen},b), (\ref{agen},b)
and
(\ref{cal K})
that we may pass to the limit $\delta \rightarrow 0_+$ in
(\ref{cd}--c) to obtain that $(\rhokaL^{[\Delta t,n\red{]}},\utkaL^n,\hpsikaL^n)$
solves (\ref{c}), (\ref{bLM}),
and (\ref{genLM}); that is, (\ref{rhonL}--c).

Finally, as $(\rhokaL^0,\utkaL^0,\hpsikaL^0) \in L^{\Gamma}_{\geq 0}(\Omega) \times
\Ht^1_0(\Omega) \times Z_2$, performing
the above existence proof at each time level $t_n$, $n=1,\ldots,N$,
yields a solution  $\{(\rhokaL^{[\Delta t],n},\utkaL^n,\hpsikaL^n)\}_{n=1}^N$
to (P$^{\Delta t}_{\kappa,\alpha,L}$) with $\rhokaL^n(\cdot) = \rhokaL^{[\Delta t],n}(\cdot,t_n)$, $n=1,\dots,N$,
by noting that $\rhokaL^{[\Delta t],n}$ thus constructed is an element of $C([t_{n-1},t_n];L^2(\Omega))$,
$n=1,\dots,N$.
\end{proof}

\section{Existence of a solution to (P$_{\kappa,\alpha}$)}
\label{sec:entropy}
\setcounter{equation}{0}

Next, we derive bounds on the solution of $({\rm P}^{\Delta t}_{\kappa,\alpha,L})$,
independent of $\Delta t$ and $L$.
Our starting point is Lemma \ref{conv}, concerning the existence of a solution to the problem
$({\rm P}^{\Delta t}_{\kappa,\alpha,L})$. The model $({\rm P}^{\Delta t}_{\kappa,\alpha,L})$
includes `microscopic cut-off' in the drag and convective terms of the Fokker--Planck equation,
where $L>1$ is a
(fixed, but otherwise arbitrary,) cut-off parameter.
Our next objective is to pass to the limits $L \rightarrow \infty$
and $\Delta t \rightarrow 0_+$ in the model $({\rm P}^{\Delta t}_{\kappa,\alpha,L})$,
with $L$ and $\Delta t$ linked by the condition
$\Delta t = o(L^{-1})$,
as $L \rightarrow \infty$.
To that end, we need to develop various bounds on sequences of weak solutions
of $({\rm P}^{\Delta t}_{\kappa,\alpha,L})$ that are uniform in the time step $\Delta
t$ and the cut-off parameter
$L$, and thus permit the extraction of weakly convergent subsequences,
as $L \rightarrow \infty$, through the use of a weak compactness argument.
The derivation of such bounds, based on the use of the relative entropy associated
with the Maxwellian $M$, is our
main task in this section.

%\medskip

We define
\begin{subequations}
\begin{align}
&\rhokaLD := \rhokaL^{[\Delta t],n},
\quad t\in [t_{n-1},t_n], \quad n=1,\dots,N,
\quad \Rightarrow
\quad \rhokaLD(\cdot,t_n)=\rhokaL^n(\cdot), \quad n=0,\dots,N,
\label{rhokaLD}\\
&\rhokaL^{\{\Delta t\}}(\cdot,t) :=
\frac{1}{\Delta t}\int_{t_{n-1}}^{t_n}\rhokaL^{[\Delta t]}(\cdot,s)\,{\rm d}s,
\quad t\in (t_{n-1},t_n], \quad n=1,\dots,N.
\label{rhokaLDav}
\end{align}
\end{subequations}
Further, we define the pressure variable
\begin{align}
\pkaL^{\{\Delta t\}}(\cdot,t) := \frac{1}{\Delta t}
\int_{t_{n-1}}^{t_n} \pk(\rhokaL^{[\Delta t]}(\cdot,s)) \,{\rm d}s,
\qquad t\in(t_{n-1},t_n], \quad n=1,\dots,N,
\label{pkaL}
\end{align}
and the momentum variable
\begin{align}
\mtkaL^n:= \rhokaL^n\, \utkaL^n, \qquad n=0,\ldots,N.
\label{mtkaLn}
\end{align}

We then introduce the following definitions:
\begin{subequations}
\begin{alignat}{2}
\utkaLD(\cdot,t)&:=\,\frac{t-t_{n-1}}{\Delta t}\,
\utkaL^n(\cdot)+
\frac{t_n-t}{\Delta t}\,\utkaL^{n-1}(\cdot), &&\quad t\in [t_{n-1},t_n], \quad n=1,\dots,N, \label{ulin}\\
\utkaLDp(\cdot,t)&:=\utkaL^n(\cdot),\qquad
\utkaLDm(\cdot,t):=\utkaL^{n-1}(\cdot),
&&\quad t\in(t_{n-1},t_n], \quad n=1,\dots,N. \label{upm}
\end{alignat}
\end{subequations}
We shall adopt $\utkaL^{\Delta t (,\pm)}$ as a collective symbol for $\utkaL^{\Delta t}$,
$\utkaL^{\Delta t,\pm}$.
The corresponding notations
$\hpsikaL^{\Delta t}$, $\hpsikaL^{\Delta t,\pm}$ and $\hpsikaL^{\Delta t (,\pm)}$;
$\rhokaL^{\Delta t}$, $\rhokaL^{\Delta t,\pm}$ and $\rhokaL^{\Delta t (,\pm)}$;
$\mtkaL^{\Delta t}$, $\mtkaL^{\Delta t,\pm}$ and $\mtkaL^{\Delta t (,\pm)}$,
and $\vrhokaL^{\Delta t}$, $\vrhokaL^{\Delta t,\pm}$ and $\vrhokaL^{\Delta t (,\pm)}$
are defined analogously.
%recall \eqref{idatabd} and \eqref{inidata-1}.
%In addition, we define
%the products $(\rhokaL\,\utkaL)^{\Delta t}$, $(\rhokaL\,\utkaL)^{\Delta t,\pm}$
%and $(\rhokaL \, \utkaL)^{\Delta t (,\pm)}$.
%; and
%$(\zeta(\rho_L)\,\widetilde\psi_L)^{\Delta t}$, $(\zeta(\rho_L)\,\widetilde\psi_L)^{\Delta t,\pm}$
%and $(\zeta(\rho_L)\,\widetilde\psi_L)^{\Delta t (,\pm)}$ similarly.
The notation $\rhokaL^{\Delta t}$ signifying the
piecewise linear interpolant of $\rhokaL^{[\Delta t]}$
with respect to the variable $t$ is not to be confused with
$\rhokaL^{[\Delta t]}$, itself,
which denotes the function defined piecewise, over the union of
time slabs $\Omega \times [t_{n-1},t_n]$, $n=1,\dots,N$, solving
\eqref{rhonL} subject to the initial condition $\rhokaL^{[\Delta t]}(\cdot,t_{n-1}) = \rhokaL^{n-1}(\cdot)$,
$n=1,\dots, N$, with $\rhokaL^0:= \rho^0$.

\begin{comment}
Finally, we define the functions
$\rho_L^{\{\Delta t\}}$ and $\zeta^{\{\Delta t\}}_L$ by
%
\begin{equation}\label{zeta-time-average}
\rho^{\{\Delta t\}}_L|_{(t_{n-1},t_n)} := \frac{1}{\Delta t}
\int_{t_{n-1}}^{t_n} \rho_L^{[\Delta t]} \dd t,\;\;\; \zeta^{\{\Delta t\}}_L|_{(t_{n-1},t_n)}
:= \frac{1}{\Delta t}
\int_{t_{n-1}}^{t_n} \zeta(\rho_L^{[\Delta t]}) \dd t, \;\; n=1,\dots,N.
\end{equation}
\end{comment}

%We note for future reference that
%
%\begin{equation}
%\utaeD-\utae^{\Delta t,\pm}= (t-t_{n}^{\pm})
%\,\frac{\partial \utaeD}{\partial t},
%\quad t \in (t_{n-1},t_{n}), \quad n=1,\dots,N, \label{eqtime+}
%\end{equation}
%
%where $t_{n}^{+} := t_{n}$ and $t_{n}^{-} := t_{n-1}$,
%with an analogous relationship in the case of $\psia^{\Delta t}$.

%\newpage

Using the above notation,
(\ref{rhonL}--c) summed for $n=1, \dots,  N$ can be restated in the form:
find $(\rhokaLD(\cdot,t),\utkaLD(\cdot,t),
\hpsikaLD(\cdot,t))
\in H^1(\Omega) \cap L^{\frac{4\Gamma}{3}}_{\geq 0}(\Omega) \times \Ht^1_0(\Omega) \times (X \cap Z_2)$,
%with $\rho_L^{\Delta t, +}(t) \in \Upsilon$,
with $\frac{\partial \rhokaLD}{\partial t}(\cdot,t) \in H^1(\Omega)'$,
a.e.\ $t \in (0,T)$, such that $\mtkaLD$ is defined via (\ref{mtkaLn}) and
%
%Similarly,
%(\ref{Gequn}) summed for $n=1, \dots,  N$ can be restated  in a form
%that is reminiscent of a weak formulation of  (\ref{ns1a}--d):
%find $\ut^{\Delta t(,\pm)}_{L} \in \Vt$, $t \in (0,T]$, such that
%
\begin{subequations}
\begin{align}
&\displaystyle\int_{0}^{T}\left\langle \frac{\partial \rhokaL^{[\Delta t]}}{\partial t}\,,\eta
\right\rangle_{H^1(\Omega)} \dd t
%\nonumber\\
%&
+ \int_0^T \int_\Omega \left( \alpha \,\nabx \rhokaLD -\rhokaLD \,\utkaLDp \right)
\cdot \nabx \eta
\,\dx \,\dt =0
\nonumber \\
& \hspace{3.4in}
\qquad
\forall \eta \in L^2(0,T;H^1(\Omega)),
\label{eqrhocon}
\end{align}
\begin{align}
&\displaystyle\int_{0}^{T}\!\! \int_\Omega \left[
\frac{\partial \mtkaLD}{\partial t} %(\rhokaL\,\utkaL)
- \frac{1}{2}
\frac{\partial \rhokaL^{\Delta t}}{\partial t}\, \utkaLDp
\right]
\cdot
\wt \,\dx\, \dt
%\nonumber\\
%&
+
\displaystyle\int_{0}^{T}\!\! \int_\Omega
%\mu(\rho^{\Delta t, +}_L) \,\Dtt(\utaeDp)
\Stt(\utkaLDp)
:\nabxtt \wt
%\Dtt(\wt)
\,\dx\, \dt
\nonumber \\
&\qquad \qquad +
\frac{1}{2} \int_{0}^T\!\! \int_{\Omega}
%\rhokaL^{\Delta t,-}
\left[ \left[ (\mtkaLDm \cdot \nabx) \utkaLDp \right]\cdot\,\wt
- \left[ (\mtkaLDm \cdot \nabx) \wt  \right]\cdot\,\utkaLDp
\right]\!\dx\, \dt
\nonumber \\ & \qquad \qquad
- \int_{0}^T \!\!\int_{\Omega}
\pkaL^{\{\Delta t\}} \,\nabx \cdot \wt \,\dx \,\dt
\nonumber \\
&\qquad =\int_{0}^T
 \int_{\Omega} \left[ \rhokaL^{\Delta t,+}\,
\ft^{\{\Delta t\}} \cdot \wt  %\rangle_{H^1_0(\Omega)}
%\right.\nonumber\\
%&\qquad\qquad\qquad\left.
- %k\,\sum_{i=1}^K \int_{\Omega}
%\Ctt_i(M\,\zeta(\rho^{\Delta t,+}_L)\,\hpsiaet^{\Delta t,+})
\tautt_1 (M\,\hpsikaLDp)
: \nabxtt
\wt \right] \dx \, \dt
\nonumber \\
&\qquad \qquad
- 2 \,\mathfrak{z}
\int_0^T\!\! \int_{\Omega}
\left( \int_D M\,\beta^L(\hpsikaLDp) \,\dq \right)
\nabx %\left( \int_D M\,\hpsikaL^n \,\dq \right)
\vrhokaLDp
\cdot \wt \,\dx\,\dt
\qquad
\forall \wt \in L^2(0,T;\Vt),
\label{equncon}
\end{align}
%
%, subject to the initial condition $\utaeD(\cdot,0)= \ut^0 \in \Vt$.
%
%Analogously, (\ref{psiG}) summed through $n=1, \dots,  N$
%can be restated as follows: find $\widetilde\psi^{\Delta t(,\pm)}_{L}(t)
%\in Z_2$, $t \in (0,T]$, %such that
%
\begin{align}
\label{eqpsincon}
&\int_{0}^T \!\!\int_{\Omega \times D}
%\left[
M\,\frac{ \partial \hpsikaLD}{\partial t} %(\zeta(\rho_L)\,\hpsiaet)^{\Delta t}\,
\varphi \dq \dx \dt
%\nonumber\\& \hspace{0.5cm}
+
\frac{1}{4\,\lambda}
\,\sum_{i=1}^K
 \,\sum_{j=1}^K A_{ij}
\int_{0}^T \!\!\int_{\Omega \times D} \,
M\,
 \nabqj \hpsikaLDp
\cdot\, \nabqi
\varphi
%\right]
\dq \dx \dt
\nonumber \\
& \qquad %\hspace{0.2cm}
+ \int_{0}^T \!\!\int_{\Omega \times D} M \left[
\epsilon\,
\nabx \hpsikaLDp
- \utkaLDp\,%\zeta^{\{\Delta t\}}_L\,
\beta^L(\hpsikaLDp) \right]\cdot\, \nabx
\varphi
\dq \dx \dt
\nonumber \\
&
\qquad %\hspace{0.4cm}
- \int_{0}^T \!\!\int_{\Omega \times D} M\,\sum_{i=1}^K
\left[\sigtt(\utkaLDp)
\,\qt_i\right]%\zeta(\rho_L^{\Delta t,+})\,
\beta^L(\hpsikaLDp) \,\cdot\, \nabqi
\varphi
\,\dq \dx \dt = 0
%\nonumber\\
\quad
%&\hspace{3.77in}
\forall \varphi \in L^2(0,T;X);
\end{align}
\end{subequations}
%
%with $q \in (2,\infty)$ when $d=2$ and $q \in [3,6]$ when $d=3$,
subject to the initial conditions
$\rhokaL^{\Delta t}(0)=\rho^0 \in L^\infty_{\geq 0}(\Omega)$, $\utkaLD(0) = \ut^0
\in \Ht^1_0(\Omega)$ and
$\hpsikaLD(0) = \hpsi^0
\in X \cap Z_2$, where we recall (\ref{proju0}) and (\ref{psi0}).
We emphasize that (\ref{eqrhocon}--c) %and \eqref{eqpsincon}
is an equivalent restatement of problem (${\rm P}^{\Delta t}_{\kappa,\alpha,L}$),
for which existence of a solution has been established (cf. Lemma \ref{conv}).

We are now ready to embark on the derivation of the required bounds,
uniform in the time step $\Delta t$ and the cut-off parameter $L$,
on norms of $\rhokaL^{[\Delta t]}(t) \in H^1_0(\Omega)\cap L^{\frac{4 \Gamma}{3}}_{\geq 0}(\Omega)$,
$\utkaLDp(t)\in \Ht^{1}_0(\Omega)$, $\hpsikaLDp(t) \in X \cap Z_2$
and $\vrhokaLDp(t) \in H^1(\Omega)$, $t \in (0,T]$.

\subsection{$L$, $\Delta t$-independent bounds on the spatial derivatives of $\utkaLD$ and $\hpsikaLD$}
\label{Lindep-space}
\begin{comment}
As far as
$\rho^{\Delta t,+}_L$ is concerned, it follows by tracing the constants in the argument leading to
inequality (17) in DiPerna \& Lions \cite{DPL} and recalling from \eqref{inidata} that
$\rho_0 \in \Upsilon$, that, for each $p \in [1,\infty]$,
%
\begin{subequations}
\begin{align}
&\|\rho_L^{[\Delta t]}(t)\|_{L^p(\Omega)}  \leq \|\rho_0\|_{L^p(\Omega)}, \qquad t \in (0,T],
\label{eq:energy-rho-0}
\end{align}
%
and therefore, for each $p \in [1,\infty]$,
\begin{align}
&\|\rho_L^{\Delta t,+}(t)\|_{L^p(\Omega)}  \leq \|\rho_0\|_{L^p(\Omega)}, \qquad t \in (0,T].
\label{eq:energy-rho}
\end{align}
\end{subequations}
%
\end{comment}
%
We note that it is \textit{not} possible to pass to the limit $\delta \rightarrow 0_+$ in (\ref{E1})
to obtain strong enough $L$-independent bounds due to the fourth, seventh and eighth
of the ten terms on the left-hand side.
Similarly, it is \textit{not} possible to pass to the limit in these terms
even before we use the bound $[{\mathcal F}^L_{\delta}]''(\cdot) \geq \frac{1}{L}$;
recall its use in (\ref{acorstab1}) to obtain (\ref{Ek}), and hence (\ref{E1}).
However, it is a simple matter to pass to the
limit $\delta \rightarrow 0_+$ in (\ref{Ektrhod}).
Noting (\ref{rhodwconG},d), (\ref{uwconH1}), (\ref{CwconL2},f) and
the convexity of $P_\kappa(\cdot)$, we may pass to the limit
$\delta \rightarrow 0_+$ in (\ref{Ektrhod}) to obtain for $n=1,\ldots,N$ that
\begin{align}
&\frac{1}{2}\,\displaystyle
\int_{\Omega} \left[ \rhokaL^{n} %[\Delta t],n}(\cdot,t_n)
\,|\utkaL^n|^2 + \rhokaL^{n-1}\,
|\utkaL^n-\utkaL^{n-1}|^2 \right]
\dx
+ \int_{\Omega} \Pk(\rhokaL^{n}) %[\Delta t],n}(\cdot,t_n))
\dx
\nonumber\\
&\;
+ \alpha\,\kappa
\int_{t_{n-1}}^{t_n}\!\!
\left[ \|\nabx[(\rhokaL^{[\Delta t],n})^2]\|_{L^2(\Omega)}^2 + \frac{4}{\Gamma}\,\|\nabx
[(\rhokaL^{[\Delta t],n})^{\frac{\Gamma}{2}}]\|_{L^2(\Omega)}^2 \right]\!
\dt
%\nonumber \\
%&\;
+ \Delta t\,\mu^S\,c_0\, \|\utkaL^n\|_{H^1(\Omega)}^2
\nonumber \\
&\;+ \mathfrak{z}\,\left[ \|\vrhokaL^n\|_{L^2(\Omega)}^2 +
\|\vrhokaL^n-\vrhokaL^{n-1}\|_{L^2(\Omega)}^2\right]
+ 2\,\Delta t \,\mathfrak{z}\,\varepsilon\,\|\nabx \vrhokaL^n\|_{L^2(\Omega)}^2
\nonumber
\\
%\end{align}
%\begin{align}
&\hspace{0.2in}
\leq
\frac{1}{2}\,\displaystyle
\int_{\Omega} \rhokaL^{n-1}\,|\utkaL^{n-1}|^2 \dx
+ \int_{\Omega} \Pk(\rhokaL^{n-1})  \dx
+ \Delta t
\int_{\Omega}\rhokaL^{n} %[\Delta t],n}(\cdot,t_n)
\,\ft^n \cdot \utkaL^n \dx
\nonumber \\
& \hspace{0.5in}
+ \mathfrak{z}\,\|\vrhokaL^{n-1}\|_{L^2(\Omega)}^2
+ k\,\Delta t \,
(K+1) \int_\Omega %\left( \int_D M\,\hpsikaL^n \dq \right)
\vrhokaL^n\,
\nabx \cdot \utkaL^n \,\dx
\nonumber \\
&\hspace{0.5in}
- k\,\Delta t \sum_{i=1}^K
\int_{\Omega}
 \Ctt_i(M
 \,\hpsikaL^n): \nabxtt \utkaL^n \dx.
\label{Ektrho}
\end{align}
Summing the above over $n$, and adopting the notation
(\ref{rhokaLD}), (\ref{ulin},b) and (\ref{vrho0}), we obtain
for $n=1,\ldots N$ that
\begin{align}
&\frac{1}{2}\,\displaystyle
\int_{\Omega} \rhokaLDp(t_n)
\,|\utkaLDp(t_n)|^2 \,\dx
+ \frac{1}{2\Delta t}\,\int_{0}^{t_n} \int_{\Omega} \rhokaLDm\,
|\utkaLDp-\utkaLDm|^2 \,
\dx\,\dt
\nonumber \\
& \qquad
+ \int_{\Omega} \Pk(\rhokaLDp(t_n))  \dx
+  \mu^S c_0\, \int_0^{t_n} \|\utkaLDp \|_{H^1(\Omega)}^2 \dt
\nonumber\\
&\qquad
+ \alpha\,\kappa
\int_{0}^{t_n}
\left[ \|\nabx[(\rhokaL^{[\Delta t]})^2]\|_{L^2(\Omega)}^2 + \frac{4}{\Gamma}\,\|\nabx
[(\rhokaL^{[\Delta t]})^{\frac{\Gamma}{2}}]\|_{L^2(\Omega)}^2 \right]
\dt
%\nonumber \\
%&\qquad
+ \mathfrak{z}\,\|\vrhokaLDp(t_n)\|_{L^2(\Omega)}^2\nonumber\\
&\qquad+ \mathfrak{z}\int_0^{t_n} \left[ \|\vrhokaLDp-\vrhokaLDm\|_{L^2(\Omega)}^2
+ 2\,\varepsilon
\,\|\nabx \vrhokaLDp\|_{L^2(\Omega)}^2 \right] \dt
\nonumber
\\
%\end{align}
%\begin{align}
&\hspace{0.2in}
\leq
\frac{1}{2}\,\displaystyle
\int_{\Omega} \rho^0\,|\ut^{0}|^2 \dx
+ \int_{\Omega} \Pk(\rho^0)  \dx
+ \mathfrak{z}\,\|\varrho^0\|_{L^2(\Omega)}^2
%\int_{\Omega} \left(\int_D M \,\hpsi^0\dq\right)^2 \dx
\nonumber \\
& \hspace{0.5in}
+ \int_0^{t_n}
\int_{\Omega}\rhokaLDp
\,\ft \cdot \utkaLDp \dx \,\dt
%\nonumber \\
%& \hspace{0.5in}
+ k\,(K+1) \int_0^{t_n} \,
 \int_\Omega %\left( \int_D M\,\hpsikaLDp \dq \right)
\vrhokaLDp\,
\nabx \cdot \utkaLDp \,\dx\,\dt
\nonumber \\
&\hspace{0.5in}
- k \, \sum_{i=1}^K \int_0^{t_n}
\int_{\Omega}
 \Ctt_i(M
 \,\hpsikaLDp): \nabxtt \utkaLDp \dx \,\dt .
\label{energy-uL}
\end{align}

We now require the appropriate $\hpsikaL^n$ analogue of (\ref{acorstab1}).
The \red{appropriate} choice of test function in
%\eqref{eqpsincon}
(\ref{psiG})
for this purpose
is $\varphi = %\chi_{[0,t]}\,
[\mathcal{F}^L]'(\hpsikaL^n)$. %with $t=t_n$, $n \in \{1,\dots,N\}$.
\begin{comment}
this can be seen by noting that with such a $\varphi$, at
least formally, the final term on the left-hand side of
\eqref{eqpsincon} can be manipulated to become identical to
the final term in \eqref{eq:energy-u}, but with the opposite sign.
\end{comment}
While Lemma \ref{conv} guarantees that $\hpsikaL^n$ belongs to $Z_2$, %for all
%$t \in [0,T]$,
and is therefore nonnegative a.e. on $\Omega \times D$, %\times [0,T]$,
there is unfortunately
no reason why $\hpsikaL^n$ should be strictly positive on $\Omega\times D$, %\times [0,T]$,
and therefore the
expression $[\mathcal{F}^L]'(\hpsikaL^n)$ may in general
be undefined. %; the same is true of $[\mathcal{F}^L]''(\psia^{\Delta t,+})$,
%which will also appear in the algebraic manipulations.
Similarly to (\ref{Pkentreg}), we shall circumvent this problem
by choosing
$\varphi=[\mathcal{F}^L]'(\hpsikaL^n + \varsigma)$ in
(\ref{psiG}), which leads, for any fixed $\varsigma \in {\mathbb R}_{>0}$, to
\begin{comment}
instead of
$[\mathcal{F}^L]'(\psia^{\Delta t,+})$, where $\alpha>0$; since
$\psia^{\Delta t,+}$ is known to be nonnegative from Lemma \ref{conv},
$[\mathcal{F}^L]'(\psia^{\Delta t,+} + \alpha)$ and $[\mathcal{F}^L]''(\psia^{\Delta t,+} + \alpha)$
are well-defined.
After deriving the relevant bounds, which will involve
$\mathcal{F}^L(\psia^{\Delta t,+} + \alpha)$ only,
we shall pass to the limit $\alpha \rightarrow 0_{+}$, noting that, unlike
$[\mathcal{F}^L]'(\psia^{\Delta t,+})$ and $[\mathcal{F}^L]''(\psia^{\Delta t,+})$, the function
$\mathcal{F}^L(\psia^{\Delta t,+})$ is well-defined for any {\em nonnegative} $\psia^{\Delta t,+}$.
Thus, the core of the idea is to take any $\alpha \in (0,1)$, whereby $0 < \alpha < 1 < L$, and choose
$\varphi = \chi_{[0,t]}\,[\mathcal{F}^L]'(\psia^{\Delta t,+} + \alpha)$,
with $t = t_n$, $\;n \in \{1,\dots,N\}$, as test function in \eqref{eqpsincon}, and then pass to the
limit $\alpha \rightarrow 0_+$. An equivalent but slightly more transparent approach is to start from
\eqref{psiG} with the indices $n$ and $n-1$ in \eqref{psiG} replaced by $k$ and $k-1$, respectively,
choose $\varphi = [\mathcal{F}^L]'(\widetilde\psi^k_L + \alpha)$ as test function, sum the resulting
expressions through $k=1,\dots, n$, with $n \in \{1,\dots,N\}$, and then pass to the
limit $\alpha \rightarrow 0_+$. For reasons of clarity, we shall adopt the latter approach.

Thus, for $k = 1, \dots, n$ and $n \in \{1, \dots, N\}$, we arrive at the following identity
\end{comment}
%

\begin{align}\label{z-terms}
0&=\int_{\Omega \times D} M\, \frac{%\zeta(\rho^k_L)\,
\hpsikaL^n - %\zeta(\rho^{k-1}_L)\, \widetilde
\hpsikaL^{n-1}}{\Delta t} \, [[\mathcal{F}^L]'(\hpsikaL^n + \varsigma)]\dq \dx\nonumber\\
&\hspace{0.2in}- \int_{\Omega \times D} M\, %\left(\frac{1}{\Delta t}\int_{t_{k-1}}^{t_k}
%\zeta(\rho^{[\Delta t]}_L) \dd t\right)
\utkaL^n \cdot \left(\nabx [[\mathcal{F}^L]'(\hpsikaL^n + \varsigma)]\right)\,\beta^L(\hpsikaL^n)
\,\dq \dx\nonumber\\
&\hspace{0.2in} + \frac{1}{4\lambda}\,\sum_{i=1}^K \sum_{j=1}^K
\int_{\Omega\times D}  A_{ij}\, M\,
\nabqj \hpsikaL^n \cdot \nabqi [[\mathcal{F}^L]'(\hpsikaL^n + \varsigma)]\dq \dx\nonumber\\
&\hspace{0.2in} + \varepsilon\int_{\Omega \times D}  M\, \nabx \hpsikaL^n \cdot \nabx [[\mathcal{F}^L]'
(\hpsikaL^n + \varsigma)]
\dq \dx
\nonumber \\
&\hspace{0.2in} -
\sum_{i=1}^K
\int_{\Omega\times D} M\,
\beta^L(\hpsikaL^n)
[\sigtt(\utkaL^n) \, \qt_i] \cdot \nabqi [[\mathcal{F}^L]'(\hpsikaL^n + \varsigma)]\dq \dx 
\nonumber\\
&\hspace{0.2in}=:\sum_{i=1}^5 {\tt T}_i.
\end{align}

It follows from (\ref{eq:FL2a}) that
%By applying part c) of Lemma \ref{basic} with $F(s) = \mathcal{F}^L(s)$,
%$G(s) = \mathcal{G}^L(s)$, $A = \zeta(\rho^k_L)$, $B=\zeta(\rho^{k-1}_L)$,
%$a = \widetilde\psi^k_L + \varsigma$, $b = \widetilde\psi^{k-1}_L + \varsigma$, noting that
%$s\, (\mathcal{F}^L)'(s) - \mathcal{F}^L(s) - \mathcal{G}^L(s) = 0:= c_0$
%for all $s \in (0,\infty)$, and
%$\mbox{ess.inf}_{s>0}(\mathcal{F}^{L})''(s) = 1/L :=d_0$, it follows that
%
\begin{align}\label{t1t2}
{\tt T}_1 %+ {\tt T}_2
&\geq
\frac{1}{\Delta t}\int_{\Omega \times D} M\, %\zeta(\rho^k_L)\,
\left[\mathcal{F}^L(\hpsikaL^n+\varsigma) %\dq \dx - \int_{\Omega \times D} M\,
%\zeta(\rho^{k-1}_L)\,
-\mathcal{F}^L(\hpsikaL^{n-1}+\varsigma)\right]\dq \dx
\nonumber\\&\hspace{1in}
+\frac{1}{2\,\Delta t\,L} \int_{\Omega \times D} M\, %\zeta(\rho^{k-1}_L)\,
(\hpsikaL^n -\hpsikaL^{n-1})^2 \dq \dx.
\end{align}
%We now move on to the next term in \eqref{z-terms}:
In addition, it follows from (\ref{eq:FL2a}) that
\begin{align}
{\tt T}_2 &= - \int_{\Omega \times D} M \,\frac{\beta^L(\hpsikaL^n)}{\beta^L(\hpsikaL^n+\varsigma)}
\, \utkaL^n
\cdot \nabx \hpsikaL^n \,\dq \dx
\nonumber \\
&=
\int_{\Omega} \left(\int_{D} M \,\hpsikaL^n\,\dq\right)
\nabx \cdot \utkaL^n \,\dx
\nonumber \\
& \hspace{1in} + \int_{\Omega \times D} M
\,\left[1-\frac{\beta^L(\hpsikaL^n)}{\beta^L(\hpsikaL^n+\varsigma)}\right]
\utkaL^n \cdot \nabx \hpsikaL^n \,\dq \dx.
\label{t2}
\end{align}
Thanks to \eqref{A}, we have that
\begin{subequations}
\begin{align}\label{t3}
{\tt T}_3  %\! :\!&= \frac{1}{4\lambda}\,\int_{\Omega \times D}
%\sum_{i=1}^K \sum_{j=1}^K A_{ij}\,M\,\nabqj \widetilde\psi^k_L \cdot
%\nabqi [(\mathcal{F}^L)'(\widetilde \psi^k_L + \varsigma)] \dq \dx
%\nonumber \\
%&= \frac{1}{4\lambda}\,\int_{\Omega \times D} \,M
%\, [(\mathcal{F}^L)''(\widetilde \psi^k_L + \varsigma)]
%\, \sum_{i=1}^K \sum_{j=1}^K A_{ij}\,\nabqj \widetilde\psi^k_L \cdot
%\nabqi \widetilde \psi^k_L \dq \dx
%\nonumber \\
&\geq \frac{a_0}{4\lambda}\int_{\Omega \times D}
M \,  [[\mathcal{F}^L]''(\hpsikaL^n + \varsigma)] \,
|\nabq \hpsikaL^n|^2 \dq \dx,
\\
{\tt T}_4
&\geq \varepsilon\int_{\Omega \times D} M \,
[[\mathcal{F}^L]''(\hpsikaL^n + \varsigma)] \,|\nabx \hpsikaL^n|^2
%\frac{|\nabx \hpsikaL^n|^2}{\hpsikaL^n + \varsigma}
\,\dq \dx.
\label{t4}
\end{align}
\end{subequations}
It is tempting to bound $ [\mathcal{F}^L]''(\hpsikaL^n + \varsigma)$ below further by
$(\hpsikaL^n + \varsigma)^{-1}$ using \eqref{eq:FL2b}. We have refrained from doing so as the
precise form of \eqref{t4} will be required to absorb the extraneous term that the process of shifting
$\hpsikaL^n$ by the addition of $\varsigma>0$ generates in the last term %${\tt T}_2$ above
in (\ref{t2}). Similarly, (\ref{t3}) is required for the last
line in \eqref{t5} below.
%Once the extraneous term has been absorbed into the right-hand side of
%\eqref{t3}, we shall apply inequality \eqref{eq:FL2b} to
%the resulting expression to bound it below further.
%and will
%bound $ [(\mathcal{F}^L)''(\widetilde \psi^k_L + \varsigma)]$ below further by
%$(\widetilde \psi^k_L + \varsigma)^{-1}$ using \eqref{eq:FL2b} at that point.
%The next term that has to be dealt with, this time by a direct use of \eqref{eq:FL2b}, is
%
%\begin{eqnarray}\label{t4}
%{\tt T}_4 %:= \varepsilon \int_{\Omega \times D} M\, \nabx \widetilde \psi^k_L
%\cdot \nabx [({\mathcal F}^L)'(\widetilde{\psi}^k_L + \varsigma)]
%\geq \varepsilon\int_{\Omega \times D} M \,
%\frac{|\nabx \hpsikaL^n|^2}{\hpsikaL^n + \varsigma} \dq \dx.
%\end{eqnarray}
%
%It remains to consider the critical final term
%
Finally, it follows from (\ref{eq:FL2a}) and (\ref{eqM}) that
\begin{align}
{\tt T}_5 %:\!&= - \int_{\Omega \times D} \sum_{i=1}^K\, [\sigtt(\ut^k_L)\,\qt_i]\,
%M\, \zeta(\rho^k_L)\, \beta^L(\widetilde\psi^k_L)\cdot
%\nabqi [(\mathcal{F}^L)'(\widetilde \psi^k_L + \varsigma)] \dq \dx \nonumber\\
%&= - \int_{\Omega \times D} \sum_{i=1}^K\, [\sigtt(\ut^k_L)\,\qt_i]\, M\, \zeta(\rho^k_L)\,
%\beta^L(\widetilde\psi^k_L+\varsigma)\cdot
%\nabqi [(\mathcal{F}^L)'(\widetilde \psi^k_L + \varsigma)] \dq \dx \nonumber\\
%&\qquad + \int_{\Omega \times D} \sum_{i=1}^K\, [\sigtt(\ut^k_L)\,\qt_i]\, M\, \zeta(\rho^k_L)\,
%[\beta^L(\widetilde\psi^k_L+\varsigma)-\beta^L(\widetilde\psi^k_L)]\cdot
%\nabqi [(\mathcal{F}^L)'(\widetilde \psi^k_L + \varsigma)] \dq \dx \nonumber\\
&= - \red{\sum_{i=1}^K}\int_\Omega %\zeta(\rho^k_L)\,
\left[\int_D M [(\nabxtt\utkaL^n)\, \qt_i] \cdot \nabqi\hpsikaL^n \dq \right] \dx\nonumber\\
&\qquad + \int_{\Omega \times D} M\, %\zeta(\rho^k_L)
\left[1 - \frac{\beta^L(\hpsikaL^n)}{\beta^L(\hpsikaL^n+\varsigma)} \right]
\sum_{i=1}^K\,[(\nabxtt \utkaL^n)\,\qt_i]
\cdot \nabqi \hpsikaL^n \dq \dx
\nonumber
%\\
%\end{align}
%
%Thus, by applying the integration-by-parts formula \eqref{intbyparts}
%to the expression in the square brackets in the penultimate line of \eqref{t5a}, we deduce that
%
%\begin{align}\label{t5}
%{\tt T}_5 &=
%- \int_\Omega \zeta(\rho^k_L)\, \left[\int_D M \,\widetilde\psi^k_L\, \sum_{i=1}^K\, U'(\textstyle{\frac{1}{2}}|\qt_i|^2)
%\, [(\qt_i\, \qt_i^{\tt T}):\nabxtt \ut^k_L] \dq \right]\dx \nonumber\\
%&\qquad + \int_{\Omega \times D} M\, \zeta(\rho^k_L)
%\left[1 - \frac{\beta^L(\widetilde\psi^k_L)}{\beta^L(\widetilde\psi^k_L+\varsigma)}
%\right] \sum_{i=1}^K\,[(\nabxtt \ut^k_L)\,\qt_i]
%\cdot \nabqi \widetilde \psi^k_L \dq \dx\nonumber\\
%&=
\nonumber\\
%\end{align}
%\begin{align}
&=- \int_{\Omega\times D} M \, \sum_{i=1}^K U'(\textstyle{\frac{1}{2}}|\qt_i|^2)\, %\zeta(\rho^k_L)\,
\hpsikaL^n \, (\qt_i\, \qt_i^{\tt T}):\nabxtt \utkaL^n \dq\dx
\nonumber \\ & \qquad
+K\int_{\Omega} \left(\int_{D} M \,\hpsikaL^n\,\dq\right)
\nabx \cdot \utkaL^n \,\dx
\nonumber\\
&\qquad + \int_{\Omega \times D} M\, %\zeta(\rho^k_L)
\left[1 - \frac{\beta^L(\hpsikaL^n)}{\beta^L(\hpsikaL^n+\varsigma)} \right]
\sum_{i=1}^K\,[(\nabxtt \utkaL^n)\,\qt_i]
\cdot \nabqi \hpsikaL^n \dq \dx.
\label{t5}
\end{align}
%
\begin{comment}
By summing \eqref{t1t2}, \eqref{t3}, \eqref{t4} and \eqref{t5} we obtain
%
\begin{align}\label{t-sum-1}
&\frac{1}{\Delta t}\left[\int_{\Omega \times D} M\,
\zeta(\rho^k_L)\, \mathcal{F}^L(\widetilde\psi^k_L+\varsigma)\dq \dx -
\int_{\Omega \times D} M\, \zeta(\rho^{k-1}_L)\,
\mathcal{F}^L(\widetilde\psi^{k-1}_L+\varsigma)\dq \dx\right]\nonumber\\
&\qquad
+ \frac{1}{2\Delta t\,L} \int_{\Omega \times D} M\,
\zeta(\rho^{k-1}_L)\,(\widetilde\psi^k_L - \widetilde\psi^{k-1}_L)^2 \dq \dx
\nonumber\\
&\qquad + \frac{a_0}{4\lambda}\int_{\Omega \times D} M \,
[[\mathcal{F}^L]''(\widetilde \psi^k_L + \varsigma)]\, |\nabq \widetilde\psi^k_L|^2 \dq \dx %\nonumber\\
%&\qquad
+ \varepsilon\int_{\Omega \times D} M \,
\frac{|\nabx \widetilde\psi^k_L|^2}{\widetilde \psi^k_L + \varsigma} \dq \dx\nonumber\\
&\leq  \int_{\Omega\times D} M \, \sum_{i=1}^K U'(\textstyle{\frac{1}{2}}|\qt_i|^2)\, \zeta(\rho^k_L)\,
\widetilde\psi^k_L \, (\qt_i\, \qt_i^{\rm T}):\nabxtt \ut^k_L \dq\dx \nonumber\\
&\qquad - \int_{\Omega \times D} M\, \zeta(\rho^k_L)
\left[1 - \frac{\beta^L(\widetilde\psi^k_L)}{\beta^L(\widetilde\psi^k_L+\varsigma)}
\right] \sum_{i=1}^K\,[(\nabxtt \ut^k_L)\,\qt_i]
\cdot \nabqi \widetilde \psi^k_L \dq \dx.
\end{align}
%
\end{comment}
Substituting (\ref{t1t2})--(\ref{t5}) into (\ref{z-terms}), multiplying by $\Delta t$,
summing over $n$ and adopting the notation (\ref{ulin},b) yields, for $n=1,\ldots,N$, that
%As each term in \eqref{t-sum-1} can be seen as the value of a piecewise constant function
%on the interval $(t_{k-1},t_k)$, multiplication of \eqref{t-sum-1} by $\Delta t$
%and summation over the indices $k=1,\dots, n$, where $n \in \{1, \dots, N\}$,
%yields on noting that $\widetilde\psi^{\Delta t}_L(0) = \widetilde\psi^0 = \beta^L(\widetilde\psi^0)$,
%for $t=t_n$, that
%
\begin{align}
&\int_{\Omega \times D} M\, %\zeta(\rho^{\Delta t,+}_L(t))\,
\mathcal{F}^L(\hpsikaLDp(t_n) + \varsigma) \dq \,\dx
+\frac{1}{2 \,\Delta t\, L}\int_0^{t_n}\int_{\Omega \times D}
M\,%\zeta(\rho^{\Delta t,-}_L)\,
(\hpsikaLDp - \hpsikaLDm)^2 \dq\, \dx\, \dt
\nonumber\\
&\qquad +\int_0^{t_n} \int_{\Omega \times D}
M\,
[[\mathcal{F}^L]''(\hpsikaLDp + \varsigma)]\,
\left[\frac{a_0}{4\,\lambda}\,
|\nabq \hpsikaLDp |^2
+ \varepsilon\, |\nabx \hpsikaLDp |^2
\right]
\,\dq \,\dx\, \dt
\nonumber
%\\
%+\, %\frac{\varepsilon}{\zeta_{\rm max}}
%\varepsilon\int_0^t \int_{\Omega \times D}\, M\,
%\frac{|\nabx \psia^{\Delta t,+}|^2}{\psia^{\Delta t,+} + \varsigma} \dq \dx \dd s\nonumber
%\end{align}
%\begin{align}
%&&\qquad
%+\, %\frac{\varepsilon}{\zeta_{\rm max}}
%\varepsilon\, \int_0^t \int_{\Omega \times D}\, M\,
%\frac{|\nabx \psia^{\Delta t,+}|^2}{\psia^{\Delta t,+} + \varsigma} \dq \dx \dd s\nonumber
%\\
\end{align}
\begin{align}
&\quad\leq \int_{\Omega \times D} M\, %\zeta(\rho_0)\,
\mathcal{F}^L(\beta^L(\hpsi^0) + \varsigma)
\dq \,\dx
\nonumber \\
& \quad \qquad -(K+1) \int_0^{t_n}\int_{\Omega} \left(\int_{D} M \,\hpsikaLDp\,\dq\right)
\nabx \cdot \utkaLDp \,\dx \,\dt
\nonumber\\
&\quad \qquad+ \int_0^{t_n} \int_{\Omega \times D} M\,\sum_{i=1}^K
\,U_i'(\textstyle{\frac{1}{2}|\qt|^2})\,%\zeta(\rho^{\Delta t,+}_L)\,
\hpsikaLDp \,(\qt_i\,\qt_i^{\rm T}):
\nabxtt \utkaLDp \dq\, \dx\, \dt
\nonumber\\
&\quad \qquad- \int_0^{t_n} \int_{\Omega \times D} M\,
 %\zeta(\rho^{\Delta t,+}_L)
\left[1 -\frac{\beta^L(\hpsikaLDp)}{\beta^L(\hpsikaLDp
+ \varsigma)}\right] \utkaLDp \cdot \nabx \hpsikaLDp \dq \,\dx\, \dt
\nonumber \\
&\quad \qquad- \int_0^{t_n} \int_{\Omega \times D} M\,%\zeta(\rho^{\Delta t,+}_L)
\left[1 -\frac{\beta^L(\hpsikaLDp)}{\beta^L(\hpsikaLDp
+ \varsigma)}\right] \sum_{i=1}^K
\left[(\nabxtt \utkaLDp)\,\qt_i\right]
\cdot \nabqi \hpsikaLDp\, \dq \,\dx\, \dt,
\label{eq:energy-psi-summ1}
\end{align}
where we have noted (\ref{psi0conv}).
%
%We refer to \cite[(4.14)--(4.18)]{BS2011-fene} for the details of a similar,
%but somewhat simpler, argument in the case of $\zeta \equiv 1$.
%and this is trivially adapted to $\zeta(\rho^{\Delta t,+}_L)$.
%and note that the initial condition
%$\hat \psi^{\Delta t}(0) = \hat \psi^0$, recall (\ref{psi0}),
%imposed here
%is slightly different to that in \cite{BS2011-fene}.
The denominator in the prefactor of the second integral on the left-hand side motivates us to link $\Delta t$ to $L$
so that $\Delta t\, L = o(1)$,
as $\Delta t \!\rightarrow\! 0_{+}$ (or, equivalently, $\Delta t = o(L^{-1})$, as
$L \rightarrow \infty$), in order to drive the integral multiplied by the prefactor to $0$ in
the limit of $\Delta t \rightarrow 0_+$,
once the product of the two has been bounded above by a constant, independent of
$\Delta t$ and $L$.

Comparing \eqref{eq:energy-psi-summ1} with \eqref{energy-uL}, and noting (\ref{eqCtt}),
we see that after multiplying
\eqref{eq:energy-psi-summ1} by $k$ and adding the resulting inequality to \eqref{energy-uL}
the last two terms on the right-hand side of \eqref{energy-uL} are cancelled by $k$ times the
second and third terms on the right-hand side of \eqref{eq:energy-psi-summ1}.
Hence, for %any $t=t_n$, with
$n %\in \{
=1,\ldots,N$, %\}$,
we deduce that
\begin{align}
&\frac{1}{2}\,\displaystyle
\int_{\Omega} \rhokaLDp(t_n)
\,|\utkaLDp(t_n)|^2 \,\dx
+ \frac{1}{2\Delta t}\,\int_{0}^{t_n} \int_{\Omega} \rhokaLDm\,
|\utkaLDp-\utkaLDm|^2 \,
\dx\,\dt
\nonumber \\
& \qquad
+ \int_{\Omega} \Pk(\rhokaLDp(t_n))  \dx
+k\,\int_{\Omega \times D} M\,
\mathcal{F}^L(\hpsikaLDp(t_n) + \varsigma) \dq \,\dx
\nonumber\\
&\qquad
+ \alpha\,\kappa
\int_{0}^{t_n}
\left[ \|\nabx[(\rhokaL^{[\Delta t]})^2]\|_{L^2(\Omega)}^2 + \frac{4}{\Gamma}\,\|\nabx
[(\rhokaL^{[\Delta t]})^{\frac{\Gamma}{2}}]\|_{L^2(\Omega)}^2 \right]
\dt
\nonumber
\\
%\end{align}
%\begin{align}
&\qquad
+  \mu^S c_0\, \int_0^{t_n} \|\utkaLDp \|_{H^1(\Omega)}^2 \dt
+\frac{k}{2 \,\Delta t\, L}\int_0^{t_n}\int_{\Omega \times D}
M\,%\zeta(\rho^{\Delta t,-}_L)\,
(\hpsikaLDp - \hpsikaLDm)^2 \dq\, \dx\, \dt
\nonumber
\\
&\qquad + k\,\int_0^{t_n} \int_{\Omega \times D}
M\,
[[\mathcal{F}^L]''(\hpsikaLDp + \varsigma)]\,
\left[\frac{a_0}{4\,\lambda}\,
|\nabq \hpsikaLDp |^2
+ \varepsilon\, |\nabx \hpsikaLDp |^2
\right]
\,\dq \,\dx\, \dt
\nonumber \\
%+\, %\frac{\varepsilon}{\zeta_{\rm max}}
%\varepsilon\int_0^t \int_{\Omega \times D}\, M\,
%\frac{|\nabx \psia^{\Delta t,+}|^2}{\psia^{\Delta t,+} + \varsigma} \dq \dx \dd s\nonumber
%\end{align}
%\begin{align}
%&&\qquad
%+\, %\frac{\varepsilon}{\zeta_{\rm max}}
%\varepsilon\, \int_0^t \int_{\Omega \times D}\, M\,
%\frac{|\nabx \psia^{\Delta t,+}|^2}{\psia^{\Delta t,+} + \varsigma} \dq \dx \dd s\nonumber
%\\
& \qquad + \mathfrak{z}\,\|\vrhokaLDp(t_n)\|_{L^2(\Omega)}^2
+ \mathfrak{z}\int_0^{t_n} \left[ \|\vrhokaLDp-\vrhokaLDm\|_{L^2(\Omega)}^2
+ 2\,\varepsilon
\,\|\nabx \vrhokaLDp\|_{L^2(\Omega)}^2 \right] \dt
\nonumber
\\
&\hspace{0.2in}
\leq
\frac{1}{2}\,\displaystyle
\int_{\Omega} \rho^0\,|\ut^{0}|^2 \dx
+ \int_{\Omega} \Pk(\rho^0)  \dx
+ \mathfrak{z}\,%\int_{\Omega} \left(\int_D M \,\hpsi^0\dq\right)^2 \dx
\|\varrho^0\|_{L^2(\Omega)}^2
\nonumber \\
&\hspace{0.3in}
+ \int_0^{t_n}
\int_{\Omega}\rhokaLDp
\,\ft \cdot \utkaLDp \dx \,\dt
+k\,\int_{\Omega \times D} M\, %\zeta(\rho_0)\,
\mathcal{F}^L(\beta^L(\hpsi^0) + \varsigma)
\dq \,\dx
\nonumber 
\\
&\hspace{0.3in}- k\,\int_0^{t_n} \int_{\Omega \times D} M\,
 %\zeta(\rho^{\Delta t,+}_L)
\left[1 -\frac{\beta^L(\hpsikaLDp)}{\beta^L(\hpsikaLDp
+ \varsigma)}\right] \utkaLDp \cdot \nabx \hpsikaLDp \dq \,\dx\, \dt
\nonumber 
\\
%\end{align}
%\begin{align}
&\hspace{0.3in}- k\,\int_0^{t_n} \int_{\Omega \times D} M\,%\zeta(\rho^{\Delta t,+}_L)
\left[1 -\frac{\beta^L(\hpsikaLDp)}{\beta^L(\hpsikaLDp
+ \varsigma)}\right] \sum_{i=1}^K
\left[(\nabxtt \utkaLDp)\,\qt_i\right]
\cdot \nabqi \hpsikaLDp\, \dq \,\dx\, \dt.
\label{eq:energy-u+psi}
\end{align}

Similarly to (\ref{trhoin}), we have on choosing $\eta=1$ in (\ref{rhonL}) that
\begin{align}
\int_{\Omega} \rhokaL^n \,\dx = \int_{\Omega} \rhokaL^{n-1}\,\dx=
\int_{\Omega} \rho^0 \,\dx, \qquad n=1,\ldots,N.
\label{rhonint}
\end{align}
Noting (\ref{rhonint}) and (\ref{upm}), we have that
\begin{align}
\left|\int_0^{t_n} \int_{\Omega}\rhokaLDp
\,\ft \cdot \utkaLDp \dx \,\dt\right|
\leq \frac{1}{2} \left[ \int_0^{t_n} \int_{\Omega}\rhokaLDp\,|\utkaLDp|^2\,\dx\,\dt
+ \int_{0}^{t_n} \|\ft\|_{L^\infty(\Omega)}^2 \dt \int_\Omega \rho^0 \dx  \right].
\nonumber\\
\label{fbd}
\end{align}
Next we recall from
\cite[(4.25)]{BS2011-fene} the bound
\begin{align}
\int_{\Omega \times D} M\, %\zeta(\rho_0)\,
\mathcal{F}^L(\beta^L(\hpsi^0) + \varsigma) \dq \dx
\leq
 \frac{3\varsigma}{2}\, %\zeta_{\rm max}\,
 |\Omega|
+ \int_{\Omega \times D} M\, %\zeta(\rho_0)\,
\mathcal{F}(\hpsi^0 + \varsigma) \dq \dx.
\label{FLbLbd}
\end{align}
Let $\bt := (b_1,\ldots,b_K)$, recall (\ref{inidata}), and $b :=|\bt|_1 := b_1 +\cdots + b_K$;
then %we can bound the magnitude of the last term on
%It remains to bound the fourth term on
%the right-hand side of
%\eqref{eq:energy-u+psi}
%Noting that
%$\beta^L$ is Lipschitz continuous, with Lipschitz constant equal to $1$,
%and $\beta^L(s+\varsigma) %\geq \varsigma$
%for $s \geq 0$ (recall that $0<\varsigma < 1 < L$),
we can bound the magnitude of the last term on the right-hand side of \eqref{eq:energy-u+psi},
on noting (\ref{eq:FL2a}) and (\ref{Mint1}), by
\begin{align}
&\frac{k\,a_0}{8 \lambda} \left(\int_0^{t_n} \int_{\Omega \times D}
M [[\mathcal{F}^L]''(\hpsikaLDp + \varsigma)]
\, |\nabq \hpsikaLDp|^2\, \dq\, \dx\, \dt\right)
\nonumber \\
&\hspace{2.5in}
+ \varsigma\,\frac{2k\,\lambda\, b}{a_0}
\left(\int_0^{t_n} \int_{\Omega}  |\nabxtt \utkaLDp|^2 \dx \,\dt\right),
\label{dragbd}
\end{align}
%
%where the constant $C_M$ is defined by
%\begin{align}\label{CM}
%C_M := \int_D M\,|\qt|^2  \dq;
%\end{align}
%
see \cite[(4.20)]{BS2010-hookean} for the details.
Similarly, the second to last term on the right-hand side of \eqref{eq:energy-u+psi}
can be bounded by
\begin{align}
&\frac{k\,\varepsilon}{2} \left(\int_0^{t_n} \int_{\Omega \times D}
M [[\mathcal{F}^L]''(\hpsikaLDp + \varsigma)]
\, |\nabx \hpsikaLDp|^2\, \dq\, \dx\, \dt\right)
%\nonumber \\ &\hspace{2.5in}
+ \varsigma\,\frac{k}{2\,\varepsilon}
\left(\int_0^{t_n} \int_{\Omega}  |\utkaLDp|^2 \dx \,\dt\right).
\label{convbd}
\end{align}
%for details in the case of $\zeta \equiv 1$.
Noting (\ref{eq:energy-u+psi})--(\ref{convbd}), %(\ref{Korn}),
and using \eqref{eq:FL2b} to bound the expression $[\mathcal{F}^L]''(\hpsikaLDp + \varsigma)$
 from below by $\mathcal{F}''(\hpsikaLDp + \varsigma)= (\hpsikaLDp
 + \varsigma)^{-1}$ and
\eqref{eq:FL2c} to bound $\mathcal{F}^L(\hpsikaLDp + \varsigma)$
by $\mathcal{F}(\hpsikaLDp+\varsigma)$ from below yields, for %all %$t=t_n$,
$n = 1,\ldots, N$, that
\begin{align}\label{eq:energy-u+psi1}
&\frac{1}{2}\,\displaystyle
\int_{\Omega} \rhokaLDp(t_n)
\,|\utkaLDp(t_n)|^2 \,\dx
+ \frac{1}{2\Delta t}\,\int_{0}^{t_n} \int_{\Omega} \rhokaLDm\,
|\utkaLDp-\utkaLDm|^2 \,
\dx\,\dt
\nonumber \\
& \qquad
+ \int_{\Omega} \Pk(\rhokaLDp(t_n))  \dx
+k\,\int_{\Omega \times D} M\,
\mathcal{F}(\hpsikaLDp(t_n) + \varsigma) \dq \,\dx
\nonumber\\
&\qquad
+ \alpha\,\kappa
\int_{0}^{t_n}
\left[ \|\nabx[(\rhokaL^{[\Delta t]})^2]\|_{L^2(\Omega)}^2 + \frac{4}{\Gamma}\,\|\nabx
[(\rhokaL^{[\Delta t]})^{\frac{\Gamma}{2}}]\|_{L^2(\Omega)}^2 \right]
\dt
\nonumber \\
&\qquad
+  \mu^S c_0\, \int_0^{t_n} \|\utkaLDp \|_{H^1(\Omega)}^2 \dt
+\frac{k}{2 \,\Delta t\, L}\int_0^{t_n}\int_{\Omega \times D}
M\,%\zeta(\rho^{\Delta t,-}_L)\,
(\hpsikaLDp - \hpsikaLDm)^2 \dq\, \dx\, \dt
\nonumber\\
&\qquad + \frac{k}{2}\,\int_0^{t_n} \int_{\Omega \times D}
\frac{M}{\hpsikaLDp + \varsigma}\,
\left[\frac{a_0}{4\,\lambda}\,
|\nabq \hpsikaLDp |^2
+ \varepsilon\, |\nabx \hpsikaLDp |^2
\right]
\,\dq \,\dx\, \dt
\nonumber \\
& \qquad + \mathfrak{z}\,\|\vrhokaLDp(t_n)\|_{L^2(\Omega)}^2
+ \mathfrak{z}\int_0^{t_n} \left[ \|\vrhokaLDp-\vrhokaLDm\|_{L^2(\Omega)}^2
+ 2\,\varepsilon
\,\|\nabx \vrhokaLDp\|_{L^2(\Omega)}^2 \right] \dt
\nonumber \\
&\hspace{0.2in}
\leq
\frac{1}{2}\,\displaystyle
\int_{\Omega} \rho^0\,|\ut^{0}|^2 \dx
+ \int_{\Omega} \Pk(\rho^0)  \dx
+ \mathfrak{z}\,%\int_{\Omega} \left(\int_D M \,\hpsi^0\dq\right)^2 \dx
\|\varrho^0\|_{L^2(\Omega)}^2
\nonumber\\
&\hspace{0.3in}
+\frac{1}{2}  \int_{0}^{t_n} \|\ft\|_{L^\infty(\Omega)}^2 \dt \int_\Omega \rho^0 \dx
+k\,\int_{\Omega \times D} M\, %\zeta(\rho_0)\,
\mathcal{F}(\hpsi^0 + \varsigma)
\dq \,\dx + \frac{3k\,\varsigma}{2}\,|\Omega|
\nonumber \\
&\hspace{0.3in}
+ 2k\,\varsigma\,\max\left\{\frac{\lambda\, b}{a_0},\frac{1}{4\varepsilon}\right\}\,
\int_0^{t_n} \|\utkaLDp\|_{H^1(\Omega)}^2 \dt
+ \frac{1}{2} \int_0^{t_n} \int_{\Omega}\rhokaLDp\,|\utkaLDp|^2\,\dx\,\dt.
\end{align}
Passing to the limit $\varsigma \rightarrow 0_+$ in (\ref{eq:energy-u+psi1}),
and then applying a \red{discrete} Gronwall inequality yields, for $n=1,\ldots, N$, that
\begin{align}\label{eq:energy-u+psi-final2}
&\frac{1}{2}\,\displaystyle
\int_{\Omega} \rhokaLDp(t_n)
\,|\utkaLDp(t_n)|^2 \,\dx
+ \frac{1}{2\Delta t}\,\int_{0}^{t_n} \int_{\Omega} \rhokaLDm\,
|\utkaLDp-\utkaLDm|^2 \,
\dx\,\dt
\nonumber \\
& \qquad
+ \int_{\Omega} \Pk(\rhokaLDp(t_n))  \dx
+k\,\int_{\Omega \times D} M\,
\mathcal{F}(\hpsikaLDp(t_n)) \dq \,\dx
\nonumber\\
&\qquad
+ \alpha\,\kappa
\int_{0}^{t_n}
\left[ \|\nabx[(\rhokaL^{[\Delta t]})^2]\|_{L^2(\Omega)}^2 + \frac{4}{\Gamma}\,\|\nabx
[(\rhokaL^{[\Delta t]})^{\frac{\Gamma}{2}}]\|_{L^2(\Omega)}^2 \right]
\dt
\nonumber \\
&\qquad
+  \mu^S c_0\, \int_0^{t_n} \|\utkaLDp \|_{H^1(\Omega)}^2 \dt
+\frac{k}{2 \,\Delta t\, L}\int_0^{t_n}\int_{\Omega \times D}
M\,%\zeta(\rho^{\Delta t,-}_L)\,
(\hpsikaLDp - \hpsikaLDm)^2 \dq\, \dx\, \dt
\nonumber\\
&\qquad + k\,\int_0^{t_n} \int_{\Omega \times D}
M\,
\left[\frac{a_0}{2\lambda}\,
\left|\nabq \sqrt{\hpsikaLDp} \right|^2
+ 2\varepsilon\, \left|\nabx \sqrt{\hpsikaLDp} \right|^2
\right]
\,\dq \,\dx\, \dt
\nonumber \\
& \qquad + \mathfrak{z}\,\|\vrhokaLDp(t_n)\|_{L^2(\Omega)}^2
+ \mathfrak{z}\int_0^{t_n} \left[ \|\vrhokaLDp-\vrhokaLDm\|_{L^2(\Omega)}^2
+ 2\,\varepsilon
\,\|\nabx \vrhokaLDp\|_{L^2(\Omega)}^2 \right] \dt
\nonumber \\
&\hspace{0.2in}
\leq {\rm e}^{t_n}\biggl[
\frac{1}{2}\,\displaystyle
\int_{\Omega} \rho^0\,|\ut^{0}|^2 \dx
+ \int_{\Omega} \Pk(\rho^0)  \dx
+k\,\int_{\Omega \times D} M\, %\zeta(\rho_0)\,
\mathcal{F}(\hpsi^0)
\dq \,\dx
\nonumber\\
&\hspace{1in}
+ %\int_0^{t_n}
%\int_{\Omega}\rhokaLDp
%\,\ft \cdot \utkaLDp \dx \,\dt
\mathfrak{z}\,%\int_{\Omega} \left(\int_D M \,\hpsi^0\dq\right)^2 \dx
\|\varrho^0\|_{L^2(\Omega)}^2
+\frac{1}{2}  \int_{0}^{t_n} \|\ft\|_{L^\infty(\Omega)}^2 \dt \int_\Omega \rho^0 \dx
\biggr]
\nonumber \\
&\hspace{0.2in}
\leq {\rm e}^{t_n}\biggl[
\frac{1}{2}\,\displaystyle
\int_{\Omega} \rho^0\,|\ut_{0}|^2 \dx
+ \int_{\Omega} \Pk(\rho^0)  \dx
+k\,\int_{\Omega \times D} M\, %\zeta(\rho_0)\,
\mathcal{F}(\hpsi_0)
\dq \,\dx
\nonumber\\
&\hspace{1in}
+
\mathfrak{z}\,\red{\int_{\Omega} \left(\int_D M \,\hpsi_0\dq\right)^2 \dx}
%|\Omega|
+
\frac{1}{2}  \int_{0}^{t_n} \|\ft\|_{L^\infty(\Omega)}^2 \dt \int_\Omega \rho^0 \dx
\biggr]
\nonumber\\
&\hspace{0.2in}\leq C,
\end{align}
where $C$ is a positive constant, independent of the parameters
$\Delta t$, $L$, $\alpha$ and $\kappa$.
Here, we have noted (\ref{idatabd}), (\ref{inidata-1}) and (\ref{vrho0}) for the penultimate
inequality in (\ref{eq:energy-u+psi-final2}), and
(\ref{rho0conv}) for the final inequality.

Next we bound the extra stress term (\ref{eqCtt}). As we do not have a bound on
$\|M^{\frac{1}{2}} \hpsikaLDp\|_{L^2(\Omega\times D)}$ in (\ref{eq:energy-u+psi-final2}),
we will need a weaker bound than (\ref{eqCttbd}). First, we deduce from
(\ref{eqCtt}), (\ref{eqM})
and as $M=0$ on $\partial D$ that
\begin{align}
\Ctt_i(M\,\varphi) = - \int_{D} (\nabqi M) \,\qt_i^{\rm T}\,\varphi \, \dq
= \int_{D} M\,(\nabqi \varphi) \,\qt_i^{\rm T} \, \dq
+ \left(\int_{D} M\,\varphi \dq\right) \,\Itt.
\label{Cittiden}
\end{align}
Hence, for $r \in [1,2)$, \red{on noting that $\nabqi \varphi = \nabqi (\sqrt{\varphi})^2 = 2 \sqrt{\varphi}\,\, \nabqi \sqrt{\varphi}$ for any
sufficiently smooth nonnegative function $\varphi$,} we have that
\begin{align}
%\int_{\Omega}
\|\Ctt_i(M\,\varphi)\|_{L^r(\Omega)} %\dx
&\leq C\left[
\int_{\Omega}  \left(\int_{D} M\,\varphi \dq\right)^{\frac{r}{2}}
\left(\int_{D} M\left|\nabqi \sqrt{\varphi}\right|^2 \dq\right)^{\frac{r}{2}}\dx +
\int_{\Omega} \left(\int_{D} M\,\varphi \dq\right)^r \dx \right]^{\frac{1}{r}}
\nonumber \\
& \leq C\left[ \|\nabqi \sqrt{\varphi}\|_{L^2_M(\Omega \times D)}\,
\left\|\int_{D} M\,\varphi \dq\right\|_{L^{\frac{r}{2-r}}(\Omega)}^{\frac{1}{2}}
+ \left\|\int_{D} M\,\varphi \dq\right\|_{L^r(\Omega)}
\right].
\label{Cttrbd}
\end{align}
Therefore, for $r \in [1,2)$ and $s \in [1,2]$, it follows that, \red{for any such function $\varphi$,}
\begin{align}
&\|\Ctt_i(M\,\varphi)\|_{L^s(0,T;L^r(\Omega))}
\nonumber \\
& \quad \leq
C\biggl[ \|\nabqi \sqrt{\varphi}\|_{L^2(0,T;L^2_M(\Omega \times D))}\,
\left\|\int_{D} M\,\varphi \dq\right\|_{L^{\upsilon}(0,T;L^{\frac{r}{2-r}}(\Omega))}^{\frac{1}{2}}
%\nonumber \\
%&\hspace{1in}
+ \left\|\int_{D} M\,\varphi \dq\right\|_{L^s(0,T;L^r(\Omega))}
\biggr],
\nonumber \\
&
\label{Cttrsbd}
\end{align}
where $\upsilon = \frac{s}{2-s}$ if $s \in [1,2)$ and $\upsilon =\infty$ if $s=2$.
\begin{comment}
On setting
\begin{align}
\vrhokaLDp := \int_D M\,\hpsikaLDp\,\dq,
\label{vrhokaLD}
\end{align}
\end{comment}
We deduce from (\ref{Cttrsbd}) and (\ref{eq:energy-u+psi-final2})
that, for $i=1,\ldots,K$,
\begin{align}
\|\Ctt_i(M\,\hpsikaLDp)\|_{L^s(0,T;L^r(\Omega))}
&\leq C \qquad \mbox{if} \quad
\|\vrhokaLDp \|_{L^{\upsilon}(0,T;L^{\frac{r}{2-r}}(\Omega))} \leq C,
\label{Cttrsbdvr}
\end{align}
where $r \in [1,2)$, $s \in [1,2]$ and
$\upsilon = \frac{s}{2-s}$ if $s \in [1,2)$ and $\upsilon =\infty$ if $s=2$.

It follows from (\ref{eq:energy-u+psi-final2}) and (\ref{eqinterp}) that
\begin{align}
\|\vrhokaLDp\|_{L^{\frac{2}{\vartheta}}(0,T;L^\upsilon(\Omega))}
\leq C\,\|\vrhokaLDp\|_{L^{\infty}(0,T;L^2(\Omega))}^{1-\vartheta}\,
\|\vrhokaLDp\|_{L^{2}(0,T;H^1(\Omega))}^{\vartheta}\leq C,
\label{vrhokaLvbd}
\end{align}
where $\vartheta=\frac{(\upsilon-2)d}{2\upsilon}$, and $\upsilon \in (2,\infty)$ if $d=2$
and $\upsilon \in (2,6]$ if $d=3$.
For example, we have that
\begin{align}
\|\vrhokaLDp\|_{L^{\infty}(0,T;L^2(\Omega))}+
\|\vrhokaLDp\|_{L^{\frac{2(d+2)}{d}}(\Omega_T)}
+
\|\vrhokaLDp\|_{L^{4}(0,T;L^{\frac{2d}{d-1}}(\Omega))}
 \leq C,
\label{vrhokaLvvbd}
\end{align}
\begin{comment}
and hence deduce from (\ref{vrhoW1sbd}) and (\ref{vrhokaLDint}) that
\begin{align}
\|\vrhokaLDp\|_{L^2(0,T;L^{\frac{d}{d-1}}(\Omega))}
&\leq C\, \|\vrhokaLDp\|_{L^2(0,T;W^{1,1}(\Omega))} \leq C,
\label{vrhoLsbd}
\end{align}
\end{comment}
and hence we deduce from (\ref{Cttrsbdvr}) and (\ref{tau1})
%with $\frac{s}{2-s}=2$ and $\frac{r}{2-r} = \frac{d}{d-1}$,
that
\begin{subequations}
\begin{align}
\|\Ctt_i(M\,\hpsikaLDp)\|_{L^2(0,T;L^{\frac{4}{3}}(\Omega))}
+\|\Ctt_i(M\,\hpsikaLDp)\|_{L^{\frac{4(d+2)}{3d+4}}(\Omega_T)}
&\leq C, \qquad i=1,\ldots,K,
\label{Ctt43bd} \\
%\mbox{and} \qquad \qquad
\|\tautt_1(M\,\hpsikaLDp)\|_{L^2(0,T;L^{\frac{4}{3}}(\Omega))}+
\|\tautt_1(M\,\hpsikaLDp)\|_{L^{\frac{4(d+2)}{3d+4}}(\Omega_T)}
&\leq C,
\label{tautt43bd}
\end{align}
\end{subequations}
where $C$ is independent of $\Delta t$, $L$, $\alpha$ and $\kappa$.

\begin{remark}\label{z0rem}
We note from {\rm(\ref{eq:energy-u+psi-final2})} and
{\rm(\ref{vrhokaLvbd})} that if $\mathfrak{z}=0$, then
the bounds {\rm(\ref{vrhokaLvvbd})} and {\rm(\ref{Ctt43bd},b)}
no longer hold.
In this case, we have only the following weaker bounds.

Similarly to {\rm(\ref{rhonint})}, we have on choosing $\varphi = 1$ in {\rm(\ref{psiG})}, and
noting {\rm(\ref{upm})} and {\rm(\ref{psi0conv})}, that, for a.a.\ $t \in (0,T)$,
\begin{align}
\int_\Omega \vrhokaLDp \,\dx = \int_{\Omega \times D} M\,\hpsikaLDp\,\dq\,\dx
= \int_{\Omega \times D} M\,\hpsi^0 \dq \, \dx
\leq C.
\label{vrhokaLDint}
\end{align}
Next we deduce from {\rm(\ref{eq:energy-u+psi-final2})} and {\rm(\ref{vrhokaLDint})} that
\begin{align}
\|\nabx \vrhokaLDp\|_{L^2(0,T;L^1(\Omega))}^2
&= 4\,\left\|\int_D M \,\sqrt{\hpsikaLDp}\,\nabx \sqrt{\hpsikaLDp}\,\dq \right\|_{L^2(0,T;L^1(\Omega))}^2 &
\nonumber \\
&\leq 4\, \|\vrhokaLDp\|_{L^\infty(0,T;L^1(\Omega))}\,
\left\|\nabx \sqrt{\hpsikaLDp}\right\|_{L^2(0,T;L^2_M(\Omega \times D))}^2
\leq C.
\label{vrhoW1sbd}
\end{align}
It follows from Sobolev embedding, {\rm(\ref{vrhoW1sbd})} and {\rm(\ref{vrhokaLDint})} that
\begin{align}
\|\vrhokaLDp\|_{L^2(0,T;L^{\frac{d}{d-1}}(\Omega))}
&\leq C\, \|\vrhokaLDp\|_{L^2(0,T;W^{1,1}(\Omega))} \leq C.
\label{vrhoLsbd}
\end{align}
Therefore, we obtain from {\rm(\ref{Cttrsbdvr})}, {\rm(\ref{vrhoLsbd})} and {\rm(\ref{tau1})} that
%%%%%%%%%%%%%%%%%%% RETURN TO THIS IN THE END %%%%%%%%%%%%%%%%%%%% \footnote{Can this be improved for $d=2$ ?}
\begin{align}
\|\tautt_1(M\,\hpsikaLDp)\|_{L^{\frac{4}{3}}(0,T;L^{\frac{2d}{2d-1}}(\Omega))}
\leq C,
\label{tautt1red}
\end{align}
where $C$ is independent of $\Delta t$, $L$, $\alpha$ and $\kappa$.
\end{remark}

\subsection{$L,\,\Delta t$-independent bounds on the spatial and temporal derivatives of
$\rhokaL^{[\Delta t]}$}
\label{sec:time-rho}

In addition to the bounds on $\rhokaL^{[\Delta t]}$ and $\rhokaLDp$ in (\ref{eq:energy-u+psi-final2}),
we establish further relevant bounds here.
Similarly to (\ref{rhodbd}), on choosing, for any $s\in (0,T]$, $\eta(\cdot,t) = \chi_{[0,s]}\,
[\rhokaL^{[\Delta t]}(\cdot,t)]^{\vartheta-1}$, for $\vartheta=2$ and $\frac{\Gamma}{2}$,
in (\ref{eqrhocon}), we obtain, on noting %(\ref{cdef}),
%(\ref{regrid}) %, (\ref{Pdef})
(\ref{eq:energy-u+psi-final2}), that
\begin{align}
&\frac{1}{\vartheta}\,\|\rhokaL^{[\Delta t]}(\cdot,s)\|_{L^\vartheta(\Omega)}^\vartheta
+ \frac{4\alpha(\vartheta-1)}{\vartheta^2}\,
\int_{0}^{s} \|\nabx[ (\rhokaL^{[\Delta t]})^{\frac{\vartheta}{2}}]\|_{L^2(\Omega)}^2 \,\dt
\nonumber \\
& \quad = \frac{1}{\vartheta}\left[ \|\rho^0\|_{L^\vartheta(\Omega)}^\vartheta
+ (\vartheta-1)\,\int_0^s \int_{\Omega} %\rhokaLd^{[\Delta t],n} \,\,
\utkaLDp \cdot
\nabx [(\rhokaL^{[\Delta t]})^\vartheta] \dx\, \dt \right]
\nonumber \\
& \quad \leq \frac{1}{\vartheta}\left[\|\rho^0\|_{L^\vartheta(\Omega)}^\vartheta
+ \int_0^s \|\utkaLDp\|_{L^2(\Omega)}^2 \,\dt+
\frac{(\vartheta-1)^2}{4}\,\int_0^s %\int_{\Omega}
\|\nabx [(\rhokaL^{[\Delta t]})^\vartheta]\|_{L^2(\Omega)}^2 \dt
%(\rhokaLd^{[\Delta t],n})^2|\nabx \rhokaLd^{[\Delta t],n}|^2 \dx\, \dt
\right]
\nonumber \\
& \quad \leq C,
\label{rhobd}
\end{align}
where $C$ is independent of $\Delta t$ and $L$.
%On denoting by $\mint\eta$, the mean value of the function $\eta$ over $\Omega$,
Similarly to (\ref{PkrhokaLdc}), it follows from
a  Poincar\'e inequality,  %(\ref{eqinterp}),
(\ref{eq:energy-u+psi-final2}) and (\ref{rhobd}) for
$\vartheta=\frac{\Gamma}{2}$ that
\begin{align}
&\|\rhokaL^{[\Delta t]}\|_{L^{\Gamma}(0,T;L^\Gamma(\Omega))}^\Gamma
= \|(\rhokaL^{[\Delta t]})^{\frac{\Gamma}{2}}\|_{L^{2}(0,T;L^2(\Omega))}^2
\nonumber \\[2mm]
%& \hspace{0.6in} \leq
%2 \|(I-\mint)(\rhokaLd^{[\Delta t],n})^{\frac{\Gamma}{2}}\|_{L^{2}(t_{n-1},t_n;L^2(\Omega))}^2
%+ 2\, \|\mint(\rhokaLd^{[\Delta t],n})^{\frac{\Gamma}{2}}\|_{L^{2}(t_{n-1},t_n;L^2(\Omega))}^2
%\nonumber \\[2mm]
& \hspace{0.6in} \leq C\left[
\|\nabx[(\rhokaL^{[\Delta t]})^{\frac{\Gamma}{2}}]\|_{L^{2}(0,T;L^2(\Omega))}^2
+ \|\rhokaL^{[\Delta t]}\|_{L^{\infty}(0,T;L^{\frac{\Gamma}{2}}(\Omega))}^\Gamma \right]
\leq C.
\label{PkrhokaLc}
\end{align}
Similarly to (\ref{rhodH-1a}),
we obtain from (\ref{cdef}), (\ref{eqinterp}), (\ref{eq:energy-u+psi-final2}) and (\ref{rhobd})
for $\vartheta=\frac{\Gamma}{2}$ %, on recalling that $\Gamma \geq 6$,
that
\begin{align}
\left| \int_0^T c(\utkaLDp)(\rhokaL^{[\Delta t]},\eta) \dt\right|
%&\leq
%\alpha \,\|\rhokaLd^{[\Delta t],n}\|_{L^2(t_{n-1},t_n;H^{1}(\Omega))}\, \|\eta\|_{L^2(t_{n-1},t_n;H^{1}(\Omega))}
%\nonumber \\
%& \hspace{0.5in}
%+ \left| \int_{t_{n-1}}^{t_n} \|\rhokaLd^{[\Delta t],n}\|_{L^3(\Omega)}\,\|\utkaLd^n\|_{L^6(\Omega)}\,
%\|\nabx \eta\|_{L^2(\Omega)} \,\dt \right|
%\nonumber \\
%& %\hspace{1in}
&\leq C \,\|\eta\|_{L^2(0,T;H^{1}(\Omega))}
+ C \left| \int_0^T \|\utkaLDp\|_{H^1(\Omega)}\,
\|\eta\|_{H^{1}(\Omega)} \,\dt \right|
\nonumber \\
& %\hspace{1in}
\leq C \left[ 1 + \|\utkaLDp\|_{L^2(0,T;H^1(\Omega)}\right]
\,\|\eta\|_{L^2(0,T;H^{1}(\Omega))}
\nonumber \\
&
\leq C\,
\,\|\eta\|_{L^2(0,T;H^{1}(\Omega))}
\qquad \forall \eta \in L^2(0,T;H^{1}(\Omega)).
\label{rhoH-1a}
\end{align}
%where $r = s=4$ if $d=2$ and
%$r=2$ and $s=6$ if $d=3$.
Hence, similarly to (\ref{rhodall}), we deduce from  %(\ref{Pdef}),
(\ref{rhobd}) for $\vartheta=2$ and $\frac{\Gamma}{2}$,
(\ref{rhoH-1a}),
on noting
(\ref{eqrhocon}) and (\ref{cdef}),
%(\ref{PkrhokaLdb}),
(\ref{eq:energy-u+psi-final2}) %, %(\ref{Pdef})
and
(\ref{PkrhokaLc}) that %$\rhokaL^{[\Delta t]}$ %\in \Yrhod^n$
%is such that
\begin{align}
&
%+\|\rhokaLd^{[\Delta t],n}\|_{L^{\Gamma}(t_{n-1},t_n;L^\Gamma(\Omega))}^\Gamma
\|\rhokaL^{[\Delta t]}\|_{L^\infty(0,T;L^{\frac{\Gamma}{2}}(\Omega))}%^{\frac{\Gamma}{2}}
+
\|\rhokaL^{[\Delta t]}\|_{L^2(0,T;H^1(\Omega))}^2
\nonumber \\
& \hspace{1in}
+
\|\rhokaL^{[\Delta t]}\|_{H^1(0,T;H^{1}(\Omega)')}^2
%+\|\rhokaL^{[\Delta t]}(\cdot,t_n)\|_{L^\Gamma(\Omega)}^\Gamma
%+\|(\rhokaLd^{[\Delta t],n})^{2}\|_{L^2(t_{n-1},t_n;H^1(\Omega))}^2
+\|(\rhokaL^{[\Delta t]})^{\frac{\Gamma}{2}}\|_{L^2(0,T;H^1(\Omega))}^2
\leq C,
\label{rhoall}
\end{align}
where $C$ is independent of $\Delta t$ and $L$.
Similarly to (\ref{rhodL3}),
we deduce from  (\ref{eqinterp}) and the last bound in (\ref{rhoall}) that
\begin{align}
\|\rhokaL^{[\Delta t]}\|_{L^\Gamma(0,T;L^{3\Gamma}(\Omega))}^\Gamma =
\|(\rhokaL^{[\Delta t]})^{\frac{\Gamma}{2}}\|_{L^2(0,T;L^6(\Omega))}^2
\leq C\,\|(\rhokaL^{[\Delta t]})^{\frac{\Gamma}{2}}\|_{L^2(0,T;H^1(\Omega))}^2\leq C.
\label{rhoL3}
\end{align}
Next, similarly to (\ref{rhodL43}),
it follows from (\ref{eqLinterp}), %with $\upsilon=\frac{4\Gamma}{3}$, $r=\frac{\Gamma}{2}$
%and $s=3 \Gamma$ yielding $\vartheta =\frac{3}{4}$,
the first bound in (\ref{rhoall})
and (\ref{rhoL3}) that
\begin{align}
\|\rhokaL^{[\Delta t]}\|_{L^{\frac{4\Gamma}{3}}(0,T;L^{\frac{4\Gamma}{3}}(\Omega))}^{\frac{4\Gamma}{3}}
\leq
\|\rhokaL^{[\Delta t]}\|_{L^{\infty}(0,T;L^{\frac{\Gamma}{2}}(\Omega))}^{\frac{\Gamma}{3}}
\|\rhokaL^{[\Delta t]}\|_{L^{\Gamma}(0,T;L^{3\Gamma}(\Omega))}^{\Gamma}
\leq C,
\label{rhoL43}
\end{align}
where $C$ is independent of $\Delta t$ and $L$.
Finally, it follows from (\ref{pkaL}), (\ref{pkdef}) and (\ref{rhoL43}) that
\begin{align}
\|\pkaL^{\{\Delta t\}}\|_{L^{\frac{4}{3}}%(0,T;L^{\frac{4}{3}}
(\Omega_T)}^{\frac{4}{3}}
& \leq \|\pk(\rhokaL^{[\Delta t]})\|_{L^{\frac{4}{3}}%(0,T;L^{\frac{4}{3}}
(\Omega_T)}^{\frac{4}{3}}
\leq C\,\|\rhokaL^{[\Delta t]}\|_{L^{\frac{4\Gamma}{3}}%(0,T;L^{\frac{4\Gamma}{3}}
(\Omega_T)}^{\frac{4\Gamma}{3}}
\leq C,
\label{pkaLbd}
\end{align}
where $C$ is independent of $\Delta t$ and $L$.

\subsection{Passing to the limit $\Delta t \rightarrow 0_+$ $(L \rightarrow \infty)$
in the continuity equation (\ref{eqrhocon})}
\label{rhoLcon}

As noted after (\ref{eq:energy-psi-summ1}), we shall assume that
\begin{equation}\label{LT}
\Delta t = o(L^{-1})\qquad \mbox{as $\Delta t \rightarrow 0_+$ $(L \rightarrow \infty)$}.
\end{equation}
Requiring, for example, that $0<\Delta t \leq C_0/(L\,\log L)$, $L > 1$,
with an arbitrary (but fixed)
constant $C_0$ will suffice to ensure that \eqref{LT} holds.
We have the following convergence results.
\begin{lemma}
\label{rhoLconv}
There exists a subsequence (not indicated) of $\{(\rhokaL^{[\Delta t]},\utkaLDp,\hpsikaLDp)
\}_{\Delta t >0}$,
and functions
\begin{align}
\rhoka \in L^2(0,T;H^1(\Omega))\cap H^1(0,T;H^1(\Omega)')\cap %L^\infty
C_w([0,T];L^{\frac{\Gamma}{2}}(\Omega))\cap
L^{\frac{4\Gamma}{3}}_{\geq 0}(\Omega_T)
\label{rhokareg}
\end{align}
with $\rhoka(\cdot,0) = \rho^0(\cdot)$
and $\utka \in L^2(0,T;\Ht^1_0(\Omega))$
such that, as $\Delta t \rightarrow 0_+$ $(L \rightarrow \infty)$,
\begin{subequations}
\begin{alignat}{3}
\rhokaL^{[\Delta t]} &\rightarrow  \rhoka
\quad &&\mbox{weakly in } L^2(0,T;H^1(\Omega)), \quad
&&\mbox{weakly in } H^1(0,T;H^{1}(\Omega)'),
\label{rhoLwcon}\\
\rhokaL^{[\Delta t]} &\rightarrow  \rhoka
\quad &&%\mbox{weakly-$\star$ in } L^\infty(0,T;L^{\frac{\Gamma}{2}}(\Omega)),
\mbox{in } C_w([0,T];L^{\frac{\Gamma}{2}}(\Omega)),\quad &&\red{\mbox{weakly in } L^{\frac{4\Gamma}{3}}(\Omega_T),}
\label{rhoLwconstar}\\
\left(\rhokaL^{[\Delta t]}\right)^{\frac{\Gamma}{2}} &\rightarrow  \rhoka^{\frac{\Gamma}{2}}
\quad &&\mbox{weakly in } L^2(0,T;H^1(\Omega)),
&&
\label{rhoLpwcon}\\
\rhokaL^{[\Delta t]} &\rightarrow  \rhoka \quad
&&\mbox{strongly in } L^2(0,T;L^r(\Omega)),\quad
&&\mbox{strongly in } L^{\upsilon}(\Omega_T),
\label{rhoLscon}
%\rhokaLd^{[\Delta t],n}(\cdot,t_n) &\rightarrow  \rhokaL^n(\cdot)\quad
%&&\mbox{weakly in } L^\Gamma(\Omega),
%\label{rhodwconG}\\
%(\rhokaL^{[\Delta t]})^\vartheta &\rightarrow  \rhoka^\vartheta
%\quad &&\mbox{weakly in } L^2(0,T;H^1(\Omega)),
%\quad &&\vartheta = 2 \mbox{ and } \frac{\Gamma}{2},
%\label{rhoLwconth}\\
%\nabx \rhokaL^{[\Delta t]} &\rightarrow  \nabx \rhoka \quad
%&&\mbox{strongly in } L^{2}(\Omega_T),\quad
%\label{rhoLxscon}
\end{alignat}
\end{subequations}
where $r \in [1,\infty)$ if $d=2$ and $r \in [1,6)$ if $d=3$, and $\upsilon \in [1,\frac{4\Gamma}{3})$;
\begin{subequations}
\begin{alignat}{2}
\pk(\rhokaL^{[\Delta t]}) & \rightarrow \pk(\rhoka)
\quad &&\mbox{strongly in } L^s(\Omega_T),
\label{pkaLscon} \\
\pkaL^{\{\Delta t\}} & \rightarrow \pk(\rhoka)
\quad &&\mbox{weakly in } L^s(\Omega_T),
\label{pkaLwcon}
\end{alignat}
\end{subequations}
where $s \in (1,\tfrac{4}{3})$; and
\begin{alignat}{2}
\utkaLDp &\rightarrow \utka \qquad &&\mbox{weakly in } L^2(0,T;\Ht^1_0(\Omega)).
\label{uLwconH1}
\end{alignat}

Moreover, we have that
\begin{align}
&\displaystyle\int_{0}^{T}\left\langle \frac{\partial \rhoka}{\partial t}\,,\eta
\right\rangle_{H^1(\Omega)} \dd t
+ \int_0^T \int_\Omega \left( \alpha \,\nabx \rhoka -\rhoka \,\utka \right)
\cdot \nabx \eta
\,\dx \,\dt =0
\nonumber \\
& \hspace{3.4in}
\qquad
\forall \eta \in L^2(0,T;H^1(\Omega)).
\label{eqrhoka}
\end{align}
%with $\rhoka(\cdot,0) = \rho_0(\cdot)$.
\end{lemma}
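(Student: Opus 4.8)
The plan is to extract the required subsequence from the uniform bounds already at our disposal, identify the limits, and then pass to the limit in the weak formulation \eqref{eqrhocon}. First I would invoke the $L,\Delta t$-independent bounds \eqref{rhoall}, \eqref{rhoL43} and \eqref{pkaLbd} on $\rhokaL^{[\Delta t]}$, together with the bound \eqref{eq:energy-u+psi-final2} on $\utkaLDp$ in $L^2(0,T;\Ht^1_0(\Omega))$. The first bound in \eqref{rhoall} and reflexivity of $L^2(0,T;H^1(\Omega))$, $H^1(0,T;H^1(\Omega)')$ and $L^{\frac{4\Gamma}{3}}(\Omega_T)$ yield, after passing to a subsequence, the weak convergences \eqref{rhoLwcon} and the weak convergence in $L^{\frac{4\Gamma}{3}}(\Omega_T)$ asserted in \eqref{rhoLwconstar}; the last bound in \eqref{rhoall} gives \eqref{rhoLpwcon} (the fact that the weak limit of $(\rhokaL^{[\Delta t]})^{\Gamma/2}$ is $\rhoka^{\Gamma/2}$ follows once strong convergence \eqref{rhoLscon} is in hand). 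The strong convergences \eqref{rhoLscon} follow from the Aubin--Lions--Simon compactness theorem \eqref{compact1} applied with $\mathfrak{X}_0 = H^1(\Omega)$, $\mathfrak{X}=L^r(\Omega)$, $\mathfrak{X}_1 = H^1(\Omega)'$ (using the compact embedding $H^1(\Omega) \hookrightarrow L^r(\Omega)$ valid for the stated range of $r$), combined with the interpolation inequality \eqref{eqLinterp} and the uniform $L^{\frac{4\Gamma}{3}}(\Omega_T)$ bound \eqref{rhoL43} to upgrade to strong convergence in $L^\upsilon(\Omega_T)$ for $\upsilon < \frac{4\Gamma}{3}$. The weak-$C_w$ convergence \eqref{rhoLwconstar} is obtained from \eqref{Cwcoma}--\eqref{Cwcomb} with $\eta_n = \rhokaL^{[\Delta t]}$, $r = \frac{\Gamma}{2}$, using the first and third bounds in \eqref{rhoall} (noting $H^1(\Omega)' \hookrightarrow W^{1,\upsilon}_0(\Omega)'$ for suitable $\upsilon$); the initial condition $\rhoka(\cdot,0)=\rho^0(\cdot)$ is then read off from continuity in $C_w$ and the fact that $\rhokaL^{[\Delta t]}(\cdot,0)=\rho^0$ for every $\Delta t$.

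Next I would address the pressure convergences. Since $\pk \in C^1(\mathbb{R}_{\geq0},\mathbb{R}_{\geq 0})$ has polynomial growth of order $\Gamma$, and $\rhokaL^{[\Delta t]} \to \rhoka$ strongly in $L^\upsilon(\Omega_T)$ for every $\upsilon < \frac{4\Gamma}{3}$ with a uniform $L^{\frac{4\Gamma}{3}}(\Omega_T)$ bound, a standard application of the continuity of Nemytskii operators together with Vitali's theorem (or the generalised dominated convergence theorem, using the uniform $L^{\frac{4\Gamma}{3}}$ bound on $\rhokaL^{[\Delta t]}$ to dominate $\pk(\rhokaL^{[\Delta t]})$ in $L^{\frac{4\Gamma}{3\Gamma}} = L^{4/3}(\Omega_T)$ and hence obtain uniform integrability in any $L^s$, $s < 4/3$) gives \eqref{pkaLscon}. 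For \eqref{pkaLwcon} one uses that $\pkaL^{\{\Delta t\}}$ is a time-average of $\pk(\rhokaL^{[\Delta t]})$ over slabs of width $\Delta t$: by \eqref{pkaLbd} the family $\{\pkaL^{\{\Delta t\}}\}$ is bounded in $L^{4/3}(\Omega_T)$, hence weakly convergent along a subsequence in $L^s(\Omega_T)$, $s < 4/3$, and testing against smooth functions and using the strong convergence \eqref{pkaLscon} together with the elementary estimate that the averaging operator commutes with limits as $\Delta t \to 0_+$ identifies the weak limit as $\pk(\rhoka)$. Finally, \eqref{uLwconH1} is immediate from the $L^2(0,T;\Ht^1_0(\Omega))$ bound in \eqref{eq:energy-u+psi-final2} and reflexivity.

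It remains to pass to the limit in \eqref{eqrhocon}. The time-derivative term converges by \eqref{rhoLwcon}. The term $\alpha\int_0^T\int_\Omega \nabx\rhokaLD\cdot\nabx\eta$ converges by the weak $L^2(0,T;H^1(\Omega))$ convergence of $\rhokaL^{[\Delta t]}$ (noting that $\rhokaLD = \rhokaL^{[\Delta t]}$ in the sense of \eqref{rhokaLD}, i.e.\ the piecewise-defined function, not the piecewise-linear interpolant). The genuinely nonlinear term $\int_0^T\int_\Omega \rhokaLD\,\utkaLDp\cdot\nabx\eta$ is handled by writing it as a product of a strongly convergent factor and a weakly convergent one: $\rhokaL^{[\Delta t]} \to \rhoka$ strongly in $L^2(0,T;L^r(\Omega))$ for $r$ close to $6$ (if $d=3$; any $r<\infty$ if $d=2$), while $\utkaLDp \to \utka$ weakly in $L^2(0,T;\Ht^1_0(\Omega)) \hookrightarrow L^2(0,T;\Lt^6(\Omega))$ (if $d=3$), so the product $\rhokaL^{[\Delta t]}\,\utkaLDp$ converges weakly in $L^1(0,T;\Lt^{r'}(\Omega))$ for some $r' > 1$, which suffices to pass to the limit against $\nabx\eta \in L^2(0,T;\Lt^2(\Omega)) \subset L^\infty(0,T;\Lt^2(\Omega))$ after a further H\"older argument; more precisely one uses that $\{\utkaLDp\}$ is also bounded in $L^2(0,T;\Lt^6(\Omega))$ and $\{\rhokaL^{[\Delta t]}\}$ strongly convergent in $L^2(0,T;L^{3}(\Omega))$, so $\rhokaL^{[\Delta t]}\,\utkaLDp \to \rhoka\,\utka$ weakly in $L^1(0,T;\Lt^2(\Omega))$. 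Since the test space $L^2(0,T;H^1(\Omega))$ is unchanged in the limit, \eqref{eqrhoka} follows. The main obstacle is establishing the strong convergence \eqref{rhoLscon} with the full range of exponents: the compactness argument itself is routine, but one must be careful to combine the Aubin--Lions--Simon theorem (which on its own only gives strong convergence in $L^2(0,T;L^r(\Omega))$) with the uniform higher-integrability bound \eqref{rhoL43} and the interpolation inequality \eqref{eqLinterp} to reach strong convergence in $L^\upsilon(\Omega_T)$ for all $\upsilon < \frac{4\Gamma}{3}$, since it is precisely this strong convergence in a sufficiently high Lebesgue norm that makes the passage to the limit in the pressure term \eqref{pkaLscon} and in the convective term possible.
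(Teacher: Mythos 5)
Your overall strategy mirrors the paper's: extract weakly convergent subsequences from \eqref{rhoall}, \eqref{rhoL43}, \eqref{pkaLbd} and \eqref{eq:energy-u+psi-final2}; upgrade to strong convergence via Aubin--Lions--Simon and interpolation with the uniform $L^{4\Gamma/3}(\Omega_T)$ bound; treat the pressures via the growth structure of $p_\kappa$; then pass to the limit in \eqref{eqrhocon}. Most of the steps are sound, and the route for \eqref{pkaLscon} via continuity of the Nemytskii operator and Vitali's theorem is a legitimate alternative to the paper's direct mean-value estimate \eqref{pkaLcon1}, with the latter being slightly more elementary.

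However, the argument for the convective term has a genuine gap. You claim $L^2(0,T;\Lt^2(\Omega)) \subset L^\infty(0,T;\Lt^2(\Omega))$, which is false (the inclusion runs the other way), and your ``more precise'' version lands on $\rhokaL^{[\Delta t]}\,\utkaLDp \to \rhoka\,\utka$ weakly in $L^1(0,T;\Lt^2(\Omega))$. This does \emph{not} suffice to pass to the limit against $\nabx\eta \in L^2(0,T;\Lt^2(\Omega))$, because the dual of $L^1(0,T;\Lt^2(\Omega))$ is $L^\infty(0,T;\Lt^2(\Omega))$, which does not contain $L^2(0,T;\Lt^2(\Omega))$. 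The missing observation is that you should also invoke the uniform $L^\infty(0,T;L^{\Gamma/2}(\Omega))$ bound on $\rhokaL^{[\Delta t]}$ (available in \eqref{rhoall}, with $\Gamma/2 \geq 4$): then H\"older in space ($\tfrac{2}{\Gamma}+\tfrac{1}{6}<\tfrac{1}{2}$) gives $\rhokaL^{[\Delta t]}\utkaLDp$ uniformly bounded in $L^2(0,T;L^{r}(\Omega))$ for some $r>2$, in particular in $L^2(\Omega_T)$. Combining this uniform bound with the weak $L^1(0,T;\Lt^2(\Omega))$ convergence you already established, the weak limit is attained in $L^2(\Omega_T)$ along the subsequence, which \emph{can} be paired against $\nabx\eta \in L^2(\Omega_T)$. (Alternatively, first pass to the limit for $\eta$ smooth and then extend by density, noting that the limiting convective term is absolutely integrable for all $\eta \in L^2(0,T;H^1(\Omega))$ thanks to $\rhoka \in L^\infty(0,T;L^{\Gamma/2}(\Omega))$ and $\utka \in L^2(0,T;\Lt^6(\Omega))$.) Either fix closes the argument without new ideas, but as written the step does not follow.
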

\begin{proof}
The convergence results (\ref{rhoLwcon},b) follow immediately from (\ref{rhoall}), (\ref{Cwcoma},b)
and (\ref{rhoL43}).
The strong convergence results (\ref{rhoLscon}) follow from (\ref{rhoLwcon}), (\ref{compact1}),
(\ref{rhoL43}) and the interpolation result (\ref{eqLinterp}).
%with $\Omega$ replaced by $\Omega_T$.
The weak convergence result (\ref{rhoLpwcon}) is then a consequence of (\ref{rhoall}) and (\ref{rhoLscon}).
\red{Therefore, we have the desired result (\ref{rhokareg}).
As $L^2(0,T;H^1(\Omega))\cap H^1(0,T;H^1(\Omega)') \hookrightarrow C([0,T];L^2(\Omega))$,
we obtain that $\rhoka(\cdot,0)=\rho^0(\cdot)$.}

Next, it follows from (\ref{pkdef}), for $s \in [1,\frac{4}{3}]$, that
\begin{align}
\|\pk(\rhoka)-\pk(\rhokaL^{[\Delta t]})\|_{L^s(\Omega_T)}
\leq C\,\left[\|\rhoka\|_{L^{s\Gamma}(\Omega_T)}^{\Gamma-1}
+ \|\rhokaL^{[\Delta t]}\|_{L^{s\Gamma}(\Omega_T)}^{\Gamma-1}\right]
 \|\rhoka-\rhokaL^{[\Delta t]}\|_{L^{s\Gamma}(\Omega_T)}.
\label{pkaLcon1}
\end{align}
Hence, the result (\ref{pkaLscon}) follows from (\ref{pkaLcon1}) and (\ref{rhoLscon}).

It follows from (\ref{pkaL}) and (\ref{pkaLscon}) that, for $s \in (1,\frac{4}{3})$,
\begin{align}
\int_{\Omega_T} \pkaL^{\{\Delta t\}}\, \eta \,\dx \,\dt =
\int_{\Omega_T} \pk(\rhokaL^{[\Delta t]})\,
\eta^{\{\Delta t\}} \,\dx \,\dt   \qquad \forall \eta \in L^{\frac{s}{s-1}}(\Omega_T),
\label{timshift}
\end{align}
where
\begin{align}
\eta^{\{\Delta t\}}(\cdot,t) := \frac{1}{\Delta t} \int_{t_{n-1}}^{t_n} \eta(\cdot,t') \,{\rm d}t', \qquad
t \in (t_{n-1},t_n], \qquad n=1,\ldots,N.
\label{timav}
\end{align}
We note that
\begin{align}
\lim_{\Delta t \rightarrow 0_+} \|\eta-\eta^{\{\Delta t\}}\|_{L^r(\Omega_T)} =0 \qquad \forall
\eta \in L^{r}(\Omega_T), \qquad r\in [1,\infty).
\label{timden}
\end{align}
Therefore, the desired result (\ref{pkaLwcon}) follows from (\ref{timshift}), (\ref{pkaLscon})
and (\ref{timden}).
Finally, the weak convergence result (\ref{uLwconH1}) follows immediately from
(\ref{eq:energy-u+psi-final2}).

It is now a simple matter to pass to the limit $\Delta t \rightarrow 0_+$ $(L \rightarrow \infty)$
for the subsequence in (\ref{eqrhocon}), on noting (\ref{rhoLwcon}--c) and (\ref{uLwconH1}),
to obtain (\ref{eqrhoka}).
\end{proof}

In order to pass to the limit $\Delta t \rightarrow 0_+$ $(L \rightarrow \infty)$
in the momentum equation (\ref{equncon}), we will need a
strong convergence result for $\nabx \rhokaL^{[\Delta t]}$.
First, it follows from
(\ref{upm}), \eqref{idatabd},
%\eqref{Korn},  Lemma \ref{conv}, \eqref{inidata},
and \eqref{eq:energy-u+psi-final2} %and \eqref{u-t-2}
that
\begin{align}\label{u-t-3}
\|\nabxtt\utkaLDm\|^2_{L^2(0,T;L^2(\Omega))} &= \Delta t\,
\|\nabxtt\ut^0\|^2 + \int_{\Delta t}^T \|\nabxtt \utkaLDm\|^2 \dt
\nonumber\\
&\leq \int_\Omega \rho^0 |\ut_0|^2 \dx + \int_0^{T-\Delta t} \|\nabxtt \utkaLDp\|^2 \dt
%\nonumber\\
%& \leq  \int_\Omega \rho_0 |\ut_0|^2 \dd x + \|\nabxtt\ut^{\Delta t,+}\|^2_{L^2(0,T;L^2(\Omega))}
%\nonumber\\
%&&\hspace{2.5cm}
%\leq  \int_\Omega \rho_0 |\ut_0|^2 \dd x + \frac{1}{c_0\,
%\mu_{\rm min}}\int_0^T \int_\Omega \mu(\rho^{\Delta t,+}_L) |\Dtt(\ut^{\Delta %t,+})|^2 \dx \dd s\nonumber\\
%&&%\hspace{3.65cm}
\leq C; %\left(1+ \frac{1}{c_0\, \mu_{\rm min}}\right) [{\sf B}(\ut_0, \ft, \widetilde\psi_0)]^2.
\end{align}
hence, we obtain from (\ref{eqinterp}), a  Poincar\'e inequality,
\eqref{eq:energy-u+psi-final2}, (\ref{u-t-3}) and (\ref{ulin},b) that
\begin{align}
\|\utkaLDpm\|_{L^2(0,T;L^6(\Omega))}
\leq \|\utkaLDpm\|_{L^2(0,T;H^1(\Omega))}
\leq \|\nabx \utkaLDpm\|_{L^2(0,T;L^2(\Omega))}
\leq C,
\label{utkaLL6}
\end{align}
where $C$ is independent
of $\Delta t$, $L$, $\alpha$ and $\kappa$.

\begin{lemma}\label{rhonabxsclem}
There exists a $C \in \mathbb R_{>0}$, independent of $\Delta t$ and $L$, such that
\begin{subequations}
\begin{align}
&\|\rhokaLDpm\|_{L^\infty(0,T;L^{\Gamma}(\Omega))} +
\left\|\sqrt{\rhokaLDwpm}\,\utkaLDwpm\right\|_{L^\infty(0,T;L^2(\Omega))}
\nonumber \\
& \qquad
+ \|\rhokaLDwpm\,\utkaLDwpm\|_{L^\infty(0,T;L^{\frac{2\Gamma}{\Gamma+1}}(\Omega))}
+ \left\|\sqrt{\rhokaLDwpm}\,\utkaLDwpm\right\|_{L^2(0,T;L^{\frac{6\Gamma}{\Gamma+3}}(\Omega))}
\leq C,
\label{kaLLinfL1} \\
&\|\rhokaL^{[\Delta t]}\,\utkaLDp\|_{L^\infty(0,T;L^1(\Omega))}
+\|\rhokaL^{[\Delta t]}\,\utkaLDp\|_{L^2(0,T;L^{\frac{6\Gamma}{\Gamma+12}}(\Omega))}
\nonumber \\
& \qquad
+ \left\|\sqrt{\rhokaL^{[\Delta t]}}\,\utkaLDp\right\|_{L^2(0,T;L^{\frac{6\Gamma}{\Gamma+6}}(\Omega))}
+ \|\rhokaL^{[\Delta t]}\,\utkaLDp\|_{L^\upsilon(\Omega_T)}
\leq C,
\label{kaLLrLs}\\
& \|\nabx \rhokaL^{[\Delta t]}\|_{L^\upsilon(\Omega_T)} \leq C,
\label{nabxkaLLrLs}
\end{align}
\end{subequations}
where $\upsilon=\frac{8\Gamma-12}{3\Gamma} \geq \frac{13}{6}$ as $\Gamma \geq 8$.

Hence, in addition to (\ref{rhokareg}), $\rhoka \in L^\infty(0,T;L^\Gamma(\Omega))$
and for a further subsequence of the subsequence of Lemma \ref{rhoLconv}, it follows that,
as $\Delta t \rightarrow 0_+$ $(L \rightarrow \infty)$,
\begin{subequations}
\begin{alignat}{3}
\nabx \rhokaL^{[\Delta t]} &\rightarrow  \nabx \rhoka \quad
&& \mbox{weakly in } L^{\upsilon}(\Omega_T),
\qquad
&&\mbox{strongly in } L^{2}(\Omega_T),
\label{rhoLxscon} \\
\rhokaLDp &\rightarrow \rhoka
\quad
&&\mbox{weakly-$\star$ in } L^\infty(0,T;L^{\Gamma}(\Omega)),\qquad
&&\mbox{strongly in } L^{2}(0,T;H^1(\Omega)'),
\label{rhoLDpcon}
\end{alignat}
and, for any nonnegative $\eta \in C[0,T],$ %a.a.\ $t \in (0,T)$,
\begin{align}\label{Ppka}
&\int_0^T \left(\int_{\Omega} \Pk(\rhoka)\,\dx\right) \eta\,\dt
\leq \liminf_{\Delta t \rightarrow 0_+\, (L \rightarrow \infty)}
\int_0^T \left( \int_{\Omega} \Pk(\rhokaLDp)\, \dx\right) \eta\,\dt.
\end{align}
\end{subequations}
\end{lemma}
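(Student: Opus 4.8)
The plan is to carry the proof out in three stages: first to establish the parameter-uniform bounds \eqref{kaLLinfL1}--\eqref{nabxkaLLrLs}; then to extract a further subsequence (of the one in Lemma \ref{rhoLconv}) and identify the weak limits, from which \eqref{rhoLDpcon} and \eqref{Ppka} follow quickly; and finally, which is the genuinely delicate part, to upgrade $\nabx\rhokaL^{[\Delta t]}$ to strong convergence in $L^2(\Omega_T)$.

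\emph{Step 1: uniform bounds.} The $L^\infty(0,T;L^{\Gamma}(\Omega))$ bound on $\rhokaLDpm$ is read off from \eqref{eq:energy-u+psi-final2}: since $\Pk(s)\geq\frac{\kappa}{\Gamma-1}\,s^{\Gamma}$, the bound on $\int_\Omega\Pk(\rhokaLDp(t_n))\,\dx$ controls $\|\rhokaL^n\|_{L^{\Gamma}(\Omega)}$ uniformly in $\Delta t$ and $L$ for every $n$, hence, by the piecewise-constant definition \eqref{upm}, both of $\rhokaLDpm$. Likewise $\frac12\int_\Omega\rhokaLDp(t_n)\,|\utkaLDp(t_n)|^2\,\dx\leq C$ and the corresponding bound at the previous time level give $\|\sqrt{\rhokaLDwpm}\,\utkaLDwpm\|_{L^\infty(0,T;L^2(\Omega))}\leq C$; the remaining bounds in \eqref{kaLLinfL1} follow by H\"older's inequality, writing $\rho\,u=\sqrt\rho\cdot\sqrt\rho\,u$ and $\sqrt\rho\,u=\rho^{1/2}\cdot u$, using $\|\sqrt\rho\|_{L^{2\Gamma}(\Omega)}^2=\|\rho\|_{L^{\Gamma}(\Omega)}$, the Sobolev embedding $H^1_0(\Omega)\hookrightarrow L^6(\Omega)$ and \eqref{utkaLL6}. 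The bounds \eqref{kaLLrLs} on $\rhokaL^{[\Delta t]}\,\utkaLDp$ are obtained in the same manner, this time using the continuity-equation bounds $\|\rhokaL^{[\Delta t]}\|_{L^\infty(0,T;L^{\frac{\Gamma}{2}}(\Omega))}+\|\rhokaL^{[\Delta t]}\|_{L^{\frac{4\Gamma}{3}}(\Omega_T)}\leq C$ from \eqref{rhoall}, \eqref{rhoL43} together with \eqref{utkaLL6}, \eqref{eqinterp} and \eqref{eqLinterp}; for the $L^\infty(0,T;L^1(\Omega))$ bound one uses in addition the mass identity $\int_\Omega\rhokaL^{[\Delta t]}(\cdot,t)\,\dx=\int_\Omega\rho^0\,\dx$ (take $\eta\equiv1$ in \eqref{eqrhocon}) and a Cauchy--Schwarz estimate. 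Finally, \eqref{nabxkaLLrLs} is obtained by interpolating the three gradient bounds $\nabx\rhokaL^{[\Delta t]}\in L^2(\Omega_T)$, $\rhokaL^{[\Delta t]}\,\nabx\rhokaL^{[\Delta t]}=\tfrac12\nabx[(\rhokaL^{[\Delta t]})^2]\in L^2(\Omega_T)$ and $(\rhokaL^{[\Delta t]})^{\frac{\Gamma}{2}-1}\nabx\rhokaL^{[\Delta t]}=\tfrac2\Gamma\nabx[(\rhokaL^{[\Delta t]})^{\frac{\Gamma}{2}}]\in L^2(\Omega_T)$ (all from \eqref{rhoall}) against the $L^{\frac{4\Gamma}{3}}(\Omega_T)$-integrability of $\rhokaL^{[\Delta t]}$, via \eqref{eqinterp} and \eqref{eqLinterp}; a short arithmetic with the exponents yields $\upsilon=\frac{8\Gamma-12}{3\Gamma}$, which is $\geq\frac{13}{6}$ since $\Gamma\geq8$.

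\emph{Step 2: weak limits.} The bounds of Step 1, together with Lemma \ref{rhoLconv}, allow the extraction of a further subsequence for which $\nabx\rhokaL^{[\Delta t]}\rightharpoonup\nabx\rhoka$ weakly in $L^{\upsilon}(\Omega_T)$ (the limit being $\nabx\rhoka$ by uniqueness, in view of \eqref{rhoLwcon}), $\rhokaLDp\rightharpoonup\rhoka$ weakly-$\star$ in $L^\infty(0,T;L^{\Gamma}(\Omega))$, and hence $\rhoka\in L^\infty(0,T;L^{\Gamma}(\Omega))$ by weak-$\star$ lower semicontinuity of the norm; this is the first part of \eqref{rhoLxscon} and the first half of \eqref{rhoLDpcon}. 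For the strong $L^2(0,T;H^1(\Omega)')$ convergence in \eqref{rhoLDpcon}, I would write $\rhokaLDp-\rhokaL^{[\Delta t]}=\int_t^{t_n}\frac{\partial\rhokaL^{[\Delta t]}}{\partial t}(\cdot,s)\,\dd s$ on each slab $(t_{n-1},t_n]$, which is $O(\Delta t)$ in $L^2(0,T;H^1(\Omega)')$ by \eqref{rhoall}, and combine it with $\rhokaL^{[\Delta t]}\to\rhoka$ strongly in $L^2(\Omega_T)\hookrightarrow L^2(0,T;H^1(\Omega)')$ from \eqref{rhoLscon}. The inequality \eqref{Ppka} is then immediate: for nonnegative $\eta\in C[0,T]$ the functional $\rho\mapsto\int_0^T\big(\int_\Omega\Pk(\rho)\,\dx\big)\eta\,\dt$ is convex on $L^{\Gamma}(\Omega_T)$, hence weakly lower semicontinuous, and $\rhokaLDp\rightharpoonup\rhoka$ weakly in $L^{\Gamma}(\Omega_T)$.

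\emph{Step 3: strong convergence of $\nabx\rhokaL^{[\Delta t]}$ in $L^2(\Omega_T)$.} This is the main obstacle. I would test \eqref{eqrhocon} with $\eta=\rhokaL^{[\Delta t]}$ (admissible since $\rhokaL^{[\Delta t]}\in L^2(0,T;H^1(\Omega))\cap H^1(0,T;H^1(\Omega)')\hookrightarrow C([0,T];L^2(\Omega))$, the interpolant gluing continuously across slab interfaces), obtaining the energy equality
\[\tfrac12\|\rhokaL^{[\Delta t]}(\cdot,T)\|_{L^2(\Omega)}^2+\alpha\int_0^T\|\nabx\rhokaL^{[\Delta t]}\|_{L^2(\Omega)}^2\,\dt=\tfrac12\|\rho^0\|_{L^2(\Omega)}^2-\tfrac12\int_0^T\!\!\int_\Omega(\rhokaL^{[\Delta t]})^2\,\nabx\cdot\utkaLDp\,\dx\,\dt,\]
and the analogous identity for the limit by testing \eqref{eqrhoka} with $\eta=\rhoka$ (using $\rhoka\in C([0,T];L^2(\Omega))$, cf.\ \eqref{rhokareg}). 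In the cross term one has $(\rhokaL^{[\Delta t]})^2\to\rhoka^2$ strongly in $L^2(\Omega_T)$ (from \eqref{rhoLscon}, as $4<\frac{4\Gamma}{3}$ and $(\rhokaL^{[\Delta t]})^2$ is bounded in $L^{\frac{2\Gamma}{3}}(\Omega_T)$) while $\nabx\cdot\utkaLDp\rightharpoonup\nabx\cdot\utka$ weakly in $L^2(\Omega_T)$ by \eqref{uLwconH1}, so the cross term passes to the limit. The delicate point is the endpoint term $\|\rhokaL^{[\Delta t]}(\cdot,T)\|_{L^2(\Omega)}^2$: \eqref{rhoLwconstar} gives only $\rhokaL^{[\Delta t]}(\cdot,T)\rightharpoonup\rhoka(\cdot,T)$ weakly in $L^{\frac{\Gamma}{2}}(\Omega)$, hence $\|\rhoka(\cdot,T)\|_{L^2(\Omega)}^2\leq\liminf\|\rhokaL^{[\Delta t]}(\cdot,T)\|_{L^2(\Omega)}^2$, and since this term enters the equality with a negative sign, passing to the $\limsup$ yields only $\limsup_{\Delta t\to0_+}\alpha\int_0^T\|\nabx\rhokaL^{[\Delta t]}\|^2\leq\alpha\int_0^T\|\nabx\rhoka\|^2$. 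Combining this with the weak lower semicontinuity $\liminf_{\Delta t\to0_+}\alpha\int_0^T\|\nabx\rhokaL^{[\Delta t]}\|^2\geq\alpha\int_0^T\|\nabx\rhoka\|^2$ (a consequence of \eqref{rhoLwcon}) forces $\|\nabx\rhokaL^{[\Delta t]}\|_{L^2(\Omega_T)}\to\|\nabx\rhoka\|_{L^2(\Omega_T)}$; since $L^2(\Omega_T)$ is a Hilbert space, norm convergence together with the weak convergence \eqref{rhoLwcon} yields $\nabx\rhokaL^{[\Delta t]}\to\nabx\rhoka$ strongly in $L^2(\Omega_T)$, which completes the proof.
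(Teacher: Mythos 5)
Your Step 3 is correct and takes a genuinely different, arguably cleaner route from the paper's. Where the paper integrates the energy identity against a $(T-t)$ weight, obtains a.e.\ convergence of $\nabx\rhokaL^{[\Delta t]}$, and upgrades via Egorov and Vitali (the latter relying on the $\upsilon>2$ bound \eqref{nabxkaLLrLs} for $2$-equi-integrability), you compare the unweighted energy equalities at the final time, use weak lower semicontinuity of the $L^2(\Omega)$-norm of $\rhokaL^{[\Delta t]}(\cdot,T)$ (justified because $\rhokaLDp\to\rhoka$ in $C_w([0,T];L^{\Gamma/2}(\Omega))$, so $\rhokaL^{[\Delta t]}(\cdot,T)\rightharpoonup\rhoka(\cdot,T)$ weakly in $L^{\Gamma/2}(\Omega)\hookrightarrow L^2(\Omega)$), and combine norm convergence $\|\nabx\rhokaL^{[\Delta t]}\|_{L^2(\Omega_T)}\to\|\nabx\rhoka\|_{L^2(\Omega_T)}$ with the weak convergence already in Lemma~\ref{rhoLconv} to conclude by the Radon--Riesz property of the Hilbert space. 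This is a valid shortcut. Step 2, and the derivation of \eqref{kaLLinfL1} and most of \eqref{kaLLrLs}, are broadly in line with the paper.

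The proof of \eqref{nabxkaLLrLs}, however, has a genuine gap. You propose to interpolate the three gradient estimates $\nabx\rhokaL^{[\Delta t]}\in L^2(\Omega_T)$, $\nabx[(\rhokaL^{[\Delta t]})^2]\in L^2(\Omega_T)$, $\nabx[(\rhokaL^{[\Delta t]})^{\Gamma/2}]\in L^2(\Omega_T)$ against $\rhokaL^{[\Delta t]}\in L^{4\Gamma/3}(\Omega_T)$ via \eqref{eqinterp} and \eqref{eqLinterp}. No such interpolation can produce $\nabx\rhokaL^{[\Delta t]}\in L^\upsilon(\Omega_T)$ for $\upsilon>2$: every available bound involves $\nabx\rhokaL^{[\Delta t]}$ multiplied by a \emph{nonnegative} power of $\rhokaL^{[\Delta t]}$, and these powers cannot be cancelled by (nonnegative) $L^p$-integrability of $\rhokaL^{[\Delta t]}$ alone. (A quick sanity check: any $\rho\in L^\infty\cap L^2(0,T;H^1)$ has $\rho^a\nabx\rho\in L^2$ for all $a\geq 0$ and $\rho\in L^p$ for all $p<\infty$, yet $\nabx\rho$ need not lie in any $L^\upsilon$ with $\upsilon>2$.) The Gagliardo--Nirenberg inequality \eqref{eqinterp} is also of no use here, since raising the integrability of a first derivative by it would require second-derivative control, which is not available. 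What is actually needed is the $L^r(0,T;L^s(\Omega))$ maximal regularity for the Neumann heat operator (Lemma 7.38 of Novotn\'y \& Stra\v{s}kraba, i.e.\ Lemma~\ref{Le-G-2}): one views \eqref{eqrhocon} as $\partial_t\rhokaL^{[\Delta t]}-\alpha\Delta_x\rhokaL^{[\Delta t]}=-\nabx\cdot(\rhokaL^{[\Delta t]}\,\utkaLDp)$ and obtains $\alpha\|\nabx\rhokaL^{[\Delta t]}\|_{L^\upsilon(\Omega_T)}\leq C\bigl(\alpha^{1-1/\upsilon}\|\rho^0\|_{L^\upsilon(\Omega)}+\|\rhokaL^{[\Delta t]}\,\utkaLDp\|_{L^\upsilon(\Omega_T)}\bigr)$, with the last factor controlled by \eqref{kaLLrLs}. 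Relatedly, the $L^\infty(0,T;L^1(\Omega))$ bound in \eqref{kaLLrLs} is not quite as direct as ``mass identity plus Cauchy--Schwarz'' suggests, since $\rhokaL^{[\Delta t]}(\cdot,t)$ and $\utkaLDp(\cdot,t)=\utkaL^n$ are evaluated at unmatched slab times; the paper passes through $\rhokaLDp\,\utkaLDp$ and absorbs the mismatch via \eqref{LinfL1a} and the $H^1(0,T;H^1(\Omega)')$ bound on $\rhokaL^{[\Delta t]}$.
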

\begin{proof}
%Finally, we need to prove the strong convergence result (\ref{rhoLxscon}).
The first two bounds in (\ref{kaLLinfL1}) follow immediately from (\ref{ulin},b),
(\ref{eq:energy-u+psi-final2}), (\ref{Pdef}), $(\ref{rho0conv})$ and (\ref{idatabd}).
The third bound in (\ref{kaLLinfL1})
follows immediately from the first two on noting that
\begin{align}
\|\rhokaLDwpm\,\utkaLDwpm\|_{L^{\frac{2\Gamma}{\Gamma+1}}(\Omega)}
\leq \left\|\sqrt{\rhokaLDwpm}\,\right\|_{L^{2\Gamma}(\Omega)}\,
\left\|\sqrt{\rhokaLDwpm}\,\utkaLDwpm \right\|_{L^{2}(\Omega)}.
\label{LinfLr}
\end{align}
It follows from the first bound in (\ref{kaLLinfL1}), (\ref{eqinterp}) and
(\ref{utkaLL6})
that
\begin{align}
\left\|\sqrt{\rhokaLDwpm}\,\utkaLDwpm\right\|_{L^2(0,T;L^{\frac{6\Gamma}{\Gamma+3}}(\Omega))}
&\leq \left\|\sqrt{\rhokaLDwpm}\,\right\|_{L^\infty(0,T;L^{2\Gamma}(\Omega))}
\,\|\utkaLDwpm\|_{L^2(0,T;L^{6}(\Omega))} \leq C,
\label{L2Lsab}
\end{align}
and hence the fourth bound in (\ref{kaLLinfL1}).

Next we note, for any $\eta \in L^2(0,T;H^1(\Omega))$ and for a.a.\ $s \in (t_{n-1},t_n]$, that
\begin{align}
&\left|\int_\Omega \left[\rhokaL^{[\Delta t]}(\xt,s)-\rhokaLDp(\xt,s)\right] \eta(\xt,s) \,\dx
\right|
\nonumber \\
&\hspace{0.3in} =
%\int_{\Omega} \,|\utkaL^n(\xt)| \,\dx
\left|  \int_{s}^{t_n}\! \int_\Omega  \eta(\xt,s)\,
\frac{\partial \rhokaL^{[\Delta t]}}{\partial t}(\xt,t)\,\dx\,\dt \right|
\leq \|\eta(\cdot,s)\|_{H^1(\Omega)}
\int_{t_{n-1}}^{t_n}
\left \|\frac{\partial \rhokaL^{[\Delta t]}}{\partial t}
\right\|_{H^1(\Omega)'}\,\dt.
\label{LinfL1a}
\end{align}
It follows from (\ref{LinfL1a}) with $\eta=|\utkaLDp|$, (\ref{kaLLinfL1}), (\ref{rhoall}) and
(\ref{eq:energy-u+psi-final2}) that
\begin{align}
\|\rhokaL^{[\Delta t]}\,\utkaLDp\|_{L^\infty(0,T;L^1(\Omega))}
& \leq \|\rhokaLDp\,\utkaLDp\|_{L^\infty(0,T;L^1(\Omega))}
\nonumber \\
& \qquad
+ \frac{1}{2} \left[\left\|\frac{\partial \rhokaL^{[\Delta t]}}{\partial t}\right\|_{L^2(0,T,H^1(\Omega)')}^2
+ \|\utkaLDp\|_{L^2(0,T;H^1(\Omega))}^2\right]
\leq C,
\label{LinfL1b}
\end{align}
and hence the first desired bound in (\ref{kaLLrLs}).
Similarly to (\ref{L2Lsab}),
it follows from (\ref{rhoall}), (\ref{eqinterp}) and
(\ref{utkaLL6})
that
\begin{subequations}
\begin{align}
\|\rhokaL^{[\Delta t]}\,\utkaLDp\|_{L^2(0,T;L^{\frac{6\Gamma}{\Gamma+12}}(\Omega))}
&\leq \|\rhokaL^{[\Delta t]}\|_{L^\infty(0,T;L^{\frac{\Gamma}{2}}(\Omega))}
\,\|\utkaLDp\|_{L^2(0,T;L^{6}(\Omega))},
%\nonumber \\&\leq
%\|\rhokaL^{[\Delta t]}\|_{L^\infty(0,T;L^{\frac{\Gamma}{2}}(\Omega))}
%\,\|\utkaLDp\|_{L^2(0,T;H^1(\Omega))} \leq C,
\label{L2Lsa}\\
\left\|\sqrt{\rhokaL^{[\Delta t]}}\,\utkaLDp\right\|_{L^2(0,T;L^{\frac{6\Gamma}{\Gamma+6}}(\Omega))}
&\leq \left\|\sqrt{\rhokaL^{[\Delta t]}}\,\right\|_{L^\infty(0,T;L^{\Gamma}(\Omega))}
\,\|\utkaLDp\|_{L^2(0,T;L^{6}(\Omega))},
\label{LsLsaa}
\end{align}
\end{subequations}
and hence the second and third bounds in (\ref{kaLLrLs}).
It follows from (\ref{eqLinterp}) with $\upsilon=\frac{8\Gamma-12}{3\Gamma}$, $r=1$ and $s=\frac{6\Gamma}{\Gamma+12}$
that $\upsilon\,\vartheta=2$ \red{(with $\vartheta \in (0,1)$ for $\Gamma \geq 8$)} and so
\begin{align}
\|\rhokaL^{[\Delta t]}\,\utkaLDp\|_{L^\upsilon(\Omega_T)}
\leq \|\rhokaL^{[\Delta t]}\,\utkaLDp\|_{L^\infty(0,T;L^1(\Omega))}^{1-\vartheta}
\,\|\rhokaL^{[\Delta t]}\,\utkaLDp\|_{L^2(0,T;L^s(\Omega))}^{\vartheta}.
\label{Lup}
\end{align}
Thus, (\ref{Lup}) and the first two bounds in (\ref{kaLLrLs}) yield the \red{fourth} bound in
(\ref{kaLLrLs}). On noting this bound and recalling from (\ref{inidata})
that $\partial \Omega \in C^{2,\theta}$, $\theta \in (0,1)$,
and $\rho^0 \in L^\infty(\Omega)$ satisfying (\ref{rho0conv}),
we can now apply the parabolic regularity result, Lemma 7.38 in
Novotn\'{y} \& Stra\v{s}kraba \cite{NovStras} \arxiv{(or Lemma \ref{Le-G-2} in Appendix \ref{sec:App-G})}, to
(\ref{eqrhocon}) to
obtain that the solution $\rhokaL^{[\Delta t]}$ satisfies the bound (\ref{nabxkaLLrLs}).

The first desired result in (\ref{rhoLxscon}) follows immediately from (\ref{nabxkaLLrLs}).
Next we obtain from (\ref{rhobd}) for $\vartheta=2$ that, for any $s \in (0,T]$,
\begin{align}
&\frac{1}{2}\,\|\rhokaL^{[\Delta t]}(\cdot,s)\|_{L^2(\Omega)}^2
+ \alpha\,
\int_{0}^{s} \|\nabx \rhokaL^{[\Delta t]}\|_{L^2(\Omega)}^2 \,\dt
\nonumber \\ & \hspace{1in}
= \frac{1}{2}\left[ \|\rho^0\|_{L^2(\Omega)}^2
- \int_0^s \int_{\Omega}
(\nabx \cdot \utkaLDp)\,
(\rhokaL^{[\Delta t]})^2 \dx\, \dt \right].
\label{rhoLsc1}
\end{align}
Integrating (\ref{rhoLsc1}) over $s \in (0,T)$, and performing integration by parts, yields that
\begin{align}
&\frac{1}{2}\,\|\rhokaL^{[\Delta t]}\|_{L^2(\Omega_T)}^2
+ \alpha\,
\int_{0}^{T} (T-t)\, \|\nabx \rhokaL^{[\Delta t]}\|_{L^2(\Omega)}^2 \,\dt
\nonumber \\ & \hspace{1in}
= \frac{1}{2}\left[ T\,\|\rho^0\|_{L^2(\Omega)}^2
- \int_0^T (T-t) \int_{\Omega}
(\nabx \cdot \utkaLDp)\,
(\rhokaL^{[\Delta t]})^2 \dx\, \dt \right].
\label{rhoLsc2}
\end{align}
Similarly, on choosing for any $s\in (0,T]$, $\eta(\cdot,t) = \chi_{[0,s]}\,
\rhoka(\cdot,t)$ in (\ref{eqrhoka}), and integrating over $s \in (0,T)$ yields that
\begin{align}
&\frac{1}{2}\,\|\rhoka\|_{L^2(\Omega_T)}^2
+ \alpha\,
\int_{0}^{T} (T-t)\, \|\nabx \rhoka\|_{L^2(\Omega)}^2 \,\dt
\nonumber \\ & \hspace{1in}
= \frac{1}{2}\left[ T\,\|\rho^0\|_{L^2(\Omega)}^2
- \int_0^T (T-t) \int_{\Omega}
(\nabx \cdot \utka)\,
(\rhoka)^2 \dx\, \dt \right].
\label{rhoLsc3}
\end{align}
We deduce from (\ref{rhoLsc2}), (\ref{rhoLsc3}), (\ref{rhoLscon}) \red{(with $\upsilon = 4$)} and (\ref{uLwconH1}) that
\begin{align}
\lim_{\Delta t \rightarrow 0_+\, (L \rightarrow \infty)}
\int_{0}^{T} (T-t)\, \|\nabx \rhokaL^{[\Delta t]}\|_{L^2(\Omega)}^2 \,\dt
= \int_{0}^{T} (T-t)\, \|\nabx \rhoka\|_{L^2(\Omega)}^2 \,\dt.
\label{rhoLsc4}
\end{align}
\red{By applying the elementary identity $|\undertilde{a}-\undertilde{b}|^2=|\undertilde{a}|^2 - |\undertilde{b}|^2 - 2 (\undertilde{a}-\undertilde{b})\cdot \undertilde{b}$
with $\undertilde{a}=\nabx\rhokaL^{[\Delta t]}$ and $\undertilde{b}=\nabx\rhokaL$,} it follows from (\ref{rhoLsc4}) and (\ref{rhoLwcon}) that
\begin{subequations}
\begin{align}
\lim_{\Delta t \rightarrow 0_+\, (L \rightarrow \infty)}
\int_{0}^{T} (T-t)\, \|\nabx (\rhokaL-\rhokaL^{[\Delta t]})\|_{L^2(\Omega)}^2 \,\dt = 0,
\label{wscon}
\end{align}
and hence, for a.a.\ $t \in (0,T)$,
\begin{align}
\|\nabx (\rhokaL-\rhokaL^{[\Delta t]})(\cdot, t)\|_{L^2(\Omega)}^2 \rightarrow
0, \qquad \mbox{as } \Delta t \rightarrow 0_+\; (L \rightarrow \infty).
\label{nabxaacon}
\end{align}
\end{subequations}
Therefore, we obtain the second desired result (\ref{rhoLxscon}) from
(\ref{nabxaacon}), (\ref{nabxkaLLrLs}) and Vitali's convergence theorem.
\red{The details of the argument are as follows. With $\upsilon\geq \frac{13}{6}>2$, the bound \eqref{nabxkaLLrLs} implies that $|\nabx\rhokaL^{[\Delta t]}|^2$ is equi-integrable in $L^1(\Omega_T)$, i.e.,  $\nabx\rhokaL^{[\Delta t]}$ is 2-equi-integrable. Further, thanks to \eqref{wscon}, a subsequence of $\nabx\rhokaL^{[\Delta t]}$ is
a.e. convergent on $\Omega_T$ (cf. Theorem 2.20 (iii) in \cite{Fonseca-Leoni}), and thus by Egoroff's theorem (cf. Theorem 2.22 in \cite{Fonseca-Leoni}) it also converges in measure. Hence, by Vitali's convergence theorem (cf. Theorem 2.24 in \cite{Fonseca-Leoni}, with $p=2$,) we have strong convergence of the subsequence (not indicated).}

\red{The first stated convergence result in \eqref{rhoLDpcon} follows directly from the first bound in \eqref{kaLLinfL1}.}
Next, it follows from (\ref{LinfL1a}) and (\ref{rhoall}) that
\begin{align}
\|\rhokaL^{[\Delta t]}-\rhokaLDp\|_{L^2(0,T;H^1(\Omega)')}^2
\leq (\Delta t)^2
\left\|\frac{\partial \rhokaL^{[\Delta t]}}{\partial t}
\right\|_{L^2(0,T;H^1(\Omega)')}^2 \leq C\,(\Delta t)^2.
\label{DiffH1d}
\end{align}
Hence the desired convergence results (\ref{rhoLDpcon}) follow immediately from (\ref{DiffH1d}) \red{and
(\ref{rhoLscon}) with $r>\frac{2d}{d+2}$ (to ensure that $L^r(\Omega) \subset H^1(\Omega)'$).} %and (\ref{kaLLinfL1}).

Finally, it follows for any nonnegative $\eta \in C[0,T$],
on noting  the convexity of $\Pk(\cdot)$, that
\begin{align}
\int_0^T \left(\int_\Omega \Pk(\rhokaLDp)\,\dx \right) \eta \,\dt
& \geq \int_0^T \left(\int_\Omega \left[ \Pk(\rhoka)
+ \Pk'(\rhoka)\,(\rhokaLDp-\rhoka)
\right]\dx \right) \eta \,\dt.
\label{Pkbel}
\end{align}
This yields the desired result (\ref{Ppka}) on noting (\ref{rhoLDpcon}) \red{and that $\Pk'(\rhoka) \in L^1(0,T;L^{\frac{\Gamma}{\Gamma-1}}(\Omega))$ by \eqref{rhokareg}}.
%(\ref{eq:energy-u+psi-final2}) and that $\Pk(\rhoka)\in L^\infty(0,T;L^1(\Omega))$.
\end{proof}

%For $r \in (1,\infty)$, let
%$\widetilde{W}^{2-\frac{2}{r},r}(\Omega)$ be the completion of
%$\{ \eta \in C^{\infty}(\overline{\Omega}) : \nabx \eta \cdot \nt =0$ on $\partial \Omega\}$
%in $W^{2-\frac{2}{r},r}(\Omega)$.
The following result is also required to pass to the limit
$\Delta t \rightarrow 0_+$ $(L \rightarrow \infty)$ in the momentum equation (\ref{equncon}).

\begin{lemma}\label{badlem}
%On assuming that $\rho_0 \in \widetilde{W}^{2-\frac{2}{s},s}(\Omega)$, where
%$s=\frac{8\Gamma-12}{7 \Gamma-6}
%\geq \frac{26}{25}$ as $\Gamma \geq 8$,
%\footnote{It is best to introduce
%$\rho^0(\alpha)$ to achieve this.}
There exists a $C \in {\mathbb R}_{> 0}$, independent of $\Delta t$ and $L$, such that
\begin{align}
\|\nabx\cdot(\rhokaL^{[\Delta t]}\,\utkaLDp)\|_{L^s(\Omega_T)}+
\left\|\frac{\partial \rhokaL^{[\Delta t]}}{\partial t}\right\|_{L^s(\Omega_T)}
+ \|\Delta_x\,\rhokaL^{[\Delta t]}\|_{L^s(\Omega_T)} \leq C,
\label{Laprho}
\end{align}
where $s=\frac{8\Gamma-12}{7 \Gamma-6} \geq \frac{26}{25}$ as $\Gamma \geq 8$.
In addition, we have that
\begin{subequations}
\begin{align}
\lim_{\Delta t \rightarrow 0_+ \ (L \rightarrow \infty)}
\|\rhokaL^{[\Delta t]}\,\utkaLDp -
\rhokaLDm\,\utkaLDm \|_{L^2(\Omega_T)}=0,
\label{bada}\\
\lim_{\Delta t \rightarrow 0_+ \ (L \rightarrow \infty)}
\|\rhokaLDp\,\utkaLDp -
\rhokaLDm\,\utkaLDm \|_{L^2(\Omega_T)}=0.
\label{badb}
\end{align}
\end{subequations}
\end{lemma}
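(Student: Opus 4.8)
The plan is to first prove the uniform $L^s(\Omega_T)$ bounds \eqref{Laprho} by a parabolic bootstrap, and then to deduce the two limits \eqref{bada}, \eqref{badb} from \eqref{Laprho} by observing that the increment of $\rhokaL^{[\Delta t]}$ over one time step is small in a norm that is $L^\infty$ in time.

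\emph{Step 1: the $L^s(\Omega_T)$ bounds.} Write $\nabx\cdot(\rhokaL^{[\Delta t]}\,\utkaLDp) = \utkaLDp\cdot\nabx\rhokaL^{[\Delta t]} + \rhokaL^{[\Delta t]}\,(\nabx\cdot\utkaLDp)$ and estimate the two summands by H\"older's inequality. For the first, combine the gradient bound $\nabx\rhokaL^{[\Delta t]}\in L^\upsilon(\Omega_T)$ from \eqref{nabxkaLLrLs} (with $\upsilon=\frac{8\Gamma-12}{3\Gamma}$) with $\utkaLDp\in L^2(0,T;\Lt^6(\Omega))$ from \eqref{utkaLL6}: the time exponent of the product is $(\tfrac12+\tfrac1\upsilon)^{-1}=\frac{8\Gamma-12}{7\Gamma-6}=s$, while its spatial exponent $(\tfrac16+\tfrac1\upsilon)^{-1}$ is strictly larger than $s$, so this summand lies in $L^s(0,T;L^{b}(\Omega))\subset L^s(\Omega_T)$ on the bounded domain $\Omega$. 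For the second, combine $\rhokaL^{[\Delta t]}\in L^{\frac{4\Gamma}{3}}(\Omega_T)$ from \eqref{rhoL43} with $\nabx\cdot\utkaLDp\in L^2(\Omega_T)$ from \eqref{eq:energy-u+psi-final2}; the product lies in $L^{\frac{4\Gamma}{2\Gamma+3}}(\Omega_T)$, and $\frac{4\Gamma}{2\Gamma+3}>s$ for $\Gamma\ge8$. Hence $\|\nabx\cdot(\rhokaL^{[\Delta t]}\,\utkaLDp)\|_{L^s(\Omega_T)}\le C$, uniformly in $\Delta t$ and $L$, with $s>1$ since $\Gamma\ge8>6$. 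Now read the continuity equation \eqref{eqrhocon} pointwise as
\[
\frac{\partial \rhokaL^{[\Delta t]}}{\partial t}-\alpha\,\Delta_x\rhokaL^{[\Delta t]}=-\nabx\cdot(\rhokaL^{[\Delta t]}\,\utkaLDp)\in L^s(\Omega_T),
\]
with the homogeneous Neumann condition \eqref{equ0b} and initial datum $\rho^0\in L^\infty(\Omega)$. Since the right-hand side is now controlled in $L^s(\Omega_T)$ itself — rather than only $\rhokaL^{[\Delta t]}\,\utkaLDp$ having an $L^\upsilon$-bounded gradient, which was all that went into \eqref{nabxkaLLrLs} — applying the parabolic $L^s$-regularity estimate already used for \eqref{nabxkaLLrLs} (Lemma~7.38 in Novotn\'{y} \& Stra\v{s}kraba \cite{NovStras}, \arxiv{or Lemma~\ref{Le-G-2} in Appendix~\ref{sec:App-G}}) yields $\|\Delta_x\rhokaL^{[\Delta t]}\|_{L^s(\Omega_T)}\le C$; the bound on $\frac{\partial \rhokaL^{[\Delta t]}}{\partial t}$ in $L^s(\Omega_T)$ then follows from the equation, giving \eqref{Laprho}.

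\emph{Step 2: the two limits.} For both \eqref{bada} and \eqref{badb} use the decomposition (with $\rho^\flat$ denoting $\rhokaL^{[\Delta t]}$, resp. $\rhokaLDp$)
\[
\rho^\flat\,\utkaLDp-\rhokaLDm\,\utkaLDm
=\big(\rho^\flat-\rhokaLDm\big)\utkaLDp+\rhokaLDm\big(\utkaLDp-\utkaLDm\big).
\]
For the second term, \eqref{eq:energy-u+psi-final2} gives $\|\sqrt{\rhokaLDm}\,(\utkaLDp-\utkaLDm)\|_{L^2(\Omega_T)}^2\le C\,\Delta t\to0$; moreover $\sqrt{\rhokaLDm}\,(\utkaLDp-\utkaLDm)$ is bounded in $L^2(0,T;L^{\frac{6\Gamma}{\Gamma+3}}(\Omega))$ (using the fourth bound of \eqref{kaLLinfL1}, together with $\sqrt{\rhokaLDm}\in L^\infty(0,T;L^{2\Gamma}(\Omega))$ from the first bound of \eqref{kaLLinfL1} and $\utkaLDp\in L^2(0,T;\Lt^6(\Omega))$); since $\frac{6\Gamma}{\Gamma+3}>\frac{2\Gamma}{\Gamma-1}$ for $\Gamma\ge8$, interpolation in the spatial exponent gives $\sqrt{\rhokaLDm}\,(\utkaLDp-\utkaLDm)\to0$ in $L^2(0,T;L^{\frac{2\Gamma}{\Gamma-1}}(\Omega))$, whence $\rhokaLDm(\utkaLDp-\utkaLDm)=\sqrt{\rhokaLDm}\cdot\big[\sqrt{\rhokaLDm}(\utkaLDp-\utkaLDm)\big]\to0$ in $L^2(\Omega_T)$. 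For the first term, note that on each $(t_{n-1},t_n]$ one has $\rhokaL^{[\Delta t]}(\cdot,t)-\rhokaLDm(\cdot,t)=\int_{t_{n-1}}^{t}\frac{\partial \rhokaL^{[\Delta t]}}{\partial t}(\cdot,s)\,{\rm d}s$ and $\rhokaLDp(\cdot,t)-\rhokaLDm(\cdot,t)=\int_{t_{n-1}}^{t_n}\frac{\partial \rhokaL^{[\Delta t]}}{\partial t}(\cdot,s)\,{\rm d}s$, so by H\"older's inequality in $t$ and the $L^s(\Omega_T)$ bound on $\frac{\partial \rhokaL^{[\Delta t]}}{\partial t}$ from Step~1,
\[
\big\|\rho^\flat-\rhokaLDm\big\|_{L^\infty(0,T;L^s(\Omega))}\le C\,(\Delta t)^{1-\frac1s}\longrightarrow 0.
\]
Since $\rhokaL^{[\Delta t]}-\rhokaLDm$ is also bounded in $L^\infty(0,T;L^{\Gamma/2}(\Omega))$ (by \eqref{rhoall} and \eqref{kaLLinfL1}), and $\rhokaLDp-\rhokaLDm$ in $L^\infty(0,T;L^{\Gamma}(\Omega))$ (by \eqref{kaLLinfL1}), interpolating these two $L^\infty$-in-time bounds — and using $\Gamma/2\ge4>3$ — yields $\rho^\flat-\rhokaLDm\to0$ in $L^\infty(0,T;L^3(\Omega))$; multiplying by $\utkaLDp\in L^2(0,T;\Lt^6(\Omega))$ and using $\tfrac13+\tfrac16=\tfrac12$ gives $(\rho^\flat-\rhokaLDm)\utkaLDp\to0$ in $L^2(\Omega_T)$. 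Adding the two contributions proves \eqref{bada} and \eqref{badb}.

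\emph{Main obstacle.} The delicate point is the bootstrap in Step~1: one must already have the spatial-gradient bound \eqref{nabxkaLLrLs} in order to make sense of $\nabx\cdot(\rhokaL^{[\Delta t]}\,\utkaLDp)$ in $L^s(\Omega_T)$, and the precise value of $s$ — hence the compatibility with the standing hypothesis $\gamma>\frac32$, $\Gamma=\max\{\gamma,8\}$ — is forced by the interplay of the exponents $\upsilon$, $\frac{4\Gamma}{3}$ and $2$ together with the threshold $s>1\iff\Gamma>6$. In Step~2 the only genuine difficulty is that a naive H\"older estimate of $(\rho^\flat-\rhokaLDm)\utkaLDp$ fails because $\utkaLDp$ is merely $L^2$ in time; this is overcome by observing that the one-step increment of $\rhokaL^{[\Delta t]}$ is controlled \emph{uniformly over time levels}, i.e. in $L^\infty(0,T;L^s(\Omega))$, with rate $(\Delta t)^{1-1/s}$ times the $L^s(\Omega_T)$-norm of $\frac{\partial \rhokaL^{[\Delta t]}}{\partial t}$ supplied by \eqref{Laprho}.
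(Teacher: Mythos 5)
Your Step~1 follows the paper's bootstrap: control $\nabx\cdot(\rhokaL^{[\Delta t]}\,\utkaLDp)$ in $L^s(\Omega_T)$ using the first-order gradient bound \eqref{nabxkaLLrLs}, then invoke maximal parabolic regularity to conclude. Your exponent bookkeeping for the two pieces $\nabx\rhokaL^{[\Delta t]}\cdot\utkaLDp$ and $\rhokaL^{[\Delta t]}\,(\nabx\cdot\utkaLDp)$ is correct, and if anything tidier than the paper's slightly elliptical \eqref{badex1}. The gap is in the parabolic-regularity citation: Lemma~7.38 of \cite{NovStras} (equivalently Lemma~\ref{Le-G-2}) is the divergence-form result, which takes $h=\nabx\cdot\bt$ with $\bt\in L^r(0,T;L^s(\Omega))$ and delivers only $z\in L^r(0,T;W^{1,s}(\Omega))$ --- no control on $\Delta_x z$. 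Having the right-hand side $-\nabx\cdot(\rhokaL^{[\Delta t]}\,\utkaLDp)$ bounded in $L^s(\Omega_T)$ \emph{itself}, rather than merely in divergence form, is precisely what upgrades you from Lemma~7.38 to Lemma~7.37 (equivalently Lemma~\ref{Le-G-1}), which gives $z\in L^s(0,T;W^{2,s}(\Omega))$ and $\partial_t z\in L^s(\Omega_T)$; that is the lemma the paper uses here and the one your argument needs. One must also note that the required initial regularity $\rho^0\in\widetilde W^{2-2/s,s}(\Omega)$, with the Neumann compatibility condition, is supplied by \eqref{rho0reg}, not by $\rho^0\in L^\infty(\Omega)$ alone.

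Your Step~2 is correct but takes a genuinely different route. You use the plain decomposition $\rho^\flat\,\utkaLDp-\rhokaLDm\,\utkaLDm=(\rho^\flat-\rhokaLDm)\,\utkaLDp+\rhokaLDm\,(\utkaLDp-\utkaLDm)$ and the uniform-in-time increment bound $\|\rho^\flat-\rhokaLDm\|_{L^\infty(0,T;L^s(\Omega))}\le C(\Delta t)^{1-1/s}$ deduced from $\partial_t\rhokaL^{[\Delta t]}\in L^s(\Omega_T)$, after which the multiplication by $\utkaLDp\in L^2(0,T;\Lt^6(\Omega))$ is elementary. The paper instead uses the square-root splitting $\rho_1 u_1-\rho_2 u_2=\sqrt{\rho_1\rho_2}\,(u_1-u_2)+(\sqrt{\rho_1}-\sqrt{\rho_2})(\sqrt{\rho_1}u_1+\sqrt{\rho_2}u_2)$ together with $|\sqrt{a}-\sqrt{b}\,|^2\le|a-b|$ and the $L^s(\Omega_T)$-in-time increment estimate \eqref{bad6} with rate $\Delta t$; this requires absorbing the stray factor $\sqrt{\rhokaL^{[\Delta t]}}$ through the auxiliary mixed-norm interpolations \eqref{bad2}--\eqref{bad4} and \eqref{bad8}--\eqref{bad11}. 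Both arguments draw on the same inputs --- the dissipation term in \eqref{eq:energy-u+psi-final2} and the $L^s(\Omega_T)$ bound on $\partial_t\rhokaL^{[\Delta t]}$ from \eqref{Laprho} --- and your version is somewhat more direct, at the price of a slightly worse, but still vanishing, rate $(\Delta t)^{1-1/s}$ in place of $\Delta t$.
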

\begin{proof}
It follows from (\ref{PkrhokaLc}) and (\ref{nabxkaLLrLs}) %and (\ref{utkaLL6})
that
\begin{align}
%\|\,|\nabx \rhokaL^{[\Delta t]}|\,|\nabxtt \utkaLDp|\,\|_{L^s(\Omega_T)}
%\leq
\|\rhokaL^{[\Delta t]}\|_{L^{\frac{8\Gamma-12}{3\Gamma}}(\Omega_T)}
+ \|\nabx\rhokaL^{[\Delta t]}\|_{L^{\frac{8\Gamma-12}{3\Gamma}}(\Omega_T)} \leq C,
%\quad\!\mbox{and}\!\quad
%\|\utkaLDp\|_{L^{2}(\Omega_T)} + \|\nabxtt \utkaLDp\|_{L^{2}(\Omega_T)} \leq C;
\label{badex1a}
\end{align}
%(in the course of applying (\ref{PkrhokaLc}), we made us of the fact that
where we have noted that $\frac{8\Gamma-12}{3\Gamma} < \Gamma$, with $\Gamma \geq 8$.
Hence, (\ref{badex1a}) and (\ref{utkaLL6}) yield that
\begin{align}
\|\,|\nabx \rhokaL^{[\Delta t]}|\,|\nabxtt \utkaLDp|\,\|_{L^s(\Omega_T)}
\leq \|\nabx \rhokaL^{[\Delta t]}\|_{L^{\frac{8\Gamma-12}{3\Gamma}}(\Omega_T)}
\,\|\nabxtt \utkaLDp\|_{L^{2}(\Omega_T)} \leq C,
\label{badex1}
\end{align}
where $s=\frac{8\Gamma-12}{7\Gamma-6}$. \red{From the first bound in \eqref{badex1a} and \eqref{badex1}, we obtain the first bound in (\ref{Laprho}).}
%\begin{align}
%\|\nabx\cdot(\rhokaL^{[\Delta t]}\,\utkaLDp)\|_{L^s(\Omega_T)}\leq C.
%\label{badex2}
%\end{align}
As $\partial \Omega \in C^{2,\theta}$, $\theta \in (0,1)$,
it follows from (\ref{projrho0}), (\ref{rho0conv})
and elliptic regularity that, for all $r \in [1,\infty)$,
\begin{align}
\|\rho^0\|_{W^{2,r}(\Omega)} \leq C(\alpha)\,\|\rho_0-\rho^0\|_{L^\infty(\Omega)}
\leq C(\alpha) \qquad \mbox{and} \qquad \nabx \rho^0 \cdot \nt = 0 \mbox{ on } \partial \Omega.
\label{rho0reg}
\end{align}
%and $\rho_0 \in \widetilde{W}^{2-\frac{2}{s},s}(\Omega)$,
On noting (\ref{rho0reg}) and the first bound in (\ref{Laprho}),
we can now apply \red{to (\ref{eqrhocon}) } the parabolic regularity result, Lemma 7.37 in
Novotn\'{y} \& Stra\v{s}kraba \cite{NovStras} \arxiv{(or Lemma \ref{Le-G-1} in Appendix \ref{sec:App-G})},  to
obtain that the solution $\rhokaL^{[\Delta t]}$ satisfies the
last two bounds in (\ref{Laprho}).
%\begin{align}
%\left\|\frac{\partial \rhokaL^{[\Delta t]}}{\partial t}\right\|_{L^s(\Omega_T)}
%\leq C.
%\label{badex3}
%\end{align}

Next, we note that
\begin{align}
&\|\rhokaL^{[\Delta t]}\,\utkaLDp -
\rhokaLDm\,\utkaLDm \|_{L^2(\Omega_T)}^2
= \int_0^T \int_{\Omega} |\rhokaL^{[\Delta t]}\,\utkaLDp -
\rhokaLDm\,\utkaLDm|^2 \,\dx \,\dt
\nonumber \\
&\qquad = \int_0^T \int_{\Omega} \,\biggl|
\sqrt{\rhokaL^{[\Delta t]}\,\rhokaLDm}\,
\left(\utkaLDp-\utkaLDm\right)
\nonumber \\
& \qquad \qquad + \left(\sqrt{\rhokaL^{[\Delta t]}}-\sqrt{\rhokaLDm}\right)\,
\left(\sqrt{\rhokaL^{[\Delta t]}}\,\utkaLDp+\sqrt{\rhokaLDm}\,\utkaLDm\right) \biggr|^2
 \dx \,\dt
\nonumber \\
&\qquad \leq 2\,\int_0^T \int_{\Omega} \rhokaL^{[\Delta t]}\,\rhokaLDm\,
|\utkaLDp-\utkaLDm|^2\,\dx\,\dt
\nonumber \\
& \qquad \qquad +
2\,\int_0^T \int_{\Omega}
\left|\sqrt{\rhokaL^{[\Delta t]}}-\sqrt{\rhokaLDm}\right|^2\,
\left|\sqrt{\rhokaL^{[\Delta t]}}\,\utkaLDp+\sqrt{\rhokaLDm}\,\utkaLDm\right|^2
 \dx \,\dt
\nonumber \\
&\qquad \leq
2\,\int_0^T \int_{\Omega} \rhokaL^{[\Delta t]}\,\rhokaLDm\,
|\utkaLDp-\utkaLDm|^2\,\dx\,\dt
\nonumber \\
& \qquad \qquad +
2\,\int_0^T \int_{\Omega}
\left|\rhokaL^{[\Delta t]}-\rhokaLDm\right|\,
\left|\sqrt{\rhokaL^{[\Delta t]}}\,\utkaLDp+\sqrt{\rhokaLDm}\,\utkaLDm\right|^2
 \dx \,\dt.
\label{bad1}
\end{align}
It follows from (\ref{rhoall}) and (\ref{eq:energy-u+psi-final2}) that
\begin{align}
&\left\|\sqrt{\rhokaL^{[\Delta t]}}\,\sqrt{\rhokaLDm}\,
\left(\utkaLDp-\utkaLDm\right)\right\|_{L^2(0,T;L^{\frac{2\Gamma}{\Gamma+2}}(\Omega))}
\nonumber \\
& \qquad \leq \left\|\sqrt{\rhokaL^{[\Delta t]}}
\,\right\|_{L^\infty(0,T;L^{\Gamma}(\Omega))}\,
\left\|\sqrt{\rhokaLDm}\,
\left(\utkaLDp-\utkaLDm\right)\right\|_{L^2(\Omega_T)} \leq C\,(\Delta t)^{\frac{1}{2}}.
\label{bad2}
\end{align}
Similarly, it
 follows from \red{(\ref{utkaLL6}), (\ref{rhoall}) and (\ref{kaLLinfL1})} and  that
\begin{align}
&\left\|\sqrt{\rhokaL^{[\Delta t]}}\,\sqrt{\rhokaLDm}\,
\left(\utkaLDp-\utkaLDm\right)\right\|_{L^2(0,T;L^{\frac{6\Gamma}{\Gamma+9}}(\Omega))}
\nonumber \\
& \qquad \leq \left\|\sqrt{\rhokaL^{[\Delta t]}\,\rhokaLDm}
\right\|_{L^\infty(0,T;L^{\frac{2\Gamma}{3}}(\Omega))}\,
\|\utkaLDp-\utkaLDm\|_{L^2(0,T;L^6(\Omega))}
\nonumber \\
& \qquad \leq
C\,
\left\|\sqrt{\rhokaL^{[\Delta t]}}
\right\|_{L^\infty(0,T;L^{\Gamma}(\Omega))}\,
\left\|\sqrt{\rhokaLDm}
\right\|_{L^\infty(0,T;L^{2\Gamma}(\Omega))}
\leq C.
\label{bad3}
\end{align}
We deduce from (\ref{bad2}), (\ref{bad3}) and (\ref{eqLinterp}) as $\frac{6\Gamma}{\Gamma+9}>2>
\frac{2\Gamma}{\Gamma+2}$ that
\begin{align}
\lim_{\Delta t \rightarrow 0_+\;(L \rightarrow \infty)}
\left\|\sqrt{\rhokaL^{[\Delta t]}}\,\sqrt{\rhokaLDm}\,
\left(\utkaLDp-\utkaLDm\right)\right\|_{L^2(\Omega_T)} =0,
\label{bad4}
\end{align}
and so the first term on the right-hand side of (\ref{bad1})
converges to zero, as $\Delta t \rightarrow 0_+\;(L \rightarrow \infty)$.

Next, we deal with the second term on the right-hand side of (\ref{bad1}).
\begin{comment}
We have that
\begin{align}
&\int_0^T \int_{\Omega}
\left|\sqrt{\rhokaL^{[\Delta t]}}-\sqrt{\rhokaLDm}\right|^2\,
\left|\sqrt{\rhokaL^{[\Delta t]}}\,\utkaLDp+\sqrt{\rhokaLDm}\,\utkaLDm\right|^2
 \dx \,\dt
\nonumber \\
& \qquad \leq
\int_0^T \int_{\Omega}
\left|\rhokaL^{[\Delta t]}-\rhokaLDm\right|\,
\left|\sqrt{\rhokaL^{[\Delta t]}}\,\utkaLDp+\sqrt{\rhokaLDm}\,\utkaLDm\right|^2
 \dx \,\dt.
\label{bad5}
\end{align}
\end{comment}
It follows from (\ref{Laprho}) that
\begin{align}
\|\rhokaL^{[\Delta t]}-\rhokaLDm\|_{L^s(\Omega_T)} \leq \Delta t \,
\left\|\frac{\partial \rhokaL^{[\Delta t]}}{\partial t}\right\|_{L^s(\Omega_T)}
\leq C\,\Delta t,
\label{bad6}
\end{align}
and from (\ref{rhoall}) and (\ref{kaLLinfL1}) that
\begin{align}
\|\rhokaL^{[\Delta t]}-\rhokaLDm\|_{L^\infty(0,T;L^{\frac{\Gamma}{2}}(\Omega))} \leq C.
\label{bad7}
\end{align}
Hence, the bounds (\ref{bad6}) and (\ref{bad7}) yield, on noting (\ref{eqLinterp}) and
as $\Gamma \geq 8$, that
\begin{align}
\lim_{\Delta t \rightarrow 0_+\;(L \rightarrow \infty)}
\|\rhokaL^{[\Delta t]}-\rhokaLDm\|_{L^{\upsilon}(0,T;L^{r}(\Omega))} =0,
\qquad \mbox{ for any } \upsilon \in [1,\infty), \; r \in [1,4).
\label{bad8}
\end{align}
As $\frac{6\Gamma}{\Gamma+3} \geq \frac{48}{11}> 4$ \red{for $\Gamma \geq 8$}, it follows from
\red{(\ref{eqLinterp}) and (\ref{kaLLinfL1})} that
\begin{align}
&\left\|\sqrt{\rhokaLDwpm}\,\utkaLDwpm
\right\|_{L^{3}(0,T;L^{3}(\Omega))}
\nonumber \\
& \hspace{1in} \leq
\left\|\sqrt{\rhokaLDwpm}\,\utkaLDwpm
\right\|_{L^{\infty}(0,T;L^{2}(\Omega))}^{\frac{1}{3}}\,\left\|\sqrt{\rhokaLDwpm}\,\utkaLDwpm
\right\|_{L^{2}(0,T;L^{4}(\Omega))}^{\frac{2}{3}} \leq C.
\label{bad9}
\end{align}
In addition, (\ref{kaLLrLs}) and (\ref{utkaLL6}) yield that
\begin{align}
&\left\|
\sqrt{\rhokaL^{[\Delta t]}}\,\utkaLDp \right\|_{L^{4}(0,T;L^{\frac{12}{7}}(\Omega))}
%\nonumber \\
%& \hspace{1in}
\leq
\left\|\rhokaL^{[\Delta t]}\,\utkaLDp
\right\|_{L^{\infty}(0,T;L^{1}(\Omega))}^{\frac{1}{2}}\,\left\|\utkaLDp
\right\|_{L^{2}(0,T;L^{6}(\Omega))}^{\frac{1}{2}} \leq C.
\label{bad10}
\end{align}
As $\frac{6\Gamma}{\Gamma+6}\geq \frac{24}{7}$ \red{for $\Gamma \geq 8$}, it then follows from
\red{(\ref{eqLinterp}), (\ref{bad10}) and (\ref{kaLLrLs})} that
\begin{align}
&\left\|
\sqrt{\rhokaL^{[\Delta t]}}\,\utkaLDp \right\|_{L^{\frac{28}{13}}(0,T;L^{3}(\Omega))}
\nonumber \\
& \hspace{1in}\leq
\left( \int_0^T
\left\|
\sqrt{\rhokaL^{[\Delta t]}}\,\utkaLDp \right\|_{L^{\frac{12}{7}}(\Omega)}^{\frac{4}{13}}
\,\left\|
\sqrt{\rhokaL^{[\Delta t]}}\,\utkaLDp \right\|_{L^{\frac{24}{7}}(\Omega)}^{\frac{24}{13}}\,\dt\right)^{\frac{13}{28}}
\nonumber \\
& \hspace{1in} \leq
\left\|
\sqrt{\rhokaL^{[\Delta t]}}\,\utkaLDp \right\|_{L^{4}(0,T;L^{\frac{12}{7}}(\Omega))}^{\frac{1}{7}}
\,\left\|
\sqrt{\rhokaL^{[\Delta t]}}\,\utkaLDp \right\|_{L^{2}(0,T;L^{\frac{24}{7}}(\Omega))}^{\frac{6}{7}}
\leq C.
\label{bad11}
\end{align}
Combining (\ref{bad8}), (\ref{bad9}) and (\ref{bad11})
yields that
the second term on the right-hand side of (\ref{bad1})
converges to zero, as $\Delta t \rightarrow 0_+\;(L \rightarrow \infty)$.
Therefore, we have the desired result (\ref{bada}).

We now adapt the argument above to prove the desired result (\ref{badb}).
\begin{comment}
Similarly to (\ref{bad1}), we have that
\begin{align}
&\|\rhokaLDp\,\utkaLDp -
\rhokaLDm\,\utkaLDm \|_{L^2(\Omega_T)}^2
\nonumber \\
&\qquad \leq
2\,\int_0^T \int_{\Omega} \rhokaLDp\,\rhokaLDm\,
|\utkaLDp-\utkaLDm|^2\,\dx \,\dt
\nonumber \\
& \qquad \qquad +
2\,\int_0^T \int_{\Omega}
\left|\rhokaLDp-\rhokaLDm \right|\,
\left|\sqrt{\rhokaLDp}\,\utkaLDp+\sqrt{\rhokaLDm}\,\utkaLDm\right|^2
 \dx \,\dt.
\label{bad1a}
\end{align}
\end{comment}
The bounds (\ref{bad1})--(\ref{bad4}) remain true with $\rhokaL^{[\Delta t]}$
replaced by $\rhokaLDp$. Similarly, (\ref{bad6}) remains true
with $\rhokaL^{[\Delta t]}$ on the left-hand side of the inequality
replaced by $\rhokaLDp$. Hence, the bounds (\ref{bad7}) and (\ref{bad8})
remain true with $\rhokaL^{[\Delta t]}$
replaced by $\rhokaLDp$. Therefore, on combining all these modified bounds with
(\ref{bad9}), we obtain the desired result (\ref{badb}).
\end{proof}

\subsection{$L$, $\Delta t$-independent bounds on the time-derivatives of $\mtkaLD$
and $\hpsikaLD$}
\label{utsec}
%
%
\begin{comment}
Thus we deduce from \eqref{u-t-2}, \eqref{u-t-3} and Poincar\'{e}'s inequality that
$\|\ut^{\Delta t(,\pm)}_L\|_{L^2(0,T;H^1(\Omega))}
\leq C_\ast$, where $C_\ast$ is a positive constant that
depends sole
solely on $\epsilon$,  $\rho_{\rm min}$, $\rho_{\rm max}$,
$\mu_{\rm \min}$, $\zeta_{\rm min}$, $\zeta_{\rm max}$, $T$, $|\Att|$, $a_0$, $c_0$, $C_\varkappa$, $k$,
$\lambda$, $K$ and $b$; in particular, $C_\ast$ is independent of $L$ and $\Delta t$.
\end{comment}
On noting from (\ref{ulin},b) that
\begin{align} \mtkaLD &= \frac{t-t_{n-1}}{\Delta t}\, \mtkaLDp + \frac{t_n-t}{\Delta t}\, \mtkaLDm,
\quad t \in (t_{n-1},t_n],\quad
n=1,\dots,N,
\label{u-t-2}
\end{align}
an elementary calculation yields, for any $s \in [1,\infty]$, that
\begin{align}\label{u-t-1}
%\frac{1}{6}\int_0^T \left(\|%\nabxtt
%\mtkaLDp\|^2+ \|%\nabxtt
%\mtkaLDm\|^2\right) \dt
%&\leq
\int_0^T \|%\nabxtt
\mtkaLD\|_{L^s(\Omega)}^2 \dt %\nonumber\\
&\leq \frac{1}{2}\int_0^T\left(\|%\nabxtt
\mtkaLDp\|_{L^s(\Omega)}^2
+ \|%\nabxtt
\mtkaLDm\|_{L^s(\Omega)}^2\right)\dt.
\end{align}
In order to pass to the limit $\Delta t \rightarrow 0_+$ $(L \rightarrow \infty)$
in the momentum equation (\ref{equncon}), we require  the following result.
\begin{lemma}\label{mtkaLlem}
There exists a $C \in \mathbb{R}_{>0}$, independent
of $\Delta t$ and $L$, such that
\begin{subequations}
\begin{align}
&\|\mtkaLDpm\|_{L^\infty(0,T;L^{\frac{2\Gamma}{\Gamma+1}}(\Omega))}
+\|\mtkaLDpm\|_{L^2(0,T;L^{\frac{6\Gamma}{\Gamma+6}}(\Omega))}
+ \|\mtkaLDpm\|_{L^\upsilon(\Omega_T)}
\leq C,
\label{mtkaLLnu}\\
&\|\mtkaLDpm \otimes \utkaLDpm \|_{L^2(0,T;L^{\frac{6\Gamma}{4\Gamma+3}}(\Omega))}
\leq C,
\label{mtutkaLD}
\end{align}
\end{subequations}
where $\upsilon=\frac{10\Gamma-6}{3(\Gamma+1)} \geq \frac{74}{27}$ as $\Gamma \geq 8$.
\end{lemma}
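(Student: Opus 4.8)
The plan is to reduce the lemma to the uniform estimates already established in Lemma \ref{rhonabxsclem} — the bounds (\ref{kaLLinfL1}) on $\rhokaLDpm$ and on $\sqrt{\rhokaLDwpm}\,\utkaLDwpm$ — together with the $L^2(0,T;\Lt^6(\Omega))$ bound (\ref{utkaLL6}) on $\utkaLDpm$, and then to transfer the resulting estimates from the piecewise-constant-in-time functions to the piecewise-linear interpolants by convexity and (\ref{u-t-1}).

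First I would treat the $\pm$ (piecewise-constant) versions. The first bound in (\ref{mtkaLLnu}) is immediate, since $\mtkaLDwpm = \rhokaLDwpm\,\utkaLDwpm$, so the third term in (\ref{kaLLinfL1}) already gives $\|\mtkaLDwpm\|_{L^\infty(0,T;L^{2\Gamma/(\Gamma+1)}(\Omega))}\le C$. For the second bound I would write $\mtkaLDwpm = \sqrt{\rhokaLDwpm}\,\big(\sqrt{\rhokaLDwpm}\,\utkaLDwpm\big)$ and apply H\"older in $\xt$ with exponents $2\Gamma$ and $\tfrac{6\Gamma}{\Gamma+3}$ (note $\tfrac{1}{2\Gamma}+\tfrac{\Gamma+3}{6\Gamma}=\tfrac{\Gamma+6}{6\Gamma}$), using $\|\sqrt{\rhokaLDwpm}\|_{L^\infty(0,T;L^{2\Gamma}(\Omega))}=\|\rhokaLDwpm\|_{L^\infty(0,T;L^{\Gamma}(\Omega))}^{1/2}\le C$ and the fourth term of (\ref{kaLLinfL1}); this yields $\|\mtkaLDwpm\|_{L^2(0,T;L^{6\Gamma/(\Gamma+6)}(\Omega))}\le C$. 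The third bound in (\ref{mtkaLLnu}) then follows from the Lebesgue interpolation (\ref{eqLinterp}): interpolate $\|\mtkaLDwpm(\cdot,t)\|_{L^\upsilon(\Omega)}$ between $L^{2\Gamma/(\Gamma+1)}(\Omega)$ and $L^{6\Gamma/(\Gamma+6)}(\Omega)$ with parameter $\mu=2/\upsilon$, integrate in $t$, and insert the two bounds just obtained; the arithmetic $\tfrac1\upsilon=(1-\mu)\tfrac{\Gamma+1}{2\Gamma}+\mu\tfrac{\Gamma+6}{6\Gamma}$ with $\mu=2/\upsilon$ forces exactly $\upsilon=\tfrac{10\Gamma-6}{3(\Gamma+1)}$, and $\mu\in(0,1)$ since $\upsilon>2$ for $\Gamma\ge 8$. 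To pass from $\mtkaLDwpm$ to the full collective symbol $\mtkaLDpm$, observe that for $t\in(t_{n-1},t_n]$ the interpolant $\mtkaLD(\cdot,t)$ is a convex combination of $\mtkaLDp(\cdot,t)$ and $\mtkaLDm(\cdot,t)$, cf. (\ref{u-t-2}), so its $L^s(\Omega)$ norm is bounded by the larger of the two; hence the $L^\infty$-in-time bounds carry over verbatim, and the $L^\upsilon(\Omega_T)$ bound carries over via (\ref{u-t-1}) with $s=\upsilon$.

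For (\ref{mtutkaLD}) the point is that no new estimate is needed: by H\"older in $\xt$ with $\big(\tfrac{6\Gamma}{4\Gamma+3}\big)^{-1}=\tfrac{\Gamma+1}{2\Gamma}+\tfrac16$ one has, for every admissible choice of the two collective symbols,
\[
\|\mtkaLDpm\otimes\utkaLDpm\|_{L^2(0,T;L^{6\Gamma/(4\Gamma+3)}(\Omega))}
\le \|\mtkaLDpm\|_{L^\infty(0,T;L^{2\Gamma/(\Gamma+1)}(\Omega))}\,\|\utkaLDpm\|_{L^2(0,T;\Lt^6(\Omega))}\le C,
\]
using the first bound of (\ref{mtkaLLnu}) just proved (valid for the interpolant as well, by the convexity argument above) and (\ref{utkaLL6}) (likewise valid for all versions). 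Since $\tfrac{6\Gamma}{4\Gamma+3}>1$ for $\Gamma\ge 8$, this is the asserted estimate.

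I do not anticipate a genuine obstacle; the only real care required is the bookkeeping of Lebesgue exponents — choosing the H\"older pairings so that the density factors are absorbed into the $L^\infty(0,T;L^{\Gamma})$ control of $\rhokaLDpm$ and the kinetic-energy factors $\sqrt{\rhokaLDwpm}\,\utkaLDwpm$ into their $L^\infty(0,T;L^2)$ and $L^2(0,T;L^{6\Gamma/(\Gamma+3)})$ control — and verifying that the interpolation exponent is precisely $\upsilon=\tfrac{10\Gamma-6}{3(\Gamma+1)}$ and that it lies in the admissible range for (\ref{eqLinterp}), which is exactly where the standing hypothesis $\Gamma=\max\{\gamma,8\}\ge 8$ enters. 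The transfer to the piecewise-linear interpolants via convexity and (\ref{u-t-1}) is routine.
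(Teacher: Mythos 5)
Your proposal is correct and follows essentially the same route as the paper: read off the $L^\infty(0,T;L^{\frac{2\Gamma}{\Gamma+1}}(\Omega))$ bound from the third term of \eqref{kaLLinfL1}, obtain the $L^2(0,T;L^{\frac{6\Gamma}{\Gamma+6}}(\Omega))$ bound by H\"older (the paper splits $\rhokaLDwpm\utkaLDwpm$ directly as $\rho\cdot u$ with $\rho\in L^\infty L^\Gamma$ and $u\in L^2 L^6$, whereas you go through $\sqrt{\rho}\,(\sqrt{\rho}\,u)$ and the fourth term of \eqref{kaLLinfL1} --- an immaterial variation since that fourth term was itself derived from the same two bounds), interpolate with $\vartheta\upsilon=2$ to get $L^\upsilon(\Omega_T)$, and then get \eqref{mtutkaLD} by H\"older against \eqref{utkaLL6}.

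The one place that needs tightening is the transfer to the piecewise-linear interpolant. The sentence ``the $L^\upsilon(\Omega_T)$ bound carries over via \eqref{u-t-1} with $s=\upsilon$'' miscites that estimate: \eqref{u-t-1} controls $\int_0^T\|\mtkaLD\|_{L^s(\Omega)}^2\,\dt$, i.e.\ an $L^2(0,T;L^\upsilon(\Omega))$-type bound, which is not the $L^\upsilon(0,T;L^\upsilon(\Omega))$ bound you need when $\upsilon>2$. There are two clean fixes, both already implicit in your write-up: (i) the convexity observation you made for the $L^\infty$ case gives the pointwise-in-time estimate $\|\mtkaLD(\cdot,t)\|_{L^\upsilon(\Omega)}\le\max\{\|\mtkaLDp(\cdot,t)\|_{L^\upsilon(\Omega)},\|\mtkaLDm(\cdot,t)\|_{L^\upsilon(\Omega)}\}$, and raising to the $\upsilon$-th power and integrating in $t$ carries the $L^\upsilon(\Omega_T)$ bound over directly; or (ii) do what the paper does and transfer the two \emph{ingredient} bounds first --- $L^\infty(0,T;L^{\frac{2\Gamma}{\Gamma+1}}(\Omega))$ via \eqref{u-t-2}, $L^2(0,T;L^{\frac{6\Gamma}{\Gamma+6}}(\Omega))$ via \eqref{u-t-1} --- and only then apply the interpolation \eqref{mtkaLLup} to $\mtkaLD$ itself, so that the collective bound holds by construction and no separate transfer of the $L^\upsilon$ norm is required.
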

\begin{proof}
The first bound in (\ref{mtkaLLnu}) for $\mtkaLDwpm$ follows immediately from the third bound
in (\ref{kaLLinfL1}).
The corresponding bound for $\mtkaLD$ is a direct consequence of (\ref{u-t-2}).
Similarly to (\ref{L2Lsa}),
it follows from (\ref{kaLLinfL1}) and
(\ref{utkaLL6})
that
\begin{align}
\|\rhokaLDwpm\,\utkaLDwpm\|_{L^2(0,T;L^{\frac{6\Gamma}{\Gamma+6}}(\Omega))}
&\leq \|\rhokaLDwpm\|_{L^\infty(0,T;L^{\Gamma}(\Omega))}
\,\|\utkaLDwpm\|_{L^2(0,T;L^{6}(\Omega))}
%\nonumber \\
%&\leq
%\|\rhokaL^{[\Delta t]}\|_{L^\infty(0,T;L^{\frac{\Gamma}{2}}(\Omega))}
%\,\|\utkaLDp\|_{L^2(0,T;H^1(\Omega))}
\leq C,
\label{mtkaLL2Lsa}
\end{align}
and hence the second bound in (\ref{mtkaLLnu}) for $\mtkaLDwpm$.
The corresponding bound for $\mtkaLD$ is then a direct consequence of (\ref{u-t-1}).
Similarly to (\ref{Lup}),
it follows from (\ref{eqLinterp}) with $\upsilon=\frac{10\Gamma-6}{3(\Gamma+1)}$,
$r=\frac{2\Gamma}{\Gamma+1}$ and $s=\frac{6\Gamma}{\Gamma+6}$
that $\upsilon\,\vartheta=2$ \red{(with $\vartheta \in (0,1)$ thanks to $\Gamma \geq 8$),} and so
\begin{align}
\|\mtkaLDpm\|_{L^\upsilon(\Omega_T)}
\leq \|\mtkaLDpm\|_{L^\infty(0,T;L^r(\Omega))}^{1-\vartheta}
\,\|\mtkaLDpm\|_{L^2(0,T;L^s(\Omega))}^{\vartheta}.
\label{mtkaLLup}
\end{align}
Hence (\ref{mtkaLLup}) and the first two bounds in (\ref{mtkaLLnu}) yield the third bound in
(\ref{mtkaLLnu}).
Finally, combining the first bound in (\ref{mtkaLLnu}) and
(\ref{utkaLL6}) yields the bound (\ref{mtutkaLD}).
\end{proof}

Next, we need to bound the time-derivative of $\mtkaLD$ independently of $\Delta t$ and $L$.
It follows from (\ref{pkaLbd}) that we will need to choose
at least $\wt \in L^4(0,T;\Wt^{1,4}_0(\Omega))$ in (\ref{equncon}).
We now rewrite the time-derivative of $\rhokaL^{\Delta t}$ in \eqref{equncon}
using \eqref{eqrhocon}.
Adopting the notation (\ref{timav}), we have for any $\wt \in L^4(0,T;\Wt^{1,4}_0(\Omega))$
that $\utkaLDp \cdot \wt^{\{\Delta t\}} \in L^2(0,T;H^1(\Omega))$, and so
(\ref{equncon}) yields that
\begin{align} -\displaystyle\int_{0}^{T}\!\! \int_\Omega
\frac{\partial \rhokaL^{\Delta t}}{\partial t}\,\utkaLDp \cdot \wt \,\dx\,\dt
&= - \displaystyle\int_{0}^{T} \left \langle
\frac{\partial \rhokaLD}{\partial t},\utkaLDp \cdot \wt^{\{\Delta t\}}
\right \rangle_{H^1(\Omega)}
\dt \nonumber \\
& = \displaystyle\int_{0}^{T}\!\! \int_\Omega \left(
\alpha \,\nabx \rhokaL^{[\Delta t]} - \rhokaL^{[\Delta t]}\,\utkaLDp
\right) \cdot \nabx (\utkaLDp \cdot \wt^{\{\Delta t\}})
 \,\dx\, \dt \nonumber \\
&= \alpha \displaystyle\int_{0}^{T}\!\! \int_\Omega
\nabx \rhokaL^{[\Delta t]} \cdot \nabx (\utkaLDp \cdot \wt^{\{\Delta t\}})
 \,\dx\, \dt \nonumber \\
& \qquad - \displaystyle\int_{0}^{T}\!\! \int_\Omega
\rhokaL^{[\Delta t]}\,\utkaLDp \cdot \nabx (\utkaLDp \cdot \wt)
 \,\dx\, \dt
\nonumber \\
& \qquad + \displaystyle\int_{0}^{T}\!\! \int_\Omega
\rhokaL^{[\Delta t]}\,\utkaLDp \cdot \nabx (\utkaLDp \cdot (\wt-\wt^{\{\Delta t\}}))
 \,\dx\, \dt.
\label{equncona}
\end{align}
Next we note that, for all $\wt \in L^4(0,T;\Wt_0^{1,4}(\Omega))$,
\begin{align}
&\displaystyle\int_{0}^{T}\!\! \int_\Omega
\rhokaL^{[\Delta t]}\,\utkaLDp \cdot \nabx (\utkaLDp \cdot \wt)
 \,\dx\, \dt
\nonumber \\
& \hspace{1in} =
\displaystyle\int_{0}^{T}\!\! \int_\Omega
\mtkaLDm \cdot \nabx (\utkaLDp \cdot \wt)
 \,\dx\, \dt \nonumber \\
& \hspace{1.5in}
+ \displaystyle\int_{0}^{T}\!\! \int_\Omega
\left( \rhokaL^{[\Delta t]}\,\utkaLDp -
\rhokaLDm\,\utkaLDm \right)
\cdot \nabx (\utkaLDp \cdot \wt)
 \,\dx\, \dt.
\label{equnconb}
\end{align}
Therefore, on combining (\ref{equncon}), (\ref{equncona}) and (\ref{equnconb}),
one can rewrite (\ref{equncon}) as
\begin{align}
&\displaystyle\int_{0}^{T}\!\! \int_\Omega \left[
\frac{\partial \mtkaLD}{\partial t} \cdot \wt
+ \frac{\alpha}{2}
\,\nabx \rhokaL^{[\Delta t]} \cdot \nabx (\utkaLDp \cdot \wt^{\{\Delta t\}})
- \left[(\mtkaLDm \cdot \nabx) \wt  \right]\cdot\,\utkaLDp
\right]\!
\dx\, \dt
\nonumber \\
& \qquad \qquad
+\displaystyle\int_{0}^{T}\!\! \int_\Omega
\Stt(\utkaLDp)
:\nabxtt \wt
\,\dx\, \dt
- \int_{0}^T \!\!\int_{\Omega}
\pkaL^{\{\Delta t\}} \,\nabx \cdot \wt \,\dx \,\dt
\nonumber \\
&\qquad \qquad
- \frac{1}{2} \displaystyle\int_{0}^{T}\!\! \int_\Omega
\left( \rhokaL^{[\Delta t]}\,\utkaLDp -
\rhokaLDm\,\utkaLDm \right)
\cdot \nabx (\utkaLDp \cdot \wt)
 \,\dx\, \dt
\nonumber \\
& \qquad \qquad + \frac{1}{2}\displaystyle\int_{0}^{T}\!\! \int_\Omega
\rhokaL^{[\Delta t]}\,\utkaLDp \cdot \nabx (\utkaLDp \cdot (\wt-\wt^{\{\Delta t\}}))
 \,\dx\, \dt
\nonumber \\
&\qquad =\int_{0}^T
\int_{\Omega}  \left[\rhokaL^{\Delta t,+}\,
\ft^{\{\Delta t\}} \cdot \wt
-\tautt_1(M\,\hpsikaLDp)
: \nabxtt
\wt\right] \dx \, \dt
\nonumber
\\
& \qquad \qquad
- 2\, \mathfrak{z} \,
\int_0^T \int_\Omega
\left( \int_D M\,\beta^L(\hpsikaLDp) \,\dq \right)
\nabx %\left( \int_D M\,\hpsikaL^n \,\dq \right)
\vrhokaLDp
\cdot \wt \,\dx\,\dt
%\nonumber \\
%& \hspace{3in}
\qquad
\forall \wt \in L^4(0,T;\Wt_0^{1,4}(\Omega)).
\nonumber\\
\label{equnconadj}
\end{align}

We have the following result.
\begin{lemma}
\label{mtkaLDtdlem}
There exists a $C \in \mathbb{R}_{>0}$, independent
of $\Delta t$ and $L$, such that
\begin{align}
\left\|\frac{\partial \mtkaLD}{\partial t}\right\|_{L^s(0,T;W_0^{1,4}(\Omega)')} \leq C,
\label{mtkaLDdtbd}
\end{align}
where $s=\frac{8\Gamma-12}{7\Gamma-6} \geq \frac{26}{25}$ as $\Gamma \geq 8$.
\end{lemma}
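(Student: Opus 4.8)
The plan is to bound $\partial \mtkaLD/\partial t$ by testing the reformulated momentum equation \eqref{equnconadj} against arbitrary $\wt \in L^{s'}(0,T;\Wt_0^{1,4}(\Omega))$, where $s' = s/(s-1)$, and estimating each term on the left- and right-hand sides separately, using the $\Delta t$-, $L$-independent bounds already collected. The key point is that \eqref{equnconadj} expresses $\int_0^T\int_\Omega (\partial \mtkaLD/\partial t)\cdot \wt$ as a sum of integrals in which $\wt$ enters only through $\wt$, $\nabx\wt$, $\wt^{\{\Delta t\}}$ or $\nabx\wt^{\{\Delta t\}}$; since $\wt \mapsto \wt^{\{\Delta t\}}$ is a contraction on every $L^r(\Omega_T)$, each such factor is controlled by $\|\wt\|_{L^{s'}(0,T;W^{1,4}_0(\Omega))}$, and by Sobolev embedding also by $\|\wt\|_{L^{s'}(0,T;L^\infty(\Omega))}$ (recall $W^{1,4}(\Omega)\hookrightarrow L^\infty(\Omega)$ for $d\le 3$).

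First I would dispatch the linear and viscous terms: the $\Stt(\utkaLDp):\nabxtt\wt$ term is bounded via \eqref{eq:energy-u+psi-final2} (which gives $\utkaLDp \in L^2(0,T;\Ht^1_0(\Omega))$ uniformly) paired with $\nabx\wt \in L^{s'}(0,T;L^4(\Omega))$; the pressure term $\pkaL^{\{\Delta t\}}\nabx\cdot\wt$ uses the bound \eqref{pkaLbd}, namely $\pkaL^{\{\Delta t\}} \in L^{4/3}(\Omega_T)$ uniformly, against $\nabx\cdot\wt\in L^4(\Omega_T)$; the body-force term uses $\rhokaL^{\Delta t,+} \in L^\infty(0,T;L^\Gamma(\Omega))$ from \eqref{rhonint} (or \eqref{kaLLinfL1}) and $\ft^{\{\Delta t\}}\in L^2(0,T;\Lt^\infty(\Omega))$ from \eqref{fnbd}. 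The term $\tautt_1(M\,\hpsikaLDp):\nabxtt\wt$ is controlled by \eqref{tautt43bd}, i.e. $\tautt_1(M\,\hpsikaLDp)\in L^2(0,T;L^{4/3}(\Omega))$ uniformly (this is exactly where $\mathfrak{z}>0$ is used — cf. Remark \ref{z0rem}). The interaction term involves $\int_D M\beta^L(\hpsikaLDp)\,\dq \le \vrhokaLDp$, which by \eqref{vrhokaLvvbd} lies in $L^{4}(0,T;L^{2d/(d-1)}(\Omega))$ uniformly, and $\nabx\vrhokaLDp$, which by \eqref{eq:energy-u+psi-final2} lies in $L^2(0,T;\Lt^2(\Omega))$ uniformly; pairing these via H\"older (with a little room from the $L^4$ exponent) against $\wt\in L^{s'}(0,T;L^\infty(\Omega))$ gives the bound. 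The convective-type terms $[(\mtkaLDm\cdot\nabx)\wt]\cdot\utkaLDp$ and $(\rhokaL^{[\Delta t]}\utkaLDp)\cdot\nabx(\utkaLDp\cdot\wt)$ are handled using \eqref{mtkaLLnu}, \eqref{mtutkaLD}, \eqref{kaLLrLs} together with $\utkaLDp\in L^2(0,T;L^6(\Omega))$ from \eqref{utkaLL6} and the product rule $\nabx(\utkaLDp\cdot\wt)=(\nabx\utkaLDp)^{\rm T}\wt + (\nabx\wt)^{\rm T}\utkaLDp$; the $\alpha$-term $\nabx\rhokaL^{[\Delta t]}\cdot\nabx(\utkaLDp\cdot\wt^{\{\Delta t\}})$ uses \eqref{nabxkaLLrLs} ($\nabx\rhokaL^{[\Delta t]}\in L^\upsilon(\Omega_T)$, $\upsilon\ge 13/6$) with \eqref{utkaLL6}.

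The bookkeeping that fixes the exponent $s$ is the main obstacle: one must check that \emph{every} term on the left of \eqref{equnconadj} can be put in the form (something bounded in $L^s(\Omega_T)$ or $L^s(0,T;L^r(\Omega))$ with $r\ge 4/3$) $\times\,(\wt$-factor in the conjugate space), and that the worst exponent is exactly $s=\frac{8\Gamma-12}{7\Gamma-6}$. The binding constraint comes from the terms quadratic in the velocity combined with one factor of density (or its gradient): e.g. $\rhokaL^{[\Delta t]}\,\utkaLDp\otimes\utkaLDp$, whose time-integrability follows by interpolating the $L^\infty(0,T;L^1(\Omega))$ bound on $\sqrt{\rhokaL^{[\Delta t]}}\,|\utkaLDp|^2$-type quantities against the $L^2(0,T;L^6(\Omega))$ bound on $\utkaLDp$, and the term $\nabx\rhokaL^{[\Delta t]}\cdot(\nabx\utkaLDp)^{\rm T}\utkaLDp\,\wt^{\{\Delta t\}}$. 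I would track these two through H\"older's inequality in the space and time variables, confirm the resulting Lebesgue exponent in $t$ is $s$, then take the supremum over $\|\wt\|_{L^{s'}(0,T;\Wt_0^{1,4}(\Omega))}\le 1$ to conclude \eqref{mtkaLDdtbd}, with the constant $C$ depending only on the data and not on $\Delta t$, $L$, $\alpha$ or $\kappa$. A minor technical point to verify along the way is that the correction terms involving $\wt-\wt^{\{\Delta t\}}$ do not require extra regularity: they are estimated crudely, bounding $\|\wt-\wt^{\{\Delta t\}}\|$ in the same norm as $\|\wt\|$ by a triangle inequality and the contraction property, so they contribute terms of the same type.
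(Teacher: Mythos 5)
Your proposal mirrors the paper's own proof: test the reformulated momentum equation \eqref{equnconadj} against $\wt \in L^{s'}(0,T;\Wt_0^{1,4}(\Omega))$, estimate each term via H\"older using precisely the uniform bounds you cite (in particular \eqref{kaLLinfL1}--\eqref{nabxkaLLrLs}, \eqref{mtkaLLnu}, \eqref{pkaLbd}, \eqref{tautt43bd}, \eqref{eq:energy-u+psi-final2}, \eqref{fnbd}), note that $W^{1,4}(\Omega)\hookrightarrow L^\infty(\Omega)$ for $d\le 3$, identify the density$\times$velocity$\times$velocity-gradient terms as the ones fixing $s$, and control $\wt^{\{\Delta t\}}$ by $\wt$. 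The paper organizes the binding H\"older split explicitly as $\frac{1}{s'}+\frac{1}{\upsilon}+\frac{1}{2}=1$ with $\upsilon=\frac{8\Gamma-12}{3\Gamma}$ and handles the $\wt^{\{\Delta t\}}$ bound on the mixed Bochner norm $L^{s'}(0,T;W^{1,4}(\Omega))$ via Jensen's inequality and $s'\ge 4$ (cf.\ \eqref{wtimeav}) rather than a naive contraction, but these are bookkeeping details rather than a different route.
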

\begin{proof}
Let $\upsilon = \frac{8\Gamma -12}{3\Gamma} \geq \frac{13}{6}$ as in Lemma \ref{rhonabxsclem}.
Then for $s'=\frac{s}{s-1}=\frac{8\Gamma-12}{\Gamma-6} \red{>} 8$, we have that
\begin{align}
\frac{1}{s'}+\frac{1}{\upsilon}+ \frac{1}{2}=1.
\label{frac}
\end{align}
It follows from (\ref{equnconadj}), (\ref{frac}), $W^{1,4}(\Omega) \hookrightarrow L^\infty(\Omega)$,
$H^1(\Omega) \hookrightarrow L^6(\Omega)$,
(\ref{kaLLinfL1}--c), (\ref{utkaLL6}), (\ref{mtkaLLnu}), on noting that
$\frac{10\Gamma-6}{3(\Gamma+1)}\geq \frac{8\Gamma-12}{7\Gamma-6}$,
(\ref{pkaLbd}), (\ref{tautt43bd}),
(\ref{betaLa}),  (\ref{eq:energy-u+psi-final2}) and (\ref{fnbd}) that,
for any $\wt \in L^{s'}(0,T;\Wt_0^{1,4}(\Omega))$,
\begin{align}
&\left|\displaystyle\int_{0}^{T}\!\! \int_\Omega
\frac{\partial \mtkaLD}{\partial t} \cdot \wt \,\dx\,\dt\right| \nonumber \\
& \quad \leq C\,\|\nabx \rhokaL^{[\Delta t]}\|_{L^{\upsilon}(\Omega_T)}
\,\|\utkaLDp\|_{L^2(0,T;H^1(\Omega))}\,\|\wt^{\{\Delta t\}}\|_{L^{s'}(0,T;W^{1,4}(\Omega))}
\nonumber \\
& \qquad +
C\,\left[\|\rhokaL^{[\Delta t]}\,\utkaLDp\|_{L^{\upsilon}(\Omega_T)}
+ \|\mtkaLDm\|_{L^{\upsilon}(\Omega_T)} + 1
\right]
\,\|\utkaLDp\|_{L^2(0,T;H^1(\Omega))}\,\|\wt\|_{L^{s'}(0,T;W^{1,4}(\Omega))}
\nonumber \\
& \qquad + C\,\|\rhokaL^{[\Delta t]}\,\utkaLDp\|_{L^{\upsilon}(\Omega_T)}
\,\|\utkaLDp\|_{L^2(0,T;H^1(\Omega))}
\,\|\wt^{\{\Delta t\}}\|_{L^{s'}(0,T;W^{1,4}(\Omega))}
\nonumber \\
%\end{align*}
%\begin{align}
& \qquad +
\|\pkaL^{\{\Delta t\}}\|_{L^{\frac{4}{3}}(\Omega_T)}\, \|\wt\|_{L^{4}(0,T;W^{1,4}(\Omega))}
+ C\,\|\tautt_1(M\,\hpsikaLDp)\|_{L^2(0,T;L^{\frac{4}{3}}(\Omega))}\,
\|\wt\|_{L^{2}(0,T;W^{1,4}(\Omega))}
\nonumber \\
& \qquad +\|\vrhokaLDp\|_{L^\infty(0,T;L^2(\Omega))}
\,\|\vrhokaLDp\|_{L^2(0,T;H^1(\Omega))}\,\|\wt\|_{L^2(0,T;L^\infty(\Omega))}
\nonumber \\
&\qquad + \|\rhokaL^{\Delta t,+}\|_{L^\infty(0,T;L^2(\Omega))}\,
\|\ft^{\{\Delta t\}}\|_{L^2(0,T;L^\infty(\Omega))}\, \|\wt\|_{L^{2}(\Omega_T)}
\nonumber \\
& \quad \leq %C\,\|\wt^{\{\Delta t\}}\|_{L^{s'}(0,T;W^{1,2d}(\Omega))}+
C\,\left[ %\|\rhokaL^{[\Delta t]}\,\utkaLDp\|_{L^{\upsilon}(\Omega_T)}
%+ \|\mtkaLDm\|_{L^{\upsilon}(\Omega_T)} + 1\right]
\|\wt\|_{L^{s'}(0,T;W^{1,4}(\Omega))}
+
\|\wt^{\{\Delta t\}}\|_{L^{s'}(0,T;W^{1,4}(\Omega))}\right].
\nonumber \\
\label{equnconadjbd}
\end{align}
The desired result (\ref{mtkaLDdtbd}) then follows immediately from (\ref{equnconadjbd})
on noting, as $s'\geq 4$, that, for any $\wt \in L^{s'}(0,T;\Wt_0^{1,4}(\Omega))$,
\begin{align}
\|\wt^{\{\Delta t\}}\|_{L^{s'}(0,T;W^{1,4}(\Omega))}^{s'}
& = \Delta t \,\sum_{n=1}^N  \,\left\|\frac{1}{\Delta t} \int_{t_{n-1}}^{t_n} \wt \,\dt\right\|_{W^{1,4}(\Omega)}^{s'}
\nonumber \\
&\leq C\,\Delta t\,\sum_{n=1}^N  \left(\frac{1}{\Delta t} \,
\int_{t_{n-1}}^{t_n} \|\wt\|_{W^{1,4}(\Omega)}^{4}\,\dt\right)^{\frac{s'}{4}}
%\nonumber \\ &
\leq C\,
\|\wt\|_{L^{s'}(0,T;W^{1,4}(\Omega))}^{s'}.
\label{wtimeav}
\end{align}
\end{proof}

Next, we bound the time derivative of $\hpsikaLD$.
%\footnote{At present, we only have this result for $d=2$ and hence we will only have
%strong convergence of $\hpsikaLD$ for $d=2$.}

\begin{lemma}\label{hpsikaLDdtlem}
There exists a $C \in {\mathbb R}_{>0}$, independent of $\Delta t$ and $L$, such that
\begin{align}
\left\|M\,\frac{\partial \hpsikaLD}{\partial t}
\right\|_{L^2(0,T;H^s(\Omega\times D)')} \leq C,
\label{hpsikaLDdtbd}
\end{align}
where $s > 1+ \frac{1}{2}(K+1)d$.
\end{lemma}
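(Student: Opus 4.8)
The plan is to bound the linear functional $\varphi \mapsto \int_0^T \int_{\Omega \times D} M\, \frac{\partial \hpsikaLD}{\partial t}\, \varphi \,\dq \dx \dt$ on $L^2(0,T;H^s(\Omega\times D))$ by testing \eqref{eqpsincon} with test functions $\varphi$ belonging to that space and estimating each of the four terms on the left-hand side of \eqref{eqpsincon} using the $L$- and $\Delta t$-independent bounds already established in \eqref{eq:energy-u+psi-final2}. The exponent $s > 1 + \tfrac12 (K+1)d$ is exactly what is needed so that, by Sobolev embedding on the $(K+1)d$-dimensional domain $\Omega \times D$, one has $H^s(\Omega\times D) \hookrightarrow W^{1,\infty}(\Omega\times D)$; this will allow us to pull $\|\nabx \varphi\|_{L^\infty}$, $\|\nabq \varphi\|_{L^\infty}$ and $\|\varphi\|_{L^\infty}$ out of the various integrals, leaving only Maxwellian-weighted integrals of the approximating solution that are controlled uniformly.

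First I would record the consequences of \eqref{eq:energy-u+psi-final2} that will be used: $\nabx \sqrt{\hpsikaLDp}$ and $\nabq \sqrt{\hpsikaLDp}$ are bounded in $L^2(0,T;L^2_M(\Omega\times D))$, $\vrhokaLDp = \int_D M\,\hpsikaLDp\,\dq$ is bounded in $L^\infty(0,T;L^2(\Omega))\cap L^2(0,T;H^1(\Omega))$ (hence, via \eqref{Cttrsbd}, $\Ctt_i(M\,\hpsikaLDp)$ and $\tautt_1(M\,\hpsikaLDp)$ are bounded as in \eqref{tautt43bd}), $\utkaLDp$ is bounded in $L^2(0,T;\Ht^1_0(\Omega))$, and $\beta^L(\hpsikaLDp) \le \hpsikaLDp$ with $\int_D M\,\hpsikaLDp\,\dq$ bounded in $L^\infty(0,T;L^1(\Omega))$ by \eqref{vrhokaLDint}. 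Then I would estimate term by term. For the diffusion-in-$\qt$ term $\frac{1}{4\lambda}\sum_{i,j}A_{ij}\int M\,\nabqj \hpsikaLDp \cdot \nabqi \varphi$, write $\nabqj \hpsikaLDp = 2\sqrt{\hpsikaLDp}\,\nabqj\sqrt{\hpsikaLDp}$ and use Cauchy--Schwarz plus $\|\nabq\varphi\|_{L^\infty}$ and the $L^\infty(0,T;L^1(\Omega))$ bound on $\int_D M\,\hpsikaLDp$; similarly for $\varepsilon\int M\,\nabx\hpsikaLDp \cdot \nabx\varphi$. For the drag term $-\int M\,\sum_i[\sigtt(\utkaLDp)\,\qt_i]\,\beta^L(\hpsikaLDp)\cdot\nabqi\varphi$, integrate by parts in $\qt_i$ and use \eqref{intbyparts} or directly bound by $\|\nabxtt\utkaLDp\|_{L^2(\Omega)}$ (noting $|\qt_i|\le b_i^{1/2}$) times $\|\nabq \varphi\|_{L^\infty}$ times the Maxwellian mass of $\hpsikaLDp$; the convective term $-\int M\,\utkaLDp\,\beta^L(\hpsikaLDp)\cdot\nabx\varphi$ is handled the same way using $\|\utkaLDp\|_{L^6(\Omega)}$ and H\"older. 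Each estimate produces a product of two factors in $L^2(0,T)$ (or better) times $\|\varphi\|_{L^2(0,T;H^s(\Omega\times D))}$, all with constants independent of $\Delta t$ and $L$, and summing over the terms yields \eqref{hpsikaLDdtbd}.

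I do not anticipate a genuine obstacle here: the hard analytic work — the uniform energy bound \eqref{eq:energy-u+psi-final2} and the associated control of $\vrhokaLDp$ and the Kramers stress — is already in hand, and this lemma is a bookkeeping exercise in matching H\"older exponents against the $(K+1)d$-dimensional Sobolev embedding threshold. The one point requiring mild care is the convective and drag terms, where $\beta^L(\hpsikaLDp)$ rather than $\hpsikaLDp$ appears under the integral together with a gradient of $\hpsikaLDp$; one must avoid needing an $L^2_M$ bound on $\hpsikaLDp$ itself (which is \emph{not} available, only $\mathcal{F}(\hpsikaLDp)\in L^\infty(0,T;L^1_M)$ is), and instead exploit the factorization $\nabx\hpsikaLDp = 2\sqrt{\hpsikaLDp}\,\nabx\sqrt{\hpsikaLDp}$ combined with $\beta^L(\hpsikaLDp)\le \hpsikaLDp$ and $\int_D M\,\hpsikaLDp\,\dq = \vrhokaLDp \in L^\infty(0,T;L^2(\Omega))$ to close the estimate with the Sobolev embedding $H^1(\Omega)\hookrightarrow L^6(\Omega)$ absorbing $\utkaLDp$. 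Provided these exponents are tracked consistently, the bound follows directly.
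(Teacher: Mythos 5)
Your proof is correct and matches the paper's argument in essentially every detail: testing \eqref{eqpsincon} against $\varphi$ with $\|\nabla\varphi\|_{L^\infty(\Omega\times D)}$ controlled via $H^s(\Omega\times D)\hookrightarrow W^{1,\infty}(\Omega\times D)$, using the factorization $\nabla\psi = 2\sqrt{\psi}\,\nabla\sqrt{\psi}$ together with Cauchy--Schwarz, bounding $\beta^L(\hpsikaLDp)$ by $\hpsikaLDp$, and closing with the $L^\infty(0,T;L^2(\Omega))$ bound on $\vrhokaLDp$ (the indispensable ingredient that hinges on $\mathfrak{z}>0$) and the $L^2(0,T;H^1)$ bounds on $\nabla\sqrt{\hpsikaLDp}$ and $\utkaLDp$ from \eqref{eq:energy-u+psi-final2}. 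The only cosmetic difference is your optional detour through $\|\utkaLDp\|_{L^6(\Omega)}$ in the convection term, whereas the paper pairs $L^2\times L^2$ directly; both are valid.
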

\begin{proof}
It follows from (\ref{eqpsincon}), (\ref{eq:energy-u+psi-final2}), (\ref{betaLa})
and \red{(\ref{vrhokaLvvbd})}
%(\ref{vrhokaLvbd})
that, for any $\varphi \in L^2(0,T;W^{1,\infty}(\Omega\times D))$,
\begin{align}
\label{eqpsinconbd}
&\left|\int_{0}^T \!\!\int_{\Omega \times D}
M\,\frac{ \partial \hpsikaLD}{\partial t}
\varphi \dq \dx \dt\right|
\nonumber \\
& \; \leq
2\epsilon \left|\int_{0}^T \!\!\int_{\Omega \times D} M
\,\sqrt{\hpsikaLDp}\,
\nabx \sqrt{\hpsikaLDp} \cdot\, \nabx
\varphi
\dq \dx \dt \right|
\nonumber \\
& \quad \quad +
\frac{1}{2\,\lambda}
\left|
\,\sum_{i=1}^K
 \,\sum_{j=1}^K A_{ij}\,
 \int_{0}^T
 \!\!\int_{\Omega \times D}
 M\,\sqrt{\hpsikaLDp}\,\nabqj \sqrt{\hpsikaLDp}
\cdot\, \nabqi
\varphi
\dq \dx \dt\right|
\nonumber \\
& \quad \quad
+ \left|\int_{0}^T \!\!\int_{\Omega \times D} M \,\utkaLDp\,
\beta^L(\hpsikaLDp)\cdot\, \nabx
\varphi
\dq \dx \dt \right|
\nonumber \\
&
\quad \quad
+ \left| \int_{0}^T \!\!\int_{\Omega \times D} M\,\sum_{i=1}^K
\left[\sigtt(\utkaLDp)
\,\qt_i\right]
\beta^L(\hpsikaLDp) \,\cdot\, \nabqi
\varphi
\,\dq \dx \dt\right|
\nonumber \\
& \;
\leq C\,\max\left\{1,\|\vrhokaLDp\|_{L^\infty(0,T;L^2(\Omega))}\right\}
\biggl[ \|\nabx \sqrt{\hpsikaLDp}\|_{L^2(0,T;L^2_M(\Omega\times D))}
\nonumber \\
& \quad \quad
+
\|\nabq \sqrt{\hpsikaLDp}\|_{L^2(0,T;L^2_M(\Omega\times D))}
+ \|\utkaLDp\|_{L^2(0,T;H^1(\Omega))}\biggr]
\,\|\varphi\|_{L^2(0,T;W^{1,\infty}(\Omega \times D))}
\nonumber \\
& \; \leq C
\,\|\varphi\|_{L^2(0,T;W^{1,\infty}(\Omega \times D))}.
\end{align}
The desired result (\ref{hpsikaLDdtbd}) then follows on noting that $H^s(\Omega \times D) \hookrightarrow
W^{1,\infty}(\Omega \times D)$ for the stated bound on $s$.
\end{proof}

\begin{remark}\label{rem-z}
\red{We note that allowing $\mathfrak{z}=0$ would impact on the proof of Lemma \ref{mtkaLDtdlem}. As is already clear from the
formal energy inequality \eqref{energy-id4}, by setting $\mathfrak{z} = 0$ one looses control over the $L^\infty(0,T;L^2(\Omega))
\cap L^2(0,T; H^1(\Omega))$ norm of $\varrho^{\Delta t,+}_{\kappa,\alpha,L}$; instead, one can only control weaker norms
of $\varrho^{\Delta t,+}_{\kappa,\alpha,L}$, leading to \eqref{tautt1red} in place of \eqref{tautt43bd}.
%(4.32) in place of (4.28b).
%e.g. its $L^\infty(0,T; L^1(\Omega))$ norm, thanks to \eqref{vrhokaLDint}.
While this weaker control is not
sufficient to prove \eqref{mtkaLDdtbd} as stated, one can prove a weaker result by replacing the $\Wt^{1,4}(\Omega)$
norm on the test function $\wt$ by the $\Wt^{1,2d}(\Omega)$ norm throughout the proof.
Admitting $\mathfrak{z}=0$ in Lemma \ref{hpsikaLDdtlem}, on the other hand, results in unsurmountable difficulties:
the proof of the lemma cannot be completed without an $L^r(0,T;L^2(\Omega))$ norm bound on $\varrho^{\Delta t,+}_{\kappa,\alpha,L}$, with $r>2$ at least; in particular, we are unable to prove \eqref{hpsikaLDdtbd}, or a weaker result on the time-difference of $\hpsikaLD$, when $\mathfrak{z}=0$. Remark \ref{z0rem},
and equation \eqref{vrhoLsbd} in particular, indicate that an $L^r(0,T;L^2(\Omega))$ norm bound on $\varrho^{\Delta t,+}_{\kappa,\alpha,L}$, with $r>2$, is unlikely to hold without requiring $\mathfrak{z}>0$, regardless of the choice of $\mathfrak{k}$ in \eqref{tau1a}.}
\end{remark}

\subsection{Passing to the limit $\Delta t \rightarrow 0_+$ $(L\rightarrow \infty)$ in the
momentum equation (\ref{equncon}) and the {F}okker--{P}lanck equation (\ref{eqpsincon})}

We have the following convergence results.

\begin{lemma}\label{mtkaLDconlem}
We have that
\begin{align}
\mtka:= \rhoka\,\utka \in \Lt^\upsilon(\Omega_T) \cap W^{1,s}(0,T;\Wt_0^{1,4}(\Omega)')
\cap %L^\infty
C_w([0,T];L^{\frac{2\Gamma}{\Gamma+1}}(\Omega)),
\label{mtkareg}
\end{align}
where $\upsilon = \frac{10\Gamma-6}{3(\Gamma+1)}$ and $s=\frac{8\Gamma-12}{7\Gamma-6}$,
with $\Gamma \geq 8$.
\red{In addition,} for a further subsequence of the subsequences of Lemmas \ref{rhoLconv} and \ref{rhonabxsclem}
it follows that, as $\Delta t \rightarrow 0_+$ $(L \rightarrow \infty)$,
\begin{subequations}
\begin{alignat}{2}
\mtkaLDpm &\rightarrow \mtka  \qquad&&\mbox{weakly in } \Lt^\upsilon(\Omega_T),
\label{mtkaLwcon}\\
\mtkaLD &\rightarrow \mtka
\qquad &&\mbox{weakly in } W^{1,s}(0,T;\Wt_0^{1,4}(\Omega)'),
\label{mtkaLdtwcon}\\
\mtkaLDpm &\rightarrow \mtka  \qquad&&\mbox{strongly in } L^2(0,T;\Ht^{1}(\Omega)'),
\label{mtkaLscon}\\
\mtkaLDpm \otimes \utkaLDp &\rightarrow \mtka\otimes \utka
\qquad &&\mbox{weakly in } L^{2}(0,T;\Ltt^{\frac{6\Gamma}{4\Gamma+3}}(\Omega)),
\label{mtutkaLDpmcon}\\
\mtkaLDpm & \rightarrow \mtka  \qquad&&%\mbox{weakly-$\star$ in } L^\infty(0,T;L^{\frac{2\Gamma}{\Gamma+1}}(\Omega)).
\mbox{in } C_w([0,T];L^{\frac{2\Gamma}{\Gamma+1}}(\Omega)).
\label{mtkaLwscon}
\end{alignat}
\end{subequations}
\end{lemma}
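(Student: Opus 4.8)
The plan is to collect the $L,\Delta t$-independent bounds already established and extract weakly (or weakly-$\ast$) convergent subsequences, then upgrade some of these to the stated $C_w$ and strong convergences by standard Aubin--Lions--Simon type arguments, and finally identify the limits. First I would note that \eqref{mtkareg} is essentially forced by the bounds: the $\Lt^\upsilon(\Omega_T)$ membership comes from the third bound in \eqref{mtkaLLnu}, the $W^{1,s}(0,T;\Wt_0^{1,4}(\Omega)')$ membership from Lemma \ref{mtkaLDtdlem} together with the $L^s(\Omega_T)$-bound implicit in \eqref{mtkaLLnu} (recall $\upsilon \geq s$ since $\frac{10\Gamma-6}{3(\Gamma+1)} \geq \frac{8\Gamma-12}{7\Gamma-6}$ for $\Gamma\geq 8$), and the $C_w([0,T];L^{\frac{2\Gamma}{\Gamma+1}}(\Omega))$ membership from Lemma \ref{lemma-strauss}(a) once \eqref{mtkaLwscon} is in hand. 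The weak convergences \eqref{mtkaLwcon} and \eqref{mtkaLdtwcon} are then immediate from the uniform bounds in Lemma \ref{mtkaLlem} (third bound in \eqref{mtkaLLnu}) and Lemma \ref{mtkaLDtdlem} respectively, passing to a subsequence; the limit in \eqref{mtkaLwcon} is called $\widetilde m_{\kappa,\alpha}$ for now and must be identified.

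The identification $\mtka = \rhoka\,\utka$ is the crux and I would handle it as follows. From \eqref{mtkaLLnu}, $\mtkaLDpm$ is bounded in $L^\infty(0,T;L^{\frac{2\Gamma}{\Gamma+1}}(\Omega))$, hence (since $\frac{2\Gamma}{\Gamma+1}>\frac{2d}{d+2}$ for $\Gamma\geq 8$) bounded in $L^\infty(0,T;\Ht^1(\Omega)')$; combined with the time-derivative bound \eqref{mtkaLDdtbd}, the Aubin--Lions--Simon theorem \eqref{compact1} applied with $\mathfrak{X}_0 = L^{\frac{2\Gamma}{\Gamma+1}}(\Omega)$, $\mathfrak{X} = \Ht^1(\Omega)'$, $\mathfrak{X}_1 = \Wt_0^{1,4}(\Omega)'$ gives the strong convergence \eqref{mtkaLscon} for $\mtkaLD$, and then for $\mtkaLDpm$ using \eqref{badb} of Lemma \ref{badlem} (which bounds $\|\mtkaLDp - \mtkaLDm\|_{L^2(\Omega_T)}\to 0$, and the $\mtkaLD$-to-$\mtkaLDpm$ comparison via \eqref{u-t-2}). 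Having $\rhokaL^{[\Delta t]}\to\rhoka$ strongly in $L^2(0,T;L^r(\Omega))$ for $r$ up to $6^-$ from \eqref{rhoLscon} and $\utkaLDp\rightharpoonup\utka$ weakly in $L^2(0,T;\Ht^1_0(\Omega))\hookrightarrow L^2(0,T;\Lt^6(\Omega))$ from \eqref{uLwconH1}, a product of strong-times-weak convergence identifies $\rhokaL^{[\Delta t]}\utkaLDp\rightharpoonup\rhoka\utka$ in $L^1(\Omega_T)$ (or a suitable $L^r$), and since $\rhokaL^{[\Delta t]}\utkaLDp$ and $\mtkaLDm$ have the same $L^2(0,T;\Ht^1(\Omega)')$-limit by \eqref{bada}, we conclude $\mtka = \rhoka\,\utka$ a.e. on $\Omega_T$. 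The $C_w$ convergence \eqref{mtkaLwscon} then follows from the uniform $L^\infty(0,T;L^{\frac{2\Gamma}{\Gamma+1}}(\Omega))$ bound plus the strong $L^2(0,T;\Ht^1(\Omega)')$ convergence via a standard density/equicontinuity argument (test functions in $\Lt^{\frac{2\Gamma}{\Gamma-1}}(\Omega)$, approximated in $\Ht^1_0(\Omega)$).

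For \eqref{mtutkaLDpmcon} I would argue that $\mtkaLDpm\otimes\utkaLDp$ is bounded in $L^2(0,T;\Ltt^{\frac{6\Gamma}{4\Gamma+3}}(\Omega))$ by \eqref{mtutkaLD} (combining \eqref{mtkaLLnu} with \eqref{utkaLL6} via H\"older, noting $\frac{\Gamma+6}{6\Gamma}+\frac16 = \frac{4\Gamma+3}{6\Gamma}$), so a subsequence converges weakly to some limit; to identify it with $\mtka\otimes\utka$ one uses the strong convergence of $\mtkaLDpm$ in $L^2(0,T;\Ht^1(\Omega)')$ (or better, strong convergence of $\sqrt{\rhokaLDwpm}\,\utkaLDwpm$-type quantities available from the entropy bound and \eqref{kaLLinfL1}) against the weak convergence of $\utkaLDp$ in $L^2(0,T;\Lt^6(\Omega))$, writing $\mtkaLDpm\otimes\utkaLDp = (\sqrt{\rhokaLDwpm}\,\utkaLDwpm)\otimes(\sqrt{\rhokaLDwpm}\,\utkaLDp)$ and passing to the limit factor by factor; here the strong convergence of $\sqrt{\rhokaLDwpm}\,\utkaLDwpm$ in $L^2(\Omega_T)$ that underlies \eqref{bada}--\eqref{badb} is what makes the product converge. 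The main obstacle is precisely this last identification of the weak limit of the convective term $\mtkaLDpm\otimes\utkaLDp$: one has only weak convergence of $\utkaLDp$ and the density is not bounded below, so the usual trick of writing everything in terms of $\sqrt{\rhokaL^{[\Delta t]}}$ and invoking the strong $L^2$ convergence of $\sqrt{\rhokaL^{[\Delta t]}}\,\utkaLDp$ (obtainable by interpolating the strong $L^2(0,T;\Ht^1(\Omega)')$ convergence of $\mtkaLDpm$ against the $L^\infty(0,T;L^2(\Omega))$ bound on $\sqrt{\rhokaLDwpm}\,\utkaLDwpm$, exactly as in the proof of Lemma \ref{badlem}) must be carried out carefully, keeping track of the precise Lebesgue exponents so that all H\"older pairings close with $\Gamma\geq 8$.
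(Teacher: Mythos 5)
Your overall structure matches the paper's: the uniform bounds from Lemma~\ref{mtkaLlem} and Lemma~\ref{mtkaLDtdlem} give weak compactness, Aubin--Lions--Simon \eqref{compact1} gives the strong $L^2(0,T;\Ht^1(\Omega)')$ convergence of $\mtkaLD$, \eqref{badb} together with \eqref{u-t-2} transfers it to $\mtkaLDpm$, and limits are then identified by pairing strong-in-dual against weak-in-primal. Two details differ harmlessly from the paper: it applies \eqref{compact1} with $\mathfrak{X}_0 = L^\upsilon(\Omega)$ rather than $L^{\frac{2\Gamma}{\Gamma+1}}(\Omega)$ (both compactly embed in $H^1(\Omega)'$), and for the identification $\mtka = \rhoka\,\utka$ it pairs the strong $L^2(0,T;H^1(\Omega)')$ convergence of $\rhokaLDp$ from \eqref{rhoLDpcon} against $\utkaLDp\cdot\wt \rightharpoonup \utka\cdot\wt$ in $L^2(0,T;H^1(\Omega))$, whereas you route through the strong $L^2(0,T;L^r(\Omega))$ convergence of $\rhokaL^{[\Delta t]}$ from \eqref{rhoLscon} combined with \eqref{bada}. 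Both close.

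The one step that would not survive scrutiny as written is your final paragraph. You claim strong $L^2(\Omega_T)$ convergence of $\sqrt{\rhokaL^{[\Delta t]}}\,\utkaLDp$ is ``obtainable by interpolating'' the strong $L^2(0,T;\Ht^1(\Omega)')$ convergence of $\mtkaLDpm$ against the $L^\infty(0,T;L^2(\Omega))$ bound on $\sqrt{\rhokaLDwpm}\,\utkaLDwpm$; but interpolation relates different norms of the same sequence and cannot transfer strong convergence from $\rho u$ to $\sqrt{\rho}\,u$, which are different quantities (the estimates you invoke from the proof of Lemma~\ref{badlem} interpolate norms of a single product $\sqrt{\rhokaL^{[\Delta t]}}\sqrt{\rhokaLDm}\,(\utkaLDp-\utkaLDm)$, not a bound on $\rho u$ against one on $\sqrt{\rho}u$). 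You do not need this for \eqref{mtutkaLDpmcon}: the first route you mention is the one the paper takes and suffices. For $\chi \in C^\infty_0(\Omega_T)$ the pairing $\int_0^T \langle \mtkaLDpm, \chi\,\utkaLDp\rangle_{H^1(\Omega)}\,\dt$ converges because $\mtkaLDpm \to \mtka$ strongly in $L^2(0,T;\Ht^1(\Omega)')$ by \eqref{mtkaLscon} while $\chi\,\utkaLDp \rightharpoonup \chi\,\utka$ weakly in $L^2(0,T;\Ht^1(\Omega))$ by \eqref{uLwconH1}, and the bound \eqref{mtutkaLD} places the weak limit in $L^2(0,T;\Ltt^{\frac{6\Gamma}{4\Gamma+3}}(\Omega))$; no $\sqrt{\rho}$ decomposition is required. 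Likewise, for \eqref{mtkaLwscon} the equicontinuity in $t$ comes from the time-derivative bound \eqref{mtkaLDdtbd} (paired with the $L^\infty(0,T;L^{\frac{2\Gamma}{\Gamma+1}}(\Omega))$ bound via \eqref{Cwcoma},b), not from the strong $L^2(0,T;\Ht^1(\Omega)')$ convergence as you state.
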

\begin{proof}
The weak convergence result (\ref{mtkaLwcon}) for
some limit function $\mtka \in \Lt^\upsilon(\Omega_T)$\red{, which is the common limit of
$\mtkaL^{\Delta t}$, $\mtkaL^{\Delta t,+}$ and $\mtkaL^{\Delta t,-}$,} follows immediately from (\ref{mtkaLLnu}),
(\ref{badb}), (\ref{mtkaLn}) and (\ref{u-t-2}).
The weak convergence result (\ref{mtkaLdtwcon})
and the strong convergence result (\ref{mtkaLscon})
for $\mtkaLD$ follow immediately for $\mtka \in \Lt^\upsilon(\Omega_T)\cap
W^{1,s}(0,T;\Wt_0^{1,4}(\Omega)')$
from (\ref{mtkaLLnu}), (\ref{mtkaLDdtbd}) and (\ref{compact1}),
on noting that  $L^\upsilon(\Omega)$ is compactly embedded in $H^1(\Omega)'$, \red{which is in turn continuously embedded in $W^{1,4}(\Omega)'$}.
The corresponding result (\ref{mtkaLscon}) for $\mtkaLDwpm$ then follows from (\ref{mtkaLscon}) for $\mtkaLD$,
and (\ref{badb}).
It follows from (\ref{rhoLDpcon}), (\ref{uLwconH1}) and (\ref{mtkaLscon})
for $\mtkaLDp$ that $\mtka=\rhoka\,\utka$, and hence (\ref{mtkareg}).
The result (\ref{mtutkaLDpmcon}) follows immediately from
(\ref{mtutkaLD}), (\ref{mtkaLscon}) and (\ref{uLwconH1}). Finally \eqref{mtkaLwscon}
follows from the bound on the first term in \eqref{mtkaLLnu}, (\ref{mtkaLDdtbd}) and (\ref{Cwcoma},b).
\end{proof}

Next, noting (\ref{ulin},b), a simple calculation yields that
[see (6.32)--(6.34) in \cite{BS2010} for details]:
\begin{align}
\int_0^T\int_{\Omega \times D} M\big|\nabx \sqrt{\hpsikaLD}\big|^2\dq\dx \dt
&\leq 2 \int_0^T \int_{\Omega \times D} M\,
\left[ |\nabx\sqrt{\hpsikaLDp}|^2
+ |\nabx\sqrt{\hpsikaLDm}|^2 \right] \dq \dx \dt,
\label{nabxDTpm}
\end{align}
and an analogous result with $\nabx$ replaced by $\nabq$.
\red{Then, the bound \eqref{eq:energy-u+psi-final2},
%and \eqref{psi-time-bound}, %and \eqref{u-time-bound}
on noting
%(\ref{inidata}), %(\ref{fractional-u}),
%(\ref{Korn}),
%(\ref{ulin},b), %(\ref{idatabd}),
(\ref{inidata-1}), (\ref{nabxDTpm})
and the convexity of ${\mathcal F}$,}
imply the existence
of a $C \in \mathbb{R}_{>0}$,
%depending only on
%${\sf B}(\ut_0,\ft,\widetilde\psi_0)$ and
%the constant $C_{\ast}$, %and $C_{\ast\ast}$, which in turn depend only
%on$\epsilon$,  $\rho_{\rm min}$, $\rho_{\rm max}$, $\mu_{\rm \min}$,
%$\zeta_{\rm min}$, $\zeta_{\rm max}$,
%$T$, $|\Att|$, $a_0$, $c_0$, $C_\varkappa$, $k$,
%$\lambda$, $K$ and  $b$, %and $C_{\sf P}(\Omega)$,
%$\epsilon$, $\nu$, $C_{\sf P}(\Omega)$, $T$, $|\Att|$,
%$a_0$, $k$, $K$, $\lambda$, $\Omega$, $d$ and $b$,
independent of $\Delta t$ and $L$, such that:
\begin{align}\label{eq:energy-u+psi-final5}
%&\mbox{ess.sup}_{t \in [0,T]}\|\uta^{\Delta t(,\pm)}(t)\|^2
%+ \|\ut_L^{\Delta t,\pm}\|_{N^{\gamma,2}(0,T;L^2(\Omega))}^2
%+ \frac{1}{\Delta t} \int_{\Delta t}^T \|\uta^{\Delta t, +}(t) - \uta^{\Delta t,+}(t-\Delta t)\|^2
%\dd t
%\nonumber \\
%& \;
%+ \frac{1}{\Delta t} \int_{0}^T \|\uta^{\Delta t, +}- \uta^{\Delta t,-}\|^2
%\dd t
%+ \int_0^T \|\nabxtt \uta^{\Delta t(,\pm)}\|^2 \dd t
%\nonumber \\
&
\mbox{ess.sup}_{t \in [0,T]}
\int_{\Omega \times D}  M\, \mathcal{F}(\hpsikaLDpm(t)) \dq \dx
 + \frac{1}{\Delta t\, L}
\int_0^T \!\! \int_{\Omega \times D}\!\! M\, (\hpsikaLDp - \hpsikaLDm)^2
\dq \dx \dd t
\nonumber \\
&\; + \int_0^T\!\! \int_{\Omega \times D} M\,
\big|\nabx \sqrt{\hpsikaLDpm} \big|^2 \dq \dx \dd t
+\, \int_0^T\!\! \int_{\Omega \times D}M\,\big|\nabq \sqrt{\hpsikaLDpm}\big|^2
\,\dq \dx \dd t \leq C.
\end{align}

\begin{lemma}
\label{convfinal}
%Suppose that the assumptions \eqref{inidata} and the condition \eqref{LT},
%relating $\Delta t$ to $L$, hold. Then,
%there exists a subsequence of $\{(\rho^{\Delta t}_L,\utae^{\Delta t}, \widetilde\psi^{\Delta t}_L)
%\}_{L >1}$ (not indicated)
%with $\Delta t = o(L^{-1})$, and functions $(\rho,\ut, \widetilde\psi)$,
%with $\widetilde\psi \geq 0$ a.e. on $\Omega \times D \times [0,T]$,
%such that
%
%\[ \rho \in L^\infty(0,T;\Upsilon)\cap C([0,T];L^p(\Omega)), \qquad
%\ut \in L^{\infty}(0,T;\Lt^2(\Omega))\cap L^{2}(0,T;\Vt),
%\cap H^1(0,T;\Vt'_\sigma),\quad \sigma\geq \textstyle{\frac{1}{2}}d,\; \sigma>1,
%\]
%
%where $p \in [1,\infty)$, and
%
%\[\widetilde\psi \in L^1(0,T;L^1_M(\Omega \times D)) \quad \mbox{and}
%\qquad
For a further subsequence of the subsequences of Lemmas \ref{rhoLconv}, \ref{rhonabxsclem}
and \ref{mtkaLDconlem},
there exists a function
\begin{subequations}
\begin{align}
\label{hpsika}
&\hpsika \in L^{\upsilon}(0,T;Z_1%L^1_M(\Omega \times D)
)\cap
H^1(0,T; M^{-1}(H^{s}(\Omega \times D))'),
\end{align}
where $\upsilon \in [1,\infty)$ and $s>1+\frac{1}{2}(K+1)d$,
with finite relative entropy and Fisher information,
\begin{align}
\mathcal{F}(\hpsika) \in L^\infty(0,T;L^1_M(\Omega \times D)) \qquad \mbox{and} \qquad
\sqrt{\hpsika} \in L^2(0,T;H^1_M(\Omega \times D)),
\label{Fisent}
\end{align}
%\quad s>1 + \textstyle{\frac{1}{2}}(K+1)d,
%
%%
%\begin{equation}\label{mass-conserved}
%%\varrho(\xt,t):=
%\int_D M(\qt)\,\widetilde\psi(\xt,\qt,t) \dq = 1\quad \mbox{for a.e. $(x,t) \in \Omega \times [0,T]$},
%\end{equation}
%%
%whereby $\widetilde\psi \in L^\infty(0,T; L^1_M(\Omega \times D))$;
%and finite relative entropy and Fisher information, with
%%
%\begin{equation}\label{relent-fisher}
%\mathcal{F}(\widetilde\psi) \in L^\infty(0,T;L^1_M(\Omega\times D))\quad
%\mbox{and}\quad \sqrt{\widetilde\psi} \in L^{2}(0,T;H^1_M(\Omega \times D)),
%\end{equation}
%%
\end{subequations}
such that,
as $\Delta t \rightarrow 0_+$ $(L \rightarrow \infty)$,
%
\begin{comment}
\begin{subequations}
\begin{alignat}{2}
\rho_L^{[\Delta t]} &\rightarrow \rho &&\qquad \mbox{weak$^\star$ in }
L^\infty(0,T;\Lt^\infty(\Omega)), \label{rho[]weak}\\
%\rho_L^{\Delta t (,\pm)} &\rightarrow \rho &&\qquad \mbox{weak$^\star$ in }
%L^{\infty}(0,T;{L}^\infty(\Omega)), \label{rhowconL2}\\
\rho_L^{[\Delta t]} &\rightarrow \rho &&\qquad \mbox{strongly in }
L^{\infty}(0,T;{L}^p(\Omega)), \label{rho[]sconL2}\\
\rho_L^{\Delta t (,\pm)}, \, \rho_L^{\{\Delta t\}},
&\rightarrow \rho &&\qquad \mbox{strongly in }
L^{\infty}(0,T;{L}^p(\Omega)), \label{rhosconL2}\\
\mu(\rho_L^{\Delta t(,\pm)}) &\rightarrow \mu(\rho) &&\qquad \mbox{strongly in }
L^{\infty}(0,T;{L}^p(\Omega)), \label{musconL2}\\
\zeta(\rho_L^{[\Delta t]}),\,\zeta(\rho_L^{\Delta t(,\pm)}),
\,
\zeta_L^{\{\Delta t\}}
 &\rightarrow \zeta(\rho) &&\qquad \mbox{strongly in }
L^{\infty}(0,T;{L}^p(\Omega)), \label{zetasconL2}
\end{alignat}
\end{subequations}
%
where $p \in [1,\infty)$;
%
\begin{subequations}
\begin{alignat}{2}
\utae^{\Delta t (,\pm)} &\rightarrow \ut &&\qquad \mbox{weak$^\star$ in }
L^{\infty}(0,T;{\Lt}^2(\Omega)), \label{uwconL2a}\\
%\bet
\utae^{\Delta t (,\pm)} &\rightarrow \ut &&\qquad \mbox{weakly in }
L^{2}(0,T;\Vt), \label{uwconH1a}\\
%\bet&&
\utae^{\Delta t (,\pm)} &\rightarrow \ut &&\qquad \mbox{strongly in }
L^{2}(0,T;\Lt^{r}(\Omega)), \label{usconL2a}
%\bet
%&~\frac{\partial \utae^{\Delta t}}{\partial t} \rightarrow  \frac{\partial \ute}{\partial t}
%\qquad &&\mbox{weakly in }
%L^2(0,T;\Vt_\sigma'), \label{utwconL2a}
\end{alignat}
\end{subequations}
where $r \in [1,\infty)$ if $d=2$ and $r \in [1,6)$ if $d=3$;
and
\end{comment}
\begin{subequations}
\begin{alignat}{2}
%\hpsiaet^{\Delta t(,\pm)} & \rightarrow
%\hpsiaet &&\qquad \mbox{weakly in } L^1(0,T;L^1_M(\Omega\times D)),\label{psiwconL1}\\
%\bet
M^{\frac{1}{2}}\,\nabx \sqrt{\hpsikaLDpm} &\rightarrow M^{\frac{1}{2}}\,\nabx \sqrt{\hpsika}
&&\qquad \mbox{weakly in } L^{2}(0,T;\Lt^2(\Omega\times D)), \label{psiwconH1a}\\
%\bet
M^{\frac{1}{2}}\,\nabq \sqrt{\hpsikaLDpm} &\rightarrow M^{\frac{1}{2}}\,\nabq \sqrt{\hpsika}
&&\qquad \mbox{weakly in } L^{2}(0,T;\Lt^2(\Omega\times D)), \label{psiwconH1xa}\\
M\,\frac{\partial \hpsikaLD}{\partial t} &\rightarrow M\,\frac{\partial \hpsika}{\partial t}
&&\qquad \mbox{weakly in } L^{2}(0,T;H^s(\Omega\times D)'), \label{psidtwcon}\\
\hpsikaLDpm & \rightarrow
\hpsika &&\qquad \mbox{strongly in } L^\upsilon(0,T;L^1_M(\Omega\times D)),\label{psisconL1}\\
\beta^L(\hpsikaLDpm) & \rightarrow
\hpsika &&\qquad \mbox{strongly in } L^\upsilon(0,T;L^1_M(\Omega\times D)),\label{psisconL1beta}\\
\tautt(M\,\hpsikaLDp) & \rightarrow \tautt(M\,\hpsika)
&&\qquad \mbox{strongly in } \Ltt^r(\Omega_T),\label{tausconLs}
\end{alignat}
where  $r \in [1,\frac{4(d+2)}{3d+4})$; and,
for a.a.\ $t \in (0,T)$,
\begin{align}\label{fatou-app}
&\int_{\Omega \times D} M(\qt)\, \mathcal{F}(\hpsika(\xt,\qt,t))\dq \dx
\leq \liminf_{\Delta t \rightarrow 0_+\, (L\rightarrow \infty)}
\int_{\Omega \times D} M(\qt)\, \mathcal{F}(\hpsikaLDp(\xt,\qt,t)) \dq \dx.
\end{align}
\end{subequations}

In addition, we have that
\begin{align}
\vrhoka := \int_D M\, \hpsika\, \dq \in L^\infty(0,T;L^2(\Omega)) \cap L^2(0,T;H^1(\Omega)),
\label{vrhokareg}
\end{align}
and, as $\Delta t \rightarrow 0_+$ $(L \rightarrow \infty)$,
\begin{subequations}
\begin{align}
\vrhokaLDp &\rightarrow \vrhoka \quad
\mbox{weakly-$\star$ in } L^\infty(0,T;L^2(\Omega)),
\quad \mbox{weakly in } L^2(0,T;H^1(\Omega)),
\label{vrhokaLDpH1con} \\
\vrhokaLDp,\,\int_D M\,\beta^L(\hpsikaLDp)\,\dq &\rightarrow \vrhoka
\quad \mbox{strongly in } L^{\frac{5\varsigma}{3(\varsigma-1)}}(0,T;L^\varsigma(\Omega)),
\label{bvrhokaLDpL2con}
\end{align}
\end{subequations}
for any  $\varsigma \in (1,6)$.
\end{lemma}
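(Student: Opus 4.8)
The plan is to extract the limit $\hpsika$ and all the stated convergences by combining the uniform bounds derived above — principally \eqref{eq:energy-u+psi-final2}, \eqref{eq:energy-u+psi-final5} and Lemma~\ref{hpsikaLDdtlem} — with Dubinski\u{\i}'s compactness theorem \eqref{Dubinskii}, the Aubin--Lions--Simon theorem \eqref{compact1}, and the convergences for $\rhokaL^{[\Delta t]}$, $\utkaLDp$ and $\mtkaLDpm$ from Lemmas~\ref{rhoLconv}, \ref{rhonabxsclem} and \ref{mtkaLDconlem}. First I would record that \eqref{eq:energy-u+psi-final5} bounds $\mathcal{F}(\hpsikaLDpm)$ in $L^\infty(0,T;L^1_M(\Omega\times D))$ and $\sqrt{\hpsikaLDpm}$ in $L^2(0,T;H^1_M(\Omega\times D))$, while Lemma~\ref{hpsikaLDdtlem} bounds $M\,\partial_t\hpsikaLD$ in $L^2(0,T;H^s(\Omega\times D)')$. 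To pass to the limit I would introduce the seminormed set $\mathfrak{M}:=\{\varphi\in Z_1:\sqrt{\varphi}\in H^1_M(\Omega\times D)\}$ with $[\varphi]_{\mathfrak{M}}:=\|\sqrt{\varphi}\|_{H^1_M(\Omega\times D)}^2$; property \eqref{eq:property} and the seminorm axioms are immediate, and the compact embedding \eqref{wcomp2} shows that $\mathfrak{M}$ is compactly embedded in $L^1_M(\Omega\times D)$, which embeds continuously into $M^{-1}(H^s(\Omega\times D))'$ since $H^s(\Omega\times D)\hookrightarrow L^\infty(\Omega\times D)$. Dubinski\u{\i}'s theorem then gives a subsequence with $\hpsikaLD\to\hpsika$ strongly in $L^1(0,T;L^1_M(\Omega\times D))$; the time-difference bound $\|\hpsikaLDp-\hpsikaLDm\|_{L^2(0,T;L^2_M(\Omega\times D))}^2\leq C\,\Delta t\,L=o(1)$ from \eqref{eq:energy-u+psi-final5} and \eqref{LT} transfers this to $\hpsikaLDpm$, and interpolating against the $L^\infty(0,T;L^1_M)$ bound (which follows from the superlinearity of $\mathcal{F}$ and $\int_{\Omega\times D}M\,\hpsikaLDpm\dq\dx$ bounded) upgrades it to \eqref{psisconL1}. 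Since $\hpsikaLDpm\to\hpsika$ in $L^1$ gives, along a further subsequence, $\sqrt{\hpsikaLDpm}\to\sqrt{\hpsika}$ a.e. and, via $\int M|\sqrt a-\sqrt b|^2\leq\int M|a-b|$, strongly in $L^2(0,T;L^2_M(\Omega\times D))$, a standard distributional argument identifies the weak limits of $M^{1/2}\nabx\sqrt{\hpsikaLDpm}$ and $M^{1/2}\nabq\sqrt{\hpsikaLDpm}$, yielding \eqref{psiwconH1a}, \eqref{psiwconH1xa} and $\sqrt{\hpsika}\in L^2(0,T;H^1_M(\Omega\times D))$; passing to the limit in Lemma~\ref{hpsikaLDdtlem} gives \eqref{psidtwcon}, and nonnegativity of $\hpsika$ together with \eqref{hpsika} and \eqref{Fisent} follow.

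Next I would treat the remaining scalar quantities. For \eqref{psisconL1beta} I would use that the entropy bound controls the large-value tail: $0\leq\hpsikaLDpm-\beta^L(\hpsikaLDpm)=[\hpsikaLDpm-L]_+$ and $\mathcal{F}(s)/s\geq\log L-1$ for $s\geq L>e$, so $\|\hpsikaLDpm-\beta^L(\hpsikaLDpm)\|_{L^\infty(0,T;L^1_M(\Omega\times D))}\leq(\log L-1)^{-1}\,\mbox{ess.sup}_{t}\int_{\Omega\times D}M\,\mathcal{F}(\hpsikaLDpm)\dq\dx\to0$, whence $\beta^L(\hpsikaLDpm)\to\hpsika$ in $L^\upsilon(0,T;L^1_M(\Omega\times D))$ for $\upsilon<\infty$. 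The Fatou inequality \eqref{fatou-app} and $\mathcal{F}(\hpsika)\in L^\infty(0,T;L^1_M(\Omega\times D))$ follow from the a.e. convergence and Fatou's lemma applied for a.e.\ $t$, using \eqref{eq:energy-u+psi-final2}. For the polymer number density, integrating the strong convergence of $\hpsikaLDpm$ over $D$ and using $|\int_D M\varphi\dq|\leq\int_D M|\varphi|\dq$ gives $\vrhokaLDp\to\vrhoka$ and (by the tail estimate) $\int_D M\,\beta^L(\hpsikaLDp)\dq\to\vrhoka$ strongly in $L^\upsilon(0,T;L^1(\Omega))$ for all $\upsilon<\infty$; the energy bound \eqref{eq:energy-u+psi-final2} (using $\mathfrak{z}>0$) gives uniform bounds on these in $L^\infty(0,T;L^2(\Omega))\cap L^2(0,T;H^1(\Omega))\hookrightarrow L^\infty(0,T;L^2(\Omega))\cap L^2(0,T;L^6(\Omega))$, so weak/weak-$\star$ lower semicontinuity yields \eqref{vrhokareg} and \eqref{vrhokaLDpH1con}, and interpolating the strong $L^\upsilon(0,T;L^1(\Omega))$ convergence against these bounds — together with the a.e.\ convergence and Vitali's theorem — produces \eqref{bvrhokaLDpL2con} (the endpoint exponent $\tfrac{5\varsigma}{3(\varsigma-1)}$ arising from the interpolation of $L^\upsilon(0,T;L^1)$ with $L^2(0,T;L^6)$ as $\upsilon\to\infty$, with equi-integrability at the endpoint coming from $\varsigma<6$ being strict).

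The main obstacle will be the strong convergence \eqref{tausconLs} of the extra stress. Writing $\tautt(M\,\hpsikaLDp)=k\sum_{i=1}^K\Ctt_i(M\,\hpsikaLDp)-k(K+1)\,\vrhokaLDp\,\Itt-\mathfrak{z}\,\vrhokaLDp^2\,\Itt$ reduces matters to $\Ctt_i(M\,\hpsikaLDp)$, since the $\vrhokaLDp$-terms are handled by \eqref{bvrhokaLDpL2con} and a product argument. Using \eqref{Cittiden} in the form $\Ctt_i(M\,\hpsikaLDp)=2\int_D M\,(\qt_i^{\rm T})\sqrt{\hpsikaLDp}\,\nabqi\sqrt{\hpsikaLDp}\dq+\vrhokaLDp\,\Itt$, the product of the strongly convergent $M^{1/2}\sqrt{\hpsikaLDp}$ with the only weakly convergent $M^{1/2}\nabqi\sqrt{\hpsikaLDp}$ yields at first only weak convergence of $\Ctt_i(M\,\hpsikaLDp)$ in $L^2(0,T;L^{4/3}(\Omega))$, consistent with \eqref{Ctt43bd}; upgrading this cannot be done by making the Fisher information converge (that would be circular), so one argues instead, as in the incompressible analysis (e.g.\ the proof of Lemma~3.3 in \cite{BS2011-fene} and of the analogous result in \cite{BS2010-hookean}), by deriving from the Fokker--Planck equation \eqref{eqpsincon} a uniform-in-$\Delta t$-and-$L$ bound on the time-derivative of the piecewise-linear-in-time interpolant of $\Ctt_i(M\,\hpsikaLDp)$ in a negative-order Bochner space (all resulting terms being controlled by \eqref{eq:energy-u+psi-final5}, \eqref{eq:energy-u+psi-final2} and the boundedness of $M\,U_i'(\tfrac12|\qt_i|^2)|\qt_i|^2$ on $D$, which holds since $\theta_i>1$), then combining it with the $L^2(0,T;L^{4/3}(\Omega))$ bound and the compact embedding $L^{4/3}(\Omega)\hookrightarrow\hookrightarrow H^{-1}(\Omega)$ via \eqref{compact1} to obtain strong convergence of $\Ctt_i(M\,\hpsikaLDp)$ in $L^2(0,T;H^{-1}(\Omega))$, hence a.e.\ on $\Omega_T$, and finally invoking the higher-integrability bound \eqref{Ctt43bd} and Vitali's convergence theorem to reach strong convergence in $\Ltt^r(\Omega_T)$ for $r<\tfrac{4(d+2)}{3d+4}$. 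This identifies $\tautt(M\,\hpsika)$ as the limit, and the limiting Fokker--Planck equation is then read off exactly as in \cite{BS2011-fene}.
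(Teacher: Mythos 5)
Most of your proof follows the paper's route: Dubinski\u{\i} compactness for \eqref{psisconL1} with essentially the same seminormed set, the $\Delta t\,L = o(1)$ time-difference bound to pass from $\hpsikaLD$ to $\hpsikaLDpm$, interpolation against the $L^\infty(0,T;L^1_M)$ bound to reach $\upsilon\in[1,\infty)$, Fatou for \eqref{fatou-app}, and the energy estimates for the $\vrhokaLDp$ convergences. Your entropy-tail argument for \eqref{psisconL1beta} is a valid alternative to the paper's appeal to the Lipschitz continuity of $\beta^L$.

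However, your proposed route to \eqref{tausconLs} has a genuine gap. You want to derive a uniform bound on $\partial_t\Ctt_i(M\,\hpsikaLD)$ from the Fokker--Planck equation \eqref{eqpsincon} and then apply Aubin--Lions. Extracting that time derivative from \eqref{eqpsincon} amounts to testing against $\varphi(\xt,\qt)=\zeta(\xt)\,U_i'(\tfrac12|\qt_i|^2)\,(\qt_i\qt_i^{\rm T})_{kl}$, and the resulting dissipation term then pairs $M\,\nabq\hpsikaLDp$ against $\nabq\varphi$, which contains $U_i''(\tfrac12|\qt_i|^2)$. Using \eqref{growth1},b), near $\partial D_i$ one has $M\,|U_i''|^2\,|\qt_i|^6 \sim [\mathrm{dist}(\qt_i,\partial D_i)]^{\theta_i-4}$, which fails to be integrable unless $\theta_i>3$; so $\varphi\notin X = H^1_M(\Omega\times D)$ under the paper's hypothesis $\theta_i>1$, and the claimed control ``by the boundedness of $M\,U_i'\,|\qt_i|^2$'' is not enough. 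The paper therefore does \emph{not} differentiate $\Ctt_i$ in time. Instead, it writes $\tautt_1$ via \eqref{tau1ibp} in terms of $M\,\nabq\hpsi$ and $\int_D M\hpsi\dq$, decomposes the integral over $D$ into an interior subdomain $D_0\Subset D$ and the boundary annulus $D\setminus D_0$, controls the annulus by the Dunford--Pettis equi-integrability of $M\nabq\hpsikaLDp = 2M\sqrt{\hpsikaLDp}\,\nabq\sqrt{\hpsikaLDp}$, and on $D_0$ (where $M$ is bounded above and below) integrates by parts to split into volume terms handled by the strong $L^1$ convergence \eqref{psisconL1} and a boundary term $\mathtt{T}_{22}$ handled by Auchmuty's trace inequality \eqref{aucha}/\eqref{auchb} together with the interior $W^{1,s}$ bound \eqref{tau1limit4}. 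You would need to replace your argument with this (or some other technique that avoids pairing the Fisher information against the singular weight $U_i''$).
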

\begin{proof}
In order to prove the strong convergence result (\ref{psisconL1}),
we will apply Dubinski\u{\i}'s compactness result (\ref{Dubinskii}) with
\red{
${\mathfrak X}=L^1_M(\Omega \times D)$, ${\mathfrak X_1}=M^{-1}\,H^s(\Omega \times D)'$
and
\begin{align}
{\mathfrak M} = \{ \varphi \in Z_1 : \int_{\Omega \times D} M \,\left[
|\nabq \sqrt{\varphi}|^2 + |\nabx \sqrt{\varphi}|^2 \right] \dq \dx <\infty \}.
\label{calM}
\end{align}
See Section 5 in \cite{BS2011-fene}
for the proof of the compactness of the embedding
$\mathfrak{M} \hookrightarrow \mathfrak{X}$,
and the continuity of the embedding $\mathfrak{X} \hookrightarrow \mathfrak{X}_1$.}
\begin{comment}
As we have for any $\varphi \in  {\mathcal S}=L^1_M(\Omega \times D)$ that
\begin{align}
\|\varphi\|_{{\mathcal S}_1}
=
\sup_{\chi \in W^{1,\infty}(\Omega \times D)}
\frac{\left|\displaystyle\int_{\Omega \times D} M\,\varphi\,\chi\dq \dx\right|}
{\|\chi\|_{W^{1,\infty}(\Omega \times D)}} \leq C\,\|\varphi\|_{{\mathcal S}},
\end{align}
it follows that $\mathcal{S}\hookrightarrow \mathcal{S}_1$.
\end{comment}
Hence, the desired result (\ref{psisconL1}) for $\hpsikaLD$ and $\upsilon=1$ follows from
(\ref{Dubinskii}) with $\varsigma_0=\varsigma_1=2$,
and the stated choices of \red{${\mathfrak M}$, ${\mathfrak X}$ and ${\mathfrak X}_1$}
above, on noting %the last two bounds in
(\ref{eq:energy-u+psi-final5}) and (\ref{hpsikaLDdtbd}).
%and (\ref{vrhokaLDint}).
The desired result (\ref{psisconL1}) for $\hpsikaLDwpm$ and $\upsilon=1$
then follows from (\ref{psisconL1}) for $\hpsikaLD$ and $\upsilon=1$, (\ref{u-t-2}) with
$\mtkaLDpm$ replaced by $\hpsikaLDpm$, the second bound in
(\ref{eq:energy-u+psi-final5}) and (\ref{LT}).
The desired result (\ref{psisconL1}) for $\upsilon\in (1,\infty)$
then follows from (\ref{psisconL1}) for $\upsilon=1$,
the first bound in (\ref{eq:energy-u+psi-final5}) \red{(note that $\mathcal{F}(s) \geq [s-{\rm e}+1]_{+}$  for all $s \geq 0$, since $\mathcal{F}(s) \geq 0$ and, by convexity of $\mathcal{F}$, $\mathcal{F}(s) \geq \mathcal{F}({\rm e}) + (s-{\rm e})\mathcal{F}'({\rm e})=s - {\rm e}+1$)} and
an interpolation result, see Lemma 5.1 in \cite{BS2011-fene}.
The weak convergence result (\ref{psidtwcon}) follows immediately from
(\ref{hpsikaLDdtbd}).
The weak convergence results (\ref{psiwconH1a},b) follow immediately from
the last two terms in (\ref{eq:energy-u+psi-final5}),
on noting an argument similar to that in the proof of Lemma 3.3 in
\cite{BS2011-fene} in order to identify the limit.
The result (\ref{psisconL1beta}) follows from (\ref{psisconL1})
and the Lipschitz continuity of $\beta^L$, see (5.8) in
\cite{BS-2012-density-JDE} for details.
\begin{comment}
Next from the Lipschitz continuity of $\beta^L$, we obtain for any $p \in [1,\infty)$ that
\begin{align}
&\|\widetilde \psi- \beta^L(\widetilde \psi^{\Delta t(,\pm)}_L)\|_{L^p(0,T;L^1_M(\Omega \times D))}
\nonumber \\
& \hspace{1in} \leq
\|\widetilde \psi- \beta^L(\widetilde \psi)\|_{L^p(0,T;L^1_M(\Omega \times D))}
+
\|\beta^L(\widetilde \psi)
- \beta^L(\widetilde \psi^{\Delta t(,\pm)}_L)\|_{L^p(0,T;L^1_M(\Omega \times D))}
\nonumber \\
& \hspace{1in} \leq
\|\widetilde \psi- \beta^L(\widetilde \psi)\|_{L^p(0,T;L^1_M(\Omega \times D))}
+
\|\widetilde \psi- \widetilde \psi^{\Delta t(,\pm)}_L
\|_{L^p(0,T;L^1_M(\Omega \times D))}.
\label{betapsicon}
\end{align}
The first term on the right-hand side of (\ref{betapsicon})
converges to zero as $L \rightarrow \infty$
on noting that $\beta^L(\widetilde \psi)$ converges to $\widetilde \psi$ almost everywhere
on $\Omega \times D \times (0,T)$ and applying Lebesgue's dominated convergence theorem,
the second term converges to $0$ on noting (\ref{5-psisconL2a}).
Hence, we obtain the desired result (\ref{5-psisconL2beta}).
\end{comment}
The result (\ref{fatou-app}) follows from (\ref{psisconL1}) and Fatou's lemma,
see (6.46) in \cite{BS2011-fene} for details.
In addition, the convergence results (\ref{psiwconH1a}--d,g)
yield the  desired results (\ref{hpsika},b).

The results (\ref{vrhokaLDpH1con}) for some limit function $\vrhoka$
follow immediately from the bounds on $\vrhokaLDp$ in (\ref{eq:energy-u+psi-final2}).
The fact that $\vrhoka= \int_D M\,\hpsika \dq$ follows from (\ref{vrhokaLn}), (\ref{upm})
and (\ref{psisconL1}),
and hence the desired result (\ref{vrhokareg}).
The strong convergence results (\ref{bvrhokaLDpL2con}) follow from noting the embedding
$H^1(\Omega) \hookrightarrow L^6(\Omega)$, (\ref{betaLa}), (\ref{psisconL1},e)
and (\ref{eqLinterp}).

Finally, we need to prove (\ref{tausconLs}).
Similarly to (\ref{Cttrsbdvr})--(\ref{Ctt43bd},b), we deduce from (\ref{Fisent}), (\ref{vrhokareg}),
(\ref{eqinterp}) and (\ref{Cttrsbd}) that
\begin{align}\label{tautt43bd-0}
\|\rhoka\|_{L^{\frac{2(d+2)}{d}}(\Omega_T)}+
\|\tautt_1(M\,\hpsika)\|_{L^{\frac{4(d+2)}{3d+4}}(\Omega_T)}
&\leq C.
\end{align}
\begin{comment}
We shall now show that, in fact, $\tautt_1(M\,\hpsikaLDp) \rightarrow \tautt_1(M\,\hpsika)$
strongly in $L^r(\Omega_T)$ as $L \rightarrow \infty$ (with $\Delta t = o(L^{-1})$),
for each fixed $\kappa>0$ and $\alpha>0$,
and all $r \in \left[1, \frac{4(d+2)}{3d+4}\right)$. To this end,
let $\Omega_0 \subset \overline{\Omega_0} \subset \Omega$ be an arbitrary Lipschitz
subdomain of $\Omega$. We then
have by H\"older's inequality, \eqref{tautt43bd} and \eqref{tautt43bd-0} that
%
\begin{align}\label{tau1limit1}
&\int_{\Omega_T} |\tautt_1(M \hpsikaLDp)(\xt,t) - \tautt_1(M \hpsika)(\xt,t)| \dx \dt \nonumber\\
& \quad = \int_0^T \int_{\Omega \setminus \Omega_0} |\tautt_1(M \hpsikaLDp)(\xt,t)
- \tautt_1(M \hpsika)(\xt,t)| \dx \dt
\nonumber\\
& \qquad + \int_0^T \int_{\Omega_0} |\tautt_1(M \hpsikaLDp)(\xt,t)
- \tautt_1(M \hpsika)(\xt,t)| \dx \dt \nonumber\\
& \quad \leq C[T|\Omega \setminus \Omega_0|]^{\frac{d+4}{4(d+2)}}
+ \int_0^T \int_{\Omega_0} |\tautt_1(M \hpsikaLDp)(\xt,t) - \tautt_1(M \hpsika)(\xt,t)| \dx \dt,
\end{align}
where $C$ is a positive constant, independent of $\Delta t$ and $L$.
\end{comment}
%Next, we focus our attention on the last term in \eqref{tau1limit1}, and note,
%with an identical argument as in
%the first three lines of \eqref{tau1limit}, and
%Let $D_0 \subset \overline{D_0} \subset D$ be an arbitrary
%Lipschitz subdomain of $D$, that
On recalling (\ref{tau1}) and (\ref{Cittiden}),  we have that
\begin{align}
\tautt_1(M\,\varphi)%(\xt,t) & - \tautt_1(M\,\hpsika)(\xt,t)
= k   \int_{D} M\,%(\qt)
\left(\sum_{i=1}^K \nabqi \varphi %(\xt,\qt,t)
\otimes \qt_i\right) %- \hpsika(\xt,\qt,t))
\dq - k \left(\int_{D} M %(\qt)
\,\varphi%(\xt,\qt,t) %- \hpsika(\xt,\qt,t))
\dq\right) \Itt.
%\nonumber\\
%&= k \int_{D\setminus D_0} M(\qt) \sum_{i=1}^K \qt_i
%\otimes \nabqi (\hpsikaLDp(\xt,\qt,t)- \hpsika(\xt,\qt,t)) \dq \\
%&\qquad - k \left[\int_{D\setminus D_0} M(\qt) (\hpsikaLDp(\xt,\qt,t)
%- \hpsika(\xt,\qt,t))\dq\right] \Itt \\
%&\qquad +  k \int_{D_0} M(\qt) \sum_{i=1}^K \qt_i
%\otimes \nabqi (\hpsikaLDp(\xt,\qt,t)- \hpsika(\xt,\qt,t)) \dq \\
%&\qquad - k \left[\int_{D} M(\qt) (\hpsikaLDp(\xt,\qt,t) - \hpsika(\xt,\qt,t))\dq\right] \Itt.
\label{tau1ibp}
\end{align}
Let $D_0 \subset \overline{D_0} \subset D$ be an arbitrary
Lipschitz subdomain of $D$, then \eqref{tau1ibp} yields that
\begin{align}\label{tau1limit2}
&\int_{\Omega_T} |\tautt_1(M \hpsikaLDp)%(\xt,t)
- \tautt_1(M \hpsika)%(\xt,t)
| \dx \dt
\nonumber \\
%\leq C[T|\Omega \setminus \Omega_0|]^{\frac{d+4}{4(d+2)}} \nonumber\\
&\hspace{1in} \leq k\, |\bt|_1^{\frac{1}{2}} \int_0^T \int_{\Omega} \int_{D\setminus D_0} M%(\qt)
\,\sum_{i=1}^K \left( |\nabq \hpsikaLDp%(\xt,\qt,t)
| + |\nabq\hpsika%(\xt,\qt,t)
|\right) \dq \dx \dt \nonumber\\
%&\qquad + k\,d^{\frac{1}{2}} \int_0^T \int_{\Omega_0}\int_{D\setminus D_0} M(\qt)
%\left(\hpsikaLDp(\xt,\qt,t) +  %\hpsika(\xt,\qt,t)\right)\dq \dx \dt\nonumber\\
&\hspace{1.5in} + k \int_0^T \int_{\Omega} \left| \int_{D_0} M%(\qt)
\,\sum_{i=1}^K \qt_i
\otimes \nabqi (\hpsikaLDp %(\xt,\qt,t)
- \hpsika %(\xt,\qt,t)
) \dq \right|\dx \dt \nonumber \\
&\hspace{1.5in} + k \,d^{\frac{1}{2}} \int_0^T \int_{\Omega \times D} M%(\qt)
\,|\hpsikaLDp %(\xt,\qt,t)
- \hpsika %(\xt,\qt,t)
|\dq \dx \dt =: {\tt T}_1 + {\tt T}_2 + {\tt T}_3, %+ {\tt T}_4,
\end{align}
where we have recalled (\ref{inidata}).
%
%Given any $\delta>0$, there exists a Lipschitz domain $\Omega_0\subset \overline{\Omega_0}
%\subset \Omega$ such that $0<{\tt T}_1< \delta/4$; that fixes $\Omega_0$.
Further, %by an identical argument as in the paragraph following \eqref{tau1limit},
we deduce from (\ref{psiwconH1xa},d) that
\[M\,\nabqi \hpsikaLDp= 2M\, \sqrt{\hpsikaLDp}\, \nabqi\sqrt{\hpsikaLDp} \rightarrow
2M \,\sqrt{\hpsika}\, \nabqi\sqrt{\hpsika} = M\, \nabqi \hpsika, \quad i=1,\dots,K,\]
weakly in $L^1(0,T; \Lt^1(\Omega \times D)) = \Lt^1(\Omega_T \times D)$
as $\Delta t \rightarrow 0_+$ $(L\rightarrow \infty)$.
%$L \rightarrow \infty$, with $\Delta t = o(L^{-1})$.
By the Dunford--Pettis theorem the sequence $\{M\,\nabq \hpsikaLDp\}_{\Delta t>0}$ is therefore
equi-integrable in $\Lt^1(\Omega_T \times D)$; hence, for any $\delta>0$
there exists a $\delta_0=\delta_0(\delta)$ such that for any set $D_0
\subset D$ with $T \,|\Omega|\, |D\setminus D_0|< \delta_0$,
\[ k\, |\bt|_1^{\frac{1}{2}} \int_0^T \int_{\Omega} \int_{D\setminus D_0} M%(\qt)
\,\sum_{i=1}^K \left( |\nabq \hpsikaLDp%(\xt,\qt,t)
| + |\nabq\hpsika%(\xt,\qt,t)
|\right) \dq \dx \dt < \frac{\delta}{3}.\]
We therefore select $D_0 \subset \overline{D_0} \subset D$ to be a Lipschitz subdomain of $D$ such that
$T \,|\Omega|\, |D\setminus D_0|< \delta_0$, which implies that $0<{\tt T}_1 < \frac{\delta}{3}$; that,
now, fixes our choice of $D_0$.

Next, %with $D_0$ thus fixed,
we bound ${\tt T}_2$.
By performing partial integration over $D_0$, we have that
\begin{align}
{\tt T}_2&= k \int_0^T \int_{\Omega} \left| \int_{D_0} M%(\qt)
\,\sum_{i=1}^K \qt_i \otimes \nabqi (\hpsikaLDp %(\xt,\qt,t)
- \hpsika%(\xt,\qt,t)
) \dq \right|\dx \dt \nonumber
\\
&\leq k \int_0^T \int_{\Omega} \left| - \int_{D_0}
\sum_{i=1}^K \left(\nabqi M %(\qt)
\otimes  \qt_i\right)
(\hpsikaLDp%(\xt,\qt,t)
- \hpsika%(\xt,\qt,t)
)
\dq \right|\dx \dt \nonumber\\
%\end{align}
%\begin{align}
&\qquad + k \int_0^T \int_{\Omega} \left| - K \left[\int_{D_0} M%(\qt)
\,(\hpsikaLDp%(\xt,\qt,t)
- \hpsika%(\xt,\qt,t)
) \dq \right] \Itt \right|\dx \dt \nonumber\\
&\qquad + k \int_0^T \int_{\Omega} \left|\int_{\partial D_0}
\sum_{i=1}^K M %(\qt)
\,(\nt_i \otimes  \qt_i) (\hpsikaLDp %(\xt,\qt,t)
- \hpsika%(\xt,\qt,t)
) \,{\rm d}\sigma(\qt) \right|\dx \dt, \nonumber
\end{align}
where the $d$-component column vector $\nt_i$ is the $i$th component of the
$Kd$-component unit outward (column) normal vector
$\nt = (\nt_1^{\rm T},\dots, \nt_K^{\rm T})^{\rm T}$ to the boundary $\partial D_0$ of $D_0$.
As the closure of the Lipschitz subdomain $D_0$ is a strict subset of the open set $D$,
we have, on noting (\ref{eqM}) and (\ref{growth1},b),
that $\sup_{\qt \in D_0} \big( \frac{1}{M(\qt)}|\nabq M(\qt)|\big)
\leq C(\delta_0)<\infty$. Hence,
\begin{align}
{\tt T}_2 &\leq k\, \int_0^T \int_{\Omega}\int_{D_0} \left[
|\bt|_1^{\frac{1}{2}}\,|\nabq M %(\qt)
| + K\,d^{\frac{1}{2}} \,M%(\qt)
\right]
|\hpsikaLDp%(\xt,\qt,t)
- \hpsika%(\xt,\qt,t)
|\dq \dx \dt \nonumber
\\
%& \quad  + k \, K\, d^{\frac{1}{2}} \int_0^T \int_{\Omega_0} \int_{D_0} M(\qt)
%|\hpsikaLDp(\xt,\qt,t)- \hpsika(\xt,\qt,t)| \dq \dx \dt \nonumber\\
& \qquad \qquad + k \,|\bt|_1^{\frac{1}{2}} \int_0^T \int_{\Omega}
\int_{\partial D_0} M%(\qt)
\, |\hpsikaLDp%(\xt,\qt,t)
- \hpsika%(\xt,\qt,t)
| \,{\rm d}\sigma(\qt)\dx \dt \nonumber\\
& \leq  k\,\left( |\bt|_1^{\frac{1}{2}} \,C(\delta_0)+ K\, d^{\frac{1}{2}}\right)\,
\int_0^T \int_{\Omega} \int_{D_0} M%(\qt)
\,|\hpsikaLDp%(\xt,\qt,t)
- \hpsika%(\xt,\qt,t)
| \dq \dx \dt \nonumber\\
&\qquad \qquad  + k \,|\bt|_1^{\frac{1}{2}}\,\|M\|_{L^\infty(D)}\,
\int_0^T \int_{\Omega} \int_{\partial D_0}
|\hpsikaLDp%(\xt,\qt,t)
- \hpsika%(\xt,\qt,t)
| \,{\rm d}\sigma(\qt)\dx \dt \nonumber\\
%& \leq  C(\delta_0)\,k\, \max(|\bt|_1^{\frac{1}{2}}, K\,
%d^{\frac{1}{2}}) \sup_{\qt \in D_0} [M(\qt)^{-1}]
%\int_0^T\!\! \int_{\Omega} \int_{D} M(\qt) |\hpsikaLDp(\xt,\qt,t)- \hpsika(\xt,\qt,t)| \dq \dx \dt \nonumber\\
%&\quad + C(\delta_0)\, k \,|\bt|_1^{\frac{1}{2}}
%\int_0^T\!\int_{\Omega_0} \int_{\partial D_0}
%|\hpsikaLDp(\xt,\qt,t)- \hpsika(\xt,\qt,t)| \,{\rm d}\sigma(\qt)\dx \dt \nonumber\\
&=:{\tt T}_{21} + {\tt T}_{22}.
\label{T21T22}
\end{align}
Thus, thanks to \eqref{psisconL1} with $\upsilon=1$, there exists a $\Delta t_0$ such that for all
$\Delta t \leq \Delta t_0$,
%$\Delta t = o(L^{-1})$,
we have that $0 < {\tt T}_{21} < \frac{\delta}{6}$ and $0 < {\tt T}_{3} < \frac{\delta}{3}$.

Finally, we shall show that, for $\Delta t_0$ sufficiently small, also $0<{\tt T}_{22}< \frac{\delta}{6}$.
In the process of doing so we shall repeatedly use the following result.
As the closure of $D_0$ is a compact subset of $D$, we have from (\ref{growth1}) that
\begin{equation}\label{tau1limit0}
\sup_{\qt \in D_0} [M(\qt)]^{-1} \leq C(D_0)< \infty.
\end{equation}
We begin by noting that (\ref{eq:energy-u+psi-final5}) and (\ref{tau1limit0})
%\eqref{psisconL1} with $\upsilon=1$, \eqref{psiwconH1a} and \eqref{psiwconH1xa}
imply that
$\{ \sqrt{\hpsikaLDp}\}_{\Delta t>0}$ is a bounded sequence in $L^2(0,T;H^1(\Omega \times D_0))$;
hence, by Sobolev embedding,
it is also a bounded sequence in $L^2(0,T;$ $L^{\frac{2(K+1)d}{(K+1)d-2}}(\Omega \times D_0))$.
Further, by \eqref{eq:energy-u+psi-final5} and (\ref{tau1limit0}),
%$\{\hpsikaLDp\}_{L>1}$ is a bounded sequence in $L^\infty(0,T;L^1(\Omega_0 \times D_0))$; hence,
$\{\sqrt{\hpsikaLDp}\}_{\Delta t >0}$ is a bounded sequence in
$L^\infty(0,T;L^2(\Omega \times D_0))$. %By function space interpolation
It then follows from (\ref{eqLinterp}) that $\{ \sqrt{\hpsikaLDp}\}_{\Delta t >0}$
is a bounded sequence in $L^{\frac{2((K+1)d +2)}{(K+1)d}}(0,T; L^{\frac{2((K+1)d +2)}{(K+1)d}}
(\Omega \times D_0))$; thus,
\begin{align}\label{tau1limit3}
%\left(
\int_0^T \int_{\Omega \times D_0} |\hpsikaLDp%(\xt,\qt,t)
|^{\frac{(K+1)d +2}{(K+1)d}}
\dq \dx \dt %\right)^{\frac{(K+1)d}{(K+1)d+2}}
\leq C(D_0),
\end{align}
where the constant $C(D_0)$ is independent of $\Delta t$ and $L$. Now, for any $s \in (1,2)$,
we have by H\"older's inequality, %\eqref{psiwconH1a} and \eqref{psiwconH1xa},
(\ref{eq:energy-u+psi-final5}), (\ref{tau1limit0})
and the inequality $(a^{\frac{s}{2}}
+ b^{\frac{s}{2}}) \leq 2^{1-\frac{s}{2}} (a+b)^{\frac{s}{2}}$ with $a,\,b\geq 0$,
which follows from the concavity of
the function $x \in [0,\infty) \mapsto x^{\frac{s}{2}} \in [0,\infty)$, that
\begin{align*}
&\int_0^T \int_{\Omega \times D_0} \left( |\nabx \hpsikaLDp%(\xt,\qt,t)
|^s
+ |\nabq \hpsikaLDp%(\xt,\qt,t)
|^s \right) \dq \dx \dt\\
&\hspace{1in}= 2^s \int_0^T \int_{\Omega \times D_0} |\hpsikaLDp%(\xt,\qt,t)
|^{\frac{s}{2}}
\left( \left|\nabx \sqrt{\hpsikaLDp%(\xt,\qt,t)
}\,\right|^s + \left|\nabq\sqrt{\hpsikaLDp%(\xt,\qt,t)
}\,
\right|^s\right) \dq \dx \dt\\
&\hspace{1in}\leq 2^{\frac{s}{2}+1} \left(\int_0^T \int_{\Omega \times D_0} |\hpsikaLDp%(\xt,\qt,t)
|^{\frac{s}{2-s}}
\dq \dx \dt\right)^{\frac{2-s}{2}}\nonumber\\
&\hspace{1.5in}\times \left(\int_0^T \int_{\Omega \times D_0} \left|\nabx \sqrt{\hpsikaLDp%(\xt,\qt,t)
}\,\right|^2
+ \left|\nabq\sqrt{\hpsikaLDp%(\xt,\qt,t)
}\,\right|^2\dq \dx \dt\right)^{\frac{s}{2}}\\
&\hspace{1in}\leq C \left(\int_0^T \int_{\Omega \times D_0} |\hpsikaLDp%(\xt,\qt,t)
|^{\frac{s}{2-s}}\dq \dx \dt\right)^{\frac{2-s}{2}}.
\end{align*}
Comparing this with \eqref{tau1limit3} indicates that $s \in (1,2)$ should be chosen so that
\[  \frac{s}{2-s} \leq    \frac{(K+1)d +2}{(K+1)d}. \]
The largest such $s$ is $s= \frac{(K+1)d+2}{(K+1)d+1}$; using this value of $s$, we then deduce
on noting \eqref{tau1limit3} that
\begin{align}\label{tau1limit4}
&\|\hpsikaLDp\|_{L^s(0,T;W^{1,s}(\Omega \times D_0))} \leq C(D_0).
\end{align}
Note further that, thanks to \eqref{tau1limit0} and \eqref{psisconL1},
we have for any $\upsilon \in [1,\infty)$ that
\begin{align}\label{tau1limit5}
\|\hpsikaLDp- \hpsika\|_{L^\upsilon(0,T;L^{1}(\Omega \times D_0))} \rightarrow 0,
\quad
\mbox{as $\Delta t \rightarrow 0_+$ $(L \rightarrow  \infty)$.}
%\mbox{as $L\rightarrow \infty$, with $\Delta t = o(L^{-1})$.}
\end{align}

We shall now use \eqref{tau1limit4} and \eqref{tau1limit5} to show that ${\tt T}_{22}$ converges to $0$
as $\Delta t \rightarrow 0_+$ $(L \rightarrow \infty)$.
%$L \rightarrow \infty$, with $\Delta t= o(L^{-1})$.
To this end, we shall make use of the following sharp trace inequality, established recently by
Auchmuty (cf. Theorem 6.3 inequality (6.3) in \cite{auchmuty}): suppose that $\mathcal{O}$ is a
bounded Lipschitz domain and let $r\in (1,2)$; then, the following inequality holds for all
$\varphi \in W^{1,r}(\mathcal{O})$:
\begin{equation}
\int_{\partial \mathcal{O}} |\varphi|^{2-\frac{1}{r}} \dd \sigma \leq \frac{|\partial \mathcal{O}|}{|\mathcal{O}|}
\|\varphi\|^{2-\frac{1}{r}}_{L^{2-\frac{1}{r}}(\mathcal{O})} + \left(2- \frac{1}{r}\right)
k_{\mathcal O}\, \| \varphi\|_{L^{1}(\mathcal{O})}^{1-\frac{1}{r}}\,
\|\nabla \varphi\|_{L^r(\mathcal{O})},
\label{aucha}
\end{equation}
where $k_{\mathcal{O}}$ is a positive constant, which depends on $\mathcal{O}$ only.
We deduce from (\ref{aucha}) and (\ref{eqLinterp}) for any $r \in (1,2)$ that
\begin{equation}
\int_{\partial \mathcal{O}} |\varphi|^{2-\frac{1}{r}} \dd \sigma \leq
C(\mathcal O,r)\, \| \varphi\|_{L^{1}(\mathcal{O})}^{1-\frac{1}{r}}\,
\|\varphi\|_{W^{1,r}(\mathcal{O})} \qquad \forall \varphi \in W^{1,r}(\mathcal{O}).
\label{auchb}
\end{equation}
We apply (\ref{auchb}) with
%
%\begin{align}\label{auch}
%$\varphi = \hpsikaLDp- \hpsika$
%\end{align}
%
$\mathcal{O}=D_0$, integrate the resulting inequality over
$(0,T)\times \Omega$ and apply H\"older's inequality; this yields
for any $r\in(1,2)$ and for all $\varphi \in L^r(0,T;W^{1,r}(\Omega \times D_0))$ that
\begin{align}\label{auch1}
&\int_0^T \int_{\Omega \times \partial D_0} |\varphi|^{2-\frac{1}{r}} \dd \sigma(\qt) \dx \dt
\leq
%\frac{|\partial D_0|}{|D_0|}\int_0^T \|\varphi\|^{2-\frac{1}{s}}_{L^{2-\frac{1}{s}}
%(\Omega_0 \times D_0)} \dt \nonumber\\
%&\qquad + \left(2- \frac{1}{s}\right) k_{D_0}\,
%\left(\int_0^T \| \varphi\|_{L^{1}(\Omega_0\times D_0)} \dt \right)^{1-\frac{1}{s}}
%\left(\int_0^T \|\nabq \varphi\|_{L^s(\Omega_0 \times D_0)}^s \dt\right)^{\frac{1}{s}}\nonumber\\
%&\leq \frac{|\partial D_0|}{|D_0|}\int_0^T
%\|\varphi\|^{2-\frac{1}{s}}_{L^{2-\frac{1}{s}}(\Omega_0 \times D_0)} \dt \nonumber\\
%&\qquad + \left(2- \frac{1}{s}\right) k_{D_0}\,
C(D_0,r)\,
%\left(\int_0^T \| \varphi\|_{L^{1}(\Omega_0\times D_0)}\dt\right)^{1-\frac{1}{r}}
%\left(\int_0^T \|\nabx \varphi\|_{L^s(\Omega_0 \times D_0)}^s
%+ \|\nabq \varphi\|_{L^s(\Omega_0 \times D_0)}^s\dt\right)^{\frac{1}{s}}.\nonumber\\
\|\varphi\|_{L^1(0,T;L^1(\Omega\times D_0))}^{1-\frac{1}{r}}\,
\|\varphi\|_{L^r(0,T;W^{1,r}(\Omega\times D_0))}.
\end{align}
Motivated by the bound \eqref{tau1limit4}, we fix
\[ r=s= \frac{(K+1)d + 2}{(K+1)d + 1} \in (1,2)\]
in \eqref{auch1}.
It follows from (\ref{auch1}), (\ref{tau1limit4}) and (\ref{tau1limit5}) that
\begin{align}\label{auch3}
&\int_0^T \int_{\Omega \times \partial D_0}
|\hpsikaLDp%(\xt,\qt,t)
- \hpsika%(\xt,\qt,t)
|^{2-\frac{1}{s}} \dd \sigma(\qt) \dx \dt
\rightarrow 0,\qquad
\mbox{as $\Delta t \rightarrow 0_+$ $(L\rightarrow \infty)$.}
%\mbox{as $L \rightarrow \infty$, with $\Delta t=o(L^{-1})$.}
\end{align}
Since $2-\frac{1}{s}>1$, it follows from \eqref{auch3} that ${\tt T}_{22}$ converges to $0$, as
$\Delta t \rightarrow 0_+$ $(L\rightarrow \infty)$.
%$L \rightarrow \infty$, with $\Delta t = o(L^{-1})$.
We thus deduce that there exists a %positive integer $L_0$
$\Delta t_0$
such that for all $\Delta t  \leq \Delta t_0$, %and $\Delta t = o(L^{-1})$,
we have that $0 < {\tt T}_{22} < \frac{\delta}{6}$.
%Finally, by combining the bounds on
%$(\tt T_1},\, {\tt T}_{21},\, {\tt T}_{22}$ and (\tt T}_3, we deduce the existence of a
%positive integer $L_0$
%such that for all $L \geq L_0$,
%with $\Delta t = o(L^{-1})$, we have that $0 < {\tt T}_{3} < \delta/4$.
\begin{comment}
It remains to bound ${\tt T}_4$.
Thanks to \eqref{psisconL1} with $\upsilon=1$, there exists a positive integer $L_0$
(obtained by possibly increasing the value of $L_0$ appearing
in the process of bounding the term ${\tt T}_3$) such that
for all $L \geq L_0$ and $\Delta t = o(L^{-1})$,  we have $0 < {\tt T}_4 < \delta/4$.
\end{comment}
Finally, by recalling the inequalities \eqref{tau1limit2} and (\ref{T21T22})
and the bounds on ${\tt T_1},\, {\tt T}_{21},\, {\tt T}_{22}$ and ${\tt T}_3$,
it then follows that for each $\delta>0$
there exists a %positive integer
$\Delta t_0$
%$L_0$
such that for all $\Delta t \leq \Delta t_0$,
%$L \geq L_0$ and $\Delta t = o(L^{-1})$,
we have that
\begin{align*}
&\int_{\Omega_T} |\tautt_1(M \hpsikaLDp)%(\xt,t)
- \tautt_1(M \hpsika)%(\xt,t)
| \dx \dt  < \delta.
\end{align*}
Thus we have proved
that
\[ \tautt_1(M \hpsikaLDp) \rightarrow \tautt_1(M \hpsika)\qquad \mbox{strongly in $L^1(\Omega_T)$,\quad
as
$ \Delta t \rightarrow 0_+$ $(L \rightarrow \infty)$}.\]
%$L \rightarrow \infty$, with $\Delta t = o(L^{-1})$}.\]
This, together with (\ref{tautt43bd}), \eqref{tautt43bd-0} and (\ref{eqLinterp}), implies that, as
$\Delta t \rightarrow 0_+$ $(L \rightarrow \infty)$,
%$L \rightarrow \infty$, with $\Delta t = o(L^{-1})$,
%
\begin{align}\label{tau1limit6}
\tautt_1(M \hpsikaLDp) \rightarrow \tautt_1(M \hpsika)\qquad \mbox{strongly in $L^r(\Omega_T)$ for all
$r \in \left[1, \frac{4(d+2)}{3d+4}\right)$.}
\end{align}
We note further that, according to \eqref{bvrhokaLDpL2con} with $\varsigma=2$,
%for each fixed $\kappa>0$ and $\alpha>0$,
$\vrhokaLDp \rightarrow \vrhoka$ strongly in $L^{\frac{10}{3}}(0,T;L^2(\Omega))$
%as $L \rightarrow \infty$ (with $\Delta t = o(L^{-1})$)
as $\Delta t \rightarrow 0_+$ $(L \rightarrow \infty)$;
therefore $(\vrhokaLDp)^2 \rightarrow \vrhoka^2$ strongly in $L^{\frac{5}{3}}(0,T;L^1(\Omega))$,
and thus strongly in $L^1(0,T;L^1(\Omega))=L^1(\Omega_T)$, as
$\Delta t \rightarrow 0_+$ $(L \rightarrow \infty)$.
%$L \rightarrow \infty$ (with $\Delta t = o(L^{-1})$).
Also, by \eqref{vrhokaLvvbd}, since $\frac{8(d+2)}{3d+4} < \frac{2(d+2)}{d}$ for
$d \in \{2,3\}$, we have
that $\{(\vrhokaLDp)^2\}_{\Delta t>0}$ is a bounded sequence in $L^\frac{4(d+2)}{3d+4}(\Omega_T)$;
consequently from (\ref{tautt43bd-0}) and (\ref{eqLinterp}),
$(\vrhokaLDp)^2 \rightarrow (\vrhoka)^2$ strongly in $L^r(\Omega_T)$
for all $r \in \left[1, \frac{4(d+2)}{3d+4}\right)$.
Combining this with \eqref{tau1limit6} we deduce \eqref{tausconLs}
thanks to \eqref{tautot}.
\end{proof}

We are now ready to pass to the limit with
$\Delta t \rightarrow 0_+$
$(L\rightarrow \infty)$ in (\ref{eqrhocon}--c)
to prove the existence of a weak
solution to the regularized problem (P$_{\kappa,\alpha}$).
%FENE chain model with variable density and viscosity, which is
%ubsection{Passage to the limit $L\rightarrow \infty$}
%\label{5-passage}
%In this section we prove
%the main result of the paper.

\begin{theorem}
\label{5-convfinal}
The triple $(\rhoka,\utka,\hpsika)$,
defined as in Lemmas \ref{rhoLconv} and \ref{convfinal},
is a global weak solution to problem (P$_{\kappa,\alpha}$),
in the sense that
\begin{subequations}
\begin{align}
&\displaystyle\int_{0}^{T}\left\langle \frac{\partial \rhoka}{\partial t}\,,\eta
\right\rangle_{H^1(\Omega)} \dd t
+ \int_0^T \int_\Omega \left( \alpha \,\nabx \rhoka -\rhoka \,\utka \right)
\cdot \nabx \eta
\,\dx \,\dt =0
\nonumber \\
& \hspace{3in}
\qquad
~\hfill \forall \eta \in L^2(0,T;H^1(\Omega)),
\label{eqrhokarep}
\end{align}
with $\rhoka(\cdot,0) = \rho^0(\cdot)$,
\begin{align}
&\displaystyle\int_{0}^{T} \left \langle
\frac{\partial (\rhoka\,\utka)}{\partial t}, \wt \right \rangle_{W^{1,4}_0(\Omega)}
\dt
+ \frac{\alpha}{2} \displaystyle\int_{0}^{T}\!\! \int_\Omega
\nabx \rhoka \cdot \nabx ( \utka \cdot \wt)
\,\dx\,\dt
\nonumber \\
& \qquad \qquad
+
\displaystyle\int_{0}^{T}\!\! \int_\Omega
\left[
\Stt(\utka) - \rhoka\,\utka \otimes \utka
-\pk(\rhoka)\,\Itt
\right]
:\nabxtt \wt
\,\dx\, \dt
\nonumber \\
&\qquad =\int_{0}^T
 \int_{\Omega} \left[ \rhoka\,
\ft \cdot \wt
-\left(\tautt_1 (M\,\hpsika) - \mathfrak{z}\,\vrhoka^2\,\Itt\right)
: \nabxtt
\wt \right] \dx \, \dt
%\nonumber \\
%& \hspace{3.3in}
\qquad
\forall \wt \in L^r(0,T;\Wt^{1,4}_0(\Omega)),
\nonumber \\
\label{equtka}
\end{align}
~\vspace{-8mm}

\noindent
with $(\rhoka\,\utka)(\cdot,0) = (\rho^0\ut_0)(\cdot)$ and
$r = \frac{8\Gamma-12}{\Gamma-6}$, $\Gamma \geq 8$, and
\begin{align}
\label{eqhpsika}
&\int_{0}^T \left \langle
M\,\frac{ \partial \hpsika}{\partial t},
\varphi \right \rangle_{H^s(\Omega \times D)} \dt
+
\frac{1}{4\,\lambda}
\,\sum_{i=1}^K
 \,\sum_{j=1}^K A_{ij}
\int_{0}^T \!\!\int_{\Omega \times D}
M\,
 \nabqj \hpsika
\cdot\, \nabqi
\varphi\,
\dq \,\dx \,\dt
\nonumber \\
& \quad %\hspace{0.2cm}
+ \int_{0}^T \!\!\int_{\Omega \times D} M \left[
\epsilon\,
\nabx \hpsika
- \utka\,\hpsika \right]\cdot\, \nabx
\varphi
\,\dq \,\dx \,\dt
\nonumber \\
&
\quad
- \int_{0}^T \!\!\int_{\Omega \times D} M\,\sum_{i=1}^K
\left[\sigtt(\utka)
\,\qt_i\right]
\hpsika \,\cdot\, \nabqi
\varphi
\,\dq \,\dx\, \dt = 0
%\nonumber \\
%& \hspace{3.4in}
\qquad
\forall \varphi \in L^2(0,T;H^s(\Omega \times D)),
\nonumber\\
\end{align}
\end{subequations}
~\vspace{-8mm}

\noindent
with $\hpsika(\cdot,0) = \hpsi_0(\cdot)$  and $s > 1+ \frac{1}{2}\,(K+1)\,d$.

%The initial condition $\ut_\epsilon(\cdot,0) = \ut_0(\cdot)$
%is satisfied in the sense of weakly continuous functions, in the space $C_w([0,T];\Ht)$;
%the initial condition $\widetilde\psi_\epsilon(\cdot,\cdot,0) = \widetilde\psi_0(\cdot,\cdot)$
%is satisfied in the sense of continuous functions, in $C([0,T];M^{-1}(H^s(\Omega \times %D))')$.
%In addition, the function $\ut$
%is weakly continuous as a mapping from $[0,T]$ to $\Ht$, and $\widetilde\psi$
%is weakly continuous as a mapping from $[0,T]$ to $L^1_M(\Omega \times D)$.
%\smallskip

In addition, the weak solution $(\rhoka,\utka,\hpsika)$
satisfies, for
\begin{comment}
\begin{subequations}
\begin{align}
\int_{\Omega} |\rho(t)|^p \dx = \int_{\Omega} |\rho_0|^p \dx,
\label{5-energyrho}
\end{align}
for $p \in [1,\infty)$, and the energy inequality
for a.e.\ $t \in [0,T]$:
\end{comment}
a.a.\ $t' \in (0,T)$, \red{the following energy inequality:}
{\color{black}{
\begin{align}\label{5-eq:energyest}
&\frac{1}{2}\,\displaystyle
\int_{\Omega} \rhoka(t')
\,|\utka(t')|^2 \,\dx
+ \int_{\Omega} \Pk(\rhoka(t'))  \dx
+k\,\int_{\Omega \times D} M\,
\mathcal{F}(\hpsika(t')) \dq \,\dx
\nonumber\\
&\quad
+ \alpha\,\kappa
\int_{0}^{t'}
\left[ \|\nabx(\rhoka^2)\|_{L^2(\Omega)}^2 + \frac{4}{\Gamma}\,\|\nabx
(\rhoka^{\frac{\Gamma}{2}})\|_{L^2(\Omega)}^2 \right]
\dt
+  \mu^S c_0\, \int_0^{t'} \|\utka\|_{H^1(\Omega)}^2 \dt
\nonumber \\
&\quad + k\,\int_0^{t'} \int_{\Omega \times D}
M\,
\left[\frac{a_0}{2\lambda}\,
\left|\nabq \sqrt{\hpsika} \right|^2
+ 2\varepsilon\, \left|\nabx \sqrt{\hpsika} \right|^2
\right]
\,\dq \,\dx\, \dt
\nonumber \\
& \quad + \mathfrak{z}\,\|\vrhoka(t')\|_{L^2(\Omega)}^2
+ 2\, \mathfrak{z}\, \varepsilon\,\int_0^{t'} \|\nabx \vrhoka\|_{L^2(\Omega)}^2 \dt
\nonumber\\
&\leq {\rm e}^{t'}\biggl[
\frac{1}{2}\,\displaystyle
\int_{\Omega} \rho^0\,|\ut_{0}|^2 \dx
+ \int_{\Omega} \Pk(\rho^0)  \dx
+k\,\int_{\Omega \times D} M\, %\zeta(\rho_0)\,
\mathcal{F}(\hpsi_0)
\dq \,\dx
\nonumber\\
&\hspace{1in}
+
\mathfrak{z}\,\int_{\Omega} \left(\int_D M \,\hpsi_0\dq\right)^2 \dx
%|\Omega|
+
\frac{1}{2}  \int_{0}^{t'} \|\ft\|_{L^\infty(\Omega)}^2 \dt \int_\Omega \rho^0 \dx
\biggr]
\nonumber\\
&%\hspace{0.2in}
\leq C,
\end{align}
}}
where $C \in {\mathbb R}_{>0}$ is independent of $\alpha$ and $\kappa$.
\end{theorem}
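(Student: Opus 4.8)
The plan is to obtain Theorem~\ref{5-convfinal} by passing to the limit $\Delta t \to 0_+$ $(L\to\infty)$ in the time-discrete problem~(\ref{eqrhocon}--c), using the convergence results already assembled in Lemmas~\ref{rhoLconv}, \ref{rhonabxsclem}, \ref{badlem}, \ref{mtkaLlem}, \ref{mtkaLDtdlem}, \ref{hpsikaLDdtlem}, \ref{mtkaLDconlem} and~\ref{convfinal}. First I would dispatch the continuity equation: \eqref{eqrhokarep} together with the initial condition $\rhoka(\cdot,0)=\rho^0(\cdot)$ is precisely~\eqref{eqrhoka}, already proved in Lemma~\ref{rhoLconv}, so nothing new is needed there. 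Next, for the momentum equation I would start from the reformulated identity~\eqref{equnconadj}, which is valid for all $\wt\in L^4(0,T;\Wt_0^{1,4}(\Omega))$, and pass to the limit term by term: the time-derivative term converges by \eqref{mtkaLdtwcon}; the $\alpha$-term $\int\nabx\rhokaL^{[\Delta t]}\cdot\nabx(\utkaLDp\cdot\wt^{\{\Delta t\}})$ converges using the strong convergence $\nabx\rhokaL^{[\Delta t]}\to\nabx\rhoka$ in $L^2(\Omega_T)$ from \eqref{rhoLxscon}, the weak convergence \eqref{uLwconH1}, and \eqref{timden} to replace $\wt^{\{\Delta t\}}$ by $\wt$; the convective term uses \eqref{mtutkaLDpmcon}; the viscous term uses \eqref{uLwconH1}; the pressure term uses \eqref{pkaLwcon}; the ``bad'' correction term $\int(\rhokaL^{[\Delta t]}\utkaLDp-\rhokaLDm\utkaLDm)\cdot\nabx(\utkaLDp\cdot\wt)$ vanishes in the limit by \eqref{bada} combined with \eqref{utkaLL6}; the remaining $\wt-\wt^{\{\Delta t\}}$ correction vanishes by \eqref{timden} and the uniform bound \eqref{kaLLrLs}; the body-force term uses \eqref{rhoLDpcon} (or \eqref{rhoLscon}) and \eqref{fncon}; the extra-stress term uses the strong convergence \eqref{tausconLs}; and the quadratic interaction term, after noting it equals (up to integration by parts in $\xt$ and the Neumann boundary condition) $-2\mathfrak{z}\int(\int_D M\beta^L(\hpsikaLDp)\dq)\nabx\vrhokaLDp\cdot\wt$, converges because $\int_D M\beta^L(\hpsikaLDp)\dq\to\vrhoka$ strongly in $L^{5\varsigma/(3(\varsigma-1))}(0,T;L^\varsigma(\Omega))$ by \eqref{bvrhokaLDpL2con} and $\nabx\vrhokaLDp\rightharpoonup\nabx\vrhoka$ weakly in $L^2(0,T;L^2(\Omega))$ by \eqref{vrhokaLDpH1con}; I would then recognize that this limit identity is exactly the weak form~\eqref{equtka} after rewriting $-2\mathfrak{z}\int\vrhoka\nabx\vrhoka\cdot\wt$ as $\mathfrak{z}\int\vrhoka^2\nabx\cdot\wt$. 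The attainment of the initial datum $(\rhoka\utka)(\cdot,0)=(\rho^0\ut_0)(\cdot)$ follows from \eqref{mtkaLwscon} ($C_w$-convergence) together with \eqref{ut0conv}, \eqref{proju0} and the fact that $\mtkaLD(0)=\rho^0\ut^0$.

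Third, for the Fokker--Planck equation I would pass to the limit in~\eqref{eqpsincon}, tested against $\varphi\in L^2(0,T;H^s(\Omega\times D))\hookrightarrow L^2(0,T;W^{1,\infty}(\Omega\times D))$: the time-derivative term converges by \eqref{psidtwcon}; the $\qt$-diffusion term, written as $2\int M\sqrt{\hpsikaLDp}\,\nabqj\sqrt{\hpsikaLDp}\cdot\nabqi\varphi$, converges by the strong convergence $M^{1/2}\sqrt{\hpsikaLDp}\to M^{1/2}\sqrt{\hpsika}$ in $L^2$ (a consequence of \eqref{psisconL1} and the uniform $L^2_M$-Fisher bound, cf.\ the argument in Lemma~\ref{convfinal}) paired with the weak convergence \eqref{psiwconH1xa}; similarly the $\xt$-diffusion term uses \eqref{psiwconH1a}; the convective transport term $\int M\utkaLDp\beta^L(\hpsikaLDp)\cdot\nabx\varphi$ converges using \eqref{uLwconH1} for $\utkaLDp$ and \eqref{psisconL1beta} together with the $L^\infty$-weighted bound on $\beta^L$ to handle the product (more precisely by splitting it as a product of a strongly and a weakly convergent factor after recalling $\beta^L(\hpsikaLDp)\to\hpsika$ strongly in $L^\upsilon(0,T;L^1_M)$); and the drag term $\int M[\sigtt(\utkaLDp)\qt_i]\beta^L(\hpsikaLDp)\cdot\nabqi\varphi$ converges by the same pairing of \eqref{uLwconH1} with \eqref{psisconL1beta}, using that $|\qt_i|$ is bounded on $D$. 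The attainment of $\hpsika(\cdot,0)=\hpsi_0(\cdot)$ follows from \eqref{psidtwcon}, the fact that $M\partial_t\hpsikaLD\in L^2(0,T;H^s(\Omega\times D)')$ with $M\hpsikaLD\in C([0,T];H^s(\Omega\times D)')$, the discrete initial condition $\hpsikaLD(0)=\hpsi^0$, and \eqref{psi0conv} identifying the limit of $\hpsi^0=\hpsi^0(L,\Delta t)$ with $\hpsi_0$.

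Finally, for the energy inequality~\eqref{5-eq:energyest} I would take the lower limit $\Delta t\to 0_+$ $(L\to\infty)$ in the discrete energy bound~\eqref{eq:energy-u+psi-final2} (for $n=n(t')$ the index with $t_{n}\to t'$, noting that the estimate holds at all discrete times and, after passing through the piecewise-constant/linear interpolants, for a.a.\ $t'\in(0,T)$). On the left-hand side: the kinetic term $\int\rhokaLDp|\utkaLDp|^2$ is handled by writing it as $\||\mtkaLDp|/\sqrt{\rhokaLDp}\|_{L^2}^2 = \||\mtkaLDp|^2/\rhokaLDp\|_{L^1}$ and using lower semicontinuity of the convex map $(\mathbf{m},\rho)\mapsto|\mathbf{m}|^2/\rho$ together with \eqref{mtkaLwscon} and the $C_w$-convergence \eqref{rhoLwconstar}; the pressure-potential term uses \eqref{Ppka} (with $\eta\equiv 1$, or with a suitable sequence of continuous approximations of $\chi_{[0,t']}$, since \eqref{Ppka} is stated with a time weight); the relative-entropy term uses \eqref{fatou-app}; the four dissipation integrals over $(0,t')$ are handled by weak lower semicontinuity of the corresponding $L^2$-norms using \eqref{rhodwconth} (for $\nabx\rhoka^2$ and $\nabx\rhoka^{\Gamma/2}$), \eqref{uLwconH1}, \eqref{psiwconH1a}--(b), and \eqref{vrhokaLDpH1con}; the $\vrhoka$-term uses \eqref{vrhokaLDpH1con} again. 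On the right-hand side: all terms converge by \eqref{rho0convLp}, \eqref{idatabd}, \eqref{inidata-1}, \eqref{vrho0}, \eqref{fncon} and the identification of the limits of the smoothed data, and the final bound $\leq C$ independent of $\alpha,\kappa$ follows as in the last three lines of~\eqref{eq:energy-u+psi-final2}. I expect the main obstacle to be the careful treatment of the quadratic interaction term in the momentum equation and the convective/drag terms in the Fokker--Planck equation, where one has only weak convergence of $\utka$ and only strong $L^\upsilon(0,T;L^1_M)$ convergence of $\hpsika$: these products are not obviously passable to the limit, and one must exploit the improved integrability of $\vrhoka$ from \eqref{bvrhokaLDpL2con} (which is exactly where $\mathfrak{z}>0$ is used) and, for the drag and transport terms, the structure $\beta^L(\hpsikaLDp)=2\sqrt{\hpsikaLDp}\cdot(\tfrac12\sqrt{\hpsikaLDp})$ allowing a strong/weak splitting — essentially re-running, in slightly simplified form, the argument of Lemma~\ref{convfinal} used to establish \eqref{tausconLs}.
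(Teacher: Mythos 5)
Your proposal tracks the paper's actual proof essentially line by line: the continuity equation is already \eqref{eqrhoka}, the momentum limit is taken in \eqref{equnconadj} using \eqref{mtkaLdtwcon}, \eqref{rhoLxscon}, \eqref{uLwconH1}, \eqref{pkaLwcon}, \eqref{bada}, \eqref{timden}, \eqref{tausconLs}, \eqref{vrhokaLDpH1con} and \eqref{bvrhokaLDpL2con}, the Fokker--Planck limit is taken in \eqref{eqpsincon} with the strong/weak splitting of the diffusion and drag terms exactly as in the \textit{$\mathtt{T}_1,\dots,\mathtt{T}_4$} decomposition the paper spells out, and the energy inequality is obtained by weak lower semicontinuity after $\eta$-averaging followed by du Bois-Reymond. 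The one cosmetic variation is that you handle the kinetic term via joint convexity of $(\mt,\rho)\mapsto|\mt|^2/\rho$ paired with the $C_w$-convergences \eqref{mtkaLwscon}, \eqref{rhoLDpcon} rather than via the weak convergence \eqref{mtutkaLDpmcon} the paper cites; both are correct and rely on the same underlying bounds.
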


\begin{proof}
The limit equation (\ref{eqrhokarep}) has already been established
in Lemma \ref{rhoLconv}, see (\ref{eqrhoka}).

We now pass to the limit $\Delta t \rightarrow 0_+$ $(L \rightarrow \infty)$,
subject to (\ref{LT}), for the subsequence of $\{(\rhokaL^{[\Delta t]},\utkaLDp,
\hpsikaLDp)\}_{\Delta t >0}$ of Lemma \ref{convfinal}
in (\ref{equnconadj}) initially for a fixed test function
$\wt \in \Ct^\infty_0(\Omega_T)$.
We consider first the five terms on the left-hand side of
(\ref{equnconadj}).
On noting (\ref{mtkaLdtwcon},d), (\ref{rhoLxscon}), (\ref{uLwconH1}), (\ref{nabxkaLLrLs})
and (\ref{timden}),
we obtain the first two terms on the left-hand side of
(\ref{equtka}) and the $\rhoka\,\utka \otimes \utka$ term from the first term on the
left-hand side of (\ref{equnconadj}).
The second and third terms on the left-hand side of (\ref{equnconadj})
give rise to the remaining terms on the left-hand side of (\ref{equtka}),
on noting (\ref{uLwconH1}) and (\ref{pkaLwcon}).
The fourth and fifth terms on the left-hand side of (\ref{equnconadj})
converge to zero, on noting (\ref{bada}), (\ref{utkaLL6}), (\ref{kaLLrLs})
and (\ref{timden}).

We now consider the two terms on the right-hand side of (\ref{equnconadj}).
The first term gives rise to the $\ft$ and $\tautt_1$ contributions
on the right-hand side of (\ref{equtka}),
on noting (\ref{rhoLDpcon}), (\ref{fncon}) and (\ref{tausconLs}).
The second term on the right-hand side of (\ref{equnconadj})
converges to the $\varrho_{\kappa,\alpha}^2$ term on the right-hand side of (\ref{equtka}),
on noting (\ref{vrhokaLDpH1con},b) and performing integration by parts.
Therefore, we have obtained (\ref{equtka}) for any $\wt \in \Ct^\infty_0(\Omega_T)$.
The desired result (\ref{equtka}) for any $\wt \in L^r(0,T;\Wt^{1,4}_0(\Omega))$
then follows from the denseness of $\Ct^\infty_0(\Omega_T)$ in $L^r(0,T;\Wt^{1,4}_0(\Omega))$,
(\ref{mtkareg}), (\ref{rhoLxscon},b), (\ref{uLwconH1}), (\ref{frac}), (\ref{mtutkaLDpmcon}),
(\ref{rhokareg}), (\ref{Pdef}), (\ref{inidata}), %(\ref{rhoLDpcon}),
(\ref{tausconLs}), and finally (\ref{vrhokareg}), which with (\ref{eqinterp}) yields, similarly to
(\ref{vrhokaLvbd}) and (\ref{vrhokaLvvbd}),
that $\vrhoka \in L^{4}(0,T;L^{\frac{2d}{d-1}}(\Omega))$.

Similarly, we now pass to the limit $\Delta t \rightarrow 0_+$ $(L \rightarrow \infty)$
%subject to (\ref{LT}),
for the subsequence
%of $\{(\rhokaL^{[\Delta]},\utkaLDp, \hpsikaLDp)\}_{\Delta t >0}$ of Lemma \ref{convfinal}
in (\ref{eqpsincon}) initially for a fixed test function
$\varphi \in C([0,T];C^\infty(\overline{\Omega\times D}))$.
The first term of
(\ref{eqpsincon}) converges to the first term of (\ref{eqhpsika}),
on noting (\ref{psidtwcon}).
For the second term of (\ref{eqpsincon}), we note that
\begin{align}
&\int_{0}^T \int_{\Omega \times D} M\,\nabqi \hpsikaLDp \cdot \nabqj \varphi \dq\,\dx\,\dt
\nonumber \\
& \hspace{1in} = 2 \int_{0}^T \int_{\Omega \times D} M\,
\left( \sqrt{\hpsikaLDp} - \sqrt{\hpsika}\right)
\nabqi \sqrt{\hpsikaLDp} \cdot \nabqj \varphi \dq\,\dx\,\dt
\nonumber \\
& \hspace{1.4in} + 2 \int_{0}^T \int_{\Omega \times D} M\,
\sqrt{\hpsika}\,
\nabqi \sqrt{\hpsikaLDp} \cdot \nabqj \varphi \dq\,\dx\,\dt
=: {\tt T}_1 + {\tt T}_2.
\label{diffqiqjcon1}
\end{align}
Next, on noting (\ref{eq:energy-u+psi-final5}) and that
$|\sqrt{c_1}-\sqrt{c_2}| \leq \sqrt{|c_1-c_2|}$ for all $c_1,\, c_2 \in {\mathbb R}_{\geq 0}$,
we have that
\begin{align}
|{\tt T}_1| & \leq C\,\left\|\sqrt{\hpsikaLDp}
- \sqrt{\hpsika}\right\|_{L^2(0,T;L^2_M(\Omega \times D))}
\, \|\nabqj \varphi\|_{L^\infty(0,T;L^\infty(\Omega \times D))} \nonumber \\
& \leq C\,\|\hpsikaLDp - \hpsika\|_{L^1(0,T;L^1_M(\Omega \times D))}^{\frac{1}{2}}
\, \|\nabqj \varphi\|_{L^\infty(0,T;L^\infty(\Omega \times D))},
\label{diffqiqjcon2}
\end{align}
and so (\ref{psisconL1}) yields that ${\tt T}_1$ converges to zero
as $\Delta t \rightarrow 0_+$.
Similarly, as $M^{\frac{1}{2}}\,
\sqrt{\hpsika}\,\nabqj \varphi \in L^2(0,T;\Lt^2(\Omega \times D))$,
it follows from (\ref{psiwconH1xa}) that, as $\Delta t \rightarrow 0_+$,
\begin{align}
{\tt T}_2 \rightarrow 2 \int_{0}^T \int_{\Omega \times D} M\,
\sqrt{\hpsika}\,
\nabqi \sqrt{\hpsika} \cdot \nabqj \varphi \dq\,\dx\,\dt
= \int_{0}^T \int_{\Omega \times D} M\,
\nabqi \hpsika \cdot \nabqj \varphi \dq\,\dx\,\dt.
\label{diffqiqjcon3}
\end{align}
Hence the second term in (\ref{eqpsincon}) converges to the second term in
(\ref{eqhpsika}).
For the fourth term in (\ref{eqpsincon}), we note that
\begin{align}
&\int_{0}^T \!\!\int_{\Omega \times D} M\,
\left[\sigtt(\utkaLDp)
\,\qt_i\right]
\beta^L(\hpsikaLDp) \,\cdot\, \nabqi
\varphi
\,\dq \dx \dt
\nonumber \\
& \hspace{1in} = \int_{0}^T \!\!\int_{\Omega \times D} M\,
\left[\nabxtt \utkaLDp
\,\qt_i\right]
\left( \beta^L(\hpsikaLDp)-\hpsika \right) \,\cdot\, \nabqi
\varphi
\,\dq \dx \dt
\nonumber \\
& \hspace{1.5in}
+ \int_{0}^T \!\!\int_{\Omega \times D} M\,
\left[\nabxtt \utkaLDp
\,\qt_i\right]
\hpsika \,\cdot\, \nabqi
\varphi
\,\dq \dx \dt =: {\tt T}_3 + {\tt T}_4.
\label{diffqiqjcon4}
\end{align}
Next, on noting (\ref{utkaLL6}), (\ref{eqLinterp}), (\ref{betaLa}),
(\ref{vrhokaLDint}) and
(\ref{vrhokareg}),
we have that
\begin{align}
|{\tt T}_3| & \leq C\,\bigg\|\int_D M\,|\beta^L(\hpsikaLDp) - \hpsika|\,\dq \bigg\|_{L^2(0,T;L^2(\Omega))}
\, \|\nabqi \varphi\|_{L^\infty(0,T;L^\infty(\Omega \times D))} \nonumber \\
& \leq C\,\|\beta^L(\hpsikaLDp) - \hpsika\|_{L^2(0,T;L^1_M(\Omega \times D))}^{\frac{2}{5}}
\,\|\vrhokaLDp+
\vrhoka\|_{L^2(0,T;L^6(\Omega))}^{\frac{3}{5}}
\,
\|\nabqi \varphi\|_{L^\infty(0,T;L^\infty(\Omega))},
\nonumber \\
\label{diffqiqjcon5}
\end{align}
and so (\ref{psisconL1beta}) and (\ref{vrhokaLvbd}) \red{(with $\upsilon =6$ and $\vartheta=\frac{d}{3}$, $d=2,3$,)}
 yield that ${\tt T}_3$ converges to zero
as $\Delta t \rightarrow 0_+$.
Similarly, as (\ref{vrhokareg}) yields that $\displaystyle \int_D \,M\,
\hpsika\,\qt_i \otimes \nabqi \varphi \,\dq \in L^2(\Omega_T)$,
it follows from (\ref{uLwconH1}) that, as $\Delta t \rightarrow 0_+$,
\begin{align}
{\tt T}_4 \rightarrow
\int_{0}^T \!\!\int_{\Omega \times D} M\,
\big[(\nabxtt \utka)
\,\qt_i\big]
\hpsika \,\cdot\, \nabqi
\varphi
\,\dq \dx \dt .
\label{diffqiqjcon6}
\end{align}
Hence the last term in (\ref{eqpsincon}) converges to the last term in
(\ref{eqhpsika}).
Similarly to the second and last terms,
the third term in (\ref{eqpsincon}) converges to the third term in
(\ref{eqhpsika}).
Therefore, we have obtained (\ref{eqhpsika}) for any $\varphi \in
 C([0,T];C^\infty(\overline{\Omega\times D}))$.
The desired result (\ref{eqhpsika}), for any \red{$\varphi \in L^2(0,T;H^s(\Omega\times D))$},
then follows from the denseness of \red{the function space}
$C([0,T];C^\infty(\overline{\Omega\times D}))$
in \red{$L^2(0,T;H^s(\Omega\times D))$}, $H^s(\Omega \times D) \hookrightarrow
W^{1,\infty}(\Omega \times D)$, (\ref{hpsika},b), (\ref{vrhokareg}) and (\ref{uLwconH1}).

Next we shall verify the attainment of the respective initial data by $\rhoka \,\utka$ and $\hpsika$.
We have already established that
$\rhoka \,\utka\in C_w([0,T];L^{\frac{2\Gamma}{\Gamma+1}}(\Omega))$,
see (\ref{mtkareg}).
\begin{comment}
then follows from
$\rhoka \,\utka \in C_w([0,T];\Wt_0^{1,4}(\Omega)')
\cap L^\infty(0,T;$ $L^{\frac{2\Gamma}{\Gamma+1}}(\Omega))$
(cf. \eqref{mtkareg}),
by Lemma \ref{lemma-strauss}(a), with $\mathfrak{X}:=L^{\frac{2\Gamma}{\Gamma+1}}(\Omega)
= L^{\frac{2\Gamma}{\Gamma-1}}(\Omega)'$, $\mathfrak{Y}:=\Wt_0^{1,4}(\Omega)'$.
\end{comment}
That $\hpsika\in C_{w}([0,T];$ $L^1_M(\Omega \times D))$ follows from
$\mathcal{F}(\hpsika) \in L^\infty(0,T; L^1_M(\Omega
\times D))$ and $\hpsika \in H^1(0,T; M^{-1}(H^s(\Omega \times D))')$ (cf. \eqref{Fisent} and \eqref{hpsikaLDdtbd})
with $s>1 + \frac{1}{2}(K+1)d$,
by Lemma \ref{lemma-strauss}(b) on taking
$\mathfrak{X}:=L^\Phi_M(\Omega \times D)$, the Maxwellian weighted Orlicz space with
Young's function $\Phi(r) = \mathcal{F}(1+|r|)$ (cf. Kufner, John \& Fu\v{c}ik \cite{KJF},
Sec. 3.18.2) whose separable
predual $\mathfrak{E}:=E^\Psi_M(\Omega \times D)$ has Young's function
$\Psi(r) = \exp|r| - |r| - 1$, and $\mathfrak{Y} := M^{-1}(H^s(\Omega \times D))'$
whose predual with respect to the duality pairing $\langle M \cdot , \cdot
\rangle_{H^s(\Omega \times D)}$ is $\mathfrak{F}:=H^s(\Omega \times D)$, with
$s> 1 + \frac{1}{2}(K+1)d$, and noting the embedding $C_{w\ast}([0;T]; L^\Phi_M(\Omega \times D))
\hookrightarrow C_{w}([0,T]; L^1_M(\Omega \times D))$.
The last embedding and that $\mathfrak{F} \hookrightarrow \mathfrak{E}$ are proved by adapting
Def. 3.6.1. and Thm. 3.2.3 in Kufner, John \& Fu\v{c}ik \cite{KJF} to the measure
$M(\qt)\dq \dx$ to show that $L^\infty(\Omega \times D) \hookrightarrow L^\Xi_M(\Omega \times D)$
for any Young's function $\Xi$, and then adapting Theorem 3.17.7 {\em ibid.}
to deduce that $\mathfrak{F} \hookrightarrow L^\infty(\Omega \times D)
\hookrightarrow E^\Psi_M(\Omega \times D)= \mathfrak{E}$.

We are now ready to prove that
$\rhoka \utka$ and $\hpsika$ satisfy the initial conditions
$(\rhoka\utka)(\cdot,0)=(\rho^0\ut_0)(\cdot)$ and
$\hpsika(\cdot,\cdot,0) = \hpsi_0(\cdot,\cdot)$ in the sense of
$C_w([0,T];\Lt^{\frac{2\Gamma}{\Gamma+1}}(\Omega))$ and
$C_w([0,T]; L^1_M(\Omega \times D))$, respectively.
The desired result for $\rhoka \utka$ follows immediately from
(\ref{mtkaLwscon}) and (\ref{ut0conv}).
We now consider $\hpsika$.
According to (\ref{hpsikaLDdtbd}), there exists a $C \in \mathbb{R}_{>0}$, independent
of $\Delta t$ and $L$, such that
\begin{align*}
\left|\int_0^T\int_{\Omega \times D}M\,\frac{\partial \hpsikaLD}{\partial t}
\,\varphi\dq  \dx \dt \right| \leq C\, \|\varphi\|_{L^{2}(0,T;H^s(\Omega\times D))}\qquad
\forall \varphi \in L^{2}(0,T;H^s(\Omega \times D)),
\end{align*}
where $s>1+\frac{1}{2}(K+1)d$. Choosing, in particular
\[ \varphi(\xt,t) = \phi(\xt) \left(1 - \frac{t}{\delta}\right)_+,\qquad \phi \in
H^s(\Omega\times D),\quad
0< \delta < T,\]
integrating by parts with respect to $t$ and using that $\hpsikaLD(\cdot,\cdot,0) =
\hpsi^0(\cdot,\cdot)$, we have
that
\begin{align*}
\left|\frac{1}{\delta}\int_0^\delta \int_{\Omega \times D} M\,\hpsikaLD
\,\phi \dq \dx \dt - \int_{\Omega \times D} M\,\hpsi^0 \,\phi \dq \dx \right|
\leq C\, \delta^{\frac{1}{2}}
\,\|\phi\|_{H^s(\Omega\times D)}
\qquad
\forall \phi \in H^s(\Omega \times D).
\end{align*}
For $\delta \in (0,T)$ and $\phi$ fixed,
we now pass to the limit $\Delta t \rightarrow 0_+$ ($L \rightarrow \infty$)
in this inequality using \eqref{psisconL1} and (\ref{psi0conv})
to deduce that
\begin{align*}
%\left|\bigg(\!\frac{1}{\delta}\!\int_0^\delta\! (\rhoka\utka)(t)\!\dt,\vt\bigg)\!
%- (\rho_0\ut_0,\vt)\right|\! =
\left|\frac{1}{\delta} \int_0^\delta  \int_{\Omega \times D} M\,\hpsika
\,\phi \dq \dx \dt - \int_{\Omega \times D} M\,\hpsi_0 \,\phi \dq \dx \right|
\leq C \,\delta^{\frac{1}{2}}
\,\|\phi\|_{H^{s}(\Omega \times D)}
\qquad
\forall \phi \in H^s(\Omega\times D),
\end{align*}
where we have recalled that
$H^s(\Omega \times D) \hookrightarrow
W^{1,\infty}(\Omega \times D)$.
Thus, noting
%the definition \eqref{mtkareg} of $\mtka$ %, Fubini's theorem and
the weak continuity result $\hpsika \in C_w([0,T];L^1_M(\Omega\times D))$
established above, it follows on passing to the limit $\delta \rightarrow 0_+$ that
$$\int_{\Omega \times D} M\,\hpsika(0)\,\phi \dq \dx = \int_{\Omega \times D}
M\,\hpsi_0\,\phi  \dq \dx
\qquad  \forall \phi \in H^s(\Omega \times D).$$
Hence, we have $\hpsika(\cdot,\cdot,0) = \hpsi_0$ in $L^1_M(\Omega \times D)$.
%Thanks to the denseness of $\Wt^{1,4}_0(\Omega)$ in $\Lt^{\frac{2\Gamma}{\Gamma-1}}(\Omega)$,
%the same is true for all $\vt \in \Lt^{\frac{2\Gamma}{\Gamma-1}}(\Omega)=
%\Lt^{\frac{2\Gamma}{\Gamma+1}}(\Omega)'$; i.e.,
%$(\rhoka \utka)(0) = \rho^0 \ut_0$ as an equality in $\Lt^{\frac{2\Gamma}{\Gamma+1}}(\Omega)$.
%In particular, since $\rhoka(0) = \rho_0>0$, we have that $\utka(0) = \ut_0$.
%An identical argument, based on the inequality \eqref{hpsikaLDdtbd} stated in
%Lemma \ref{hpsikaLDdtlem}
%and using the weak continuity result $\hpsika\in C_{w}([0,T];L^1_M(\Omega \times D))$
%established above and (\ref{psi0conv}), yields that $\hpsika(0)=\hpsi_0$.

\smallskip

{\color{black}
It remains to prove the inequality (\ref{5-eq:energyest}). For $t' \in (0,T]$ fixed, let
$n=n(t', \Delta t)$ be a positive integer such that $0 \leq (n-1)\Delta t < t' \leq n\Delta t \leq T$.
It follows from (\ref{eq:energy-u+psi-final2}) and (\ref{upm}), on noting that the interval $(0,t']$ is contained in $(0,t_n]$, that
\begin{align}
&\frac{1}{2}\,\displaystyle
\int_{\Omega} \rhokaLDp(t')
\,|\utkaLDp(t')|^2 \,\dx
+ \int_{\Omega} \Pk(\rhokaLDp(t'))  \dx
+k\,\int_{\Omega \times D} M\,
\mathcal{F}(\hpsikaLDp(t')) \dq \,\dx
\nonumber\\
&\qquad
+ \alpha\,\kappa
\int_{0}^{t'}
\left[ \|\nabx[(\rhokaL^{[\Delta t]})^2]\|_{L^2(\Omega)}^2 + \frac{4}{\Gamma}\,\|\nabx
[(\rhokaL^{[\Delta t]})^{\frac{\Gamma}{2}}]\|_{L^2(\Omega)}^2 \right]
\dt
+  \mu^S c_0\, \int_0^{t'} \|\utkaLDp \|_{H^1(\Omega)}^2 \dt
\nonumber \\
&\qquad + k\,\int_0^{t'} \int_{\Omega \times D}
M\,
\left[\frac{a_0}{2\lambda}\,
\left|\nabq \sqrt{\hpsikaLDp} \right|^2
+ 2\varepsilon\, \left|\nabx \sqrt{\hpsikaLDp} \right|^2
\right]
\,\dq \,\dx\, \dt
\nonumber \\
& \qquad + \mathfrak{z}\,\|\vrhokaLDp(t')\|_{L^2(\Omega)}^2
+ 2\mathfrak{z}\,\varepsilon \int_0^{t'}
\|\nabx \vrhokaLDp\|_{L^2(\Omega)}^2  \dt
\nonumber\\
&\leq {\rm e}^{t_n}\biggl[
\frac{1}{2}\,\displaystyle
\int_{\Omega} \rho^0\,|\ut_{0}|^2 \dx
+ \int_{\Omega} \Pk(\rho^0)  \dx
+k\,\int_{\Omega \times D} M\, %\zeta(\rho_0)\,
\mathcal{F}(\hpsi_0)
\dq \,\dx
\nonumber\\
&\hspace{1in}
+
\mathfrak{z}\,\int_{\Omega} \left(\int_D M \,\hpsi_0\dq\right)^2 \dx
%|\Omega|
+
\frac{1}{2}  \int_{0}^{t_n} \|\ft\|_{L^\infty(\Omega)}^2 \dt \int_\Omega \rho^0 \dx
\biggr]
\nonumber\\
&\leq C, \nonumber\\
\label{energyka1}
\end{align}

\vspace{-4mm}

\noindent
where $C \in {\mathbb R}_{>0}$ is independent of $\alpha$ and $\kappa$. Clearly $n=n(t',\Delta t) \geq \frac{t'}{\Delta t} \rightarrow \infty$
as $\Delta t \rightarrow 0_+$. Since $t' \in (t_{n-1},t_n]$ and $t_n - t_{n-1}=\Delta t$, we deduce that as $\Delta t \rightarrow 0_+$
(and, hence, $n=n(t',\Delta t) \rightarrow \infty$) both $t_{n-1}$ and $t_n$ converge to $t'$; hence
\begin{equation}\label{e-f-conv}
{\rm e}^{t_n} \rightarrow {\rm e}^{t'}\quad \mbox{and}\quad
\int_0^{t_n} \|\ft\|^2_{L^\infty(\Omega)}\dt \rightarrow \int_0^{t'}\|\ft\|^2_{L^\infty(\Omega)}\dt,\quad \mbox{as}\quad \Delta t \rightarrow 0_+.
\end{equation}
}

We multiply \eqref{energyka1} by any nonnegative $\eta \in C_0^\infty(0,T)$, integrate over $(0,T)$, and pass to the limit $\Delta t \rightarrow 0_+$ (and $L \rightarrow \infty$) in the resulting inequality.
It then follows from (\ref{mtutkaLDpmcon}), (\ref{Ppka}), (\ref{fatou-app}); weak lower-semicontinuity,
via the weak convergence results (\ref{rhoLwcon},c),
(\ref{uLwconH1}), (\ref{psiwconH1a},b) and (\ref{vrhokaLDpH1con}); and \eqref{e-f-conv},
that we obtain the inequality (\ref{5-eq:energyest}) multiplied by $\eta$
and integrated over $(0,T)$.
%Via a density argument and Fatou's lemma, we deduce the desired
%result (\ref{5-eq:energyest}).
The desired result (\ref{5-eq:energyest}) then follows from the well-known variant of du Bois-Reymond's lemma
according to which, if $\phi \in L^1(0,T)$, then
\begin{align}
\int_{0}^T \phi \, \eta \dt \geq 0 \quad \forall \eta \in C^\infty_0(0,T) \mbox{ with }
\eta \geq 0 \mbox{ on } (0,T) \quad \Rightarrow \quad \phi \geq 0 \quad \red{\mbox{ a.e. in $(0,T)$}}.
\label{weaknneg}
\end{align}
\end{proof}

\section{Existence of a solution to (P$_{\kappa}$)}
\label{sec:Pkappa}
\setcounter{equation}{0}

It follows from the bounds on $\vrhoka$ in (\ref{5-eq:energyest}),
similarly to (\ref{vrhokaLvbd}) and (\ref{vrhokaLvvbd}),
that
\begin{align}
\|\vrhoka\|_{L^{\infty}(0,T;L^2(\Omega))}+
\|\vrhoka\|_{L^{\frac{2(d+2)}{d}}(\Omega_T)}
+\|\vrhoka\|_{L^{4}(0,T;L^{\frac{2d}{d-1}}(\Omega))}
\leq C,
\label{vrhokabd}
\end{align}
where throughout this section  $C$ is a generic positive constant, independent of $\alpha$.
Hence, we deduce from (\ref{vrhokabd}), (\ref{Cttrsbd}) and (\ref{5-eq:energyest}),
similarly to (\ref{tautt43bd}),
that
\begin{align}
\|\tautt_1(M\,\hpsika)\|_{L^2(0,T;L^{\frac{4}{3}}(\Omega))}
+\|\tautt_1(M\,\hpsika)\|_{L^{\frac{4(d+2)}{3d+4}}(\Omega_T)}
&\leq C.
\label{tautt1ka}
\end{align}
Similarly to (\ref{hpsikaLDdtbd}), it follows from
(\ref{vrhokabd}) and (\ref{5-eq:energyest}) that
\begin{align}
\left\|M\,\frac{\partial \hpsika}{\partial t}
\right\|_{L^2(0,T;H^s(\Omega\times D)')} \leq C,
\label{hpsikadtbd}
\end{align}
where $s > 1+ \frac{1}{2}(K+1)d$.
We have the following analogue of Lemma \ref{convfinal}.

\begin{lemma}
\label{hpsikconv}
There exist functions
\begin{subequations}
\begin{align}
\label{hpsik}
&\utk \in L^2(0,T;\Ht^1_0(\Omega))\qquad \mbox{and} \qquad  \hpsik \in L^{\upsilon}(0,T;Z_1)\cap
H^1(0,T; M^{-1}(H^{s}(\Omega \times D))'),
\end{align}
where  $\upsilon \in [1,\infty)$ and $s>1+\frac{1}{2}(K+1)d$,
with finite relative entropy and Fisher information,
\begin{align}
\mathcal{F}(\hpsik) \in L^\infty(0,T;L^1_M(\Omega \times D)) \qquad \mbox{and} \qquad
\sqrt{\hpsik} \in L^2(0,T;H^1_M(\Omega \times D)),
\label{Fisentk}
\end{align}
\end{subequations}
and a subsequence of $\{(\rhoka,\,\utka,\,\hpsika)\}_{\alpha > 0}$
such that,
as $\alpha \rightarrow 0_+$,
\begin{align}
\utka &\rightarrow \utk
\qquad \mbox{weakly in } L^{2}(0,T;\Ht_0^1(\Omega)), \label{uwconH1k}
\end{align}
and
\begin{subequations}
\begin{alignat}{2}
M^{\frac{1}{2}}\,\nabx \sqrt{\hpsika} &\rightarrow M^{\frac{1}{2}}\,\nabx \sqrt{\hpsik}
&&\qquad \mbox{weakly in } L^{2}(0,T;\Lt^2(\Omega\times D)), \label{psiwconH1ak}\\
%\bet
M^{\frac{1}{2}}\,\nabq \sqrt{\hpsika} &\rightarrow M^{\frac{1}{2}}\,\nabq \sqrt{\hpsik}
&&\qquad \mbox{weakly in } L^{2}(0,T;\Lt^2(\Omega\times D)), \label{psiwconH1xak}\\
M\,\frac{\partial \hpsika}{\partial t} &\rightarrow M\,\frac{\partial \hpsik}{\partial t}
&&\qquad \mbox{weakly in } L^{2}(0,T;H^s(\Omega\times D)'), \label{psidtwconk}\\
\hpsika & \rightarrow
\hpsik &&\qquad \mbox{strongly in } L^\upsilon(0,T;L^1_M(\Omega\times D)),\label{psisconL1k}\\
\tautt(M\,\hpsika) & \rightarrow \tautt(M\,\hpsik)
&&\qquad \mbox{strongly in } \Ltt^r(\Omega_T),\label{tausconLsk}
\end{alignat}
where $r \in [1,\frac{4(d+2)}{3d+4})$, and,
for a.a.\ $t \in (0,T)$,
\begin{align}\label{fatou-appk}
&\int_{\Omega \times D} M(\qt)\, \mathcal{F}(\hpsik(\xt,\qt,t))\dq \dx
\leq \liminf_{\alpha \rightarrow 0_+}
\int_{\Omega \times D} M(\qt)\, \mathcal{F}(\hpsika(\xt,\qt,t)) \dq \dx.
\end{align}
\end{subequations}

In addition, we have that
\begin{align}
\vrhok := \int_D M\, \hpsik\, \dq \in L^\infty(0,T;L^2(\Omega)) \cap L^2(0,T;H^1(\Omega)),
\label{vrhokreg}
\end{align}
and, as $\alpha \rightarrow 0_+$,
\begin{subequations}
\begin{alignat}{2}
\vrhoka &\rightarrow \vrhok \qquad
&&\mbox{weakly-$\star$ in } L^\infty(0,T;L^2(\Omega)),
\qquad \mbox{weakly in } L^2(0,T;H^1(\Omega)),
\label{vrhokaH1con} \\
\vrhoka &\rightarrow \vrhok
\qquad &&\mbox{strongly in } L^{\frac{5\varsigma}{3(\varsigma-1)}}(0,T;L^\varsigma(\Omega)),
\label{vrhokaL2con}
\end{alignat}
\end{subequations}
for any $\varsigma \in (1,6)$.
\end{lemma}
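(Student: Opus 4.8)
The plan is to mimic the structure of Lemma~\ref{convfinal}, since Lemma~\ref{hpsikconv} is its exact analogue at the next level of the limiting process: we now pass $\alpha \to 0_+$ in the solution of (P$_{\kappa,\alpha}$) rather than $\Delta t \to 0_+$ $(L\to\infty)$ in the solution of (P$^{\Delta t}_{\kappa,\alpha,L}$). The crucial point is that all the bounds established in Theorem~\ref{5-convfinal}, namely the energy inequality (\ref{5-eq:energyest}), together with (\ref{vrhokabd}), (\ref{tautt1ka}) and (\ref{hpsikadtbd}), are uniform in $\alpha$ (as well as in $\kappa$). First I would extract weakly convergent subsequences from $\{\utka\}_{\alpha>0}$ in $L^2(0,T;\Ht^1_0(\Omega))$ (giving (\ref{uwconH1k})), from $\{M^{1/2}\nabx\sqrt{\hpsika}\}$ and $\{M^{1/2}\nabq\sqrt{\hpsika}\}$ in $L^2(0,T;\Lt^2(\Omega\times D))$ (giving (\ref{psiwconH1ak},b)), and from $\{M\,\partial_t\hpsika\}$ in $L^2(0,T;H^s(\Omega\times D)')$ using (\ref{hpsikadtbd}) (giving (\ref{psidtwconk})). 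The limit functions must then be identified: that $\utk$ is the weak limit and that the weak limits of the square-root gradients are $M^{1/2}\nabx\sqrt{\hpsik}$, $M^{1/2}\nabq\sqrt{\hpsik}$ follows once (\ref{psisconL1k}) is in hand, by the same identification argument as in the proof of Lemma~3.3 in \cite{BS2011-fene}.

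The strong convergence (\ref{psisconL1k}) is obtained by Dubinski\u{\i}'s compactness theorem (\ref{Dubinskii}), applied exactly as in Lemma~\ref{convfinal} with $\mathfrak{M}$ as in (\ref{calM}), $\mathfrak{X}=L^1_M(\Omega\times D)$ and $\mathfrak{X}_1 = M^{-1}H^s(\Omega\times D)'$: the first three terms in (\ref{5-eq:energyest}) control $\mathcal{F}(\hpsika)$ and the Fisher information uniformly in $\alpha$, while (\ref{hpsikadtbd}) controls $\partial_t\hpsika$; the case $\upsilon=1$ follows with $\varsigma_0=\varsigma_1=2$, and the extension to $\upsilon\in(1,\infty)$ follows via the interpolation Lemma~5.1 in \cite{BS2011-fene} and the bound on $\mathcal{F}(\hpsika)$. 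The relative-entropy lower-semicontinuity (\ref{fatou-appk}) then follows from (\ref{psisconL1k}) and Fatou's lemma, as in (6.46) of \cite{BS2011-fene}, and (\ref{hpsik},b)--(\ref{Fisentk}) are read off from the convergence results. For the polymer number density, (\ref{vrhokaH1con}) follows from the $\vrhoka$-bounds in (\ref{5-eq:energyest}); that $\vrhok=\int_D M\hpsik\,\dq$ follows from (\ref{psisconL1k}); then (\ref{vrhokaL2con}) follows from the embedding $H^1(\Omega)\hookrightarrow L^6(\Omega)$, (\ref{psisconL1k}), and the interpolation inequality (\ref{eqLinterp}), exactly as (\ref{bvrhokaLDpL2con}) was deduced.

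The genuinely nontrivial step is (\ref{tausconLsk}), the strong convergence of $\tautt(M\,\hpsika)$ in $\Ltt^r(\Omega_T)$ for $r<\frac{4(d+2)}{3d+4}$. Here I would follow the long argument used in Lemma~\ref{convfinal} for (\ref{tausconLs}) essentially verbatim, with $\alpha$ playing the role of $\Delta t$. Writing $\tautt_1(M\,\varphi)$ in the integrated-by-parts form (\ref{tau1ibp}), one splits $\int_{\Omega_T}|\tautt_1(M\hpsika)-\tautt_1(M\hpsik)|\dx\dt$ into a tail contribution near $\partial D$ (controlled by equi-integrability of $\{M\nabq\hpsika\}$ in $\Lt^1(\Omega_T\times D)$, via Dunford--Pettis), an interior term on a Lipschitz subdomain $D_0\Subset D$ where the Maxwellian is bounded below (controlled by (\ref{psisconL1k})), and a boundary term on $\Omega\times\partial D_0$ controlled by Auchmuty's sharp trace inequality (\ref{auch1}) together with the interior bound $\|\hpsika\|_{L^s(0,T;W^{1,s}(\Omega\times D_0))}\le C(D_0)$ obtained from the uniform Fisher-information bound by the same interpolation as in (\ref{tau1limit3})--(\ref{tau1limit4}). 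Upgrading from $L^1$ to $L^r$ strong convergence uses (\ref{tautt1ka}), (\ref{tautt43bd-0})-type bounds and (\ref{eqLinterp}). Finally the quadratic correction $\mathfrak{z}\,\vrhoka^2\,\Itt$ converges strongly in $L^r(\Omega_T)$ by combining (\ref{vrhokaL2con}) (with $\varsigma=2$, giving $\vrhoka^2\to\vrhok^2$ strongly in $L^1(\Omega_T)$), the uniform bound (\ref{vrhokabd}) on $\vrhoka$ in $L^{\frac{2(d+2)}{d}}(\Omega_T)$ (so $\vrhoka^2$ is bounded in $L^{\frac{d+2}{d}}(\Omega_T)\supset L^{\frac{4(d+2)}{3d+4}}(\Omega_T)$), and interpolation; adding this to the convergence of $\tautt_1$ gives (\ref{tausconLsk}) via (\ref{tautot}). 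I expect the boundary-trace estimate to be the main technical obstacle, exactly as it was in Lemma~\ref{convfinal}, but since the estimates there are $\alpha$-independent in all the relevant places, no new idea should be needed.
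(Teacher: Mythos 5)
Your proposal is correct and follows essentially the same route as the paper, which disposes of this lemma in two sentences by observing that the bound on $\utka$ in \eqref{5-eq:energyest} gives \eqref{uwconH1k} and \eqref{hpsik}, and that everything else follows from the $\alpha$-uniform bounds on $\hpsika$ and $\vrhoka$ in \eqref{5-eq:energyest} ``in the same way as the results of Lemma \ref{convfinal}.'' Your expanded sketch of those steps (Dubinski\u{\i}, Fatou, the Auchmuty-trace argument for $\tautt_1$, and the treatment of $\vrhoka^2$) matches the paper; the only blemish is a reversed set-inclusion symbol -- on a finite-measure set $L^{\frac{d+2}{d}}(\Omega_T)\subset L^{\frac{4(d+2)}{3d+4}}(\Omega_T)$, not $\supset$, which is of course what you actually use.
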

\begin{proof} The convergence result (\ref{uwconH1k}) and the first result in (\ref{hpsik})
follow immediately from the bound on $\utka$ in (\ref{5-eq:energyest}).
The remainder of the results follow from the bounds on $\hpsika$ and $\vrhoka$ in (\ref{5-eq:energyest})
in the same way as the results of Lemma \ref{convfinal}.
\end{proof}

We have the following analogue of
Lemmas \ref{rhonabxsclem} and \ref{mtkaLlem}.

\begin{lemma}\label{rhokalem}
Let $\Gamma \geq 8$; then,
there exists a $C \in \mathbb R_{>0}$, independent of $\alpha$, such that
\begin{subequations}
\begin{align}
&\|\rhoka\|_{L^\infty(0,T;L^{\Gamma}(\Omega))}
+ \|\utka\|_{L^2(0,T;H^1(\Omega))}
+
\left\|\sqrt{\rhoka}\,\utka\right\|_{L^\infty(0,T;L^2(\Omega))}
\nonumber \\
& \qquad
+ \|\rhoka\,\utka\|_{L^\infty(0,T;L^{\frac{2\Gamma}{\Gamma+1}}(\Omega))}
+ \|\rhoka\,\utka\|_{L^2(0,T;L^{\frac{6\Gamma}{\Gamma+6}}(\Omega))}
\nonumber \\
&\qquad
+ \|\rhoka\,\utka\|_{L^{\frac{10\Gamma-6}{3(\Gamma+1)}}(\Omega_T)}
+ \left\|\rhoka\,|\utka|^2\right\|_{L^2(0,T;L^{\frac{6\Gamma}{4\Gamma+3}}(\Omega))}
\leq C,
\label{mtkabd} \\
& \sqrt{\alpha}\,\,\|\nabx \rhoka\|_{L^2(\Omega_T)}
+ \alpha\,\|\nabx \rhoka\|_{L^{\frac{10\Gamma-6}{3(\Gamma+1)}}(\Omega_T)}
+ \alpha\,\|(\nabx \rhoka \cdot \nabx) \utka  \|_{L^{\frac{5\Gamma-3}{4\Gamma}}(\Omega_T)}
\nonumber \\
&  \qquad +\alpha\,\|\nabx \rhoka \otimes \utka  \|_{L^{\frac{5\Gamma-3}{4\Gamma}}(\Omega_T)}
\leq C,
\label{nabxrhoka}\\
&
\left\|\frac{\partial\rhoka}{\partial t}\right\|_{L^2(0,T;H^{1}(\Omega)')} \leq C.
\label{dtrhoka}
\end{align}
\end{subequations}

Hence, there exists a function $\rhok \in %L^\infty(
C_w([0,T];L^{\Gamma}_{\geq 0}(\Omega))
\cap H^1(0,T;H^1(\Omega)')$,
and for a further subsequence of the subsequence of Lemma \ref{hpsikconv}, it follows that,
as $\alpha \rightarrow 0_+$,
\begin{subequations}
\begin{alignat}{2}
\rhoka &\rightarrow \rhok
\qquad
&&%\mbox{weakly-$\star$ in } L^\infty(0,T;L^{\Gamma}(\Omega)),
\mbox{in } %L^\infty
C_w([0,T];L^{\Gamma}(\Omega)),
\qquad \mbox{weakly in } H^1(0,T;H^1(\Omega)'),
\label{rhokawcon}
\\
\rhoka &\rightarrow \rhok
\qquad
&&\mbox{strongly in } L^2(0,T;H^1(\Omega)'),
\label{rhokascon}
\\
\alpha\,\nabx \rhoka &\rightarrow  \zerot \qquad
&&\mbox{strongly in } \Lt^{r}(\Omega_T), \qquad r \in [1,\textstyle\frac{10\Gamma-6}{3(\Gamma+1)}),
\label{rhokaxwcon0} \\
\alpha\, (\nabx \rhoka \cdot \nabx) \utka &\rightarrow
\zerot
\quad
&&\mbox{weakly in } \Lt^{\frac{5\Gamma-3}{4\Gamma}}(\Omega_T),
\label{alpha0a}\\
\alpha\, \nabx \rhoka \otimes \utka &\rightarrow
\zerott
\quad
&&\mbox{weakly in } \Ltt^{\frac{5\Gamma-3}{4\Gamma}}(\Omega_T),
\quad \mbox{strongly in } L^1(0,T;\Ltt^{\frac{3}{2}}(\Omega)),
\label{alpha0b}
\end{alignat}
and, for any nonnegative $\eta \in C[0,T],$
\begin{align}\label{Ppk}
&\int_0^T \left(\int_{\Omega} \Pk(\rhok)\,\dx\right) \eta\,\dt
\leq \liminf_{\alpha \rightarrow 0_+}
\int_0^T \left( \int_{\Omega} \Pk(\rhoka)\, \dx\right) \eta\,\dt.
\end{align}
\end{subequations}
\end{lemma}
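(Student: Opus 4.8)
The strategy is the familiar one: extract the required uniform-in-$\alpha$ bounds from the energy inequality \eqref{5-eq:energyest} together with the continuity equation \eqref{eqrhokarep}, and then pass to the limit $\alpha \to 0_+$ using weak compactness, the Aubin--Lions--Simon theorem, and the weak-star continuity lemma, Lemma \ref{lemma-strauss}. The argument mirrors almost verbatim the proofs of Lemmas \ref{rhonabxsclem}, \ref{mtkaLlem} and \ref{rhoLconv}, with $\rhokaL^{[\Delta t]}$ now replaced by $\rhoka$, $\utkaLDp$ by $\utka$, etc.; the key structural difference is that $\alpha$ appears as a small coefficient in front of $\nabx \rhoka$, so the gradient terms must be shown to vanish rather than merely converge.

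\textbf{Step 1: the $\alpha$-uniform bounds \eqref{mtkabd}--\eqref{dtrhoka}.} The bound on $\|\rhoka\|_{L^\infty(0,T;L^\Gamma(\Omega))}$ comes from the $\int_\Omega P_\kappa(\rhoka)$ term on the left of \eqref{5-eq:energyest} via \eqref{Pdef} (recall $\Gamma = \max\{\gamma,8\}$), combined with the $\alpha\kappa$-dissipation term controlling $\|(\rhoka)^{\Gamma/2}\|_{L^2(0,T;H^1(\Omega))}$, exactly as in \eqref{PkrhokaLc}. The bound on $\|\utka\|_{L^2(0,T;H^1(\Omega))}$ and on $\|\sqrt{\rhoka}\,\utka\|_{L^\infty(0,T;L^2(\Omega))}$ is immediate from the $\mu^S c_0$ and kinetic-energy terms in \eqref{5-eq:energyest}. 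The remaining bounds in \eqref{mtkabd} follow by interpolation (H\"older plus \eqref{eqinterp} and \eqref{eqLinterp}) exactly as in the proof of Lemma \ref{rhokalem}'s predecessors, namely \eqref{LinfLr}, \eqref{L2Lsab}, \eqref{mtkaLLup}, \eqref{mtutkaLD} --- one just tracks the same exponents. For \eqref{nabxrhoka}: the $\sqrt{\alpha}\,\|\nabx\rhoka\|_{L^2(\Omega_T)}$ bound comes from choosing $\eta = \rhoka$ in \eqref{eqrhokarep} and an integration-in-time argument as in \eqref{rhoLsc1}--\eqref{rhoLsc2}; the higher-integrability $\alpha$-bound on $\nabx\rhoka$ would follow by applying the parabolic regularity results (Lemma 7.37 and Lemma 7.38 of Novotn\'y \& Stra\v{s}kraba, as in Lemmas \ref{rhonabxsclem} and \ref{badlem}) to \eqref{eqrhokarep}, noting $\rho^0 \in W^{2,r}(\Omega)$ from \eqref{rho0reg}, and then the products $\alpha(\nabx\rhoka\cdot\nabx)\utka$ and $\alpha\,\nabx\rhoka\otimes\utka$ are handled by H\"older against $\|\nabxtt\utka\|_{L^2(\Omega_T)}$ with the exponent $\frac{5\Gamma-3}{4\Gamma}$ determined by the conjugacy relation $\frac{3(\Gamma+1)}{10\Gamma-6} + \frac{1}{2} = \frac{4\Gamma}{5\Gamma-3}$. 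Finally \eqref{dtrhoka} follows by testing \eqref{eqrhokarep} with $\eta \in H^1(\Omega)$ and using $\sqrt{\alpha}\,\|\nabx\rhoka\|_{L^2(\Omega_T)} \leq C$ together with $\|\rhoka\,\utka\|_{L^2(0,T;L^{\frac{6\Gamma}{\Gamma+6}}(\Omega))} \leq C$ and the embedding $L^{\frac{6\Gamma}{\Gamma+6}}(\Omega) \hookrightarrow H^1(\Omega)'$.

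\textbf{Step 2: extraction of limits and identification.} The bounds \eqref{mtkabd}, \eqref{dtrhoka} together with \eqref{Cwcoma}--\eqref{Cwcomb} (applied with $r = \Gamma$, $\varsigma = 2$, $\upsilon$ arbitrary in $(1,\infty)$) give a subsequence and a limit $\rhok$ with $\rhoka \to \rhok$ in $C_w([0,T];L^\Gamma(\Omega))$ and $\partial_t\rhoka \rightharpoonup \partial_t\rhok$ weakly in $L^2(0,T;H^1(\Omega)')$, yielding \eqref{rhokawcon} and $\rhok \in C_w([0,T];L^\Gamma_{\geq 0}(\Omega)) \cap H^1(0,T;H^1(\Omega)')$; nonnegativity of $\rhok$ is inherited since $\rhoka \geq 0$. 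The strong convergence \eqref{rhokascon} in $L^2(0,T;H^1(\Omega)')$ follows from \eqref{rhokawcon}, \eqref{dtrhoka}, and the Aubin--Lions--Simon theorem \eqref{compact1} using the compact embedding $L^\Gamma(\Omega) \hookrightarrow\!\!\!\rightarrow H^1(\Omega)'$ (valid since $\Gamma > \frac{2d}{d+2}$). The strong convergences to zero \eqref{rhokaxwcon0}, \eqref{alpha0a}, \eqref{alpha0b} follow from the $\alpha$-bounds in \eqref{nabxrhoka}: since $\alpha\,\nabx\rhoka$, $\alpha(\nabx\rhoka\cdot\nabx)\utka$, $\alpha\,\nabx\rhoka\otimes\utka$ are each bounded in the indicated spaces \emph{and} carry an explicit factor of $\alpha \to 0$, while $\sqrt{\alpha}\,\nabx\rhoka$ is bounded in $L^2(\Omega_T)$ forcing $\alpha\,\nabx\rhoka = \sqrt{\alpha}\cdot(\sqrt{\alpha}\,\nabx\rhoka) \to \zerot$ strongly in $L^2(\Omega_T)$ and then, by interpolation with the higher-integrability bound, strongly in $L^r(\Omega_T)$ for $r < \frac{10\Gamma-6}{3(\Gamma+1)}$; the product terms go to zero weakly by the same device, and the strong $L^1(0,T;\Ltt^{3/2}(\Omega))$ convergence in \eqref{alpha0b} follows from $\alpha^{1/2}\,\nabx\rhoka \to \zerot$ in $L^2(\Omega_T)$ times $\alpha^{1/2}\,\utka$ bounded in $L^2(0,T;L^6(\Omega))$. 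Finally \eqref{Ppk} is the standard weak-lower-semicontinuity consequence of convexity of $P_\kappa$ and $\rhoka \rightharpoonup \rhok$, argued exactly as in \eqref{Pkbel}--\eqref{Ppka}, noting $P_\kappa'(\rhok) \in L^1(0,T;L^{\Gamma/(\Gamma-1)}(\Omega))$.

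\textbf{Main obstacle.} The only genuinely delicate point is establishing the \emph{higher-integrability} $\alpha$-bounds on $\nabx\rhoka$ and the associated products in \eqref{nabxrhoka} with the sharp exponent $\frac{5\Gamma-3}{4\Gamma}$: this requires invoking the parabolic maximal-regularity estimates of Novotn\'y \& Stra\v{s}kraba for \eqref{eqrhokarep}, which in turn needs an $L^s$-bound on $\nabx\cdot(\rhoka\utka)$ analogous to \eqref{Laprho}, and one must verify that this bound is uniform in $\alpha$ --- the chain of interpolation exponents is identical to that in Lemmas \ref{rhonabxsclem} and \ref{badlem} but must be re-checked since $\Delta t$ is no longer present. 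Everything else is a routine transcription of the $\Delta t \to 0_+$ passage, with the cosmetic simplification that there are no longer any $\pm$ time-interpolants to reconcile (cf. \eqref{u-t-1}, \eqref{mtkaLwscon}).
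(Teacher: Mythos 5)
Your overall plan (energy estimate $\Rightarrow$ uniform bounds, Aubin--Lions--Simon and $C_w$-compactness, weak lower-semicontinuity for the pressure integral) is the right one and matches the paper's structure, but the way you propose to obtain the higher-integrability $\alpha$-bound on $\nabx\rhoka$ in \eqref{nabxrhoka} contains a genuine gap. You invoke ``$\rho^0 \in W^{2,r}(\Omega)$ from \eqref{rho0reg}'' together with Lemma 7.37 (and Lemma 7.38) of Novotn\'y \& Stra\v{s}kraba, and you say the argument would also ``need an $L^s$-bound on $\nabx\cdot(\rhoka\,\utka)$ analogous to \eqref{Laprho}''. Neither of these ingredients is available uniformly in $\alpha$: the estimate \eqref{rho0reg} reads $\|\rho^0\|_{W^{2,r}(\Omega)} \leq C(\alpha)$ with a constant that blows up as $\alpha\rightarrow 0_+$ (since $\Delta_x\rho^0 = (\rho_0-\rho^0)/\alpha$), and any $\alpha$-uniform bound on $\nabx\cdot(\rhoka\,\utka)$ analogous to \eqref{Laprho} would require an $\alpha$-uniform control of $\nabx\rhoka$ without the $\alpha$ prefactor, which is precisely what is not at your disposal (indeed, \eqref{nabxkaLLrLs} in Lemma \ref{rhonabxsclem} is proved only with $\Delta t$- and $L$-independent constants, not $\alpha$-independent ones). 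Invoking Lemma 7.37, which requires $\rho^0 \in \widetilde{W}^{2-2/r,s}(\Omega)$ on the right-hand side of its estimate, therefore cannot deliver the required $\alpha$-uniformity.

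The correct route, and the one the paper takes, uses \emph{only} Lemma 7.38 (Lemma \ref{Le-G-2}), the divergence-form variant of the parabolic regularity result, applied to \eqref{eqrhokarep} with $\bt = -\rhoka\,\utka$. That lemma requires only $\rho^0 \in L^s(\Omega)$, which from \eqref{rho0conv} is bounded uniformly in $\alpha$ (since $\rho^0 \in [0,\|\rho_0\|_{L^\infty(\Omega)}]$); and it requires only a bound on $\rhoka\,\utka$ in $L^r(0,T;L^s(\Omega))$ — not on its divergence — which is the sixth bound in \eqref{mtkabd}. Crucially, the conclusion of Lemma 7.38 is
\[
\alpha\,\|\nabx z\|_{L^r(0,T;L^s(\Omega))} \leq C(r,s,\Omega)\big[\alpha^{1-\frac{1}{r}}\|z_0\|_{L^s(\Omega)} + \|\bt\|_{L^r(0,T;L^s(\Omega))}\big],
\]
where the factor of $\alpha$ already appears in front of $\nabx z$ on the left-hand side; so with $r=s=\frac{10\Gamma-6}{3(\Gamma+1)}$ and $\alpha \leq 1$ you obtain $\alpha\,\|\nabx\rhoka\|_{L^{(10\Gamma-6)/(3(\Gamma+1))}(\Omega_T)} \leq C$ with $C$ independent of $\alpha$, exactly the second bound in \eqref{nabxrhoka}. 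The third and fourth bounds then follow by H\"older against $\|\nabxtt\utka\|_{L^2(\Omega_T)}$ as you indicate. Two smaller corrections: for the first bound in \eqref{mtkabd} the $\int_\Omega\Pk(\rhoka(t'))\dx$ term of \eqref{5-eq:energyest} already suffices (one only needs $\alpha$-independence, not $\kappa$-independence, so there is no need to invoke the $\alpha\kappa$-dissipation term); and in the final strong-convergence claim of \eqref{alpha0b} the correct factoring is $\alpha\,\nabx\rhoka \to \zerot$ strongly in $\Lt^2(\Omega_T)$ (since $\|\alpha\nabx\rhoka\|_{L^2} \leq \sqrt{\alpha}\cdot C$), paired with $\utka$ bounded in $L^2(0,T;\Lt^6(\Omega))$, as in \eqref{L1L32} — the sequence $\sqrt{\alpha}\,\nabx\rhoka$ is bounded, not convergent to zero, in $L^2(\Omega_T)$.
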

\begin{proof}
The first three bounds in (\ref{mtkabd}) follow immediately from
(\ref{5-eq:energyest}). The last four bounds in (\ref{mtkabd})
follow, similarly to (\ref{mtkaLLnu},b), from the first two
bounds in (\ref{mtkabd}).

On recalling (\ref{rhokareg}), we choose $\eta= \rhoka$ in (\ref{eqrhoka}) to obtain,
on noting (\ref{mtkabd}), that
\begin{align}
\frac{1}{2}\,\|\rhoka(\cdot,T)\|_{L^2(\Omega)}^2
+ \alpha\,\|\nabx \rhoka\|_{L^2(\Omega_T)}^2
%\nonumber \\
%& \hspace{2.2in}
&= \frac{1}{2}\,
\left[\|\rho^0\|_{L^2(\Omega)}^2
- \int_0^T \int_\Omega (\nabx \cdot \utka)\,\rhoka^2 \dx\right]
\nonumber \\
&\leq C\,\left[1+ \|\utka\|_{L^2(0,T;H^1(\Omega))}\,\|\rhoka\|_{L^4(0,T;L^4(\Omega))}^2\right]
\leq C.
\label{alpnabx}
\end{align}
Hence the first bound (\ref{nabxrhoka}).
On noting the sixth bound in (\ref{mtkabd}) and recalling from (\ref{inidata})
that $\partial \Omega \in C^{2,\theta}$, $\theta \in (0,1)$,
and $\rho^0 \in L^\infty(\Omega)$ satisfying (\ref{rho0conv}),
we can now apply the parabolic regularity result,
Lemma 7.38 in Novotn\'{y} \& Stra\v{s}kraba \cite{NovStras} \arxiv{(or Lemma \ref{Le-G-2} in Appendix \ref{sec:App-G})}, to
(\ref{eqrhokarep}) to
obtain that the solution $\rhoka$ satisfies the second bound in (\ref{nabxrhoka}).
The third and fourth bounds in (\ref{nabxrhoka}) follow from the second bounds
in (\ref{mtkabd},b), similarly to (\ref{badex1}).
The bound (\ref{dtrhoka}) follows immediately from (\ref{eqrhokarep}),
the fifth bound in (\ref{mtkabd}) and the first bound in (\ref{nabxrhoka}).

The convergence results (\ref{rhokawcon},b) follow immediately from the first bound in
(\ref{mtkabd},c), %(\ref{dtrhoka})
(\ref{Cwcoma},b) and (\ref{compact1}).
The first two bounds in (\ref{nabxrhoka}) and (\ref{eqLinterp})
yield the desired result (\ref{rhokaxwcon0}).
The convergence result (\ref{alpha0a}) and the first result in (\ref{alpha0b})
follow from the final two bounds in (\ref{nabxrhoka}),
the second bound in (\ref{mtkabd})
and (\ref{rhokaxwcon0}).
The second result in (\ref{alpha0b}) also follows from
the second bound in (\ref{mtkabd}) and
(\ref{rhokaxwcon0}) with $r=2$
on noting that, for all $\eta_1 \in L^2(\Omega_T)$ and $\eta_2 \in L^2(0,T;H^1(\Omega))$,
\begin{align}
\|\eta_1\,\eta_2\|_{L^1(0,T;L^{\frac{3}{2}}(\Omega))}
\leq \|\eta_1\|_{L^2(\Omega_T)}\,\|\eta_2\|_{L^2(0,T;L^6(\Omega))}
\leq C\,\|\eta_1\|_{L^2(\Omega_T)}\,\|\eta_2\|_{L^2(0,T;H^1(\Omega))}.
\label{L1L32}
\end{align}
Finally, the result (\ref{Ppk}) follows, similarly to (\ref{Pkbel}), from (\ref{rhokawcon})
and the convexity of $\Pk$.
\end{proof}

Next, we set
$\upsilon = \frac{10\Gamma -6}{3(\Gamma+1)} \geq \frac{74}{27}$, which appears in Lemma \ref{rhokalem},
and $s'=\frac{5\Gamma-3}{\Gamma-3}\geq 5$ so that (\ref{frac}) holds.
Then, similarly to (\ref{equnconadjbd}),
it follows from (\ref{equtka}), (\ref{frac}), $W^{1,4}(\Omega) \hookrightarrow L^\infty(\Omega)$,
$H^{1}(\Omega) \hookrightarrow L^6(\Omega)$,
(\ref{mtkabd},b), (\ref{vrhokabd}), %(\ref{5-eq:energyest}),
 on noting that $\frac{2d}{d-1} > \frac{8}{3}$,
(\ref{tautt1ka}),
and (\ref{inidata}) that,
for any $\wt \in L^{s'}(0,T;\Wt_0^{1,4}(\Omega))$,
\begin{align}
&\left|\displaystyle\int_{0}^{T} \left\langle
\frac{\partial (\rhoka\,\utka)}{\partial t},  \wt \right\rangle_{W^{1,4}_0(\Omega)}\dt
-\int_{0}^T \int_{\Omega} \pk(\rhoka)\,\nabx \cdot \wt
\dx\,\dt
\right| \nonumber \\
& \quad \leq C\left[ \alpha\,\|\nabx \rhoka\|_{L^{\upsilon}(\Omega_T)} +
\|\rhoka\,\utka\|_{L^{\upsilon}(\Omega_T)} + 1\right]
\|\utka\|_{L^2(0,T;H^1(\Omega))}\,\|\wt\|_{L^{s'}(0,T;W^{1,4}(\Omega))}
\nonumber \\
& \qquad +
%\|\pkaL^{\{\Delta t\}}\|_{L^{\frac{4}{3}}(\Omega_T)}\, \|\wt\|_{L^{4}(0,T;W^{1,4}(\Omega))}
C\left[\|\tautt_1(M\,\hpsika)\|_{L^2(0,T;L^{\frac{4}{3}}(\Omega))}\,
%\|\wt\|_{L^{4}(0,T;W^{1,4}(\Omega))}
%\nonumber \\
%& \qquad
+\|\vrhoka\|_{L^4(0,T;L^{\frac{2d}{d-1}}(\Omega))}^2 \right]
\|\wt\|_{L^{2}(0,T;W^{1,4}(\Omega))}
%\,\|\vrhoka\|_{L^2(0,T;H^1(\Omega))}
%\,\|\wt\|_{L^2(0,T;L^\infty(\Omega))}
\nonumber \\
&\qquad + \|\rhoka\|_{L^\infty(0,T;L^2(\Omega))}\,
\|\ft\|_{L^2(0,T;L^\infty(\Omega))}\, \|\wt\|_{L^{2}(\Omega_T)}
\nonumber \\
& \quad \leq
C\,\|\wt\|_{L^{s'}(0,T;W^{1,4}(\Omega))}.
\nonumber \\
&
\label{equtkabd}
\end{align}

For $r,\,s \in (1,\infty)$, let
%\begin{subequations}
\begin{align}
L^r_0(\Omega)&:= \{ \zeta \in L^r(\Omega) : \displaystyle \int_{\Omega} \zeta \,\dx=0\},
%\label{Lr0}\\
\quad \Et^{r,s}(\Omega):= \{ \wt \in \Lt^r(\Omega) : \nabx \cdot \wt \in L^s(\Omega)\}
\nonumber \\
\mbox{and} \quad \Et^{r,s}_0(\Omega)&:= \{ \wt \in \Et^{r,s}(\Omega) : \wt \cdot \nt = 0
\mbox{ on } \partial \Omega\}.
\label{Etrs}
\end{align}
%\end{subequations}
\red{The equality $\wt \cdot \nt = 0 \mbox{ on } \partial \Omega$ should be understood in the sense of traces of Sobolev functions, with equality in
$W^{1-\frac{\upsilon}{\upsilon'}, \upsilon'}(\partial\Omega)'$, where $\frac{1}{\upsilon} + \frac{1}{\upsilon'}=1$ and $\upsilon=\min\{r,s\}$; cf. Lemma 3.10 in \cite{NovStras}.}

We now introduce the Bogovski\u{\i} operator $\calBt:
L^r_0(\Omega)\rightarrow \Wt^{1,r}_0(\Omega)$, $r \in (1,\infty)$, such that
\begin{align}
\int_{\Omega} \left(\nabx \cdot \calBt(\zeta) - \zeta\right) \eta \,\dx =0 \qquad
\forall \eta \in L^{\frac{r}{r-1}}(\Omega);
\label{Bop}
\end{align}
which satisfies
\begin{subequations}
\begin{alignat}{2}
\|\calBt(\zeta) \|_{W^{1,r}(\Omega)} &\leq C\, \|\zeta\|_{L^r(\Omega)} \qquad &&\forall \zeta \in L^r_0(\Omega),
\label{Bop1}\\
\|\calBt(\nabx\cdot\wt) \|_{L^r(\Omega)} &\leq C\, \|\wt\|_{L^r(\Omega)} \qquad
&&\forall \wt \in \Et^{r,s}_0(\Omega),
\label{Bop2}
\end{alignat}
\end{subequations}
see Lemma 3.17 in Novotn\'{y} \& Stra\v{s}kraba \cite{NovStras} \arxiv{(or Lemma \ref{Le-C-1} in Appendix \ref{sec:App-C})}.

\begin{lemma}\label{BogLem}
There exists a $C(\alpha) \in \mathbb{R}_{>0}$ such that
\begin{align}
\|\nabx\cdot(\rhoka\,\utka)\|_{L^s(\Omega_T)}+
\left\|\frac{\partial \rhoka}{\partial t}\right\|_{L^s(\Omega_T)}
+ \|\Delta_x\,\rhoka\|_{L^s(\Omega_T)} &\leq C(\alpha),
\label{Laprhoka}
\end{align}
where $s=\frac{5\Gamma-3}{4\Gamma}$.
In addition, there exists a $C \in \mathbb{R}_{>0}$, independent of $\alpha$, such that
\begin{align}
\|\rhoka\|_{L^{\Gamma+1}(\Omega_T)} &\leq C.
\label{Bogpkabd}
\end{align}
\end{lemma}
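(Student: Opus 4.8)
\textbf{Plan of proof for Lemma \ref{BogLem}.}

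The two bounds in Lemma \ref{BogLem} have different characters and I would treat them separately. For \eqref{Laprhoka}, the pattern is exactly the one already executed in Lemma \ref{badlem}: I would first establish the bound on $\|\nabx\cdot(\rhoka\,\utka)\|_{L^s(\Omega_T)}$ by expanding $\nabx\cdot(\rhoka\,\utka)=(\nabx\rhoka)\cdot\utka+\rhoka\,(\nabx\cdot\utka)$, and estimating the first term by H\"older using the second bound in \eqref{nabxrhoka} (i.e. $\alpha\,\nabx\rhoka\in L^{\frac{10\Gamma-6}{3(\Gamma+1)}}(\Omega_T)$, hence for \emph{fixed} $\alpha$, $\nabx\rhoka$ is in that space with an $\alpha$-dependent constant) together with $\utka\in L^2(0,T;\Ht^1(\Omega))\hookrightarrow L^2(0,T;\Lt^6(\Omega))$ from \eqref{mtkabd}; the second term is handled by the first bound in \eqref{nabxrhoka} and the same $L^2(0,T;L^6(\Omega))$ bound on $\utka$ (so in fact the exponents match those in \eqref{badex1}, with $s=\frac{5\Gamma-3}{4\Gamma}$). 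With $\nabx\cdot(\rhoka\,\utka)$ controlled in $L^s(\Omega_T)$, equation \eqref{eqrhokarep} immediately gives $\partial_t\rhoka=\alpha\,\Delta_x\rhoka-\nabx\cdot(\rhoka\,\utka)$ in the sense of distributions, and then the parabolic regularity result Lemma 7.37 in Novotn\'{y} \& Stra\v{s}kraba (Lemma \ref{Le-G-1} in Appendix \ref{sec:App-G}) applied to \eqref{eqrhokarep}, using the $W^{2,r}$-regularity of $\rho^0$ from \eqref{rho0reg}, yields the last two bounds in \eqref{Laprhoka}. These constants are allowed to depend on $\alpha$, so this part is routine and follows Lemma \ref{badlem} verbatim.

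The substantive claim is \eqref{Bogpkabd}: the $L^{\Gamma+1}(\Omega_T)$ bound on $\rhoka$, uniform in $\alpha$, which is the compressible--Navier--Stokes pressure-gain estimate. The plan is the standard Lions--Feireisl argument using the Bogovski\u{\i} operator. I would fix $\zeta(\xt):=\rhoka(\xt,t)-\dashint_\Omega\rhoka(\cdot,t)\dx\in L^\Gamma_0(\Omega)$ (noting that $\int_\Omega\rhoka\,\dx=\int_\Omega\rho^0\,\dx$ is bounded uniformly by \eqref{rhonint}/\eqref{trhoin}, and $\rhoka\in L^\infty(0,T;L^\Gamma(\Omega))$ by \eqref{mtkabd}), and use the test function
\[
\wt(\xt,t):=\calBt\!\left(\rhoka(\xt,t)-\dashint_\Omega\rhoka(\cdot,t)\dx\right)
\]
in the momentum equation \eqref{equtka}, with the understanding that one works on a subinterval or with a cut-off in $t$ and differentiates through $\calBt$ in the time variable. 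The leading term $\int_0^T\int_\Omega \pk(\rhoka)\,\nabx\cdot\wt\,\dx\dt=\int_0^T\int_\Omega \pk(\rhoka)\,(\rhoka-\dashint_\Omega\rhoka)\,\dx\dt$ produces, via \eqref{pkdef}, a term $\kappa\int_0^T\int_\Omega\rhoka^{\Gamma+1}\dx\dt$ plus lower-order contributions; this is the quantity to be bounded. All the remaining terms — the time-derivative term (rewritten using \eqref{equtkabd} or by integrating by parts and using \eqref{dtrhoka} together with $\|\calBt(\rhoka-\dashint\rhoka)\|$ bounds), the convective term $\rhoka\,\utka\otimes\utka:\nabxtt\wt$, the viscous term $\Stt(\utka):\nabxtt\wt$, the $\alpha\,\nabx\rhoka\cdot\nabx(\utka\cdot\wt)$ term (controlled by \eqref{nabxrhoka}), the body-force term, and the extra-stress terms $\tautt_1(M\,\hpsika):\nabxtt\wt$ and $\mathfrak{z}\,\vrhoka^2\,\nabx\cdot\wt$ — must be estimated in terms of $\|\wt\|$ in suitable space-time norms, which by \eqref{Bop1} reduce to norms of $\rhoka$ that are already known to be uniformly bounded (from \eqref{5-eq:energyest}, \eqref{mtkabd}, \eqref{vrhokabd}, \eqref{tautt1ka}), times at worst a small power of $\|\rhoka\|_{L^{\Gamma+1}(\Omega_T)}$ which can be absorbed by Young's inequality. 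One also needs $\calBt$ applied to $\nabx\cdot(\rhoka\,\utka)$ when rewriting the time-derivative term, controlled by \eqref{Bop2} and the first bound in \eqref{Laprhoka}; this is the one place an $\alpha$-dependent intermediate bound is used, but since it only enters multiplied by quantities that eventually cancel or are absorbed, the final constant in \eqref{Bogpkabd} remains $\alpha$-independent. Exactly this computation is carried out, for the compressible Navier--Stokes system with an extra-stress forcing, in Lemma 7.36 (or the surrounding discussion) of Novotn\'{y} \& Stra\v{s}kraba, and I would follow that template, checking at each step that the FENE extra-stress terms $\tautt_1$ and the interaction term $\mathfrak{z}\vrhoka^2$ are subcritical relative to $L^{\Gamma+1}$ — which they are, since \eqref{tautt1ka} and \eqref{vrhokabd} give space-time integrability with exponents $\frac{4(d+2)}{3d+4}$ and $\frac{2(d+2)}{d}$ respectively, both comfortably below $\Gamma+1\geq 9$.

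The main obstacle I anticipate is \textbf{not} any single estimate but the bookkeeping of the time-derivative term: $\wt$ depends on $t$ through $\rhoka$, so $\int_0^T\langle\partial_t(\rhoka\utka),\wt\rangle\,\dt$ must be integrated by parts in $t$, generating a term $\int_0^T\int_\Omega(\rhoka\utka)\cdot\partial_t\calBt(\rhoka-\dashint\rhoka)\,\dx\dt$; one then commutes $\partial_t$ with $\calBt$ (legitimate since $\calBt$ is linear and time-independent) and uses $\partial_t\rhoka=\alpha\Delta_x\rhoka-\nabx\cdot(\rhoka\utka)$ to express $\partial_t\calBt(\rhoka-\dashint\rhoka)=\calBt(\partial_t\rhoka-\dashint\partial_t\rhoka)$, at which point \eqref{Bop2} (applied to $\nabx\cdot(\rhoka\utka)\in\Et^{r,s}_0$, with the no-flux condition coming from the Neumann boundary condition implicit in \eqref{eqrhokarep}) and the first two bounds in \eqref{Laprhoka} close the estimate. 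Handling the mean-value subtraction $\dashint_\Omega\rhoka\dx$ carefully (it contributes bounded, $\Gamma$-independent terms) and justifying the boundary trace $\nabx\cdot(\rhoka\,\utka)\cdot\nt=0$ needed to invoke \eqref{Bop2} are the two fiddly points; both are standard and covered by the cited lemmas in Appendix \ref{sec:App-C} and Appendix \ref{sec:App-G}.
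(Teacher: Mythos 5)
Your approach is the right one and matches the paper's in its essentials: the parabolic regularity result for \eqref{Laprhoka} and the Bogovski\u{\i} test function $\wt = \eta\,\calBt\big((I-\mint)\rhoka\big)$ for \eqref{Bogpkabd}, with the time derivative handled by integrating by parts, commuting $\partial_t$ with $\calBt$, and using the continuity equation.

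There is, however, a gap in your treatment of the resulting term $\calBt(\partial_t\rhoka)$, and your explanation of why \eqref{Bogpkabd} is $\alpha$-independent does not hold up. You assert that an $\alpha$-dependent intermediate bound is used but ``eventually cancels or is absorbed''; no such cancellation occurs, and if the estimate on $\calBt(\partial_t\rhoka)$ genuinely required the $\alpha$-dependent bounds of \eqref{Laprhoka}, the final constant in \eqref{Bogpkabd} would inherit that dependence. Moreover, you only describe how to treat the $\nabx\cdot(\rhoka\,\utka)$ part of $\partial_t\rhoka$ via \eqref{Bop2} and never say how to handle the $\alpha\,\Delta_x\rhoka$ part; applying \eqref{Bop1} to $\alpha\,\Delta_x\rhoka$ would invoke the $\alpha$-dependent third bound in \eqref{Laprhoka}. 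The correct and clean resolution (and what the paper does in (\ref{Bop4})--(\ref{Bop6})) is to keep the divergence intact: $\partial_t\rhoka = \nabx\cdot(\alpha\,\nabx\rhoka - \rhoka\,\utka)$ with the combined no-flux condition $(\alpha\,\nabx\rhoka-\rhoka\,\utka)\cdot\nt=0$ on $\partial\Omega$ (Neumann condition plus $\utka|_{\partial\Omega}=\zerot$), so that $\alpha\,\nabx\rhoka-\rhoka\,\utka \in L^s(0,T;\Et^{r,s}_0(\Omega))$, and then apply \eqref{Bop2} to this vector field to obtain
\begin{equation*}
\bigg\|\calBt\Big(\tfrac{\partial\rhoka}{\partial t}\Big)\bigg\|_{L^s(0,T;L^r(\Omega))}
\leq C\,\|\alpha\,\nabx\rhoka - \rhoka\,\utka\|_{L^s(0,T;L^r(\Omega))}.
\end{equation*}
This is $\alpha$-independent because $\alpha\,\|\nabx\rhoka\|_{L^{\frac{10\Gamma-6}{3(\Gamma+1)}}(\Omega_T)}\leq C$ from \eqref{nabxrhoka} (the $\alpha$-prefactor generated by the regularized continuity equation saves you, with no cancellation invoked) and $\|\rhoka\,\utka\|_{L^{\frac{10\Gamma-6}{3(\Gamma+1)}}(\Omega_T)}\leq C$ from \eqref{mtkabd}. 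The $\alpha$-dependent bounds in \eqref{Laprhoka} are used only qualitatively, to know the divergence lies in $L^s$, and never enter the quantitative estimate. A lesser point: once you use the pre-organized estimate \eqref{equtkabd}, the remaining terms close directly against uniformly bounded norms of $\rhoka$, $\utka$, $\vrhoka$, $\tautt_1$; no Young absorption of a small power of $\|\rhoka\|_{L^{\Gamma+1}}$ is in fact needed.
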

\begin{proof}
We prove (\ref{Laprhoka}), similarly to (\ref{Laprho}).
The first bound in (\ref{Laprhoka}) follows from (\ref{nabxrhoka}).
On noting (\ref{rho0reg}) and the first bound in (\ref{Laprhoka}),
we can now apply the parabolic regularity result, Lemma 7.37 in
Novotn\'{y} \& Stra\v{s}kraba \cite{NovStras} \arxiv{(or Lemma \ref{Le-G-1} in Appendix \ref{sec:App-G})}, to
(\ref{eqrhokarep}) to
obtain that the solution $\rhoka$ satisfies the
last two bounds in (\ref{Laprho}).
It follows from (\ref{Laprhoka}), (\ref{mtkabd},b) and (\ref{eqrhokarep}) that
\begin{align}
&\frac{\partial \rhoka}{\partial t} = \nabx \cdot
\left( \alpha\,\nabx \rhoka - \rhoka\,\utka \right)
\in L^s(\Omega_T)\nonumber \\
\qquad \mbox{with} \qquad  &\left(\alpha\,\nabx \rhoka - \rhoka\,\utka \right)\cdot \nt =0
\quad {\red{\mbox{ on } \partial \Omega \times (0,T),}}
\label{Bop4}
\end{align}
and hence, on recalling (\ref{Etrs}) and that $s=\frac{5\Gamma-3}{4\Gamma} <
\frac{10\Gamma-6}
{3(\Gamma+1)}=r$, we have that
\begin{align}
\alpha\,\nabx \rhoka - \rhoka\,\utka \in L^s(0,T;\Et^{r,s}_0(\Omega)).
\label{Bop5}
\end{align}
Therefore (\ref{Bop4}), (\ref{Bop5}), (\ref{Bop2}) and (\ref{mtkabd},b) yield that
\begin{align}
\bigg\|\calBt\bigg(\frac{\partial \rhoka}{\partial t}\bigg)\bigg\|_{L^s(0,T;L^r(\Omega))} &=
\|\calBt(\nabx \cdot (\alpha\,\nabx \rhoka - \rhoka\,\utka))\|_{L^s(0,T;L^r(\Omega))}
\nonumber \\
&\leq C\,\|\alpha\,\nabx \rhoka - \rhoka\,\utka\|_{L^s(0,T;L^r(\Omega))}
\leq C.
\label{Bop6}
\end{align}

On recalling the notation used in (\ref{PkrhokaLdc}),
then, similarly to (\ref{rhonint}), we obtain, on choosing $\eta =1$ in (\ref{eqrhokarep})
and noting (\ref{rho0conv}),
that, for all $t \in [0,T]$,
\begin{align}
0 \leq \mint \rhoka(t) = \mint \rho^0 \leq \|\rho_0\|_{L^\infty(\Omega)}.
\label{mintrhokabd}
\end{align}
We now choose $\wt = \eta\, \calBt((I-{\mint})\rhoka)$
in (\ref{equtkabd}), where %we recall the notation
%and that ${\mint}\, \rhoka(t) %(\xt,t) \dx
%= {\mint} \rho_0$ %(\xt) \dx$
%for $t \in [0,T]$,
$\eta \in C^\infty_0(0,T)$, to obtain, on noting (\ref{pkdef}), (\ref{pgamma}),
(\ref{mintrhokabd}), (\ref{Bop1}), (\ref{Bop6}) and (\ref{mtkabd}) that,
for $s'=\frac{5\Gamma-3}{\Gamma-3}$,
\begin{align}
&\left| \int_0^T \eta \int_\Omega \left( c_p\,\rhoka^{\gamma+1} + \kappa \left( \rhoka^{5}
+ \rhoka^{\Gamma+1} \right) \right) \dx \,\dt \right|
\nonumber \\
&\qquad
\leq C\,\|\eta\|_{L^{\infty}(0,T)}\,\left[
\|\mint \rhoka \|_{L^\infty(0,T)}\,\|\pk(\rhoka)\|_{L^1(\Omega_T)}
+\|\calBt((I-\mint)\rhoka)\|_{L^{s'}(0,T;W^{1,4}(\Omega))}
\right] \nonumber \\
& \qquad \qquad + \left| \int_0^T \int_\Omega \rhoka\,\utka \cdot \left[ \frac{{\rm d} \eta}
{{\rm dt}}\, \calBt((I-\mint)\rhoka)
+ \eta \,\calBt\bigg(\frac{\partial \rhoka}{\partial t}\bigg) \right] \dx \,\dt \right|
\nonumber \\
&\qquad
\leq C\,\|\eta\|_{L^{\infty}(0,T)}\,\left[\|\rhoka\|_{L^\Gamma(\Omega_T)}^{\Gamma}
+\|\rhoka\|_{L^{s'}(0,T;L^4(\Omega))}
\right] \nonumber \\
& \qquad \qquad + \|\rhoka\,\utka\|_{L^\infty(0,T:L^{\frac{2\Gamma}{\Gamma+1}}(\Omega))}\,
\left\|\frac{{\rm d} \eta}
{{\rm dt}}\right\|_{L^1(0,T)}\,
\|\calBt((I-\mint)\rhoka)\|_{L^{\infty}(0,T;L^{\frac{2\Gamma}{\Gamma-1}}(\Omega))}\,
\nonumber \\
& \qquad \qquad + \|\rhoka\,\utka\|_{L^\infty(0,T:L^{\frac{2\Gamma}{\Gamma+1}}(\Omega))}\,
\|\eta\|_{L^{\infty}(0,T)} \,\left\|\calBt\left(\frac{\partial \rhoka}{\partial t}\right)
\right\|_{L^1(0,T;L^{\frac{2\Gamma}{\Gamma-1}}(\Omega))}
\nonumber \\
&\qquad \leq
C\,\left[\|\eta\|_{L^{\infty}(0,T)}+
\left\|\frac{{\rm d} \eta}
{{\rm dt}}\right\|_{L^1(0,T)}\right].
\nonumber \\
\label{Bop3}
\end{align}
We now consider (\ref{Bop3}) with $\eta=\eta_m \in C^\infty_0(0,T)$, $m \in {\mathbb N}$, where
$\eta_m \in [0,1]$ with $\eta_m(t)=1$ for $t \in [\frac{1}{m},T-\frac{1}{m}]$
and $\|\frac{{\rm d}\eta_m}{{\rm d} t}\|_{L^\infty(0,T)} \leq 2m$ yielding
$\|\frac{{\rm d}\eta_m}{{\rm d} t}\|_{L^1(0,T)} \leq 4$.
As $\eta_m \rightarrow 1$ pointwise in $(0,T)$, as $m \rightarrow \infty$,
we obtain the desired result
(\ref{Bogpkabd}).
\end{proof}

We have the following analogue of Lemmas
 \ref{mtkaLDtdlem} and \ref{mtkaLDconlem}.

\begin{lemma}\label{mtkalem}
Let $\Gamma\geq 8$; then,
there exists a $C \in \mathbb R_{>0}$, independent of $\alpha$, such that
\begin{align}
\left\|\frac{\partial(\rhoka\,\utka)}{\partial t}
\right\|_{L^{\frac{\Gamma+1}{\Gamma}}(0,T;W^{1,\Gamma+1}_0(\Omega)')} \leq C.
\label{dtmtka}
\end{align}

Hence, for a further subsequence of the subsequence of Lemma \ref{rhokalem}, it follows that,
as $\alpha \rightarrow 0_+$,
\begin{subequations}
\begin{alignat}{2}
\rhoka\,\utka &\rightarrow \rhok\,\utk
\quad
&&\mbox{weakly in } \Lt^{\frac{10\Gamma-6}{3(\Gamma+1)}}(\Omega_T),
\quad \mbox{weakly in $W^{1,\frac{\Gamma+1}{\Gamma}}(0,T;\Wt^{1,\Gamma+1}_0(\Omega)')$},
\label{mtkawcon}
\\
\rhoka\,\utka &\rightarrow \rhok\,\utk
\quad &&\mbox{in }C_w([0,T];L^{\frac{2\Gamma}{\Gamma+1}}(\Omega)),
\quad \mbox{strongly in } L^2(0,T;\Ht^1(\Omega)'),
\label{mtkascon} \\
\rhoka\,\utka \otimes \utka &\rightarrow \rhok\,\utk \otimes \utk
\quad
&&\mbox{weakly in } L^2(0,T;\Ltt^{\frac{6\Gamma}{4\Gamma+3}}(\Omega)),
\label{mtkautkawcon}
\\
\rhoka &\rightarrow \rhok
\quad
&&\mbox{weakly in } L^{\Gamma+1}(\Omega_T),
\label{rhokaconp}\\
%\rhoka^\vartheta &\rightarrow \overline{\rhoka^\vartheta}
%\quad
%&&\mbox{weakly in } L^{\frac{\Gamma+1}{\vartheta}}(\Omega_T),  \qquad \vartheta=\gamma,\,2
%\mbox{ and } \Gamma,
%\label{pkkaconth}\\
\pk(\rhoka) &\rightarrow \overline{\pk(\rhok)}
\quad
&&\mbox{weakly in } L^{\frac{\Gamma+1}{\Gamma}}(\Omega_T),
\qquad
\label{pkkacon}
\end{alignat}
\end{subequations}
where
$\overline{\pk(\rhok)} %= c_p\,\overline{\rhoka^\gamma}
%+ \kappa \,( \overline{\rhoka^4}+\overline{\rhoka^\Gamma})$
%and
%$\overline{\rhoka^\vartheta}
\in L^{\frac{\Gamma+1}{\Gamma}}_{\geq 0}(\Omega_T)$
%$\vartheta=\gamma,\,2$
%and $\Gamma$,
remains to be identified.
\end{lemma}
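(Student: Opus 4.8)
\textbf{Proof plan for Lemma \ref{mtkalem}.}

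The plan is to first establish the time-derivative bound \eqref{dtmtka} by rewriting the momentum equation \eqref{equtka} in the form it takes after moving the pressure term onto the right, exactly as was done to pass from \eqref{equncon} to \eqref{equnconadj} at the discrete level, and then bounding each resulting term in the dual norm using the $\alpha$-uniform estimates already in hand. Concretely, from \eqref{equtkabd} we already control the pairing of $\frac{\partial(\rhoka\utka)}{\partial t}$ minus the pressure term against $\wt \in L^{s'}(0,T;\Wt^{1,4}_0(\Omega))$ with $s' = \frac{5\Gamma-3}{\Gamma-3}$; the pressure term itself is now bounded in $L^{\frac{\Gamma+1}{\Gamma}}(\Omega_T)$ thanks to \eqref{Bogpkabd} (and the regularization terms $\kappa(\rho^4 + \rho^\Gamma)$ are absorbed into $\rho^{\Gamma+1}$ by \eqref{Bogpkabd} together with interpolation using \eqref{mtkabd}). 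Since $W^{1,\Gamma+1}_0(\Omega) \hookrightarrow W^{1,4}_0(\Omega) \cap L^\infty(\Omega)$ for $\Gamma \geq 8$, and $\frac{\Gamma+1}{\Gamma}\leq s'$, combining \eqref{equtkabd} with the pressure bound gives, for all $\wt \in L^{\frac{\Gamma+1}{\Gamma}}(0,T;\Wt^{1,\Gamma+1}_0(\Omega))$, a bound of the form $|\langle \frac{\partial(\rhoka\utka)}{\partial t}, \wt\rangle| \leq C\,\|\wt\|_{L^{\frac{\Gamma+1}{\Gamma}}(0,T;W^{1,\Gamma+1}(\Omega))}$, which is exactly \eqref{dtmtka}. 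One minor technical point to handle carefully here is the $\alpha\,\nabx\rhoka$ contribution inside \eqref{equtka}, but that is controlled uniformly in $\alpha$ by the second bound in \eqref{nabxrhoka}, which is why the exponent $\frac{5\Gamma-3}{4\Gamma}$ appears in \eqref{alpha0a},(b) and causes no loss.

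Next I would extract the convergence results. The first weak convergence in \eqref{mtkawcon} follows from the sixth bound in \eqref{mtkabd}; the second from \eqref{dtmtka}. The Aubin--Lions--Simon theorem \eqref{compact1}, applied with the compact embedding $L^{\frac{10\Gamma-6}{3(\Gamma+1)}}(\Omega)\hookrightarrow \Ht^1(\Omega)'$ (valid since $\frac{10\Gamma-6}{3(\Gamma+1)} > \frac{2d}{d+2}$ for $\Gamma \geq 8$) and $\Ht^1(\Omega)' \hookrightarrow \Wt^{1,\Gamma+1}_0(\Omega)'$, gives the strong convergence in $L^2(0,T;\Ht^1(\Omega)')$ in \eqref{mtkascon}; the $C_w$-convergence in \eqref{mtkascon} then follows from the first bound in \eqref{mtkabd}, \eqref{dtmtka} and \eqref{Cwcoma},(b). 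To identify the limit of $\rhoka\utka$ as $\rhok\utk$, I combine \eqref{rhokascon} (strong in $L^2(0,T;H^1(\Omega)')$) with \eqref{uwconH1k} (weak in $L^2(0,T;\Ht^1_0(\Omega))$): the product of a strongly convergent sequence in $H^1(\Omega)'$ with a weakly convergent one in $H^1(\Omega)$ converges in the sense of distributions to the product of the limits, and uniqueness of limits identifies it with $\rhok\utk$. For \eqref{mtkautkawcon} I write $\rhoka\utka\otimes\utka = (\rhoka\utka)\otimes\utka$ and use the strong $L^2(0,T;\Ht^1(\Omega)')$ convergence of $\rhoka\utka$ from \eqref{mtkascon} against the weak $L^2(0,T;\Ht^1_0(\Omega))$ convergence of $\utka$ from \eqref{uwconH1k}, together with the uniform bound in \eqref{mtkabd} (last term) to pin down the weak limit in $L^2(0,T;\Ltt^{\frac{6\Gamma}{4\Gamma+3}}(\Omega))$; once more, the identification of the limit tensor as $\rhok\utk\otimes\utk$ uses that $\rhoka\utka \to \rhok\utk$ and $\utka \to \utk$ in compatible topologies. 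The bound \eqref{rhokaconp} is immediate from \eqref{Bogpkabd}, and \eqref{pkkacon} from \eqref{Bogpkabd} applied to $\pk(\rhoka)$, which grows like $\rho^{\Gamma}$, giving a uniform $L^{\frac{\Gamma+1}{\Gamma}}(\Omega_T)$ bound; the weak limit is merely named $\overline{\pk(\rhok)}$ at this stage since $\rhoka$ is only known to converge weakly in $L^{\Gamma+1}(\Omega_T)$ and $\pk$ is nonlinear. Nonnegativity of $\overline{\pk(\rhok)}$ follows since $\pk(\rhoka) \geq 0$ a.e.\ and weak limits preserve sign.

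The main obstacle is the one flagged by the notation: identifying $\overline{\pk(\rhok)}$ with $\pk(\rhok)$, i.e.\ proving strong (or a.e.) convergence of $\rhoka$, which is \emph{not} part of this lemma and is deferred — it is the heart of the Lions--Feireisl compactness machinery (effective viscous flux identity, renormalized continuity equation, and the div--curl / weak continuity argument), to be carried out in the subsequent passage to the limit $\alpha \to 0_+$. Within the confines of Lemma \ref{mtkalem}, however, the only genuine subtlety is bookkeeping the exponents so that $\frac{\Gamma+1}{\Gamma}$ is simultaneously (i) dominated by $s'=\frac{5\Gamma-3}{\Gamma-3}$ so that \eqref{equtkabd} applies, (ii) the conjugate of $\Gamma+1$ so that the pressure estimate \eqref{Bogpkabd} pairs correctly, and (iii) such that $W^{1,\Gamma+1}(\Omega)$ embeds into $L^\infty(\Omega)$, all of which hold precisely because $\Gamma \geq 8$; and checking that the compact and continuous embeddings used in the Aubin--Lions step are valid in dimension $d \in \{2,3\}$ for the stated Lebesgue exponents. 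These are routine once the exponent arithmetic is laid out, so I would present the time-derivative bound in full detail and then list the convergence assertions with brief justifications referring back to \eqref{mtkabd}--\eqref{Ppk}, \eqref{uwconH1k} and \eqref{Bogpkabd}.
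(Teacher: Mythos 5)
Your strategy for the time-derivative bound --- combining \eqref{equtkabd} with the pressure estimate \eqref{Bogpkabd} --- is the same as the paper's, and the remainder of your convergence arguments (Aubin--Lions--Simon with the compact embedding $L^{\frac{10\Gamma-6}{3(\Gamma+1)}}(\Omega)\hookrightarrow H^1(\Omega)'$, identification of the limit $\rho_\kappa \ut_\kappa$ via the strong/weak pairing, and the reading off of \eqref{rhokaconp}--\eqref{pkkacon} from \eqref{Bogpkabd}) track the paper closely. But the derivation of \eqref{dtmtka} itself contains a genuine duality error.

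You write that, using \eqref{equtkabd} and \eqref{Bogpkabd}, one obtains, \emph{for all} $\wt \in L^{\frac{\Gamma+1}{\Gamma}}(0,T;\Wt^{1,\Gamma+1}_0(\Omega))$, a bound $\bigl|\bigl\langle \frac{\partial(\rhoka\utka)}{\partial t},\wt\bigr\rangle\bigr|\leq C\,\|\wt\|_{L^{\frac{\Gamma+1}{\Gamma}}(0,T;W^{1,\Gamma+1}(\Omega))}$, and that this ``is exactly \eqref{dtmtka}''. It is not: by Bochner duality $[L^p(0,T;X)]'=L^{p'}(0,T;X')$, the conclusion $\frac{\partial(\rhoka\utka)}{\partial t}\in L^{\frac{\Gamma+1}{\Gamma}}(0,T;W^{1,\Gamma+1}_0(\Omega)')$ is equivalent to such a bound for all $\wt\in L^{\Gamma+1}(0,T;\Wt^{1,\Gamma+1}_0(\Omega))$ with the $L^{\Gamma+1}$-norm of $\wt$ on the right-hand side, i.e.\ with the conjugate exponent on the test function side. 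Moreover the bound you assert cannot be obtained from \eqref{equtkabd}: that estimate controls the pairing (modulo the pressure) by $\|\wt\|_{L^{s'}(0,T;W^{1,4}(\Omega))}$ with $s'=\frac{5\Gamma-3}{\Gamma-3}$, and since $\frac{\Gamma+1}{\Gamma}<s'$ and $(0,T)$ has finite measure, the inclusion $L^{s'}(0,T)\supset L^{\frac{\Gamma+1}{\Gamma}}(0,T)$ goes the wrong way --- the $L^{s'}$-norm of $\wt$ is \emph{not} dominated by its $L^{\frac{\Gamma+1}{\Gamma}}$-norm. The inequality you invoke, $\frac{\Gamma+1}{\Gamma}\leq s'$, is therefore irrelevant; the one that is actually needed is $4< s'<\Gamma+1$, which gives $\|\wt\|_{L^{s'}(0,T;W^{1,4}(\Omega))}\leq C\,\|\wt\|_{L^{\Gamma+1}(0,T;W^{1,\Gamma+1}(\Omega))}$ on the bounded set $\Omega\times(0,T)$, and is what lets you combine \eqref{equtkabd} with the pairing of $\pk(\rhoka)\in L^{\frac{\Gamma+1}{\Gamma}}(\Omega_T)$ against $\nabla_x\cdot\wt\in L^{\Gamma+1}(\Omega_T)$. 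Swapping the exponent on the test side and using $s'<\Gamma+1$ (instead of $\frac{\Gamma+1}{\Gamma}\leq s'$) repairs the argument and yields exactly the paper's intermediate estimate \eqref{dtmtkapr}.
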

\begin{proof}
We deduce from (\ref{equtkabd}), (\ref{Bogpkabd}) and as $4<s'<\Gamma+1$ that
\begin{align}
&\left|\displaystyle\int_{0}^{T} \left\langle
\frac{\partial (\rhoka\,\utka)}{\partial t},  \wt \right\rangle_{W^{1,\Gamma+1}_0(\Omega)}
\!\!\!\dt
\right| \leq C\,\|\wt\|_{L^{\Gamma+1}(0,T;W^{1,\Gamma+1}(\Omega))}
\qquad \forall \wt \in L^{\Gamma+1}(0,T;\Wt^{1,\Gamma+1}_0(\Omega)),
\label{dtmtkapr}
\end{align}
and hence the desired result (\ref{dtmtka}).

The results (\ref{mtkawcon}--c) follow similarly to (\ref{mtkaLwcon}--e)
from (\ref{mtkabd}), (\ref{dtmtka}), (\ref{Cwcoma},b), (\ref{compact1}), (\ref{rhokascon})
and (\ref{uwconH1k}).
The results (\ref{rhokaconp}--f) follow immediately from (\ref{Bogpkabd}),
(\ref{pkdef}) and (\ref{pgamma}).
\end{proof}

We have the following analogue of Theorem \ref{5-convfinal}.

\begin{lemma}
\label{Pkexistslem}
The triple $(\rhok,\utk,\hpsik)$,
defined as in Lemmas \ref{hpsikconv} and \ref{rhokalem},
%is a global weak solution to problem (P$_{\kappa}$),
%in the sense that
satisfies
\begin{subequations}
\begin{align}
&\displaystyle\int_{0}^{T}\left\langle \frac{\partial \rhok}{\partial t}\,,\eta
\right\rangle_{H^1(\Omega)} \dd t
- \int_0^T \int_\Omega \rhok \,\utk
\cdot \nabx \eta
\,\dx \,\dt =0
%\nonumber \\
%& \hspace{3in}
\qquad
\forall \eta \in L^2(0,T;H^1(\Omega)),
\label{eqrhok}
\end{align}
with $\rhok(\cdot,0) = \rho_0(\cdot)$,
\begin{align}
&\displaystyle\int_{0}^{T} \left \langle
\frac{\partial (\rhok\,\utk)}{\partial t}, \wt \right \rangle_{W^{1,\Gamma+1}_0(\Omega)}
\!\!\!\dt
+
\displaystyle\int_{0}^{T}\!\! \int_\Omega
\left[
\Stt(\utk) - \rhok\,\utk \otimes \utk
-\overline{\pk(\rhok)}\,\Itt
\right]
:\nabxtt \wt
\,\dx\, \dt
\nonumber \\
%& \qquad \qquad
%- \int_{0}^T \!\!\int_{\Omega}
%\pk(\rhok) \,\nabx \cdot \wt \,\dx \,\dt
%\nonumber \\
&\qquad =\int_{0}^T
 \int_{\Omega} \left[ \rhok\,
\ft \cdot \wt
-\left(\tautt_1 (M\,\hpsik) - \mathfrak{z}\,\vrhok^2\,\Itt\right)
: \nabxtt
\wt \right] \dx \, \dt
\nonumber \\
& \hspace{3.3in}
\quad
\forall \wt \in L^{\Gamma+1}(0,T;\Wt^{1,\Gamma+1}_0(\Omega)),
\label{equtk}
\end{align}
with $(\rhok\,\utk)(\cdot,0) = (\rho_0\,\ut_0)(\cdot)$, and
\begin{align}
\label{eqhpsik}
&\int_{0}^T \left \langle
M\,\frac{ \partial \hpsik}{\partial t},
\varphi \right \rangle_{H^s(\Omega \times D)} \dt
+
\frac{1}{4\,\lambda}
\,\sum_{i=1}^K
 \,\sum_{j=1}^K A_{ij}
\int_{0}^T \!\!\int_{\Omega \times D}
M\,
 \nabqj \hpsik
\cdot\, \nabqi
\varphi\,
\dq \,\dx \,\dt
\nonumber \\
& \qquad
+ \int_{0}^T \!\!\int_{\Omega \times D} M \left[
\epsilon\,
\nabx \hpsik
- \utk\,\hpsik \right]\cdot\, \nabx
\varphi
\,\dq \,\dx \,\dt
\nonumber \\
&
\qquad
- \int_{0}^T \!\!\int_{\Omega \times D} M\,\sum_{i=1}^K
\left[\sigtt(\utk)
\,\qt_i\right]
\hpsik \,\cdot\, \nabqi
\varphi
\,\dq \,\dx\, \dt = 0
%\nonumber \\ & \hspace{3.4in}
\quad
\forall \varphi \in L^2(0,T;H^s(\Omega \times D)),
\end{align}
\end{subequations}
with $\hpsik(\cdot,0) = \hpsi_0(\cdot)$ and $s > 1 + \frac{1}{2}(K+1)d$.

In addition, the triple $(\rhok,\utk,\hpsik)$
satisfies, for
a.a.\ $t' \in (0,T)$,
{\color{black}
\begin{align}\label{Pkenergy}
&\frac{1}{2}\,\displaystyle
\int_{\Omega} \rhok(t')
\,|\utk(t')|^2 \,\dx
+ \int_{\Omega} \Pk(\rhok(t'))  \dx
+k\,\int_{\Omega \times D} M\,
\mathcal{F}(\hpsik(t')) \dq \,\dx
\nonumber\\
&\quad
+  \mu^S c_0\, \int_0^{t'} \|\utk\|_{H^1(\Omega)}^2 \dt
+ k\,\int_0^{t'} \int_{\Omega \times D}
M\,
\left[\frac{a_0}{2\lambda}\,
\left|\nabq \sqrt{\hpsik} \right|^2
+ 2\varepsilon\, \left|\nabx \sqrt{\hpsik} \right|^2
\right]
\,\dq \,\dx\, \dt
\nonumber \\
& \quad + \mathfrak{z}\,\|\vrhok(t')\|_{L^2(\Omega)}^2
+ 2\, \mathfrak{z}\, \varepsilon\,\int_0^{t'} \|\nabx \vrhok\|_{L^2(\Omega)}^2 \dt
\nonumber\\
&\leq {\rm e}^{t'}\biggl[
\frac{1}{2}\,\displaystyle
\int_{\Omega} \rho_0\,|\ut_{0}|^2 \dx
+ \int_{\Omega} \Pk(\rho_0)  \dx
+k\,\int_{\Omega \times D} M\, %\zeta(\rho_0)\,
\mathcal{F}(\hpsi_0)
\dq \,\dx
\nonumber\\
&\hspace{1in}
+
\mathfrak{z}\,\int_{\Omega} \left(\int_D M \,\hpsi_0\dq\right)^2 \dx
%|\Omega|
+
\frac{1}{2}  \int_{0}^{t'} \|\ft\|_{L^\infty(\Omega)}^2 \dt \int_\Omega \rho_0 \dx
\biggr]
\nonumber\\
&\leq C,
\end{align}
}
where $C \in {\mathbb R}_{>0}$ is independent of $\kappa$.
\end{lemma}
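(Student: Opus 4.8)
The plan is to imitate, with the single parameter $\alpha\to0_+$, the passage to the limit carried out in the proof of Theorem~\ref{5-convfinal}, now drawing all convergences from Lemmas~\ref{hpsikconv}, \ref{rhokalem}, \ref{BogLem} and \ref{mtkalem}. First I would pass to the limit in the continuity equation: starting from \eqref{eqrhokarep} and fixing $\eta\in L^2(0,T;H^1(\Omega))$, the term $\alpha\int\nabx\rhoka\cdot\nabx\eta$ vanishes by the first bound in \eqref{nabxrhoka} (or by \eqref{rhokaxwcon0}), while $\rhoka\,\utka\to\rhok\,\utk$ weakly in $\Lt^2(\Omega_T)$ by \eqref{mtkawcon}; this yields \eqref{eqrhok}. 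Since $\rhok\in H^1(0,T;H^1(\Omega)')$ with $\rhoka\to\rhok$ in $L^2(0,T;H^1(\Omega)')$ by \eqref{rhokascon}, each $\rhoka(\cdot,0)=\rho^0(\cdot)$, and $\rho^0\to\rho_0$ strongly in $L^p(\Omega)$ by \eqref{rho0convLp}, the initial condition $\rhok(\cdot,0)=\rho_0$ follows.

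Next I would pass to the limit in the momentum equation \eqref{equtka}, first for $\wt\in\Ct^\infty_0(\Omega_T)$. Writing $\nabx(\utka\cdot\wt)=(\nabxtt\utka)\,\wt+(\nabxtt\wt)\,\utka$, the two artificial-viscosity contributions $\tfrac{\alpha}{2}\int\nabx\rhoka\cdot\nabx(\utka\cdot\wt)$ vanish in the limit by \eqref{alpha0a} and \eqref{alpha0b}. The inertial term converges by \eqref{mtkawcon}, \eqref{mtkautkawcon} and \eqref{dtmtka} (after rewriting $\partial_t(\rhoka\,\utka)$); the viscous term by \eqref{uwconH1k}; the pressure term converges only weakly in $L^{\frac{\Gamma+1}{\Gamma}}(\Omega_T)$ to the limit $\overline{\pk(\rhok)}$ of \eqref{pkkacon}, which is \emph{not} identified with $\pk(\rhok)$ at this stage; the body-force term by the weak convergence $\rhoka\to\rhok$ against the fixed datum $\ft$; and the extra-stress terms by the strong convergence \eqref{tausconLsk} together with $\vrhoka^2\to\vrhok^2$ strongly in $L^r(\Omega_T)$, $r<\tfrac{4(d+2)}{3d+4}$, deduced from \eqref{vrhokaL2con} (with $\varsigma=2$), the bound in \eqref{vrhokabd} and interpolation exactly as at the end of the proof of Lemma~\ref{convfinal}. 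This gives \eqref{equtk} for smooth $\wt$, and the extension to $\wt\in L^{\Gamma+1}(0,T;\Wt^{1,\Gamma+1}_0(\Omega))$ is a density argument, legitimate because every term in \eqref{equtk} is a continuous linear functional on that space, the crucial inputs being the uniform bound \eqref{dtmtka} on $\partial_t(\rhoka\,\utka)$ and the uniform $L^{\Gamma+1}(\Omega_T)$ bound \eqref{Bogpkabd} on $\rhoka$ furnished by the Bogovski\u{\i}-operator estimate of Lemma~\ref{BogLem}. The attainment $(\rhok\,\utk)(\cdot,0)=(\rho_0\,\ut_0)(\cdot)$ then follows from \eqref{mtkascon} and \eqref{rho0convLp}, as in Theorem~\ref{5-convfinal}.

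The Fokker--Planck passage is essentially identical to the one in Theorem~\ref{5-convfinal}. Testing \eqref{eqhpsika} with $\varphi\in C([0,T];C^\infty(\overline{\Omega\times D}))$, the time-derivative term converges by \eqref{psidtwconk}; the $\nabq$- and $\nabx$-diffusion terms are handled by writing $\nabqi\hpsika=2\sqrt{\hpsika}\,\nabqi\sqrt{\hpsika}$, splitting $\sqrt{\hpsika}=(\sqrt{\hpsika}-\sqrt{\hpsik})+\sqrt{\hpsik}$, and using $|\sqrt{c_1}-\sqrt{c_2}|\le\sqrt{|c_1-c_2|}$ with \eqref{psisconL1k} for the difference and \eqref{psiwconH1xak}, \eqref{psiwconH1ak} for the main part; and the convective and drag terms $\utk\,\hpsik$ and $[\sigtt(\utka)\,\qt_i]\,\hpsika$ are treated by the same split $\hpsika=(\hpsika-\hpsik)+\hpsik$, controlling the difference via \eqref{psisconL1k} interpolated against the $L^\infty(0,T;L^2(\Omega))$ bound on $\vrhoka$ in \eqref{vrhokabd} (as for the terms ${\tt T}_3$, ${\tt T}_4$ in the proof of Theorem~\ref{5-convfinal}) and passing in the main part by \eqref{uwconH1k}, since $\int_D M\,\hpsik\,\qt_i\otimes\nabqi\varphi\,{\rm d}\undertilde{q}\in L^2(\Omega_T)$ by \eqref{vrhokreg}. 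Density of $C([0,T];C^\infty(\overline{\Omega\times D}))$ in $L^2(0,T;H^s(\Omega\times D))$, the embedding $H^s(\Omega\times D)\hookrightarrow W^{1,\infty}(\Omega\times D)$ for $s>1+\tfrac12(K+1)d$, and the bounds \eqref{Fisentk}, \eqref{vrhokreg} give \eqref{eqhpsik}; the initial condition $\hpsik(\cdot,0)=\hpsi_0$ follows from the $C_w$-continuity argument of Theorem~\ref{5-convfinal} (Lemma~\ref{lemma-strauss} with the Maxwellian-weighted Orlicz space) together with \eqref{psisconL1k}.

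Finally, the energy inequality \eqref{Pkenergy} is obtained by starting from \eqref{5-eq:energyest} at level $(\kappa,\alpha)$, \emph{discarding} the nonnegative $\alpha\kappa$-dissipation term, multiplying by an arbitrary nonnegative $\eta\in C^\infty_0(0,T)$, integrating over $(0,T)$, and passing to the limit $\alpha\to0_+$: weak lower semicontinuity via \eqref{uwconH1k}, \eqref{psiwconH1ak}, \eqref{psiwconH1xak} and \eqref{vrhokaH1con} for the dissipation and the $\|\vrhok(t')\|_{L^2(\Omega)}^2$ term; strong convergence of $\sqrt{\rhoka}\,\utka$ (identifying the limit as $\sqrt{\rhok}\,\utk$ by means of \eqref{mtkascon} and a strong $L^2(0,T;L^p(\Omega))$, $p<\Gamma$, convergence of $\rhoka$ obtained by interpolating \eqref{rhokascon} against the bound in \eqref{mtkabd}) for the kinetic energy; \eqref{Ppk} for $\int_\Omega\Pk(\rhok)\,\dx$; Fatou together with \eqref{psisconL1k} for $\int_{\Omega\times D}M\,\mathcal{F}(\hpsik)\,{\rm d}\undertilde{q}\,\dx$; and \eqref{rho0convLp} on the right-hand side. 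The du Bois-Reymond argument \eqref{weaknneg} then removes $\eta$, giving \eqref{Pkenergy} for a.a.\ $t'\in(0,T)$, with the final constant independent of $\kappa$ since the data $\rho_0,\ut_0,\hpsi_0,\ft$ are. I expect the main obstacle to be the momentum equation: showing that the two $\alpha$-weighted artificial-viscosity contributions genuinely vanish (which hinges on the parabolic-regularity bounds \eqref{nabxrhoka}, themselves a consequence of the Bogovski\u{\i}-operator pressure estimate of Lemma~\ref{BogLem}), handling the density compactness needed for the kinetic-energy term, and the careful bookkeeping required to move between the test-function spaces $L^r(0,T;\Wt^{1,4}_0(\Omega))$ of \eqref{equtka} and $L^{\Gamma+1}(0,T;\Wt^{1,\Gamma+1}_0(\Omega))$ of \eqref{equtk} by density; by contrast, no effective-viscous-flux identification is needed here, the pressure limit being left as $\overline{\pk(\rhok)}$.
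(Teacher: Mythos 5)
Your proof follows the same overall strategy as the paper's (pass to the limit $\alpha\to0_+$ equation by equation using the convergences already collected in Lemmas~\ref{hpsikconv}, \ref{rhokalem}, \ref{BogLem} and \ref{mtkalem}; discard the nonnegative $\alpha\kappa$-dissipation term; multiply the $(\kappa,\alpha)$-level energy inequality by a nonnegative $\eta\in C^\infty_0(0,T)$, integrate, pass to the limit via weak lower semicontinuity and Fatou, and remove $\eta$ by du Bois-Reymond), and the vast majority of it is correct in both statement and in the choice of supporting convergence results.

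However, there is one flawed detour in the energy-inequality step. You handle the kinetic-energy term by claiming strong convergence of $\sqrt{\rhoka}\,\utka$ in $L^2(\Omega_T)$, and to identify the limit you invoke a supposed strong $L^2(0,T;L^p(\Omega))$ ($p<\Gamma$) convergence of $\rhoka$ ``obtained by interpolating \eqref{rhokascon} against the bound in \eqref{mtkabd}.'' This interpolation does not hold: strong convergence in $L^2(0,T;H^1(\Omega)')$ together with a uniform $L^\infty(0,T;L^\Gamma(\Omega))$ bound does \emph{not} imply strong convergence in any $L^2(0,T;L^p(\Omega))$ with $p>1$ — a rapidly oscillating sequence, bounded in $L^\infty$, converges strongly to zero in $H^{-1}(\Omega)$ while its $L^p(\Omega)$ norms stay bounded away from zero. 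Indeed, strong $L^p(\Omega_T)$ convergence of $\rhoka$ is only established \emph{after} Lemma~\ref{Pkexistslem}, via the effective-viscous-flux argument (see \eqref{rhokascon1}); it is not available at this stage. The detour is also unnecessary: the weak convergence \eqref{mtkautkawcon} of $\rhoka\,\utka\otimes\utka$ to $\rhok\,\utk\otimes\utk$ in $L^2(0,T;\Ltt^{\frac{6\Gamma}{4\Gamma+3}}(\Omega))$ — which you already use in the momentum equation — is exactly what the paper invokes here; testing it against $\eta\,\Itt$ with $\eta\in C^\infty_0(0,T)$, $\eta\geq 0$, gives $\int_0^T\eta\int_\Omega\rhoka|\utka|^2\,\dx\,\dt\to\int_0^T\eta\int_\Omega\rhok|\utk|^2\,\dx\,\dt$, which is all that is needed. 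Alternatively, the pairing $\langle\rhoka\utka,\eta\,\utka\rangle$ of the strong $L^2(0,T;\Ht^1(\Omega)')$ convergence \eqref{mtkascon} against the weak $L^2(0,T;\Ht^1_0(\Omega))$ convergence \eqref{uwconH1k} yields the same conclusion. Replace the interpolation claim by either of these and the argument is complete.
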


\begin{proof}
Passing to the limit $\alpha \rightarrow 0_+$ for the subsequence of Lemma \ref{mtkalem}
in (\ref{eqrhokarep}) yields (\ref{eqrhok}) subject to the stated initial condition,
on noting (\ref{rhokawcon},c), (\ref{mtkawcon}) and (\ref{rho0convL2}).

Similarly to the proof of (\ref{equtka}), passing to the limit $\alpha \rightarrow 0_+$
for the subsequence of Lemma \ref{mtkalem}
in (\ref{equtka}) yields (\ref{equtk}) subject to the stated initial condition, on noting
(\ref{uwconH1k}), (\ref{rhokawcon},d,e), (\ref{mtkawcon}--c,e), %(\ref{pkaiden1}),
(\ref{tausconLsk}), (\ref{vrhokaL2con}) and (\ref{rho0convL2}).
Similarly to the proof of (\ref{eqhpsika}), passing to the limit $\alpha \rightarrow 0_+$
for the subsequence of Lemma \ref{mtkalem}
in (\ref{eqhpsika}) yields (\ref{eqhpsik}) subject to the stated initial condition, on noting
(\ref{uwconH1k}), (\ref{psiwconH1ak}--d),
(\ref{vrhokreg})
and (\ref{eqinterp}).
Similarly to the proof of (\ref{5-eq:energyest}),
we deduce (\ref{Pkenergy}) from (\ref{5-eq:energyest})
using the results  (\ref{mtkautkawcon}), (\ref{Ppk}), (\ref{psiwconH1ak},b,f),
(\ref{uwconH1k}), (\ref{vrhokaH1con}) and \red{(\ref{rho0convLp})}.
\end{proof}

Finally, to obtain the complete analogue of Theorem \ref{5-convfinal},
we have to identify $\overline{\pk(\rhok)}$, which appears in
(\ref{equtk}) and (\ref{pkkacon}), by establishing
that $\overline{\pk(\rhok)}=\pk(\rhok)$.
Due to the presence of the extra stress term in the momentum equation,
we require a modification of the effective viscous flux compactness result, Proposition 7.36 in
Novotn\'{y} \& Stra\v{s}kraba \cite{NovStras}.
Such results require pseudodifferential operators identified via the Fourier transform
${\mathfrak F}$. We briefly recall the key ideas, and refer to Section 4.4.1
in  \cite{NovStras} for the details.

With
\begin{align}
{\mathfrak S}({\mathbb R}^d) :=
\left\{ \eta \in C^\infty({\mathbb R}^d) : \sup_{\xt \in {\mathbb R}^d}
\left|x_1^{\varsigma_1} \cdots x_d^{\varsigma_d}  \,
\frac{\Dlxn \eta}{\Dlxd}\right|
\leq C(|\varsigmat|,|\lambdat|) \quad \forall \varsigmat,\,\lambdat \in {\mathbb N}^d \right\},
\label{Sfrak}
\end{align}
the space of smooth rapidly decreasing (complex-valued) functions, we introduce
the Fourier transform
${\mathfrak F} : {\mathfrak S}({\mathbb R}^d) \rightarrow {\mathfrak S}({\mathbb R}^d)$,
and its inverse
${\mathfrak F}^{-1} : {\mathfrak S}({\mathbb R}^d) \rightarrow {\mathfrak S}({\mathbb R}^d)$,
defined by
%\begin{subequations}
\begin{align}
[{\mathfrak F}(\eta)](\yt) = \frac{1}{(2\pi)^{\frac{d}{2}}}
\int_{{\mathbb R}^d} {\rm e}^{-i\,{\small \xt}\, \cdot\, {\small \yt}}
\,\eta(\xt) \,\dx \qquad \mbox{and} \qquad
\label{FTran}
%\end{align}
%defined by
%\begin{align}
[{\mathfrak F}^{-1}(\eta)](\xt) = \frac{1}{(2\pi)^{\frac{d}{2}}} \int_{{\mathbb R}^d}
{\rm e}^{i\,{\small \xt}\, \cdot\, {\small \yt}}
\,\eta(\yt) \,\dy.
%\label{IFTran}
\end{align}
%\end{subequations}
These are extended to
${\mathfrak F},\, {\mathfrak F}^{-1}:
{\mathfrak S}({\mathbb R}^d)' \rightarrow {\mathfrak S}({\mathbb R}^d)'$,
where ${\mathfrak S}({\mathbb R}^d)'$, the dual of ${\mathfrak S}({\mathbb R}^d)$,
is the space of tempered distributions, via
\begin{align}
\langle {\mathfrak F}(\eta),\xi \rangle_{{\mathfrak S}({\mathbb R}^d)}
= \langle \eta, {\mathfrak F}(\xi)\rangle_{{\mathfrak S}({\mathbb R}^d)}
\quad \mbox{and}
\quad
\langle {\mathfrak F}^{-1}(\eta),\xi \rangle_{{\mathfrak S}({\mathbb R}^d)}
= \langle \eta, {\mathfrak F}^{-1}(\xi)\rangle_{{\mathfrak S}({\mathbb R}^d)}
\quad \forall \xi \in {\mathfrak S}({\mathbb R}^d).
\label{Ftext}
\end{align}

We now introduce the inverse divergence operator
${\mathcal A}_j : {\mathfrak S}({\mathbb R}^d) \rightarrow
{\mathfrak S}({\mathbb R}^d)'$, $j=1,\ldots,d$, such that
\begin{align}
{\mathcal A}_j(\eta)= - {\mathfrak F}^{-1}\left( \frac{i\,y_j}{|\yt|^2}\,
[{\mathfrak F}(\eta)](\yt) \right).
\label{Adef}
\end{align}
It follows from Theorems 1.55 and 1.57 in \cite{NovStras} \arxiv{(or Lemmas \ref{Le-B-1} and \ref{Le-B-2} in Appendix \ref{sec:App-B})}
and Sobolev embedding
that, for $j=1,\ldots d$,
\begin{subequations}
\begin{align}
\|\nabx{\mathcal A}_j(\eta)\|_{L^r({\mathbb R}^d)} &\leq C(r)\,
\|\eta\|_{L^r({\mathbb R}^d)}
\qquad \forall \eta \in {\mathfrak S}({\mathbb R}^d), \quad r\in (1,\infty),
\label{Abd}\\
\|{\mathcal A}_j(\eta)\|_{L^{\frac{dr}{d-r}}({\mathbb R}^d)} &\leq C(r)\,
\|\eta\|_{L^r({\mathbb R}^d)}
\qquad \forall \eta \in {\mathfrak S}({\mathbb R}^d), \quad r\in (1,d).
\label{Abd1}
\end{align}
\end{subequations}
Hence, we deduce from (\ref{Abd},b) that ${\mathcal A}_j$ can be extended
to ${\mathcal A}_j : L^r({\mathbb R}^d) \rightarrow D^{1,r}({\mathbb R}^d)$ for $r\in (1,\infty)$,
$j=1,\ldots,d$, where $D^{1,r}({\mathbb R}^d)$ is a homogeneous Sobolev space;
see Section 1.3.6 in \cite{NovStras} \arxiv{(or Appendix \ref{sec:App-A} here)}.
In addition, \red{by duality},
${\mathcal A}_j$ can be extended
to ${\mathcal A}_j : D^{1,r}({\mathbb R}^d)' \rightarrow D^{1,r}({\mathbb R}^d)$ for $r\in (1,\infty)$,
$j=1,\ldots,d$, see (4.4.4) in \cite{NovStras}.
Moreover, as ${\mathcal A}_j(\eta)$ is real, for a real-valued function $\eta$,
and from the Parseval--Plancherel formula we have, for
all $\eta \in L^r({\mathbb R}^d)$ and
$\xi \in L^{\frac{r}{r-1}}({\mathbb R}^d)$, $r\in (1,\infty)$,
having compact support that
\begin{align}
\int_{\mathbb R^d}
{\mathcal A}_j(\eta)\,\xi \,\dx &=
-\int_{{\mathbb R}^d} \eta\,{\mathcal A}_j(\xi)\,\dx,
\qquad j= 1, \ldots,d.
\label{Aint}
\end{align}
Finally, we introduce the so-called Riesz operator ${\mathcal R}_{kj}: L^r({\mathbb R}^d)
\rightarrow L^r({\mathbb R}^d)$, $r \in (1,\infty)$,
defined by
\begin{align}
{\mathcal R}_{kj}(\eta) = \frac{\partial}{\partial x_k} {\mathcal A}_j(\eta),
%\quad \Rightarrow \quad {\mathcal R}_{kj}(\eta)={\mathcal R}_{jk}(\eta),
\qquad j,\,k = 1,\ldots,d.
\label{Rjkdef}
\end{align}
We note for
all $\eta \in L^r({\mathbb R}^d)$ and
$\xi \in L^{\frac{r}{r-1}}({\mathbb R}^d)$, $r\in (1,\infty)$, that
\begin{subequations}
\begin{align}
\sum_{j=1}^d {\mathcal R}_{jj}(\eta)&=
\sum_{j=1}^d \frac{\partial}{\partial x_j} {\mathcal A}_j(\eta)=\eta,
\label{Rjjiden}\\
{\mathcal R}_{kj}(\eta)={\mathcal R}_{jk}(\eta)\qquad
\mbox{and} \qquad
&\int_{{\mathbb R}^d} {\mathcal R}_{jk}(\eta)\,\xi \,\dx =
\int_{{\mathbb R}^d} \eta\,{\mathcal R}_{jk}(\xi)\,\dx,
\qquad j,\,k = 1, \ldots,d.
\label{Rint}
\end{align}
\end{subequations}
Below we \red{use} the notation $\undertilde{\mathcal A}(\cdot)$ and $\underdtilde{\mathcal R}(\cdot)$
with components ${\mathcal A}_i(\cdot)$ and ${\mathcal R}_{ij}(\cdot)$,
$ i,j=1,\ldots,d$, respectively. \red{We shall adopt the convention that whenever any of these
operators is applied to a function or a distribution that has been defined on $\Omega$ only, it is tacitly understood that the function
or distribution in
question has been extended by $0$ from $\Omega$ to the whole of $\mathbb{R}^d$.}

We now have the following modification of Proposition 7.36 in
Novotn\'{y} \& Stra\v{s}kraba \cite{NovStras},
which is adequate for our purposes.
%\footnote{We may have to adjust this slightly for use in Section 6.}

\begin{lemma}\label{Prop736}
Given $\{(g_n,\ut_n,\mt_n,p_n,\tautt_n,f_n,\Ft_n)\}_{n \in {\mathbb N}}$,
we assume for any $\zeta \in C^\infty_0(\Omega)$ that, as $n \rightarrow \infty$,
\begin{subequations}
\begin{alignat}{2}
g_n &\rightarrow g \qquad && \mbox{in } C_w([0,T];L^q(\Omega)), \qquad \quad
 \mbox{weakly $($-$\star)$ in }L^\omega(\Omega_T),
\label{gncon}\\
\ut_n &\rightarrow \ut \qquad && \mbox{weakly in }L^2(0,T;\Ht^1_0(\Omega)),
\label{utncon}\\
\mt_n &\rightarrow \mt \qquad && \mbox{in } C_w([0,T];\Lt^z(\Omega)),
\label{mtncon} \\
p_n &\rightarrow p \qquad && \mbox{weakly in }L^r(\Omega_T),
\label{pncon}\\
\tautt_n &\rightarrow \tautt \qquad && \mbox{strongly in }L^1(0,T;\Ltt^{\frac{q}{q-1}}(\Omega)),
\label{tauncon}\\
f_n &\rightarrow f \qquad && \mbox{weakly in }L^2(0,T;H^1(\Omega)'),
\label{fncona}\\
\undertilde{\mathcal A}(\zeta\,f_n)
&\rightarrow \undertilde{\mathcal A}(\zeta\,f)\qquad
&&\mbox{strongly in }L^2(0,T;\Lt^{\frac{z}{z-1}}(\Omega)),
\label{fnconb}\\
\Ft_n &\rightarrow \Ft \qquad && \mbox{weakly in }\Lt^s(\Omega_T),
\label{Ftcon1} %\\
%\Ft_n^{(2)} &\rightarrow \Ft^{(2)} \qquad &&\mbox{strongly in }L^1(0,T;
%\Wt^{1,q}_0(\Omega)'),
%\label{Ftncon2}
\end{alignat}
\end{subequations}
where $q \in (d,\infty)$, $r,\,s \in (1,\infty)$, $\omega \in \big[\!\max\{2,\frac{r}{r-1}\},\infty\big]$
and $z \in \big(\frac{6q}{5q-6},\infty\big)$.

In addition, suppose that
\begin{subequations}
\begin{align}
&\frac{\partial g_n}{\partial t} + \nabx \cdot (\ut_n\,g_n) = f_n \qquad
\mbox{in }C^\infty_0(\Omega_T)',
\label{gnpde} \\
&\frac{\partial \mt_n}{\partial t}+ \nabx \cdot (\mt_n \otimes \ut_n) - \mu\,\Delta_x
\,\ut_n - (\mu+ \lambda)\, \nabx\,(\nabx \cdot \ut_n) + \nabx\, p_n
\nonumber \\
&\hspace{2.7in}
= \Ft_n %+ \Ft_n^{(2)}
+ \nabx \cdot \tautt_n \qquad
\mbox{in }\Ct^\infty_0(\Omega_T)'.
\label{mnpde}
\end{align}
\end{subequations}
Then it follows %for a subsequence (not indicated) of $\{g_n,\ut_n,p_n\}_{n \in {\mathbb N}}$,
that, for any $\zeta \in C^\infty_0(\Omega)$ and $\eta \in C^\infty_0(0,T)$,
\begin{align}
&\lim_{n \rightarrow \infty}
\int_0^T \!\! \!\eta \left(\int_{\Omega} \zeta \,g_n\,
[p_n-(2\mu+\lambda)\, \nabx\cdot \ut_n]
\dx \right) \dt
=
\int_0^T \!\! \!\eta \left(\int_{\Omega} \zeta \,g\,
[p-(2\mu+\lambda)\, \nabx\cdot \ut]
\dx \right) \dt.
\label{EFVd}
\end{align}
\begin{comment}
, as $n \rightarrow \infty$,
\begin{subequations}
\begin{alignat}{2}
g_n\,(\nabx \cdot \ut_n) &\rightarrow  \overline{g_n\,(\nabx \cdot \ut_n)}
\qquad && \mbox{weakly in }L^2(0,T;L^\frac{2q}{q+2}(\Omega)),
\label{gndivutncon}\\
g_n\,p_n &\rightarrow  \overline{g_n\,p_n}
\qquad && \mbox{weakly in }L^{\frac{\omega r}{\omega+r}}(\Omega_T).
%C^\infty_0(\Omega_T)',
\label{gnpncon}
\end{alignat}
\end{subequations}
%where $\overline{g_n\,p_n} \in L^{\min\{2,\frac{\omega r}{\omega+r}\}}
%(0,T;L^{\min\{\frac{2q}{2+q},\frac{\omega r}{\omega+r}\}}_{\rm loc}
%(\Omega))$.
Moreover, we have that
\begin{align}
\overline{g_n\,p_n}- (2\mu+\lambda)\,\overline{g_n\,(\nabx \cdot \ut_n)}
& = g\left[p- (2\mu+\lambda)\,(\nabx \cdot \ut)\right] \qquad \mbox{a.e.\ in } \Omega_T.
\label{EVF1}
\end{align}
\end{comment}
\end{lemma}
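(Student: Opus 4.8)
\textbf{Proof strategy for the effective viscous flux identity (Lemma \ref{Prop736}).}
The plan is to follow the classical Lions--Feireisl argument for the effective viscous flux, as presented in Proposition 7.36 of \cite{NovStras}, but carefully tracking the extra terms coming from $\Ft_n$ and $\nabx\cdot\tautt_n$ on the right-hand side of \eqref{mnpde}, and verifying that they are harmless in the limit. The central idea is to use the inverse divergence operator $\undertilde{\mathcal A}$ and the Riesz operators $\underdtilde{\mathcal R}$ to construct test functions for the momentum equation that isolate the quantity $g_n\,[p_n - (2\mu+\lambda)\,\nabx\cdot\ut_n]$. Concretely, fix $\zeta \in C^\infty_0(\Omega)$ and $\eta \in C^\infty_0(0,T)$, and test \eqref{mnpde} with $\wt_n(\xt,t) := \eta(t)\,\zeta(\xt)\,\undertilde{\mathcal A}(\zeta\,g_n)(\xt,t)$ (with the convention that $g_n$ is extended by zero outside $\Omega$); test the limiting momentum equation with the analogous $\wt := \eta\,\zeta\,\undertilde{\mathcal A}(\zeta\,g)$. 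Subtracting and rearranging, most terms pair a strongly convergent factor with a weakly convergent one, and the crucial commutator term is handled by the Div--Curl lemma / the commutator identity relating $\underdtilde{\mathcal R}(g_n \ut_n)$ and $g_n\,\underdtilde{\mathcal R}(\ut_n)$, exactly as in \cite{NovStras}; the regularity assumptions $q>d$, $z > \frac{6q}{5q-6}$ are precisely what is needed to make all the Sobolev embeddings and H\"older pairings in that computation work.

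The steps, in order, would be: (i) verify that $\wt_n \in L^2(0,T;\Ht^1_0(\Omega))$ with the right integrability, using \eqref{Abd}--\eqref{Abd1} and \eqref{gncon}, so that it is an admissible test function in the weak form of \eqref{mnpde}; (ii) insert $\wt_n$ into \eqref{mnpde}, integrate by parts in $t$ (using \eqref{gnpde} to rewrite $\partial_t \undertilde{\mathcal A}(\zeta g_n) = \undertilde{\mathcal A}(\zeta f_n) - \undertilde{\mathcal A}(\zeta \nabx\cdot(\ut_n g_n))$, which is where assumption \eqref{fnconb} on the strong convergence of $\undertilde{\mathcal A}(\zeta f_n)$ enters), and collect the term $\int \eta \int_\Omega \zeta\,g_n\,[p_n - (2\mu+\lambda)\nabx\cdot\ut_n]\,\dx\,\dt$ plus remainder terms; (iii) pass to the limit $n \to \infty$ in each remainder term. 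Here the new contributions $\int \eta \int_\Omega \wt_n \cdot \Ft_n$ and $-\int \eta \int_\Omega \tautt_n : \nabxtt \wt_n$ converge to their natural limits because $\wt_n \to \wt$ strongly in $L^2(0,T;\Lt^{z'}(\Omega))$ and $\nabx \wt_n \to \nabx \wt$ strongly in $L^2(0,T;\Lt^{q'}(\Omega))$ (by \eqref{Abd}, \eqref{gncon} and compactness, where $q'=\frac{q}{q-1}$), against $\Ft_n \rightharpoonup \Ft$ weakly in $\Lt^s(\Omega_T)$ and $\tautt_n \to \tautt$ strongly in $L^1(0,T;\Ltt^{q'}(\Omega))$; one must check $s' < z'$, i.e. $s > z/(z-1)$ does not over-constrain, but since $s\in(1,\infty)$ is free this is a matter of bookkeeping on which exponent dominates, and in the application $\Ft_n = g_n \ft_n$ with $g_n$ bounded in $L^\omega$. (iv) Perform the same computation with the limiting equation tested by $\wt$; (v) subtract, and reduce the difference of the two effective-flux integrals to the commutator expression, which is shown to vanish by the Div--Curl argument of \cite{NovStras} (Lemma 4.4.5 there / the $\undertilde{\mathcal R}$ commutator), using \eqref{utncon}, \eqref{mtncon}, \eqref{mtncon} combined with \eqref{gncon} to identify weak limits of products. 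This yields \eqref{EFVd}.

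I expect the main obstacle to be step (iii): making sure that the presence of the extra-stress term $\nabx\cdot\tautt_n$ does not destroy the compactness argument, which is precisely the delicate point flagged in the Introduction. Because $\wt_n$ built from $\undertilde{\mathcal A}(\zeta g_n)$ has only the spatial integrability afforded by \eqref{Abd}--\eqref{Abd1} — namely $\nabx\wt_n$ bounded in $L^\infty(0,T;L^\omega(\Omega))$ roughly — the pairing $\int \tautt_n : \nabxtt\wt_n$ requires $\tautt_n$ to converge strongly in $L^1(0,T;\Ltt^{q/(q-1)}(\Omega))$, and the choice $q > d$ is exactly tuned so that $\undertilde{\mathcal A}(\zeta g_n)$ lands in $L^\infty$ in space (by $L^q \hookrightarrow$ into a space of bounded inverse-divergence when $q>d$), making the pairing legitimate. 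The verification that all these exponents are mutually consistent — in particular that the time-integrability picked up from $g_n \in C_w([0,T];L^q)$ and $\mt_n \in C_w([0,T];\Lt^z)$ suffices to control the convective and commutator terms — is the technically heaviest part, but it is entirely parallel to \cite{NovStras} once the two genuinely new right-hand-side terms are seen to converge; those two terms are in fact the \emph{easiest} to pass to the limit, given the strong convergence hypotheses \eqref{tauncon} and \eqref{fnconb}, which is why the lemma has been stated with precisely those hypotheses.
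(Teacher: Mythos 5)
Your proposal follows essentially the same route as the paper's proof: test \eqref{mnpde} with $\eta\,\zeta\,\undertilde{\mathcal A}(\widetilde\zeta\,g_n)$, use \eqref{gnpde} together with the properties of $\undertilde{\mathcal A}$ and $\underdtilde{\mathcal R}$ to peel off $\int\eta\int\zeta\,\widetilde\zeta\,g_n\,[p_n-(2\mu+\lambda)\nabx\cdot\ut_n]$, pass to the limit via the Commutator lemma (Lemma 4.25 in \cite{NovStras}) and the strong convergences \eqref{tauncon}, \eqref{fnconb}, then repeat with the limiting equation and subtract. The one small technical point you should note is that you test with a single cutoff $\zeta\,\undertilde{\mathcal A}(\zeta\,g_n)$, which yields $\zeta^2\,g_n$ in the effective-flux integrand rather than $\zeta\,g_n$; the paper avoids this by using two cutoffs $\zeta,\widetilde\zeta$ and at the end choosing $\widetilde\zeta\equiv 1$ on $\mathrm{supp}\,\zeta$, which gives precisely the stated form \eqref{EFVd}.
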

\begin{proof}
We adapt the proof of Proposition 7.36 in \cite{NovStras}, by just pointing out the key differences.
As $q>d$, then $q^\star$, the Sobolev conjugate of $q$ in the notation (1.3.64) of \cite{NovStras},
is such that $q^\star=\infty$.
Hence our restrictions on $r,s,\omega$ and $z$ satisfy the restrictions of Proposition 7.36 in
\cite{NovStras}.
%On extending $g_n,\ut_n$ and $f_n$, $n \in {\mathbb N}$, by zero from $\Omega$ to $\mathbb R^d$ and
With any $\widetilde{\zeta} \in C^\infty_0(\Omega)$, it follows from (\ref{gnpde}) and
properties (\ref{Aint})--(\ref{Rjjiden},b) of ${\mathcal A}_j$ and ${\mathcal R}_{kj}$  that,
for $i=1,\ldots,d$,
\begin{align}
&\frac{\partial}{\partial t}{\mathcal A}_i(\widetilde{\zeta}\,g_n)
+ \sum_{j=1}^d {\mathcal R}_{ij}(\widetilde{\zeta}\,g_n\,u^j_n)
= {\mathcal A}_i(\widetilde{\zeta}\,f_n)
+ %\sum_{j=1}^d
{\mathcal A}_i(g_n\,%u^j_n\,\frac{\partial\widetilde{\zeta}}{\partial x_j}
\ut_n\cdot \nabx\,\widetilde{\zeta})
\qquad
\mbox{in }C^\infty_0(\Omega_T)',
\qquad
\label{Agnpde}
\end{align}
where we adopt the notation $u^j_n$ for the $j^{\rm th}$ component of $\ut_n$.
With any $\zeta,\,\widetilde{\zeta} \in C^\infty_0(\Omega)$ and $\eta \in C^\infty_0(0,T)$,
we now consider $\eta\,\zeta\,\undertilde{\mathcal A}(\widetilde{\zeta}\,g_n)$ as a test function
%for the $i^{\rm th}$ equation of
for (\ref{mnpde}).
It follows from (\ref{gncon}) and (\ref{Abd},b) that
$\undertilde{\mathcal A}(\widetilde{\zeta}\,g_n) \in L^\infty(0,T;\Wt^{1,q}(\Omega))\cap
L^\omega(0,T;\Wt^{1,\omega}(\Omega))$, \red{and hence}
$\undertilde{\mathcal A}(\widetilde{\zeta}\,g_n) \in \Lt^\infty(\Omega_T)$
as $q>d$.
Similarly to (\ref{mtkabd}), $$g_n\,\ut_n \in L^2(0,T;\Lt^{\frac{6q}{q+6}}(\Omega))$$
and $$\mt_n\otimes\ut_n \in
L^2(0,T;\Ltt^{\frac{6z}{z+6}}(\Omega)).$$
As $z \in (\frac{6q}{5q-6},\infty)$,
and therefore $\frac{z}{z-1} \in (1,\frac{6q}{q+6})$
and $\frac{6z}{z+6} \in (\frac{q}{q-1},6)$, it follows from (\ref{Agnpde}),
(\ref{Abd},b), (\ref{Rjkdef}) and (\ref{fnconb}) that
$$\frac{\partial}{\partial t}{\undertilde{\mathcal A}}(\widetilde{\zeta}\,g_n) \in
L^2(0,T;\Lt^{\frac{z}{z-1}}(\Omega)).$$
Noting the above and (\ref{utncon}--e,h), we see that
$\eta\,\zeta\,\undertilde{\mathcal A}(\widetilde{\zeta}\,g_n)$ is a valid test function
for (\ref{mnpde}),
and we obtain, on using integration by parts several times
and properties  (\ref{Aint})--(\ref{Rjjiden},b) of ${\mathcal A}_j$ and ${\mathcal R}_{kj}$, that
\begin{align}
&\int_0^T \!\! \!\eta \left(\int_{\Omega} \zeta \,\widetilde{\zeta}\,g_n\,
[p_n-(2\mu+\lambda)\, \nabx\cdot \ut_n]
\dx \right) \dt
\nonumber \\
&\;=
\mu \int_0^T \!\! \!\eta \left(\int_{\Omega}
\left(\nabxtt \ut_n : \undertilde{\mathcal A}(\widetilde{\zeta}\,g_n)
\otimes \nabx\,\zeta
- \ut_n \otimes  \nabx\,\zeta: \underdtilde{\mathcal R}(\widetilde{\zeta}\,g_n)
+ \widetilde{\zeta}\,g_n\,
\ut_n \cdot  \nabx\,\zeta
\right)\dx\right)\dt
\nonumber \\
& \quad + \int_0^T \!\! \!\eta \left(\int_{\Omega} \left(
\left(\tautt_n - \mt_n \otimes \ut_n\right) :
\undertilde{\mathcal A}(\widetilde{\zeta}\,g_n) \otimes \nabx \,\zeta
-[p_n-(\mu+\lambda)\,\nabx\cdot \ut_n]\,
\undertilde{\mathcal A}(\widetilde{\zeta}\,g_n)\cdot\nabx \,\zeta
\right)\dx \right)\dt
\nonumber \\
& \quad +
\int_0^T \!\! \!\eta \left(\int_{\Omega} \zeta\,\left(
\left(\tautt_n - \mt_n \otimes \ut_n\right) : \underdtilde{\mathcal R}(\widetilde{\zeta}\,g_n)-
\Ft_n
\cdot \undertilde{{\mathcal A}}(\widetilde{\zeta}\,g_n)
\right)\,
\dx \right)\dt
\nonumber \\
& \quad -\int_0^T \!\! \frac{{\rm d}\eta}{{\rm d}t} \left(\int_{\Omega} \zeta\,
\mt_n \cdot \undertilde{\mathcal A}(\widetilde{\zeta}\,g_n) \,\dx\right)\dt
-\int_0^T \!\! \!\eta \left(\int_{\Omega} \zeta\,
\mt_n \cdot \left[ \undertilde{\mathcal A}(\widetilde{\zeta}\,f_n)
+ \undertilde{\mathcal A}(g_n\,\ut_n\cdot \nabx \,\widetilde{\zeta})
\right]
\dx\right)\dt
\nonumber\\
&\quad + \int_0^T \!\! \!\eta
\left(\int_{\Omega} \widetilde{\zeta}\,
\sum_{i=1}^d \sum_{j=1}^d
g_n \,u_n^j\,{\mathcal R}_{ij}(\zeta\,m_n^i)
\dx \right)\dt.
\nonumber \\
& \label{EVFa}
\end{align}
The equation (\ref{EVFa}) is exactly the same as (7.5.12) in \cite{NovStras},
except for the extra $\tautt_n$ terms %, the rewrite of $\Ft$ as $\Ft^{(1)} + \Ft^{(2)}$
and the change of notation.

We will just concentrate on the terms involving $\tautt_n$, %$\Ft^{(1)}$  and $\Ft^{(2)}$,
as the other terms are dealt with as in \cite{NovStras}.
It follows from (\ref{gncon}), (\ref{Abd},b),
see (7.5.18)--(7.5.20) in \cite{NovStras} for the details, that
\begin{subequations}
\begin{alignat}{3}
\undertilde{\mathcal A}(\widetilde{\zeta}\,g_n) &\rightarrow
\undertilde{\mathcal A}(\widetilde{\zeta}\,g)\quad
&&\mbox{weakly in }L^{\infty}(0,T;\Wt^{1,q}(\Omega)),\quad &&\mbox{strongly in }
\Lt^{\upsilon}(\Omega_T),
\label{Agn}\\
\underdtilde{\mathcal R}(\widetilde{\zeta}\,g_n) &\rightarrow
\underdtilde{\mathcal R}(\widetilde{\zeta}\,g)\quad
&&\mbox{weakly in }L^{\infty}(0,T;\Ltt^{q}(\Omega)),\quad &&\mbox{strongly in }
L^{\upsilon}(0,T;\Htt^{-1}(\Omega)),
\label{Rgn}
\end{alignat}
\end{subequations}
where $\upsilon \in [1,\infty)$. %, $\upsilon_2 \in [1,q*]$, and $q^\star=\frac{d\,q}{d-q}$ if $q<d$
%and $q^\star = \infty$ otherwise.
It follows from (\ref{tauncon}) and (\ref{Agn},b) that, as $n \rightarrow \infty$,
\begin{align}
&\int_0^T \!\! \!\eta \left(\int_{\Omega} \tautt_n : \left[
\undertilde{\mathcal A}(\widetilde{\zeta}\,g_n) \otimes \nabx \,\zeta
+\underdtilde{\mathcal R}(\widetilde{\zeta}\,g_n)\,\zeta
\right]
\dx \right) \dt \nonumber \\
& \hspace{1in} \rightarrow
\int_0^T \!\! \!\eta \left(\int_{\Omega} \tautt : \left[
\undertilde{\mathcal A}(\widetilde{\zeta}\,g) \otimes \nabx \,\zeta
+\underdtilde{\mathcal R}(\widetilde{\zeta}\,g)\,\zeta
\right]
\dx \right) \dt.
\label{tauncona}
\end{align}
Combining (\ref{tauncona}) with the convergence, as $n \rightarrow \infty$,
of other terms in (\ref{EVFa}) as in the proof of Proposition 7.36 in \cite{NovStras},
which involves the use of the crucial Commutator lemma
(Lemma 4.25 in \cite{NovStras}, \arxiv{or Lemma \ref{Le-D-3} in Appendix \ref{sec:App-D}}),
we obtain that
\begin{align}
&\lim_{n \rightarrow \infty}
\int_0^T \!\! \!\eta \left(\int_{\Omega} \zeta \,\widetilde{\zeta}\,g_n\,
[p_n-(2\mu+\lambda)\, \nabx\cdot \ut_n]
\dx \right) \dt
\nonumber \\
&\;=
\mu \int_0^T \!\! \!\eta \left(\int_{\Omega}
\left(\nabxtt \ut : \undertilde{\mathcal A}(\widetilde{\zeta}\,g)
\otimes \nabx\,\zeta
- \ut \otimes  \nabx\,\zeta: \underdtilde{\mathcal R}(\widetilde{\zeta}\,g)
+ \widetilde{\zeta}\,g\,
\ut \cdot  \nabx\,\zeta
\right)\dx\right)\dt
\nonumber \\
& \quad + \int_0^T \!\! \!\eta \left(\int_{\Omega} \left(
\left(\tautt - \mt \otimes \ut\right) :
\undertilde{\mathcal A}(\widetilde{\zeta}\,g) \otimes \nabx \,\zeta
-[p-(\mu+\lambda)\,\nabx\cdot \ut]\,
\undertilde{\mathcal A}(\widetilde{\zeta}\,g)\cdot\nabx \,\zeta
\right)\dx \right)\dt
\nonumber \\
& \quad +
\int_0^T \!\! \!\eta \left(\int_{\Omega} \zeta\,\left(
\left(\tautt - \mt \otimes \ut\right) : \underdtilde{\mathcal R}(\widetilde{\zeta}\,g)-
\Ft
\cdot \undertilde{{\mathcal A}}(\widetilde{\zeta}\,g)
\right)\,
\dx \right)\dt
\nonumber \\
& \quad -\int_0^T \!\! \frac{{\rm d}\eta}{{\rm d}t} \left(\int_{\Omega} \zeta\,
\mt \cdot \undertilde{\mathcal A}(\widetilde{\zeta}\,g) \,\dx\right)\dt
-\int_0^T \!\! \!\eta \left(\int_{\Omega} \zeta\,
\mt \cdot \left[ \undertilde{\mathcal A}(\widetilde{\zeta}\,f)
+ \undertilde{\mathcal A}(g\,\ut\cdot \nabx \,\widetilde{\zeta})
\right]
\dx\right)\dt
\nonumber\\
&\quad + \int_0^T \!\! \!\eta
\left(\int_{\Omega} \widetilde{\zeta}\,
\sum_{i=1}^d \sum_{j=1}^d
g \,u^j\,{\mathcal R}_{ij}(\zeta\,m^i)
\dx \right)\dt,
%\nonumber \\&
\label{EVFb}
\end{align}
which is exactly the same as (7.5.25) in \cite{NovStras},
except for the extra $\tautt$ terms and the change of notation.

In addition, the equations (\ref{gnpde},b) are exactly the same as in (7.5.7)--(7.5.8)
in \cite{NovStras} except for the extra $\tautt_n$ term.
One can use (\ref{gncon}--h) to pass to the limit $n \rightarrow \infty$
in (\ref{gnpde},b) to obtain
\begin{subequations}
\begin{alignat}{2}
\frac{\partial g}{\partial t} + \nabx \cdot (\ut\,g) &= f \quad
&&\mbox{in }C^\infty_0(\Omega_T)',
\label{gpde} \\
\frac{\partial \mt}{\partial t}+ \nabx \cdot (\mt \otimes \ut) - \mu\,\Delta_x
\,\ut - (\mu+ \lambda)\, \nabx\,(\nabx \cdot \ut) + \nabx\, p
%\nonumber \\
%&\hspace{2.7in}
&= \Ft
+ \nabx \cdot \tautt\quad
&&\mbox{in }\Ct^\infty_0(\Omega_T)',
\nonumber \\
&
\label{mpde}
\end{alignat}
\end{subequations}
see \cite{NovStras} for details. Clearly, the $\tautt_n$ term in (\ref{mnpde})
is easily dealt with using (\ref{tauncon}).
Similarly to (\ref{Agnpde}), we deduce that $\eta\,\zeta\,\undertilde{\mathcal{A}}(\widetilde{\zeta}
\,g)$ is a valid test function for (\ref{mpde}),
for any $\zeta,\,\widetilde{\zeta} \in C^\infty_0(\Omega)$ and $\eta \in C^\infty_0(0,T)$,
and we obtain (\ref{EVFa}) without the subscript $n$.
Combining this with (\ref{EVFb}), we deduce that,
for any $\zeta,\,\widetilde{\zeta} \in C^\infty_0(\Omega)$ and $\eta \in C^\infty_0(0,T)$,
\begin{align}
&\lim_{n \rightarrow \infty}
\int_0^T \!\! \!\eta \left(\int_{\Omega} \zeta \,\widetilde{\zeta}\,g_n\,
[p_n-(2\mu+\lambda)\, \nabx\cdot \ut_n]
\dx \right) \dt
=
\int_0^T \!\! \!\eta \left(\int_{\Omega} \zeta \,\widetilde{\zeta}\,g\,
[p-(2\mu+\lambda)\, \nabx\cdot \ut]
\dx \right) \dt.
\label{EFVc}
\end{align}
%Finally, we deduce (\ref{gndivutncon},b) from (\ref{gncon},b,d), and
%hence (\ref{EVF1}) from (\ref{EFVc}).
Hence \red{we arrive at (\ref{EFVd}) by taking $\tilde{\zeta} \in C^\infty_0(\Omega)$ such that $\tilde{\zeta} \equiv 1$ on the support
of $\zeta$}.
\end{proof}

We need also the following variation of Lemma \ref{Prop736}
for later use in Section \ref{sec:P}.
\begin{corollary}\label{Cor736}
The results of Lemma \ref{Prop736} hold with the assumptions (\ref{fncona},g) replaced by
%\begin{subequations}
\begin{alignat}{2}
%\undertilde{\mathcal A}(\zeta\,f_n)
f_n &\rightarrow f %\undertilde{\mathcal A}(\zeta\,f)
\qquad
&&\mbox{weakly in } %L^2(0,T;\Lt^{\frac{z}{z-1}}(\Omega)),
L^2(\Omega_T), \qquad
\mbox{as }n \rightarrow \infty.
\label{fnconbcor}
%{\mathcal A}_i(\zeta\,\mt_n) &\rightarrow {\mathcal A}_i(\zeta\,\mt) \qquad
%&&\mbox{strongly in }\Lt^2(\Omega_T), \qquad i=1,\ldots,d,
%\label{Cor736eq}
\end{alignat}
%\end{subequations}
\end{corollary}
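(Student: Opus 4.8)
\textbf{Proof proposal for Corollary \ref{Cor736}.}

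The plan is to reduce the statement of Corollary \ref{Cor736} to that of Lemma \ref{Prop736} by verifying that the stronger hypothesis \eqref{fnconbcor} implies both of the hypotheses \eqref{fncona} and \eqref{fnconb} that were used in the proof of Lemma \ref{Prop736}. Since all other hypotheses \eqref{gncon}--\eqref{Ftcon1} (excluding \eqref{fncona}, \eqref{fnconb}) and the equations \eqref{gnpde}, \eqref{mnpde} are assumed unchanged, the conclusion \eqref{EFVd} will follow verbatim once these two implications are established. So the whole argument is a short compatibility check at the level of the functional-analytic assumptions, with no new estimates on the PDEs themselves.

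First I would observe that \eqref{fncona} is immediate: $L^2(\Omega_T) = L^2(0,T;L^2(\Omega))$ embeds continuously into $L^2(0,T;H^1(\Omega)')$ because $L^2(\Omega) \hookrightarrow H^1(\Omega)'$ (indeed $H^1(\Omega) \hookrightarrow L^2(\Omega)$ with dense range, so dualising gives $L^2(\Omega) \hookrightarrow H^1(\Omega)'$), and continuous embeddings preserve weak convergence; thus $f_n \rightarrow f$ weakly in $L^2(0,T;H^1(\Omega)')$, which is \eqref{fncona}. Next, for \eqref{fnconb}, fix $\zeta \in C^\infty_0(\Omega)$; then $\zeta\,f_n \rightarrow \zeta\,f$ weakly in $L^2(\Omega_T)$, with all $\zeta\,f_n$ supported in the fixed compact set $\mathrm{supp}\,\zeta \Subset \Omega$. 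Extending by zero to $\mathbb{R}^d$ and recalling the mapping property \eqref{Abd1} of the inverse-divergence operator $\undertilde{\mathcal A}$ (with $r=2<d$ for $d=3$, and the analogue for $d=2$, using also \eqref{Abd} and Sobolev embedding as in the text), one has $\undertilde{\mathcal A}(\zeta\,f_n)$ bounded in $\Ht^{1}(\mathcal O)$ uniformly in $n$ and $t$, for any bounded Lipschitz domain $\mathcal O$ containing $\mathrm{supp}\,\zeta$. Here the key point is that $z \in \big(\tfrac{6q}{5q-6},\infty\big)$ with $q>d$ forces $\tfrac{z}{z-1} < 6 \leq \tfrac{2d}{d-2}$ (interpreting $\tfrac{2d}{d-2}=\infty$ for $d=2$), so that by the compact Sobolev embedding $H^{1}(\mathcal O) \hookrightarrow\!\!\!\rightarrow L^{\frac{z}{z-1}}(\mathcal O)$ together with a uniform bound on $\partial_t \undertilde{\mathcal A}(\zeta\,f_n)$ in a negative-order space (obtained exactly as in the proof of Lemma \ref{Prop736}, using \eqref{Agnpde} with $f_n$ in place of the general right-hand side, \eqref{gncon}, \eqref{utncon} and the mapping properties \eqref{Abd}, \eqref{Rjkdef}), the Aubin--Lions--Simon compactness theorem \eqref{compact1} yields strong convergence $\undertilde{\mathcal A}(\zeta\,f_n) \rightarrow \undertilde{\mathcal A}(\zeta\,f)$ in $L^2(0,T;\Lt^{\frac{z}{z-1}}(\Omega))$, which is precisely \eqref{fnconb} (the weak $L^2(\Omega_T)$ convergence \eqref{fnconbcor} identifies the limit).

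With \eqref{fncona} and \eqref{fnconb} thus verified, the conclusion \eqref{EFVd} follows by invoking Lemma \ref{Prop736} directly, since the remainder of its hypotheses are unchanged. I do not anticipate a genuine obstacle here; the only point requiring a little care is the quantitative check that the exponent range $z \in \big(\tfrac{6q}{5q-6},\infty\big)$ with $q>d\in\{2,3\}$ indeed makes $\tfrac{z}{z-1}$ subcritical for the Sobolev embedding of $H^1$, so that the compactness argument producing \eqref{fnconb} goes through; this is the same arithmetic already used in deriving the mapping properties in the proof of Lemma \ref{Prop736}, and presents no difficulty.
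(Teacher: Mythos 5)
Your reduction of \eqref{fnconbcor} to \eqref{fncona} is fine: $L^2(\Omega)\hookrightarrow H^1(\Omega)'$ continuously, and continuous embeddings preserve weak convergence. But your reduction to \eqref{fnconb} contains a genuine gap, and this is precisely the point the paper's proof is designed to circumvent. You propose to derive the \emph{strong} convergence $\undertilde{\mathcal A}(\zeta\,f_n)\to\undertilde{\mathcal A}(\zeta\,f)$ in $L^2(0,T;\Lt^{\frac{z}{z-1}}(\Omega))$ via Aubin--Lions, which requires a bound on $\partial_t \undertilde{\mathcal A}(\zeta\,f_n)$ in some negative-order space, uniform in $n$. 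You claim this bound is ``obtained exactly as in the proof of Lemma \ref{Prop736}, using \eqref{Agnpde} with $f_n$ in place of the general right-hand side.'' This is not available: \eqref{Agnpde} is the equation governing $\undertilde{\mathcal A}(\widetilde{\zeta}\,g_n)$ — it is obtained by applying $\undertilde{\mathcal A}$ to the continuity equation \eqref{gnpde} for $g_n$ — and there is no analogous evolution equation for $\undertilde{\mathcal A}(\zeta\,f_n)$. Under the hypotheses of the corollary one only has weak $L^2(\Omega_T)$ convergence of $f_n$; nothing controls $\partial_t f_n$. In the very application where the corollary is invoked (Section~\ref{sec:P}), $f_n = -(\rhok(\mathfrak{t}_\ell)'_+(\rhok)-\mathfrak{t}_\ell(\rhok))\,\nabx\cdot\utk$, and its time derivative involves $\partial_t\rhok$ and $\partial_t\utk$ in a form that is not bounded in the norms required. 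So the proposed compactness step fails, and \eqref{fnconb} does not follow from \eqref{fnconbcor}. (A minor secondary point: you also assert a bound on $\undertilde{\mathcal A}(\zeta\,f_n)$ uniform ``in $n$ and $t$''; \eqref{fnconbcor} only yields an $L^2$-in-$t$ bound, not $L^\infty$.)

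The paper's argument takes a different route: rather than trying to recover \eqref{fnconb}, it tracks down the one place in the proof of Lemma \ref{Prop736} where \eqref{fnconb} was actually used (the $f_n$-term on the right-hand side of \eqref{EVFa}) and replaces it. Using the duality identity \eqref{Aint}, the operator $\undertilde{\mathcal A}$ is transferred from $f_n$ onto $\mt_n$:
\begin{align*}
-\int_0^T \!\!\eta \left(\int_{\Omega} \zeta\,\mt_n \cdot \undertilde{\mathcal A}(\widetilde{\zeta}\,f_n)\,\dx\right)\dt
=\int_0^T \!\!\eta \left(\int_{\Omega}\widetilde{\zeta}\,f_n \sum_{i=1}^d {\mathcal A}_i(\zeta\,m_n^i)\,\dx\right)\dt,
\end{align*}
and now the strong convergence required is that of $\undertilde{\mathcal A}(\widetilde{\zeta}\,\mt_n)$, which is available from \eqref{mtncon} (namely $\mt_n\to\mt$ in $C_w([0,T];\Lt^z(\Omega))$) together with \eqref{Abd},b and Sobolev embedding, as $z>\frac{6}{5}$ — analogously to \eqref{Agn}. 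Weak $L^2$ convergence of $f_n$ then suffices to pass to the limit on the right-hand side. This is the step your proposal is missing: you must move the pseudodifferential operator onto the quantity for which compactness is known, not onto $f_n$ for which it is not.
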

\begin{proof}
One can still pass to the limit $n \rightarrow \infty$
in (\ref{gnpde}) to obtain (\ref{gpde}) using (\ref{fnconbcor}) in place of
(\ref{fncona},g). We deduce from (\ref{fnconbcor}), (\ref{Abd1}) and (\ref{Aint}) that
\begin{align}
\undertilde{\mathcal A}(\zeta\,f_n)
&\rightarrow \undertilde{\mathcal A}(\zeta\,f)
\qquad
\mbox{weakly in } L^2(0,T;\Lt^6(\Omega)),
\qquad
\mbox{as }n \rightarrow \infty.
\label{Afnconbcor}
\end{align}
As $\frac{z}{z-1} < \frac{6q}{q+6} < 6$,
(\ref{Afnconbcor}) ensures that one can still conclude from (\ref{Agnpde})
that $\eta\,\zeta\,\undertilde{\mathcal A}(\widetilde{\zeta}\,g_n)$ is a valid test function
for (\ref{mnpde}).
Similarly, one can deduce that
$\eta\,\zeta\,\undertilde{\mathcal A}(\widetilde{\zeta}\,g)$ is a valid test function
for (\ref{mpde}).
The only other place \red{where} (\ref{fncona},g) are used in the proof of Lemma
\ref{Prop736} is in dealing with the term involving $f_n$ on the right-hand side of (\ref{EVFa});
that is, the term
\begin{align}
-\int_0^T \!\! \!\eta \left(\int_{\Omega} \zeta\,
\mt_n \cdot \undertilde{\mathcal A}(\widetilde{\zeta}\,f_n)
\dx\right)\dt
=
\int_0^T \!\! \!\eta \left(\int_{\Omega}
\widetilde{\zeta}\,f_n \, \sum_{i=1}^d
{\mathcal A}_i(\zeta\,
m_n^i)
\dx\right)\dt,
\label{EFVacor}
\end{align}
where we have noted (\ref{Aint}).
Similarly to (\ref{Agn}), it follows from (\ref{mtncon}), (\ref{Abd},b) and Sobolev embedding,
as $z > \frac{6}{5}$, that
\begin{align}
\undertilde{\mathcal A}(\widetilde{\zeta}\,m_n) &\rightarrow
\undertilde{\mathcal A}(\widetilde{\zeta}\,m)\quad
&&\mbox{weakly in }L^{\infty}(0,T;\Wt^{1,z}(\Omega)),\quad &&\mbox{strongly in }
L^{\upsilon}(0,T;\Lt^3(\Omega)),
\label{Amncor}
\end{align}
where $\upsilon \in [1,\infty)$.
Therefore, (\ref{Amncor}), (\ref{fnconbcor}) and (\ref{Aint}) imply
that we can pass to the limit $n \rightarrow \infty$ in (\ref{EFVacor}) to obtain
the term involving $f$ on the right-hand side of (\ref{EVFb}).
\end{proof}

In order to identify $\overline{\pk(\rhok)}$ in (\ref{equtk}) and (\ref{pkkacon}),
we now apply Lemma \ref{Prop736} with
(\ref{gnpde},b) being (\ref{eqrhokarep},b)
so that $\mu=\frac{\mu^S}{2}$ and $\lambda= \mu^B-\frac{\mu^S}{d}$,
$g_n=\rhoka$, $\ut_n=\utka$, $\mt_n = \rhoka\,\utka$, $p_n=\pk(\rhoka)$,
$\tautt_n = \tautt(M\,\hpsika)+\frac{\alpha}{2}\,( \utka \otimes \nabx\, \rhoka)$,
$f_n = \alpha\,\Delta_x\,\rhoka$ and
$\Ft_n = \rhoka\,\ft - \frac{\alpha}{2}\,(\nabx\, \rhoka \cdot \nabx)\,\utka$.
With $\{(\rhoka,\utka,\hpsika)\}_{\alpha>0}$ being the subsequence (not indicated)
of Lemma \ref{mtkalem}, we have that
(\ref{gncon}--d) hold with
$g=\rhok$, $\ut=\utk$, $\mt = \rhok\,\utk$ and $p=\overline{\pk(\rhok)}$,
and $q=\Gamma$, $\omega=\Gamma+1$, $z=\frac{2\Gamma}{\Gamma+1}$ and
$r=\frac{\Gamma+1}{\Gamma}$ on recalling
(\ref{rhokawcon}), (\ref{mtkascon},d,e) and  (\ref{uwconH1k}).
We note that $\omega=\Gamma +1 = \frac{r}{r-1} > 2$
and, as $\Gamma \geq 8$, $z=\frac{2\Gamma}{\Gamma+1} \geq \frac{16}{9} > \frac{24}{17}
\geq \frac{6\Gamma}{5\Gamma-6}=\frac{6q}{5q-6}$. Hence, the constraints on
$q,r,\omega$ and $z$ hold.
The results (\ref{tauncon},h) hold
with $\tautt = \tautt(M\,\hpsik)$, $\Ft= \rhok\,\ft$ and $s=\frac{5\Gamma-3}{4\Gamma}$,
on recalling  (\ref{tausconLsk}),
(\ref{alpha0a},e) and (\ref{rhokaconp}),
and  noting that $\frac{4(d+2)}{3d+4} \geq \frac{20}{13}
> \frac{8}{7} \geq \frac{q}{q-1}$.
Finally, the results (\ref{fncona},g) hold with $f=0$
on recalling (\ref{rhokaxwcon0}) and the properties of
$\undertilde{{\mathcal A}}$ and $\underdtilde{{\mathcal R}}$,
on noting that $\frac{z}{z-1} = \frac{2\Gamma}{\Gamma-1} < \frac{10\Gamma-6}{3(\Gamma+1)}$,
see (7.9.21) in \cite{NovStras} for details.
Hence, we obtain from (\ref{EFVd}) for the subsequence of Lemma
\ref{mtkalem} that, for all $\zeta \in C^\infty_0(\Omega)$ and $\eta \in C^\infty_0(0,T)$,
\begin{align}
&\lim_{\alpha \rightarrow 0_+}
\int_0^T \!\! \!\eta \left(\int_{\Omega} \zeta \,\rhoka\,
[\pk(\rhoka)-\mu^\star \, \nabx\cdot \utka]
\dx \right) \dt
\nonumber \\
& \hspace{2.5in}
=
\int_0^T \!\! \!\eta \left(\int_{\Omega} \zeta \,\rhok\,
[\overline{\pk(\rhok)}-\mu^\star\, \nabx\cdot \utk]
\dx \right) \dt,
\label{EVF2}
\end{align}
\begin{comment}
\begin{align}
&\lim_{n \rightarrow \infty}
\int_0^T \!\! \!\eta \left(\int_{\Omega} \zeta \,g_n\,
[p_n-(2\mu+\lambda)\, \nabx\cdot \ut_n]
\dx \right) \dt
=
\int_0^T \!\! \!\eta \left(\int_{\Omega} \zeta \,g\,
[p-(2\mu+\lambda)\, \nabx\cdot \ut]
\dx \right) \dt.
\label{EFVd}
\end{align}
(\ref{EVF1}) and (\ref{gndivutncon},b)
for the subsequence of Lemma
\begin{align}
\overline{\rhok \,\pk(\rhok)} - \mu^\star\,
\overline{\rhok\,\nabx\cdot \utk}
=
\rhok \left[\overline{\pk(\rhoka)} - \mu^\star\,\nabx\cdot \utk\right]
\qquad \mbox{a.e. in } \Omega_T,
\label{EVF2}
\end{align}
\end{comment}
where $\mu^\star:=\frac{(d-1)}{d}\,\mu^S+\mu^B$.
The first two bounds in (\ref{mtkabd}) yield that
\begin{align}
\|\rhoka\,\nabx\cdot \utka\|_{L^2(0,T;L^{\frac{2\Gamma}{\Gamma+2}}(\Omega))} \leq C,
\label{rhokautkabd}
\end{align}
and hence there exists a $\overline{\rhok\,\nabx\cdot \utk}
\in L^2(0,T;L^{\frac{2\Gamma}{\Gamma+2}}(\Omega))$
such that for a subsequence (not indicated)
\begin{alignat}{2}
\rhoka\,\nabx\cdot \utka &\rightarrow
\overline{\rhok\,\nabx\cdot \utk}
\qquad &&\mbox{weakly in }L^{2}(0,T;L^{\frac{2\Gamma}{\Gamma+2}}(\Omega)),
\qquad \mbox{ as } \alpha \rightarrow 0_+.
\label{rhokautka}
%\rhoka \,\pk(\rhoka) &\rightarrow \overline{\rhoka \,\pk(\rhoka)}
%\qquad &&\mbox{weakly in }L^{1}(\Omega_T).
%\label{rhokapka}
\end{alignat}
%\end{subequations}
It follows from the monotonicity %and convexity
of $\pk(\cdot)$ %, and the nonnegativity of $\rhok$
that
\begin{align}
\rhoka \,\pk(\rhoka) &=
(\rhoka -\rhok)\,(\pk(\rhoka)-\pk(\rhok))
+ (\rhoka -\rhok)\,\pk(\rhok) + \rhok\,\pk(\rhoka)
\nonumber \\
& \geq
(\rhoka -\rhok)\,\pk(\rhok) +
\rhok\,\pk(\rhoka) %\rhok\,[\pk(\rhok)+(\rhoka -\rhok)\,\pk'(\rhok)]
\hspace{1.65in} \mbox{a.e. in }\Omega_T.
\label{pkmono}
\end{align}
We deduce from (\ref{EVF2}), (\ref{pkmono}), (\ref{rhokautka}) and (\ref{rhokaconp},e)
that for all nonnegative
$\zeta \in C^\infty_0(\Omega)$ and $\eta \in C^\infty_0(0,T)$,
\begin{align}
&\int_0^T \!\! \!\eta \left(\int_{\Omega} \zeta \,
[\overline{\rhok\,\nabx\cdot \utk}-
\rhok\, \nabx\cdot \utk]
\dx \right) \dt \geq 0 \nonumber \\
& \hspace{2.3in}\quad \Rightarrow \quad \overline{\rhok\,\nabx\cdot \utk} \geq
\rhok\, \nabx\cdot \utk \quad
\mbox{a.e. in }\Omega_T,
\label{rhokutklim}
\end{align}
where we have noted (\ref{weaknneg}) with $(0,T)$ replaced by $\Omega_T$
for the final implication.

Next, we introduce $\mathfrak{L}(s) = s\,{\rm log}\, s$ for $s \in [0,\infty)$.
On recalling (\ref{rho0conv},b), we have for a subsequence (not indicated),
via Lebesgue's dominated convergence theorem, that
\begin{align}
\lim_{\alpha \rightarrow 0_+}
\int_{\Omega} \mathfrak{L}(\rho^0) \,\dx  = \int_{\Omega} \mathfrak{L}(\rho_0)\,\dx.
\label{rho0lncon}
\end{align}
We can now follow the discussion in Section 7.9.3 in
Novotn\'{y} \& Stra\v{s}kraba \cite{NovStras} to deduce that
$\overline{\pk(\rhok)}=\pk(\rhok)$. For the benefit of the reader,
we briefly outline the argument.
One deduces from (\ref{eqrhok}) as $\rhok \in C_w([0,T];L^{\Gamma}_{\geq 0}(\Omega))$,
via renormalization and noting that $s\,\mathfrak{L}'(s)-\mathfrak{L}(s)=s$
for $s \in [0,\infty)$,  that,
for any $t' \in (0,T]$,
\begin{align}
\int_{\Omega} \left[\mathfrak{L}(\rhok)(t') -\mathfrak{L}(\rho_0)\right] \dx =
- \int_0^{t'} \int_\Omega \rhok\,\nabx\cdot \utk \,\dx\,\dt.
\label{lnrhok}
\end{align}
On noting (\ref{Laprhoka}), one can choose, similarly to (\ref{z-terms}),
$\eta = \chi_{[0,t']}\,[{\rm log}\, (\rhoka +\varsigma) - 1]$, where
$\varsigma \in {\mathbb R}_{>0}$, in (\ref{eqrhokarep}), and  on passing to the limit
$\varsigma \rightarrow 0_+$ obtain that, for any $t' \in (0,T]$,
\begin{align}
\int_{\Omega} \left[\mathfrak{L}(\rhoka)(t') -\mathfrak{L}(\rho^0)\right] \dx \leq
- \int_0^{t'} \int_\Omega \rhoka\,\nabx\cdot \utka \,\dx\,\dt.
\label{lnrhoka}
\end{align}
Subtracting (\ref{lnrhok}) from (\ref{lnrhoka}),
and passing to the limit $\alpha \rightarrow 0_+$,
one deduces from
(\ref{rho0lncon}), (\ref{rhokautka}) and (\ref{rhokutklim}) that,
for any $t' \in (0,T]$,
\begin{align}
\int_{\Omega}
\left[\overline{\mathfrak{L}(\rhok)(t')} -\mathfrak{L}(\rhok)(t')\right] \dx &\leq
\int_0^{t'} \int_\Omega \left[
\rhok\,\nabx\cdot \utk - \overline{\rhok\,\nabx\cdot \utk}
\right] \dx \,\dt \leq 0,
%\nonumber \\
%&= \frac{1}{\mu^\star}
%\int_0^T \int_\Omega \left[
%\rhok\,\overline{\pk(\rhoka)} - \overline{\rhoka\,\pk(\rhoka)}
%\right] \dx \,\dt,
\label{EVF2a}
\end{align}
where, on noting (\ref{rhokawcon}),
\begin{align}
\mathfrak{L}(\rhoka)(t') \rightarrow  \overline{\mathfrak{L}(\rhok)(t')}
\qquad \mbox{weakly in } L^{r}(\Omega), \qquad \mbox{for any } r \in [1,\Gamma),
\qquad\mbox{as $\alpha \rightarrow 0_+$}.
\label{rhokalncon}
\end{align}
As $\mathfrak{L}(s)$ is continuous and convex for $s\in [0,\infty)$,
it follows from (\ref{rhokawcon}) and
(\ref{rhokalncon}), see e.g.\ Corollary 3.33 in \cite{NovStras} \arxiv{(or Lemma \ref{Le-D-1} in Appendix \ref{sec:App-D})}, that
$\overline{\mathfrak{L}(\rhok)(t')} \geq \mathfrak{L}(\rhok)(t')$ a.e.\ in $\Omega$
for any $t' \in (0,T]$.
Hence, %the left-hand side of (\ref{EVF2a}) is nonnegative.
we deduce from (\ref{EVF2a}) that
$\overline{\mathfrak{L}(\rhok)(t')} = \mathfrak{L}(\rhok)(t')$ a.e.\ in $\Omega$
for any $t' \in (0,T]$.
\begin{comment}
Appealing to Lemma 3.35 in \cite{NovStras}, we deduce from (\ref{EVF2a}),
(\ref{rhokaconp},e) and (\ref{rhokapka}) that
\begin{align}
\overline{\rhoka\,\pk(\rhoka)} = \rhok\,\overline{\pk(\rhoka)}
\qquad \Rightarrow
\qquad \overline{\pk(\rhoka)} = \pk(\rhok),
\end{align}
where we have noted Lemma 3.39 in \cite{NovStras} for the final implication.
\begin{align}
\overline{ \rhoka^{\Gamma+1}} = \overline{\rhoka^{\Gamma}}\,\rhok  \qquad \Rightarrow
\qquad \overline{ \rhoka^{\Gamma}} = \rhok^{\Gamma},
\label{pkaiden}
\end{align}
where we have appealed to Lemma 3.39 in \cite{NovStras} to yield the final result.
\end{comment}
%Hence $\rhoka \rightarrow \rhok$ a.e.\ in $\Omega_T$ as $\alpha \rightarrow 0_+$.
Therefore, on applying Lemma 3.34 in \cite{NovStras} \arxiv{(or Lemma \ref{Le-D-2} in Appendix \ref{sec:App-D})}, we conclude from the above,
(\ref{rhokalncon}) and (\ref{rhokawcon}) that
$\rhoka(t) \rightarrow \rhok(t) \mbox{ strongly in }L^1(\Omega)$ for any $t \in (0,T]$,
as $\alpha \rightarrow 0_+$.
It immediately follows from this, %(\ref{rhokawcon}), (\ref{pkaiden}),
(\ref{Bogpkabd}),
(\ref{eqLinterp}) and (\ref{pkkacon}),
on possibly extracting a further subsequence (not indicated), that,
as $\alpha \rightarrow 0_+$,
\begin{subequations}
\begin{alignat}{3}
\rhoka &\rightarrow \rhok \qquad &&\mbox{strongly in } L^r(\Omega_T), \qquad
&&\mbox{for any }r \in [1,\Gamma+1),
\label{rhokascon1}\\
%\end{alignat}
%and hence, on combining this with (\ref{pkkacon}), we have that
%&\begin{align}
\pk(\rhoka) &\rightarrow \pk(\rhok) \qquad &&\mbox{weakly in } L^{\frac{\Gamma+1}{\Gamma}}(\Omega_T),
%\overline{\pk(\rhoka)} = \pk(\rhok).
\qquad&&\mbox{that is, }  \overline{\pk(\rhok)}=\pk(\rhok).
\label{pkaiden1}
\end{alignat}
\end{subequations}
Finally, we have the following complete analogue of Theorem \ref{5-convfinal}.

\begin{theorem}
\label{Pkexistslemfinal}
The triple $(\rhok,\utk,\hpsik)$,
defined as in Lemmas \ref{hpsikconv} and \ref{rhokalem},
is a global weak solution to problem (P$_{\kappa}$),
in the sense that
(\ref{eqrhok},c), with their initial conditions, hold and
\begin{align}
&\displaystyle\int_{0}^{T} \left \langle
\frac{\partial (\rhok\,\utk)}{\partial t}, \wt \right \rangle_{W^{1,\Gamma+1}_0(\Omega)}
\!\!\!\dt
+
\displaystyle\int_{0}^{T}\!\! \int_\Omega
\left[
\Stt(\utk) - \rhok\,\utk \otimes \utk
-\pk(\rhok)\,\Itt
\right]
:\nabxtt \wt
\,\dx\, \dt
\nonumber \\
&\qquad =\int_{0}^T
 \int_{\Omega} \left[ \rhok\,
\ft \cdot \wt
-\left(\tautt_1 (M\,\hpsik) - \mathfrak{z}\,\vrhok^2\,\Itt\right)
: \nabxtt
\wt \right] \dx \, \dt
\nonumber \\
& \hspace{3.3in}
\quad
\forall \wt \in L^{\Gamma+1}(0,T;\Wt^{1,\Gamma+1}_0(\Omega)),
\label{equtkfinal}
\end{align}
with $(\rhok\,\utk)(\cdot,0) = (\rho_0\,\ut_0)(\cdot)$.
In addition, the weak solution $(\rhok,\utk,\hpsik)$
satisfies (\ref{Pkenergy}).
\end{theorem}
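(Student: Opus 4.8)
The plan is to obtain almost the entire statement directly from Lemma \ref{Pkexistslem}, which already furnishes the limiting triple $(\rhok,\utk,\hpsik)$ satisfying the continuity equation \eqref{eqrhok} with $\rhok(\cdot,0)=\rho_0$, the Fokker--Planck equation \eqref{eqhpsik} with $\hpsik(\cdot,0)=\hpsi_0$, the momentum equation \eqref{equtk} with $(\rhok\,\utk)(\cdot,0)=(\rho_0\,\ut_0)$, and the energy inequality \eqref{Pkenergy}. The only discrepancy between \eqref{equtk} and the desired \eqref{equtkfinal} is that \eqref{equtk} carries the weak pressure limit $\overline{\pk(\rhok)}$ in place of $\pk(\rhok)$. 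Hence the sole genuinely new point needed for Theorem \ref{Pkexistslemfinal} is the identification $\overline{\pk(\rhok)}=\pk(\rhok)$; once it is established, substituting into \eqref{equtk} yields \eqref{equtkfinal}, and everything else is inherited verbatim, including \eqref{Pkenergy}.

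First I would set up the effective viscous flux argument by applying Lemma \ref{Prop736} to the regularized system \eqref{eqrhokarep},(\ref{equtka}) with the dictionary $\mu=\tfrac{\mu^S}{2}$, $\lambda=\mu^B-\tfrac{\mu^S}{d}$, $g_n=\rhoka$, $\ut_n=\utka$, $\mt_n=\rhoka\,\utka$, $p_n=\pk(\rhoka)$, $\tautt_n=\tautt(M\,\hpsika)+\tfrac{\alpha}{2}\,(\utka\otimes\nabx\rhoka)$, $f_n=\alpha\,\Delta_x\rhoka$, $\Ft_n=\rhoka\,\ft-\tfrac{\alpha}{2}\,(\nabx\rhoka\cdot\nabx)\,\utka$. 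The step that needs care is verifying the convergence hypotheses (\ref{gncon}--h) and the exponent constraints $q\in(d,\infty)$, $z\in(\tfrac{6q}{5q-6},\infty)$, $\omega=\tfrac{r}{r-1}\ge 2$ and so on; with $q=\Gamma$, $\omega=\Gamma+1$, $r=\tfrac{\Gamma+1}{\Gamma}$, $z=\tfrac{2\Gamma}{\Gamma+1}$ these all hold precisely because $\Gamma\ge 8$, using the bounds of Lemmas \ref{rhokalem}, \ref{mtkalem}, \ref{BogLem} together with the convergences \eqref{rhokawcon}, \eqref{mtkascon}, \eqref{uwconH1k}, \eqref{tausconLsk}, \eqref{alpha0a}, \eqref{alpha0b}, \eqref{rhokaconp}, \eqref{rhokaxwcon0}. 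This delivers the limit identity \eqref{EVF2}.

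Next I would exploit the monotonicity of $\pk$: from $\rhoka\,\pk(\rhoka)\ge(\rhoka-\rhok)\,\pk(\rhok)+\rhok\,\pk(\rhoka)$ and passage to the limit in \eqref{EVF2}, together with \eqref{rhokautka} and \eqref{rhokaconp}, one obtains $\overline{\rhok\,\nabx\cdot\utk}\ge\rhok\,\nabx\cdot\utk$ a.e.\ in $\Omega_T$, cf.\ \eqref{rhokutklim}. I would then renormalize the continuity equation with $\mathfrak{L}(s)=s\log s$: \eqref{eqrhok}, which is legitimate because $\rhok\in C_w([0,T];L^\Gamma_{\ge0}(\Omega))$, gives \eqref{lnrhok}; testing \eqref{eqrhokarep} with $\chi_{[0,t']}[\log(\rhoka+\varsigma)-1]$ and letting $\varsigma\to0_+$ (the shift by $\varsigma$ being forced by the mere nonnegativity of $\rhoka$) gives the inequality \eqref{lnrhoka}. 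Subtracting and letting $\alpha\to0_+$, using \eqref{rho0lncon}, \eqref{rhokautka}, \eqref{rhokutklim}, yields $\overline{\mathfrak{L}(\rhok)(t')}\le\mathfrak{L}(\rhok)(t')$; the reverse inequality is automatic from convexity of $\mathfrak{L}$ and \eqref{rhokawcon}, so equality holds a.e.\ for every $t'\in(0,T]$. Lemma 3.34 in \cite{NovStras} then upgrades this to $\rhoka(t)\to\rhok(t)$ strongly in $L^1(\Omega)$ for each $t$, and interpolation against the uniform bound \eqref{Bogpkabd} promotes this to strong $L^r(\Omega_T)$ convergence for all $r<\Gamma+1$; hence $\pk(\rhoka)\to\pk(\rhok)$ and $\overline{\pk(\rhok)}=\pk(\rhok)$, i.e.\ \eqref{rhokascon1}--\eqref{pkaiden1}.

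Finally I would substitute $\overline{\pk(\rhok)}=\pk(\rhok)$ into \eqref{equtk} to obtain \eqref{equtkfinal}, the remaining conclusions being exactly those of Lemma \ref{Pkexistslem}. I expect the main obstacle to be the effective viscous flux step: not the abstract statement of Lemma \ref{Prop736}, but the bookkeeping that confirms the rather low integrability available here fits its exponent constraints — which is exactly why $\Gamma\ge8$ is imposed and why the artificial pressure $\kappa(\rho^4+\rho^\Gamma)$ was built into $\pk$ — together with the renormalization of the continuity equation under only $L^\infty$ regularity of $\rho^0$, which necessitates the $\varsigma$-regularization and the subsequent passage to the limit.
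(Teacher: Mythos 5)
Your proposal is correct and follows essentially the same route as the paper: Lemma \ref{Pkexistslem} already supplies everything except the identification $\overline{\pk(\rhok)}=\pk(\rhok)$, and you obtain that identification exactly as the paper does, via the effective viscous flux lemma (with the same dictionary for Lemma \ref{Prop736}), the monotonicity inequality for $\pk$, the $\varsigma$-regularized renormalization of the continuity equation with $\mathfrak{L}(s)=s\log s$, and Lemma 3.34 of \cite{NovStras} together with interpolation against \eqref{Bogpkabd}. In the paper these steps appear as preparatory material (through \eqref{EVF2}--\eqref{pkaiden1}) so that the stated proof of the theorem is a one-line citation, but the substance of the argument coincides with yours.
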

\begin{proof}
The results (\ref{eqrhok},c) and (\ref{Pkenergy}) have already been established
in Lemma \ref{Pkexistslem}.
Equation (\ref{equtkfinal}) was established in Lemma \ref{Pkexistslem} with
$\pk(\rhok)$ replaced by $\overline{\pk(\rhok)}$, see (\ref{equtk}).
The desired result (\ref{equtkfinal}) then follows immediately from (\ref{equtk})
and (\ref{pkaiden1}).
\end{proof}

\begin{comment}
Therefore, updating Theorem \ref{Pkexistslem}, we have that
the triple $(\rhok,\utk,\hpsik)$,
defined as in Lemmas \ref{hpsikconv} and \ref{rhokalem},
is a global weak solution to problem (P$_{\kappa}$),
in the sense that
(\ref{eqrhok},c) hold with their initial conditions and
\begin{align}
&\displaystyle\int_{0}^{T} \left \langle
\frac{\partial (\rhok\,\utk)}{\partial t}, \wt \right \rangle_{W^{1,\Gamma+1}_0(\Omega)}
\!\!\!\dt
+
\displaystyle\int_{0}^{T}\!\! \int_\Omega
\left[
\Stt(\utk) - \rhok\,\utk \otimes \utk
-\pk(\rhok)\,\Itt
\right]
:\nabxtt \wt
\,\dx\, \dt
\nonumber \\
&\qquad =\int_{0}^T
 \int_{\Omega} \left[ \rhok\,
\ft \cdot \wt
-\left(\tautt_1 (M\,\hpsik) - \mathfrak{z}\,\vrhok^2\,\Itt\right)
: \nabxtt
\wt \right] \dx \, \dt
\nonumber \\
& \hspace{3.3in}
\quad
\forall \wt \in L^{\Gamma+1}(0,T;\Wt^{1,\Gamma+1}_0(\Omega)),
\label{equtkcon}
\end{align}
with $(\rhok\,\utk)(\cdot,0) = (\rho_0\,\ut_0)(\cdot)$.
\end{comment}

\section{Existence of a solution to (P)}
\label{sec:P}
\setcounter{equation}{0}

It follows from the bounds on $\vrhok$ in (\ref{Pkenergy}),
similarly to (\ref{vrhokaLvbd}) and (\ref{vrhokaLvvbd}),
that
\begin{align}
\|\vrhok\|_{L^{\infty}(0,T;L^2(\Omega))}+
\|\vrhok\|_{L^{\frac{2(d+2)}{d}}(\Omega_T)}
+\|\vrhok\|_{L^{2}(0,T;L^{6}(\Omega))}
+\|\vrhok\|_{L^{4}(0,T;L^{\frac{2d}{d-1}}(\Omega))}
\leq C,
\label{vrhokbd}
\end{align}
where throughout this section  $C$ is a generic positive constant, independent of $\kappa$.
Hence, we deduce from (\ref{vrhokbd}), (\ref{Cttrsbd}) and (\ref{Pkenergy}),
similarly to (\ref{tautt43bd}),
that
\begin{align}
\|\tautt_1(M\,\hpsik)\|_{L^2(0,T;L^{\frac{4}{3}}(\Omega))}
+\|\tautt_1(M\,\hpsik)\|_{L^{\frac{4(d+2)}{3d+4}}(\Omega_T)}
+\|\tautt_1(M\,\hpsik)\|_{L^{\frac{4}{3}}(0,T;L^{\frac{12}{7}}(\Omega))}
&\leq C.
\label{tautt1k}
\end{align}
Similarly to (\ref{hpsikaLDdtbd}), it follows from
(\ref{vrhokbd}) and (\ref{Pkenergy}) that
\begin{align}
\left\|M\,\frac{\partial \hpsik}{\partial t}
\right\|_{L^2(0,T;H^s(\Omega\times D)')} \leq C,
\label{hpsikdtbd}
\end{align}
where $s > 1+ \frac{1}{2}(K+1)d$.
We have the following analogue of Lemma \ref{hpsikconv}.

\begin{lemma}
\label{hpsiconv}
There exist functions
\begin{subequations}
\begin{align}
\label{hpsi}
&\ut \in L^2(0,T;\Ht^1_0(\Omega))\qquad \mbox{and} \qquad  \hpsi \in L^{\upsilon}(0,T;Z_1)\cap
H^1(0,T; M^{-1}(H^{s}(\Omega \times D))'),
\end{align}
where  $\upsilon \in [1,\infty)$ and $s>1+\frac{1}{2}(K+1)d$,
with finite relative entropy and Fisher information,
\begin{align}
\mathcal{F}(\hpsi) \in L^\infty(0,T;L^1_M(\Omega \times D)) \qquad \mbox{and} \qquad
\sqrt{\hpsi} \in L^2(0,T;H^1_M(\Omega \times D)),
\label{FisentP}
\end{align}
\end{subequations}
and a subsequence of $\{(\rhok,\,\utk,\,\hpsik)\}_{\kappa > 0}$
such that,
as $\kappa \rightarrow 0_+$,
\begin{align}
\utk &\rightarrow \ut
\qquad \mbox{weakly in } L^{2}(0,T;\Ht^1_0(\Omega)), \label{uwconH1b}
\end{align}
and
\begin{subequations}
\begin{alignat}{2}
M^{\frac{1}{2}}\,\nabx \sqrt{\hpsik} &\rightarrow M^{\frac{1}{2}}\,\nabx \sqrt{\hpsi}
&&\qquad \mbox{weakly in } L^{2}(0,T;\Lt^2(\Omega\times D)), \label{hpsiwconH1}\\
%\bet
M^{\frac{1}{2}}\,\nabq \sqrt{\hpsik} &\rightarrow M^{\frac{1}{2}}\,\nabq \sqrt{\hpsi}
&&\qquad \mbox{weakly in } L^{2}(0,T;\Lt^2(\Omega\times D)), \label{hpsiwconH1x}\\
M\,\frac{\partial \hpsik}{\partial t} &\rightarrow M\,\frac{\partial \hpsi}{\partial t}
&&\qquad \mbox{weakly in } L^{2}(0,T;H^s(\Omega\times D)'), \label{hpsidtwcon}\\
\hpsik & \rightarrow
\hpsi &&\qquad \mbox{strongly in } L^\upsilon(0,T;L^1_M(\Omega\times D)),\label{hpsisconL1}\\
\tautt(M\,\hpsik) & \rightarrow \tautt(M\,\hpsi)
&&\qquad \mbox{strongly in } \Ltt^r(\Omega_T),\label{tausconLr}
\end{alignat}
where $r \in [1,\frac{4(d+2)}{3d+4})$, and,
for a.a.\ $t \in (0,T)$,
\begin{align}\label{Pfatou-app}
&\int_{\Omega \times D} M(\qt)\, \mathcal{F}(\hpsi(\xt,\qt,t))\dq \dx
\leq \liminf_{\kappa \rightarrow 0_+}
\int_{\Omega \times D} M(\qt)\, \mathcal{F}(\hpsik(\xt,\qt,t)) \dq \dx.
\end{align}
\end{subequations}

In addition, we have that
\begin{align}
\vrho := \int_D M\, \hpsi\, \dq \in L^\infty(0,T;L^2(\Omega)) \cap L^2(0,T;H^1(\Omega)),
\label{vrhoreg}
\end{align}
and, as $\kappa \rightarrow 0_+$,
\begin{subequations}
\begin{alignat}{2}
\vrhok &\rightarrow \vrho \qquad
&&\mbox{weakly-$\star$ in } L^\infty(0,T;L^2(\Omega)),
\qquad \mbox{weakly in } L^2(0,T;H^1(\Omega)),
\label{vrhokH1con} \\
\vrhok &\rightarrow \vrho
\qquad &&\mbox{strongly in } L^{\frac{5\varsigma}{3(\varsigma-1)}}(0,T;L^\varsigma(\Omega)),
\label{vrhokL2con}
\end{alignat}
\end{subequations}
for any $\varsigma \in (1,6)$.
\end{lemma}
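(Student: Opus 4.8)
\textbf{Proof proposal for Lemma \ref{hpsiconv}.}

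The plan is to proceed exactly as in the proof of Lemma \ref{hpsikconv}, which established the analogous statement for the passage $\Delta t \rightarrow 0_+$ $(L \rightarrow \infty)$, but now using the $\kappa$-independent \emph{a priori} bounds collected above. The starting point is the energy inequality (\ref{Pkenergy}), which, being uniform in $\kappa$, furnishes: a bound on $\utk$ in $L^2(0,T;\Ht^1_0(\Omega))$ (hence (\ref{uwconH1b}) and the first claim in (\ref{hpsi})); bounds on $M^{1/2}\nabx\sqrt{\hpsik}$ and $M^{1/2}\nabq\sqrt{\hpsik}$ in $L^2(0,T;\Lt^2(\Omega\times D))$ (hence (\ref{hpsiwconH1},b)); a uniform bound on the relative entropy $\int_{\Omega\times D} M\,\mathcal{F}(\hpsik)$ in $L^\infty(0,T;L^1)$; and the bounds on $\vrhok$ recorded in (\ref{vrhokbd}). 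The bound on the time derivative $M\,\partial_t\hpsik$ in $L^2(0,T;H^s(\Omega\times D)')$, $s>1+\tfrac12(K+1)d$, is exactly (\ref{hpsikdtbd}), which itself follows from (\ref{vrhokbd}) and (\ref{Pkenergy}) in the same way (\ref{hpsikaLDdtbd}) was proved. First I would extract, via Banach--Alaoglu, the weak limits $\ut$, $\hpsi$ together with the weak convergences (\ref{hpsiwconH1}--d), (\ref{hpsidtwcon}) and (\ref{vrhokH1con}); the identification of the limits of $M^{1/2}\nabx\sqrt{\hpsik}$ etc. with $M^{1/2}\nabx\sqrt{\hpsi}$ etc. is carried out by the argument indicated in the proof of Lemma 3.3 in \cite{BS2011-fene}.

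Next I would establish the strong convergence (\ref{hpsisconL1}). As in Lemma \ref{convfinal}, this is an application of Dubinski\u{\i}'s compactness theorem (\ref{Dubinskii}) with $\mathfrak{X}=L^1_M(\Omega\times D)$, $\mathfrak{X}_1=M^{-1}H^s(\Omega\times D)'$ and $\mathfrak{M}$ the seminormed set (\ref{calM}); the compact embedding $\mathfrak{M}\hookrightarrow\mathfrak{X}$ and the continuous embedding $\mathfrak{X}\hookrightarrow\mathfrak{X}_1$ are quoted from Section 5 of \cite{BS2011-fene}. This yields (\ref{hpsisconL1}) for $\upsilon=1$, and then for all $\upsilon\in[1,\infty)$ by interpolating against the uniform $L^\infty(0,T;L^1_M)$ bound on $\mathcal{F}(\hpsik)$ (noting $\mathcal{F}(s)\ge[s-\mathrm{e}+1]_+$), exactly as in Lemma \ref{convfinal}. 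The Fatou-type inequality (\ref{Pfatou-app}) follows from (\ref{hpsisconL1}) and Fatou's lemma; the definition (\ref{vrhoreg}) of $\vrho$ and the identification $\vrho=\int_D M\,\hpsi\,\dq$ follow by integrating (\ref{hpsisconL1}) over $D$; and (\ref{vrhokL2con}) follows from (\ref{hpsisconL1}), (\ref{vrhokH1con}), the embedding $H^1(\Omega)\hookrightarrow L^6(\Omega)$ and the interpolation inequality (\ref{eqLinterp}), precisely as (\ref{bvrhokaLDpL2con}) was derived.

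The only substantial step is (\ref{tausconLr}), the strong convergence of the extra-stress tensor. Here I would reproduce verbatim the long localization argument from the proof of Lemma \ref{convfinal}: write $\tautt_1(M\,\varphi)$ in the integrated-by-parts form (\ref{tau1ibp}); for a fixed small $\delta>0$ choose a Lipschitz subdomain $D_0\Subset D$ so that the contribution from $D\setminus D_0$ of the equi-integrable family $\{M\,\nabq\hpsik\}$ is below $\delta/3$ (Dunford--Pettis); bound the remaining interior and boundary terms using $\sup_{D_0}M^{-1}<\infty$, the interior higher-integrability bound on $\hpsik$ obtained by interpolating the $L^\infty(0,T;L^2(\Omega\times D_0))$ and $L^2(0,T;H^1(\Omega\times D_0))$ bounds coming from (\ref{Pkenergy}), the strong $L^\upsilon(0,T;L^1(\Omega\times D_0))$ convergence (\ref{hpsisconL1}), and Auchmuty's sharp trace inequality to handle $\int_{\partial D_0}|\hpsik-\hpsi|^{2-1/s}$. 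This gives $\tautt_1(M\,\hpsik)\to\tautt_1(M\,\hpsi)$ strongly in $L^1(\Omega_T)$; combined with the uniform bound (\ref{tautt1k}) and (\ref{eqLinterp}) one upgrades to strong convergence in $L^r(\Omega_T)$ for all $r\in[1,\tfrac{4(d+2)}{3d+4})$. For the quadratic term $\mathfrak{z}\,\vrhok^2$ one uses (\ref{vrhokL2con}) with $\varsigma=2$ to get $\vrhok^2\to\vrho^2$ strongly in $L^{5/3}(0,T;L^1(\Omega))$, and (\ref{vrhokbd}) with (\ref{eqLinterp}) to upgrade to $L^r(\Omega_T)$ in the same range; adding the two pieces and recalling (\ref{tautot}) yields (\ref{tausconLr}). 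The main obstacle, as in Lemma \ref{convfinal}, is precisely this last step: controlling the boundary term on $\Omega\times\partial D_0$ via the trace inequality requires the interior $W^{1,s}$-bound on $\hpsik$ with $s=\tfrac{(K+1)d+2}{(K+1)d+1}$, which is just barely what the interpolation of the Fisher-information and relative-entropy bounds delivers; everything else is a routine transcription of the corresponding $\kappa$-independent estimates already proved.
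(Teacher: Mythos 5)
Your proposal is correct and follows the paper's approach exactly: the paper's own proof of this lemma is a two-line pointer back to the derivation of Lemma \ref{convfinal}, and you have reproduced (in more detail than the paper bothers to) each step of that argument — Banach--Alaoglu for the weak limits, Dubinski\u{\i}'s compactness theorem for (\ref{hpsisconL1}), Fatou for (\ref{Pfatou-app}), and the localization/Dunford--Pettis/Auchmuty trace argument for (\ref{tausconLr}). One small slip: Lemma \ref{hpsikconv}, which you cite as the template in your first sentence, is the $\alpha \rightarrow 0_+$ passage; the technical content you then describe (Dubinski\u{\i}, the subdomain $D_0$, the trace inequality, etc.) actually lives in the proof of Lemma \ref{convfinal} (the $\Delta t \rightarrow 0_+$ passage), to which Lemma \ref{hpsikconv} also simply refers.
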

\begin{proof}
The convergence result (\ref{uwconH1b}) and the first result in (\ref{hpsi})
follow immediately from the bound on $\utk$ in (\ref{Pkenergy}).
The remainder of the results follow from the bounds on $\hpsik$ and $\vrhok$ in (\ref{Pkenergy})
in the same way as the results of Lemma \ref{convfinal}.
\end{proof}

We have the following analogue of
Lemma \ref{rhokalem}.

\begin{lemma}\label{rhoklem}
Let $\Gamma \geq 8$; then, there exists a $C \in \mathbb R_{>0}$, independent of $\kappa$,
such that, for any $\gamma > \frac{3}{2}$ as in (\ref{pgamma}),
\begin{subequations}
\begin{align}
&\|\rhok\|_{L^\infty(0,T;L^{\gamma}(\Omega))}
+ \|\utk\|_{L^2(0,T;H^1(\Omega))}
+ \kappa^{\frac{1}{\Gamma}}\,\|\rhok\|_{L^\infty(0,T;L^{\Gamma}(\Omega))}
+ \left\|\sqrt{\rhok}\,\utk\right\|_{L^\infty(0,T;L^2(\Omega))}
\nonumber \\
& \qquad
+ \|\rhok\,\utk\|_{L^\infty(0,T;L^{\frac{2\gamma}{\gamma+1}}(\Omega))}
+ \|\rhok\,\utk\|_{L^2(0,T;L^{\frac{6\gamma}{\gamma+6}}(\Omega))}
\nonumber \\
&\qquad
+ \|\rhok\,\utk\|_{L^{\frac{10\gamma-6}{3(\gamma+1)}}(\Omega_T)}
+ \left\|\rhok\,|\utk|^2\right\|_{L^2(0,T;L^{\frac{6\gamma}{4\gamma+3}}(\Omega))}
\leq C,
\label{mtkbd} \\
&
\left\|\frac{\partial\rhok}{\partial t}\right\|_{L^2(0,T;W^{1,6}(\Omega)')}
\leq C.
\label{dtrhok}
\end{align}
\end{subequations}

Hence, there exists a function $\rho \in %L^\infty(0,T
C_w([0,T];L^{\gamma}_{\geq 0}(\Omega))
\cap H^1(0,T;W^{1,6}(\Omega)')$,
and for a further subsequence of the subsequence of Lemma \ref{hpsiconv}, it follows that,
as $\kappa \rightarrow 0_+$,
\begin{subequations}
\begin{alignat}{2}
\rhok &\rightarrow \rho
\qquad
&&%\mbox{weakly-$\star$ in } L^\infty(0,T;L^{\gamma}(\Omega)),
\mbox{in } C_w([0,T];L^{\gamma}(\Omega))
\qquad \mbox{weakly in } H^1(0,T;W^{1,6}(\Omega)'),
\label{rhokwcon}
\\
\rhok &\rightarrow \rho
\qquad
&&\mbox{strongly in } L^2(0,T;H^1(\Omega)'),
\label{rhokscon}
\end{alignat}
and, for any nonnegative $\eta \in C[0,T],$
\begin{align}\label{Pp}
&\int_0^T \left(\int_{\Omega} P(\rho)\,\dx\right) \eta\,\dt
\leq \liminf_{\kappa \rightarrow 0_+}
\int_0^T \left( \int_{\Omega} P(\rhok)\, \dx\right) \eta\,\dt.
\end{align}
\end{subequations}
\end{lemma}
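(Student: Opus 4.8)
The plan is to mimic, at the level of the limiting step $\kappa\to 0_+$, the arguments already used for the passages $\Delta t\to 0_+$ ($L\to\infty$) and $\alpha\to 0_+$, extracting a single subsequence (via Lemmas \ref{hpsikconv} and \ref{rhoklem} and successive diagonal extractions) along which all the required convergences hold. The uniform-in-$\kappa$ bounds all come from the energy inequality \eqref{Pkenergy}: the first bound in \eqref{mtkbd} on $\|\rhok\|_{L^\infty(0,T;L^\gamma(\Omega))}$ follows from the $\int_\Omega P_\kappa(\rhok)$ term in \eqref{Pkenergy} on noting $P_\kappa(s)\geq P(s)=\frac{c_p}{\gamma-1}s^\gamma$ from \eqref{Pdef}; the $\kappa^{1/\Gamma}\|\rhok\|_{L^\infty(0,T;L^\Gamma(\Omega))}$ bound follows from the $\kappa\,\frac{s^\Gamma}{\Gamma-1}$ contribution to $P_\kappa$; the $H^1$-bound on $\utk$ is the $\mu^S c_0\int_0^{t'}\|\utk\|_{H^1(\Omega)}^2$ term; and $\|\sqrt{\rhok}\,\utk\|_{L^\infty(0,T;L^2(\Omega))}$ is the kinetic energy term. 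The remaining momentum-type bounds in \eqref{mtkbd} — on $\rhok\utk$ in the various mixed Lebesgue norms and on $\rhok|\utk|^2$ — follow from these first four exactly as \eqref{mtkaLLnu}, \eqref{mtutkaLD} and \eqref{mtkabd} were derived (interpolation of $\sqrt{\rhok}\in L^\infty(0,T;L^{2\gamma}(\Omega))$ against $\utk\in L^2(0,T;L^6(\Omega))$, then a Gagliardo--Nirenberg interpolation of the $L^\infty(0,T;L^{\frac{2\gamma}{\gamma+1}})$ and $L^2(0,T;L^{\frac{6\gamma}{\gamma+6}})$ bounds to reach the space-time $L^{\frac{10\gamma-6}{3(\gamma+1)}}$ bound, using $\gamma>\frac32$ to guarantee the interpolation exponent lies in $(0,1)$).

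Next I would establish \eqref{dtrhok}. Since $\rhok$ now solves the \emph{pure} continuity equation \eqref{eqrhok} (the $\alpha\Delta_x$ regularization having been removed in Lemma \ref{Pkexistslem}), we have $\frac{\partial\rhok}{\partial t}=-\nabx\cdot(\rhok\utk)$ in the sense of distributions, so for $\wt$ a scalar test function $\langle \partial_t\rhok,\wt\rangle = \int_\Omega \rhok\utk\cdot\nabx\wt\,\dx$; bounding this by $\|\rhok\utk\|_{L^{\frac{6\gamma}{\gamma+6}}(\Omega)}\|\nabx\wt\|_{L^{(\cdot)}(\Omega)}$ and using the sixth bound in \eqref{mtkbd} together with $W^{1,6}(\Omega)\hookrightarrow W^{1,\frac{6\gamma}{5\gamma-6}}(\Omega)$ (valid since $\frac{6\gamma}{5\gamma-6}\leq 6$ for $\gamma\geq\frac{3}{2}$ — here one checks the Sobolev/duality exponent arithmetic) gives the $L^2(0,T;W^{1,6}(\Omega)')$ bound on $\partial_t\rhok$. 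The weak convergence results \eqref{rhokwcon},b then follow from the first bound in \eqref{mtkbd}, the bound \eqref{dtrhok}, and the weak-star/$C_w$ compactness statement \eqref{Cwcoma},b, while the strong convergence \eqref{rhokscon} in $L^2(0,T;H^1(\Omega)')$ follows from the Aubin--Lions--Simon theorem \eqref{compact1} applied with $\mathfrak{X}_0=L^\gamma(\Omega)$ (compactly embedded in $H^1(\Omega)'$ since $\gamma>1$, so $L^\gamma(\Omega)\hookrightarrow\hookrightarrow W^{-1,p}\hookrightarrow H^1(\Omega)'$ for a suitable $p$) and $\mathfrak{X}_1=W^{1,6}(\Omega)'$. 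Finally, \eqref{Pp} follows from \eqref{rhokwcon} and the convexity of $P(\cdot)$ exactly as \eqref{Pkbel}--\eqref{Ppka}: write $P(\rhok)\geq P(\rho)+P'(\rho)(\rhok-\rho)$, multiply by nonnegative $\eta$, integrate, and pass to the limit using $\rhok\rightharpoonup\rho$ in, e.g., $C_w([0,T];L^\gamma(\Omega))$ together with $P'(\rho)\in L^1(0,T;L^{\frac{\gamma}{\gamma-1}}(\Omega))$ which holds by $\rho\in L^\infty(0,T;L^\gamma(\Omega))$ (inherited by weak-star lower semicontinuity from the first bound in \eqref{mtkbd}).

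The main obstacle I anticipate is \emph{not} in this lemma itself — whose proof is essentially a bookkeeping exercise in transcribing the $\alpha\to 0_+$ arguments — but rather lies just ahead in the overall programme: the integrability exponent on the density has now dropped from $\Gamma+1\geq 9$ (available at the $(\mathrm{P}_\kappa)$ level) down to only $\gamma>\frac32$, so the pressure $p(\rhok)=c_p\rhok^\gamma$ is only bounded in $L^\infty(0,T;L^1(\Omega))$ a priori, which is \emph{not} enough to pass to the limit in the momentum equation; one must upgrade this via a Bogovski\u{\i}-type test-function argument (as in Lemma \ref{BogLem}, but now without the $\kappa$-terms to help) to get $\rhok\in L^{\gamma+\theta}(\Omega_T)$ for some $\theta>0$, and then one needs the effective-viscous-flux/renormalization machinery (Lemma \ref{Prop736} and Corollary \ref{Cor736}) to identify the weak limit of $p(\rhok)$ with $p(\rho)$ — and crucially to verify that for $\gamma>\frac32$ one still has the integrability needed to run the Feireisl oscillation-defect-measure argument that controls $\rhok\to\rho$ strongly. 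Within the confines of the present lemma, though, the only genuinely delicate point is checking the chain of Sobolev/interpolation exponent inequalities (the role of $\gamma>\frac32$, analogous to the role of $\Gamma\geq 8$ earlier) so that every interpolation weight lies in $(0,1)$ and every embedding $W^{1,6}(\Omega)\hookrightarrow W^{1,p'}(\Omega)$ used in \eqref{dtrhok} is legitimate; everything else is a direct transcription of the already-established $\alpha\to 0_+$ analysis.
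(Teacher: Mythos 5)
Your proposal follows exactly the route the paper takes: the first four bounds in \eqref{mtkbd} are read off from the $\kappa$-uniform energy inequality \eqref{Pkenergy} (using $P_\kappa(s)\geq P(s)=\frac{c_p}{\gamma-1}s^\gamma$ and the $\kappa s^\Gamma/(\Gamma-1)$ term in $P_\kappa$ for the first and third, the kinetic term for the fourth, and the $\mu^S c_0$-dissipation for the second); the remaining momentum-type bounds are the same interpolations already used for \eqref{mtkaLLnu} and \eqref{mtutkaLD}; the time-derivative bound \eqref{dtrhok} comes from the pure continuity equation \eqref{eqrhok} together with the $L^2(0,T;L^{\frac{6\gamma}{\gamma+6}})$ bound on $\rhok\utk$ (the paper records the exponent check as $\tfrac{6\gamma}{\gamma+6}>\tfrac65$, which is the dual of your $\tfrac{6\gamma}{5\gamma-6}\leq 6$); and the convergences \eqref{rhokwcon}--\eqref{rhokscon} and the lower-semicontinuity statement \eqref{Pp} follow from \eqref{Cwcoma},b, \eqref{compact1} and the convexity of $P$ exactly as in \eqref{Pkbel}. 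Two minor corrections are in order. First, your opening parenthetical refers to ``Lemmas \ref{hpsikconv} and \ref{rhoklem}'' when extracting the subsequence, which is circular (you are proving Lemma \ref{rhoklem}); you mean Lemma \ref{hpsiconv}. Second, and more substantively, you assert that $\gamma>\tfrac32$ ``guarantees the interpolation exponent lies in $(0,1)$'' for the space-time bound $\|\rhok\utk\|_{L^{\frac{10\gamma-6}{3(\gamma+1)}}(\Omega_T)}$. With $r=\tfrac{2\gamma}{\gamma+1}$, $s=\tfrac{6\gamma}{\gamma+6}$, $\upsilon=\tfrac{10\gamma-6}{3(\gamma+1)}$ the requirement $\upsilon\vartheta=2$ forces $\vartheta=\tfrac2\upsilon$, and $\vartheta\in(0,1)$ holds iff $\upsilon>2$ iff $\gamma>3$; equivalently, the hypothesis $r\leq\upsilon\leq s$ of \eqref{eqLinterp} fails for $\gamma\in(\tfrac32,3)$ since $(2\gamma-3)(\gamma-3)<0$ there. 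So the arithmetic check you claim does \emph{not} reduce to $\gamma>\tfrac32$; the naive transcription of the $\Gamma\geq 8$ interpolation gives only $\|\rhok\utk\|_{L^{\min\{\upsilon,s\}}(\Omega_T)}\leq C$ when $\gamma<3$. (The paper's own proof is equally terse here, invoking ``similarly to \eqref{mtkaLLnu},b'' without rechecking the exponent range, so this is not a gap your blind attempt introduced; the weaker bound nevertheless suffices for the weak-convergence statements that use it in Lemma \ref{mtklem}.)
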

\begin{proof}
The first four bounds in (\ref{mtkbd}) follow immediately from
(\ref{Pkenergy}). The last four bounds in (\ref{mtkbd})
follow, similarly to (\ref{mtkaLLnu},b), from the first two
bounds in (\ref{mtkbd}).
The bound (\ref{dtrhok}) follows immediately from (\ref{eqrhok}),
the sixth bound in (\ref{mtkbd}), on noting that $\frac{6\gamma}{\gamma+6} > \frac{6}{5}$
as $\gamma > \frac{3}{2}$.

The convergence results (\ref{rhokwcon},b) follow immediately from
(\ref{mtkbd},b), (\ref{Cwcoma},b) and (\ref{compact1}).
The result (\ref{Pp}) follows, similarly to (\ref{Pkbel}), from (\ref{rhokwcon})
and the convexity of $P$.
\end{proof}

Similarly to (\ref{equtkabd}),
it follows from (\ref{equtk}),
(\ref{mtkbd}), (\ref{vrhokbd}),
(\ref{tautt1k}), (\ref{inidata}), on noting that $\gamma >\frac{3}{2}$,
and (\ref{eqinterp}) that,
for any $\wt \in L^{\infty}(0,T;\Wt_0^{1,r}(\Omega))$ with $r=\max\{\Gamma+1,\upsilon\}$
and $\upsilon =\max\{\frac{3\gamma}{2\gamma-3},\frac{12}{5}\}$,
\begin{align}
&\left|\displaystyle\int_{0}^{T} \left\langle
\frac{\partial (\rhok\,\utk)}{\partial t},  \wt \right\rangle_{W^{1,\Gamma+1}_0(\Omega)}\dt
-\int_{0}^T \int_{\Omega} \pk(\rhok)\,\nabx \cdot \wt
\dx\,\dt
\right| \nonumber \\
& \qquad \leq C\,
\|\rhok\|_{L^\infty(0,T;L^\gamma(\Omega))}\,\|\utk\|_{L^2(0,T;L^6(\Omega))}^2\,
\,\|\wt\|_{L^{\infty}(0,T;W^{1,\frac{3\gamma}{2\gamma-3}}(\Omega))}
\nonumber \\
&\qquad \qquad + C\,
\|\utk\|_{L^2(0,T;H^1(\Omega))}
\,\|\wt\|_{L^{2}(0,T;H^1(\Omega))}
\nonumber \\
& \qquad \qquad +
C\left[\|\tautt_1(M\,\hpsik)\|_{L^{\frac{4}{3}}(0,T;L^{\frac{12}{7}}(\Omega))}
+\|\vrhok\|_{L^2(0,T;L^6(\Omega))}^2 \right]
\|\nabttx\wt\|_{L^\infty(0,T;L^{\frac{12}{5}}(\Omega))}
\nonumber \\
&\qquad \qquad + \|\rhok\|_{L^\infty(0,T;L^\gamma(\Omega))}\,
\|\ft\|_{L^2(0,T;L^\infty(\Omega))}\, \|\wt\|_{L^{2}(0,T;L^3(\Omega))}
\nonumber \\
& \qquad \leq
C\,\|\wt\|_{L^{\infty}(0,T;W^{1,\upsilon}(\Omega))}.
\label{equtkbd}
\end{align}
%where $s=\max\{\frac{12}{5},\frac{3\gamma}{2\gamma-3}\}$.
We deduce from (\ref{equtkbd}) with $\wt = \eta \, \vt$, where $\eta \in C^{\infty}_0(0,T)$
and $\vt \in L^{\infty}(0,T;\Wt^{1,\upsilon}_0(\Omega))\cap
H^1(0,T;\Lt^{\frac{6\gamma}{5\gamma-6}}(\Omega))$
with $r=\max\{\Gamma+1,\upsilon\}$
and
$\upsilon=
\max\{\frac{3\gamma}{2\gamma-3},\frac{12}{5}\}$,
on noting
(\ref{mtkbd}) and (\ref{eqinterp}) as $\frac{2\gamma}{\gamma-1} < 6$
for $\gamma > \frac{3}{2}$, that
\begin{align}
&\left|\displaystyle
\int_{0}^T \eta \int_{\Omega} \pk(\rhok)\,\nabx \cdot \vt\,
\dx\,\dt
\right| \nonumber \\
& \quad \leq
\left|\int_{0}^{T} \int_{\Omega} \rhok\,\utk \cdot\frac{\partial(\eta\,\vt)}{\partial t}\,
\dx\,\dt\right|
+ C\,\|\eta\|_{L^\infty(0,T)}\,\|\vt\|_{L^{\infty}(I;W^{1,\upsilon}(\Omega))}
\nonumber \\
& \quad \leq C\,\|\rhok\,\utk\|_{L^\infty(0,T;L^{\frac{2\gamma}{\gamma+1}}(\Omega))}
\,\left\|\frac{{\rm d}\eta}{{\rm d} t}\right\|_{L^1(0,T)}
\,\|\vt\|_{L^\infty(I;L^{\frac{2\gamma}{\gamma-1}}(\Omega))}
\nonumber \\
& \quad \quad + C\,\|\eta\|_{L^\infty(0,T)}\left[
\|\rhok\,\utk\|_{L^2(0,T;L^{\frac{6\gamma}{\gamma+6}}(\Omega))}
\left\|\frac{\partial \vt}{\partial t}\right\|_{L^2(I;L^{\frac{6\gamma}{5\gamma-6}}(\Omega))}
+ \|\vt\|_{L^{\infty}(I;W^{1,\upsilon}(\Omega))} \right]
\nonumber \\
& \quad \leq C\left[
\left\|\frac{{\rm d}\eta}{{\rm d}t}\right\|_{L^1(0,T)}+
\|\eta\|_{L^\infty(0,T)}\right] \left[\|\vt\|_{L^{\infty}(I;W^{1,\upsilon}(\Omega))}
+
%C\,\|\eta\|_{L^\infty(0,T)}
\left\|\frac{\partial \vt}{\partial t}\right
\|_{L^{2}(I;L^{\frac{6\gamma}{5\gamma-6}}(\Omega))}\right],
%\nonumber \\
\label{equtkbd1}
\end{align}
%where $s=\max\{\frac{12}{5},\frac{3\gamma}{2\gamma-3}\}$ and
%$\upsilon=\max\{s,\frac{2\gamma}{\gamma-1}\}$.
where $I={\rm supp}(\eta) \subset (0,T)$.
With $\upsilon=\upsilon(\gamma)$ thus defined,
let
\begin{align}
\vartheta(\gamma) := \frac{\gamma}{\upsilon(\gamma)} = \left\{ \begin{array}{ll}
\frac{2\gamma - 3}{3} \quad &\mbox{for } \frac{3}{2} < \gamma \leq 4, \\
%\frac{\gamma-1}{2} \quad  &\mbox{for } 3 \leq \gamma \leq 6, \\
\frac{5}{12}\gamma \quad &\mbox{for } 4 \leq \gamma.
\end{array}
\right.
\label{vthetag}
\end{align}
%so that $\upsilon\,\vartheta= \gamma$.
With $\vartheta(\gamma) \in \mathbb{R}_{>0}$ defined as above and $\ell \in \mathbb{N}$,
we now introduce $\mathfrak{b} : \mathbb{R}_{\geq 0}
\rightarrow \mathbb{R}_{\geq 0}$ and $\mathfrak{b}_\ell : \mathbb{R}_{\geq 0}
\rightarrow \mathbb{R}_{\geq 0}$ such that
\begin{align}
\mathfrak{b}(s):= s^\vartheta  \qquad \mbox{and} \qquad
\mathfrak{b}_{\ell}(s):= \left\{ \begin{array}{ll}
\mathfrak{b}(s) \quad &\mbox{for } 0 \leq s \leq \ell,\\
\mathfrak{b}(\ell) \quad &\mbox{for } \ell \leq s.
\end{array} \right.
\label{bbell}
\end{align}
We note from (\ref{bbell}), (\ref{mtkbd}) and (\ref{eqinterp})
that, %$\calBt((I-\mint)\mathfrak{b}_\ell(\rhok)) \in L^\infty(0,T;\Wt^{1,r}_0(\Omega))$,
%$r \in [1,\infty)$, and for,
for $\upsilon(\gamma):=\max\{\frac{3\gamma}{2\gamma-3},%\frac{2\gamma}{\gamma-1},
\frac{12}{5}\}$ as in (\ref{equtkbd1}) and $\vartheta(\gamma)$ as in (\ref{vthetag}),
\red{we have that}
\begin{subequations}
\begin{align}
%\|\calBt((I-\mint)\mathfrak{b}_\ell(\rhok))\|_{L^\infty(0,T;W^{1,\upsilon}(\Omega))}
\|\mathfrak{b}_\ell(\rhok)\|_{L^\infty(0,T;L^{\upsilon}(\Omega))}
&\leq \|\rhok^\vartheta\|_{L^\infty(0,T;L^\upsilon(\Omega))}
\leq \|\rhok\|_{L^\infty(0,T;L^\gamma(\Omega))}^\vartheta \leq C,
\label{Bttheta} \\
\|\mathfrak{b}_\ell(\rhok)\,\utk\|_{L^2(0,T;L^{\frac{6\gamma}{\gamma+6\vartheta}}(\Omega))}
&\leq \|\rhok\|_{L^\infty(0,T;L^\gamma(\Omega))}^\vartheta
\, \|\utk\|_{L^2(0,T;L^6(\Omega))} \leq C,
\label{Btutktheta} \\
\|\mathfrak{b}_\ell(\rhok)\,\nabx \cdot \utk\|_{L^2(0,T;L^{\frac{2\gamma}{\gamma+2\vartheta}}(\Omega))}
&\leq \|\rhok\|_{L^\infty(0,T;L^\gamma(\Omega))}^\vartheta
\, \|\utk\|_{L^2(0,T;H^1(\Omega))} \leq C,
\label{Btdivuttheta}
\end{align}
\end{subequations}
where $C\in {\mathbb R}_{>0}$ is independent of $\kappa, \vartheta$ and $\ell$.

As $\Gamma > 2$, it follows from (\ref{eqrhok}), on extending $\rhok$ and $\utk$ from $\Omega$
to ${\mathbb R}^d$ by zero, that
\begin{align}
\frac{\partial \rhok}{\partial t}
+ \nabx \cdot ( \rhok\,\utk)=0
\qquad \mbox{in } C^\infty_0({\mathbb R}^d \times (0,T))',
\label{prolong}
\end{align}
see Lemmas 6.8 in Novotn\'{y} \& Stra\v{s}kraba \cite{NovStras} \arxiv{(or Lemma \ref{Le-F-1} in Appendix \ref{sec:App-F})}.
Applying Lemma 6.11 in  \cite{NovStras} \arxiv{(or Lemma \ref{Le-F-3} in Appendix \ref{sec:App-F})} to (\ref{prolong}),
we have the renormalised equation, for any $\ell \in {\mathbb N}$,
\begin{align}
\frac{\partial \mathfrak{b}_\ell(\rhok)}{\partial t}
+ \nabx \cdot ( \mathfrak{b}_\ell(\rhok)\,\utk)
+ (\rhok\,(\mathfrak{b}_\ell)_+'(\rhok)-\mathfrak{b}_\ell(\rhok)\,)\,\nabx\cdot \utk=0
\qquad \mbox{in } C^\infty_0({\mathbb R}^d \times (0,T))',
\label{renormcon}
\end{align}
where $(\mathfrak{b}_\ell)_+'(\cdot)$ is the right-derivative
of $\mathfrak{b}_\ell(\cdot)$ satisfying
\begin{align}
(\mathfrak{b}_\ell)_+'(s %\eta(\xt,t)
) = \left\{
\begin{array}{ll}
\mathfrak{b}'(s%\eta(\xt,t)
)
\quad& \mbox{for } 0 \leq s < \ell, \\%(\xt,t) \in \{ (\yt,s) : \eta(\yt,s) \neq \ell\} \\
0 \quad &
\mbox{for } \ell \leq s.
%(\xt,t) \in \{ (\yt,s) : \eta(\yt,s)=\ell\},
\end{array}
\right.
\label{rderiv}
\end{align}
For any $\delta \in (0,\frac{T}{2})$, we now introduce the Friedrichs mollifier,
with respect to the time variable, $\mathcal{S}_\delta : L^1(0,T;L^q(\Omega))
\rightarrow  C^\infty(\delta,T-\delta;L^q(\Omega))$, $q\in [1,\infty]$,
\begin{align}
\mathcal{S}_\delta(\eta)(\xt,t) = \frac{1}{\delta} \int_0^T %{{\mathbb R}}
\omega\left(\frac{t-s}{\delta}\right)\eta(\xt,s)\,{\rm d}s  \qquad \mbox{a.e.\ in } \Omega
\times (\delta,T-\delta),
\label{Sddef}
\end{align}
where $\omega \in C^\infty_0({\mathbb R})$, $\omega \geq 0$, ${\rm supp}(\omega) \subset (-1,1)$
and $\int_{{\mathbb R}} \omega \,{\rm d}s=1$.
It follows from
(\ref{renormcon}) and (\ref{Sddef})  that
\begin{align}
&\frac{\partial {\mathcal S}_\delta(\mathfrak{b}_\ell(\rhok))}{\partial t}
+ \nabx \cdot {\mathcal S}_\delta( \mathfrak{b}_\ell(\rhok)\,\utk)
+ {\mathcal S}_\delta([\rhok\,(\mathfrak{b}_\ell)_+'(\rhok)-\mathfrak{b}_\ell(\rhok)\,]\,\nabx\cdot \utk)=0
\nonumber \\
& \hspace{3.5in}
\qquad \mbox{in } C^\infty_0({\mathbb R}^d \times (\delta,T-\delta))'.
\label{Sdrenormcon}
\end{align}
In addition, it follows from (\ref{Sddef}), (\ref{bbell}),
(\ref{mtkbd}), (\ref{eqinterp}), (\ref{rderiv}) and (\ref{Sdrenormcon}) that
\begin{align}
&{\mathcal S}_\delta( \mathfrak{b}_\ell(\rhok)) \in
C^\infty(\delta,T-\delta;L^\infty({\mathbb R}^d)),
\quad
{\mathcal S}_\delta( \mathfrak{b}_\ell(\rhok)\,\utk) \in C^\infty(\delta,T-\delta;\Lt^6({\mathbb R}^d)),
\nonumber \\
& {\mathcal S}_\delta([\rhok\,(\mathfrak{b}_\ell)_+'(\rhok)-\mathfrak{b}_\ell(\rhok)\,]\,\nabx\cdot \utk),
\;\;\;\;\,
\nabx \cdot [{\mathcal S}_\delta( \mathfrak{b}_\ell(\rhok)\,\utk)]
\in C^\infty(\delta,T-\delta;L^2({\mathbb R}^d)).
\label{Sdreg}
\end{align}
One can deduce from $\utk \in L^2(0,T;\Ht^1_0(\Omega))$ and (\ref{Sdreg}) that
\begin{align}
{\mathcal S}_\delta( \mathfrak{b}_\ell(\rhok)\,\utk) \in C^\infty(\delta,T-\delta;\Et^{6,2}_0(\Omega)),
\label{SdEreg}
\end{align}
where we recall (\ref{Etrs}).
We note from (\ref{Bop1}),  (\ref{Sddef}), (\ref{bbell}) and (\ref{Bttheta})
that $\calBt([{\mathcal S}_\delta(\mathfrak{b}_\ell(\rhok))]) \in
L^\infty(\delta,T-\delta;\Wt^{1,r}_0(\Omega))$,
$r \in [1,\infty)$, and, for $\upsilon(\gamma):=\max\{\frac{3\gamma}{2\gamma-3},
%\frac{2\gamma}{\gamma-1},
\frac{12}{5}\}$ as in (\ref{equtkbd1}),
that
\begin{subequations}
\begin{align}
\|\calBt((I-\mint)[{\mathcal S}_\delta(\mathfrak{b}_\ell(\rhok))])
\|_{L^\infty(\delta,T-\delta;W^{1,\upsilon}(\Omega))}
&\leq C\,\|{\mathcal S}_\delta(\mathfrak{b}_\ell(\rhok))
\|_{L^\infty(\delta,T-\delta;L^\upsilon(\Omega))}
\nonumber \\
&\leq C\,\|\mathfrak{b}_\ell(\rhok)\|_{L^\infty(0,T;L^\upsilon(\Omega))} \leq C,
\label{SdBttheta}
\end{align}
and from (\ref{Sdrenormcon}), Sobolev embedding, (\ref{SdEreg}), (\ref{Bop1},b),
(\ref{Sddef}), (\ref{bbell}), (\ref{rderiv}) and (\ref{Btutktheta},c)
with $\vartheta$ as in (\ref{vthetag})
that
\begin{align}
&\left\|\frac{\partial}{\partial t}\calBt((I-\mint)[{\mathcal S}_\delta(\mathfrak{b}_\ell(\rhok))])\right
\|_{L^{2}(\delta,T-\delta;L^{\frac{6\gamma}{\gamma+6\vartheta}}(\Omega))}
\nonumber \\
& \quad \leq
\|\calBt(\nabx\cdot[{\mathcal S}_\delta(\mathfrak{b}_\ell(\rhok)\,\utk)])
\|_{L^{2}(\delta,T-\delta;L^{\frac{6\gamma}{\gamma+6\vartheta}}(\Omega))}
\nonumber \\
& \qquad \qquad +
\|\calBt((I-\mint)[{\mathcal S}_\delta(
[\rhok\,(\mathfrak{b}_\ell)_+'(\rhok)-\mathfrak{b}_\ell(\rhok)\,]\,\nabx\cdot \utk
)])
\|_{L^{2}(\delta,T-\delta;L^{\frac{6\gamma}{\gamma+6\vartheta}}(\Omega))}
\nonumber \\
& \quad \leq
\|\calBt(\nabx\cdot[{\mathcal S}_\delta(\mathfrak{b}_\ell(\rhok)\,\utk)])
\|_{L^{2}(\delta,T-\delta;L^{\frac{6\gamma}{\gamma+6\vartheta}}(\Omega))}
\nonumber \\
& \qquad \qquad +
\|\calBt((I-\mint)[{\mathcal S}_\delta(
[\rhok\,(\mathfrak{b}_\ell)_+'(\rhok)-\mathfrak{b}_\ell(\rhok)\,]\,\nabx\cdot \utk
)])
\|_{L^{2}(\delta,T-\delta;W^{1,\frac{2\gamma}{\gamma+2\vartheta}}(\Omega))}
\nonumber \\
& \quad \leq
C\left[\|%{\mathcal S}_\delta(
\mathfrak{b}_\ell(\rhok)\,\utk %)
\|_{L^{2}(%\delta,T-\delta
0,T;L^{\frac{6\gamma}{\gamma+6\vartheta}}(\Omega))}
%\nonumber \\ & \qquad \qquad
+\|%{\mathcal S}_\delta(
[\rhok\,(\mathfrak{b}_\ell)_+'(\rhok)-\mathfrak{b}_\ell(\rhok)\,]\,\nabx\cdot \utk
%)
\|_{L^{2}(%\delta,T-\delta
0,T;L^{\frac{2\gamma}{\gamma+2\vartheta}}(\Omega))}
\right]
\nonumber \\
& \quad \leq
C\left[\|%{\mathcal S}_\delta(
\mathfrak{b}_\ell(\rhok)\,\utk %)
\|_{L^{2}(%\delta,T-\delta
0,T;L^{\frac{6\gamma}{\gamma+6\vartheta}}(\Omega))}
%\nonumber \\ & \qquad \qquad
+\|%{\mathcal S}_\delta(
\mathfrak{b}_\ell(\rhok)\,\nabx\cdot \utk
%)
\|_{L^{2}(%\delta,T-\delta
0,T;L^{\frac{2\gamma}{\gamma+2\vartheta}}(\Omega))}
\right]
\leq C,\nonumber \\
\label{dtSdBt}
\end{align}
\end{subequations}
where $C \in {\mathbb R}_{>0}$ in (\ref{SdBttheta},b) is independent of
$\kappa, \vartheta, \ell$ and $\delta$.
We now have the following analogue of Lemma \ref{BogLem}.

\begin{lemma}\label{BogLema}
With $\vartheta(\gamma)$ as defined in (\ref{vthetag}), we have that
\begin{align}
\|\rhok\|_{L^{\gamma+\vartheta}(\Omega_T)} +
\kappa^{\frac{1}{4+\vartheta}}\,\|\rhok\|_{L^{4+\vartheta}(\Omega_T)}
+ \kappa^{\frac{1}{\Gamma+\vartheta}}\,\|\rhok\|_{L^{\Gamma+\vartheta}(\Omega_T)}
\leq C.
\label{Bogpkbd}
\end{align}
\end{lemma}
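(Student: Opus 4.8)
The plan is to prove Lemma \ref{BogLema} by the standard Bogovski\u{\i}-type pressure estimate applied to the momentum equation (\ref{equtk}) for the solution $(\rhok,\utk,\hpsik)$ of $({\rm P}_\kappa)$, using that every term in (\ref{equtk}) other than the pressure term has already been controlled uniformly in $\kappa$ in (\ref{equtkbd1}). Concretely, for $\delta\in(0,\tfrac{T}{2})$, $\ell\in\mathbb{N}$ and a nonnegative $\eta\in C^\infty_0(0,T)$ with $\mathrm{supp}(\eta)\subset(\delta,T-\delta)$, I would take
\[
\vt:=\calBt\big((I-\mint)[\mathcal{S}_\delta(\mathfrak{b}_\ell(\rhok))]\big)
\]
as test function in (\ref{equtkbd1}). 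By the defining property (\ref{Bop}) of $\calBt$ one has $\nabx\cdot\vt=(I-\mint)[\mathcal{S}_\delta(\mathfrak{b}_\ell(\rhok))]$, so the left-hand side of (\ref{equtkbd1}) equals $\int_0^T\eta\int_\Omega \pk(\rhok)\,\mathcal{S}_\delta(\mathfrak{b}_\ell(\rhok))\,\dx\,\dt$ minus the harmless term $\int_0^T\eta\,(\mint[\mathcal{S}_\delta(\mathfrak{b}_\ell(\rhok))])\,(\int_\Omega\pk(\rhok)\,\dx)\,\dt$; the latter is bounded uniformly in $\kappa,\vartheta,\ell,\delta$ because $|\mint[\mathcal{S}_\delta(\mathfrak{b}_\ell(\rhok))]|\le\|\mathfrak{b}_\ell(\rhok)\|_{L^\infty(0,T;L^1(\Omega))}\le C\|\rhok\|_{L^\infty(0,T;L^\gamma(\Omega))}^{\vartheta}$ (using $\vartheta<\gamma$) and $\int_\Omega\pk(\rhok)\,\dx\le C\int_\Omega\Pk(\rhok)\,\dx\le C$ by the energy bound (\ref{Pkenergy}).

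Next I would check the right-hand side of (\ref{equtkbd1}) is bounded uniformly. The bound $\|\vt\|_{L^\infty(0,T;W^{1,\upsilon}(\Omega))}\le C$, with $\upsilon(\gamma):=\max\{\tfrac{3\gamma}{2\gamma-3},\tfrac{12}{5}\}$, follows from (\ref{SdBttheta}), and $\|\partial_t\vt\|_{L^2(\delta,T-\delta;L^{6\gamma/(\gamma+6\vartheta)}(\Omega))}\le C$ from (\ref{dtSdBt}); moreover $\vt\in\Wt^{1,\upsilon}_0(\Omega)$ is built into $\calBt$. Since on the bounded domain $\Omega$ one has $L^{6\gamma/(\gamma+6\vartheta)}(\Omega)\hookrightarrow L^{6\gamma/(5\gamma-6)}(\Omega)$ — which holds precisely because the choice (\ref{vthetag}) satisfies $\vartheta(\gamma)\le\tfrac{2\gamma-3}{3}$ for every $\gamma>\tfrac32$ — the test function $\vt$ lies in the function space required by (\ref{equtkbd1}), which therefore yields $|\int_0^T\eta\int_\Omega\pk(\rhok)\,\mathcal{S}_\delta(\mathfrak{b}_\ell(\rhok))\,\dx\,\dt|\le C[\|\eta\|_{L^\infty(0,T)}+\|\tfrac{{\rm d}\eta}{{\rm d}t}\|_{L^1(0,T)}]$ with $C$ independent of $\kappa,\ell,\delta$.

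It then remains to remove the three regularizations in the order $\delta\to0_+$, then $m\to\infty$, then $\ell\to\infty$. Choosing $\eta=\eta_m$ exactly as in the proof of Lemma \ref{BogLem} (so $\eta_m\in[0,1]$, $\eta_m\nearrow1$ on $(0,T)$, $\|\tfrac{{\rm d}\eta_m}{{\rm d}t}\|_{L^1(0,T)}\le4$) gives, for each fixed $m$ and $\ell$ and all $\delta$ small, $|\int_0^T\eta_m\int_\Omega\pk(\rhok)\,\mathcal{S}_\delta(\mathfrak{b}_\ell(\rhok))\,\dx\,\dt|\le C$. Since $\mathcal{S}_\delta(\mathfrak{b}_\ell(\rhok))$ is bounded by $\ell^{\vartheta}$ uniformly in $\delta$ and converges a.e.\ on $\Omega_T$ to $\mathfrak{b}_\ell(\rhok)$ as $\delta\to0_+$, while $\pk(\rhok)\in L^1(\Omega_T)$ by (\ref{Pkenergy}), Lebesgue's dominated convergence theorem and then $\eta_m\to1$ give $\int_{\Omega_T}\pk(\rhok)\,\mathfrak{b}_\ell(\rhok)\,\dx\,\dt\le C$, uniformly in $\ell$. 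Finally $\mathfrak{b}_\ell(\rhok)\nearrow\rhok^{\vartheta}$ pointwise as $\ell\to\infty$ and $\pk(\rhok)\ge0$, so the monotone convergence theorem gives $\int_{\Omega_T}\pk(\rhok)\,\rhok^{\vartheta}\,\dx\,\dt\le C$; recalling (\ref{pkdef}) one has $\pk(\rhok)\,\rhok^{\vartheta}\ge c_p\,\rhok^{\gamma+\vartheta}+\kappa\,\rhok^{4+\vartheta}+\kappa\,\rhok^{\Gamma+\vartheta}$, which yields $\|\rhok\|_{L^{\gamma+\vartheta}(\Omega_T)}\le C$, $\kappa\,\|\rhok\|_{L^{4+\vartheta}(\Omega_T)}^{4+\vartheta}\le C$ and $\kappa\,\|\rhok\|_{L^{\Gamma+\vartheta}(\Omega_T)}^{\Gamma+\vartheta}\le C$, i.e.\ (\ref{Bogpkbd}).

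I expect the main technical obstacle to be the uniform-in-$\delta$-and-$\ell$ estimate (\ref{dtSdBt}) on $\partial_t\vt$: this is where the renormalized continuity equation (\ref{Sdrenormcon}) for $\mathcal{S}_\delta(\mathfrak{b}_\ell(\rhok))$ is essential, as it lets one write $\partial_t\calBt((I-\mint)[\mathcal{S}_\delta(\mathfrak{b}_\ell(\rhok))])$ as $-\calBt(\nabx\cdot[\mathcal{S}_\delta(\mathfrak{b}_\ell(\rhok)\,\utk)])$ plus a lower-order term, to be bounded by the truncation-uniform estimates (\ref{Btutktheta}), (\ref{Btdivuttheta}) and the mapping property (\ref{Bop2}). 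The second delicate point is purely bookkeeping of the exponent: one must verify that (\ref{vthetag}) is the sharpest choice of $\vartheta(\gamma)$ consistent both with $\rhok^{\vartheta}\in L^\infty(0,T;L^{\gamma/\vartheta}(\Omega))$, which comes from $\rhok\in L^\infty(0,T;L^\gamma(\Omega))$, and with the temporal integrability of $\partial_t\vt$ demanded by (\ref{equtkbd1}); the two requirements coincide exactly when $\vartheta(\gamma)=\tfrac{2\gamma-3}{3}$, i.e.\ for $\tfrac32<\gamma\le4$, and are slack for $\gamma\ge4$.
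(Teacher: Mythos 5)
Your proposal is correct and follows essentially the same route as the paper's proof: you test \eqref{equtkbd1} with $\vt=\calBt\big((I-\mint)[\mathcal{S}_\delta(\mathfrak{b}_\ell(\rhok))]\big)$, use \eqref{SdBttheta}--\eqref{dtSdBt} together with the embedding $L^{6\gamma/(\gamma+6\vartheta)}(\Omega)\hookrightarrow L^{6\gamma/(5\gamma-6)}(\Omega)$ (valid since $\vartheta\leq\frac{2\gamma-3}{3}$) to show the test function is admissible, absorb the mean-value term via the energy bound on $\int_\Omega\Pk(\rhok)\dx$, and then remove the regularizations. The only cosmetic difference is that you decouple the time-cutoff index of $\eta$ from $\ell$ and pass to the limits by dominated convergence and then monotone convergence, whereas the paper ties the two indices together ($\eta=\eta_\ell$ with support in $(\tfrac1\ell,T-\tfrac1\ell)$) and invokes Fatou's lemma after $\delta\to 0_+$.
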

\begin{proof}
For any $\ell \in {\mathbb N}$ and $\delta \in (0,\frac{T}{2})$, we
choose $\vt = \calBt((I-\mint)[{\mathcal S}_\delta(\mathfrak{b}_\ell(\rhok))])
\in L^\infty(\delta,T-\delta,\Wt^{1,r}_0(\Omega))$, any $r \in [1,\infty)$,
and $\eta \in C^\infty_0(0,T)$, with ${\rm supp}(\eta) \subset (\delta,T-\delta)$,
in (\ref{equtkbd1}) to obtain, on noting (\ref{SdBttheta},b), $\frac{6\gamma}{5\gamma-6}
\leq \frac{6\gamma}{\gamma+6\vartheta}$ as $\vartheta \leq \frac{2 \gamma}{3}-1$,
(\ref{mtkbd}) and (\ref{pkdef}), that
\begin{align}
&\left|\displaystyle
\int_{0}^T \eta \int_{\Omega} \pk(\rhok)\,{\mathcal S}_\delta(\mathfrak{b}_\ell(\rhok))
\dx\,\dt
\right| \nonumber \\
& \quad \leq C\left[
\left\|\frac{{\rm d}\eta}{{\rm d}t}\right\|_{L^1(0,T)}+
\|\eta\|_{L^\infty(0,T)}\right]
%\|\calBt((I-\mint)[{\mathcal S}_\delta(\mathfrak{b}_\ell(\rhok))])
%\|_{L^{\infty}(\delta, T-\delta;W^{1,\upsilon}(\Omega))}
%\nonumber \\
%& \qquad \qquad +
%C\,\|\eta\|_{L^\infty(0,T)}
%\left\|\frac{\partial}{\partial t}\calBt((I-\mint)[{\mathcal S}_\delta(\mathfrak{b}_\ell(\rhok))])\right
%\|_{L^{2}(\delta,T-\delta;L^{\frac{6\gamma}{5\gamma-6}}(\Omega))}
%\nonumber \\
%& \qquad \qquad
\left[1+ %\|\eta\|_{L^\infty(0,T)}\,
\|\pk(\rhok)\|_{L^1(\Omega_T)}
\,\|\mint {\mathcal S}_\delta(\mathfrak{b}_\ell(\rhok))\|_{L^\infty(\delta,T-\delta)} \right]
\nonumber \\
& \quad \leq C\left[
\left\|\frac{{\rm d}\eta}{{\rm d}t}\right\|_{L^1(0,T)}+
\|\eta\|_{L^\infty(0,T)}\right].
%\nonumber \\
%& \qquad \qquad +
%C\,\|\eta\|_{L^\infty(0,T)}
%\left\|\frac{\partial}{\partial t}\calBt((I-\mint)[{\mathcal S}_\delta(\mathfrak{b}_\ell(\rhok))])\right
%\|_{L^{1}(\delta,T-\delta;W^{1,\frac{2\gamma}{\gamma-1}}(\Omega))}.
\label{equtkbd2}
\end{align}

We now consider (\ref{equtkbd2}) with $\eta=\eta_{\ell} \in C^\infty_0(0,T)$ with
supp$(\eta_{\ell}) \subset (\frac{1}{\ell},T-\frac{1}{\ell})$,
$\ell \in {\mathbb N}$ with $\ell > \frac{4}{T}$, where
$\eta_{\ell} \in [0,1]$ with $\eta_\ell(t)=1$ for $t \in [\frac{2}{\ell},T-\frac{2}{\ell}]$
and $\|\frac{{\rm d}\eta_\ell}{{\rm d} t}\|_{L^\infty(0,T)} \leq 2\ell$ yielding
$\|\frac{{\rm d}\eta_\ell}{{\rm d} t}\|_{L^1(0,T)} \leq 4$.
For a fixed $\ell$, we now let $\delta \rightarrow 0$ in (\ref{equtkbd2})
and using the standard convergence properties of mollifiers we obtain that
\begin{align}
&\left|
\displaystyle
\int_{0}^T \eta_\ell \int_{\Omega} \pk(\rhok)\,\mathfrak{b}_\ell(\rhok)
\dx\,\dt
\right|
\leq C,
\label{equtkbd3}
\end{align}
where $C \in \mathbb{R}$ is independent of $\ell$ and  $\kappa$.
Letting $\ell \rightarrow \infty$ in (\ref{equtkbd3}), and noting that
$\eta_\ell \rightarrow 1$ pointwise in $(0,T)$, $\mathfrak{b}_\ell(\rhok)
\rightarrow \mathfrak{b}(\rhok)=\rhok^\vartheta$ pointwise in $\Omega_T$
and Fatou's lemma, we obtain that
\begin{align}
&%\left|
\displaystyle
\int_{0}^T \int_{\Omega} \pk(\rhok)\,\rhok^\vartheta
\dx\,\dt
%\right|
\leq C,
\label{equtkbd4}
\end{align}
where $C \in \mathbb{R}$ is independent of $\kappa$.
Hence the desired result (\ref{Bogpkbd}) follows from (\ref{equtkbd4}),
(\ref{pkdef}) and (\ref{pgamma}).
\end{proof}

%Next, we set
%$\upsilon = \frac{10\gamma -6}{3(\gamma+1)} > \frac{6}{5}$, which appears in Lemma \ref{rhoklem}.
Similarly to (\ref{equtkbd}),
it follows from (\ref{equtk}),
(\ref{mtkbd}), (\ref{vrhokbd}),
(\ref{tautt1k})
and (\ref{inidata}), on noting $\gamma >\frac{3}{2}$, that,
for any $\wt \in L^{\Gamma+1}(0,T;\Wt_0^{1,\upsilon}(\Omega))$ with
$\upsilon=\max\{\Gamma+1,\frac{6\gamma}{2\gamma-3}\}$,
\begin{align}
&\left|\displaystyle\int_{0}^{T} \left\langle
\frac{\partial (\rhok\,\utk)}{\partial t},  \wt \right\rangle_{W^{1,\Gamma+1}_0(\Omega)}\dt
-\int_{0}^T \int_{\Omega} \pk(\rhok)\,\nabx \cdot \wt
\dx\,\dt
\right| \nonumber \\
& \quad \leq C\left[
\|\rhok\,|\utk|^2\|_{L^{2}(0,T;L^{\frac{6\gamma}{4\gamma+3}}(\Omega))} +
\|\utk\|_{L^2(0,T;H^1(\Omega))}
\right]
\,\|\wt\|_{L^{2}(0,T;W^{1,\frac{6\gamma}{2\gamma-3}}(\Omega))}
\nonumber \\
& \qquad +
C\left[\|\tautt_1(M\,\hpsik)\|_{L^2(0,T;L^{\frac{4}{3}}(\Omega))}\,
+\|\vrhok\|_{L^4(0,T;L^{\frac{2d}{d-1}}(\Omega))}^2 \right]
\|\nabxtt\wt\|_{L^2(0,T;L^4(\Omega))}
\nonumber \\
&\qquad + \|\rhok\|_{L^\infty(0,T;L^\gamma(\Omega))}\,
\|\ft\|_{L^2(0,T;L^\infty(\Omega))}\, \|\wt\|_{L^{2}(0,T;L^3(\Omega))}
\nonumber \\
& \quad \leq
C\,\|\wt\|_{L^{2}(0,T;W^{1,s}(\Omega))},
\label{equtkbda}
\end{align}
where $s=\max\{4,\frac{6\gamma}{2\gamma-3}\}$.
We now have the following analogue of Lemma
 \ref{mtkalem}.

\begin{lemma}\label{mtklem}
There exists a $C \in \mathbb R_{>0}$, independent of $\kappa$, such that
\begin{align}
\left\|\frac{\partial(\rhok\,\utk)}{\partial t}
\right\|_{L^{\frac{\Gamma+\vartheta}{\Gamma}}(0,T;W^{1,r}_0(\Omega)')} \leq C,
\label{dtmtk}
\end{align}
where %$r = \min\{\frac{17}{12},\frac{5\gamma-3}{3\gamma}\}$
$\vartheta(\gamma)$ is defined as in (\ref{vthetag}),
$r= \max\{s,\frac{\Gamma+\vartheta}{\vartheta}
\}$ and
$s= \max\{4,\frac{6\gamma}{2\gamma-3}\}$.

Hence, for a further subsequence of the subsequence of Lemma \ref{rhoklem}, it follows that,
as $\kappa \rightarrow 0_+$,
\begin{subequations}
\begin{alignat}{3}
\rhok\,\utk &\rightarrow \rho\,\ut
\quad
&&\mbox{weakly in } \Lt^{\frac{10\gamma-6}{3(\gamma+1)}}(\Omega_T),
\quad &&\mbox{weakly in } W^{1,\frac{\Gamma+\vartheta}{\Gamma}}(0,T;\Wt^{1,r}_0(\Omega)'),
\label{mtkwcon}
\\
\rhok\,\utk &\rightarrow \rho\,\ut
\quad
&&\mbox{in } C_w([0,T];\Lt^{\frac{2\gamma}{\gamma+1}}(\Omega)), \quad
&&\mbox{strongly in } L^2(0,T;\Ht^1(\Omega)'),
\label{mtkscona} \\
\rhok\,\utk &\rightarrow \rho\,\ut
\quad
&&\mbox{weakly in } L^2(0,T;\Lt^{\frac{6\gamma}{\gamma+6}}(\Omega)),
\label{mtkscon} \\
\rhok\,\utk \otimes \utk &\rightarrow \rho\,\ut \otimes \ut
\quad
&&\mbox{weakly in } L^2(0,T;\Ltt^{\frac{6\gamma}{4\gamma+3}}(\Omega)),
\label{mtkutkwcon}  \\
\rhok &\rightarrow \rho
&&\mbox{weakly in } L^{{\gamma+\vartheta}}(\Omega_T),
\label{rhokconp} \\
\rhok^\gamma &\rightarrow \overline{\rho^\gamma}
\quad
&&\mbox{weakly in } L^{\frac{\gamma+\vartheta}{\gamma}}(\Omega_T),
\label{pkkcon} \\
\kappa\,(\rhok^4+\rhok^\Gamma) &\rightarrow 0
\quad
&&\mbox{weakly in } L^{\frac{\Gamma+\vartheta}{\Gamma}}(\Omega_T),
\label{pkkconzero}
\end{alignat}
\end{subequations}
where $\overline{\rho^\gamma} \in L^{\frac{\gamma+\vartheta}{\gamma}}_{\geq 0}(\Omega_T)$
remains to be identified.
\end{lemma}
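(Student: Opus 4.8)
\textbf{Proof proposal for Lemma \ref{mtklem}.} The plan is to proceed exactly as in the proof of Lemma \ref{mtkalem}, transferring the estimates one $\kappa$-level down, with the key new ingredient being the improved integrability of $\rho_\kappa$ supplied by the Bogovski\u{\i}-operator argument of Lemma \ref{BogLema}. First I would establish the time-derivative bound \eqref{dtmtk}. Combining \eqref{equtkbda} with \eqref{Bogpkbd}, which gives $\|p_\kappa(\rho_\kappa)\|_{L^{(\gamma+\vartheta)/\gamma}(\Omega_T)} \leq C$ (noting $p_\kappa(\rho_\kappa) = c_p\rho_\kappa^\gamma + \kappa(\rho_\kappa^4 + \rho_\kappa^\Gamma)$ and that each of $\gamma+\vartheta$, $4+\vartheta$, $\Gamma+\vartheta$ exceeds the relevant exponent after multiplication by the $\kappa$-weights), one obtains that the functional $\wt \mapsto \langle \partial_t(\rho_\kappa\utk),\wt\rangle$ is bounded on $L^{(\Gamma+\vartheta)/\gamma \cdot '}$-in-time, $W^{1,r}_0(\Omega)$-in-space, with $r$ chosen large enough to absorb both the $W^{1,s}$ requirement coming from the pressure term in \eqref{equtkbda} and the $W^{1,(\Gamma+\vartheta)/\vartheta}$ requirement coming from pairing $\rho_\kappa^\gamma$ (now controlled in $L^{(\gamma+\vartheta)/\gamma}$) against $\nabx\cdot\wt$; a short duality computation, identical in structure to \eqref{dtmtkapr}, then yields \eqref{dtmtk}.

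Next I would extract the convergence results \eqref{mtkwcon}--\eqref{pkkconzero}. The first part of \eqref{mtkwcon} and \eqref{mtkscon} follow immediately from the uniform bounds in \eqref{mtkbd}; the second part of \eqref{mtkwcon}, together with the strong convergence \eqref{mtkscona} and the $C_w$-in-time convergence, follow from \eqref{mtkbd}, \eqref{dtmtk}, the Aubin--Lions--Simon compactness theorem \eqref{compact1}, the result \eqref{Cwcoma}--\eqref{Cwcomb} applied with the $L^{2\gamma/(\gamma+1)}(\Omega)$ bound, the strong convergence $\rho_\kappa \to \rho$ in $L^2(0,T;H^1(\Omega)')$ from \eqref{rhokscon}, and the weak $H^1$ convergence \eqref{uwconH1b} — exactly as in the proof of Lemma \ref{mtkaLDconlem}/Lemma \ref{mtkalem}, with $L^\Gamma$ replaced by $L^\gamma$ and $L^{\Gamma+1}$ by $L^{\gamma+\vartheta}$. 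The identification of the weak limit as $\rho\,\ut$ uses \eqref{rhokwcon}, \eqref{uwconH1b} and \eqref{mtkscon}. Result \eqref{mtkutkwcon} then follows from \eqref{mtkscona} and \eqref{uwconH1b} together with the uniform bound in \eqref{mtkbd}, as in \eqref{mtutkaLDpmcon}. Finally \eqref{rhokconp}--\eqref{pkkconzero} are direct consequences of \eqref{Bogpkbd}: weak precompactness of $\{\rho_\kappa\}$ in $L^{\gamma+\vartheta}(\Omega_T)$, of $\{\rho_\kappa^\gamma\}$ in $L^{(\gamma+\vartheta)/\gamma}(\Omega_T)$, and — since $\kappa^{1/(4+\vartheta)}\|\rho_\kappa\|_{L^{4+\vartheta}} \leq C$ and $\kappa^{1/(\Gamma+\vartheta)}\|\rho_\kappa\|_{L^{\Gamma+\vartheta}} \leq C$ imply $\|\kappa\rho_\kappa^4\|_{L^{(4+\vartheta)/4}} \leq \kappa^{\vartheta/(4+\vartheta)} C \to 0$ and likewise for the $\rho_\kappa^\Gamma$ term — strong (hence weak) convergence of the $\kappa$-regularization terms to $0$ in $L^{(\Gamma+\vartheta)/\Gamma}(\Omega_T)$.

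The main obstacle — and the reason this lemma is nontrivial rather than a verbatim copy of Lemma \ref{mtkalem} — is bookkeeping the exponents so that everything is consistent: one must check that $\vartheta(\gamma)$ as defined in \eqref{vthetag} is precisely the exponent for which the Bogovski\u{\i} test function $\calBt((I-\mint)[\mathcal{S}_\delta(\mathfrak{b}_\ell(\rho_\kappa))])$ is an \emph{admissible} test function in the momentum identity \eqref{equtkbd1} (this is why the construction in \eqref{bbell}--\eqref{dtSdBt} was set up with that particular $\vartheta$), and that the resulting pressure integrability $\gamma + \vartheta$ is simultaneously enough to (i) pass to the limit in the convective term $\rho_\kappa\utk\otimes\utk$ via \eqref{mtkbd}, which forces $\gamma > \tfrac32$, and (ii) make $W^{1,r}_0(\Omega)'$ with $r = \max\{s, (\Gamma+\vartheta)/\vartheta\}$ a space into which $L^{(\gamma+\vartheta)/\gamma}(\Omega) \hookrightarrow$ the relevant negative Sobolev space. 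Since $\gamma > \tfrac32$ is exactly the standing hypothesis \eqref{pgamma}, and since all the constituent bounds \eqref{Bogpkbd}, \eqref{mtkbd}, \eqref{tautt1k}, \eqref{vrhokbd} are already in hand, no genuinely new estimate is needed; the work is entirely in assembling these with the correct H\"older exponents, and I would present it by mirroring the displayed chains \eqref{dtmtkapr} and \eqref{mtkaLwcon}--\eqref{pkkacon} with the substitutions indicated above.
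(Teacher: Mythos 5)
Your proposal follows the same route as the paper: the time-derivative bound \eqref{dtmtk} comes from combining \eqref{equtkbda} with the pressure integrability furnished by \eqref{Bogpkbd}, and the convergences \eqref{mtkwcon}--\eqref{pkkconzero} are then extracted as in Lemma \ref{mtkalem}, with \eqref{rhokconp}--\eqref{pkkconzero} immediate from \eqref{Bogpkbd} and \eqref{pkdef}. One exponent slip worth correcting: \eqref{Bogpkbd} yields a $\kappa$-uniform bound on $\pk(\rhok)$ only in $L^{(\Gamma+\vartheta)/\Gamma}(\Omega_T)$, not in the stronger $L^{(\gamma+\vartheta)/\gamma}(\Omega_T)$ you assert, since placing $\kappa\rhok^\Gamma$ in $L^{(\gamma+\vartheta)/\gamma}$ would require $\rhok \in L^{\Gamma(\gamma+\vartheta)/\gamma}$ with $\Gamma(\gamma+\vartheta)/\gamma > \Gamma+\vartheta$ whenever $\gamma < \Gamma$; your choice $W^{1,(\Gamma+\vartheta)/\vartheta}$ for the test space is nevertheless the H\"older conjugate of $L^{(\Gamma+\vartheta)/\Gamma}$, so the argument still goes through.
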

\begin{proof}
It follows from (\ref{equtkbda}), (\ref{pkdef}) and (\ref{Bogpkbd}) that,
for all $\wt \in L^{\frac{\Gamma+\vartheta}{\vartheta}}(0,T;\Wt^{1,r}_0(\Omega))$,
\begin{align}
&\left|\displaystyle\int_{0}^{T} \left\langle
\frac{\partial (\rhok\,\utk)}{\partial t},  \wt \right\rangle_{W^{1,r}_0(\Omega)}\dt
\right| \nonumber \\
& \hspace{1in} \leq
C\,\|\wt\|_{L^{2}(0,T;W^{1,s}(\Omega))}
+ \|\pk(\rhok)\|_{L^{\frac{\Gamma+\vartheta}{\Gamma}}(\Omega_T)}
\,\|\wt\|_{L^{\frac{\Gamma+\vartheta}{\vartheta}}(0,T;
W^{1,\frac{\Gamma+\vartheta}{\vartheta}}(\Omega))}
\nonumber \\
& \hspace{1in} \leq
C\,\|\wt\|_{L^{\frac{\Gamma+\vartheta}{\vartheta}}(0,T;
W^{1,r}(\Omega))},
\label{equtkbde}
\end{align}
where we have noted from (\ref{vthetag}) that $
\frac{\Gamma+\vartheta}{\vartheta} \geq \frac{\gamma+\vartheta}{\vartheta} \geq 2$.
The desired result (\ref{dtmtk}) then follows from (\ref{equtkbde}).
%on noting from(\ref{vthetag}) that $r=\frac{\gamma+\vartheta}{\gamma}$.

The results (\ref{mtkwcon}--d) follow similarly to (\ref{mtkaLwcon}--e)
from (\ref{mtkbd}), (\ref{dtmtk}), (\ref{Cwcoma},b), (\ref{compact1}), (\ref{rhokscon}),
and (\ref{uwconH1b}).
The results (\ref{rhokconp}--g) follow immediately from
(\ref{Bogpkbd}) and (\ref{pkdef}).
\end{proof}

We now have the following analogue of Lemma \ref{Pkexistslem}.

\begin{lemma}
\label{Pexistslem}
The triple $(\rho,\ut,\hpsi)$,
defined as in Lemmas \ref{hpsiconv} and \ref{rhoklem},
satisfies
\begin{subequations}
\begin{align}
&\displaystyle\int_{0}^{T}\left\langle \frac{\partial \rho}{\partial t}\,,\eta
\right\rangle_{W^{1,6}(\Omega)} \dd t
- \int_0^T \int_\Omega \rho \,\ut
\cdot \nabx \eta
\,\dx \,\dt =0
\qquad
\forall \eta \in L^2(0,T;W^{1,6}(\Omega)),
\label{eqrho}
\end{align}
with $\rho(\cdot,0) = \rho_0(\cdot)$,
\begin{align}
&\displaystyle\int_{0}^{T} \left \langle
\frac{\partial (\rho\,\ut)}{\partial t}, \wt \right \rangle_{W^{1,r}_0(\Omega)}
\dt
+
\displaystyle\int_{0}^{T}\!\! \int_\Omega
\left[
\Stt(\ut) - \rho\,\ut \otimes \ut - c_p\,\overline{\rho^\gamma}\,\Itt\right]
:\nabxtt \wt
\,\dx\, \dt
\nonumber \\
&\quad =\int_{0}^T
 \int_{\Omega} \left[ \rho\,
\ft \cdot \wt
-\left(\tautt_1 (M\,\hpsi) - \mathfrak{z}\,\vrho^2\,\Itt\right)
: \nabxtt
\wt \right] \dx \, \dt
%\nonumber \\
%& \hspace{3.3in}
\quad
\forall \wt \in L^{\frac{\gamma+\vartheta}{\vartheta}}(0,T;\Wt^{1,r}_0(\Omega)),
\label{equt}
\end{align}
with $(\rho\,\ut)(\cdot,0) = (\rho_0\,\ut_0)(\cdot)$,
$\vartheta(\gamma)$ defined as in (\ref{vthetag}) and
$r= \max\{4,\frac{6\gamma}{2\gamma-3}\}$, and
\begin{align}
\label{eqhpsi}
&\int_{0}^T \left \langle
M\,\frac{ \partial \hpsi}{\partial t},
\varphi \right \rangle_{H^s(\Omega \times D)} \dt
+
\frac{1}{4\,\lambda}
\,\sum_{i=1}^K
 \,\sum_{j=1}^K A_{ij}
\int_{0}^T \!\!\int_{\Omega \times D}
M\,
 \nabqj \hpsi
\cdot\, \nabqi
\varphi\,
\dq \,\dx \,\dt
\nonumber \\
& \qquad
+ \int_{0}^T \!\!\int_{\Omega \times D} M \left[
\epsilon\,
\nabx \hpsi
- \ut\,\hpsi \right]\cdot\, \nabx
\varphi
\,\dq \,\dx \,\dt
\nonumber \\
&
\qquad
- \int_{0}^T \!\!\int_{\Omega \times D} M\,\sum_{i=1}^K
\left[\sigtt(\ut)
\,\qt_i\right]
\hpsi \,\cdot\, \nabqi
\varphi
\,\dq \,\dx\, \dt = 0
%\nonumber \\ & \hspace{3.4in}
\quad
\forall \varphi \in L^2(0,T;H^s(\Omega \times D)),
\end{align}
\end{subequations}
with $\hpsi(\cdot,0) = \hpsi_0(\cdot)$  and $s > 1 + \frac{1}{2}(K+1)d$.

In addition, the triple $(\rho,\ut,\hpsi)$
satisfies, for
a.a.\ $t' \in (0,T)$,
{\color{black}
\begin{align}\label{Penergy}
&\frac{1}{2}\,\displaystyle
\int_{\Omega} \rho(t')
\,|\ut(t')|^2 \,\dx
+ \int_{\Omega} P(\rho(t'))  \dx
+k\,\int_{\Omega \times D} M\,
\mathcal{F}(\hpsi(t')) \dq \,\dx
\nonumber\\
&\quad
+  \mu^S c_0\, \int_0^{t'} \|\ut\|_{H^1(\Omega)}^2 \dt
+ k\,\int_0^{t'} \int_{\Omega \times D}
M\,
\left[\frac{a_0}{2\lambda}\,
\left|\nabq \sqrt{\hpsi} \right|^2
+ 2\varepsilon\, \left|\nabx \sqrt{\hpsi} \right|^2
\right]
\,\dq \,\dx\, \dt
\nonumber \\
& \quad + \mathfrak{z}\,\|\vrho(t')\|_{L^2(\Omega)}^2
+ 2\, \mathfrak{z}\, \varepsilon\,\int_0^{t'} \|\nabx \vrho\|_{L^2(\Omega)}^2 \dt
\nonumber\\
&\leq {\rm e}^{t'}\biggl[
\frac{1}{2}\,\displaystyle
\int_{\Omega} \rho_0\,|\ut_{0}|^2 \dx
+ \int_{\Omega} P(\rho_0)  \dx
+k\,\int_{\Omega \times D} M\, %\zeta(\rho_0)\,
\mathcal{F}(\hpsi_0)
\dq \,\dx
\nonumber\\
&\hspace{1in}
+
\mathfrak{z}\,\int_{\Omega} \left(\int_D M \,\hpsi_0\dq\right)^2 \dx
%|\Omega|
+
\frac{1}{2}  \int_{0}^{t'} \|\ft\|_{L^\infty(\Omega)}^2 \dt \int_\Omega \rho_0 \dx
\biggr].
\end{align}
}
\end{lemma}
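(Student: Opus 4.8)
The plan is to obtain Lemma \ref{Pexistslem} by passing to the limit $\kappa \rightarrow 0_+$ in the corresponding statements of Lemma \ref{Pkexistslem} (equations \eqref{eqrhok}, \eqref{equtkfinal}, \eqref{eqhpsik}, \eqref{Pkenergy}), using the convergence results collected in Lemmas \ref{hpsiconv}, \ref{rhoklem} and \ref{mtklem}. The structure of the argument is entirely parallel to the proof of Lemma \ref{Pkexistslem} (passage $\alpha \rightarrow 0_+$) and to Theorem \ref{5-convfinal} (passage $\Delta t \rightarrow 0_+$), so most steps are routine once the right compactness ingredients are in place.

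First I would treat the continuity equation \eqref{eqrhok}. Passing to the limit $\kappa \rightarrow 0_+$ in \eqref{eqrhok} using the weak convergence $\rhok \rightarrow \rho$ in $H^1(0,T;W^{1,6}(\Omega)')$ and $\rhok \rightarrow \rho$ in $C_w([0,T];L^\gamma(\Omega))$ from \eqref{rhokwcon}, and $\rhok\,\utk \rightarrow \rho\,\ut$ weakly in $L^2(0,T;\Lt^{\frac{6\gamma}{\gamma+6}}(\Omega))$ from \eqref{mtkscon}, together with $\nabx \cdot \ut_k \rightarrow \nabx\cdot\ut$ weakly in $L^2(\Omega_T)$, yields \eqref{eqrho}; the initial datum is attained in $C_w([0,T];L^\gamma(\Omega))$ by the standard argument (the weak continuity in time plus $\rho^0 \rightarrow \rho_0$ strongly in $L^p(\Omega)$, recall \eqref{rho0convLp}, which in this section is no longer needed since $\rho_0$ itself is the initial datum of the $(\kappa)$-problem). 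For the momentum equation, I would pass to the limit in \eqref{equtkfinal} using \eqref{uwconH1b}, \eqref{mtkwcon}, \eqref{mtkscona}, \eqref{mtkutkwcon}, \eqref{pkkcon}, \eqref{pkkconzero}, \eqref{tausconLr} and \eqref{vrhokL2con}, obtaining \eqref{equt} with the as-yet-unidentified weak pressure limit $\overline{\rho^\gamma}$ in place of $\rho^\gamma$; attainment of the initial momentum follows in $C_w([0,T];\Lt^{\frac{2\gamma}{\gamma+1}}(\Omega))$ exactly as in the proof of Theorem \ref{5-convfinal}, via Lemma \ref{lemma-strauss}(a) with $\mathfrak{X}:=\Lt^{\frac{2\gamma}{\gamma+1}}(\Omega)$ and $\mathfrak{Y}:=\Wt^{1,r}_0(\Omega)'$. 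The Fokker--Planck equation \eqref{eqhpsi} is obtained by passing to the limit in \eqref{eqhpsik} in precisely the manner of the proof of \eqref{eqhpsika} in Theorem \ref{5-convfinal}: splitting each nonlinear term into a piece controlled by the strong convergence $\hpsik \rightarrow \hpsi$ in $L^\upsilon(0,T;L^1_M(\Omega\times D))$ (from \eqref{hpsisconL1}) and a piece handled by the weak convergences \eqref{hpsiwconH1x}, \eqref{uwconH1b}, using \eqref{vrhoreg} to ensure integrability of the limiting coefficients. The attainment $\hpsi(\cdot,0)=\hpsi_0$ in $C_w([0,T];L^1_M(\Omega\times D))$ is via Lemma \ref{lemma-strauss}(b) with the Orlicz-space predual, exactly as before.

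The energy inequality \eqref{Penergy} follows from \eqref{Pkenergy}, first established for a.a.\ $t'$ after multiplication by a nonnegative $\eta \in C^\infty_0(0,T)$ and integration, by using \eqref{mtkutkwcon}, \eqref{Pp} (for the pressure potential), \eqref{Pfatou-app} (for the relative entropy), weak lower semicontinuity via \eqref{uwconH1b}, \eqref{hpsiwconH1}, \eqref{hpsiwconH1x}, \eqref{vrhokH1con}, and then the du Bois-Reymond argument \eqref{weaknneg}; here, crucially, no $\kappa$-dependent terms appear on the left-hand side after passing to the $\liminf$, since the $\kappa(\rho^4+\rho^\Gamma)$ contributions are nonnegative and may simply be dropped. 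The main obstacle — just as in the passage $\alpha \rightarrow 0_+$ treated in Section \ref{sec:Pkappa} — is the identification $\overline{\rho^\gamma}=\rho^\gamma$, i.e.\ strong convergence of $\{\rhok\}$ in $L^1(\Omega_T)$. This requires the effective-viscous-flux compactness lemma (Lemma \ref{Prop736}, or rather its variant Corollary \ref{Cor736}, since here the forcing $f_\kappa$ in the regularized continuity equation vanishes as there is no $\alpha$-diffusion term), combined with a renormalization/monotonicity argument. The additional difficulty compared with Section \ref{sec:Pkappa} is that the a priori bound on $\rhok$ is only $L^{\gamma+\vartheta}(\Omega_T)$ with the small exponent gain $\vartheta(\gamma)$ from \eqref{vthetag} (obtained via the Bogovski\u{\i}-operator estimate of Lemma \ref{BogLema}), rather than $L^{\Gamma+1}(\Omega_T)$; this is why the test-function classes carry the exponents $\frac{\gamma+\vartheta}{\vartheta}$ and one must work with the truncated renormalizing functions $\mathfrak{b}_\ell$, letting $\ell\rightarrow\infty$ with Fatou's lemma. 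Once $\overline{\rho^\gamma}=\rho^\gamma$ is established, I would conclude, via Lemma 3.34 in \cite{NovStras}, that $\rhok(t)\rightarrow\rho(t)$ strongly in $L^1(\Omega)$ for a.a.\ $t$, hence $\rhok\rightarrow\rho$ strongly in $L^r(\Omega_T)$ for $r<\gamma+\vartheta$ by \eqref{eqLinterp} and \eqref{Bogpkbd}, which upgrades \eqref{equt} to the final weak momentum equation with $c_p\,\rho^\gamma$ in place of $c_p\,\overline{\rho^\gamma}$, completing the proof of existence of a global weak solution to problem (P).
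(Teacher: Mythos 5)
Your passage-to-the-limit arguments for the continuity equation \eqref{eqrho}, the momentum equation \eqref{equt} (with the as-yet-unidentified $\overline{\rho^\gamma}$ in place of $\rho^\gamma$), the Fokker--Planck equation \eqref{eqhpsi}, and the energy inequality \eqref{Penergy} match the paper's proof of Lemma \ref{Pexistslem} in approach and in the convergence results invoked, so this part is correct. Two remarks, one on scope and one on a detail within the material that goes beyond the lemma. On scope: the statement of Lemma \ref{Pexistslem} itself contains $c_p\,\overline{\rho^\gamma}$ in the momentum equation, not $c_p\,\rho^\gamma$, so the identification $\overline{\rho^\gamma}=\rho^\gamma$ via the effective-viscous-flux argument, the renormalization with the truncations $\mathfrak{t}_\ell$ and $\mathfrak{L}_\ell$, and the passage $\ell \rightarrow \infty$ are not part of the proof of this lemma; they constitute the paper's subsequent discussion culminating in Theorem \ref{Pexistslemfinal}. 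On the detail: Corollary \ref{Cor736} is invoked in that subsequent discussion not because the forcing in the continuity equation ``vanishes'' once the $\alpha$-diffusion has been removed, but because the \emph{renormalized} continuity equation \eqref{renormcont} carries the nonzero source $f_\kappa = -\bigl(\rhok\,(\mathfrak{t}_\ell)'_+(\rhok)-\mathfrak{t}_\ell(\rhok)\bigr)\nabx\cdot\utk$, which lies in $L^2(\Omega_T)$ but not in $L^2(0,T;H^1(\Omega)')$ together with the $\undertilde{\mathcal{A}}$-convergence required by Lemma \ref{Prop736}. Neither point affects the correctness of your proof of Lemma \ref{Pexistslem}.
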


\begin{proof}
Passing to the limit $\kappa \rightarrow 0_+$ for the subsequence of Lemma \ref{mtklem}
in (\ref{eqrhok}) yields (\ref{eqrho}) subject to the stated initial condition,
on noting (\ref{rhokwcon}),
(\ref{mtkscon}) and that  $\frac{6\gamma}{\gamma+6} > \frac{6}{5}$ as $\gamma > \frac{3}{2}$.

Similarly to the proof of (\ref{equtka}), passing to the limit $\kappa \rightarrow 0_+$
for the subsequence of Lemma \ref{mtklem} in (\ref{equtkfinal}) for any
$\wt \in C^\infty_0(\Omega_T)$
yields (\ref{equt}) for any
$\wt \in C^\infty_0(\Omega_T)$ subject to the stated initial condition,
on noting
(\ref{uwconH1b}), (\ref{rhokwcon}), (\ref{mtkwcon},b,d,f,g),
(\ref{tausconLr}) and (\ref{vrhokL2con}).
The desired result (\ref{equt}) for any
$\wt \in L^{\frac{\gamma+\vartheta}{\vartheta}}(0,T;\Wt^{1,r}_0(\Omega))$
then follows from (\ref{equtkbda}), (\ref{pkkcon})
and noting from (\ref{vthetag}) that $r \geq \frac{\gamma+\vartheta}{\vartheta} \geq 2$.
Similarly to the proof of (\ref{eqhpsika}), passing to the limit $\kappa \rightarrow 0_+$
for the subsequence of Lemma \ref{mtklem}
in (\ref{eqhpsik}) yields (\ref{eqhpsi}) \red{subject to the stated} initial condition, on noting
(\ref{uwconH1b}), (\ref{hpsiwconH1}--d),
(\ref{vrhoreg})
and (\ref{eqinterp}).
Similarly to the proof of (\ref{5-eq:energyest}),
we deduce (\ref{Penergy}) from (\ref{Pkenergy})
using the results  (\ref{mtkutkwcon}), (\ref{Pp}), (\ref{hpsiwconH1},b,f),
(\ref{uwconH1b}) and (\ref{vrhokH1con}).
\end{proof}

We need to identify $\overline{\rho^\gamma}$ in (\ref{equt}) and (\ref{pkkcon}).
Similarly to (\ref{bbell}),
with $\ell \in \mathbb{N}$,
we now introduce $\mathfrak{t} : \mathbb{R}_{\geq 0}
\rightarrow \mathbb{R}_{\geq 0}$ and $\mathfrak{t}_\ell : \mathbb{R}_{\geq 0}
\rightarrow \mathbb{R}_{\geq 0}$ such that
\begin{align}
\mathfrak{t}(s):= s  \qquad \mbox{and} \qquad
\mathfrak{t}_{\ell}(s):= \left\{ \begin{array}{ll}
\mathfrak{t}(s) \quad &\mbox{for } 0 \leq s \leq \ell,\\
\mathfrak{t}(\ell) \quad &\mbox{for } \ell \leq s.
\end{array} \right.
\label{ttell}
\end{align}
Then, similarly to (\ref{renormcon}), we have the renormalised equation, for any
$\ell \in \mathbb{N}$,
\begin{align}
\frac{\partial \mathfrak{t}_\ell(\rhok)}{\partial t}
+ \nabx \cdot ( \mathfrak{t}_\ell(\rhok)\,\utk)
+ (\rhok\,(\mathfrak{t}_\ell)_+'(\rhok)-\mathfrak{t}_\ell(\rhok)\,)\,\nabx\cdot \utk=0
\qquad \mbox{in } C^\infty_0({\mathbb R}^d \times (0,T))',
\label{renormcont}
\end{align}
where $(\mathfrak{t}_\ell)_+'(\cdot)$ is defined similarly to (\ref{rderiv}).
It follows from (\ref{ttell}),  (\ref{Pkenergy}) and (\ref{renormcont}) that,
for any fixed $\ell \in {\mathbb N}$,
\begin{align}
\|\mathfrak{t}_\ell(\rhok)\|_{L^\infty(\Omega_T)} +
\|
(\rhok\,({\mathfrak t}_{\ell})'_+(\rhok)-{\mathfrak t}_{\ell}(\rhok))
\,\nabx\cdot\utk\|_{L^2(\Omega_T)}+
\left\|\frac{\partial \mathfrak{t}_\ell(\rhok)}{\partial t}
\right\|_{L^2(0,T;H^1(\Omega)')}
\leq C(\ell).
\label{ttellbd}
\end{align}

In order to identify $\overline{\rho^\gamma}$ in (\ref{equt}) and (\ref{pkkcon}),
we now apply Corollary \ref{Cor736} with
(\ref{gnpde},b) being (\ref{renormcont}) and (\ref{equtkfinal})
so that $\mu=\frac{\mu^S}{2}$ and $\lambda= \mu^B-\frac{\mu^S}{d}$,
$g_n=\mathfrak{t}_{\ell}(\rhok)$ for a fixed $\ell \in {\mathbb N}$,
$\ut_n=\utk$, $\mt_n = \rhok\,\utk$, $p_n=\pk(\rhok)$,
$\tautt_n = \tautt(M\,\hpsik)$,
$f_n = -(\rhok\,({\mathfrak t}_{\ell})'_+(\rhok)-{\mathfrak t}_{\ell}(\rhok))
\,\nabx\cdot\utk$ and
$\Ft_n = \rhok\,\ft$.
With $\{(\rhok,\utk,\hpsik)\}_{\kappa>0}$ being the subsequence (not indicated)
of Lemma \ref{mtklem}, we have that
(\ref{gncon}--d) hold with
$g=\overline{\mathfrak{t}_{\ell}(\rho)}$,
$\ut=\ut$, $\mt = \rho\,\ut$ and $p=c_p\,\overline{\rho^\gamma}$,
and $q<\infty$, $\omega=\infty$, $z=\frac{2\gamma}{\gamma+1}$ and
$r=\frac{\Gamma+\vartheta}{\Gamma}$ on recalling
(\ref{ttellbd}), (\ref{Cwcoma},b), (\ref{uwconH1b}), (\ref{mtkscona}) and (\ref{pkkcon},g).
We note that
$z=\frac{2\gamma}{\gamma+1} > \frac{6}{5}$ as $\gamma >\frac{3}{2}$.
Hence, the constraints on
$q,r,\omega$ and $z$ hold.
The results (\ref{tauncon},h) hold
with $\tautt = \tautt(M\,\hpsi)$, $\Ft= \rho\,\ft$ and $s=\gamma+\vartheta$,
on recalling  (\ref{tausconLr}) and (\ref{rhokconp}),
and  noting that $\frac{4(d+2)}{3d+4} \geq \frac{q}{q-1}$.
Finally, the result (\ref{fnconbcor}) holds with
$f=-\overline{(\rho\,({\mathfrak t}_{\ell})'_+(\rho)-{\mathfrak t}_{\ell}(\rho))
\,\nabx\cdot\ut}$
on recalling (\ref{ttellbd}).
%and the properties of
%$\undertilde{{\mathcal A}}$ %and $\underdtilde{{\mathcal R}}$,
%on noting that $\frac{z}{z-1} = \frac{2\gamma}{\gamma-1} < 6$.
%\footnote{I don't see how (\ref{fnconb}) holds for the stated $f_n$ and $f$.}
%see (7.9.21) in \cite{NovStras} for details.
Hence, we obtain from (\ref{EFVd}) for the subsequence of Lemma \ref{mtklem} that,
for any fixed $\ell \in \mathbb{N}$
and for all $\zeta \in C^\infty_0(\Omega)$ and $\eta \in C^\infty_0(0,T)$,
\begin{align}
&\lim_{\kappa \rightarrow 0_+}
\int_0^T \!\! \!\eta \left(\int_{\Omega} \zeta \,\mathfrak{t}_\ell(\rhok)\,
[\pk(\rhok)-\mu^\star \, \nabx\cdot \utk]
\dx \right) \dt
\nonumber \\
& \hspace{2.5in}
=
\int_0^T \!\! \!\eta \left(\int_{\Omega} \zeta \,\overline{\mathfrak{t}_{\ell}(\rho)}\,
[c_p\,\overline{\rho^\gamma}-\mu^\star\, \nabx\cdot \ut]
\dx \right) \dt,
\label{EVF2k}
\end{align}
where $\mu^\star:=\frac{(d-1)}{d}\,\mu^S+\mu^B$.

We deduce from (\ref{EVF2k}), (\ref{ttell}), (\ref{pkdef}), (\ref{pgamma}), (\ref{pkkcon},g)
and (\ref{uwconH1b}) that, for any fixed $\ell \in \mathbb{N}$,
\begin{align}
c_p \left[ \overline{\mathfrak{t}_{\ell}(\rho)\,\rho^\gamma}
- \overline{\mathfrak{t}_{\ell}(\rho)}\,\overline{\rho^\gamma}
\right] = \mu^\star
\left[
\overline{\mathfrak{t}_{\ell}(\rho)\,\nabx\cdot \ut}
- \overline{\mathfrak{t}_{\ell}(\rho)}\,\nabx \cdot \ut
\right] \qquad \mbox{a.e. in } \Omega_T,
\label{EVF3k}
\end{align}
where, as $\kappa \rightarrow 0_+$,
\begin{subequations}
\begin{alignat}{2}
\mathfrak{t}_{\ell}(\rhok)\,\rhok^\gamma &\rightarrow
\overline{\mathfrak{t}_{\ell}(\rho)\,\rho^\gamma} \qquad &&\mbox{weakly in }
L^{\frac{\gamma+\vartheta}{\gamma}}(\Omega_T),
\label{EVF4kb}\\
 \qquad \mbox{and}
\qquad
\mathfrak{t}_{\ell}(\rhok)\,\nabx \cdot \utk &\rightarrow
\overline{\mathfrak{t}_{\ell}(\rho)\,\nabx \cdot \ut} \qquad &&\mbox{weakly in }
L^{2}(\Omega_T).
\label{EVF4kc}
\end{alignat}
\end{subequations}
We can now follow the discussion in Sections 7.10.2--7.10.5 in
Novotn\'{y} \& Stra\v{s}kraba \cite{NovStras} to deduce that
$\overline{p(\rho)}=p(\rho)$. For the benefit of the reader,
we briefly outline the argument. First, it follows from (\ref{pkkcon}) and (\ref{EVF4kb})
that, for any fixed $\ell \in \mathbb{N}$,
\begin{align}
&\int_{\Omega_T}
\left[ \overline{\mathfrak{t}_{\ell}(\rho)\,\rho^\gamma}
- \overline{\mathfrak{t}_{\ell}(\rho)}\,\overline{\rho^\gamma}
\right] \dx \,\dt \nonumber \\
& \qquad = \lim_{\kappa \rightarrow 0_+} \left[
\int_{\Omega_T} (\mathfrak{t}_{\ell}(\rhok)-
\mathfrak{t}_{\ell}(\rho))\,(\rhok^\gamma -\rho^\gamma)\, \dx \,\dt
+
\int_{\Omega_T} (\mathfrak{t}_{\ell}(\rho)-
\overline{\mathfrak{t}_{\ell}(\rho)})\,(\overline{\rho^\gamma} -\rho^\gamma)\, \dx \,\dt
\right]
\nonumber \\
& \qquad \geq \limsup_{\kappa \rightarrow 0_+}
\int_{\Omega_T} |\mathfrak{t}_{\ell}(\rhok)-
\mathfrak{t}_{\ell}(\rho)|^{\gamma+1} \dx \,\dt,
\label{EVF5k}
\end{align}
where we have noted that the second term on the second line is nonnegative
as $\mathfrak{t}_\ell(s)$ is concave and $s^\gamma$ is convex for $s \in [0,\infty)$.

We now introduce $\mathfrak{L}_\ell : {\mathbb R}_{\geq 0} \rightarrow  {\mathbb R}_{\geq 0}$,
$\ell \in {\mathbb N}$, such that $s\,\mathfrak{L}_\ell'(s)-\mathfrak{L}_\ell(s)=\mathfrak{t}_\ell(s)$
for all $s \in [0,\infty)$, so that
\begin{align}
\mathfrak{L}_{\ell}(s):= \left\{ \begin{array}{ll}
\mathfrak{L}(s):=s\,{\rm log}\, s \quad &\mbox{for } 0 \leq s \leq \ell,\\
s\,{\rm log}\, \ell + s-\ell \quad &\mbox{for } \ell \leq s.
\end{array} \right.
\label{LLell}
\end{align}
Similarly to (\ref{lnrhok}) and (\ref{renormcon}),
one deduces, via renormalization, from (\ref{eqrho}) and (\ref{eqrhok}) that, for any fixed
$\ell \in \mathbb{N}$ and
for any $t' \in (0,T]$,
\begin{subequations}
\begin{align}
\int_{\Omega} \left[\mathfrak{L}_{\ell}(\rho)(t') -\mathfrak{L}_{\ell}(\rho_0)\right] \,\dx &= -
\int_0^{t'} \int_\Omega \mathfrak{t}_\ell(\rho)\,\nabx\cdot \ut \,\dx\,\dt,
\label{lnrhokb} \\
\int_{\Omega} \left[\mathfrak{L}_{\ell}(\rhok)(t') -\mathfrak{L}_{\ell}(\rho_0)\right] \,\dx &= -
\int_0^{t'} \int_\Omega \mathfrak{t}_\ell(\rhok)\,\nabx\cdot \utk \,\dx\,\dt.
\label{lnrhokc}
\end{align}
\end{subequations}
Although establishing (\ref{lnrhokc}) is straightforward as $\rhok \in
C_w([0,T;L^\Gamma_{\geq 0}(\Omega))$, establishing (\ref{lnrhokb}) is not,
since $\rho \in
C_w([0,T;L^\gamma_{\geq 0}(\Omega))$, and so $\rho$ may not be in $L^2(\Omega_T)$
as $\gamma > \frac{3}{2}$.
Nevertheless, (\ref{lnrhokb}) can still be established, see Lemma 7.57 in
\cite{NovStras}. We note that our $\vartheta(\gamma)$, recall (\ref{vthetag}),
differs from the $\vartheta(\gamma)$ in \cite{NovStras} for $\gamma \geq 4$,
due to the presence of the extra stress term in the momentum equation for our polymer
model. However, as $\rho \in L^2(\Omega_T)$ for $\gamma \geq 4$, Lemma 7.57 in
\cite{NovStras} is not required for such $\gamma$.
%=\frac{2\gamma-3}{\gamma}$ for all
%$\gamma > \frac{3}{2}$
Subtracting (\ref{lnrhokb}) from (\ref{lnrhokc}),
and passing to the limit $\kappa \rightarrow 0_+$,
one deduces from
 (\ref{EVF4kc}) that,
for any fixed $\ell \in \mathbb{N}$ and
for any $t' \in (0,T]$,
\begin{align}
\int_{\Omega}
\left[\overline{\mathfrak{L}_{\ell}(\rho)(t')} -\mathfrak{L}_{\ell}(\rho)(t')\right] \,\dx & =
\int_0^{t'} \int_\Omega \left[
\mathfrak{t}_\ell(\rho)\,\nabx \cdot \ut - \overline{\mathfrak{t}_\ell(\rho)\, \nabx \cdot \ut}
\right] \dx \,\dt,
\label{EVF2m}
\end{align}
where, on noting (\ref{rhokwcon}) and the convexity of $\mathfrak{L}_\ell$,
\begin{align}
\mathfrak{L}_{\ell}(\rhok)(t') \rightarrow  \overline{\mathfrak{L}_{\ell}(\rho)(t')}
\geq \mathfrak{L}_{\ell}(\rho)(t')
\qquad \mbox{weakly in } L^{\gamma}(\Omega), \qquad %\mbox{for any } r \in [1,\Gamma),
\mbox{as $\kappa \rightarrow 0_+$}.
\label{rhoklncon}
\end{align}
It follows from (\ref{EVF5k}), (\ref{EVF3k}), (\ref{EVF2m}), (\ref{rhoklncon})
and (\ref{hpsi}) that,
for any fixed $\ell \in \mathbb{N}$,
\begin{align}
\limsup_{\kappa \rightarrow 0_+}
\|\mathfrak{t}_{\ell}(\rho)-
\mathfrak{t}_{\ell}(\rhok)\|^{\gamma+1}_{L^{\gamma+1}(\Omega_T)}
&\leq \frac{\mu^\star}{c_p}
\int_{\Omega_T}
\left[
\overline{\mathfrak{t}_{\ell}(\rho)\,\nabx\cdot \ut}
- \overline{\mathfrak{t}_{\ell}(\rho)}\,\nabx \cdot \ut
\right] \dx \dt
\nonumber \\
&\leq \frac{\mu^\star}{c_p}
\int_{\Omega_T}
\left[\mathfrak{t}_{\ell}(\rho)
- \overline{\mathfrak{t}_{\ell}(\rho)}
\right] \,\nabx \cdot \ut \,\dx \dt
\nonumber \\
&\leq C\,
\|\mathfrak{t}_{\ell}(\rho)
- \overline{\mathfrak{t}_{\ell}(\rho)}\|_{L^2(\Omega_T)}
\nonumber \\
&\leq C\,\limsup_{\kappa \rightarrow 0_+}
\|\mathfrak{t}_{\ell}(\rho)
- \mathfrak{t}_{\ell}(\rhok)\|_{L^2(\Omega_T)}
\nonumber \\
&\leq C\,\limsup_{\kappa \rightarrow 0_+}
\|\mathfrak{t}_{\ell}(\rho)
- \mathfrak{t}_{\ell}(\rhok)\|_{L^{\gamma+1}(\Omega_T)},
\label{ttellk}
\end{align}
where $C\in {\mathbb R}_{>0}$ is independent of $\ell$ and $\kappa$.
It is easily deduced from (\ref{Bogpkbd}), (\ref{rhokconp}) and (\ref{ttell}) that,
for all $\ell \in \mathbb N$, $\kappa >0$ and $r \in [1,\gamma+\vartheta)$,
\begin{align}
\|\rhok-\mathfrak{t}_\ell(\rhok)\|_{L^r(\Omega_T)}
+\|\rho-\mathfrak{t}_\ell(\rho)\|_{L^r(\Omega_T)} +
\|\rho-\overline{\mathfrak{t}_\ell(\rho)}\|_{L^r(\Omega_T)}
\leq C\,\ell^{1-\frac{\gamma+\vartheta}{r}},
\label{telllbd}
\end{align}
where $C\in {\mathbb R}_{>0}$ is independent of $\ell$ and $\kappa$.
It follows from (\ref{ttellk}), (\ref{telllbd}) and (\ref{eqLinterp}) that
\begin{align}
\lim_{\ell \rightarrow \infty} \limsup_{\kappa \rightarrow 0_+}
\|\mathfrak{t}_{\ell}(\rho)-
\mathfrak{t}_{\ell}(\rhok)\|_{L^{\gamma+1}(\Omega_T)} =0.
\label{limsupell}
\end{align}
It immediately follows from (\ref{telllbd}), (\ref{limsupell}),
%(\ref{rhokawcon}), (\ref{pkaiden}),
(\ref{Bogpkbd}),
(\ref{eqLinterp}) and (\ref{pkkcon}),
on possibly extracting a further subsequence (not indicated),
that, as $\kappa \rightarrow 0_+$,
\begin{subequations}
\begin{alignat}{3}
\rhok &\rightarrow \rho \qquad &&\mbox{strongly in } L^s(\Omega_T),
\qquad &&\mbox{for any } s \in [1,\gamma+\vartheta(\gamma)),
\label{rhokscon1}\\
%\end{align}
%and hence that
%\begin{align}
\rhok^\gamma &\rightarrow \rho^\gamma \qquad &&\mbox{weakly in } L^{\frac{\gamma+\theta}{\gamma}}(\Omega_T),
%\overline{\rhok^\gamma}=\rho^\gamma.
\qquad &&\mbox{that is, } \overline{\rho^\gamma}=\rho^\gamma.
\label{pkiden1}
\end{alignat}
\end{subequations}

Finally, we have the analogue of Theorem \ref{Pkexistslemfinal}.

\begin{theorem}
\label{Pexistslemfinal}
The triple $(\rho,\ut,\hpsi)$,
defined as in Lemmas \ref{hpsiconv} and \ref{rhoklem},
is a global weak solution to problem (P),
in the sense that
(\ref{eqrho},c), with their initial conditions, hold and
\begin{align}
&\displaystyle\int_{0}^{T} \left \langle
\frac{\partial (\rho\,\ut)}{\partial t}, \wt \right \rangle_{W^{1,r}_0(\Omega)}
\!\!\!\dt
+
\displaystyle\int_{0}^{T}\!\! \int_\Omega
\left[
\Stt(\ut) - \rho\,\ut \otimes \ut
-c_p\,\rho^\gamma\,\Itt
\right]
:\nabxtt \wt
\,\dx\, \dt
\nonumber \\
&\qquad =\int_{0}^T
 \int_{\Omega} \left[ \rho\,
\ft \cdot \wt
-\left(\tautt_1 (M\,\hpsi) - \mathfrak{z}\,\vrho^2\,\Itt\right)
: \nabxtt
\wt \right] \dx \, \dt
\nonumber \\
& \hspace{3.3in}
\quad
\forall \wt \in L^{\frac{\gamma+\vartheta}{\vartheta}}(0,T;\Wt^{1,r}_0(\Omega)),
\label{equtfinal}
\end{align}
with $(\rho\,\ut)(\cdot,0) = (\rho_0\,\ut_0)(\cdot)$,
$\vartheta(\gamma)$ defined as in (\ref{vthetag}) and
$r= \max\{4,\frac{6\gamma}{2\gamma-3}\}$.
In addition, the weak solution $(\rho,\ut,\hpsi)$
satisfies (\ref{Penergy}).
\end{theorem}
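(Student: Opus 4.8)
The plan is to assemble Theorem \ref{Pexistslemfinal} from the results already accumulated in Section \ref{sec:P}. By the time we reach this theorem, Lemma \ref{Pexistslem} has already established \eqref{eqrho}, \eqref{eqhpsi}, their initial conditions, and the energy inequality \eqref{Penergy}, as well as the momentum equation \eqref{equt} in the form that features the as-yet-unidentified weak limit $\overline{\rho^\gamma}$. Moreover, the work between Lemma \ref{Pexistslem} and the theorem statement---culminating in \eqref{pkiden1}---has shown that $\overline{\rho^\gamma} = \rho^\gamma$ and that $\rhok \rightarrow \rho$ strongly in $L^s(\Omega_T)$ for $s \in [1,\gamma+\vartheta(\gamma))$. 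So the proof is essentially a bookkeeping exercise: quote \eqref{eqrho}, \eqref{eqhpsi} and their initial data directly from Lemma \ref{Pexistslem}; quote \eqref{Penergy} directly; and obtain \eqref{equtfinal} from \eqref{equt} simply by substituting $\overline{\rho^\gamma}=\rho^\gamma$ via \eqref{pkiden1}.

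Concretely, I would first note that \eqref{eqrho} and \eqref{eqhpsi} together with the attainment of the initial conditions $\rho(\cdot,0)=\rho_0$, $(\rho\,\ut)(\cdot,0)=(\rho_0\,\ut_0)$ and $\hpsi(\cdot,\cdot,0)=\hpsi_0$ are exactly the content of Lemma \ref{Pexistslem}, so those parts of the statement require no further argument. Next, I would invoke \eqref{equt} from Lemma \ref{Pexistslem}, which reads identically to \eqref{equtfinal} except that the pressure term is $c_p\,\overline{\rho^\gamma}\,\Itt$ rather than $c_p\,\rho^\gamma\,\Itt$; applying \eqref{pkiden1}, i.e. $\overline{\rho^\gamma}=\rho^\gamma$, immediately upgrades \eqref{equt} to \eqref{equtfinal}. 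Finally, the energy inequality \eqref{Penergy} has already been derived in Lemma \ref{Pexistslem}, so it carries over verbatim. One should also remark in passing that $\rho^\gamma \in L^{\frac{\gamma+\vartheta}{\gamma}}(\Omega_T)$ by \eqref{Bogpkbd}, so the pressure term in \eqref{equtfinal} is well-defined against test functions in $L^{\frac{\gamma+\vartheta}{\vartheta}}(0,T;\Wt^{1,r}_0(\Omega))$, which is consistent with the function spaces appearing in \eqref{equt}.

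Since essentially all the analytic work has been done before the theorem is stated, there is no genuine obstacle left \emph{at the level of the theorem itself}. If one were to point to the single hardest ingredient on which the theorem rests, it would be the identification of the pressure limit, $\overline{\rho^\gamma}=\rho^\gamma$, carried out in the paragraphs leading to \eqref{pkiden1}: this relies on the modified effective viscous flux identity \eqref{EFVd} (Lemma \ref{Prop736} and Corollary \ref{Cor736}), the renormalized continuity equation \eqref{renormcont}, the truncation functions $\mathfrak{t}_\ell$, the oscillation-defect estimate \eqref{ttellk}--\eqref{limsupell}, and the extra integrability \eqref{Bogpkbd} for $\rho$ coming from the Bogovski\u{\i} operator test function in Lemma \ref{BogLema}; the presence of the polymeric extra-stress tensor $\tautt_1(M\,\hpsi)$ and the quadratic interaction term $\mathfrak{z}\,\vrho^2\,\Itt$ forces the modification of the classical Lions--Feireisl argument and the choice of the exponent $\vartheta(\gamma)$ in \eqref{vthetag}. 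But within the proof of Theorem \ref{Pexistslemfinal} proper, the only step is the substitution $\overline{\rho^\gamma}\mapsto\rho^\gamma$ into \eqref{equt}, which is immediate.
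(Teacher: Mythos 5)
Your proposal is correct and matches the paper's own proof of Theorem \ref{Pexistslemfinal} essentially verbatim: the paper likewise cites Lemma \ref{Pexistslem} for \eqref{eqrho}, \eqref{eqhpsi}, the initial data, and \eqref{Penergy}, and obtains \eqref{equtfinal} by substituting \eqref{pkiden1}, i.e. $\overline{\rho^\gamma}=\rho^\gamma$, into \eqref{equt}. Your additional remarks on where the genuine analytic content lies (the effective-viscous-flux identity, the truncated renormalization and oscillation-defect estimate leading to \eqref{pkiden1}, and the Bogovski\u{\i}-based integrability \eqref{Bogpkbd}) are accurate but not part of the paper's proof of this theorem, which is intentionally a short synthesis.
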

\begin{proof}
The results (\ref{eqrho},c) and (\ref{Penergy}) have already been established
in Lemma \ref{Pexistslem}.
Equation (\ref{equtfinal}) was established in Lemma \ref{Pexistslem} with
$\rho^\gamma$ replaced by $\overline{\rho^\gamma}$, see (\ref{equt}).
The desired result (\ref{equtfinal}) then follows immediately from (\ref{equt})
and (\ref{pkiden1}).
\end{proof}

\begin{comment}
Therefore, updating Theorem \ref{Pexistslem}, we have that
the triple $(\rho,\ut,\hpsi)$,
defined as in Lemmas \ref{hpsiconv} and \ref{rhoklem},
is a global weak solution to problem (P),
in the sense that
(\ref{eqrho},c) hold with their initial conditions and
\begin{align}
&\displaystyle\int_{0}^{T} \left \langle
\frac{\partial (\rho\,\ut)}{\partial t}, \wt \right \rangle_{W^{1,s_2}_0(\Omega)}
\!\!\!\dt
+
\displaystyle\int_{0}^{T}\!\! \int_\Omega
\left[
\Stt(\ut) - \rho\,\ut \otimes \utk
-p(\rho)\,\Itt
\right]
:\nabxtt \wt
\,\dx\, \dt
\nonumber \\
&\qquad =\int_{0}^T
 \int_{\Omega} \left[ \rho\,
\ft \cdot \wt
-\left(\tautt_1 (M\,\hpsi) - \mathfrak{z}\,\vrho^2\,\Itt\right)
: \nabxtt
\wt \right] \dx \, \dt
\nonumber \\
& \hspace{3.3in}
\quad
\forall \wt \in L^{s_1}(0,T;\Wt^{1,s_2}_0(\Omega)),
\label{equtcon}
\end{align}
with $(\rho\,\ut)(\cdot,0) = (\rho_0\,\ut_0)(\cdot)$,
$s_1 = \min\{2,\frac{\Gamma+\theta}{\Gamma}\}$ and
$s_2= \max\{4,\frac{6\gamma}{2\gamma-3},\frac{\Gamma+\theta}{\theta}\}$.
\end{comment}

\section{Conclusions}

{\color{black}
We proved the existence of global-in-time weak solutions to a general class of models that arise
from the kinetic theory of dilute solutions of nonhomogeneous polymeric liquids, where the polymer
molecules are idealized as bead-spring chains with finitely extensible nonlinear elastic (FENE)
type spring potentials.
The class of models under consideration involved the unsteady, compressible, isentropic,
isothermal Navier--Stokes system, with constant shear and bulk viscosity coefficients,
in a bounded domain $\Omega$ in $\mathbb{R}^d$, $d = 2$ or $3$,
for the density $\rho$, the velocity $\ut$ and the pressure $p$ of the fluid, with an equation of state
of the form $p(\rho) = c_p \rho^\gamma$, where $c_p$ is a positive constant and $\gamma>\frac{3}{2}$.
Our analysis applies, without alterations, to some other familiar \textit{monotone} equations
of state, such as the (Kirkwood-modified) Tait equation of state (cf. Remark \ref{tait}).
As a starting point for the extension of this work to nonmonotone equations of state we refer
the reader to the work of Feireisl \cite{Feir2} in the context of the compressible Navier--Stokes
equations; see also Section 7.12.3 in \cite{NovStras}, which explains how, in the case of the compressible
Navier--Stokes equations at least, the existence proof for monotone equations of state can be extended,
with minor modifications, to the case of nonmonotone equations of state.

The right-hand side of the Navier--Stokes momentum equation included an elastic extra-stress
tensor, defined as the sum of the classical Kramers expression and a quadratic interaction term.
The elastic extra-stress tensor stems from the random movement of the polymer chains and involves the
associated probability density function that satisfies a Fokker--Planck-type parabolic equation,
a crucial feature of which is the presence of a centre-of-mass diffusion term.
Our analysis required no structural assumptions
on the drag term in the Fokker--Planck equation; in particular, the drag term need not
be corotational. With a nonnegative initial density
$\rho_0 \in L^\infty(\Omega)$ for the continuity equation;
a square-integrable initial velocity datum $\undertilde{u}_0$ for
the Navier--Stokes momentum equation; and
a nonnegative initial probability density function $\psi_0$
for the Fokker--Planck equation, which has finite
relative entropy with respect to the Maxwellian $M$ associated with the spring potential
in the model, we proved, {\em via} a limiting procedure on certain discretization and regularization parameters,
the existence of a global-in-time bounded-energy weak solution
$t \mapsto (\rho(t),\undertilde{u}(t), \psi(t))$
to the coupled Navier--Stokes--Fokker--Planck system, satisfying the initial condition
$(\rho(0),\undertilde{u}(0), \psi(0)) = (\rho_0,\undertilde{u}_0, \psi_0)$.

The representation $p_p = k(K+1)\varrho + \mathfrak{z} \varrho^2$ of the polymeric pressure $p_p$ used in the paper (cf.
Remark \ref{rem-1.3}) can be viewed as a quadratic truncation of a virial expansion of the polymeric pressure
in terms of the polymer number density $\varrho$. The presence of the quadratic interaction term $\mathfrak{z}\varrho^2$,
with $\mathfrak{z}>0$,  was found to be essential for the analysis, pointing at the necessity of
accounting for effects that are not captured by the classical Kramers expression, such as excluded volume effects.
}

\vspace{4mm}

%\noindent
%\textbf{Acknowledgement}
%ES was supported by the EPSRC Science and Innovation award to the Oxford Centre
%for Nonlinear PDE (EP/E035027/1). We are grateful to Leonardo Figueroa for supplying
%the image shown in Fig.~1.

\bibliographystyle{siam}

\bibliography{polyjwbesrefs}

%\end{document}

\bigskip

\bigskip

\begin{appendices}

\begin{center}
\sc{Appendix}
\end{center}

\setcounter{equation}{0}
\renewcommand{\theequation}{\Alph{section}.\arabic{equation}}

\section{Homogeneous Sobolev spaces}\label{sec:App-A}
The linear space $C^\infty_0(\mathbb{R}^d)$, when equipped with the functional $|\cdot|_{1,r}$ defined by $|\eta|_{1,r}:=\|\nabx \eta\|_{L^r(\mathbb{R}^d)}$, is a normed linear space. We denote by $D^{1,r}(\mathbb{R}^r)$ the closure of $C^\infty_0(\mathbb{R}^d)$ in this norm, and will refer to $D^{1,r}(\mathbb{R}^d)$ as a \textit{homogeneous Sobolev space}.
The space $D^{1,r}(\mathbb{R}^d)$ is separable for $r \in [1,\infty)$ and reflexive for $r \in (1,\infty)$.
For $r \in [1,d)$ the space $D^{1,r}(\mathbb{R}^d)$ can be characterized as follows:
\[ D^{1,r}(\mathbb{R}^d) = \big\{ \eta \in C^\infty_0(\mathbb{R}^d)'\,:\, \eta \in L^{\frac{dr}{d-r}}(\mathbb{R}^d), \, \nabx \eta \in L^{r}(\mathbb{R}^d)\big\}.\]
More specifically, for $r \in [1,d)$, the following \textit{Sobolev inequality} holds:
\[ \|\eta\|_{L^{\frac{dr}{d-r}}(\mathbb{R}^d)} \leq C(r,d) \|\nabx \eta\|_{L^r(\mathbb{R}^d)} \qquad \forall \eta \in D^{1,r}(\mathbb{R}^d),\]
where $C(r,d)$ is a positive constant depending on $r$ and $d$ but independent of $\eta$.
For $r \in [d, \infty)$ the elements of $D^{1,r}(\mathbb{R}^d)$ are not distributions and can only be viewed as sets of equivalence classes.

\setcounter{equation}{0}
\renewcommand{\theequation}{\Alph{section}.\arabic{equation}}

\section{Fourier multipliers}\label{sec:App-B}
Let, as in \eqref{Sfrak},  $\mathfrak{S}(\mathbb{R}^d)$ signify the linear space of smooth \textit{rapidly decreasing (complex-valued)  functions}, equipped with the topology of locally uniform convergence, and denote by ${\mathfrak S}({\mathbb R}^d)'$ its dual space,
referred to as the space of \textit{tempered distributions}. On the space of rapidly decreasing functions the Fourier transform
${\mathfrak F} : {\mathfrak S}({\mathbb R}^d) \rightarrow {\mathfrak S}({\mathbb R}^d)$ and its inverse
${\mathfrak F}^{-1} : {\mathfrak S}({\mathbb R}^d) \rightarrow {\mathfrak S}({\mathbb R}^d)$ are defined by the formulae \eqref{FTran}, which
we extend to ${\mathfrak F},\, {\mathfrak F}^{-1}: {\mathfrak S}({\mathbb R}^d)' \rightarrow {\mathfrak S}({\mathbb R}^d)'$ via the
identities \eqref{Ftext}.
\begin{lemma}[Theorem 1.55 in \cite{NovStras}]\label{Le-B-1}
~
\begin{itemize}
\item[(i)] The operator $\mathfrak{F}$ defined by \eqref{FTran} can be extended by continuity to a bounded linear operator from $L^2(\mathbb{R}^d)$ to $L^2(\mathbb{R}^d)$. Furthermore, the following Parseval--Plancherel identity holds:
\[ \int_{\mathbb{R}^d} \overline{\eta(\xt)}\,\zeta(\xt) \dd \xt = \int_{\mathbb{R}^d} \overline{[\mathfrak{F}\eta](\yt)}\,\,[\mathfrak{F}(\zeta)](\yt) \dd \yt\qquad \forall \eta,\zeta \in L^2(\mathbb{R}^d).\]
\item[(ii)] For any $\eta \in \mathfrak{S}(\mathbb{R}^d)'$ and any multiindex $\undertilde{\alpha} \in \mathbb{N}^d$, one has the following identities:
\[  \frac{\partial^{|\undertilde\alpha|}\,\mathfrak{F}{\eta}}{\partial^{\alpha_1}_{y_1}\cdots \partial^{\alpha_d}_{y_d}} = \mathfrak{F}[(-i \xt)^{\undertilde\alpha} \eta],\qquad \mathfrak{F}\left(\frac{\partial^{|\undertilde\alpha|}\eta}{\partial^{\alpha_1}_{x_1}\cdots \partial^{\alpha_d}_{x_d}}\right) = (i \yt)^{\undertilde{\alpha}} \mathfrak{F}(\eta). \]
\item[(iii)] For any $\eta \in \mathfrak{S}(\mathbb{R}^d)'$ and $\zeta \in \mathfrak{S}(\mathbb{R}^d)$, the following convolution identity holds:
\[ \mathfrak{F}(\eta \ast \zeta) = \mathfrak{F}(\eta)\,\mathfrak{F}(\zeta).\]
\end{itemize}
\end{lemma}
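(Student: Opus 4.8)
The statement collects the classical mapping properties of the Fourier transform, and since it is quoted from Theorem~1.55 in \cite{NovStras} the proof is essentially a citation; nonetheless a self-contained argument proceeds by establishing each item first on the Schwartz class $\mathfrak{S}(\mathbb{R}^d)$ (defined in \eqref{Sfrak}) with $\mathfrak{F},\mathfrak{F}^{-1}$ given by \eqref{FTran}, and then passing either to $L^2(\mathbb{R}^d)$ by density (for (i)) or to $\mathfrak{S}(\mathbb{R}^d)'$ by duality through the definitions \eqref{Ftext} (for (ii) and (iii)).

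For (i), the core is the Fourier inversion formula $\mathfrak{F}^{-1}\mathfrak{F}\eta = \eta$ on $\mathfrak{S}(\mathbb{R}^d)$. The plan is: first prove the multiplication identity $\int_{\mathbb{R}^d}[\mathfrak{F}\eta]\,\zeta\,\dd\yt = \int_{\mathbb{R}^d}\eta\,[\mathfrak{F}\zeta]\,\dd\xt$ for $\eta,\zeta\in\mathfrak{S}(\mathbb{R}^d)$, which is Fubini's theorem applied to an absolutely convergent double integral (both factors are rapidly decreasing). Applying this with $\zeta$ replaced by a rescaled Gaussian $\zeta(\varepsilon\,\cdot)$, using the explicit Fourier transform of the Gaussian (completion of the square, reduction to one dimension, a contour shift) and the fact that the rescaled Gaussians form an approximate identity as $\varepsilon\to 0_+$, one recovers $\eta$ from $\mathfrak{F}\eta$; this gives inversion. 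Parseval--Plancherel on $\mathfrak{S}(\mathbb{R}^d)$ then follows from the multiplication identity together with inversion and the elementary relation $\overline{\mathfrak{F}(\zeta)}=\mathfrak{F}^{-1}(\bar\zeta)$. Since $\mathfrak{S}(\mathbb{R}^d)$ is dense in $L^2(\mathbb{R}^d)$ and, by Parseval on $\mathfrak{S}$, $\mathfrak{F}$ is isometric there, the bounded-linear-transformation theorem yields a unique continuous extension $\mathfrak{F}:L^2(\mathbb{R}^d)\to L^2(\mathbb{R}^d)$, and passing to the limit in the Parseval identity along an approximating sequence finishes (i).

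For (ii), on $\mathfrak{S}(\mathbb{R}^d)$ the two identities follow by differentiating \eqref{FTran} under the integral sign — legitimate because the difference quotients are dominated near each point by an integrable majorant built from polynomially weighted derivatives of $\eta$ — and, for the second identity, by integration by parts with vanishing boundary terms; a byproduct is that $\mathfrak{F}$ and $\mathfrak{F}^{-1}$ map $\mathfrak{S}(\mathbb{R}^d)$ continuously into itself. To pass to $\eta\in\mathfrak{S}(\mathbb{R}^d)'$ one unwinds \eqref{Ftext}: for any $\xi\in\mathfrak{S}(\mathbb{R}^d)$,
\begin{align*}
\left\langle \mathfrak{F}\Big(\tfrac{\partial^{|\undertilde{\alpha}|}\eta}{\partial_{x_1}^{\alpha_1}\cdots\partial_{x_d}^{\alpha_d}}\Big),\,\xi \right\rangle
&= \left\langle \eta,\,(-1)^{|\undertilde{\alpha}|}\,\tfrac{\partial^{|\undertilde{\alpha}|}(\mathfrak{F}\xi)}{\partial_{y_1}^{\alpha_1}\cdots\partial_{y_d}^{\alpha_d}} \right\rangle \\
&= \big\langle (i\yt)^{\undertilde{\alpha}}\,\mathfrak{F}\eta,\,\xi \big\rangle,
\end{align*}
where the second equality uses the already proved $\mathfrak{S}$-version of the very identities in question, applied to $\xi$; the companion identity for $\mathfrak{F}[(-i\xt)^{\undertilde{\alpha}}\eta]$ is obtained symmetrically. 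Every step here is a manipulation that has been justified on $\mathfrak{S}(\mathbb{R}^d)$, so no further estimates are needed.

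For (iii), when $\eta,\zeta\in\mathfrak{S}(\mathbb{R}^d)$ the identity is again Fubini's theorem, with the normalization of the convolution taken consistently with the constant in \eqref{FTran}. For $\eta\in\mathfrak{S}(\mathbb{R}^d)'$ and $\zeta\in\mathfrak{S}(\mathbb{R}^d)$ one first checks that $\eta\ast\zeta$, defined by pairing $\eta$ against translates and reflections of $\zeta$, is a smooth function of at most polynomial growth — hence a tempered distribution — and then verifies the identity by testing against an arbitrary $\xi\in\mathfrak{S}(\mathbb{R}^d)$, using \eqref{Ftext}, the $\mathfrak{S}(\mathbb{R}^d)$-case just established, and the associativity of convolution on the Schwartz class. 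The only place where a genuine estimate rather than pure duality bookkeeping enters is part (i) — specifically the rigorous Fourier inversion formula on $\mathfrak{S}(\mathbb{R}^d)$, i.e.\ the Gaussian computation and the careful tracking of the $(2\pi)^{-d/2}$ normalization in \eqref{FTran} — so this is the one step I would expect to demand care; everything else reduces to density and to mechanical use of the definitions.
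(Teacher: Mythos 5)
Your proposal is correct: the paper gives no proof of its own here, simply quoting Theorem~1.55 of Novotn\'{y} \& Stra\v{s}kraba, and your argument is the standard textbook derivation of exactly those facts (Gaussian-based inversion and density for the Plancherel extension, duality through \eqref{Ftext} for the distributional identities, Fubini plus the polynomial-growth check for the convolution theorem). Nothing in your outline conflicts with the cited source, and the one step you flag as delicate --- the inversion formula with the $(2\pi)^{-d/2}$ normalization of \eqref{FTran} --- is indeed the only place requiring a genuine computation.
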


\medskip

A bounded measurable function $m\,: \mathbb{R}^d \rightarrow \mathbb{R}$ is called a \textit{Fourier multiplier} of type $(r,s)$, with $r,s \in [1,\infty)$, if there exists a positive constant $C(r,s)$ such that
\[ \|\mathfrak{F}^{-1}(m\, \mathfrak{F}(\eta))\|_{L^s(\mathbb{R}^d)} \leq C(r,s)\|\eta\|_{L^r(\mathbb{R}^d)}\qquad \forall \eta \in \mathfrak{S}(\mathbb{R}^d).\]
The linear operator
\[T\,:\, \mathfrak{S}(\mathbb{R}^d) \subset L^r(\mathbb{R}^d) \rightarrow L^s(\mathbb{R}^d), \qquad T\eta = \mathfrak{F}^{-1}(m\,\mathfrak{F}(\eta)) = \mathfrak{F}^{-1}(m) \ast \eta,\]
with domain $\mathcal{D}(T)=\mathfrak{S}(\mathbb{R}^d)$ is a densely defined continuous linear operator from $L^r(\mathbb{R}^d)$ to $L^s(\mathbb{R}^d)$. Therefore, its closure, which is still denoted by $T$, is a continuous linear operator from $L^r(\mathbb{R}^d)$ into $L^s(\mathbb{R}^d)$; i.e.,
\[ \|\mathfrak{F}^{-1}(m\, \mathfrak{F}(\eta))\|_{L^s(\mathbb{R}^d)} \leq C(r,s)\|\eta\|_{L^r(\mathbb{R}^d)}\qquad \forall \eta \in L^r(\mathbb{R}^d).\]
\begin{lemma}[Lizorkin's multiplier theorem; Theorem 1.57 in \cite{NovStras}]\label{Le-B-2}
Let $r \in (1,\infty)$, $\beta \in \big[0,\frac{1}{r}\big)$, and let $m\,:\, \yt \in \mathbb{R}^d \mapsto m(\yt) \in \mathbb{R}$ be such that $m \in L^\infty(\mathbb{R}^d)$, and $m$ has the derivative
\[ \frac{\partial^d m}{\partial_{y_1} \ldots \partial_{y_d}}\]
as well as all lower-order partial derivatives, continuous on $\mathbb{R}^d\setminus\{0\}$. Suppose further that there exists a $B>0$ such that
\[ |y_1|^{\alpha_1 + \beta}\cdots |y_d|^{\alpha_d + \beta}\left|\frac{\partial^{|\undertilde\alpha|}}{\partial^{\alpha_1}_{y_1}\cdots \partial^{\alpha_d}_{y_d}} m(\yt)\right| \leq B\qquad
\forall \yt \in \mathbb{R}^d\setminus\{0\},\]
where, for each $i \in \{1, \ldots, d\}$,  $\alpha_i$ is $0$ or $1$. Then, $m$ is a Fourier multiplier of type $(r,s)$ with $\frac{1}{s} = \frac{1}{r}-\beta$. In particular if $\beta=0$, then $m$ is
a Fourier multiplier of type $(r,r)$.
\end{lemma}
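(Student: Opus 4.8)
The plan is to deduce the theorem from the classical Marcinkiewicz multiplier theorem (which is the case $\beta=0$) by factoring out the integrability gain carried by $\beta>0$ as a Riesz potential. Set $\widetilde m(\yt):=m(\yt)\prod_{i=1}^d|y_i|^{\beta}$ and $p(\yt):=\prod_{i=1}^d|y_i|^{-\beta}$, so that $m=\widetilde m\,p$ off the coordinate hyperplanes. Using the Leibniz rule together with the hypothesis $|y_1|^{\alpha_1+\beta}\cdots|y_d|^{\alpha_d+\beta}\,|\partial^{\undertilde\alpha}m|\le B$ (for $\undertilde\alpha\in\{0,1\}^d$) and the elementary bounds $|\partial_{y_i}(|y_i|^{\beta})|\le\beta\,|y_i|^{\beta-1}$, a short computation shows that in every resulting term the powers of each $|y_i|$ cancel, so that $\widetilde m$ satisfies the Marcinkiewicz ($\beta=0$) bound $|y_1|^{\alpha_1}\cdots|y_d|^{\alpha_d}\,|\partial^{\undertilde\alpha}\widetilde m|\le C B$ for all $\undertilde\alpha\in\{0,1\}^d$. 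Since Fourier multipliers compose, $T_m=T_{\widetilde m}\circ T_p$ on $\mathfrak S(\mathbb R^d)$, and it is enough to prove (a) $T_p$ maps $L^r(\mathbb R^d)$ into $L^s(\mathbb R^d)$ with $\tfrac1s=\tfrac1r-\beta$, and (b) $T_{\widetilde m}$ is bounded on $L^s(\mathbb R^d)$, $1<s<\infty$. When $\beta=0$ one has $\widetilde m=m$, $T_p=\mathrm{Id}$ and $s=r$, so the ``in particular'' assertion is immediate from (b).

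For (a): $p$ is a tensor product $p(\yt)=\prod_{i=1}^d|y_i|^{-\beta}$, and $|y_i|^{-\beta}$ is precisely the one-dimensional Fourier symbol of the Riesz potential $I_\beta^{(i)}$ of order $\beta$ acting in the single variable $y_i$; hence $T_p=I_\beta^{(1)}\circ\cdots\circ I_\beta^{(d)}$, each factor acting in its own variable. I would apply the one-dimensional Hardy--Littlewood--Sobolev inequality in the $i$-th variable, uniformly in the other variables, iterating over $i=1,\dots,d$; because each variable is acted upon exactly once, this lowers the Lebesgue exponent in every variable from $\tfrac1r$ to $\tfrac1r-\beta=\tfrac1s$, yielding $T_p:L^r(\mathbb R^d)\to L^s(\mathbb R^d)$, the restriction $\beta<\tfrac1r$ making each one-dimensional step legitimate. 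The commutation of $I_\beta^{(i)}$ with the Lebesgue norms in the remaining variables is the content of the mixed-norm version of the Hardy--Littlewood--Sobolev theorem.

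For (b), I would prove the $d$-dimensional Marcinkiewicz multiplier theorem by the Littlewood--Paley method. Fix a dyadic partition of unity $\{\Phi_\jt\}_{\jt\in\mathbb Z^d}$ subordinate to the dyadic rectangles $R_\jt=\prod_{i=1}^d\{2^{j_i-1}\le|y_i|\le 2^{j_i+1}\}$ and write $\widetilde m=\sum_\jt\widetilde m_\jt$ with $\widetilde m_\jt=\widetilde m\,\Phi_\jt$. The mixed-derivative bound on $\widetilde m$ (equivalently, uniformly bounded variation over each dyadic rectangle) gives, via Plancherel and integration by parts over subrectangles, a bound on $T_{\widetilde m_\jt}$ on $L^2$ that is uniform in $\jt$; since $\widetilde m_\jt$ is supported in $R_\jt$, one has $T_{\widetilde m}f=\sum_\jt T_{\widetilde m_\jt}\widetilde\Delta_\jt f$ for a suitably enlarged Littlewood--Paley projection $\widetilde\Delta_\jt$. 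Then the tensorised ($d$-fold) Littlewood--Paley square-function characterisation of $L^s$, combined with the Fefferman--Stein vector-valued maximal inequality to control $\sum_\jt|T_{\widetilde m_\jt}\widetilde\Delta_\jt f|^2$ by $\sum_\jt|\widetilde\Delta_\jt f|^2$, yields $\|T_{\widetilde m}f\|_{L^s}\le C B\,\|f\|_{L^s}$ for $1<s<\infty$; interpolation and duality handle the non-Hilbertian exponents.

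The main obstacle is ingredient (b): establishing the $d$-dimensional vector-valued Littlewood--Paley and Fefferman--Stein inequalities and controlling the localised pieces $T_{\widetilde m_\jt}$ uniformly despite the jump discontinuities of the truncated symbols $\widetilde m_\jt$ across $\partial R_\jt$, which obstruct a naive Calder\'on--Zygmund argument and force the Littlewood--Paley route. A secondary, more bookkeeping-type difficulty is that the hypothesis is genuinely anisotropic --- powers of the individual $|y_i|$, not of $|\yt|$, appear --- so both the factorisation in (a) and the dyadic analysis in (b) must be organised according to the product structure; one must also check that the identity $T_m=T_{\widetilde m}\circ T_p$ and the extension from $\mathfrak S(\mathbb R^d)$ to $L^r(\mathbb R^d)$ by density are compatible, since $T_p$ maps $\mathfrak S$ only into $L^s$ and not into itself. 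For the purposes of this paper one may, of course, simply invoke Lizorkin's theorem as stated in \cite{NovStras} or in the literature on multiplier theorems.
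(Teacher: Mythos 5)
Lemma \ref{Le-B-2} is not proved in the paper: Appendix B, like the other appendices, is a collection of results quoted from the literature ``for the convenience of the reader'', here citing Theorem 1.57 of Novotn\'{y} \& Stra\v{s}kraba \cite{NovStras}, and no proof is given. There is thus no in-paper argument to compare yours against, and --- as you yourself note at the end --- simply invoking the cited reference is the appropriate course of action here.

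Viewed as a free-standing sketch of Lizorkin's theorem, your reduction is sound. The Leibniz computation does produce $|y_1|^{\alpha_1}\cdots|y_d|^{\alpha_d}\,|\partial^{\undertilde\alpha}\widetilde m|\le CB$ for every $\undertilde\alpha\in\{0,1\}^d$; in particular the $\undertilde\alpha=\zerot$ case of the hypothesis already gives $|\widetilde m|\le B$, so $\widetilde m$ is a genuine $L^\infty$ symbol and the factorisation $T_m=T_{\widetilde m}\circ T_p$ is meaningful. Step (a) then closes with the one-dimensional Hardy--Littlewood--Sobolev inequality applied fibre-wise together with Minkowski's integral inequality to permute the mixed norms (legitimate since $s>r$), and the constraint $\beta<\frac1r$ is exactly what each one-dimensional application requires. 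The substantial gap is, as you correctly identify, step (b): the $\beta=0$ product-type Marcinkiewicz multiplier theorem, while your Littlewood--Paley / vector-valued Fefferman--Stein route is the standard modern proof, is itself a multi-page argument. So you have a clean reduction of the $\beta>0$ case to the $\beta=0$ case together with an outline of the latter, not a complete proof --- which is fine, since none is needed for the purposes of this paper.
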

A particularly simple case is when $r=2$: it follows from part (i) of Theorem \ref{Le-B-1} that any $m \in L^\infty(\mathbb{R}^d)$ is a Fourier multiplier of type $(2,2)$.

\setcounter{equation}{0}
\renewcommand{\theequation}{\Alph{section}.\arabic{equation}}

\section{The Bogovski\u{\i} operator}\label{sec:App-C}

\begin{lemma}[Lemma 3.17 in \cite{NovStras}]\label{Le-C-1}
Suppose that $\Omega$ is a bounded Lipschitz domain in $\mathbb{R}^d$. Then, there exists a linear operator $\calBt = (\mathcal{B}^1,\ldots,\mathcal{B}^d)$, called the Bogovski\u{\i} operator, with the following properties:
\begin{equation*}
\calBt \,:\, L^r_0(\Omega) \rightarrow \Wt^{1,r}_0(\Omega), \qquad 1 < r < \infty;
\end{equation*}
\begin{equation*}
\nabx\cdot \calBt(\eta) = \eta\quad \mbox{a.e. in $\Omega$}\qquad \forall \eta \in L^r_0(\Omega), \;1 < r < \infty;
\end{equation*}
\begin{equation*}
\|\nabxtt\calBt(\eta)\|_{L^r(\Omega)} \leq C(r,\Omega)\|\eta\|_{L^{r}(\Omega)}\qquad \forall \eta \in L^r_0(\Omega), \;1<r<\infty.
\end{equation*}
For $1 < r, s < \infty$, let
\[ \Et^{r,s}(\Omega) := \{ \undertilde{\zeta} \in \Lt^r(\Omega)\,:\, \nabx \cdot \undertilde{\zeta} \in L^s(\Omega)\},\]
equipped with the norm $\|\undertilde{\zeta}\|_{E^{r,s}(\Omega)} = \|\undertilde{\zeta}\|_{L^r(\Omega)} + \|\nabx\cdot \undertilde{\zeta}\|_{L^s(\Omega)}$,
and define $\Et^{r,s}_0(\Omega)$ as the closure of $\Ct^\infty_0(\Omega)$ in the norm of $\Et^{r,s}(\Omega)$.
If $\eta = \nabx\cdot \undertilde{\zeta}$, where $\undertilde{\zeta} \in \Et^{r,s}_0(\Omega)$ with some $r \in (1,\infty)$, then
\begin{equation*}
\|\calBt(\eta)\|_{L^r(\Omega)} \leq C(r,\Omega)\|\undertilde{\zeta}\|_{L^r(\Omega)}.
\end{equation*}
Finally, if $\eta \in C^\infty_0(\Omega)$ and ${\mint} \eta = 0$, then $\calBt(\eta) \in \Ct^\infty_0(\Omega)$.
\end{lemma}

\setcounter{equation}{0}
\renewcommand{\theequation}{\Alph{section}.\arabic{equation}}

\section{Convexity and weak convergence}\label{sec:App-D}

\begin{lemma}[Corollary 3.33 in \cite{NovStras}]\label{Le-D-1}
~
\begin{itemize}
\item[(i)]
Let $r \in [1,\infty)$ and suppose that $\Omega$ is a domain in $\mathbb{R}^d$. Let further $\mathcal{H}(\eta):=\int_K|\eta|^r \dx$, $\eta \in L^r(K)$, where $K$ is a measurable subset of $\Omega$. Then,
\[ \mathcal{H}(\eta) \leq \lim\mbox{\rm inf}_{n \rightarrow \infty} \mathcal{H}(\eta_n) \qquad \mbox{whenever $\eta_n \rightarrow \eta$ weakly in $L^r(\Omega)$}.\]
\item[(ii)] Let $r \in [1,\infty)$ and suppose that $\Omega$ is a domain in $\mathbb{R}^d$, $I$ is an interval in $\mathbb{R}$ and $f$ is a convex lower-semicontinuous (respectively, concave upper-semicontinuous) function defined on $I$. Suppose further that $\eta_n$ is a sequence of nonnegative functions from $L^r(\Omega)$ with
    values in $I$ such that
    \[ \eta_n \rightarrow \eta \qquad \mbox{weakly in $L^r(\Omega)$}\]
    and
    \[ f(\eta_n) \rightarrow \overline{f(\eta)} \qquad \mbox{weakly in $L^1(\Omega)$}.\]
    Then,
    \[ f(\eta) \leq \overline{f(\eta)}\quad \mbox{{\rm (}respectively, $f(\eta) \geq \overline{f(\eta)}$\rm{)}}\quad \mbox{a.e. in $\Omega$}.\]
\end{itemize}
\end{lemma}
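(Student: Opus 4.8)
The plan is to derive both parts from the interplay between convexity and Mazur's lemma, which converts weak convergence into strong (hence, along a subsequence, pointwise a.e.) convergence of finite convex combinations with \emph{common} coefficients.

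For part (i): the map $t \mapsto |t|^r$ is convex on $\mathbb{R}$ for every $r \in [1,\infty)$, so $\mathcal{H}\,:\, \eta \mapsto \int_K |\eta|^r \,\dx$ is a convex functional on $L^r(K)$, and it is strongly continuous there by the (reverse) triangle inequality for the $L^r(K)$ norm. A convex, strongly lower-semicontinuous functional on a Banach space is weakly lower-semicontinuous, since its sublevel sets are convex and strongly closed, hence weakly closed by the Hahn--Banach separation theorem (equivalently, one may invoke Mazur's lemma directly). Finally, if $\eta_n \rightharpoonup \eta$ weakly in $L^r(\Omega)$, then $\eta_n|_K \rightharpoonup \eta|_K$ weakly in $L^r(K)$, because every $\varphi$ in the conjugate space $L^{r'}(K)$, $r'=\frac{r}{r-1}$, extends by zero to an element of $L^{r'}(\Omega)$. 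Weak lower-semicontinuity of $\mathcal{H}$ on $L^r(K)$ then gives $\mathcal{H}(\eta) \leq \liminf_{n\to\infty}\mathcal{H}(\eta_n)$.

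For part (ii), first assume $f$ convex and lower-semicontinuous on the interval $I$; since the $\eta_n$ take values in $I$, so does every convex combination of them, as $I$ is convex. Consider the joint weak convergence $(\eta_n, f(\eta_n)) \rightharpoonup (\eta, \overline{f(\eta)})$ in $L^r(\Omega)\times L^1(\Omega)$. By Mazur's lemma there exist finite convex combinations $\zeta_m := \sum_{k} \lambda^{(m)}_k \eta_k$ and $g_m := \sum_{k} \lambda^{(m)}_k f(\eta_k)$, with $\lambda^{(m)}_k \geq 0$ and $\sum_k \lambda^{(m)}_k = 1$, such that $\zeta_m \to \eta$ strongly in $L^r(\Omega)$ and $g_m \to \overline{f(\eta)}$ strongly in $L^1(\Omega)$ as $m \to \infty$ (the same coefficients in both, which is the point). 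Passing to a subsequence (not relabelled), $\zeta_m \to \eta$ and $g_m \to \overline{f(\eta)}$ a.e.\ in $\Omega$. By convexity of $f$, for a.e.\ $\xt \in \Omega$ and every $m$,
\[ g_m(\xt) = \sum_k \lambda^{(m)}_k f(\eta_k(\xt)) \geq f\Bigl(\sum_k \lambda^{(m)}_k \eta_k(\xt)\Bigr) = f(\zeta_m(\xt)). \]
Letting $m \to \infty$ and using lower-semicontinuity of $f$ at $\eta(\xt)$,
\[ \overline{f(\eta)}(\xt) = \lim_{m\to\infty} g_m(\xt) \geq \liminf_{m \to \infty} f(\zeta_m(\xt)) \geq f(\eta(\xt)) \qquad \text{for a.e. } \xt \in \Omega, \]
which is $f(\eta) \leq \overline{f(\eta)}$ a.e. If $\eta$ attains values in $\overline{I}\setminus I$, one reads $f$ on $\overline{I}$ via its lower-semicontinuous extension, for which the same pointwise inequality survives. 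The concave upper-semicontinuous case follows immediately on applying the convex case to $-f$ (convex and lower-semicontinuous), giving $-f(\eta) \leq \overline{-f(\eta)} = -\overline{f(\eta)}$ a.e., i.e.\ $f(\eta) \geq \overline{f(\eta)}$ a.e.

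This is a foundational lemma, so no genuine obstacle arises; the only points needing a little care are the simultaneous extraction of an a.e.-convergent subsequence for both $\zeta_m$ and $g_m$ (handled by a standard subsequence/diagonal argument once the strong $L^r$ and $L^1$ convergences are in hand) and the treatment of the endpoints of $I$, disposed of by the lower-semicontinuous extension of $f$. An alternative route for (ii), avoiding Mazur's lemma, is to test against $\chi_E$ for an arbitrary measurable $E \subset \Omega$ of finite measure: $\int_E f(\eta_n)\,\dx \to \int_E \overline{f(\eta)}\,\dx$ by weak $L^1$ convergence, while $\eta \mapsto \int_E f(\eta)\,\dx$ is convex and, after subtracting an affine minorant of $f$ (available at any interior point of $I$) and applying Fatou's lemma, strongly—hence weakly—lower-semicontinuous on $L^r(E)$; letting $E$ vary yields the pointwise inequality a.e.
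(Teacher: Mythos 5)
Your argument is correct in both parts. Note, however, that the paper does not prove this lemma at all: it is quoted verbatim (Appendix D) as Corollary 3.33 of Novotn\'{y} \& Stra\v{s}kraba, included ``for the convenience of the reader'', so there is no in-paper proof to compare against.

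For what it is worth, your route differs from the one usually given in that reference (and in Feireisl \& Novotn\'{y}, Theorem 10.20): there one represents the convex lower-semicontinuous $f$ as the pointwise supremum of a countable family of affine functions $s \mapsto a_j s + b_j$, tests the weak convergences against $\chi_E$ for measurable $E$ to get $\int_E \overline{f(\eta)}\,\dx = \lim_n \int_E f(\eta_n)\,\dx \geq \lim_n \int_E (a_j \eta_n + b_j)\,\dx = \int_E (a_j \eta + b_j)\,\dx$ for each $j$, and then lets $E$ vary and takes the supremum over $j$. Your Mazur-lemma argument applied to the pair $(\eta_n, f(\eta_n))$ in $L^r(\Omega)\times L^1(\Omega)$ is the other standard proof; its advantage is that it avoids the affine representation entirely and makes the role of convexity (finite Jensen) and of lower semicontinuity (passage to the a.e.\ limit of the convex combinations) completely explicit. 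The two points you flag --- extracting a single subsequence along which both $\zeta_m$ and $g_m$ converge a.e., and reading $f$ through its lower-semicontinuous extension at the endpoints of $I$ --- are exactly the right technicalities, and you dispose of them correctly. Part (i) as you prove it (convex $+$ strongly lsc $\Rightarrow$ weakly lsc, plus restriction of weak convergence from $\Omega$ to $K$ by zero-extension of the test functions) is likewise complete.
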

\begin{lemma}[Lemma 3.34 in \cite{NovStras}]\label{Le-D-2}
Let $r \in (1,\infty)$, suppose that $\Omega$ is a bounded domain in $\mathbb{R}^d$ and $I$ is an interval in $\mathbb{R}$. Suppose further that $f\,:\,I \rightarrow \mathbb{R}$ is a strictly convex function. Let $\{\eta_n\}_{n\in \mathbb{N}}$ be a sequence of functions in $L^r(\Omega)$ with values in $I$. If $\eta_n \rightarrow \eta$ weakly in $L^r(\Omega)$ and
$f(\eta_n) \rightarrow f(\eta)$ weakly in $L^1(\Omega)$, then $\eta_n \rightarrow \eta$ strongly in $L^1(\Omega)$.
\end{lemma}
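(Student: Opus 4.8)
The statement is the standard ``strict convexity upgrades weak to strong convergence'' lemma: the hypothesis that $f(\eta_n)$ converges weakly to $f(\eta)$ (and not merely to some $\overline{f(\eta)}\geq f(\eta)$, as Lemma \ref{Le-D-1}(ii) would give) says that $\eta_n$ does not oscillate, and strict convexity turns ``no oscillation'' into ``convergence in measure''. The plan is to prove $\eta_n\to\eta$ in measure on $\Omega$ and then conclude with Vitali's theorem, using that the weak $L^r$ convergence forces $\{\eta_n\}$ to be bounded in $L^r(\Omega)$ and hence, since $r>1$ and $|\Omega|<\infty$, equi-integrable in $L^1(\Omega)$.

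\textbf{Main steps.} First I would localize the convexity inequality. Since $f$ is convex on the interval $I$ it is continuous with finite one-sided derivatives on the interior $\mathring I$, so I fix compact subintervals $I_k\subset\mathring I$ with $\bigcup_k I_k=\mathring I$ and put $\Omega_k:=\{x\in\Omega:\eta(x)\in I_k\}$, noting $|\Omega\setminus\Omega_k|\to 0$ (the set where $\eta$ takes an endpoint value of $I$ being negligible, or empty, after the harmless normalisation that does not affect the hypotheses). On $\Omega_k$ choose a measurable selection $\xi_k(x)\in\partial^- f(\eta(x))$ of the left-derivative; $\xi_k$ is bounded on $\Omega_k$ because $\eta$ takes values in the compact set $I_k$ there. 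Define, on $\Omega_k$,
\[
g_n(x):=f(\eta_n(x))-f(\eta(x))-\xi_k(x)\,(\eta_n(x)-\eta(x))\geq 0,
\]
the nonnegativity being exactly convexity of $f$. Testing the weak convergence $f(\eta_n)\rightharpoonup f(\eta)$ in $L^1(\Omega)$ against $\chi_{\Omega_k}\in L^\infty(\Omega)$, and the weak convergence $\eta_n\rightharpoonup\eta$ in $L^r(\Omega)$ against $\xi_k\chi_{\Omega_k}\in L^\infty(\Omega)\subset L^{r/(r-1)}(\Omega)$, gives $\int_{\Omega_k}g_n\to 0$; since $g_n\geq 0$ this is $L^1$ convergence, so along a subsequence $g_n\to 0$ a.e.\ on $\Omega_k$. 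For a.e.\ $x\in\Omega_k$ the map $\lambda\mapsto f(\lambda)-f(\eta(x))-\xi_k(x)(\lambda-\eta(x))$ is strictly convex, nonnegative, and vanishes only at $\lambda=\eta(x)$; combined with the equi-integrability bound, which prevents $\eta_n(x)$ from escaping towards $\partial I$, $g_n(x)\to 0$ forces $\eta_n(x)\to\eta(x)$. Hence $\eta_n\to\eta$ a.e.\ on $\Omega_k$ along a subsequence, in particular in measure on $\Omega_k$. A diagonal argument over $k$, using $|\Omega\setminus\Omega_k|\to 0$, then yields $\eta_n\to\eta$ in measure on $\Omega$ along a subsequence; by Vitali's theorem that subsequence converges to $\eta$ strongly in $L^1(\Omega)$, and since the limit is uniquely determined the whole sequence converges strongly in $L^1(\Omega)$.

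\textbf{Expected main obstacle.} The conceptual content is easy; the work is in Steps making the subgradient argument rigorous \emph{without} assuming $f\in C^1$ and \emph{without} an a priori $L^\infty$ bound on $\eta_n$: namely the measurable selection of $\partial^- f$, the exhaustion $I_k\nearrow\mathring I$ with control of $|\Omega\setminus\Omega_k|$, and — the genuinely delicate point — ruling out that $\eta_n(x)$ runs off to an endpoint of $I$ (where the auxiliary strictly convex function need not be coercive) using only equi-integrability / the $L^r$ bound. Alternatively, one could bypass Steps 2--4 entirely by invoking Young measures: passing to a subsequence generating a Young measure $\{\nu_x\}$, one has $\eta(x)=\int\lambda\,d\nu_x$ and $f(\eta)(x)=\overline{f(\eta)}(x)=\int f(\lambda)\,d\nu_x$, so Jensen's inequality holds with equality a.e., which for strictly convex $f$ forces $\nu_x=\delta_{\eta(x)}$, i.e.\ $\eta_n\to\eta$ in measure; the rest is as above.
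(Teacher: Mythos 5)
The paper does not prove this lemma: it appears in Appendix D as a result quoted verbatim from Novotn\'{y} \& Stra\v{s}kraba \cite{NovStras} (Lemma 3.34), so there is no in-paper proof to compare against. Your argument is, in substance, the standard proof of that result and is sound: the nonnegative defect $g_n=f(\eta_n)-f(\eta)-\xi(\eta_n-\eta)$ has vanishing integral by testing the two weak convergences against bounded functions, hence converges a.e.\ along a subsequence, strict convexity upgrades this to $\eta_n\to\eta$ a.e., and Vitali (using the $L^r$ bound, $r>1$, on the bounded domain) finishes. Two remarks. First, the point you flag as "genuinely delicate" is in fact automatic: the auxiliary function $h(\lambda)=f(\lambda)-f(\eta(x))-\xi(x)(\lambda-\eta(x))$ is convex, nonnegative, and vanishes only at $\lambda=\eta(x)$, hence it is monotone on either side of its minimum and satisfies $h(\lambda)\geq\min\{h(\eta(x)+\epsilon),h(\eta(x)-\epsilon)\}>0$ whenever $|\lambda-\eta(x)|\geq\epsilon$; so $g_n(x)\to 0$ forces $\eta_n(x)\to\eta(x)$ with no coercivity and no use of equi-integrability at this stage. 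Second, the one real (if minor) gap is your dismissal of the set where $\eta$ attains an endpoint of $I$: this set need not be negligible, and no "normalisation" removes it, since the subgradient of $f$ may be infinite there. The fix is one line: on $E_a:=\{\eta=a\}$ with $a$ the left endpoint, one has $\eta_n-\eta\geq 0$, and testing the weak $L^r$ convergence against $\chi_{E_a}$ gives $\int_{E_a}|\eta_n-\eta|\to 0$ directly (similarly at the right endpoint), after which your exhaustion $\Omega_k$ covers the rest. The Young-measure alternative you sketch is also correct and standard.
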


The following lemma, due to Feireisl, plays an important role in the proof of weak compactness of the effective viscous flux.

\begin{lemma}[Commutator lemma; Lemma 4.25 in \cite{NovStras}]\label{Le-D-3}
Suppose that $d \in \{2,3\}$, $r,s \in (1,\infty)$, $\frac{1}{r} + \frac{1}{s}=\frac{1}{\upsilon}<1$, and
\[ f_n \rightarrow f \quad \mbox{weakly in $L^r(\mathbb{R}^d)$},\]
\[ g_n \rightarrow g \quad \mbox{weakly in $L^s(\mathbb{R}^d)$}.\]
Consider the Riesz operators $\mathcal{R}_{kj}$, with $k,j=1,\dots, d$,  defined by \eqref{Rjkdef}. Then,
\[ f_n \mathcal{R}_{kj}(g_n) - g_n \mathcal{R}_{kj}(f_n) \rightarrow f \mathcal{R}_{kj}(g) - g \mathcal{R}_{kj}(f)\quad \mbox{weakly in $L^\upsilon(\mathbb{R}^d)$}.\]
\end{lemma}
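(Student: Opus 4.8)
The plan is to follow Feireisl's original argument, which isolates a single genuinely bilinear piece to which a compensated-compactness (div--curl) argument applies, all the remaining pieces being handled by a soft duality manipulation based on the self-adjointness of the Riesz operators. Write $v_n := f_n - f$ and $w_n := g_n - g$, so that $v_n \rightharpoonup 0$ weakly in $L^r(\mathbb{R}^d)$ and $w_n \rightharpoonup 0$ weakly in $L^s(\mathbb{R}^d)$. Expanding the product and subtracting the candidate limit $f\,\mathcal{R}_{kj}(g) - g\,\mathcal{R}_{kj}(f)$, it suffices to prove that each of
\begin{align*}
{\tt E}^1_n &:= f\,\mathcal{R}_{kj}(w_n) - w_n\,\mathcal{R}_{kj}(f),\\
{\tt E}^2_n &:= v_n\,\mathcal{R}_{kj}(g) - g\,\mathcal{R}_{kj}(v_n),\\
{\tt E}^3_n &:= v_n\,\mathcal{R}_{kj}(w_n) - w_n\,\mathcal{R}_{kj}(v_n)
\end{align*}
tends to $0$ weakly in $L^\upsilon(\mathbb{R}^d)$. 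Since $\mathcal{R}_{kj}$ is bounded on $L^p(\mathbb{R}^d)$ for every $p\in(1,\infty)$ (a consequence of \eqref{Rjkdef}, \eqref{Abd} and Lemma \ref{Le-B-2}), H\"older's inequality gives a bound for each ${\tt E}^i_n$ in $L^\upsilon(\mathbb{R}^d)$ that is uniform in $n$; hence it is enough to test each ${\tt E}^i_n$ against a dense family, and I would take $\phi\in C^\infty_0(\mathbb{R}^d)$, dense in $L^{\upsilon'}(\mathbb{R}^d)$ since $\upsilon>1$. Throughout one uses the exponent bookkeeping $\tfrac1{\upsilon'}+\tfrac1r=\tfrac1{s'}$, $\tfrac1{\upsilon'}+\tfrac1s=\tfrac1{r'}$, and $s'\le r$, $r'\le s$ (all immediate from $\tfrac1r+\tfrac1s<1$).

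For ${\tt E}^1_n$ and ${\tt E}^2_n$ the plan is a duality computation using the self-adjointness of the Riesz operators, $\int_{\mathbb{R}^d}\mathcal{R}_{kj}(\eta)\,\xi\,\dx=\int_{\mathbb{R}^d}\eta\,\mathcal{R}_{kj}(\xi)\,\dx$ for $\eta\in L^p$, $\xi\in L^{p'}$, which follows from \eqref{Rint} together with $\mathcal{R}_{kj}=\mathcal{R}_{jk}$. For ${\tt E}^1_n$ one writes
\[
\int_{\mathbb{R}^d}\phi\,{\tt E}^1_n\,\dx
=\int_{\mathbb{R}^d}(\phi f)\,\mathcal{R}_{kj}(w_n)\,\dx-\int_{\mathbb{R}^d}\bigl(\phi\,\mathcal{R}_{kj}(f)\bigr)\,w_n\,\dx
=\int_{\mathbb{R}^d}w_n\,\Bigl[\mathcal{R}_{kj}(\phi f)-\phi\,\mathcal{R}_{kj}(f)\Bigr]\,\dx ,
\]
and the key point is that the bracketed function is a \emph{fixed} element of $L^{s'}(\mathbb{R}^d)$, independent of $n$: indeed $\phi f\in L^{s'}$ because $\tfrac1{\upsilon'}+\tfrac1r=\tfrac1{s'}$, whence $\mathcal{R}_{kj}(\phi f)\in L^{s'}$; and $\phi\,\mathcal{R}_{kj}(f)$ is compactly supported and lies in $L^r\subset L^{s'}$ since $s'\le r$. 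As $w_n\rightharpoonup 0$ weakly in $L^s$, this pairing tends to $0$, and the uniform $L^\upsilon$ bound then gives ${\tt E}^1_n\rightharpoonup 0$ in $L^\upsilon$. The term ${\tt E}^2_n$ is treated symmetrically, pairing the fixed $L^{r'}$ function $\mathcal{R}_{kj}(\phi g)-\phi\,\mathcal{R}_{kj}(g)$ against $v_n\rightharpoonup 0$ in $L^r$.

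The remaining term ${\tt E}^3_n$ is the crux: both $v_n$ and $w_n$ are only weakly null, so neither $v_n\mathcal{R}_{kj}(w_n)$ nor $w_n\mathcal{R}_{kj}(v_n)$ converges, and the vanishing of the difference is a genuine compensated-compactness phenomenon. The plan is to introduce the inverse-divergence potentials $\undertilde U_n:=\undertilde{\mathcal A}(v_n)$ and $\undertilde W_n:=\undertilde{\mathcal A}(w_n)$, which by \eqref{Rjjiden} and the symmetry of the Riesz operators in \eqref{Rint} satisfy $\nabx\cdot\undertilde U_n=v_n$, $\mathrm{curl}\,\undertilde U_n=0$, and likewise for $\undertilde W_n$, while $\mathcal{R}_{kj}(w_n)=\partial_k W_{n,j}$ and, by \eqref{Abd}--\eqref{Abd1}, $\nabx\undertilde U_n=\underdtilde{\mathcal R}(v_n)$ is bounded in $\Lt^r$ (so $\undertilde U_n$ is bounded in $\Lt^{r^\ast}$ when $r<d$, and in $D^{1,r}$ in general), similarly for $\undertilde W_n$. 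One then rewrites ${\tt E}^3_n=(\nabx\cdot\undertilde U_n)\,\partial_k W_{n,j}-(\nabx\cdot\undertilde W_n)\,\partial_k U_{n,j}$ and, integrating by parts and using that the third-order object $\partial_l\partial_k\mathcal A_j=\partial_l\bigl(\mathcal{R}_{kj}\bigr)$ is symmetric, splits it into a perfect divergence $\sum_l\partial_l\bigl(U_{n,l}\partial_k W_{n,j}-W_{n,l}\partial_k U_{n,j}\bigr)$ plus a quadratic remainder of the form $\undertilde W_n\!\cdot\!\nabx\mathcal{R}_{kj}(v_n)-\undertilde U_n\!\cdot\!\nabx\mathcal{R}_{kj}(w_n)$. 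The divergence part tests to zero against $\phi\in C^\infty_0$ because, along a subsequence, $U_{n,l},W_{n,l}$ converge strongly in $L^q_{\mathrm{loc}}$ for a range of $q$ containing $s'$ (resp. $r'$) by compact Sobolev embedding, while $\partial_k W_{n,j}$ (resp. $\partial_k U_{n,j}$) stays bounded and weakly null in $L^s$ (resp. $L^r$); the quadratic remainder is precisely a div--curl pairing — one factor a field whose divergence is weakly null in an $L^p$ space (hence precompact in $W^{-1,p}_{\mathrm{loc}}$), the other a gradient field — to which the Murat--Tartar div--curl lemma (equivalently, the compensated-compactness lemma behind \eqref{Le-D-3} in \cite{NovStras}) applies with exponents governed by $\tfrac1\upsilon=\tfrac1r+\tfrac1s$, yielding weak convergence to $0$; combining this with the uniform $L^\upsilon$ bound gives ${\tt E}^3_n\rightharpoonup 0$ in $L^\upsilon$. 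I expect this last term to be the main obstacle: making the div--curl cancellation rigorous with the correct Lebesgue exponents, and in particular controlling behaviour at spatial infinity since weakly null sequences in $L^r(\mathbb{R}^d)$ need not be tight. An alternative, equivalent route for ${\tt E}^3_n$ is the Fourier one: the reduction to $C^\infty_0$ test functions reduces matters to a bilinear form with symbol $\eta_k\eta_j/|\eta|^2-\xi_k\xi_j/|\xi|^2$, which vanishes whenever $\xi$ and $\eta$ are parallel — in particular on the diagonal $\eta=-\xi$ on which the Schwartz factor $\widehat\phi(-\xi-\eta)$ concentrates — so that a low-/high-frequency splitting, using weak convergence of $v_n,w_n$ for low frequencies and the decay of $\widehat\phi$ together with Lemma \ref{Le-B-2} for high frequencies, forces the limit to be $0$.
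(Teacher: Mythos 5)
This lemma is not proved in the paper: it is recalled verbatim in Appendix~D from Novotn\'y \& Stra\v{s}kraba, so your attempt can only be measured against the standard proof. Your reduction to ${\tt E}^1_n,{\tt E}^2_n,{\tt E}^3_n$, the uniform $L^\upsilon$ bound reducing matters to testing against $\phi\in C^\infty_0(\mathbb{R}^d)$, the exponent bookkeeping, and the duality treatment of ${\tt E}^1_n$ and ${\tt E}^2_n$ are all correct. Indeed the identity you use there, $\int\phi\,(a\,\mathcal{R}_{kj}(b)-b\,\mathcal{R}_{kj}(a))\,\dx=\int b\,[\mathcal{R}_{kj}(\phi a)-\phi\,\mathcal{R}_{kj}(a)]\,\dx$, is the heart of the standard argument.

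The gap is in ${\tt E}^3_n$, which carries all the content of the lemma. After your integration by parts the ``quadratic remainder'' is $\Wt_n\cdot\nabx\mathcal{R}_{kj}(v_n)-\Ut_n\cdot\nabx\mathcal{R}_{kj}(w_n)$, and $\nabx\mathcal{R}_{kj}(v_n)$ is the gradient of a function that is merely bounded in $L^r$: it lies in no Lebesgue space, so it is not an admissible factor for the Murat--Tartar div--curl lemma, and its pointwise product with $\Ut_n$ is not even defined. If you integrate by parts once more to cure this, you find $\int\phi\,\Ut_n\cdot\nabx\mathcal{R}_{kj}(w_n)\,\dx=-\int(\nabx\phi\cdot\Ut_n)\,\mathcal{R}_{kj}(w_n)\,\dx-\int\phi\,v_n\,\mathcal{R}_{kj}(w_n)\,\dx$, i.e.\ the original term reappears; iterating instead produces $\sum_l\bigl[\mathcal{R}_{kl}(w_n)\mathcal{R}_{lj}(v_n)-\mathcal{R}_{kl}(v_n)\mathcal{R}_{lj}(w_n)\bigr]$, an expression of exactly the same type, so the decomposition is circular. (A side issue: $\undertilde{\mathcal A}(v_n)$ is only defined in $D^{1,r}(\mathbb{R}^d)$, i.e.\ modulo constants, when $r\geq d$, which the hypotheses allow.) Your Fourier remark is the right heuristic but is not an estimate. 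What closes the argument is to apply to ${\tt E}^3_n$ the very same self-adjointness step you used for ${\tt E}^1_n$: $\int\phi\,{\tt E}^3_n\,\dx=\int v_n\,[\phi\,\mathcal{R}_{kj}(w_n)-\mathcal{R}_{kj}(\phi w_n)]\,\dx$, so that everything reduces to the single fact that the commutator $[\mathcal{R}_{kj},\phi]$ maps weakly null sequences of $L^s(\mathbb{R}^d)$ to strongly null sequences of $L^{r'}(\mathbb{R}^d)$. That is a quotable Coifman--Meyer type smoothing estimate (the commutator gains one derivative, so Rellich applies, the embedding $W^{1,s}\hookrightarrow\hookrightarrow L^{r'}_{\rm loc}$ following from $\frac1r+\frac1s<1<1+\frac1d$, with the tail controlled by the compact support of $\phi$); it, or the div--curl lemma applied to genuine $L^p$ fields, is the analytic input your proposal is missing.
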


\section{An Arzela--Ascoli type theorem in $C_{w}([0,T]; L^s(\Omega))$}\label{sec:App-E}
\begin{lemma}[Lemma 6.2 in \cite{NovStras}]\label{Le-E-1}
Let $r,s \in (1,\infty)$ and suppose that $\Omega$ is a bounded Lipschitz domain in $\mathbb{R}^d$, $d \geq 2$. Suppose further that
$\{g_n\}_{n \in \mathbb{N}}$ is a sequence of functions
defined on $[0,T]$ with values in $L^s(\Omega)$ such that $g_n \in C_w([0,T];L^s(\Omega))$, and $g_n$ is bounded in $C([0,T];W^{-1,r}(\Omega)) \cap L^\infty(0,T; L^s(\Omega))$ independent of $n$.
Then, there exists a subsequence (not indicated) such that
\begin{itemize}
\item[(i)]  $g_n \rightarrow g$ in $C_w([0,T]; L^s(\Omega))$;
\item[(ii)] If, in addition, $1 < r \leq \frac{d}{d-1}$ and $1<s<\infty$, or $\frac{d}{d-1} < r < \infty$ and $\frac{dr}{d+r}<s<\infty$, then $g_n \rightarrow g$ strongly in
$C([0,T]; W^{-1,r}(\Omega))$.
\end{itemize}
\end{lemma}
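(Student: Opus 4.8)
The statement is a standard Arzelà--Ascoli type compactness result in the weakly-continuous setting, and the proof proceeds by combining equicontinuity in a weak topology, boundedness in a reflexive space, and an interpolation argument for part (ii). I would first fix a countable dense subset $\{\varphi_k\}_{k\in\mathbb{N}}$ of $W^{1,r'}_0(\Omega)$, where $r'=\frac{r}{r-1}$, noting that $W^{-1,r}(\Omega)=W^{1,r'}_0(\Omega)'$. For each fixed $k$, the sequence of real-valued functions $t\mapsto \langle g_n(t),\varphi_k\rangle_{W^{1,r'}_0(\Omega)}$ is, by the assumed bound in $C([0,T];W^{-1,r}(\Omega))$, uniformly bounded; and it is uniformly equicontinuous on $[0,T]$ because $g_n$ is bounded in $C([0,T];W^{-1,r}(\Omega))$ in a way that, in the applications of this lemma, comes with an accompanying modulus of continuity (more precisely, $\{g_n\}$ being bounded in $C([0,T];W^{-1,r}(\Omega))$ together with the boundedness of $\partial_t g_n$, or else equicontinuity, which is how the hypothesis is used; one reads off from the bound in $C([0,T];W^{-1,r}(\Omega))$ that $\|g_n(t)-g_n(t')\|_{W^{-1,r}(\Omega)}$ is controlled uniformly in $n$). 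Applying the scalar Arzelà--Ascoli theorem and a diagonal extraction over $k$, I obtain a subsequence (not relabelled) such that $\langle g_n(\cdot),\varphi_k\rangle$ converges uniformly on $[0,T]$ for every $k$.

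The next step is to upgrade this to convergence tested against \emph{all} of $W^{1,r'}_0(\Omega)$, hence all of $W^{-1,r}(\Omega)'$-duals, and then to identify the limit in $C_w([0,T];L^s(\Omega))$. Here the $L^\infty(0,T;L^s(\Omega))$ bound is crucial: for each fixed $t$, $\{g_n(t)\}_n$ is bounded in $L^s(\Omega)$, which is reflexive since $s\in(1,\infty)$, so along a further subsequence $g_n(t)\rightharpoonup g(t)$ weakly in $L^s(\Omega)$ (the limit being independent of the subsequence once one knows convergence against the dense set $\{\varphi_k\}$, since $W^{1,r'}_0(\Omega)$ is dense in $L^{s'}(\Omega)$ when $s'<\infty$, i.e.\ always here as $s<\infty$ fails only at $s=\infty$ which is excluded). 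A standard $3\varepsilon$-argument — using the uniform bound $\|g_n(t)\|_{L^s(\Omega)}\le C$ to control the tail against a general test function approximated by some $\varphi_k$, and the uniform-in-$t$ convergence against $\varphi_k$ — then shows that $t\mapsto\langle g(t),\varphi\rangle$ is the uniform limit of continuous functions, hence continuous, for every $\varphi\in L^{s'}(\Omega)$; thus $g\in C_w([0,T];L^s(\Omega))$, and $g_n\to g$ in $C_w([0,T];L^s(\Omega))$. This gives part (i).

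For part (ii) the plan is to interpolate the weak convergence just obtained against the $L^\infty(0,T;L^s(\Omega))$ bound to get a genuine norm convergence in $C([0,T];W^{-1,r}(\Omega))$ under the stated restrictions on $r$ and $s$. The relevant embedding is $L^s(\Omega)\hookrightarrow W^{-1,r}(\Omega)$ \emph{compactly}: indeed $L^s(\Omega)\hookrightarrow\hookrightarrow W^{-1,r}(\Omega)$ precisely when the dual embedding $W^{1,r'}_0(\Omega)\hookrightarrow\hookrightarrow L^{s'}(\Omega)$ is compact, and by the Rellich--Kondrachov theorem this holds when $s'<(r')^*$, i.e.\ $s'<\frac{dr'}{d-r'}$ if $r'<d$ and any $s'<\infty$ if $r'\ge d$; translating, $r'\ge d\iff r\le\frac{d}{d-1}$, in which case any $s\in(1,\infty)$ works, and $r'<d\iff r>\frac{d}{d-1}$, in which case $s'<\frac{dr'}{d-r'}$ becomes $s>\frac{dr}{d+r}$. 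These are exactly the two alternatives in the statement. Given this compact embedding, I would argue that $\{g_n\}$ is bounded in $C([0,T];W^{-1,r}(\Omega))$ and equicontinuous there (same modulus as above, since $W^{-1,r}$ is the target space of the assumed bound), while each slice $\{g_n(t)\}$ is bounded in $L^s(\Omega)$ which sits compactly in $W^{-1,r}(\Omega)$; by a vector-valued Arzelà--Ascoli theorem (Simon's theorem, as recalled in the excerpt around \eqref{compact1}, or directly) the set $\{g_n\}$ is precompact in $C([0,T];W^{-1,r}(\Omega))$, and any strongly convergent subsequence must have limit $g$ by uniqueness of limits in the coarser topology. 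Hence $g_n\to g$ strongly in $C([0,T];W^{-1,r}(\Omega))$.

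\textbf{Main obstacle.} The genuinely delicate point is the equicontinuity in time: the hypothesis as literally stated only gives a uniform \emph{bound} in $C([0,T];W^{-1,r}(\Omega))$, which by itself does not imply uniform equicontinuity. In practice this lemma is invoked with an additional uniform bound on a time derivative (or a uniform time-difference estimate), which is exactly what supplies the equicontinuity modulus; the honest proof must therefore either incorporate that extra information (as Novotný--Straškraba do) or interpret "bounded in $C([0,T];W^{-1,r}(\Omega))$" as including the requisite equicontinuity. Once equicontinuity is available, the rest is the routine diagonal/$3\varepsilon$ machinery together with the Rellich--Kondrachov bookkeeping for the compact embedding $L^s\hookrightarrow\hookrightarrow W^{-1,r}$, which is what pins down the index conditions in (ii).
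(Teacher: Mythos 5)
Your proposal is correct and follows the standard route — a diagonal extraction over a countable dense subset of $W^{1,r'}_0(\Omega)$, a density/$3\varepsilon$ upgrade using the uniform $L^s$ bound for part (i), and the compact embedding $L^s(\Omega)\hookrightarrow\hookrightarrow W^{-1,r}(\Omega)$ (dual to Rellich--Kondrachov, whose index bookkeeping you carry out correctly) combined with a vector-valued Arzel\`a--Ascoli argument for part (ii). The paper itself offers no proof of this statement: it is quoted verbatim in the appendix from Novotn\'y \& Stra\v{s}kraba, so there is nothing to compare against beyond the cited source, whose argument yours essentially reproduces. Your flagged ``main obstacle'' is a genuine and worthwhile observation: mere boundedness in $C([0,T];W^{-1,r}(\Omega))$ does not yield the time-equicontinuity on which the whole argument rests, and the original lemma in Novotn\'y \& Stra\v{s}kraba indeed hypothesises uniform continuity of $g_n$ with values in $W^{-1,r}(\Omega)$; in this paper that equicontinuity is always supplied in applications by the accompanying bound on $\partial_t g_n$ (cf.\ the way the lemma is invoked through \eqref{Cwcoma}).
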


\setcounter{equation}{0}
\renewcommand{\theequation}{\Alph{section}.\arabic{equation}}

\section{The continuity equation: extension to $\mathbb{R}^d$ and renormalization}\label{sec:App-F}

\begin{lemma}[Lemma 6.8 in \cite{NovStras}]\label{Le-F-1}
Let $\Omega$ be a bounded Lipschitz domain in $\mathbb{R}^d$, $d \geq 2$, and define $\Omega_T:=\Omega \times (0,T)$. Suppose that $\rho \in L^2(\Omega_T)$, $\ut \in L^2(0,T; \Ht^{1}_0(\Omega))$ and $f \in L^1(\Omega_T)$ satisfy
\[ \frac{\partial\rho}{\partial t} + \nabx\cdot (\ut\, \rho) = f\qquad \mbox{in $C^\infty_0(\Omega_T)'$}.\]
Then, upon extending $(\rho, \ut, f)$ to $\mathbb{R}^d$ by $(0,\undertilde{0},0)$ outside $\Omega$, one has that
\[ \frac{\partial\rho}{\partial t} + \nabx\cdot (\ut\,\rho) = f\qquad \mbox{in $C^\infty_0(\mathbb{R}^d \times (0,T))'$}.\]
\end{lemma}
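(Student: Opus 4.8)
\textbf{Proof proposal for the final statement (Lemma~\ref{Le-F-1}).}

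The plan is to reduce the claimed extended equation to a statement about test functions on $\mathbb{R}^d\times(0,T)$ supported near $\partial\Omega$, and to exploit the weak formulation together with the no-slip boundary condition $\ut\in L^2(0,T;\Ht^1_0(\Omega))$. Since the equation $\frac{\partial\rho}{\partial t}+\nabx\cdot(\ut\,\rho)=f$ holds in $C^\infty_0(\Omega_T)'$, and the extended functions agree with the originals on $\Omega_T$ and vanish on $(\mathbb{R}^d\setminus\overline\Omega)\times(0,T)$, the only thing to check is that no spurious distributional contribution is created on $\partial\Omega\times(0,T)$. Concretely, for a test function $\varphi\in C^\infty_0(\mathbb{R}^d\times(0,T))$ one must show
\[
-\int_0^T\!\!\int_{\mathbb{R}^d}\Big[\rho\,\frac{\partial\varphi}{\partial t}+\rho\,\ut\cdot\nabx\varphi\Big]\dx\,\dt
=\int_0^T\!\!\int_{\mathbb{R}^d}f\,\varphi\,\dx\,\dt,
\]
and since all integrands vanish outside $\overline\Omega$, this is equivalent to the same identity with $\mathbb{R}^d$ replaced by $\Omega$ and with a general $\varphi\in C^\infty_0(\mathbb{R}^d\times(0,T))$ (not necessarily vanishing near $\partial\Omega$). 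Thus the task is precisely to upgrade the test-function class from $C^\infty_0(\Omega_T)$ to restrictions of $C^\infty_0(\mathbb{R}^d\times(0,T))$ to $\Omega_T$.

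First I would fix $\varphi\in C^\infty_0(\mathbb{R}^d\times(0,T))$ and construct a cut-off sequence $\{\zeta_n\}\subset C^\infty_0(\Omega)$ with $0\le\zeta_n\le1$, $\zeta_n\to1$ pointwise on $\Omega$, and $|\nabx\zeta_n|\le C/\mathrm{dist}(\cdot,\partial\Omega)$; such a sequence exists because $\Omega$ is a bounded Lipschitz domain (one may, e.g., take $\zeta_n$ adapted to the distance function, or use a standard partition-of-unity/Hardy-inequality construction). Then $\zeta_n\varphi\in C^\infty_0(\Omega_T)$ is an admissible test function, so
\[
-\int_0^T\!\!\int_{\Omega}\Big[\rho\,\zeta_n\frac{\partial\varphi}{\partial t}+\rho\,\ut\cdot\nabx(\zeta_n\varphi)\Big]\dx\,\dt
=\int_0^T\!\!\int_{\Omega}f\,\zeta_n\varphi\,\dx\,\dt.
\]
Expanding $\nabx(\zeta_n\varphi)=\zeta_n\nabx\varphi+\varphi\nabx\zeta_n$, the terms involving $\zeta_n$ (without its gradient) converge to the desired limits by dominated convergence, using $\rho\in L^2(\Omega_T)$, $\ut\in L^2(\Omega_T)$, $f\in L^1(\Omega_T)$, and the boundedness of $\varphi$ and its derivatives. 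The only delicate term is
\[
\int_0^T\!\!\int_{\Omega}\rho\,\varphi\,\ut\cdot\nabx\zeta_n\,\dx\,\dt,
\]
which I expect to be the main obstacle: it does not obviously vanish since $|\nabx\zeta_n|$ blows up near $\partial\Omega$. The key point is that $\ut\in\Ht^1_0(\Omega)$ vanishes on $\partial\Omega$, so by Hardy's inequality on the Lipschitz domain $\Omega$, $\ut/\mathrm{dist}(\cdot,\partial\Omega)\in L^2(\Omega)$ with norm controlled by $\|\nabxtt\ut\|_{L^2(\Omega)}$; hence $|\ut\cdot\nabx\zeta_n|\le C\,|\ut|/\mathrm{dist}(\cdot,\partial\Omega)$ is dominated in $L^2(\Omega_T)$ uniformly in $n$, while $\ut\cdot\nabx\zeta_n\to0$ a.e.\ on $\Omega_T$ (since $\zeta_n$ is eventually constant on any compact subset of $\Omega$). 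Combined with $\rho\,\varphi\in L^2(\Omega_T)$, dominated convergence gives that this term tends to $0$. Passing to the limit $n\to\infty$ in the identity above then yields the asserted equation on $\mathbb{R}^d\times(0,T)$, completing the proof. As an alternative to Hardy's inequality, one can instead choose $\zeta_n$ supported in $\Omega$ with $\mathrm{supp}(\nabx\zeta_n)\subset\{x\in\Omega:\frac1{2n}<\mathrm{dist}(x,\partial\Omega)<\frac1n\}$ and estimate the troublesome term by $\|\rho\|_{L^2}\,\|\varphi\|_{\infty}\,n\,\|\ut\|_{L^2(\{ \frac1{2n}<\mathrm{dist}<\frac1n\})}$; the boundary-strip factor $n\,\|\ut\|_{L^2(\text{strip})}$ is again controlled by a Poincaré/trace-type estimate exploiting $\ut|_{\partial\Omega}=0$ and tends to $0$, but the Hardy-inequality route is cleaner.
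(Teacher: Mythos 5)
The paper states this result (Lemma~\ref{Le-F-1}) without proof; it is quoted verbatim from Novotn\'y \& Stra\v{s}kraba \cite{NovStras} (Lemma 6.8) in an appendix collecting auxiliary results, so there is no proof in the paper itself to compare against. Your argument is correct and is in fact the standard textbook argument for this extension lemma: reduce to upgrading the admissible test-function class, insert a cut-off $\zeta_n$ adapted to the distance to $\partial\Omega$, and kill the commutator term $\int_0^T\!\!\int_\Omega \rho\,\varphi\,\ut\cdot\nabx\zeta_n\,\dx\,\dt$ by combining $|\nabx\zeta_n|\lesssim \mathrm{dist}(\cdot,\partial\Omega)^{-1}$, Hardy's inequality for $\Ht^1_0(\Omega)$ functions on a bounded Lipschitz domain (giving $\ut/\mathrm{dist}(\cdot,\partial\Omega)\in L^2(\Omega_T)$ uniformly), $\rho\in L^2(\Omega_T)$, and dominated convergence. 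The remaining terms converge trivially because $\zeta_n\to1$, $\nabx\zeta_n\to 0$ a.e.\ on $\Omega$, and $\rho\,\partial_t\varphi$, $\rho\,\ut\cdot\nabx\varphi$, $f\,\varphi$ are all in $L^1(\Omega_T)$. One small technical caveat you might make explicit: the distance-based $\zeta_n$ is only Lipschitz, so either mollify it to land in $C^\infty_0(\Omega)$ (preserving $|\nabx\zeta_n|\lesssim 1/\mathrm{dist}$ up to a constant), or observe that the weak formulation extends by density from $C^\infty_0(\Omega_T)$ to compactly supported Lipschitz test functions; either remedy is routine. With that noted, the proposal is complete and matches the argument of the cited source.
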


\bigskip

Next we collect some results of relevance for our analysis that concern the renormalization of the continuity equation.
Consider a function $b$ defined on $[0,\infty)$ such that
\begin{equation}\label{eq:F-1}
b \in C[0,\infty)\cap C^1(0,\infty), \quad |b'(s)| \leq C s^{-\lambda_0}\quad \mbox{for $s \in (0,1]$, where $C>0$ and $\lambda_0 <1$},
\end{equation}
and
\begin{equation}\label{eq:F-2}
|b'(s)| \leq C s^{\lambda_1}\quad \mbox{for $s \geq 1$, where $C>0$ and $\lambda_1> -1$}.
\end{equation}

\begin{lemma}[Lemma 6.9 in \cite{NovStras}]\label{Le-F-2}
Let $d \geq 2$, $\beta \in [2,\infty)$, and let $\lambda_1$ be a real number such that
\begin{equation}\label{eq:F-3}
\lambda_1 \leq \frac{\beta}{2} -1.
\end{equation}
Suppose further that
$\rho \in L^\beta(0,T; L^\beta_{\rm loc}(\mathbb{R}^d))$, $\rho \geq 0$ a.e. on $\mathbb{R}^d \times (0,T)$, $\ut \in L^2(0,T; \Ht^{1}_{\rm loc}(\mathbb{R}^d))$
and $f \in L^z(0,T; L^z_{\rm loc}(\mathbb{R}^d))$, where $z = \left[\frac{\beta}{\lambda_1}\right]'$ is the conjugate of $\frac{\beta}{\lambda_1}$
if $\lambda_1>0$ and $z=1$ if $\lambda_1\leq 0$, are such that
\[ \frac{\partial\rho}{\partial t} + \nabx \cdot (\ut\, \rho) = f \qquad \mbox{in $C^\infty_0(\mathbb{R}^d \times (0,T))'$}.\]
\begin{itemize}
\item[(i)] For any function $b \in C^1[0,\infty)$ satisfying \eqref{eq:F-2} and \eqref{eq:F-3} one has
\begin{equation}\label{eq:F-4}
 \frac{\partial b(\rho)}{\partial t} + \nabx\cdot (\ut\,  b(\rho)) + \left\{\rho\, b'(\rho) - b(\rho)\right\} \nabx \cdot \ut = f\, b'(\rho) \qquad \mbox{in $C^\infty_0(\mathbb{R}^d \times (0,T))'$.}
\end{equation}
\item[(ii)] If $f=0$, then \eqref{eq:F-4} holds for any $b$ satisfying \eqref{eq:F-1}--\eqref{eq:F-3}.
\end{itemize}
\end{lemma}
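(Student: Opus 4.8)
The plan is to prove Lemma~\ref{Le-F-2} by the classical regularization argument of DiPerna and Lions: mollify the continuity equation in the spatial variable, apply the chain rule to the smoothed density (which is legitimate because it is smooth in $\xt$), and pass to the limit, controlling the arising commutator by a Friedrichs-type estimate; for part~(ii), one additionally truncates $b'$ near the origin. Concretely, let $\omega_\varepsilon$, $\varepsilon>0$, be a standard Friedrichs mollifier on $\mathbb{R}^d$ and set $\rho_\varepsilon:=\rho*\omega_\varepsilon$, $f_\varepsilon:=f*\omega_\varepsilon$, convolving in $\xt$ only. Convolving the identity $\partial_t\rho+\nabx\cdot(\ut\,\rho)=f$ yields, in $C^\infty_0(\mathbb{R}^d\times(0,T))'$,
\[
\frac{\partial\rho_\varepsilon}{\partial t}+\nabx\cdot(\ut\,\rho_\varepsilon)=f_\varepsilon+r_\varepsilon,\qquad r_\varepsilon:=\nabx\cdot(\ut\,\rho_\varepsilon)-\bigl[\nabx\cdot(\ut\,\rho)\bigr]*\omega_\varepsilon .
\]
Since $\rho\in L^\beta(0,T;L^\beta_{\rm loc})$ with $\beta\ge 2$ and $\ut\in L^2(0,T;\Ht^1_{\rm loc}(\mathbb{R}^d))$, so that $\tfrac1\beta+\tfrac12\le1$, the classical Friedrichs commutator lemma gives $r_\varepsilon\to 0$ strongly in $L^\sigma_{\rm loc}(\mathbb{R}^d\times(0,T))$ with $\tfrac1\sigma=\tfrac1\beta+\tfrac12$; moreover $\{r_\varepsilon\}$ is bounded in that space, whence $r_\varepsilon\to0$ in every $L^{\sigma'}_{\rm loc}$, $\sigma'\in[1,\sigma)$.

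For fixed $\varepsilon$, $\rho_\varepsilon(\cdot,t)$ is $C^\infty$ in $\xt$ and $t\mapsto\rho_\varepsilon(\cdot,t)$ is absolutely continuous into $L^1_{\rm loc}(\mathbb{R}^d)$, with $\partial_t\rho_\varepsilon=f_\varepsilon+r_\varepsilon-\ut\cdot\nabx\rho_\varepsilon-\rho_\varepsilon\,\nabx\cdot\ut$. The chain rule then applies, and for any $b\in C^1[0,\infty)$,
\[
\frac{\partial b(\rho_\varepsilon)}{\partial t}+\nabx\cdot\bigl(\ut\,b(\rho_\varepsilon)\bigr)+\bigl(\rho_\varepsilon\,b'(\rho_\varepsilon)-b(\rho_\varepsilon)\bigr)\,\nabx\cdot\ut=b'(\rho_\varepsilon)\,(f_\varepsilon+r_\varepsilon)\qquad\mbox{in }C^\infty_0(\mathbb{R}^d\times(0,T))'.
\]
The next step is to let $\varepsilon\to0$ in this identity. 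Passing to a subsequence we may assume $\rho_\varepsilon\to\rho$ a.e. and $\rho_\varepsilon^\beta\to\rho^\beta$ in $L^1_{\rm loc}$, so $\{\rho_\varepsilon^\beta\}$ is locally equi-integrable. Under \eqref{eq:F-2} one has $|b(s)|\le C(1+s^{\lambda_1+1})$ and $|s\,b'(s)-b(s)|\le C(1+s^{\lambda_1+1})$, and \eqref{eq:F-3} gives $2(\lambda_1+1)\le\beta$; hence, by Vitali's theorem, $b(\rho_\varepsilon)\to b(\rho)$ and $\rho_\varepsilon b'(\rho_\varepsilon)-b(\rho_\varepsilon)\to\rho b'(\rho)-b(\rho)$ strongly in $L^2_{\rm loc}$, and since $\ut,\nabx\cdot\ut\in L^2_{\rm loc}$ the convective and compression terms converge in $L^1_{\rm loc}$. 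For the source term: if $\lambda_1>0$ then $\{b'(\rho_\varepsilon)\}$ is bounded and equi-integrable in $L^{\beta/\lambda_1}_{\rm loc}$, $b'(\rho_\varepsilon)\to b'(\rho)$ a.e., and $f\in L^z$ with $z=(\beta/\lambda_1)'$, so $b'(\rho_\varepsilon)f_\varepsilon\to b'(\rho)f$ in $L^1_{\rm loc}$; if $\lambda_1\le0$, $b'$ is bounded on $[0,\infty)$ and $z=1$ suffices.

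The genuinely delicate term, and the main obstacle, is $b'(\rho_\varepsilon)\,r_\varepsilon$: I expect to show it tends to $0$ in $L^1_{\rm loc}$ by an equi-integrability/truncation splitting. For $\lambda_1\le0$ this is immediate ($b'$ bounded, $r_\varepsilon\to0$ in $L^1_{\rm loc}$). For $\lambda_1>0$, write $b'(\rho_\varepsilon)=b'(\rho_\varepsilon)\chi_{\{\rho_\varepsilon\le N\}}+b'(\rho_\varepsilon)\chi_{\{\rho_\varepsilon>N\}}$; the first piece is bounded by $C(N)$ so its product with $r_\varepsilon$ vanishes in $L^1_{\rm loc}$ as $\varepsilon\to0$, while for the second piece \eqref{eq:F-3} forces $(\beta/\lambda_1)'\le\sigma$, so $\{r_\varepsilon\}$ is bounded in $L^{(\beta/\lambda_1)'}_{\rm loc}$ and H\"older bounds the product in $L^1_{\rm loc}$ by $\|b'(\rho_\varepsilon)\chi_{\{\rho_\varepsilon>N\}}\|_{L^{\beta/\lambda_1}_{\rm loc}}\,\|r_\varepsilon\|_{L^{(\beta/\lambda_1)'}_{\rm loc}}$, the first factor being small uniformly in $\varepsilon$ for $N$ large by equi-integrability of $\{\rho_\varepsilon^\beta\}$; sending first $\varepsilon\to0$ and then $N\to\infty$ gives the claim. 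This splitting covers in particular the borderline case $\lambda_1=\tfrac\beta2-1$, where a plain H\"older estimate on the whole product just fails. With this, \eqref{eq:F-4} follows, proving part~(i).

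For part~(ii) we have $f=0$, so the right-hand side of \eqref{eq:F-4} vanishes identically and the singular behaviour of $b$ near the origin never enters it. Given $b$ satisfying \eqref{eq:F-1}--\eqref{eq:F-3}, I would set, for $\delta\in(0,1)$, $b_\delta(s):=b(s)$ for $s\ge\delta$ and extend $b_\delta$ linearly on $[0,\delta]$ with slope $b'(\delta)$; then $b_\delta\in C^1[0,\infty)$, has a derivative bounded near the origin, and satisfies the hypotheses of part~(i) with $f\equiv0$, so \eqref{eq:F-4} holds for $b_\delta$. Finally I would send $\delta\to0$: \eqref{eq:F-1} yields $|b_\delta(s)|+|s\,b_\delta'(s)-b_\delta(s)|\le C(1+s)$ for $s\le1$ (uniformly in $\delta$), together with \eqref{eq:F-2}--\eqref{eq:F-3} for $s\ge1$, and $b_\delta\to b$, $s\,b_\delta'\to s\,b'$ pointwise on $(0,\infty)$; since $\rho\in L^\beta_{\rm loc}$ with $\beta\ge2$, dominated convergence passes the limit through each term of the renormalized equation for $b_\delta$, giving \eqref{eq:F-4} for $b$. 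This completes the plan.
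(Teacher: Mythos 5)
Your argument is correct, and it is essentially the standard proof of this result: the paper does not prove the lemma itself but quotes it from Novotn\'y \& Stra\v{s}kraba (Lemma 6.9 there), whose proof is exactly the DiPerna--Lions mollification--commutator scheme you describe, with the exponent bookkeeping $\tfrac{1}{\sigma}=\tfrac{1}{\beta}+\tfrac12$ and $(\beta/\lambda_1)'\le\sigma$ being precisely where \eqref{eq:F-3} enters, and with the same truncation near the origin for part (ii). One small inaccuracy: in the borderline case $\lambda_1=\tfrac{\beta}{2}-1$ the plain H\"older estimate does \emph{not} fail, since the Friedrichs commutator lemma gives strong convergence $r_\varepsilon\to 0$ in $L^{\sigma}_{\rm loc}$ (not mere boundedness) and $(\beta/\lambda_1)'=\sigma$ there, so $\|b'(\rho_\varepsilon)\,r_\varepsilon\|_{L^1_{\rm loc}}\le \|b'(\rho_\varepsilon)\|_{L^{\beta/\lambda_1}_{\rm loc}}\|r_\varepsilon\|_{L^{\sigma}_{\rm loc}}\to 0$ directly; your truncation splitting is therefore valid but not needed.
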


Next we state an extension of the previous result to a class of piecewise $C^1$ functions $b$. Specifically, for $k>0$, let
\[ b_k(s) = \left\{ \begin{array}{ll} b(s) & \mbox{if $s \in [0,k)$},\\
                                      b(k) & \mbox{if $s \in [k,\infty)$},
                     \end{array}\right. \qquad \mbox{where $b \in C^1[0,\infty)$}.\]
Let $(b_k)'_+$ denote the right-derivative of $b_k$.

\begin{lemma}[Lemma 6.11 in \cite{NovStras}]\label{Le-F-3}
Let $\beta$, $\rho$, $\ut$ satisfy the assumptions of Lemma \ref{Le-F-2} and let $f \in L^1_{\rm loc}(\mathbb{R}^d \times (0,T))$ be such that
\[ \frac{\partial\rho}{\partial t} + \nabx \cdot (\ut\,\rho) = f \qquad \mbox{in $C^\infty_0(\mathbb{R}^d \times (0,T))'$}.\]
Then, part (i) of Lemma \ref{Le-F-2} holds with $b$ replaced with $b_k$ and $b'$ replaced with $(b_k)'_+$, for every $k>0$. Furthermore, if $f=0$, then part (ii) of Lemma
\ref{Le-F-2} holds, with the regularity assumption on $b \in C^1[0,\infty)$ appearing in the definition of $b_k$ replaced with the relaxed assumption \eqref{eq:F-1}, for every $k>0$.
\end{lemma}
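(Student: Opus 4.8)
The plan is to reduce Lemma \ref{Le-F-3} to Lemma \ref{Le-F-2} by a one-sided smoothing of the nonlinearity $b$, exploiting that the truncation $b_k$ agrees with $b$ below the level $k$, is bounded, and has right-derivative $(b_k)'_+$ that is bounded and supported in $[0,k]$. Concretely, write $b_k = b \circ m_k$ with $m_k(s) := \min\{s,k\}$, and for $m \in \mathbb{N}$ choose a non-decreasing $m_k^{(m)} \in C^1[0,\infty)$ with $m_k^{(m)}(s) = s$ for $0 \le s \le k - \tfrac1m$, $m_k^{(m)}(s) = k$ for $s \ge k$, and $0 \le (m_k^{(m)})'(s) \le 1$ everywhere. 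Set $b_k^{(m)} := b \circ m_k^{(m)}$, so $(b_k^{(m)})'(s) = b'(m_k^{(m)}(s))\,(m_k^{(m)})'(s)$; this vanishes for $s \ge k$, equals $b'(s)$ for $s \le k - \tfrac1m$, and, crucially, $(b_k^{(m)})'(k) = 0 = (b_k)'_+(k)$, so the approximation is consistent even at the jump point $s=k$ of $(b_k)'_+$. In case (i), $b \in C^1[0,\infty)$ gives $b_k^{(m)} \in C^1[0,\infty)$; in case (ii), $b \in C[0,\infty) \cap C^1(0,\infty)$ gives $b_k^{(m)} \in C[0,\infty) \cap C^1(0,\infty)$, and since near $s=0$ one has $b_k^{(m)} = b$, the singular bound \eqref{eq:F-1} transfers verbatim to $b_k^{(m)}$.

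First I would verify that each $b_k^{(m)}$ satisfies the hypotheses of Lemma \ref{Le-F-2}: as $(b_k^{(m)})'$ is supported in $[0,k]$ and $b' \in C(0,\infty)$, one has $|(b_k^{(m)})'(s)| \le \max_{[1,k]}|b'| =: C$ for $s \ge 1$, so $b_k^{(m)}$ satisfies \eqref{eq:F-2} with $\lambda_1 = 0$, and since $\beta \ge 2$ the compatibility condition \eqref{eq:F-3}, $\lambda_1 \le \tfrac{\beta}{2}-1$, holds. With $\lambda_1 = 0$ the integrability exponent for the source term in Lemma \ref{Le-F-2} is $z=1$, matching the hypothesis $f \in L^1_{\rm loc}(\mathbb{R}^d \times (0,T))$. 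Hence Lemma \ref{Le-F-2}(i) (respectively (ii) when $f=0$) yields, for every $m$,
\[
\frac{\partial b_k^{(m)}(\rho)}{\partial t} + \nabx \cdot \big(\ut\, b_k^{(m)}(\rho)\big) + \big\{\rho\,(b_k^{(m)})'(\rho) - b_k^{(m)}(\rho)\big\}\,\nabx\cdot\ut = f\,(b_k^{(m)})'(\rho) \qquad \mbox{in } C^\infty_0(\mathbb{R}^d \times (0,T))'.
\]

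The second step is the passage to the limit $m \to \infty$. For each fixed $s \ge 0$ one has $b_k^{(m)}(s) = b_k(s)$ and $(b_k^{(m)})'(s) = (b_k)'_+(s)$ for all $m$ large enough (for $s<k$ once $\tfrac1m < k-s$; for $s \ge k$ for every $m$), together with the uniform bounds $\|b_k^{(m)}\|_{L^\infty[0,\infty)} \le \max_{[0,k]}|b|$ and, away from $0$, $|(b_k^{(m)})'(s)| \le \max_{[\delta,k]}|b'|$. The "commutator" coefficient $\rho\,(b_k^{(m)})'(\rho) - b_k^{(m)}(\rho)$ is bounded uniformly in $m$ and $\rho$ — in the relaxed case (ii) using $|s\,b'(s)| \le C s^{1-\lambda_0} \to 0$ as $s \to 0_+$, since $\lambda_0 < 1$, to control the near-vacuum contribution. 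Composing with $\rho \in L^\beta_{\rm loc}$ and invoking Lebesgue's dominated convergence theorem, $b_k^{(m)}(\rho) \to b_k(\rho)$ and $(b_k^{(m)})'(\rho) \to (b_k)'_+(\rho)$ a.e. and in $L^p_{\rm loc}(\mathbb{R}^d \times (0,T))$ for every finite $p$; since $\ut \in L^2(0,T;\Ht^1_{\rm loc})$ and $f \in L^1_{\rm loc}$, each of the terms $b_k^{(m)}(\rho)$, $\ut\, b_k^{(m)}(\rho)$, $\{\rho (b_k^{(m)})'(\rho) - b_k^{(m)}(\rho)\}\,\nabx\cdot\ut$, $f\,(b_k^{(m)})'(\rho)$ converges in $L^1_{\rm loc}(\mathbb{R}^d \times (0,T))$ to the corresponding term built from $b_k$ and $(b_k)'_+$. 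Testing the above identity against an arbitrary $\varphi \in C^\infty_0(\mathbb{R}^d \times (0,T))$ and letting $m \to \infty$ then gives the renormalized equation for $b_k$; when $f = 0$ the right-hand side is absent and only \eqref{eq:F-1} on $b$ is used, which is the final assertion.

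The main obstacle is the jump discontinuity of $(b_k)'_+$ at $s = k$: a symmetric mollification of $b_k$ would produce approximating derivatives whose pointwise limit at $s=k$ is not $(b_k)'_+(k)=0$, and since $\{(\xt,t)\,:\,\rho(\xt,t)=k\}$ need not be Lebesgue-null, a.e. convergence of $(b_k^{(m)})'(\rho)$ could fail. This is precisely circumvented by the one-sided construction $b_k^{(m)} = b \circ m_k^{(m)}$, with $m_k^{(m)}$ altered only on $[k-\tfrac1m,k]$ and flat for $s \ge k$, forcing $(b_k^{(m)})'(s) = 0 = (b_k)'_+(s)$ for all $s \ge k$ and all $m$. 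A secondary, milder point is the near-vacuum behaviour in case (ii), where $b'(\rho)$ itself need not be integrable while $\rho b'(\rho) - b(\rho)$ stays bounded thanks to $\lambda_0 < 1$; this is exactly why the relaxed statement is confined to $f = 0$.
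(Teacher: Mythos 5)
This lemma is quoted in the paper's appendix directly from Novotn\'{y} \& Stra\v{s}kraba (Lemma 6.11) without proof, so there is no in-paper argument to compare against; judged on its own merits, your proof is correct and coincides with the standard argument in that reference. The key device --- the one-sided $C^1$ regularization $b\circ m_k^{(m)}$, flat on $[k,\infty)$ so that the approximating derivatives converge pointwise to the right-derivative $(b_k)'_+$ even on the possibly non-null level set $\{\rho=k\}$, combined with the uniform bound on $s\,b'(s)$ near $s=0$ from $\lambda_0<1$ in the relaxed case and dominated convergence to pass to the limit --- is exactly the right one, and your verification that each $b_k^{(m)}$ satisfies the hypotheses of the preceding lemma with $\lambda_1=0$ (hence $z=1$) is complete.
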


\setcounter{equation}{0}
\renewcommand{\theequation}{\Alph{section}.\arabic{equation}}

\section{Continuity equation with dissipation}\label{sec:App-G}
Suppose that $\Omega$ is a bounded domain in $\mathbb{R}^d$ and  $T>0$. We shall consider the following parabolic initial-boundary-value problem, with
homogeneous Neumann boundary condition:
\begin{subequations}
\begin{alignat}{2}
\frac{\partial z}{\partial t} - \alpha\, \Delta_x z &= h\qquad && \mbox{in $\Omega_T:=\Omega \times (0,T)$},\label{eq:G-1}\\
z(\xt,0) &= z_0(\xt) \qquad &&\mbox{for $\xt \in \Omega$},\label{eq:G-2}\\
\nabx z \cdot \nt &= 0 \qquad &&\mbox{on $\partial \Omega \times (0,T)$},\label{eq:G-3}
\end{alignat}
\end{subequations}
where $\alpha>0$, $z_0$ and $h$ are given functions defined on $\Omega$ and $\Omega_T$, respectively, and $z$ is an unknown function defined on $\Omega_T$.

\begin{lemma}[Lemma 7.37 in \cite{NovStras}]\label{Le-G-1}
Let $\theta \in (0,1]$, $r,s \in (1,\infty)$, and suppose that $\Omega$ is a bounded domain in $\mathbb{R}^d$. Suppose further that
\begin{equation*}\label{eq:G-4}
\Omega \in C^{2,\theta},\qquad \rho_0 \in \widetilde{W}^{2-\frac{2}{r},s}(\Omega), \qquad h \in L^r(0,T; L^s(\Omega)),
\end{equation*}
where $\widetilde{W}^{2-\frac{2}{r},s}(\Omega)$ is the completion of the space $\{\zeta \in C^\infty(\overline{\Omega})\,:\,
\nabx \zeta \cdot \nt|_{\partial \Omega} = 0\}$ in
$W^{2-\frac{2}{r},s}(\Omega)$. Then, there exists a unique $z \in L^r(0,T; W^{2,s}(\Omega))\cap C([0,T]; W^{2-\frac{2}{r},s}(\Omega))$, $\frac{\partial z}{\partial t}
\in L^r([0,T]; L^s(\Omega))$ satisfying equation \eqref{eq:G-1} a.e. on $\Omega_T$, equation \eqref{eq:G-2} a.e. on $\Omega$, equation \eqref{eq:G-3} in the sense of normal
traces a.e. on $[0,T]$, and which satisfies the estimate
\begin{eqnarray*}%\label{eq:G-5}
&&\alpha^{1-\frac{1}{r}}\|z\|_{L^\infty(0,T;W^{2-\frac{2}{r},s}(\Omega))} + \left\|\frac{\partial z}{\partial t}\right\|_{L^r(0,T; L^s(\Omega))} + \alpha \|z\|_{L^r(0,T; W^{2,s}(\Omega))}\nonumber\\
&&\qquad\qquad \leq C(r,s,\Omega)\big[\alpha^{1-\frac{1}{r}} \|z_0\|_{W^{2-\frac{2}{r},s}(\Omega)} + \|h\|_{L^r(0,T;L^s(\Omega))}\big].
\end{eqnarray*}
\end{lemma}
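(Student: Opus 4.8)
The plan is to deduce the statement from the classical maximal $L^r$--$L^s$ regularity estimate for the heat equation with homogeneous Neumann boundary data in the case $\alpha=1$, and to recover the displayed dependence on $\alpha$ by a parabolic rescaling of time. If $z$ solves \eqref{eq:G-1}--\eqref{eq:G-3}, put $w(\xt,\tau):=z(\xt,\tau/\alpha)$ for $\tau\in(0,\alpha T)$; then $w$ solves $\partial_\tau w-\Delta_x w=\widetilde h$ on $\Omega\times(0,\alpha T)$ with $\widetilde h(\xt,\tau):=\alpha^{-1}h(\xt,\tau/\alpha)$, $w(\cdot,0)=z_0$, and $\nabx w\cdot\nt=0$ on $\partial\Omega\times(0,\alpha T)$. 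In the applications $\alpha\in(0,1]$ and $T$ is fixed, so the rescaled interval has length at most $T$.

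\emph{Maximal regularity at $\alpha=1$.} Let $A$ be the realization of $-\Delta_x$ in $L^s(\Omega)$ with domain $D(A)=\{v\in W^{2,s}(\Omega):\nabx v\cdot\nt=0\text{ on }\partial\Omega\}$. Since $\partial\Omega\in C^{2,\theta}$, the operator $A+1$ is invertible, sectorial and admits a bounded $H^\infty$-functional calculus on $L^s(\Omega)$; hence (Dore--Venni/Kalton--Weis, or equivalently the parabolic $L^p$-theory of Ladyzhenskaya--Solonnikov--Ural'tseva) it enjoys maximal $L^r$-regularity on bounded time intervals, with a constant depending only on $r,s,\Omega$ and the interval length, and a half-line extension argument renders this constant interval-independent. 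This yields a unique $w\in L^r(0,\alpha T;W^{2,s}(\Omega))$ with $\partial_\tau w\in L^r(0,\alpha T;L^s(\Omega))$ solving the rescaled problem and obeying
\[\|\partial_\tau w\|_{L^r(0,\alpha T;L^s(\Omega))}+\|w\|_{L^r(0,\alpha T;W^{2,s}(\Omega))}\leq C(r,s,\Omega)\big[\|z_0\|_{\widetilde W^{2-2/r,s}(\Omega)}+\|\widetilde h\|_{L^r(0,\alpha T;L^s(\Omega))}\big].\]
The correct compatibility class for the initial datum is $(L^s(\Omega),D(A))_{1-1/r,r}$, which, since $D(A)$ carries the Neumann constraint and that constraint persists at the fractional level $2-2/r$ precisely when $2-2/r>1+1/s$, coincides with the completion-defined space $\widetilde W^{2-2/r,s}(\Omega)$ (and in the complementary range $\widetilde W^{2-2/r,s}(\Omega)$ is the full trace space). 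Moreover the trace embedding $L^r(0,\alpha T;D(A))\cap W^{1,r}(0,\alpha T;L^s(\Omega))\hookrightarrow C([0,\alpha T];(L^s(\Omega),D(A))_{1-1/r,r})$ supplies $\|w\|_{L^\infty(0,\alpha T;\widetilde W^{2-2/r,s}(\Omega))}\leq C[\dots]$ with the same constant. Uniqueness follows from linearity together with this a priori bound, while the boundary condition is read off in the sense of normal traces because $w(\cdot,\tau)\in W^{2,s}(\Omega)$ for a.e. $\tau$.

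\emph{Undoing the rescaling.} Because $\tau\mapsto\tau/\alpha$ maps $(0,\alpha T)$ onto $(0,T)$ with Jacobian $\alpha^{-1}$, a direct computation gives $\|\widetilde h\|_{L^r(0,\alpha T;L^s)}=\alpha^{1/r-1}\|h\|_{L^r(0,T;L^s)}$, $\|\partial_\tau w\|_{L^r(0,\alpha T;L^s)}=\alpha^{1/r-1}\|\partial z/\partial t\|_{L^r(0,T;L^s)}$, $\|w\|_{L^r(0,\alpha T;W^{2,s})}=\alpha^{1/r}\|z\|_{L^r(0,T;W^{2,s})}$, and $\|w\|_{L^\infty(0,\alpha T;\widetilde W^{2-2/r,s})}=\|z\|_{L^\infty(0,T;\widetilde W^{2-2/r,s})}$. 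Substituting into the $\alpha=1$ estimate and multiplying through by $\alpha^{1-1/r}$ produces
\[\alpha^{1-\frac1r}\|z\|_{L^\infty(0,T;\widetilde W^{2-2/r,s}(\Omega))}+\big\|\tfrac{\partial z}{\partial t}\big\|_{L^r(0,T;L^s(\Omega))}+\alpha\|z\|_{L^r(0,T;W^{2,s}(\Omega))}\leq C(r,s,\Omega)\big[\alpha^{1-\frac1r}\|z_0\|_{\widetilde W^{2-2/r,s}(\Omega)}+\|h\|_{L^r(0,T;L^s(\Omega))}\big],\]
which is the asserted estimate; existence, uniqueness and the a.e./normal-trace interpretations of \eqref{eq:G-1}--\eqref{eq:G-3} transfer back along this change of variables. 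The two genuinely delicate (but standard) ingredients are: (i) the maximal-regularity property of the Neumann Laplacian on $L^s(\Omega)$ with a constant uniform in the length of the time interval --- this is what makes the $\alpha$-scaling yield clean powers, and it rests on the bounded $H^\infty$-calculus (equivalently, $R$-sectoriality) of $-\Delta_x$ with Neumann boundary conditions, for which the $C^{2,\theta}$ regularity of $\partial\Omega$ is exactly what is required; and (ii) the identification of $(L^s(\Omega),D(A))_{1-1/r,r}$ with the completion-defined space $\widetilde W^{2-2/r,s}(\Omega)$, which forces one to distinguish the ranges $2-2/r>1+1/s$ and $2-2/r\leq 1+1/s$. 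Everything else --- the scaling bookkeeping, uniqueness via linearity, and the $C([0,T];\cdot)$ embedding via the trace theorem for mixed-regularity spaces --- is routine.
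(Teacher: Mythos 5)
This lemma is not proved in the paper at all: it is quoted verbatim in the appendix as Lemma~7.37 of Novotn\'{y} \& Stra\v{s}kraba, included there only ``for the convenience of the reader'', so there is no in-paper argument to compare yours against. Your derivation --- classical maximal $L^r$--$L^s$ regularity for the Neumann Laplacian at $\alpha=1$, followed by the parabolic rescaling $w(\xt,\tau)=z(\xt,\tau/\alpha)$ --- is the standard route to exactly this statement, and your scaling bookkeeping is correct: $\|\widetilde h\|_{L^r(0,\alpha T;L^s)}=\alpha^{1/r-1}\|h\|_{L^r(0,T;L^s)}$, $\|w\|_{L^r(0,\alpha T;W^{2,s})}=\alpha^{1/r}\|z\|_{L^r(0,T;W^{2,s})}$, and multiplying the $\alpha=1$ estimate through by $\alpha^{1-1/r}$ reproduces the displayed inequality with the stated powers of $\alpha$.

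Two small caveats, both inherited from the way the source states the result rather than defects of your argument. First, the optimal temporal trace space of $L^r(0,T;W^{2,s})\cap W^{1,r}(0,T;L^s)$ for $r\neq s$ is the real-interpolation (Besov) space $(L^s(\Omega),D(A))_{1-1/r,r}$, which coincides with the Sobolev--Slobodeckij space $W^{2-2/r,s}(\Omega)$ only up to the usual identifications; you implicitly conflate the two, as does the quoted lemma. Second, the constant $C(r,s,\Omega)$ with no $T$-dependence requires some care because $0$ is an eigenvalue of the Neumann Laplacian, so half-line maximal regularity does not come for free; on intervals of length $\alpha T\leq T$ the cleaner justification is extension of $\widetilde h$ by zero to $(0,T)$ plus uniqueness, which yields a constant depending on an upper bound for $T$. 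Neither point affects the use made of the lemma in the paper.
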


We shall also require the following regularity result concerning the case when the right-hand side $h$ of \eqref{eq:G-1} is in divergence form, i.e., $h = \nabx \cdot \bt$.
\begin{lemma}[Lemma 7.38 in \cite{NovStras}]\label{Le-G-2}
Let $\theta \in (0,1]$, $r,s \in (1,\infty)$, and suppose that $\Omega$ is a bounded domain in $\mathbb{R}^d$. Suppose further that
\begin{equation*}\label{eq:G-6}
\Omega \in C^{2,\theta},\qquad z_0 \in L^{s}(\Omega), \qquad \bt \in L^r(0,T; L^s(\Omega)).
\end{equation*}
Then, there exists a unique $z \in L^r(0,T; W^{1,s}(\Omega))\cap C([0,T]; L^{s}(\Omega))$, such that
\begin{equation*} \frac{\dd}{\dd t} \int_\Omega z \, \eta \dx + \alpha \int_\Omega \nabx z \cdot \nabx \eta \dx = - \int_\Omega \bt \cdot \nabx \eta \dx\quad \mbox{in $(C^\infty_0[0,T])'$}\quad \forall \eta \in C^\infty(\overline{\Omega}),
\end{equation*}
$z(\xt,0) = z_0(\xt)$ for a.e. $\xt \in \Omega$, and
\begin{equation*} \alpha^{1-\frac{1}{r}}\|z\|_{L^\infty(0,T;L^s(\Omega))} + \alpha \|\nabx z\|_{L^r(0,T;L^s(\Omega))} \leq C(r,s,\Omega) \big[\alpha^{1-\frac{1}{r}}\|z_0\|_{L^s(\Omega)}
+ \|\bt\|_{L^r(0,T;L^s(\Omega))}\big].
\end{equation*}
\end{lemma}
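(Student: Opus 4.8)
The plan is to reduce the statement to the unit-coefficient case $\alpha=1$ by a rescaling of the time variable, and then to establish, for $\alpha=1$, (i) the a priori estimate for every function in the asserted solution class, (ii) existence for smooth data, (iii) existence for general data by density, and (iv) uniqueness. For the reduction: if $z$ solves $\partial_t z - \alpha\,\Delta_x z = \nabx\cdot\bt$ on $\Omega\times(0,T)$ with $z(\cdot,0)=z_0$ and the homogeneous Neumann condition, then $w(\xt,\tau):=z(\xt,\tau/\alpha)$ solves $\partial_\tau w - \Delta_x w = \alpha^{-1}\,\nabx\cdot\widetilde{\bt}$ on $\Omega\times(0,\alpha T)$ with $w(\cdot,0)=z_0$ and the same boundary condition, where $\widetilde{\bt}(\xt,\tau):=\bt(\xt,\tau/\alpha)$. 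A change of variables gives $\|\alpha^{-1}\widetilde{\bt}\|_{L^r(0,\alpha T;L^s(\Omega))}=\alpha^{\frac1r-1}\|\bt\|_{L^r(0,T;L^s(\Omega))}$, $\|w\|_{L^\infty(0,\alpha T;L^s(\Omega))}=\|z\|_{L^\infty(0,T;L^s(\Omega))}$ and $\|\nabx w\|_{L^r(0,\alpha T;L^s(\Omega))}=\alpha^{\frac1r}\|\nabx z\|_{L^r(0,T;L^s(\Omega))}$, so the estimate $\|w\|_{L^\infty_\tau L^s}+\|\nabx w\|_{L^r_\tau L^s}\le C(r,s,\Omega)(\|z_0\|_{L^s}+\|\alpha^{-1}\widetilde\bt\|_{L^r_\tau L^s})$ for $\alpha=1$ becomes, after multiplying by $\alpha^{1-\frac1r}$, exactly the asserted bound. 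The weak formulation and the initial condition transform covariantly under this scaling, so it suffices to treat $\alpha=1$ (with $T$ replaced by $\alpha T$).

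The core of the argument is the a priori estimate for $\alpha=1$, which I would obtain by localization and reduction to a whole-space model problem. Fix a finite partition of unity $\{\chi_j\}$ subordinate to a cover of $\overline\Omega$ by balls, distinguishing interior balls and boundary balls; for a boundary ball one straightens $\partial\Omega$ by a $C^{2,\theta}$ diffeomorphism and extends the (transformed) function evenly across the straightened boundary, which is compatible with the homogeneous Neumann condition. In either case $\chi_j z$ becomes a distributional solution on $\mathbb{R}^d$ of $\partial_t v - \Delta_x v = \nabx\cdot\ft_j + g_j$, where $\ft_j$ is a cut-off-and-transformed version of $\bt$ (hence controlled in $L^r(0,T;L^s)$ by $\bt$) and $g_j$ is a commutator involving $\nabx\chi_j$, $\Delta_x\chi_j$, $z$ and $\nabx z$, which is of order at most one and therefore absorbable. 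For the model problem one writes Duhamel's formula $v(t)={\rm e}^{t\Delta_x}v_0+\int_0^t\nabx\,{\rm e}^{(t-\sigma)\Delta_x}\!\cdot\ft(\sigma)\,{\rm d}\sigma$, bounds $\|{\rm e}^{t\Delta_x}v_0\|_{L^s}\le\|v_0\|_{L^s}$, and observes that the operator $\ft\mapsto\nabx v^{\mathrm{inh}}$ (with $v^{\mathrm{inh}}$ the Duhamel integral) is a parabolic singular-integral operator whose Fourier symbol $\xi\mapsto\xi\otimes\xi/(i\tau+|\xi|^2)$ satisfies the hypotheses of Lizorkin's multiplier theorem (Lemma~\ref{Le-B-2}) in the space variable for fixed $\tau$, and is bounded on $L^r_t L^s_x$ by the standard maximal-$L^r$-regularity property of the heat semigroup on $L^s(\mathbb{R}^d)$ (equivalently, by an operator-valued Fourier-multiplier argument). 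This gives $\|\nabx v\|_{L^r(0,T;L^s)}+\|v\|_{L^\infty(0,T;L^s)}\le C(r,s)(\|v_0\|_{L^s}+\|\ft\|_{L^r(0,T;L^s)})$. Summing over $j$, undoing the boundary straightening, and reabsorbing the commutator contributions (using interpolation between $L^r_t L^s_x$ bounds and the lower-order $L^\infty_t L^s_x$ control of $z$, after first closing the estimate on a short time interval and iterating) yields the required estimate on $\Omega$, valid for any $z$ in the asserted solution class.

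Existence then follows by approximation. Choose $z_0^{(n)}\in C^\infty(\overline\Omega)$ with $\nabx z_0^{(n)}\cdot\nt|_{\partial\Omega}=0$ and $z_0^{(n)}\to z_0$ in $L^s(\Omega)$, and $\bt^{(n)}\in\Ct^\infty(\overline{\Omega_T})$ with $\bt^{(n)}\to\bt$ in $L^r(0,T;L^s(\Omega))$. For such data the problem has a smooth solution $z^{(n)}$, either by classical linear parabolic theory or by applying Lemma~\ref{Le-G-1} to $\partial_t z^{(n)}-\Delta_x z^{(n)}=\nabx\cdot\bt^{(n)}\in L^r(0,T;L^s(\Omega))$. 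By linearity the a priori estimate, applied to $z^{(n)}-z^{(m)}$, shows $\{z^{(n)}\}$ is Cauchy in $L^\infty(0,T;L^s(\Omega))\cap L^r(0,T;W^{1,s}(\Omega))$; the limit $z$ has the claimed regularity, satisfies the weak formulation, the initial condition $z(\cdot,0)=z_0$ and the stated bound. Reading the equation as $\partial_t z=\nabx\cdot(\nabx z-\bt)$ gives $\partial_t z\in L^r(0,T;(W^{1,s'}(\Omega))')$, so $z\in C([0,T];X)$ for an interpolation space $X$ between $W^{1,s}(\Omega)$ and $(W^{1,s'}(\Omega))'$; combined with $z\in L^\infty(0,T;L^s(\Omega))$ this upgrades to $z\in C([0,T];L^s(\Omega))$ (via Lemma~\ref{lemma-strauss}(a) with $\mathfrak X=L^s(\Omega)$ when $s\ge2$, and, when $s<2$, by weak continuity into $L^s(\Omega)$ together with convergence of norms, which follows from the strong continuity into $X$ and the approximation by $z^{(n)}$). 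Uniqueness is immediate: for the difference of two solutions, with $z_0=0$ and $\bt=0$, the a priori estimate forces $\nabx z\equiv0$, so $z(\cdot,t)$ is a.e.\ spatially constant, and testing the weak form with $\eta\equiv1$ together with $z(\cdot,0)=0$ gives $z\equiv0$; alternatively one may test against the solution of the adjoint backward Neumann problem provided by Lemma~\ref{Le-G-1}.

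The main obstacle is the maximal-regularity step --- proving that the divergence-form inhomogeneity yields $\nabx z\in L^r(0,T;L^s(\Omega))$ with the sharp dependence on the data --- which rests on singular-integral and Fourier-multiplier theory in the anisotropic space $L^r_t L^s_x$; the localization bookkeeping (commutator estimates, boundary straightening, even reflection for the Neumann condition) and the density and uniqueness steps are comparatively routine. The one subtlety to be handled with care is the continuity $z\in C([0,T];L^s(\Omega))$ when $s<2$, where a direct energy identity is unavailable and one must argue by weak continuity plus norm convergence.
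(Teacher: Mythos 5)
You should first be aware that the paper contains no proof of this lemma: it is transcribed verbatim from Novotn\'y \& Stra\v{s}kraba (their Lemma 7.38) and placed in Appendix G purely for the reader's convenience, so there is no in-paper argument to compare against. That said, your overall architecture --- rescale $t\mapsto t/\alpha$ to reduce to $\alpha=1$, localize, flatten the boundary and reflect evenly for the Neumann condition, prove a whole-space model estimate by parabolic singular integrals, then conclude by density and a uniqueness argument --- is the standard route, and it is sound for the part of the solution driven by the divergence-form forcing ($z_0=0$): the map $\ft\mapsto\nabx v$ is indeed a parabolic Calder\'on--Zygmund operator bounded on $L^r_tL^s_x$. (The cited monograph obtains this piece more cheaply by duality against the backward adjoint problem using the second-order result, Lemma~\ref{Le-G-1}, which spares one the multiplier analysis.) A secondary remark: your ``close on a short time interval and iterate'' absorption of the commutator terms yields a constant depending on the length of the time interval, hence, after rescaling, on $\alpha T$; this is benign for $\alpha\le 1$ by monotonicity but should be recorded, since the statement asserts $C=C(r,s,\Omega)$.

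The genuine gap is in your treatment of the homogeneous part. You assert for the model problem that $\|\nabx v\|_{L^r(0,T;L^s)}\le C\|v_0\|_{L^s}$ with $v=e^{t\Delta_x}v_0$. This is false whenever $r>2$ (and for $r=2$, $s>2$): the left-hand side is comparable to the Besov norm $\|v_0\|_{B^{1-2/r}_{s,r}}$, which is strictly stronger than $\|v_0\|_{L^s}$ as soon as $1-2/r>0$. Concretely, expanding $v_0=\sum_k c_k\phi_k$ in Neumann eigenfunctions with eigenvalues $\lambda_k$, one has
\begin{equation*}
\int_0^{T}\big\|\nabx e^{t\Delta_x}v_0\big\|_{L^2}^4\,{\rm d}t
=\sum_{j,k}\lambda_j\lambda_k c_j^2c_k^2\,\frac{1-{\rm e}^{-2(\lambda_j+\lambda_k)T}}{2(\lambda_j+\lambda_k)}
\;\ge\;\tfrac14\sum_k\lambda_k c_k^4\big(1-{\rm e}^{-4\lambda_kT}\big),
\end{equation*}
and choosing $c_k$ with $\sum_k c_k^2<\infty$ but $\sum_k\lambda_k c_k^4=\infty$ (e.g.\ $c_k=k^{-1/2}(\log k)^{-1}$ against $\cos(kx_1)$ on a product domain, so $\lambda_k=k^2$) produces $v_0\in L^2(\Omega)$ with $\nabx e^{t\Delta_x}v_0\notin L^4(0,T;L^2(\Omega))$. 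Since the lemma allows all $r\in(1,\infty)$, and the paper invokes it precisely with $r=s>2$ (cf.\ \eqref{nabxkaLLrLs} and \eqref{nabxrhoka}), you cannot quietly restrict to $r\le 2$. The initial-datum contribution to $\|\nabx z\|_{L^r(0,T;L^s)}$ simply cannot be controlled by $\|z_0\|_{L^s}$ alone, so your model estimate --- and hence the proof as written --- does not close; this step needs to be repaired (e.g.\ by strengthening the hypothesis on $z_0$ to $B^{1-2/r}_{s,r}$ or $W^{1,s}$, or by checking exactly how the cited source formulates the $\rho_0$-contribution to the gradient bound), and the repair is not a matter of sharpening the singular-integral machinery.
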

\end{appendices}

\bigskip

\bigskip

\hfill{\footnotesize \textit{London \& Oxford}

\bigskip

\hfill{\footnotesize \textit{\today}}

\bigskip

\bigskip

\end{document}

%%%%%%%%%%%%%%%%%%%%%%%%%%%%%%%%%%%%%%%%%%%%%% END %%%%%%%%%%%%%%%%%%%%%%%%%%%%%%%%%%%%%%%%%%